\newcommand{\bC}{{\mathbb C}}
\newcommand{\bQ}{{\mathbb Q}}
\newcommand{\bR}{{\mathbb R}}
\newcommand{\bZ}{{\mathbb Z}}
\newcommand{\bN}{{\mathbb N}}
\newcommand{\bT}{{\mathbb T}}
\newcommand{\bK}{{\mathbb K}}
\newcommand{\cF}{\mathcal F}
\newcommand{\cA}{\mathcal A}
\newcommand{\cC}{\mathcal C}
\newcommand{\cP}{\mathcal P}
\newcommand{\cL}{\mathcal L}
\newcommand{\cH}{\mathcal H}
\newcommand{\cW}{\mathcal{W}}
\newcommand{\Xneg}{\mathcal{X}_{-}}
\DeclareMathOperator{\val}{val}
\DeclareMathOperator{\id}{id}
\DeclareMathOperator{\im}{im}
\DeclareMathOperator{\SC}{SC}
\DeclareMathOperator{\gr}{gr}
\newtheorem{conj}{Conjecture}
\newtheorem{tm}{Theorem}[section]
\newtheorem{cy}[tm]{Corollary}
\newtheorem{lm}[tm]{Lemma}
\newtheorem{prp}[tm]{Proposition}
\newtheorem{rem}[tm]{Remark}
\newtheorem{df}[tm]{Definition}
\newtheorem{ex}[tm]{Example}
\newtheorem{thmA}{Theorem}
\newtheorem{thmB}{Theorem}
\newtheorem{mainthm}{Theorem}
\begin{document}
\title{Boundary Depth and Deformations of Symplectic Cohomology}
\author{Yoel Groman}
\address{ Yoel Groman,
Hebrew University of Jerusalem,
Mathematics Department}
\begin{abstract}
We study the relation between two versions of symplectic cohomology associated to a Liouville domain $D$ embedded in a symplectic manifold $M$: the ambient version $SC^*_M(D)$ defined over the Novikov field and depending on the embedding, and the intrinsic version $SC^*_{\theta}(D)$ depending on the choice of a local Liouville form and defined over the ground field. We show that when $D$ has sufficiently small boundary depth, the ambient version can be viewed as a deformation of the intrinsic one. This is achieved by constructing a filtration whose associated graded reproduces the intrinsic theory, and developing quantitative tools to control the deformation. We apply our results to constructing local pieces of the  SYZ mirror.
\end{abstract}

\maketitle
\section{Introduction}

For a symplectic manifold $M$ and a compact subset $K \subset M$, one can define the \emph{symplectic cohomology $SH^*_M(K)$ of $M$ with support on $K$}. It is expected to be related to the Hochschild cohomology of an analogously defined Fukaya category $\cF_M(K)$ of $M$ with support on $K$, which is currently under development by Abouzaid, Varolgunes, and the author. A primary motivation for studying Floer theory with supports is the \emph{local-to-global principle} with respect to any \emph{involutive cover} \cite{Varolgunes}. That is, a cover $\{D_i\}$ of $M$ by compact sets whose defining functions can be taken to Poisson commute. The Mayer-Vietoris property established in \cite{Varolgunes} shows that in favorable cases, quantum cohomology can be reconstructed from the relative symplectic cohomologies $SH^*_M(D_i)$ with supports on the $D_i$. A similar statement for Fukaya categories is currently under development by Abouzaid, Varolgunes, and the author. 

However, a critical gap remains: \emph{the invariant $SH^*_M(D_i)$ is still a global invariant depending on the embedding $D_i \subset M$, and is not necessarily more accessible than the global theory of $M$ itself.}
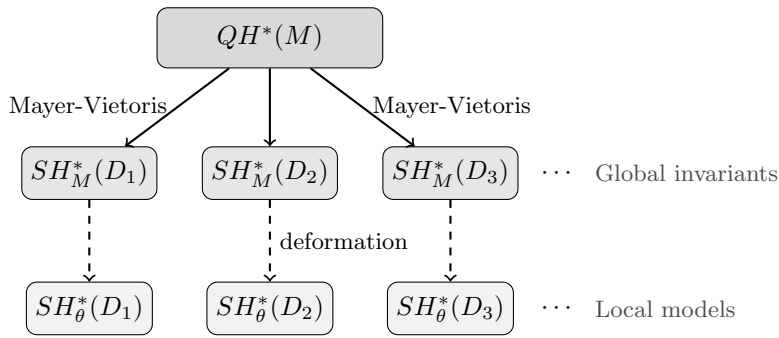
\begin{figure}[h]
\begin{flushright}  
\begin{tikzpicture}[scale=1.2]
    \node[draw, rounded corners, fill=gray!30, minimum width=3cm, minimum height=0.8cm] (global) at (0,3) {$QH^*(M)$};
    
    \node[draw, rounded corners, fill=gray!20, minimum width=1.2cm, minimum height=0.7cm] (amb1) at (-2,1.5) {$SH^*_M(D_1)$};
    \node[draw, rounded corners, fill=gray!20, minimum width=1.2cm, minimum height=0.7cm] (amb2) at (0,1.5) {$SH^*_M(D_2)$};
    \node[draw, rounded corners, fill=gray!20, minimum width=1.2cm, minimum height=0.7cm] (amb3) at (2,1.5) {$SH^*_M(D_3)$};
    \node at (3.2,1.5) {$\cdots$};
    
    \node[draw, rounded corners, fill=gray!10, minimum width=1.2cm, minimum height=0.7cm] (int1) at (-2,0) {$SH^*_\theta(D_1)$};
    \node[draw, rounded corners, fill=gray!10, minimum width=1.2cm, minimum height=0.7cm] (int2) at (0,0) {$SH^*_\theta(D_2)$};
    \node[draw, rounded corners, fill=gray!10, minimum width=1.2cm, minimum height=0.7cm] (int3) at (2,0) {$SH^*_\theta(D_3)$};
    \node at (3.2,0) {$\cdots$};
    
    \draw[->, thick] (global) -- node[left] {\small Mayer-Vietoris} (amb1);
    \draw[->, thick] (global) -- (amb2);
    \draw[->, thick] (global) -- node[right] {\small Mayer-Vietoris} (amb3);
    
    \draw[->, thick, dashed] (amb1) -- (int1);
    \draw[->, thick, dashed] (amb2) -- node[right] {\small deformation} (int2);
    \draw[->, thick, dashed] (amb3) -- (int3);
    
    \node[anchor=west, black!70] at (3.5,1.5) {\small Global invariants};
    \node[anchor=west, black!70] at (3.5,0) {\small Local models};
    
\end{tikzpicture}
\hspace{-1cm}  
\end{flushright}
\caption{The local-to-global method.}
\label{fig:gap}
\end{figure} The goal of this paper is to bridge this gap when $D_i$ is a \emph{Liouville domain} embedded in $M$ with local Liouville primitive $\theta$. In this setting there exists another invariant: the \emph{intrinsic symplectic cohomology} $SH^*_{\theta}(D_i)$ \cite{seidelbiased}. It is defined without reference to an embedding. Both invariants are presheaves of non-Archimedean normed BV algebras over the collection of compact subsets $K\subset D$. On the other hand, whereas the ambient invariant is defined over the Novikov field, the intrinsic invariant is defined over the integers. Our aim is to relate these two invariants with all their associated structure. 

\medskip

\noindent\textbf{Main results.} 
Our main results divide into three groups:
\begin{itemize}
\item \textbf{Theorems A (A1--A4)} (\S\ref{subsec:main-results}) establish a \emph{formal} relationship between the the intrinsic and ambeint invariants: they introduce a filtration on ambient symplectic cohomology whose associated graded recovers intrinsic symplectic cohomology, \emph{as presheaves of non-Archimedean normed BV algebras}. These results are formal in the sense that the the resulting spectral sequence does not generally converge.

\item \textbf{Theorems B (B1--B3)} (\S\ref{subsecQuantitativeDeformation}--\S\ref{subsec:structures}) provide a \emph{quantitative} refinement of this relation. Of particular importance is the notion of boundary depth $\beta(D)$. When the domain is shrunk sufficiently close to its skeleton, the deformation is controlled. This allows the utilization of homological perturbation theory.

\item \textbf{Theorem C} (\S\ref{subsecFluxAndTau}) is concerned with the  behavior of the deformation under isotopies and under passing to subdomains, establishing concavity and monotonicity properties of the associated \emph{$\tau$ invariant}, a numerical invariant of an embedding $D\subset M$ which measures the size of the deformation.
\end{itemize}

\medskip
In subsection \ref{subsec:characterizing-controlling-the-deformation} we formulate some conjectures about the deformation related to ideas by Cielibak-Latchev (related in \cite{Siegel2019}) on the one hand and Alami-Borman-Sheridan \cite{Borman2024} on the other.

\noindent\textbf{Applications to SYZ mirror symmetry.} The local to global framework is particularly well-suited to SYZ mirror symmetry. Given a symplectic manifold $M$ equipped with a Lagrangian torus fibration $\pi: M\to B$, the pre-images $D_P = \pi^{-1}(P)$ of appropriate subsets $P \subset B$ form an involutive cover of $M$. The intrinsic symplectic cohomology of local models is often well-understood. Moreover, the geometric properties appearing in Theorem C - flux, isotopy, and monotonicity - are particularly relevant. A prototype for applications of all the ingredients developed in the paper to constructing local pieces of the SYZ mirror is given in Theorem \ref{tmRegFibReconstruction} and Propositions \ref{prpRegFibReconstruction} and \ref{prpSingularFibReconstruction} presented in subsection \ref{subsec:application-syz}. This is taken up further in subsection \S\ref{subsec:collapse} following the statement of Theorem C.

\medskip

\noindent\textbf{Open strings.} While we work exclusively with the closed string sector of Floer theory, we expect the deformation-theoretic framework developed here to extend to Fukaya categories with supports. We briefly discuss this in \S\ref{subsec:local-global}.

\subsection{Formal deformation results}\label{subsec:main-results}

Henceforth we will assume that $M$ is either a closed symplectic manifold or a geometrically finite type symplectic manifold (see Definition \ref{dfGeometricallyFiniteType}) and that $D$ is a compact Liouville domain with smooth boundary, symplectically embedded in $M$. We will write $SC^*_M(D)$ for the underlying chain complex of the ambient invariant, and $SC^*_{\theta}(D)$ for that of the intrinsic invariant. We will omit $\theta$ from the notation for the intrinsic invariant when there is no cause for confusion. The complex $SC^*_{\theta}(D)$ is defined over the field of rational numbers. The complex $SC^*_M(D)$ is defined over the Novikov field $$\Lambda=\Lambda_{R}:=\left\{\sum_{i=1}^{\infty} a_iT^{\lambda_i}:a_i\in R,\lambda_i\to\infty\right\},$$ where $R$ is a ground ring. For a discussion of grading see Remark \ref{remGrading}. 

\medskip

\subsubsection{Theorems A}
\begin{thmA}\label{mainTmFiltration}
   Fix a locally defined Liouville primitive $\theta$ on $D$. There exists an $\bR$-filtration $$\val_{\theta}:SC^*_M(D)\to \bR,$$
   an $\hbar=\hbar(M,D,\theta)>0$, and a weak homotopy equivalence defined over $\bZ$
\begin{equation}\label{eqLocalityDef}
    \gr_{\hbar}(SC^*_M(D))\to SC^*(D)\otimes \Lambda_{[0,\hbar)}.
\end{equation}

Here $\gr_{\hbar}(SC^*_M(K))$ denotes the subquotient of elements with valuation $\geq 0$ modulo those with valuation $\geq\hbar$ \footnote{The notation $\Lambda_{[0,\hbar)}$ clashes with the notation in \cite{Groman2023SYZLocal}}. 
\end{thmA}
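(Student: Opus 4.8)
The plan is to construct the filtration $\val_\theta$ directly on the generators of $SC^*_M(D)$ in terms of symplectic action, and then identify the resulting graded object with the intrinsic complex by a careful comparison of Floer data. Recall that both $SC^*_M(D)$ and $SC^*(D)$ are built (via a homotopy colimit / telescope construction over acceleration data) from Floer complexes of Hamiltonians that are small in the interior of $D$ and grow near $\partial D$ (for the ambient theory, extended to all of $M$ with appropriate behavior outside $D$). A one-periodic orbit $\gamma$ of such a Hamiltonian that lies in a neighborhood of the skeleton of $(D,\theta)$ carries a well-defined topological action computed using the local primitive $\theta$; I would set $\val_\theta(\gamma)$ to be (a normalization of) the $T$-adic valuation of the Floer-theoretic action, i.e.\ the difference between the ambient action and the intrinsic $\theta$-action, reading off the exponent of $T$ that the SFT-type compactness / action estimates force on any Floer trajectory in $M$ connecting two such orbits. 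The first step is to check this is genuinely a filtration: the Floer differential, continuation maps, telescope maps, and BV operator are all non-increasing for $\val_\theta$, which follows from the standard energy–action identity together with the observation that every relevant cylinder either stays in a Weinstein neighborhood of the skeleton (contributing a $\theta$-exact, hence valuation-preserving, amount) or escapes and picks up a definite positive amount of $T$-energy from the symplectic area outside $D$.

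Next I would pin down the threshold $\hbar = \hbar(M,D,\theta)$. The point is that there is a smallest positive symplectic area of a "non-intrinsic" contribution: the minimal energy of a Floer cylinder in $M$ that leaves the chosen Weinstein neighborhood of $\skel(D,\theta)$, or equivalently the minimal area of a disk/sphere bubble or of a trajectory detecting the embedding. By Gromov compactness applied to the (finitely many, up to the acceleration data) moduli spaces in play, this infimum is positive; take $\hbar$ to be any positive number below it. Then for the subquotient $\gr_\hbar$ — valuation in $[0,\hbar)$ — every contribution to the differential, the product, the BV operator and all the telescope structure maps that changes $\val_\theta$ by a nonzero amount is killed, so what survives is exactly the part of the Floer data that stays in the Weinstein neighborhood. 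The third step is to match this surviving part with $SC^*(D)\otimes\Lambda_{[0,\hbar)}$: on a Weinstein neighborhood of the skeleton the ambient Hamiltonian can be arranged to agree with an intrinsic Hamiltonian defining $SC^*(D)$, the one-periodic orbits coincide, and the rigid Floer cylinders in $M$ contributing within the valuation window are precisely the intrinsic ones (no escaping trajectories, by the choice of $\hbar$), so the comparison is an isomorphism of complexes for a cofinal family of acceleration data, hence a weak homotopy equivalence after passing to the telescopes; the defining-over-$\bZ$ statement is automatic because the surviving cylinders are counted with their intrinsic (integral) signs, and the $\Lambda_{[0,\hbar)}$ coefficients just bookkeep the action window. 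Naturality in the support $K\subset D$ — so that this is a morphism of presheaves — and compatibility with the normed BV-algebra structures follow because all the structure maps (restriction, product, BV) are induced by geometric cobordism maps that respect $\val_\theta$, and one only has to check the induced maps on $\gr_\hbar$ agree with the intrinsic ones, which is the same local computation.

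The main obstacle I anticipate is the careful setup making $\val_\theta$ well-defined and monotone \emph{simultaneously} for the whole zoo of structure maps — differential, continuation, telescope/acceleration, BV operator, and the restriction maps of the presheaf — because the ambient Hamiltonians are genuinely defined on all of $M$ and one must control trajectories that wander outside $D$ without destroying the action bookkeeping; in particular one needs a version of the maximum principle or a no-escape lemma guaranteeing that only finitely many "escape energies" occur below any bound and that they are bounded below by a uniform $\hbar$. A secondary technical point is that $SC^*(D)$ is only \emph{weakly} homotopy equivalent (not isomorphic) to the graded object, because the acceleration data for the ambient and intrinsic telescopes need not coincide on the nose; here I would invoke the standard cofinality argument showing any two choices of acceleration data give weakly equivalent telescopes, and arrange a common cofinal subfamily adapted to the Weinstein neighborhood.
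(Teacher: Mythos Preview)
Your overall strategy---define $\val_\theta$ via the $\theta$-action of orbits, show Floer trajectories either preserve it or jump by at least $\hbar$, and identify the graded piece with the intrinsic complex---matches the paper's. But three concrete pieces are missing or misidentified.

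First, positivity of the relative energy $E_{M,\theta}(u)$ (equivalently, that $\val_\theta$ is a filtration) is not a consequence of the ``energy--action identity'' alone: it requires the \emph{integrated maximum principle}, applied with $J$ of contact type near $\partial D_\delta$ and $H$ radial (``S-shaped'') there. This is the content of Lemma~\ref{lmPositivity}. Without that convexity setup, a trajectory can cross $\partial D$ and pick up \emph{negative} $[\omega,\theta]$-area; the filtration would fail. You flag ``a version of the maximum principle'' as an obstacle, but it is the actual engine of the argument, not a technicality.

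Second, your dichotomy (``stays inside, or escapes and gains $T$-energy'') is a \emph{trichotomy} in the paper: there is a third case where $E_{M,\theta}(u)=0$ but the output is a critical point \emph{outside} $D$. The S-shaped Hamiltonians cannot be made literally constant on $M\setminus D$ (Remark~\ref{remMotivationSShaped}), so they have Morse critical points there, and trajectories flowing out to them have zero relative energy. These outside generators form a subcomplex of $\gr_\hbar$ which is acyclic (their norm goes to zero along the telescope), and the weak homotopy equivalence in \eqref{eqLocalityDef} is the quotient by this acyclic kernel. This---not mismatched acceleration data---is why the map is only a weak equivalence; the acceleration data on $D_\delta$ can in fact be made to agree on the nose.

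Third, your source for $\hbar$ is wrong. Gromov compactness does not give a \emph{uniform} gap across the infinitely many Hamiltonians in the acceleration datum. The paper's $\hbar$ comes from a monotonicity-type estimate (Proposition~\ref{PrpRelHbar}): for $J$ in a fixed $C^0$-ball and $H$ with small gradient outside $D$, any Floer solution with $E_{M,\theta}(u)\neq 0$ has $E_{geo}(u)>4\hbar$, with $\hbar$ depending only on the geometry of $J$. This is then fed back into the integrated-maximum-principle computation to conclude $E_{M,\theta}(u)>\hbar$.
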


We now discuss functoriality with respect to restriction maps. 
Our first functoriality statement concerns the case where 
$D_1\subset D_2\subset M$ is an exact inclusion of Liouville 
domains with respect to the local primitive $\theta$.
\begin{thmA}\label{mainTmNaturality}
   Suppose $D_1\subset D_2$ is an inclusion of Liouville domains, and $\theta$ is a local Liouville primitive for both $D_1$ and $D_2$. Then the restriction map $SC^*_{M}(D_2)\to SC^*_M(D_1)$ respects the filtration $\val_{\theta}$ of Theorem \ref{mainTmFiltration}. Moreover, it fits into a homotopy commutative diagram        
    $$
    \xymatrix{
    \gr_{\hbar}(SC^*_{M}(D_2;\Lambda))\ar[d]\ar[r]&SC^*(D_2)\otimes \Lambda_{[0,\hbar)}\ar[d]\\
    \gr_{\hbar}(SC^*_{M}(D_1;\Lambda))\ar[r]&SC^*(D_1)\otimes \Lambda_{[0,\hbar)}
    }.
    $$
\end{thmA}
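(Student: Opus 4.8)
The plan is to reduce Theorem~\ref{mainTmNaturality} to Theorem~\ref{mainTmFiltration} by checking that every piece of structure produced there is natural in the Liouville domain. First I would recall the construction of the restriction map $SC^*_M(D_2)\to SC^*_M(D_1)$: it is induced by a continuation-type map associated to monotone families of Hamiltonians or acceleration data adapted to the nested domains $D_1\subset D_2$. The first step is to show this continuation map decreases (or at least does not increase) the filtration $\val_\theta$. Since $\theta$ is a common local Liouville primitive, the action functional defining $\val_\theta$ on each complex is built from the same primitive, and the usual energy estimate for Floer continuation maps — the integrated $\|\partial_s u\|^2$ term — gives the monotonicity of $\val_\theta$ along the continuation. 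Here I would need to be careful that the ``$\val_\theta$'' filtration, which is the one whose associated graded is the intrinsic theory and is \emph{not} the naive topological energy filtration, still enjoys this monotonicity; this is where I expect to lean on the quantitative comparison between $\val_\theta$ and the geometric action that presumably underlies the proof of Theorem~\ref{mainTmFiltration}.

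The second step is to identify the associated graded of the restriction map. Taking $\gr_\hbar$ of a filtered chain map is functorial, so we automatically get an induced map $\gr_\hbar(SC^*_M(D_2))\to\gr_\hbar(SC^*_M(D_1))$. On the other hand, the intrinsic theory $SC^*(D)$ also carries its own restriction map $SC^*(D_2)\to SC^*(D_1)$, coming from the same kind of acceleration data but now for the intrinsic (Liouville) Floer theory; this is part of the presheaf structure mentioned in the introduction. The claim is that the square in the theorem commutes up to homotopy. The natural way to see this is to observe that both the vertical maps and the horizontal weak equivalences of Theorem~\ref{mainTmFiltration} come from counting the \emph{same} moduli spaces — in the window of actions $[0,\hbar)$, the relevant Floer trajectories for the ambient theory are exactly the trajectories contained in (a neighborhood of) the skeleton, which are precisely the ones computing the intrinsic theory. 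So one builds a single continuation/comparison map realizing all four arrows and invokes a standard ``composition of continuation maps is homotopic to the direct continuation map'' argument to produce the homotopy filling the square.

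Concretely, the key steps in order: (1) show $\val_\theta$ is respected by the restriction map via the energy estimate, using the shared primitive $\theta$; (2) pass to $\gr_\hbar$ and note this is functorial; (3) set up the chain-level comparison maps of Theorem~\ref{mainTmFiltration} for both $D_1$ and $D_2$ simultaneously, arranging the acceleration/continuation data so that they are compatible with the inclusion $D_1\subset D_2$; (4) produce the homotopy by a two-parameter moduli space (a homotopy of homotopies) interpolating between the two ways around the square, exactly as in the proof that Floer continuation maps are functorial; (5) check this two-parameter family also respects $\val_\theta$ in the window $[0,\hbar)$, so that it descends to $\gr_\hbar$.

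The main obstacle I anticipate is step~(1) together with the compatibility of choices in step~(3): the filtration $\val_\theta$ is not the obvious action filtration but a corrected one (this is the whole content of Theorem~\ref{mainTmFiltration}), so checking that the restriction map respects \emph{that} filtration requires understanding how the correction term behaves under the continuation map, and in particular that it is defined using $\theta$ in a way that is manifestly natural under exact inclusions. A secondary technical point is that the weak homotopy equivalences in Theorem~\ref{mainTmFiltration} are only defined after tensoring with $\Lambda_{[0,\hbar)}$ and $\hbar$ a priori depends on $(M,D,\theta)$; to make the square even make sense one must take a common $\hbar$ valid for both $D_1$ and $D_2$, which should be harmless since one can always shrink $\hbar$, but needs to be stated. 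Everything else is a routine adaptation of standard TQFT-type arguments in Floer theory.
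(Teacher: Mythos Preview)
Your outline is essentially correct and matches the paper's strategy, but there is one misconception worth correcting and one structural simplification you miss.

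\textbf{The misconception.} You repeatedly worry that $\val_\theta$ is some ``corrected'' filtration and that the ``correction term'' needs to be tracked through continuation maps. It is not. The filtration is literally $\val_{M,\theta}(T^\lambda a_\gamma)=\lambda+\cA_{H,\theta}(\gamma)$, i.e.\ the $T$-exponent plus the classical action computed with the local primitive $\theta$; this is well defined for S-shaped Hamiltonians because all periodic orbits are either in $V(D)$ (where $\theta$ lives) or constant. The nontrivial content of Theorem~\ref{mainTmFiltration} is not that this filtration exists but that the differential and continuation maps \emph{respect} it: this is the positivity $E_{M,\theta}(u)\ge 0$ from the integrated maximum principle (Lemmas~\ref{lmPositivity}, \ref{lmPositivity2}), and it immediately gives your step~(1) via Corollary~\ref{CYRelFilrPres} and Lemma~\ref{lmSameProfileFiltind}. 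So your anticipated ``main obstacle'' evaporates.

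\textbf{The factorization you miss.} Your step~(4) proposes a direct two-parameter homotopy argument. The paper instead factors the square through the intermediate object $SC^*_{\widehat{D}_2,\theta}(D_1)$: the upper rectangle (for $D_1\subset D_2\subset D_2$) is an instance of the naturality clause of Proposition~\ref{prpGrLocality}, and the passage $SC^*_{\widehat{D}_2,\theta}(D_1)\to SC^*_{\widehat{D}_1,\theta}(D_1)=SC^*(D_1)$ is another instance of the same proposition with ambient manifold $\widehat{D}_2$. This reduces the whole theorem to two applications of an already-proven result and avoids building any new two-parameter moduli spaces. Your approach would also work, but is more laborious.

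Your remark about needing a common $\hbar$ for $D_1$ and $D_2$ is correct and is handled in the paper by Lemma~\ref{lmSShapedFlexibility}, which produces S-shaped data simultaneously admissible for both domains.
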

\begin{rem}
    Chasing through the proof in Section \ref{SubSecRedSH} shows that the vertical map on the right is the Viterbo restriction map.
\end{rem}

Let us present a variant that applies to arbitrary compact subsets within a fixed Liouville domain. As discussed in Section \ref{SecSHReview}, for a Liouville domain $D$ embedded in $M$, we have two natural presheaves on the collection of compact subsets of $D$: the ambient presheaf $K\mapsto SC^*_M(K;\Lambda)$ and the intrinsic presheaf $K\mapsto SC^*_{\widehat{D},\theta}(K;\bQ)$. For the Liouville domain $D$ we have $SC^*_{\widehat{D},\theta}(D;\bQ)=SC^*_\theta(D)$. To unclutter notation, when $D$ is clear from the context we will omit $\widehat{D}$ from the notation and write $SC^*_{\theta}(K)$ for the intrinsic presheaf. This is an abuse of notation since the invariant is \emph{not} intrinsic to $K$.

\begin{thmA}\label{mainTmA1}
    Fix a Liouville domain $D\subset M$ with local Liouville primitive $\theta$. For any compact $K\subset D$, there exists a filtration $\val_{\theta}$ on $SC^*_M(K)$ and a weak homotopy equivalence
    \begin{equation}\label{eqLocalityDefCompact}
    \gr_{\hbar}(SC^*_M(K))\to SC^*_{\theta}(K)\otimes \Lambda_{[0,\hbar)}.
    \end{equation}
    Moreover, for any inclusion $K_1\subset K_2\subset D$, the restriction maps fit into a homotopy commutative diagram
    $$
    \xymatrix{
    \gr_{\hbar}(SC^*_{M}(K_2;\Lambda))\ar[d]\ar[r]&SC^*_{\theta}(K_2)\otimes \Lambda_{[0,\hbar)}\ar[d]\\
    \gr_{\hbar}(SC^*_{M}(K_1;\Lambda))\ar[r]&SC^*_{\theta}(K_1)\otimes \Lambda_{[0,\hbar)}
    }
    $$
\end{thmA}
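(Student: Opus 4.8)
The plan is to deduce Theorem~\ref{mainTmA1} from Theorems~\ref{mainTmFiltration} and~\ref{mainTmNaturality} by a colimit argument over a cofinal system of Liouville subdomains. Recall from \S\ref{SecSHReview} that for a compact $K\subset D$ the value of either presheaf on $K$ is computed as the completed mapping telescope of a cofinal system of compact neighbourhoods of $K$ in $D$, the structure maps being the restriction maps. First I would fix such a system consisting of Liouville subdomains $D=D_0\supseteq D_1\supseteq D_2\supseteq\cdots$ with $\bigcap_i D_i=K$, each exact with respect to $\theta$, so that $SC^*_M(K)$ and $SC^*_\theta(K)=SC^*_{\widehat{D},\theta}(K)$ are the completed telescopes of the sequential diagrams $SC^*_M(D_0)\to SC^*_M(D_1)\to\cdots$ and $SC^*_\theta(D_0)\to SC^*_\theta(D_1)\to\cdots$ respectively. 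Since $SC^*_{\widehat{D},\theta}(D_i)$ is the intrinsic symplectic cohomology $SC^*_\theta(D_i)$ of the Liouville domain $D_i$ (\S\ref{SecSHReview}), Theorem~\ref{mainTmFiltration} applies to each $D_i$ and Theorem~\ref{mainTmNaturality} to each inclusion $D_{i+1}\subset D_i$.

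Next I would define the filtration on $SC^*_M(K)$. Because the restriction maps $SC^*_M(D_i)\to SC^*_M(D_{i+1})$ respect $\val_\theta$ (Theorem~\ref{mainTmNaturality}), they induce a filtration $\val_\theta$ on the telescope and hence on $SC^*_M(K)$; $T$-adic completion only adjoins elements of valuation tending to $+\infty$ and so changes nothing relevant. The crucial point is the choice of a single threshold $\hbar$ valid along the whole system: a priori the constant $\hbar(M,D_i,\theta)$ furnished by Theorem~\ref{mainTmFiltration} depends on $D_i$ and could degenerate as $D_i\searrow K$. I would rule this out by setting $\hbar:=\hbar(M,D,\theta)$ and invoking Theorem~\ref{mainTmNaturality} for the inclusions $D_i\subset D$: the compatibility square there is formulated with one threshold serving both domains, so $\hbar$ is admissible for every $D_i\subseteq D$. (Morally, cf.\ Theorems~B, the deformation only becomes more tightly controlled as the domain shrinks toward its skeleton.) This step — obtaining a uniform $\hbar$ — is the one I expect to be the main obstacle; everything after it is essentially formal.

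With a uniform $\hbar$ in hand, I would pass to the colimit. The functor $\gr_\hbar=F^{\ge 0}/F^{\ge\hbar}$ is exact and records only valuations in the bounded window $[0,\hbar)$, so it commutes with sequential mapping telescopes and is insensitive to $T$-adic completion; likewise $-\otimes\Lambda_{[0,\hbar)}$ commutes with colimits. Theorem~\ref{mainTmFiltration} gives weak equivalences $\gr_\hbar(SC^*_M(D_i))\to SC^*_\theta(D_i)\otimes\Lambda_{[0,\hbar)}$, and Theorem~\ref{mainTmNaturality} says these fit into homotopy-commutative squares with the restriction maps; choosing the homotopies, one gets a weak equivalence of the corresponding mapping telescopes, whence
\begin{align*}
\gr_\hbar(SC^*_M(K))&\;\simeq\;\operatorname{tel}_i\gr_\hbar(SC^*_M(D_i))\;\simeq\;\operatorname{tel}_i\bigl(SC^*_\theta(D_i)\otimes\Lambda_{[0,\hbar)}\bigr)\\
&\;\simeq\;SC^*_\theta(K)\otimes\Lambda_{[0,\hbar)},
\end{align*}
which is~\eqref{eqLocalityDefCompact}. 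Tracking the normed BV-algebra structure along the telescope shows the equivalence carries the same refinement as in Theorem~\ref{mainTmFiltration}.

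Finally, for naturality in $K$, given $K_1\subset K_2\subset D$ I would choose the two cofinal systems compatibly: start from a system $D_i^{(2)}\searrow K_2$ and, after passing to a subsequence, pick Liouville subdomains $D_i^{(1)}\searrow K_1$ with $K_1\subset D_i^{(1)}\subset D_i^{(2)}$, each pair exact with respect to $\theta$ — possible since $K_1\subset K_2\subset D_i^{(2)}$ lies in the interior of $D_i^{(2)}$. Then the restriction $SC^*_M(K_2)\to SC^*_M(K_1)$ is the telescope of the restrictions $SC^*_M(D_i^{(2)})\to SC^*_M(D_i^{(1)})$, and applying the homotopy-commutative square of Theorem~\ref{mainTmNaturality} at each level and taking telescopes produces the asserted homotopy-commutative square for the pair $K_1\subset K_2$. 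The residual subtleties — rigidifying the homotopy-commutative ladders of Theorem~\ref{mainTmNaturality} enough to induce maps of telescopes, and the cofinality of Liouville subdomains among neighbourhoods of $K$ (established in \S\ref{SecSHReview}) — are routine once the uniform-$\hbar$ input is available.
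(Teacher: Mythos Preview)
Your approach differs substantially from the paper's, and the central premise contains a gap. You claim that \S\ref{SecSHReview} presents $SC^*_M(K)$ as a completed mapping telescope over a cofinal system of compact neighbourhoods of $K$; it does not. The paper defines $SC^*_M(K,A)$ as the completed telescope of a single acceleration datum $A$ (Definition~\ref{dfAccDatKsubM})---a monotone family of Hamiltonians converging to $0$ on $K$ and to $\infty$ off $K$---with the complex at each level being $CF^*(H_i,J_i)$, not $SC^*_M(D_i)$ for shrinking Liouville subdomains $D_i$. Writing $SC^*_M(K)$ as a telescope over the $SC^*_M(D_i)$ would require a separate chain-level continuity statement, which is not in the paper and is nontrivial (continuity of relative symplectic cohomology under decreasing intersections is typically formulated on cohomology, not on cochains). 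Your subsequent steps---commuting $\gr_\hbar$ with this putative telescope, and rigidifying homotopy-commutative ladders to obtain maps of telescopes---all rest on this unestablished premise. A secondary issue: the existence of a cofinal system of \emph{Liouville} subdomains $D_i\searrow K$ for an arbitrary compact $K$ is not addressed in the paper and is not obvious.

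The paper's proof is much more direct and avoids your reduction entirely. Theorem~\ref{mainTmA1} is deduced in one line from Proposition~\ref{prpGrLocality}, which is stated and proved from the outset for arbitrary compact $K\subset D$, not only for Liouville subdomains. The S-shaped data of Definition~\ref{dfSShaped} already live in $\cH_K(D,\theta,\delta,\Delta)$ for general $K$; the filtration $\val_{M,\theta}$ is defined on the resulting Floer complexes via \eqref{eqRelFiltration}; and the trichotomy of Corollary~\ref{lmTrichotomy} analyses $\gr_\hbar$ in one stroke---the outside critical points form an acyclic subcomplex, and the quotient is identified with the local complex on $\widehat{D}$. Naturality for $K_1\subset K_2$ is part of the statement of Proposition~\ref{prpGrLocality} itself. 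In short, Theorems~\ref{mainTmFiltration}, \ref{mainTmNaturality}, and \ref{mainTmA1} are all instances of one proposition; there is no need to deduce the third from the first two by a colimit argument.
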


\begin{rem}
    When $K\subset D$ is itself a Liouville subdomain, Viterbo functoriality implies $SC^*_{\widehat{D},\theta}(K;\bQ)\simeq SC^*_{\widehat{K},\theta}(K;\bQ)$, recovering Theorem \ref{mainTmNaturality}.
\end{rem}


Both intrinsic and ambient symplectic cochains carry a framed $E_2$ structure \cite{AbouzaidGromanVarolgunes24}. Ideally we would have an induced framed $E_2$ structure on the left hand side of \eqref{eqLocalityDef} and the map defined in \eqref{eqLocalityDef} would lift to a framed $E_2$ algebra map. However the correct statement is more involved and would involve further energy truncations. See Remark \ref{remNinputComplication} for a discussion of the complications. In this work we only establish a partial result in this direction. 
\begin{thmA}\label{mainTmBV}
    The pair of pants product and BV operator in $SC^*_M(D)$ respect the filtration $\val_{\theta}$ and the homotopy equivalence \eqref{eqLocalityDef} induces an isomorphism of BV algebras on homology with respect to the induced product structure on the left. The same holds for any compact $K\subset D$.
\end{thmA}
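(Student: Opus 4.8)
The plan is to leverage the fact, proved in establishing Theorems A1–A3, that the filtration $\val_\theta$ arises from an action filtration on the Floer chain complex in which the differential is a sum of a "topological" part (preserving action) and a "quantum correction" that strictly increases action by amounts in the spectrum of the ambient manifold. The product and BV operations admit similar decompositions. So the first step is to record, for each of the two operations, that the relevant Floer-theoretic operation chain maps strictly respect $\val_\theta$ in the appropriate sense: the pair-of-pants product $SC^*_M(D)\otimes SC^*_M(D)\to SC^*_M(D)$ satisfies $\val_\theta(x\cdot y)\geq \val_\theta(x)+\val_\theta(y)$ (or the appropriate normalization forced by the grading conventions of Remark~\ref{remGrading}), and the BV operator satisfies $\val_\theta(\Delta x)\geq \val_\theta(x)$. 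This is a direct consequence of the energy–action identity for the moduli spaces defining these operations together with the positivity of the symplectic area of the quantum-correcting discs; the key point, already used in Theorem~\ref{mainTmFiltration}, is that sphere bubbles and disc contributions have nonnegative area measured against $\theta$ on $D$.

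The second step is to identify what these operations induce on $\gr_\hbar$. Because the operations respect the filtration, they descend to the associated graded, and by Theorem~\ref{mainTmFiltration} (and Theorem~\ref{mainTmA1} for the compact case) we have the weak homotopy equivalence $\gr_\hbar(SC^*_M(D))\to SC^*(D)\otimes\Lambda_{[0,\hbar)}$. I would then argue that on the associated graded, the action-strict-increasing terms in the product and BV operator are killed — they land in the $\val_\theta\geq\hbar$ part once one passes far enough, but more to the point the \emph{leading order} (action-preserving) term of the pair-of-pants product is precisely the operation counting the "Morse–Bott/split" configurations, which is exactly the intrinsic pair-of-pants product on $SC^*(D)$ tensored with the Novikov variable. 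The same applies to $\Delta$. Thus the leading-order term of each ambient operation, after passing to $\gr_\hbar$, matches the intrinsic operation under the equivalence \eqref{eqLocalityDef}. On homology, the induced product is the associated-graded product, and the statement that \eqref{eqLocalityDef} is an isomorphism of BV algebras on homology then follows from naturality of the equivalence with respect to these operations, which one checks by the usual parametrized moduli space / continuation argument comparing the two sides.

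The third step handles the compact case $K\subset D$ by the same mechanism, using Theorem~\ref{mainTmA1} in place of Theorem~\ref{mainTmFiltration}; no new ideas are needed beyond replacing $D$ by $K$ throughout and noting that the intrinsic presheaf structure is compatible with the operations, which is part of the framed $E_2$ structure of \cite{AbouzaidGromanVarolgunes24}.

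The main obstacle I anticipate is the homotopy-coherence of the comparison: it is easy to see the operations respect the filtration and match at the level of homology \emph{operations}, but promoting this to a statement at the chain level — even just the assertion that \eqref{eqLocalityDef} can be chosen to intertwine the products up to coherent homotopy — runs into exactly the "$n$-input complication" flagged in Remark~\ref{remNinputComplication}: the higher operations of the framed $E_2$ structure involve moduli spaces with several inputs, and the action estimate degrades as the number of inputs grows, so one cannot truncate uniformly at a single $\hbar$. This is why the theorem is stated only on homology and only for the product and BV operator; the proof should therefore be careful to use only the two-pointed and one-pointed moduli spaces (plus the three-pointed ones witnessing associativity and the BV relations on homology), where a single choice of $\hbar$ — possibly after shrinking the $\hbar$ from Theorem~\ref{mainTmFiltration} — suffices, and to avoid any claim about the full $E_2$ structure.
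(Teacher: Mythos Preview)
Your overall strategy is right and matches the paper's: show the operations respect $\val_\theta$ (the paper does this via Corollary~\ref{CYRelFilrPres1}, which rests on the positivity Lemma~\ref{lmAlgebraFiltered}), then identify the induced operations on $\gr_\hbar$ with the intrinsic ones. But there is a genuine gap in your second step, and your first step mischaracterizes the mechanism.

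On the mechanism: the positivity $\val_\theta(x\cdot y)\geq \val_\theta(x)+\val_\theta(y)$ does \emph{not} come from ``sphere bubbles and disc contributions having nonnegative area measured against $\theta$.'' This is a closed-string setup; the relevant curves are Floer solutions on punctured spheres with Hamiltonian perturbation, not discs. The positivity of $E_{M,\theta}(u)$ is the content of Lemma~\ref{lmAlgebraFiltered}, proved via the integrated maximum principle for S-shaped Hamiltonians near $\partial D$ (the argument of Lemmas~\ref{lmPositivity}--\ref{lmPositivity2} adapted to $n$ inputs). Your description sounds like the Fukaya-category picture and does not match what is actually happening.

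The gap: you assert that on $\gr_\hbar$ ``the action-strict-increasing terms are killed'' and the remaining leading-order term ``is precisely \ldots\ the intrinsic pair-of-pants product.'' This is false as stated. The trichotomy (Lemma~\ref{lmAlgebraFiltered}, cf.\ Lemma~\ref{lmTrichotomy2}) says that among the $E_{M,\theta}(u)=0$ contributions there are \emph{two} kinds: curves contained in $V(D)$, and curves whose output lies on an outside critical point. Only the first kind gives the intrinsic product; the second kind survives to $\gr_\hbar$ and must be dealt with. The paper's argument (last part of Proposition~\ref{prpGrLocality}) is that the outside critical points form an acyclic subcomplex which is moreover an \emph{ideal} for the product and BV operator (Remark~\ref{remAlgebraFiltered}); hence the surjection to the quotient intertwines the BV structures, and that quotient is identified with the intrinsic complex. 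You never mention this ideal structure, and without it you cannot pass from ``operations descend to $\gr_\hbar$'' to ``the map \eqref{eqLocalityDef} is a BV-algebra map on homology.''

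Your discussion of the $n$-input obstruction is apt, though the precise reason (Remark~\ref{remNinputComplication}) is that $(\delta,\Delta)$-admissibility forces the input Hamiltonians to be $(\delta,\Delta/n)$-admissible, so one cannot fix $\Delta$ and let $n$ grow.
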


All the theorems in this subsection are proved in Section \ref{SubSecRedSH}.
\subsubsection{Proof strategy of Theorems A}\label{remProofStrategy}
    
    The proofs rely on \emph{S-shaped Hamiltonians} (Definition \ref{dfSShaped}) that are close to $0$ on the compact set $K\subset D$ with very small derivatives outside a small neighborhood of $D$. For such Hamiltonians, the $1$-periodic orbits $\gamma$ carry a well-defined action $\cA_{H,\theta}(\gamma)$ in terms of the primitive $\theta$, defining the valuation 
    \begin{equation}\label{eqValuation}
    \val_{\theta}(\gamma):=-\cA_{H,\theta}(\gamma).
    \end{equation} Floer trajectories $u$ carry a relative class in $H_2(M,D;\bZ)$ on which we evaluate the relative cohomology class $[\omega,\theta]$, denoted $E_{M,\theta}(u)$. 
    
    The integrated maximum principle implies $E_{M,\theta}(u)\geq0$, ensuring the Floer complex is filtered. Moreover, we establish a trichotomy: either $E_{M,\theta}(u)>\hbar$, or $u$ lies entirely in a neighborhood of $D$, or $E_{M,\theta}(u)=0$ with output a critical point outside this neighborhood. See Figure \ref{fig:trichotomy}. In the associated graded $\gr_{\hbar}(SC^*_M(D))$, the outside critical points form an acyclic complex, yielding the intrinsic Floer complex after quotienting.
\subsubsection{The locality spectral sequence}

Theorem \ref{mainTmFiltration} gives rise to a spectral sequence 
\begin{equation}\label{eqSpectralSequence}
    E_1^{p,q}=SH^{p+q}(D)\otimes \Lambda_{[p\hbar,(p+1)\hbar)}\Rightarrow SH^*_{M}(D)
\end{equation}
from intrinsic to ambient symplectic cohomology. Such a spectral sequence has been considered for divisor complements by \cite{borman} and for domains in rational Calabi-Yau manifolds by \cite{Sun2024}. This spectral sequence does not generally converge 
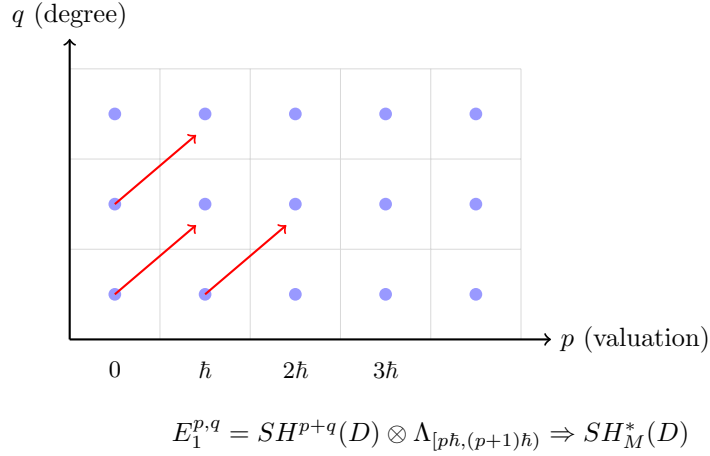
\begin{figure}[h]
\centering
\begin{tikzpicture}[scale=0.8]
    \draw[step=1.5, gray!30, very thin] (0,0) grid (7.5,4.5);
    
    \draw[->, thick] (0,0) -- (8,0) node[right] {$p$ (valuation)};
    \draw[->, thick] (0,0) -- (0,5) node[above] {$q$ (degree)};
    
    \foreach \p in {0,1,2,3,4} {
        \foreach \q in {0,1,2} {
            \fill[blue!40] (\p*1.5+0.75, \q*1.5+0.75) circle (3pt);
        }
    }
    
    \draw[->, thick, red] (0.75,0.75) -- (2.1,1.9);
    \draw[->, thick, red] (2.25,0.75) -- (3.6,1.9);
    \draw[->, thick, red] (0.75,2.25) -- (2.1,3.4);
    
    \node at (0.75, -0.5) {\small $0$};
    \node at (2.25, -0.5) {\small $\hbar$};
    \node at (3.75, -0.5) {\small $2\hbar$};
    \node at (5.25, -0.5) {\small $3\hbar$};
    
    
    
    \node[above] at (6,-2) {$E_1^{p,q} = SH^{p+q}(D) \otimes \Lambda_{[p\hbar,(p+1)\hbar)} \Rightarrow SH^*_M(D)$};
    
\end{tikzpicture}
\caption{The spectral sequence from intrinsic to ambient symplectic cohomology. 
}
\label{fig:spectral_sequence}
\end{figure}

\begin{ex}\label{exCounterExample}
    Consider the unit sphere $M=CP^1$ and let $D\subset CP^1$ be a disk 
containing a hemisphere. In this case $\widehat{D}=\bC$ and the right hand side 
of \eqref{eqLocalityDef} is acyclic. On the other hand, using the fact that the 
complement of $D$ is displaceable, it is easy to show $SH^*_{CP^1}(D)=H^*
(CP^1)\neq 0$.  
\end{ex}
Some convergence criteria similar to those considered by \cite{borman} or \cite{Sun2024} are discussed in \S\ref{SubSecConvergence}. Our focus in the present paper is on examples arising in SYZ mirror symmetry. These are not covered by any of the above and require a quantitative approach as developed in Theorems B (B1--B3). 
\subsection{A first application}\label{subsec:application-syz}

Before proceeding to the quantitative discussion we present a sample SYZ application. Let $M$ satisfy $c_1(M)=0$ and $L\subset M$ be a Maslov $0$ Lagrangian torus with Weinstein neighborhood $D\cong P\times L$ (Lagrangian product, $P\subset H^1(L;\bR)$ a convex polytope). For convex polytopes $Q\subset P$, define presheaves $\cF^*(Q):=SC^*_M(D_Q)$ (ambient) and $\cF^*_{loc}(Q):=SC^*_{\widehat{D}}(D_Q;\Lambda)$ (intrinsic), both of Banach BV algebras. The intrinsic presheaf is well understood: for the algebraic torus $T=H^1(L;\bZ)\otimes_{\bZ}\Lambda$ with log map $\log:T\to H^1(L;\bR)$, we have $\cF^*_{loc}\cong$ pushforward of analytic polyvector fields on $\log^{-1}(P)\subset T$ \cite[\S5]{GromanVarolgunes2022}.

\begin{tm}[Reconstruction for regular fibers]\label{tmRegFibReconstruction}
There is a constant $t_0>0$ which is independent of $M$ or $L$ such that the following holds. 
Suppose $P\subset H^1(L;\bR)$ is a Delzant parallelpiped centered at the origin and all of whose sidelengths equal some constant $r$. Suppose the  spectral sequence of Theorem \ref{mainTmFiltration} collapses on the first page for the Liouville domain $D=L\times t_0 P$ \footnote{The diligent reader would note that his domain does not have smooth boundary. However, as we establish in Theorem C, the collapse of the spectral sequence is not sensitive to the shape of the domain.}. Then the ambient presheaf $\cF^*|_{t_0 P}$ is isomorphic as a presheaf of BV algebras to the intrinsic presheaf $\cF^*_{loc}|_{t_0 P}$.
\end{tm}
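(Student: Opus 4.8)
\emph{Proof strategy.} The plan is to promote the hypothesis ``the locality spectral sequence \eqref{eqSpectralSequence} collapses at $E_1$ for $D = L\times t_0P$'' to a compatible family of BV isomorphisms $SH^*_M(D_Q)\cong SH^*_{\theta}(D_Q)\otimes\Lambda$ over all convex polytopes $Q\subset t_0P$, and to read this off as the desired presheaf isomorphism. First I would fix the tautological primitive $\theta$ on $P\times L$, whose Liouville skeleton is $\{0\}\times L$, and observe that each $D_Q = Q\times L$ is a compact subset of $D=L\times t_0P$; then Theorems \ref{mainTmA1} and \ref{mainTmBV} supply the filtration $\val_{\theta}$ on $SC^*_M(D_Q)$, the weak equivalence $\gr_{\hbar}SC^*_M(D_Q)\to SC^*_{\theta}(D_Q)\otimes\Lambda_{[0,\hbar)}$ respecting the pair-of-pants product and the BV operator, and, for $Q_1\subset Q_2$, a homotopy commutative square intertwining the $\val_{\theta}$-filtered restriction with the intrinsic Viterbo restriction. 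Assembling the windows $[p\hbar,(p+1)\hbar)$, $p\in\bZ$, identifies $H^*(\gr SC^*_M(D_Q))$ with the first page of \eqref{eqSpectralSequence}, that is with $SH^*_{\theta}(D_Q)\otimes\Lambda$, as a BV algebra.

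Next I would bring in the quantitative package. I would take $t_0$ small enough that the boundary depth $\beta(L\times t_0P)$ falls below the threshold required by Theorems B; monotonicity of $\beta$, equivalently of the $\tau$-invariant, under passing to subdomains (Theorem C) then yields the same bound for every $D_Q$ with $Q\subset t_0P$. That $t_0$ may be chosen independently of $M$ and $L$ is forced by the scaling behaviour of action, of $\hbar$, and of $\beta$ under rescaling the parallelepiped, which is where the Delzant, centred-at-the-origin and equal-side-length normalizations enter. Because $L\times t_0P$ has corners I would first smooth them and appeal to the shape-insensitivity of the collapse statement recorded in Theorem C (the footnote to the statement) to see that this affects neither hypothesis nor conclusion. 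With the bound in hand, Theorems B present $SC^*_M(D_Q)$ as a filtered $\Lambda$-complex which is a deformation of $SC^*_{\theta}(D_Q)\otimes\Lambda$ with all corrections of strictly positive valuation, amenable to homological perturbation theory.

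The crux is to deduce that this deformation is trivial. Since it is controlled, the $\val_{\theta}$-filtration on $SC^*_M(D_Q)$ is complete and Hausdorff, with the quantitative estimates of Theorems B guaranteeing convergence of \eqref{eqSpectralSequence}; collapse at $E_1$ then gives $SH^*_M(D_Q)\cong H^*(\gr SC^*_M(D_Q))\cong SH^*_{\theta}(D_Q)\otimes\Lambda$. By Theorem \ref{mainTmBV} the product and BV operator on $SH^*_M(D_Q)$ are filtered and induce the intrinsic ones on the associated graded, and choosing a compatible splitting over the Novikov field upgrades the previous isomorphism to one of BV algebras. I expect this step to be the main obstacle: it is exactly here that the boundary-depth bound is indispensable, since Example \ref{exCounterExample} shows that bare collapse at $E_1$ need not detect $SH^*_M$ at all, so one must verify both that the bound forces convergence and that no hidden higher extension survives in the multiplicative (BV) structure.

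Finally, passing the homotopy commutative squares of Theorem \ref{mainTmA1} to cohomology and using collapse shows that the isomorphisms just constructed intertwine the ambient restriction $SH^*_M(D_{Q_2})\to SH^*_M(D_{Q_1})$ with the intrinsic restriction, so they assemble into an isomorphism of presheaves of BV algebras $\cF^*|_{t_0P}\isomto\cF^*_{loc}|_{t_0P}$. The description of $\cF^*_{loc}$ as analytic polyvector fields on $\log^{-1}(t_0P)\subset T$ via \cite[\S5]{GromanVarolgunes2022} then merely names the target.
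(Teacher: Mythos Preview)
Your proposal has a genuine gap at the step where you upgrade the $\Lambda$-module isomorphism $SH^*_M(D_Q)\cong SH^*_{\theta}(D_Q)\hat\otimes\Lambda$ to an isomorphism of BV algebras. Knowing that the product and BV operator are filtered and reduce to the intrinsic ones on the associated graded is \emph{not} enough: a filtered algebra whose associated graded is $A$ need not be isomorphic to $A$ as an algebra, and ``choosing a compatible splitting'' does not produce a ring map. The paper's argument is specific to the target: it proves a \emph{rigidity} lemma for the polyannulus (Lemma~\ref{corProductRigid}) showing that any commutative product on $\cA^0(Q)$ which is $e^{-r}$-close to the standard one is isomorphic to it, where $r$ is the sidelength. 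The constant $t_0$ is defined precisely so that the $\hbar$ of Lemma~\ref{lmOutsideContributions} exceeds this rigidity threshold; your account of $t_0$ as controlling boundary depth is off, since $\beta(L\times Q)=0$ for convex $Q$ already. For the higher degrees the paper does not split the filtration but instead builds an explicit Gerstenhaber map $\Upsilon$ to polyderivations via iterated brackets, shows it is a small perturbation of the intrinsic one and hence an isomorphism, and then finds a volume form realising the BV operator as a divergence (Lemma~\ref{lmVolumeFormBV}).

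There is a second gap in the presheaf step. Passing the homotopy-commutative squares of Theorem~\ref{mainTmA1} to cohomology yields diagrams that commute only up to an error of size $<e^{-\hbar}$ (Theorem~\ref{mainTmD}), not on the nose, so they do not directly assemble into a natural transformation. The paper repairs this with Proposition~\ref{prpAlmostNaturalTransformation}: because the intrinsic restriction maps are injective with dense image, an almost natural transformation whose component at the initial object $t_0P$ is an honest BV isomorphism can be uniquely corrected to a strict natural transformation. (Incidentally, you conflate $\beta$ with $\tau$ when invoking Theorem~C; the monotonicity there is for the embedding torsion $\tau$, not the boundary depth.)
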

More concretely, identify $P$ with the $n$-cube $[-a,a]^n$ and considering the log map $(\Lambda^*)^n\to \bR^n$ defined by $(z_1,\dots,z_n)\mapsto (-\log|z_1|,\dots,-\log|z_n|)$. The theorem then states that:
    \begin{itemize}
    \item $\cF^0$ is isomorphic to the pushforward of the structure sheaf of $\log^{-1}([-a,a]^n)$ under the log map.
    \item $\cF^*$ is isomorphic to the pushforward of the sheaf of analytic polyvector fields under the log map.
    \item The BV operator on $\cF^*$ is intertwined with the divergence operator corresponding to the volume form $\Omega_0:=\frac{dz_1\wedge\dots\wedge dz_n}{z_1\dots z_n}$.
    \end{itemize}

\begin{rem}
    Using some general nonsense about gluing of affinoid annuli, the claim can be strengthened to allow $t_0=1$. That is, there is no need to shrink the domain in Theorem \ref{tmRegFibReconstruction}. Elaborating is outside the scope of the present work.
\end{rem}

The proof of Theorem \ref{tmRegFibReconstruction} is given in Section \ref{SecRegFibReconstruction}. When the hypothesis of the theorem is satisfied, i.e., the spectral sequence of \eqref{eqSpectralSequence} collapses on the first page, we say that $D_P$ is \emph{undeformed}.

Theorem \ref{tmRegFibReconstruction} is relevant is when $M$ is a closed symplectic manifold with a Maslov $0$ Lagrangian torus fibration $\pi: M\to B$ over a base $B$. In this case, the regular locus $B_{reg}$ carries an integral affine structure by the Arnold-Liouville theorem, and polytopes $P\subset B_{reg}$ give rise to Weinstein neighborhoods as above. We then have the following two Propositions whose proofs will be given later in the introduction.

\begin{prp}\label{prpRegFibReconstruction}
    Suppose $B_{reg}$ is connected then a polytope $P\subset B_{reg}$ is undeformed if and only if all polytopes $Q\subset B_{reg}$ are undeformed.
\end{prp}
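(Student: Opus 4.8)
The plan is to prove Proposition \ref{prpRegFibReconstruction} by a connectedness/open-closed argument on $B_{reg}$, reducing the claim ``$P$ undeformed'' to a statement that propagates along paths in the integral affine base. The key point is that ``undeformed'' is a statement about the collapse of the locality spectral sequence \eqref{eqSpectralSequence} on the first page, equivalently (by Theorem \ref{mainTmFiltration} and the quantitative results of Theorems B) that the deformation is trivial, i.e. that the $\tau$-invariant of the embedding $D_P\subset M$ vanishes. So the first step is to restate the conclusion in those terms: $P$ undeformed $\iff$ $\tau(D_P\subset M)=0$.

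\smallskip

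First I would handle \emph{monotonicity}: by Theorem C, $\tau$ is monotone under passing to subdomains, so if $Q\subset P$ and $P$ is undeformed then $\tau(D_Q)\le \tau(D_P)=0$, hence $Q$ is undeformed. This gives ``$P$ undeformed $\Rightarrow$ every $Q\subset P$ undeformed'' for free. The substance is the converse-style propagation: I want to show that if \emph{some} polytope $P_0\subset B_{reg}$ is undeformed, then \emph{every} polytope is. For this I would use the isotopy invariance in Theorem C together with the integral affine structure on $B_{reg}$: an integral-affine translation of a polytope $P$ inside $B_{reg}$ is realized by a Lagrangian isotopy of the corresponding Weinstein neighborhoods $D_P$ (the fibration structure makes translations in the base correspond to flux-changing isotopies), and by Theorem C the $\tau$-invariant is controlled (concave, hence in particular vanishing is preserved along the zero-flux locus, or more precisely: $\tau=0$ is an open and closed condition as one moves the polytope). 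Then: the set of points $b\in B_{reg}$ such that some (equivalently every, by translating and shrinking) small polytope centered at $b$ is undeformed is open (one can always shrink, using monotonicity, and small translations stay undeformed by the isotopy/concavity estimate) and closed (if $b_n\to b$ with small polytopes at $b_n$ undeformed, then a slightly larger polytope at $b$ contains a translate of one of them, and concavity/monotonicity of $\tau$ forces $\tau=0$ at $b$ too). Since $B_{reg}$ is connected, this set is all of $B_{reg}$ or empty; given $P$ is undeformed it is nonempty, so every small polytope is undeformed, and then an arbitrary $Q\subset B_{reg}$ is undeformed by covering $Q$ by small undeformed polytopes and using a Mayer--Vietoris / locality argument (the presheaf property of $SC^*_M$ together with the fact that the filtration $\val_\theta$ of Theorem \ref{mainTmA1} is compatible with restriction) to conclude the spectral sequence for $D_Q$ collapses.

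\smallskip

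The main obstacle, I expect, is the last step: passing from ``all \emph{small} polytopes undeformed'' to ``all polytopes undeformed.'' Collapse of the spectral sequence is not obviously local in $B$ — a priori, assembling local undeformedness into global undeformedness requires knowing that the locality filtration glues well under the Mayer--Vietoris cover and that no new deformation classes are created at the level of $Q$ that were invisible on the pieces. I would try to control this either (a) directly via Theorem C's monotonicity: write $Q$ as an increasing union / chain of translates and dilates of undeformed polytopes and use that $\tau$ is monotone and the sup over an exhaustion is controlled; or (b) via the homological perturbation theory of Theorems B, showing the deformation datum for $D_Q$ is assembled from those of a cover and hence vanishes if each does. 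Option (a) is cleaner if Theorem C indeed gives $\tau(D_Q)\le \sup_i \tau(D_{Q_i})$ for a suitable cover, which I believe is the intended reading of the ``monotonicity'' clause. A secondary subtlety is making precise that integral-affine translations of polytopes in $B_{reg}$ correspond to Hamiltonian/Lagrangian isotopies of Weinstein neighborhoods compatible with the ambient embedding $D\subset M$ — this is where the hypothesis $c_1(M)=0$ (Maslov $0$) and the Arnold--Liouville integral affine structure enter, and it should follow from the standard action-angle description of the fibration near $B_{reg}$.
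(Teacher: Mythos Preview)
Your proposal has a genuine error at the very first step: you write ``undeformed $\iff \tau(D_P)=0$,'' but by definition $\tau(D,\theta)=\val_\theta(d_{def})$, so collapse of the spectral sequence on the first page means $d_{def}=0$, i.e.\ $\tau=\infty$, not $\tau=0$. With the correct convention, your monotonicity argument breaks: Theorem~C(4) gives $\tau(D_Q)\le\tau(D_P)$ for $Q\subset P$, so knowing $\tau(D_P)=\infty$ tells you nothing about $\tau(D_Q)$. Likewise, your proposed open-closed argument and the Mayer--Vietoris assembly for large polytopes are both trying to work around an obstacle that disappears once you use the right tool.

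The paper's argument is much shorter and rests on two points you missed. First, by Theorem~C parts (4) and (5), $\tau(D_P,\theta_{p_0})$ depends only on the skeleton of $(D_P,\theta_{p_0})$, which is the single fiber $L_{p_0}$; hence $\tau$ is a well-defined function $\tau:B_{reg}\to\bR\cup\{\infty\}$, independent of which polytope $P\ni p_0$ you use. Second, Theorem~C(3) (concavity) says that, as a function of the Liouville form (equivalently of the base point $p_0\in P$), $\tau$ is \emph{either identically $\infty$ or finite everywhere}. So $\tau=\infty$ at one point of $P$ forces $\tau\equiv\infty$ on all of $P$. If $Q$ overlaps $P$ they share a base point, hence a Liouville form, so $\tau=\infty$ at that point forces $\tau\equiv\infty$ on $Q$ as well. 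Connectedness of $B_{reg}$ then propagates $\tau\equiv\infty$ everywhere. No Mayer--Vietoris, no covering argument, no passage from small to large polytopes is needed --- the ``either $\infty$ or finite'' dichotomy does all the work.
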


Next consider domains $D_Q=\pi^{-1}(Q)$ where $Q\subset B$ is allowed to meet the singular locus.
\begin{prp}\label{prpSingularFibReconstruction}
    Suppose $Q\subset B$ is such that  $D_Q$ is a Liouville domain  containing a Liouville subdomain $W$ which is the Weinstein neighborhood of a a Lagrangian torus $L$ Lagrangian isotopic to a regular fiber. Suppose further that the Viterbo restriction map (in intrinsic symplectic cohomology) $SH^*(D_Q)\to SH^*(W)$ is injective. If there is a polytope $P\subset B_{reg}$ so that $D_P$ is undeformed, then $D_Q$ is undeformed.
\end{prp}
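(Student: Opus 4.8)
The plan is to reduce the deformedness of $D_Q$ to that of a regular fiber neighborhood via the monotonicity/naturality properties of the locality filtration, and then use the injectivity hypothesis to propagate collapse of the spectral sequence. First I would set up the comparison: by Theorem \ref{mainTmA1} (applied with the ambient manifold $M$ and compact subsets $W\subset D_Q\subset M$), the Viterbo restriction map $SC^*_M(D_Q)\to SC^*_M(W)$ respects the filtration $\val_\theta$ and induces on associated graded the map $SC^*_\theta(D_Q)\otimes\Lambda_{[0,\hbar)}\to SC^*_\theta(W)\otimes\Lambda_{[0,\hbar)}$, which on cohomology is the intrinsic Viterbo restriction map (by the Remark following Theorem \ref{mainTmNaturality}). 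So there is a morphism of locality spectral sequences from the one for $D_Q$ to the one for $W$, which on $E_1$ is $SH^*_\theta(D_Q)\otimes\Lambda_{[\bullet\hbar,(\bullet+1)\hbar)}\to SH^*_\theta(W)\otimes\Lambda_{[\bullet\hbar,(\bullet+1)\hbar)}$, i.e.\ the intrinsic Viterbo map tensored with the Novikov-slab coefficients.

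Next I would invoke the hypothesis that $D_P$ is undeformed together with Proposition \ref{prpRegFibReconstruction}: since $W$ is a Weinstein neighborhood of a Lagrangian torus $L$ Lagrangian isotopic to a regular fiber, $W$ is, up to the relevant Lagrangian isotopy, one of the polytope-neighborhoods $D_{P'}$ with $P'\subset B_{reg}$ (here one needs that Lagrangian-isotopic tori have isomorphic Weinstein neighborhoods as Liouville domains, and that the locality spectral sequence is invariant under such isotopies — this is where Theorem C, specifically the isotopy-invariance of collapse, enters, and also the footnote remark that collapse is insensitive to the shape of the domain). Hence $W$ is undeformed, meaning its locality spectral sequence degenerates at $E_1$: all higher differentials $d_r^W$ vanish, and there are no extension problems, so the filtration on $SH^*_M(W)$ has associated graded $\bigoplus_p SH^*_\theta(W)\otimes\Lambda_{[p\hbar,(p+1)\hbar)}$.

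Now the core argument: I want to show $d_r^{D_Q}=0$ for all $r\geq 1$. Consider a class $x$ on the $E_r$ page of the $D_Q$-spectral sequence and its differential $d_r^{D_Q}x$. Map everything into the $W$-spectral sequence via the morphism above; since that target degenerates at $E_1$, its differential kills the image, so the image of $d_r^{D_Q}x$ in $E_r^W$ is zero. The point is that the $E_1$-map (intrinsic Viterbo restriction $\otimes\Lambda$-slabs) is injective by hypothesis, and injectivity on $E_1$ persists to all pages $E_r$ for a first-quadrant-type spectral sequence where the differentials on the target vanish — more carefully, if $f: E_*^{D_Q}\to E_*^W$ is a morphism of spectral sequences, $f_1$ is injective, and $d_*^W=0$ from page $1$ on, then by induction $E_r^W = E_1^W$ and $f_r$ is the restriction of $f_1$ to subquotients, hence still injective, because $E_r^{D_Q}$ is a subquotient of $E_1^{D_Q}$ and $f_1$ injective implies the induced map on any subquotient mapping to the (unchanged) $E_1^W$ is injective. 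Then $f_r(d_r^{D_Q}x)=d_r^W(f_r x)=0$ and $f_r$ injective force $d_r^{D_Q}x=0$. Therefore the $D_Q$-spectral sequence collapses at $E_1$, i.e.\ $D_Q$ is undeformed.

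The main obstacle I expect is the step identifying $W$ as an undeformed domain and, relatedly, the bookkeeping needed to make the morphism of spectral sequences genuinely compatible: one must ensure that the filtration constant $\hbar$ and the local primitive $\theta$ can be chosen compatibly for $W$, $D_Q$, and the regular-fiber model $D_{P}$ simultaneously (the invariant is not intrinsic to $W$, only to the pair $W\subset M$), and that the Lagrangian isotopy carrying $L$ to a regular fiber induces a filtered comparison — this genuinely uses Theorem C's isotopy-invariance of the $\tau$-invariant/collapse, together with Proposition \ref{prpRegFibReconstruction} to replace the specific undeformed $D_P$ by the neighborhood of $L$. A secondary subtlety is the injectivity-passes-to-$E_r$ lemma when the target does not literally degenerate at $E_1$ but one has to argue it does; I would isolate this as a short homological-algebra lemma. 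Everything else — filtered-ness of restriction maps, the $E_1$ identification — is supplied directly by Theorems A1 and C.
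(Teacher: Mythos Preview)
Your overall strategy is correct and essentially the same as the paper's, but you and the paper package the two steps differently, and your treatment of the first step (showing $W$ is undeformed) is imprecise.

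For the second step (from $W$ undeformed to $D_Q$ undeformed), your spectral-sequence argument is exactly right; in fact it is the content of the paper's proof of monotonicity of $\tau$ (Theorem~\ref{mainTmB}(4), proved as Lemma~\ref{lmTauMonotone}). The paper simply quotes that result: injectivity of the intrinsic restriction gives $\tau(W)\le\tau(D_Q)$, so $\tau(W)=\infty$ forces $\tau(D_Q)=\infty$. Your unwinding of this via injectivity on $E_1$ plus vanishing target differentials is the same argument.

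For the first step, your appeal to an ``isotopy-invariance of collapse'' in Theorem~C is not quite available: no such item is stated, and an abstract Liouville isomorphism between $W$ and some $D_{P'}$ does not by itself compare their embeddings in $M$, which is what $\tau$ depends on. The paper's argument is the precise one: connect $L$ to a regular fiber $L'$ by a path of Lagrangian tori, cover the path by finitely many overlapping Weinstein neighborhoods $D_i$, and propagate $\tau=\infty$ along the chain using concavity (Theorem~\ref{mainTmB}(3)) on each overlap, exactly as in the proof of Proposition~\ref{prpRegFibReconstruction}. This is the mechanism you are gesturing at; you should replace ``isotopy-invariance'' with this overlapping-neighborhood concavity argument.
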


\begin{rem}
\begin{enumerate}
\item Some effective criteria for undeformedness are discussed later in the introduction.
\item Combining the above with \cite[Theorem 1.4]{GromanVarolgunes2022} yields a solution to the reconstruction problem over the regular locus: we obtain a rigid analytic space $M^{\vee}_{reg}$ with non-Archimedean torus fibration $\pi^{\vee}:M^{\vee}_{reg}\to B$ such that $\cF^*$ is isomorphic as a presheaf of BV algebras to the pushforward of polyvector fields under $\pi^{\vee}$.
\item We expect the hypotheses of Proposition \ref{prpSingularFibReconstruction} hold for Gross-Siebert singularities (see \S\ref{subsec:collapse} for dimensions $2,3$), and that analogous results to Theorem \ref{tmRegFibReconstruction} extend to the singular locus. This is taken up in forthcoming work.
\item The isomorphism in Theorem \ref{tmRegFibReconstruction} is \emph{not canonical}. This has to do with wall-crossing (cf. Remark \ref{remNeckStretching}). 
\end{enumerate}
\end{rem}

\subsection{Quantitative deformation theory}\label{subsecQuantitativeDeformation}

The pathology of Example \ref{exCounterExample} (analyzed in \S\ref{SubSecLocalSS}) stems from a mismatch between the filtration $\val_{M,\theta}$ and the norm on the Floer complex: an element with valuation $0$ may have arbitrarily large norm. While shrinking the domain makes actions converge to $0$, this convergence is non-uniform, so it is not apriori clear wether finite shrinking would suffice. The quantitative invariant for determining the amount of shrinking turns out to be \emph{boundary depth} introduced in \cite{Usher09}. 

\begin{df}
    Let $(C^*,d,|\cdot|)$ be a nonarchimedean normed chain complex such that $|dx|\leq |x|$ for all $x\in C^*$. The \emph{boundary depth} $\beta(C^*,d,|\cdot|)$ is defined by 
    \begin{equation}
        \beta=\sup_{x\neq 0\in\im d}\inf\left\{\log\frac{|y|}{|x|}:dy=x\right\}.
    \end{equation}
    For a Liouville domain $D$, write 
    \begin{equation}
    \beta(D):=\beta\left(SC^*_{\theta}(D;\bQ)\right).
    \end{equation}
\end{df}
\begin{rem}
    Unless $C^*$ is finitely generated, $\beta$ may be infinite. Finite boundary depth is equivalent to the differential having a bounded right inverse.
    \end{rem}
    Note that the underlying complex $SC^*_{\theta}(D;\bQ)$ is well defined up to a norm non-increasing homotopy equivalence, and that boundary depth is an invariant of such equivalence. A detailed discussion is in Section \ref{SecBoundaryDepth}. 
    \begin{rem}
        We can also define boundary depth $\beta_{\bZ}(D)$ for the intrinsic symplectic cochains with integer coefficients. For Theorem \ref{mainTmA} $\beta_{\bQ}(D)$ is the relevant quantity, and we omit the subscript since this is the only boundary depth we use. 
    \end{rem}

Key properties of $\beta$ (see \S\ref{subsecBoundaryDepthLiouville}):
    \begin{enumerate}
        \item $\beta(t\cdot D)=t\beta(D)$ and $\beta$ is independent of the choice of primitive.
    \item $\beta$ equals the maximal torsion exponent of $SH^*(D;\Lambda_{\geq0})$. In particular, A ball of radius $r$ has $\beta=\pi r^2$. Also, $\beta(D)=0$ iff $SH^*(D;\Lambda_{\geq0})$ is torsion-free, which holds for Lagrangian products $\bT^n\times P$ where $P\subset\bR^n$ convex. 
    \item Boundary depth is sensitive to shape: $\beta(\bT^n\times P)=0$ for $P$ convex, but $\beta=\infty$ if $P$ is star-shaped but non-convex \cite[\S 6.4]{GromanVarolgunes2021}. 
    \item Some examples where boundary depth can be estimated include topological pairs \cite{GanatraPomerleano2021}, log Calabi-Yau divisor complements, and unit cotangent bundles of symmetric spaces (e.g., round $n$-spheres).
    \end{enumerate}

In the following statement we refer to the constant $\hbar=\hbar(M,D)$ introduced in Theorem \ref{mainTmFiltration}. We use the notation $SH^*_{\theta}(D)$ to denote the symplectic cohomology of $D$ over the ground ring, but with the norm defined by the action functional associated to $\theta$. 
\begin{thmB}\label{mainTmA}
We now restrict to field coefficients. Suppose $\beta(D)<\hbar(M,D)$. Then there exists a differential 
$$d_{def}:SH^*_{\theta}(D)\hat{\otimes}\Lambda\to SH^{*+1}_{\theta}(D)\hat{\otimes}\Lambda,$$  and a special deformation retraction
$$
\rho: SC^*_M(D)\to \left(SH^*_{\theta}(D)\hat{\otimes}\Lambda, d_{def}\right),
$$
satisfying 
\begin{itemize}
    \item $\val_{\theta}(\rho)\geq \hbar$,  $\val_{\theta}(d_{def})\geq 0$, 
    and $|d_{def}|_{\infty}<e^{-\hbar}$, 
    \item the map induced by $\rho$ on homology is an isometry,
    \item the homology level map on associated gradeds agrees with the one induced by \eqref{eqLocalityDef}. 
    
\end{itemize}
The same holds if we replace $D$ by $K\subset D$ for any compact $K\subset D$ satisfying $\beta(K)<\hbar$ and consider the invariant $SC^*_{\widehat{D},\theta}(K;\bQ)$ instead of $SC^*(D)$. Here we write $\beta(K)$ for the boundary depth of $SC^*_{\widehat{D},\theta}(K;\bQ)$.
\end{thmB}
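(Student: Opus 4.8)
The plan is to upgrade the weak homotopy equivalence of Theorem \ref{mainTmFiltration} to a genuine deformation retraction via homological perturbation, using the hypothesis $\beta(D)<\hbar$ to control the size of the perturbed differential. First I would set up the filtered picture: by Theorem \ref{mainTmFiltration} the associated graded $\gr_{\hbar}(SC^*_M(D))$ splits, up to weak equivalence, as $SC^*_\theta(D)\otimes\Lambda_{[0,\hbar)}$ together with an acyclic summand spanned by the ``outside'' critical points (the trichotomy in \S\ref{remProofStrategy}). I would then pass to a minimal, or at least ``small,'' model for $SC^*_\theta(D)$ over $\bQ$: choosing a splitting of the $\bQ$-complex $SC^*_\theta(D)$ into homology plus an acyclic complement realizes $SH^*_\theta(D)$ as a deformation retract of $SC^*_\theta(D)$, and the crucial quantitative input is that the homotopy $h$ in this retraction can be taken with $|h|\le e^{\beta(D)}$ on the image of $d$ — this is precisely the content of the boundary-depth definition, and is the reason the hypothesis $\beta(D)<\hbar$ enters. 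Tensoring with $\Lambda$ and then with $\Lambda_{[0,\hbar)}$, I get a deformation retraction from $\gr_{\hbar}(SC^*_M(D))$ onto $SH^*_\theta(D)\hat\otimes\Lambda_{[0,\hbar)}$ whose homotopy raises valuation by at most $\beta(D)$.

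Next I would treat the full complex $SC^*_M(D)$ as a perturbation of its associated graded. Concretely, $d_{SC_M}= d_{\gr} + \delta$ where $\delta$ strictly increases the $\val_\theta$-filtration (it collects all Floer trajectories of positive energy $E_{M,\theta}(u)>0$, hence valuation jump $\ge$ the smallest positive energy, and in the part that survives in $\gr_\hbar$, jump $\ge\hbar$ up to the truncation bookkeeping). Feeding the data $(\gr_\hbar \to SH^*_\theta\hat\otimes\Lambda_{[0,\hbar)}, \iota, h)$ and the perturbation $\delta$ into the homological perturbation lemma, the key convergence estimate is the geometric series $\sum_{k\ge0}(h\delta)^k$: since $\val_\theta(h)\ge -\beta(D)$ and $\val_\theta(\delta)>0$ in the appropriate normalization — more precisely $\delta$ shifts valuation by at least $\hbar-\beta(D)>0$ after composing with $h$ — the series converges in the $\Lambda$-adic (equivalently $\val_\theta$-completed) topology, producing a bona fide special deformation retraction $\rho: SC^*_M(D)\to (SH^*_\theta(D)\hat\otimes\Lambda, d_{def})$ with $d_{def}=$ the transferred differential $D(\iota)=\sum \bigl(\text{trees built from }\delta\text{ and }h\bigr)$. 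Tracking valuations through the perturbation series gives $\val_\theta(d_{def})\ge 0$, $|d_{def}|_\infty< e^{-\hbar}$ (the leading term already lives in valuation $\ge\hbar$, modulo the $e^{-\beta}$ loss from one factor of $h$, and $\hbar-\beta>0$), and $\val_\theta(\rho)\ge\hbar$; the isometry on homology follows because $\rho$ is a filtered homotopy equivalence inducing \eqref{eqLocalityDef} on $\gr_\hbar$, and an isomorphism that is filtered with filtered homotopy inverse is automatically an isometry for the induced norms. The agreement of the associated-graded homology map with \eqref{eqLocalityDef} is built in by construction, since the order-zero term of the perturbation series is exactly the map of Theorem \ref{mainTmFiltration}.

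The statement for a general compact $K\subset D$ with $\beta(K)<\hbar$ is identical once one substitutes Theorem \ref{mainTmA1} for Theorem \ref{mainTmFiltration} and uses the intrinsic presheaf complex $SC^*_{\widehat D,\theta}(K;\bQ)$ in place of $SC^*_\theta(D)$; nothing in the perturbation argument used that $K$ had smooth boundary or was a subdomain, only that its intrinsic complex has finite boundary depth $<\hbar$ and that the $\val_\theta$-filtration on $SC^*_M(K)$ has the trichotomy. The main obstacle I anticipate is bookkeeping the interaction between the two different ``sizes'' in play — the $\val_\theta$-filtration (which behaves well under Floer continuation and is what makes $\delta$ a filtration-increasing perturbation) and the intrinsic $\bQ$-norm on $SC^*_\theta(D)$ (which is what $\beta(D)$ measures, and which is a priori unrelated to $\val_\theta$ on the nose, cf. the pathology of Example \ref{exCounterExample}). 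Making the homotopy $h$ simultaneously bounded for the norm \emph{and} filtered for $\val_\theta$, with the quantitative bound $\hbar-\beta(D)>0$ genuinely controlling the perturbation series, is the technical heart; I would handle it by working with the completed tensor product $SH^*_\theta(D)\hat\otimes\Lambda$ from the start and carefully normalizing so that one unit of $\val_\theta$ on the $\Lambda$ factor is commensurable with one unit of the intrinsic norm, so that the single factor of $h$ appearing in each perturbation tree costs at most $\beta(D)$ and each factor of $\delta$ gains at least $\hbar$.
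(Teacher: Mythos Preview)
Your approach is essentially the paper's: realize $SC^*_M(D)$ as a Floer-type complex $(G^*\hat\otimes\Lambda,\, d_0 + T^{\hbar}d_1)$ with $(G^*,d_0)$ modeling the intrinsic complex, construct a special deformation retraction $G^*\to SH^*_\theta(D)$ whose homotopy has norm $\le e^{\beta}$ (this is exactly Lemma~\ref{lmSpecialDR}), and transfer via the homological perturbation lemma with $|h\delta|\le e^{\beta-\hbar}<1$ (Lemma~\ref{lmCompPert}).

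One sharpening on the point you rightly flag as the technical heart: the $\Lambda$-adic (norm) topology and the $\val_\theta$-filtration are \emph{not} equivalent, and the perturbation series must converge in the former, since that is what $SC^*_M(D)$ is completed for (the paper is explicit that one does \emph{not} complete for $\val_\theta$).  The geometric input that makes this work is Proposition~\ref{PrpRelHbar}: whenever $E_{M,\theta}(u)\ne 0$ one also has $E^{geo}(u)>\hbar$, so $\delta=T^{\hbar}d_1$ is genuinely norm-decreasing by $e^{-\hbar}$ and not merely $\val_\theta$-increasing; combined with $|h|\le e^{\beta}$ this gives norm convergence.  Your ``commensurability'' heuristic is pointing at exactly this, but the resolution is geometric (monotonicity estimates), not a normalization trick.
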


The proof of Theorem \ref{mainTmA} is given in Section \ref{SubSecProofThmA}.

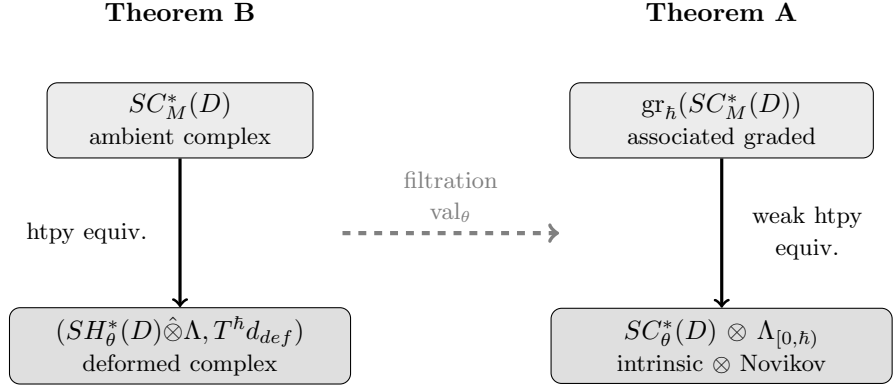
\begin{figure}[h]
\centering
\begin{tikzpicture}[scale=1.2]
    \node[draw, rounded corners, fill=gray!15, minimum width=3.5cm, minimum height=1cm, text width=3.3cm, align=center] (ambient) at (0,3) 
        {$SC^*_M(D)$ \\ \small ambient complex};
    
    \node[draw, rounded corners, fill=gray!25, minimum width=4.5cm, minimum height=1cm, text width=4.3cm, align=center] (deformed) at (0,0.5) 
        {$(SH^*_\theta(D) \hat{\otimes} \Lambda, T^{\hbar}d_{def})$ \\ \small deformed complex};
    
    \draw[->, very thick, black] (ambient) -- node[left, text width=2.2cm, align=center] {\small htpy equiv.} (deformed);
    
    \node[above, font=\bfseries] at (0,4) {Theorem B};
    
    \node[draw, rounded corners, fill=gray!15, minimum width=4cm, minimum height=1cm, text width=3.8cm, align=center] (filtered) at (6,3) 
        {$\gr_\hbar(SC^*_M(D))$ \\ \small associated graded};
    
    \node[draw, rounded corners, fill=gray!25, minimum width=4.5cm, minimum height=1cm, text width=4.3cm, align=center] (intrinsic) at (6,0.5) 
        {$SC^*_\theta(D) \otimes \Lambda_{[0,\hbar)}$ \\ \small intrinsic $\otimes$ Novikov};
    
    \draw[->, very thick, black] (filtered) -- node[right, text width=2cm, align=center] {\small weak htpy \\ \small equiv.} (intrinsic);
    
    \node[above, font=\bfseries] at (6,4) {Theorem A};
    
    \draw[->, ultra thick, gray, dashed] (1.8,1.75) -- node[above, text width=2cm, align=center] {\small filtration \\ \small $\val_\theta$} (4.2,1.75);
    
    
    
\end{tikzpicture}
\caption{The relation between ambient and intrinsic symplectic cohomology. \textbf{ Theorem A:} The associated graded is weakly homotopy equivalent to the intrinsic complex tensored with $\Lambda_{[0,\hbar)}$. \textbf{ Theorem B:} When boundary depth $\beta(D) < \hbar$ the arrow on the right lifts to a homotopy equivalence of $SC^*_M(D)$ with field coefficients to the deformed complex. }
\label{fig:deformation}
\end{figure}

\begin{rem}
\begin{enumerate}
\item Since $\beta(t\cdot D)=t\beta(D)$ while $\hbar(M,t\cdot D)=\hbar(M,D)$ is constant, if $\beta(D)\geq\hbar(M,D)$ one can scale $D$ by $t<\hbar(M,D)/\beta(D)$ to satisfy the hypothesis.
\item Examining the proof, if one replaces $D$ with its skeleton, one can forego any hypothesis on the boundary depth.
\item For $\delta$-extendable embeddings (extending to $(1+\delta)\cdot D$), $\hbar$ depends only on $(D,\delta)$ (Lemma \ref{lmOutsideContributions}).
\item The filtration $\val_{M,\theta}$ on a model as in Theorem \ref{mainTmA} is Hausdorff and exhausting, but the induced filtration on homology may stillbe non-Hausdorff. This happens if $d_{def}$ has infinite boundary depth. Our applications focus on spectral sequence collapse on the first page, avoiding this pathology.
\item Example \ref{exCounterExample} illustrates the necessity of $|d_{def}|_{\infty}<e^{-\beta}$: for $t\leq1/2$, the disc becomes displaceable with $SH^*_{\bC P^1}(t\cdot D)=0$; for $t>1/2$, the deformation dominates and the ambient invariant is not a deformation of the intrinsic one.
    \end{enumerate}
\end{rem}

Following \cite{Sun2024} we have:
\begin{cy}
Let $M$ be a closed $2n$-dimensional Calabi-Yau manifold and $D\simeq\bT^n\times P\hookrightarrow M$ a symplectic embedding with Maslov $0$ torus fibers and $P\subset\bR^n$ convex. Then $M\setminus D$ is stably non-displaceable. 
\end{cy}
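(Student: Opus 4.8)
The plan is to argue by contradiction using the quantitative deformation of Theorem \ref{mainTmA} together with the Mayer--Vietoris/acceleration machinery that controls $SH^*_M(D)$ under displaceability. First I would record the key structural input: since $D\simeq\bT^n\times P$ with $P\subset\bR^n$ convex, property (2) of the list in \S\ref{subsecQuantitativeDeformation} gives $\beta(D)=0$, and by property (1) the same holds for any rescaling $t\cdot D$. Hence for \emph{every} $t>0$ the hypothesis $\beta(t\cdot D)<\hbar(M,t\cdot D)$ of Theorem \ref{mainTmA} is satisfied (recall $\hbar(M,t\cdot D)=\hbar(M,D)$ is independent of $t$). Therefore $SC^*_M(t\cdot D)$ is, over the Novikov field, a special deformation retract of $\bigl(SH^*_\theta(t\cdot D)\hat\otimes\Lambda, d_{def}\bigr)$ with $|d_{def}|_\infty<e^{-\hbar}<1$.

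The second step is to identify $SH^*_\theta(t\cdot D)\hat\otimes\Lambda$ and observe it is nonzero. By \cite[\S5]{GromanVarolgunes2022} (cited in \S\ref{subsec:application-syz}) the intrinsic symplectic cohomology of $\bT^n\times P$ is the ring of analytic functions (in degree $0$, with polyvector fields above) on $\log^{-1}(P)\subset (\Lambda^*)^n$, which is a nonzero affinoid algebra; in particular $SH^0_\theta(t\cdot D)\hat\otimes\Lambda\neq 0$ and contains the unit. Now I would run the argument of \cite{Sun2024}: if $M\setminus D$ were stably displaceable, then after a stabilization $M\times S^2$ (which is still Calabi--Yau of the right parity and contains $(t\cdot D)\times S^2$ with the same intrinsic invariant by a Künneth argument, or one works directly with a displaceable complement) the relative symplectic cohomology $SH^*_{M}(t\cdot D)$ would vanish: displaceability of the complement forces the restriction $QH^*(M)\to SH^*_M(t\cdot D)$ to kill the unit (Varolgunes' vanishing criterion, as in Example \ref{exCounterExample} but with the vanishing direction), hence $SH^*_M(t\cdot D)=0$.

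The third step extracts the contradiction from the deformation estimate. Vanishing of $SH^*_M(t\cdot D)$ means the deformed complex $\bigl(SH^*_\theta(t\cdot D)\hat\otimes\Lambda, d_{def}\bigr)$ is acyclic. But $|d_{def}|_\infty<e^{-\hbar}<1=|1|$ and $d_{def}$ raises the degree, while the unit $1\in SH^0_\theta$ is $d$-closed on the nose (the undeformed differential is $0$ on the associated graded's homology by Theorem \ref{mainTmFiltration}); a standard geometric-series/Banach fixed point argument then shows $1$ is not in the image of $d_{def}$ — indeed any $y$ with $d_{def}y=1$ would have to have a term matching $1$ in the associated graded, forcing $|y|\geq 1$ and then $|d_{def}y|<1$, a contradiction. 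So the deformed complex is not acyclic, contradicting $SH^*_M(t\cdot D)=0$. Hence $M\setminus D$ is stably non-displaceable.

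The main obstacle I anticipate is the second step: carefully setting up the stabilization so that the vanishing criterion applies while the intrinsic side remains visibly nonzero and the boundary-depth hypothesis survives stabilization. One must check that $\hbar(M\times S^2,(t\cdot D)\times S^2)$ is still positive and that $\beta\bigl((t\cdot D)\times S^2\bigr)$ — which is again $0$ since $(t\cdot D)\times S^2$ retracts onto a Lagrangian-product-type neighborhood after the stabilizing isotopy, or by a Künneth formula for boundary depth — stays below it; alternatively, following \cite{Sun2024} one can phrase stable displaceability so that only a single stabilization by a large ball or $S^2$ factor is needed and the intrinsic invariant of the product is handled by the Künneth theorem for intrinsic symplectic cohomology. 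The remaining steps (boundary depth vanishing for convex products, nonvanishing of the affinoid algebra, the fixed-point argument) are routine given the results already established in the paper and in \cite{GromanVarolgunes2022,Sun2024}.
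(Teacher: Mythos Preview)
Your step 3 has the vanishing criterion backwards, and this is fatal. Varolgunes' theorem says that if a compact set $K$ is stably displaceable then $SH^*_M(K)=0$. Assuming for contradiction that $M\setminus D$ is stably displaceable, what vanishes is $SH^*_M(U)$ for $U$ a thickening of the complement, \emph{not} $SH^*_M(D)$. Indeed, Example \ref{exCounterExample} that you cite shows exactly the opposite of what you claim: when the complement of $D$ is displaceable one gets $SH^*_M(D)\cong QH^*(M)\neq 0$. So the deformed complex is certainly not acyclic, and your step 4 never gets off the ground.

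There is a second, related gap: your unit argument in step 4 makes no use of the Calabi--Yau or Maslov $0$ hypotheses. This should make you suspicious, since the statement is false without them (take a small torus neighborhood in $\bC P^n$). The paper's proof uses these hypotheses to get an honest $\bZ$-grading, then argues as follows: if $M\setminus D$ were stably displaceable, Mayer--Vietoris with the vanishing of $SH^*_M(U)$ and $SH^*_M(U\cap D)$ forces $QH^*(M)\cong SH^*_M(D)$. But Theorem \ref{mainTmA} (applicable since $\beta(D)=0$) models $SH^*_M(D)$ on $SH^*_\theta(D)\hat\otimes\Lambda$, and intrinsic $SH$ of $\bT^n\times P$ is concentrated in degrees $0,\dots,n$, so $SH^{2n}_M(D)=0$, contradicting $H^{2n}(M;\Lambda)\neq 0$. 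The key move you are missing is the \emph{degree argument}, not a norm/fixed-point argument on the unit.
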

\begin{proof}
By Mayer-Vietoris \cite{Varolgunes}, taking a slight thickening $U$ of $M\setminus D$ gives a long exact sequence
$$\dots\to H^*(M;\Lambda)\to SH^*_M(D)\oplus SH^*_M(U)\to SH^*_M(U\cap D)\to \dots .$$  
Since $SH^*_{\widehat{D}}(D)=0$ in degrees $>n$, Theorem \ref{mainTmA} implies $SH^*_M(D)=0$ in degrees $>n$. If $U$ were stably displaceable, then $SH^*_M(U)$ and $SH^*_M(U\cap D)$ would vanish, contradicting $H^{2n}(M;\Lambda)\neq 0$. 
\end{proof}
\begin{rem}
This applies more generally to domains with $\beta(D)=0$ and $SH^{2n}(D)=0$ (e.g., certain log Calabi-Yau divisor complements). Alternatively, the main result of \cite{Groman2023SYZLocal} implies similar results without boundary depth assumptions, allowing to somewhat relax the convexity condition on $P$. 
\end{rem}

\subsection{Deformation of the attending structures}\label{subsec:structures}

We now turn to the deformation theory of restriction maps and algebraic structures. In the following theorems, for any compact $K\subset D$ we write $d_{def}$ for the differential on $SH^*_{\widehat{D},\theta}(K;\bQ)\hat{\otimes}\Lambda$ induced by the special deformation retraction from Theorem \ref{mainTmA}.

\begin{thmB}\label{mainTmD}   
    Given $K_1\subset K_2\subset D$ for which $\beta(K_i)<\hbar$ there is a chain map $$\rho_{def}: (SH^*_{\theta}(K_2)\hat{\otimes}\Lambda,d_{def})\to (SH^*_{\theta}(K_1)\hat{\otimes}\Lambda,d_{def})$$ such that
    \begin{itemize}
        \item We have $|\rho\otimes 1-\rho_{def}|_{\infty}<e^{-\hbar}$ where $\rho:SH^*_{\theta}(K_2)\to SH^*_{\theta}(K_1)$ is the homology level restriction map in the intrinsic symplectic cohomology.
        \item $\val_{\theta}(\rho\otimes 1-\rho_{def})\geq\hbar$.
        \item 
        The diagram
        $$
\xymatrix{SC^*_{M}(K_2;\Lambda)\ar[d]\ar[r]&SH^*_{\theta}(K_2)\hat{\otimes}\Lambda\ar[d]_{\rho_{def}}\\SC^*_{M}(K_1;\Lambda)\ar[r]&SH^*_{\theta}(K_1)\hat{\otimes}\Lambda},
$$
 where the horizontal maps are the special deformation retractions from Theorem \ref{mainTmA1}, commutes up to homotopy.
    \end{itemize}
\end{thmB}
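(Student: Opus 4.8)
The plan is to obtain $\rho_{def}$ by transferring the ambient restriction map through the special deformation retractions supplied by Theorem~\ref{mainTmA}, and then to extract the two quantitative bounds by passing to the associated graded, where Theorems~\ref{mainTmA1} and~\ref{mainTmA} identify every map that occurs with its intrinsic counterpart. No genuinely new Floer theory is needed beyond those two theorems; the content is in checking that the algebra assembles as claimed.

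For $i=1,2$ I would fix the special deformation retraction of Theorem~\ref{mainTmA}: a projection $\rho_i\colon SC^*_M(K_i)\to(SH^*_\theta(K_i)\hat{\otimes}\Lambda,d_{def})$, a section $\iota_i$ in the opposite direction with $\rho_i\iota_i=\id$, and a homotopy $h_i$ on $SC^*_M(K_i)$ with $\iota_i\rho_i-\id=d h_i+h_i d$. Let $r_{21}\colon SC^*_M(K_2)\to SC^*_M(K_1)$ be the ambient restriction map, a chain map respecting $\val_\theta$ by Theorem~\ref{mainTmA1}. I would then define
$$
\rho_{def}:=\rho_1\circ r_{21}\circ\iota_2,
$$
which, as a composite of chain maps, is a chain map $(SH^*_\theta(K_2)\hat{\otimes}\Lambda,d_{def})\to(SH^*_\theta(K_1)\hat{\otimes}\Lambda,d_{def})$. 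Homotopy-commutativity of the square in the statement is then automatic: using that $\rho_1$ and $r_{21}$ are chain maps, $\iota_2\rho_2=\id+d h_2+h_2 d$ gives
$$
\rho_{def}\circ\rho_2-\rho_1\circ r_{21}=d_{def}\,H_{21}+H_{21}\,d,\qquad H_{21}:=\rho_1\circ r_{21}\circ h_2 .
$$

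For the estimates I would pass to $\gr_\hbar$. Since $|d_{def}|_{\infty}<e^{-\hbar}$ by Theorem~\ref{mainTmA}, the operator $d_{def}$ induces zero on $\gr_\hbar$, so $\gr_\hbar(SH^*_\theta(K_i)\hat{\otimes}\Lambda,d_{def})=SH^*_\theta(K_i)\otimes\Lambda_{[0,\hbar)}$ carries the zero differential; consequently any two chain-homotopic maps between such associated gradeds coincide. From $\rho_i\iota_i=\id$ one gets $\gr_\hbar(\rho_i)\circ\gr_\hbar(\iota_i)=\id$; by the last bullet of Theorem~\ref{mainTmA} the map $\gr_\hbar(\rho_i)$ is identified, on homology, with the weak equivalence \eqref{eqLocalityDefCompact}; and feeding this into the homotopy-commutative square of Theorem~\ref{mainTmA1} (whose right vertical map is the chain-level intrinsic restriction map, inducing $\rho$ on homology) gives $\gr_\hbar(\rho_1)\circ\gr_\hbar(r_{21})=(\rho\otimes 1)\circ\gr_\hbar(\rho_2)$, an honest equality by the preceding remark. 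Post-composing with $\gr_\hbar(\iota_2)$ and using $\gr_\hbar(\rho_2)\circ\gr_\hbar(\iota_2)=\id$ yields $\gr_\hbar(\rho_{def})=\gr_\hbar(\rho\otimes 1)$. Thus $\rho_{def}-\rho\otimes 1$ vanishes on $\gr_\hbar$, which by $\Lambda$-linearity is precisely $\val_\theta(\rho_{def}-\rho\otimes 1)\ge\hbar$ and gives $|\rho_{def}-\rho\otimes 1|_{\infty}\le e^{-\hbar}$; the strict inequality is recovered since Theorems~\ref{mainTmA1} and~\ref{mainTmA} remain valid with $\hbar$ replaced by a slightly smaller constant still exceeding $\max_i\beta(K_i)$, by the strict hypothesis.

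The main obstacle is not this formal manipulation but aligning the geometric inputs. One must verify that $\gr_\hbar(\rho_i)$ genuinely reduces to the leading-order map of Theorem~\ref{mainTmA1} (this is inside the proof of Theorem~\ref{mainTmA}) and, more delicately, that the $S$-shaped Hamiltonians, almost complex structures and $\hbar$-window can be chosen simultaneously for $K_1$, $K_2$ and $r_{21}$, so that the $\gr_\hbar$-level identities invoked above are literally the ones proved in Theorems~\ref{mainTmA1} and~\ref{mainTmA}. Because only homotopy-commutativity of the square is asserted, the independence of the two chosen deformation retractions costs nothing --- the homotopy $H_{21}$ is produced for free above --- and if a quantitative bound on $H_{21}$ were needed for a subsequent composition, it would follow from the norm control on $h_2$ coming from $\beta(K_2)<\hbar$.
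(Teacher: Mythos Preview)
Your definition $\rho_{def}=\rho_1\circ r_{21}\circ\iota_2$ and the homotopy $H_{21}=\rho_1\circ r_{21}\circ h_2$ are exactly what the paper does (there $p^1_1\circ\rho\circ i^2_1$ in its notation), so the third bullet is fine.

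The gap is in the quantitative estimates, and it comes from conflating the two filtrations that are in play throughout the paper: the norm (coming from $E_{top}$) and $\val_\theta$ (coming from $E_{M,\theta}$). On $SH^*_\theta(K_i)\hat\otimes\Lambda$ the filtration $\val_\theta$ is the $T$-adic one, while the norm also sees the action norm on $SH^*_\theta(K_i)$; these do not coincide. Consequently: (a) the implication ``$|d_{def}|_\infty<e^{-\hbar}\Rightarrow d_{def}$ vanishes on $\gr_\hbar$'' is invalid (an operator $1\mapsto x$ with $|x|$ small has $\val_\theta=0$); the conclusion is in fact true, but you have to read it off the perturbation formula $d_{def}=p\circ S\circ i$ with $S\ni T^\hbar d_1$; and (b) more seriously, even granting $\gr_\hbar(\rho_{def})=\rho\otimes 1$, this yields only $\val_\theta(\rho_{def}-\rho\otimes 1)\geq\hbar$, not $|\rho_{def}-\rho\otimes1|_\infty<e^{-\hbar}$. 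Running your argument with the norm filtration instead does not help, because Theorem~\ref{mainTmA1} identifies the $\val_\theta$-associated graded, not the norm-associated graded, with the intrinsic complex.

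What the paper actually uses to get both estimates at once is Corollary~\ref{CYRelFilrPres}: every structure map decomposes as $f_0+T^\hbar f_1$ where $f_1$ is \emph{simultaneously} $\val_\theta$-nonnegative and norm non-increasing. The paper introduces the intermediate $\tilde\rho:=p^1\circ r_{21}\circ i^2$ built from the \emph{unperturbed} SDR data, estimates $|\rho_{def}-\tilde\rho|$ from the perturbation formulas of Lemma~\ref{lmCompPert} (each of $p^1_1-p^1$, $i^2_1-i^2$ contains a factor $T^\hbar d_1$), and estimates $|\tilde\rho-\rho_{loc}|=|p^1\circ(r_{21}-r_{21,0})\circ i^2|$ directly from Corollary~\ref{CYRelFilrPres}. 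The same computation with $\val_\theta$ in place of $|\cdot|$ gives the valuation bound. Your $\gr_\hbar$ argument is a clean repackaging of the valuation half of this, but for the norm half you cannot avoid invoking the double control provided by Corollary~\ref{CYRelFilrPres}.
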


\begin{thmB}\label{tmProductDeformation}
    For any compact $K\subset D$ abbreviate
    $$\cA^*(K):=\left(SH^*_{\theta}(K)\hat{\otimes}\Lambda,d_{def}\right).$$
    There are chain maps
    $$
    \Delta_{def}:\cA^*(K)\to\cA^{*-1}(K),$$
    and
    $$
    *_{def}: \cA^*(K)\otimes\cA^*(K)\to\cA^*(K),
    $$
    such that 
    \begin{itemize}
        \item The induced maps on homology agree with the BV structure induced from the identification with $SH^*_M(K)$ in  Theorem \ref{mainTmA1}.
        \item Denoting by $*,\Delta$ the product and BV operator on $SH^*_{\theta}(K)\hat{\otimes}\Lambda$ we have that
       $$|*-*_{def}|<e^{-\hbar},\quad|\Delta-\Delta_{def}|<e^{-\hbar},$$ and
       $$\val_{\theta}(*-*_{def})\geq\hbar,\quad\val(\Delta-\Delta_{def})\geq \hbar.$$
    \end{itemize}
\end{thmB}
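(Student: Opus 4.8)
The plan is to obtain $*_{def}$ and $\Delta_{def}$ by homotopy transfer of the pair of pants product $\mu_{pp}$ and the BV operator $\Delta_{BV}$ of $SC^*_M(K)$ along the special deformation retraction of Theorem \ref{mainTmA}. Writing $(\iota,\rho,h)$ for that data — $\rho\colon SC^*_M(K)\to\cA^*(K)$, $\iota\colon\cA^*(K)\to SC^*_M(K)$, $\rho\iota=\id$, $\iota\rho-\id=d_M h+h\,d_M$, and the usual side conditions — I would simply set $*_{def}:=\rho\circ\mu_{pp}\circ(\iota\otimes\iota)$ and $\Delta_{def}:=\rho\circ\Delta_{BV}\circ\iota$. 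Since $\iota,\rho$ intertwine $d_{def}$ with the ambient differential and $\mu_{pp},\Delta_{BV}$ are chain maps on $SC^*_M(K)$ (the latter of odd degree), a one-line computation shows that $*_{def}$ and $\Delta_{def}$ are chain maps of the asserted degrees; no contracting homotopy is needed for the binary transfer itself. The first bullet is then immediate: $H(\iota)$ and $H(\rho)$ are the mutually inverse isomorphisms $SH^*_M(K)\cong H(\cA^*(K))$ underlying the identification of Theorem \ref{mainTmA1}, and $H(\mu_{pp})$, $H(\Delta_{BV})$ are by definition the product and BV operator on $SH^*_M(K)$, so $H(*_{def})$ and $H(\Delta_{def})$ are exactly that BV structure transported across $H(\rho)$.

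For the quantitative bullets I would pass to the associated graded $\gr_{\hbar}$, using three inputs. First, the special deformation retraction of Theorem \ref{mainTmA} is produced by filtered homological perturbation from the data of Theorem \ref{mainTmFiltration}, so $\iota$ and $\rho$ respect the filtration $\val_{\theta}$ and reduce, modulo $\val_{\theta}\geq\hbar$, to chain-level representatives of the weak homotopy equivalence \eqref{eqLocalityDefCompact} and a quasi-inverse. Second, by Theorem \ref{mainTmBV} the operations $\mu_{pp},\Delta_{BV}$ respect $\val_{\theta}$ and on $\gr_{\hbar}$ are intertwined by \eqref{eqLocalityDefCompact}, on homology, with the intrinsic product $*$ and BV operator $\Delta$. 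Third, the bound $|d_{def}|_{\infty}<e^{-\hbar}$ of Theorem \ref{mainTmA} — which is precisely where $\beta(K)<\hbar$ enters — makes $d_{def}$ vanish on $\gr_{\hbar}\cA^*(K)$, so that complex coincides with its own homology $SH^*_{\theta}(K)\otimes\Lambda_{[0,\hbar)}$, and likewise on the tensor square. Combining these: $*_{def}$ and $\Delta_{def}$ are filtration preserving, hence descend to $\gr_{\hbar}$; since source and target there carry zero differential, the descended maps are determined by their effect on homology, and by the first two inputs they equal $*$ and $\Delta$. Thus $*-*_{def}$ and $\Delta-\Delta_{def}$ vanish on $\gr_{\hbar}$, i.e. $\val_{\theta}(*-*_{def})\geq\hbar$ and $\val_{\theta}(\Delta-\Delta_{def})\geq\hbar$; the strict norm bounds $|*-*_{def}|<e^{-\hbar}$, $|\Delta-\Delta_{def}|<e^{-\hbar}$ follow from the valuation bounds just as for $d_{def}$ in Theorem \ref{mainTmA}, since the ``outside'' Floer contributions to these differences carry energy strictly above $\hbar$ by the trichotomy. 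The entire argument is uniform in $K$, so the case of a general compact $K\subset D$ with $\beta(K)<\hbar$ needs no new idea, only the versions of Theorems \ref{mainTmA}, \ref{mainTmA1} and \ref{mainTmBV} for $K$.

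I expect the main obstacle to be the first input above: arranging that the special deformation retraction of Theorem \ref{mainTmA} is \emph{strictly} filtered and that $\iota,\rho$ (and $h$, were one to pursue the full $BV_{\infty}$/framed-$E_2$ transfer) reduce to the Theorem \ref{mainTmFiltration} data on the associated graded, so that forming $*_{def},\Delta_{def}$ commutes with passage to $\gr_{\hbar}$. This is a bookkeeping point in the filtered perturbation lemma — one must verify that every term of the perturbation series building $(\iota,\rho,h)$ either preserves $\val_{\theta}$ or raises it by at least $\hbar$ — and it is exactly where $\beta(K)<\hbar$ is used, since that hypothesis is what lets the contracting homotopy be chosen with $\val_{\theta}$ bounded below by $-\beta(K)>-\hbar$. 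I would emphasise, as the excerpt already does, that we deliberately do \emph{not} assert that $(\cA^*(K),*_{def},\Delta_{def})$ is a BV algebra on the nose: only the binary operations and their homology-level identification are claimed, which is what allows us to bypass the higher-input complications flagged in Remark \ref{remNinputComplication}.
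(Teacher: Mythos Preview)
Your proposal is essentially the paper's proof. The paper defines $*_{def}=p_1\circ B\circ(i_1\otimes i_1)$ and $\Delta_{def}=p_1\circ\Delta_{BV}\circ i_1$ using the perturbed special D.R.\ data $(i_1,p_1,h_1)$ of Lemma~\ref{lmCompPert}, exactly as you do. For the quantitative estimate the paper introduces the intermediate $\tilde{*}:=p\circ B\circ(i\otimes i)$ built from the \emph{unperturbed} D.R.\ data, then bounds $|\tilde{*}-*_{def}|$ using the explicit formulas $i_1-i=hSi$, $p_1-p=pSh$ (each carrying a factor of $S$, hence of $\delta=T^{\hbar}d_1$), and bounds $|\tilde{*}-*|=|p\circ(B-B_0)\circ(i\otimes i)|$ using Corollary~\ref{CYRelFilrPres1}, which says $B=B_0+T^{\hbar}B_1$. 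Your ``pass to $\gr_{\hbar}$'' argument is the same computation in different clothing: saying that $\iota,\rho,\mu_{pp}$ reduce on $\gr_{\hbar}$ to $i,p,B_0$ is exactly the observation that the perturbation terms all carry the factor $T^{\hbar}$.

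One point to tighten. You twice conflate the norm filtration with the $\val_{\theta}$ filtration: the claim ``$|d_{def}|_{\infty}<e^{-\hbar}$ makes $d_{def}$ vanish on $\gr_{\hbar}\cA^*(K)$'' is a non sequitur, since $\gr_{\hbar}$ is taken with respect to $\val_{\theta}$, not the norm; and ``the norm bounds follow from the valuation bounds'' is likewise not right. These are two genuinely different filtrations on $SC^*_M(K)$, and the paper treats them in parallel but separately: Corollary~\ref{CYRelFilrPres1} gives both $\val_{\theta}(B-B_0)\geq\hbar$ (from $E_{M,\theta}(u)\geq\hbar$, the integrated maximum principle) and $|B-B_0|_{\infty}\leq e^{-\hbar}$ (from $E_{top}(u)\geq E_{geo}(u)>\hbar$, Proposition~\ref{PrpRelHbar}). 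The last sentence of the paper's proof makes this explicit: ``The valuation estimates follow from the same reasoning using the $\val_{M,\theta}$ filtration in place of the norm estimates.'' Once you separate these two threads, your argument is complete and matches the paper's.
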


These theorems are proved in Subsection \ref{SubSecProofThmA}. 

\begin{rem}
It is easy to deduce that when the spectral sequence collapses on the first page for a domain $D$, it also collapses for all compact subsets $K\subset D$ for which the intrinsic restriction map $SH^*_{\theta}(K)\to SH^*_{\theta}(D)$ is injective. Thus, restricting to compact subsets with $\beta(K)<\hbar$ and injective restriction maps, Theorems B1--B3 show that the ambient  presheaf of BV-algebras is a perturbation of the intrinsic one. For the affinoid torus, rigidity properties of analytic polyvector fields (Section \ref{SecRigidity}) allow us to promote this perturbation to an isomorphism of presheaves of BV algebras, yielding Theorem \ref{tmRegFibReconstruction}.
\end{rem}

\subsection{Remarks about the proof of Theorems \ref{mainTmA}-\ref{tmProductDeformation}}
The proofs of Theorems A and B involve two distinct  quantities estimated from below by a constant $\hbar>0$. We use one name for this constant just for parsimony. The quantity $E_{M,\theta}(u)$ is topological and plays a role in the formal deformation theorems. The proof of non-negativity of $E_{M,\theta}(u)$ relies on the integrated maximum principle. On the other hand, in Theorems \ref{mainTmA} - \ref{tmProductDeformation} a key role is played by the \emph{geometric energy} $E^{geo}(u)$ of a Floer trajectory. A Floer solution $u$ decreases norms by a factor of $e^{-E^{geo}(u)}$.  The key technical ingredient for quantifying the deformation is Proposition \ref{PrpRelHbar} from \cite{Groman2023SYZLocal}: whenever $E_{M,\theta}(u)\neq 0$, the trajectory satisfies $E^{geo}(u)>\hbar$. This relies on monotonicity type estimates.

\begin{rem} The geometric estimate of Proposition \ref{PrpRelHbar} is quite 
delicate. The main technical difficulty is that we cannot take $H$ to be 
strictly constant outside of $D$. This fact adds considerably to the involvedness 
of a number of proofs. See Remark \ref{remMotivationSShaped}  for an elaboration on this point.
\end{rem}
With $E^{geo}(u)>\hbar$ established, Theorem \ref{mainTmA} follows via homological perturbation theory. By Theorem \ref{mainTmFiltration}, $SC^*_M(D) = (G^* \hat{\otimes} \Lambda, d_0 + T^\hbar d_1)$ where $G^*$ models the intrinsic complex. When $\beta(D) < \hbar$, we have $|T^\hbar d_1|_\infty < e^{-\beta(D)}$, so the perturbation $T^\hbar d_1$ is small relative to the boundary depth. The homological perturbation lemma (Lemma \ref{lmCompPert}) then produces a special deformation retraction to $(SH^*_{\widehat{D},\theta}(D)\hat{\otimes}\Lambda, d_{def})$.

\subsection{Neck stretching and deformation}\label{subsec:characterizing-controlling-the-deformation}
To proceed, one can either try to characterize the deformations of the differential and attending structures explicitly or develop quantitative control methods. In the present work we only rigorously take up the latter approach. But let us pause to formulate a conjectural picture for the former approach. The discussion in this section is within the purview of a collaboration with K. Siegel.  

Recall the intrinsic complex $SC^*(D)=SC^*_{\theta}(D)$ carries a framed $E_2$ structure \cite{AbouzaidGromanVarolgunes24}. In particular is a complex over $C_*(S^1)$. Applying the Borel construction we obtain the $S^1$-equivariant symplectic cochains $SC^*_{S^1}(D)$. The latter can be equipped with a shifted $L_{\infty}$ structure. Moreover, there is Gysin map $\rho:SC^*_{S^1}(D)\to SC^{*+1}(D)$ which is an $L_{\infty}$ homomorphism. In particular we have a pushforward map $\rho_*:MC(SC_{S^1}^*(D))\to MC(SC^*(D))$ of the Maurer-Cartan moduli spaces.

\begin{conj}\label{ConjMC}
    Suppose $\partial D$ has no contractible Reeb orbits. Then there is a Maurer-Cartan element $x\in  SC^*_{S^1}(D)$ well-defined up to gauge equivalence such that $SC^*_M(D)\simeq$ twist of $SC^*(D)$ by $\rho_*(x)$. 
\end{conj}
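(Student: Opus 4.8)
The plan is to realize the deformation $SC^*_M(D)$ as an explicit twist of $SC^*(D)$ by a Maurer-Cartan element, using a neck-stretching degeneration along $\partial D$ to identify the deforming data. First I would fix the $S$-shaped Hamiltonians of Definition \ref{dfSShaped} adapted to $D$ and stretch the neck along $\partial D\times\{1\}$, introducing a parameter $N\to\infty$. By the standard SFT-type compactness (applicable because $\partial D$ has no contractible Reeb orbits, so no bubbling of planes into the symplectization occurs), a Floer trajectory contributing to $SC^*_M(D)$ breaks into a level inside $\widehat{D}$ — contributing the intrinsic differential $d_0$ — and levels consisting of curves in $M\setminus D$ with positive ends asymptotic to Reeb orbits. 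This is exactly the mechanism by which $T^\hbar d_1$ in the decomposition $d=d_0+T^\hbar d_1$ from Theorem \ref{mainTmFiltration} is built out of ``caps'' in the complement. The first key step is to organize these complement-caps: each such cap, with its collection of positive Reeb ends, defines an operation with inputs in a complex built from closed Reeb orbits; packaging all of them with their intersection-theoretic weights $T^{E_{M,\theta}}$ produces, after passing to the Borel construction over $C_*(S^1)$ (to account for the $S^1$-reparametrization of the ends), an element of $SC^*_{S^1}(D)$. Here one uses that the positive ends of a single complex document (a genus-$0$, one-negative-end curve is not required — these are the ``anchored'' contributions) naturally carry the cyclic/$S^1$-structure that the equivariant complex records.

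The second step is to check that the resulting element $x\in SC^*_{S^1}(D)$ is a Maurer-Cartan element for the shifted $L_\infty$ structure: this should follow from the codimension-one boundary strata of the relevant moduli of complement-caps, which pair up the $L_\infty$ operations $\ell_k$ applied to $x$ — a familiar ``master equation from the boundary of moduli space'' argument, analogous to how Gromov-Witten/relative invariants assemble into solutions of WDVV or of the Maurer-Cartan equation. The third step is to identify the pushforward $\rho_*(x)$, where $\rho: SC^*_{S^1}(D)\to SC^{*+1}(D)$ is the Gysin/erase-the-equivariant-parameter map, with the actual perturbation term $T^\hbar d_1$ of the ambient differential — equivalently, to show the twisted differential $d_0 + (\text{bracket with }\rho_*(x))$ agrees with $d$ up to an $L_\infty$-automorphism of $SC^*(D)$, i.e. up to gauge. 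This is where one invokes that $\rho$ is an $L_\infty$ homomorphism, so that gauge equivalence upstairs in $SC^*_{S^1}(D)$ maps to gauge equivalence of the twists downstairs, giving the ``well-defined up to gauge equivalence'' clause; the remaining ambiguity (choice of almost complex structure, neck length, auxiliary data) is absorbed into continuation maps, which are precisely the $L_\infty$-gauge transformations relating the Maurer-Cartan elements produced by different choices. Combining, $SC^*_M(D)\simeq SC^*(D)$ twisted by $\rho_*(x)$, as desired.

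The main obstacle I expect is two-fold. Analytically, making the neck-stretching compactness rigorous at chain level — with the $S$-shaped Hamiltonians that are \emph{not} constant outside $D$ (the difficulty flagged in Remark \ref{remMotivationSShaped}) — requires controlling how the Hamiltonian term interacts with the symplectization levels; one likely wants to first homotope to a genuinely autonomous/cylindrical model near $\partial D$ using the boundary-depth smallness $\beta(D)<\hbar$ to keep all correction terms below the $T^\hbar$ threshold, so that they are invisible after the relevant truncation. Algebraically, the serious point is coherence: the complement-caps do not come with canonical perturbation data, and assembling them into a single strict Maurer-Cartan element (rather than merely a homotopy Maurer-Cartan element in some larger resolution) needs a careful choice of Kuranishi/virtual perturbations compatible across all strata, together with an argument that different choices differ by gauge — this is really a statement that a certain homotopy-coherent diagram of moduli spaces can be strictified, and is the part I would expect to occupy the bulk of a full proof. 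A secondary subtlety is the hypothesis ``no contractible Reeb orbits'': it is used to rule out the bubbling of trajectory components that would obstruct the clean two-level breaking, and one should check it also suffices to guarantee the equivariant complex $SC^*_{S^1}(D)$ and the Gysin map $\rho$ behave as needed (finiteness, convergence in the Novikov norm of the assembled element $x$).
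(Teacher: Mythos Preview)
This statement is labeled a \emph{conjecture} in the paper and is not proven there; the paper offers only a one-paragraph motivational sketch. So there is no ``paper's own proof'' to compare against. Your proposal is in the same spirit as the paper's motivation---an SFT-type neck-stretching along $\partial D$ to extract the deforming data from curves in $M\setminus D$ asymptotic to Reeb orbits---and you correctly flag the two places where the real work lies (chain-level compactness with the non-constant $S$-shaped Hamiltonians, and coherent assembly of the complement-caps into a strict Maurer--Cartan element). What you have written is an outline, not a proof, and you seem aware of this.

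One point of comparison worth noting: the paper's sketch routes the construction through \emph{linearized contact cochains}, then uses the maps of \cite{Bourgeois2009,Bourgeois2017} to land in positive equivariant symplectic cochains, and finally invokes the hypothesis of no contractible Reeb orbits to obtain a map from the positive equivariant theory to the full $SC^*_{S^1}(D)$. Your proposal instead builds the element directly in $SC^*_{S^1}(D)$ from the caps and uses the no-contractible-orbits hypothesis to control bubbling in the compactness argument. These are related but not identical uses of the hypothesis; the paper's route makes the role of the hypothesis more algebraic (existence of a chain map), while yours makes it more analytic. Either way, neither the paper nor your proposal closes the gap---the conjecture remains open.
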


    The conjecture is motivated by an SFT type neck stretching argument. We expect such an argument can be used to produce an element in linearized contact chains. See \cite{Siegel2019} where such an idea is attributed to Cieliebak-Latchev. There is a map from linearized contact  cochains to positive equivariant symplectic cochains \cite{Bourgeois2009,Bourgeois2017}. Under the assumption on the Reeb orbits, there is a map from the latter to $SC^*_{S^1}(D)$. 
    
    There is an entirely different neck stretching scheme, \emph{domain depenedent neck stretching}, which leads to the following conjecture.
    \begin{conj}\label{ConjDomainDependentNeckStretching}
        Suppose there a symplectic crossing divisor $V\subset X$ carrying the symplectic form and so that $D\subset X\setminus V$. Then there is a Maurer-Cartan element $x\in  SC^*_{\theta}(D)$ (well-defined up to gauge equivalence) such that $SC^*_M(D)\simeq$ twist of $SC^*_{\theta}(D)$ by $x$. 
    \end{conj}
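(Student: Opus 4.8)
## Proof proposal for Conjecture \ref{ConjDomainDependentNeckStretching}

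\textbf{Setup and strategy.} The plan is to realize the ambient complex $SC^*_M(D)$ as a deformation of the intrinsic complex $SC^*_\theta(D)$ by a Maurer--Cartan element extracted from counts of Floer trajectories that escape toward the crossing divisor $V$. The key structural input is Theorem \ref{mainTmFiltration}: after fixing an $S$-shaped Hamiltonian adapted to $D$, we have a presentation $SC^*_M(D) = (G^* \hat\otimes \Lambda, d_0 + \delta)$ where $(G^*, d_0)$ computes the intrinsic theory and $\delta = \sum_{k\geq 1} T^{a_k}\delta_k$ collects the positive-energy part with $\val_\theta(\delta) \geq \hbar > 0$, so $\delta$ is topologically nilpotent. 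The idea is that $\delta$, being a square-zero perturbation of a differential, is itself (after transfer to homology or after using the framed $E_2$/$S^1$-structure carried by $SC^*_\theta(D)$) governed by a Maurer--Cartan element $x \in SC^*_\theta(D)$: the equation $(d_0+\delta)^2 = 0$ unwinds to $d_0\delta + \delta d_0 + \delta^2 = 0$, which is precisely the Maurer--Cartan equation for $\delta$ viewed as an element of the (completed, filtered) endomorphism $L_\infty$-algebra. The role of the hypothesis on $V$ is to guarantee, via domain-dependent neck stretching along $V$, that this perturbation can be chosen so that its components factor through the chain-level operations of $SC^*_\theta(D)$ itself — i.e. that $\delta = \ell(x)$ where $\ell$ is the natural Lie-type action of $SC^*_\theta(D)$ on itself coming from the (framed $E_2$, hence in particular bracket) structure — rather than through the larger endomorphism algebra. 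Equivalently, $x$ is the image under an SFT-type evaluation map of the class of the divisor-avoiding part of the deformation.

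\textbf{Main steps.} First, set up the domain-dependent neck stretching: choose a hypersurface $\partial N(V)$ bounding a neighborhood of $V$ inside $X\setminus D$, and a family of Floer data degenerating along it, so that Floer solutions for $SC^*_M(D)$ break into a piece confined near $D$ (contributing $d_0$ and the low-energy part already identified in Theorem \ref{mainTmFiltration}) and pieces that enter the neck and record intersection data with the components of $V$. Second, package the neck contributions: using the SFT-compactness for the stretched family, show that the excess part of the differential is computed by gluing the $D$-side configurations to ``caps'' living in a neighborhood of $V$; the count of such caps, organized by the combinatorics of which components of the crossing divisor are hit and with what multiplicity, defines a chain $x$ in $SC^*_\theta(D)$ (with coefficients carrying the $T$-powers recording symplectic areas of the capping pieces, so $\val_\theta(x)\geq\hbar>0$ and $x$ is a legitimate element of the completed complex). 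Third, prove the Maurer--Cartan equation for $x$: this comes from the codimension-one boundary of the one-dimensional moduli of stretched trajectories-with-caps — the usual breaking gives $d_0 x$, while the neck-region degenerations and cap-collisions give the quadratic (and higher, via the framed $E_2$ operations) terms, so $d_0 x + \tfrac12[x,x] + \cdots = 0$. Fourth, identify $\delta$ with the twisting differential induced by $x$: a bijection of the relevant moduli spaces (standard once the neck-stretching identification is in place) shows that the deformed differential $d_0+\delta$ agrees with the $x$-twist of $(SC^*_\theta(D), d_0)$, which by the general properties of the $E_2$-structure is a model for $SC^*_M(D)$ up to the homotopy equivalences of Theorem \ref{mainTmFiltration} and Theorem \ref{mainTmA}. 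Fifth, gauge-invariance: different choices (neck length, almost complex structures, the divisor neighborhood, the $S$-shaped Hamiltonian) yield $x$'s related by continuation-map parametrized moduli, which one checks produce gauge-equivalent Maurer--Cartan elements in the standard way.

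\textbf{Main obstacle.} The hard part will be Step Two--Three: controlling the SFT-type compactness and transversality for the \emph{domain-dependent} stretching so that the broken configurations genuinely assemble into a chain in $SC^*_\theta(D)$ (with all its $E_2$-operations) and so that the boundary of the $1$-dimensional moduli closes up to the full Maurer--Cartan equation including the higher operadic terms — rather than merely a bracket-only equation, which would only give a twist in a coarser sense. In particular, crossing divisors force one to deal with configurations hitting several components and with the corner strata of the stretched moduli; matching these with the framed $E_2$ (equivalently $BV_\infty$/$S^1$) structure on $SC^*_\theta(D)$, and ensuring the resulting $x$ is independent of auxiliary data up to gauge, is where the real work lies. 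A secondary subtlety, parallel to Remark \ref{remMotivationSShaped}, is that $H$ cannot be taken strictly constant outside $D$, so the ``$D$-side'' piece of a broken trajectory is not literally an intrinsic Floer solution; reconciling this with the clean statement requires the same energy-trichotomy bookkeeping used in the proof of Theorem \ref{mainTmFiltration}. I expect no essentially new analytic input beyond what is already invoked for Theorems A and B together with standard SFT neck-stretching technology, but the organizational burden of the higher Maurer--Cartan terms is substantial, which is presumably why this is stated as a conjecture.
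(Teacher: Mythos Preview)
The paper does not prove this statement: it is explicitly formulated as a conjecture (Conjecture~\ref{ConjDomainDependentNeckStretching}), and the surrounding text only indicates that it is ``closely related to the work of \cite{Borman2024}'' and that ``it would be interesting to see whether their method can be applied to prove this conjecture in this generality.'' There is therefore no proof in the paper to compare your proposal against.

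That said, your outline is broadly consistent with what the paper gestures at. The scheme the paper names---\emph{domain-dependent} neck stretching---is exactly what you invoke, and the reference to \cite{Borman2024} (El~Alami--Borman--Sheridan) is the natural source for the technology of extracting a Maurer--Cartan element from divisor-intersection counts and identifying the ambient theory as a twist. Your identification of the main obstacles is also on target: the paper's own Remark~\ref{remNinputComplication} flags that the $n$-ary operations are not yet under control with $S$-shaped Hamiltonians (only $n\leq 2$ is handled in Theorem~\ref{mainTmBV}), which is precisely the ``higher Maurer--Cartan terms'' difficulty you highlight; and Remark~\ref{remMotivationSShaped}, which you cite, is indeed the source of the secondary subtlety about non-constant $H$ outside $D$. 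One point to sharpen: the paper distinguishes this conjecture from Conjecture~\ref{ConjMC}, where the Maurer--Cartan element lives in the \emph{$S^1$-equivariant} cochains and is pushed forward via the Gysin map; here the element is supposed to live directly in $SC^*_\theta(D)$, so the framed $E_2$ structure (rather than the equivariant $L_\infty$ structure) is the relevant receptacle, as you say---but you should be careful not to conflate the two mechanisms when you write ``framed $E_2$/$S^1$-structure''.
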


This is closely related to the work of \cite{Borman2024}. It would be interesting to see whether their method can be applied to prove this conjecture in this generality.

\begin{rem}\label{remNeckStretching}
In the either setting, the Maurer Cartan element $x$ arises from planes in $M\setminus D$ asymptotic to Reeb orbits on $\partial D$.  In the undeformed case, the contribution of these planes can be trivialized. This gives rise to a homotopy equivalence with the local model $SC^*(D)\hat{\otimes}\Lambda$. But the homotopy equivalence depends on choices, leading to wall-crossing phenomena. For example, considering Weinstein neighborhoods of Maslov $0$ Lagrangian tori in dimension $2n=4$, for generic choices of cylindrical $J$ on $M\setminus D$ the contribution of these planes is trivial, but different choices of $J$ give rise to non-trivial wall-crossing maps.

\end{rem}
\subsection{Flux and the $\tau$ invariant}\label{subsecFluxAndTau}
\begin{df}
    Define the \emph{$\tau$ invariant} associated with a symplectic embedding $i:D\to M$ and a Liouville form $\theta$ on $D$ by $\tau(D,\theta,i,M):=\val(d_{def})$ where $d_{def}$ is as in Theorem \ref{mainTmA}.
\end{df}
We now focus on quantitative control of $d_{def}$ via the $\tau$ invariant.

\begin{mainthm}\label{mainTmB}
Fix a symplectic embedding $i:D\to M$. Then $\tau(D,\theta,i,M)$ satisfies:
\begin{enumerate}
\item \emph{(invariance)} $\tau(D,\theta,i,M)$ is independent of the choice of model in Theorem \ref{mainTmA}.
\item \emph{(homology)} For fixed $D,i,M$, the function $\tau$ depends only on the relative cohomology class $[\omega,\theta]\in H^2(M,D;\bR)$. 
\item \emph{(concavity)} Consider the set $V$ of Liouville forms on $D$ up to cohomology as an affine manifold. Then $\tau$ is  either identically infinite or finite everywhere and concave as a function on $V$.

\item \emph{(monotonicity)} Let $D_1\subset D_2$ be an exact inclusion of Liouville domains. Suppose the restriction map $SH^*(D_2;\bQ)\to SH^*(D_1;\bQ)$ is injective. Then $\tau(D_1)\leq\tau(D_2).$
\item \emph{(constancy)} $\tau(D)=\tau(t\cdot D)$.
\end{enumerate}
\end{mainthm}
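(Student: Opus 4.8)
\emph{Proof plan.} The number $\tau(D,\theta,i,M)=\val(d_{def})$ is read off a particular model --- a choice of Floer data together with the homological perturbation producing the special deformation retraction of Theorem~\ref{mainTmA} --- so the organizing idea is to recognise it as an invariant of the filtered complex $(SC^*_M(D),\val_{\theta})$ up to filtered homotopy equivalence, and then to study how it varies with the geometric data. For \emph{(invariance)}: any two models are related by continuation quasi-isomorphisms of $SC^*_M(D)$, which are action-compatible and hence filtered for $\val_{\theta}$; since the differential transferred by homological perturbation is well defined up to filtered isomorphism, its valuation is well defined. Concretely, two special deformation retractions yield a filtered homotopy equivalence $(SH^*_{\theta}(D)\hat{\otimes}\Lambda,d_{def})\simeq(SH^*_{\theta}(D)\hat{\otimes}\Lambda,d_{def}')$, and I would check that a filtered homotopy equivalence cannot change the valuation of the differential. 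After this step every later argument may be run on $(SC^*_M(D),\val_{\theta})$. For \emph{(homology)}: if $\theta'-\theta$ is exact on $D$ then periodic-orbit actions, Floer energies and the relative class $[\omega,\theta]$ are all unchanged, so $\tau$ does not move; the content is the case $\theta'-\theta=c$ with $c$ a nonzero closed form on $D$ extending to a closed form $\tilde c$ on $M$ --- precisely when the two relative classes still agree in $H^2(M,D;\bR)$. There I would conjugate the entire setup by the symplectic isotopy $\psi_s$ generated by the $\omega$-dual of $\tilde c$ (a flux deformation): since $\psi_1^*\theta=\theta+\tilde c+(\text{exact})$, the diffeomorphism $\psi_1$ identifies the data defining $\tau(D,\theta+c,i,M)$ with that defining $\tau(D,\theta,i,M)$ up to an exact change of primitive, hence induces a filtered homotopy equivalence, and \emph{(invariance)} concludes. (Completeness of $\psi_s$ uses that $M$ is closed or of geometrically finite type.)

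For \emph{(concavity)}: fix Floer data independent of $\theta$. Theorem~\ref{mainTmFiltration} presents $SC^*_M(D)$ as $(G^*\hat{\otimes}\Lambda,\,d_0+d_1)$ with $G^*$ modelling $SC^*(D)$ and $\val_{\theta}(d_1)>0$, and transferring along a homotopy retraction of $(G^*,d_0)$ onto $SH^*_{\theta}(D)$ produces $d_{def}$; because each constituent Floer trajectory contributes its relative homology class, every monomial of $d_{def}$ has $\val_{\theta}$-valuation $\langle[\omega,\theta],A\rangle$ for some $A\in H_2(M,D;\bZ)$, nonnegative by the integrated maximum principle. Hence along any affine segment $\theta_s$ in $V$ the function $s\mapsto\tau(\theta_s)$ is a pointwise minimum of affine functions of $s$, hence concave; as $V$ is affine and concavity is local, this gives concavity on $V$. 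The delicate point is that the splitting $d_0+d_1$, and thus the set of contributing classes $A$, varies with $\theta$; I expect this to be controlled by a semicontinuity argument reducing the claim near any $\theta_0$ to a fixed finite family of affine functions. The dichotomy follows from the identity $\tau(\theta)=+\infty\iff d_{def}=0\iff SH^*_M(D)\cong SH^*_{\theta}(D)\hat{\otimes}\Lambda$ (via the retraction), which over the field $\Lambda$ holds iff $\operatorname{rank}_{\Lambda}SH^*_M(D)=\dim_{\bQ}SH^*(D;\bQ)$ --- a condition mentioning neither $\theta$ nor the model --- so $\tau$ is identically $+\infty$ or finite (and $\ge 0$) everywhere, and in the latter case concave.

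For \emph{(monotonicity)}: apply Theorem~\ref{mainTmD} with $K_1=D_1\subset K_2=D_2$, giving a chain map $\rho_{def}\colon(SH^*_{\theta}(D_2)\hat{\otimes}\Lambda,d_{def})\to(SH^*_{\theta}(D_1)\hat{\otimes}\Lambda,d_{def})$ with $\rho_{def}=\rho\otimes 1+\epsilon$, $\val_{\theta}(\epsilon)\ge\hbar$, and $\rho$ the intrinsic Viterbo restriction --- injective by hypothesis. From $d_{def,1}\rho_{def}=\rho_{def}d_{def,2}$ I would compare lowest-valuation terms: injectivity of $\rho$ (with the compatibility of Viterbo restriction with the action filtration) ensures the leading symbol of $d_{def,2}$, at valuation $\tau(D_2)$, is not annihilated by the leading part $\rho\otimes 1$ of $\rho_{def}$, so the right-hand side has a nonzero term at valuation $\tau(D_2)+\val(\rho)$ and none below it, while the left-hand side has valuation $\ge\tau(D_1)+\val(\rho_{def})=\tau(D_1)+\val(\rho)$; equality of the two sides forces $\tau(D_1)\le\tau(D_2)$. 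For \emph{(constancy)}: $D$ and $t\cdot D$ have the same completion $\widehat{D}$, so the relevant intrinsic Viterbo restriction is an \emph{isomorphism} (symplectic cohomology of a Liouville domain depends only on its completion); running the same comparison once with $\rho$ and once with $\rho^{-1}$ yields $\tau(D)=\tau(t\cdot D)$ for every $t$ for which both are defined (automatic when $\beta(D)=0$) --- as it must be, since scaling by $t$ rescales $\val_{\theta}$ and the energies $E_{M,\theta}$ compatibly.

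The main obstacle, I expect, is the concavity step: making rigorous that $\tau$ is a minimum of affine functions of the cohomology parameter even though the model --- and with it the splitting $d_0+d_1$ and the index set of contributing relative classes --- depends on $\theta$, i.e.\ reconciling the two filtrations in play, $\val_{\theta}$ and the Novikov filtration, both of which are governed by relative classes in $H^2(M,D)$. The leading-order (associated-graded) comparisons behind \emph{(invariance)} and \emph{(monotonicity)} are the other technically delicate ingredients; everything else is either formal or a direct appeal to Theorems~\ref{mainTmFiltration}, \ref{mainTmA} and \ref{mainTmD}.
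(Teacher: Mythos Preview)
Your plan is broadly on target, but it diverges from the paper in one organizing choice that turns out to be decisive. The paper's first move (Lemma~\ref{lmAltTauChar}) is to recharacterise $\tau$ via the locality spectral sequence: $\tau=\val(d_i)$ where $d_i$ is the first nonzero differential. This immediately yields \emph{(invariance)} (filtered continuation maps induce isomorphisms of spectral sequences), and it is also the tool the paper uses for \emph{(monotonicity)} and \emph{(constancy)}: for the former, injectivity of the intrinsic restriction propagates page by page until a differential appears; for the latter, $t\cdot D\subset D$ induces an \emph{isomorphism} already on the $E_1$ page, hence on the entire spectral sequence. Your direct arguments for (1), (4), (5) via $\rho_{def}$ from Theorem~\ref{mainTmD} are essentially correct --- for (5) the phrase ``run it with $\rho^{-1}$'' needs the observation that $\rho_{def}$ has invertible associated graded, hence is a filtered isomorphism on the $T$-adically complete target --- but the spectral sequence packaging avoids all these leading-symbol manipulations.

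There are two genuine gaps. First, your dichotomy argument in \emph{(concavity)} is wrong: $SH^*(D;\bQ)$ is typically infinite-dimensional (cotangent bundles, for instance), so the criterion $\operatorname{rank}_{\Lambda}SH^*_M(D)=\dim_{\bQ}SH^*(D;\bQ)$ is not meaningful, and in any case $d_{def}\neq 0$ need not change an infinite rank. Second, concavity itself: you correctly isolate the obstacle --- the set of contributing classes $A\in H_2(M,D;\bZ)$ varies with $\theta$ --- and the paper's resolution is precisely the construction you anticipate but do not supply. The paper (Lemmas~\ref{lmPosDatumV}--\ref{lmFamilySDR}) builds a \emph{single} model over the coefficient ring $R(P)=\Lambda[C(P)]$, the group ring of a cone of relative classes, using Floer data that are $S$-shaped \emph{simultaneously} for all primitives $\sigma(v)$, $v\in P$; homological perturbation over $R(P)$ then produces a universal $d_{def}^P$ whose specialisations are the individual $d_{def}^v$. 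From this, $\tau(v)$ is literally the infimum over a \emph{fixed} family of linear functions $v\mapsto\langle(\omega,\sigma(v)),A\rangle$, hence concave, and the dichotomy follows because either only the zero class contributes ($\tau\equiv\infty$) or some nonzero class does (giving a genuine, hence everywhere finite, linear upper bound).
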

The proof of Theorem \ref{mainTmB} is in Section \ref{SecTau}. 

\begin{rem}
    \begin{enumerate}
    \item  $\tau(D)=\infty$ implies $d_{def}=0$.
        \item 
It follows from  properties (4) and (5) that the $\tau$ invariant depends only on the skeleton of $D$, and in particular is unsensitive to the shape of $D$.
\item In \cite{ShelukhinEtAl} an invariant $\psi$ which is very similar to $\tau$ is introduced for Lagrangian submanifolds. $\psi$ measures open string obstructions. It is shown to satisfy a concavity property similar to $\tau$. The proofs are not the same though. Since a Lagrangian $L$ is a skeleton of its Weinstein neighborhood, $\tau$ is also defined for Lagrangian submanifolds. We expect that $\psi(L)\leq\tau(L)$ for all Lagrangian submanifolds $L\subset M$.
\end{enumerate}
\end{rem}

To understand when $\tau(d_{def})=\infty$, we consider , following \cite{ShelukhinEtAl}, the \emph{star shape} $Sh^*(D,M)\subset H^1(D;\bR)$ consisting of fluxes of straight isotopies (see \S\ref{SecFlux}) and its \emph{dual cone} $C^*(D,M):=\{\beta\in H_2(M,D;\bZ):\langle[\omega,\theta]+\delta(v),\beta\rangle\geq 0\text{ for all }v\in Sh^*(D,M)\}$, where $\delta:H^1(D;\bR)\to H^2(M,D;\bR)$ is the coboundary map.

\begin{tm}\label{tmTauConstant}
Let $\partial: H_2(M,D;\bZ)\to H_1(D;\bZ)$ be the boundary map. If $\partial(C^*(D,M))=0$ then the leading differential in the spectral sequence \eqref{eqSpectralSequence} preserves $H_1(D;\bZ)$-grading. Moreover, if $M$ is exact (or symplectically aspherical and atoroidal), the spectral sequence degenerates on the first page.
\end{tm}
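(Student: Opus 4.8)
The plan is to track the $H_1(D;\bZ)$-grading through the construction of the leading differential $d_1$ in the spectral sequence \eqref{eqSpectralSequence}. Recall from \S\ref{remProofStrategy} that the $E_1$-page is the intrinsic complex $SC^*_\theta(D)$ tensored with graded pieces of $\Lambda$, and that the differential $d_1$ is assembled from Floer trajectories $u$ with $E_{M,\theta}(u)\in[\hbar,2\hbar)$ — equivalently, from relative classes $\beta\in H_2(M,D;\bZ)$ with $\langle[\omega,\theta],\beta\rangle$ in this window. The $1$-periodic orbits generating $SC^*_\theta(D)$ carry an $H_1(D;\bZ)$-class (their free homotopy class, which for an $S$-shaped Hamiltonian refines to an honest homology class since the orbits sit in a neighborhood of the skeleton), and a Floer trajectory from $\gamma_+$ to $\gamma_-$ changes this class precisely by $\partial\beta$, where $\beta=[u]\in H_2(M,D;\bZ)$ is the relative class and $\partial:H_2(M,D;\bZ)\to H_1(D;\bZ)$ the boundary map. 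So the first step is: \emph{every relative class $\beta$ that contributes to $d_1$ lies in the dual cone $C^*(D,M)$.} This is where the hypothesis enters — if it holds, then $\partial\beta=0$ for all contributing $\beta$, hence $d_1$ preserves the $H_1(D;\bZ)$-grading, giving the first assertion.

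For the first step I would argue as follows. A trajectory $u$ contributing to the Floer differential of an $S$-shaped Hamiltonian has $E^{geo}(u)\ge 0$, and by the integrated maximum principle $E_{M,\theta}(u)\ge 0$; moreover $E_{M,\theta}(u) = \langle[\omega,\theta],\beta\rangle + (\text{action difference})$, and for $u$ contributing to $d_1$ this forces $\langle[\omega,\theta],\beta\rangle > 0$. To get positivity against \emph{all} of $Sh^*(D,M)+[\omega,\theta]$ rather than just $[\omega,\theta]$, I would invoke Theorem \ref{mainTmB}(2): deforming $\theta$ within its cohomology class — or more precisely, moving along a straight isotopy with flux $v\in Sh^*(D,M)$ — does not change $\tau$, hence does not change which window a given relative class falls into asymptotically; but the \emph{energy} of a trajectory in relative class $\beta$ shifts by $\langle\delta(v),\beta\rangle$. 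Since the deformed differential is still well-defined and filtered for the isotoped data (the integrated maximum principle and Proposition \ref{PrpRelHbar} apply throughout the isotopy, as in the proof of Theorem \ref{mainTmB}), any $\beta$ that can possibly contribute a nonzero Floer count must satisfy $\langle[\omega,\theta]+\delta(v),\beta\rangle\ge 0$ for every $v\in Sh^*(D,M)$ — i.e., $\beta\in C^*(D,M)$. This is the crux and the main obstacle: making rigorous the claim that the set of classes carrying \emph{nonzero} Floer moduli is stable under the straight isotopies defining $Sh^*(D,M)$, which requires knowing those isotopies extend to the ambient $M$ compatibly with the $S$-shaped Hamiltonian construction; I would handle this exactly as in \S\ref{SecFlux}--\S\ref{SecTau}, where $\tau$ is shown to be governed by $[\omega,\theta]+\delta(\cdot)$.

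For the degeneration statement under the exact (or symplectically aspherical and atoroidal) hypothesis, the point is that in that case the action of a $1$-periodic orbit is a well-defined number depending only on the orbit (no sphere-bubbling ambiguity and no toroidal ambiguity in evaluating $[\omega,\theta]$ on $\beta$), so the filtration by $\val_\theta$ is by \emph{genuine} real numbers and the only way a differential $d_r$ ($r\ge 1$) can be nonzero is via a relative class $\beta$ with $\langle[\omega,\theta],\beta\rangle>0$. But exactness of $M$ (together with $\partial C^*=0$) forces $[\omega,\theta]$ to pair to zero with every relative class admitting a Floer trajectory: such a $\beta$ lies in $C^*(D,M)$ by Step 1, has $\partial\beta=0$, hence comes from an absolute class in $H_2(M;\bZ)$ — which pairs trivially with $[\omega]$ by exactness (resp. by asphericity). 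Therefore no higher differential can be nonzero, and the spectral sequence degenerates at $E_1$. I expect the atoroidal hypothesis is needed precisely to rule out the orbit itself contributing a toroidal-class ambiguity when we split $E_{M,\theta}(u)$ into $\langle[\omega,\theta],\beta\rangle$ plus an action difference; I would spell this out by choosing the trivializations of $[\omega,\theta]$ along the orbits so that the action is literally additive over the trajectory, which is possible exactly under atoroidality.
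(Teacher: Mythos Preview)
Your overall strategy matches the paper's: show that any relative class $\beta$ appearing in the leading differential lies in the dual cone $C^*(D,M)$, deduce $H_1(D;\bZ)$-grading preservation from $\partial\beta=0$, and in the exact case observe that such $\beta$ lifts to $H_2(M;\bZ)$ and hence pairs to zero with $\omega$, contradicting $\langle[\omega,\theta],\beta\rangle=\tau\ge\hbar>0$. The exact/aspherical part is essentially the paper's argument (your atoroidality digression is unneeded once $\partial\beta=0$ gives an honest lift via the long exact sequence of the pair).

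The gap is in your ``crux''. You argue that since the integrated maximum principle and Proposition~\ref{PrpRelHbar} hold for the isotoped data, any $\beta$ with nonzero Floer count must satisfy $\langle[\omega,\theta]+\delta(v),\beta\rangle\ge 0$ for all $v\in Sh^*(D,M)$. But positivity at the isotoped datum constrains only the classes carrying solutions \emph{there}; it says nothing about $\beta$, which contributed at $v=0$. Floer moduli spaces bifurcate along the isotopy---the count in class $\beta$ can vanish and new classes can appear---so the set of contributing classes is not stable in the sense you need. (There is also a conflation: deforming $\theta$ within its cohomology class leaves $\tau$ unchanged, but a straight isotopy with flux $v\neq 0$ changes the class of $\theta$ and hence generally changes $\tau$.) Deferring to \S\ref{SecFlux}--\S\ref{SecTau} does not close this.

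The paper's mechanism is different and uses two ingredients you do not invoke. First, over a small polytope $P\ni 0$ it builds a \emph{family} special deformation retraction with a single differential $d_{def}^P$ over the ring $R(P)$ (Lemma~\ref{lmFamilySDR}); specializing at each $v\in P$ recovers $d_{def}^v$, so a class $\alpha$ in the leading term at $0$ yields the linear upper bound $\tau(v)\le\langle[\omega,\sigma(v)],\alpha\rangle$ on all of $P$, with equality at $0$. Second, to propagate this bound to an arbitrary segment $I\subset Sh^*(D,M)$ through $0$, it uses concavity of $\tau$ along straight isotopies (Lemma~\ref{lmStraightIsotopy}): a concave function equal to an affine function at $0$ and lying below it on $[0,\epsilon]$ lies below it on all of $I$. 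Since $\tau\ge\hbar>0$ everywhere, the affine function $\langle[\omega,\sigma(\cdot)],\alpha\rangle$ stays positive on $I$, hence $\alpha\in C^*(D,M)$. The argument is about the invariant $\tau$ and its concavity, not about stability of individual Floer counts.
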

The hypothesis holds if $Sh^*(D,M)$ contains an irrational line or an $\bR_{\geq 0}$ basis of $H^1(D;\bR)$. 

\begin{cy}\label{cyTauInfiniteExact}
    If $M$ is exact and $L\subset M$ is a Lagrangian with non-positively curved metric, then either $SH^*_M(L)\cong SH^*_{T^*L}(L)$ as $\Lambda$-modules or $Sh^*(L,M)$ contains no irrational line.
\end{cy}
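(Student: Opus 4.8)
The plan is to derive Corollary~\ref{cyTauInfiniteExact} from Theorem~\ref{tmTauConstant} together with the monotonicity and structural properties of $\tau$ from Theorem~\ref{mainTmB}. The logical shape is a dichotomy: either $Sh^*(L,M)$ contains an irrational line, in which case Theorem~\ref{tmTauConstant} applies directly, or it does not, which is the second alternative. So the real content is: \emph{if $M$ is exact and $Sh^*(L,M)$ contains an irrational line, then $SH^*_M(L)\cong SH^*_{T^*L}(L)$ as $\Lambda$-modules.}

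First I would unwind the setup. Since $L$ is a Lagrangian submanifold of $M$, it is the skeleton of a Weinstein neighborhood $D\cong D_r T^*L$ (a disk cotangent bundle), and by the remark following Theorem~\ref{mainTmB} the invariant $\tau$ and the question of spectral sequence collapse are insensitive to the shape of $D$ and depend only on the skeleton $L$; in particular $\widehat D\cong T^*L$, so the intrinsic symplectic cohomology $SH^*_\theta(D)$ is $SH^*(T^*L)$ and the remark following Theorem~\ref{mainTmA} (item 2 of the proof-strategy remarks, ``if one replaces $D$ with its skeleton, one can forego any hypothesis on the boundary depth'') lets me apply Theorem~\ref{mainTmA} regardless of whether $\beta(D)<\hbar$. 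Next, the non-positive curvature hypothesis enters: for a closed manifold $L$ with a non-positively curved metric, $\partial D = S_rT^*L$ has no contractible Reeb orbits (the Reeb flow is the cogeodesic flow, and on a non-positively curved manifold geodesics do not close up to contractible loops unless $L$ itself — actually one needs: closed geodesics are non-contractible), so the linearized contact homology / positive $S^1$-equivariant picture is particularly simple; but for the purposes of this corollary the relevant consequence is that $SH^*(T^*L)$ is already well-behaved and, more to the point, that $SH^{2n}$-type obstructions of Example~\ref{exCounterExample} do not arise. Then I would invoke Theorem~\ref{tmTauConstant}: an irrational line in $Sh^*(L,M)$ forces $\partial(C^*(D,M))=0$ (this is exactly the ``hypothesis holds if $Sh^*(D,M)$ contains an irrational line'' clause), hence the leading differential preserves $H_1(L;\bZ)$-grading, and since $M$ is exact the spectral sequence~\eqref{eqSpectralSequence} degenerates on the first page.

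With degeneration on the first page in hand, $D$ (equivalently $L$) is undeformed in the sense following Theorem~\ref{tmRegFibReconstruction}: $d_{def}=0$, so by Theorem~\ref{mainTmA} the special deformation retraction $\rho$ exhibits a homotopy equivalence $SC^*_M(D)\simeq SH^*_\theta(D)\hat\otimes\Lambda = SH^*(T^*L)\hat\otimes\Lambda$, and $\rho$ induces an isometry on homology. Passing to homology and using $SH^*_M(L)=SH^*_M(D)$ and $SH^*_{T^*L}(L)=SH^*(T^*L)$, this gives $SH^*_M(L)\cong SH^*_{T^*L}(L)\hat\otimes\Lambda$ as $\Lambda$-modules; since $SH^*(T^*L)$ is already defined over the ground ring and we tensor up, this is the asserted isomorphism $SH^*_M(L)\cong SH^*_{T^*L}(L)$ of $\Lambda$-modules (with the convention that the right-hand side means the $\Lambda$-linear extension). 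This completes the first horn; the corollary's statement is then just the contrapositive-packaged dichotomy.

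The main obstacle I anticipate is the careful justification that an irrational line in the star shape genuinely yields $\partial(C^*(D,M))=0$ and that exactness of $M$ (rather than merely symplectic asphericity) is what is needed for the degeneration — i.e., correctly importing Theorem~\ref{tmTauConstant} with all its hypotheses verified in the Lagrangian Weinstein-neighborhood setting, including the subtlety that $C^*(D,M)$ is defined using the coboundary map $\delta:H^1(D;\bR)\to H^2(M,D;\bR)$ and the flux classes, and that these are computed on the skeleton. A secondary but more routine point is spelling out why the non-positive curvature hypothesis is invoked at all in the final statement: it is there to ensure $SH^*(T^*L)$ is the ``expected'' answer (via the Abbondandolo--Schwarz / Viterbo isomorphism with loop space homology, which for non-positively curved $L$ has a clean description) and to rule out the pathological displaceability phenomenon of Example~\ref{exCounterExample}; I would state this cleanly rather than belabor it, since the module isomorphism itself follows formally from degeneration. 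I would also need to be slightly careful that ``irrational line'' in the hypothesis of Theorem~\ref{tmTauConstant} is being used in the same sense (a line whose direction vector has irrationally related coordinates with respect to the integral lattice $H^1(L;\bZ)$), so that it indeed forces the pairing condition defining $C^*(D,M)$ to cut out only the zero class after applying $\partial$.
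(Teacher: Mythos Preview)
Your approach is correct and aligns with the intended argument: the corollary follows from the sentence preceding it (an irrational line in $Sh^*(L,M)$ forces $\partial(C^*(D,M))=0$) together with Theorem~\ref{tmTauConstant} (exactness of $M$ then gives first-page degeneration, hence $d_{def}=0$) and Theorem~\ref{mainTmA} applied at the skeleton via the remark following it. Your speculation tying non-positive curvature to the pathology of Example~\ref{exCounterExample} is misplaced---that example concerns non-convergence of the spectral sequence, which exactness of $M$ already precludes here via Theorem~\ref{tmTauConstant}---but this side remark does not affect the validity of your main argument.
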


\subsection{Collapse in SYZ Mirror Symmetry}\label{subsec:collapse}
Consider a Maslov $0$ Lagrangian torus fibration $\pi: M \to B$ with singularities and regular locus $B_{reg}$. For $b\in B_{reg}$ and polytope $P\subset B_{reg}$ containing $b$, set $D_P:=\pi^{-1}(P)$. Each $p_0\in P$ defines a Liouville form $\theta_p=(p-p_0)d\theta$ for $D_P$, giving $\tau(p):=\tau(D_P,\theta_p)$. By parts 4,5 of Theorem \ref{mainTmB}, $\tau$ is independent of $D_P$, allowing us to define $\tau:B_{reg}\to \bR\cup\{\infty\}$.

We use $\tau$ to prove Propositions \ref{prpRegFibReconstruction}--\ref{prpSingularFibReconstruction}. Recall $D_P$ is \emph{undeformed} if the spectral sequence collapses on the first page.

\begin{proof}[Proof of Proposition \ref{prpRegFibReconstruction}]
If $D_P$ is undeformed for some $\theta$, then $\tau(D_P,\theta)=\infty$. By concavity (Theorem \ref{mainTmB}(3)), $\tau(D_P,\theta')=\infty$ for all Liouville forms $\theta'$, so $\tau|_P\equiv\infty$. If $Q\subset B_{reg}$ overlaps $P$ non-trivially, there exists a common Liouville form. By connectedness and concavity, $\tau\equiv\infty$ on $B_{reg}$.
\end{proof}

\begin{proof}[Proof of Proposition \ref{prpSingularFibReconstruction}]
Let $W$ be the Weinstein neighborhood of $L\subset D_Q$ isotopic to a regular fiber. Connect $L$ to a regular fiber $L'$ by a path $\gamma$ of Lagrangian tori covered by a finite number of Weinstein neighborhoods $D_i$. By concavity and Proposition \ref{prpRegFibReconstruction}, $\tau(D_i)=\infty$ for all $i$, hence $\tau(W)=\infty$. By monotonicity (Theorem \ref{mainTmB}(4)), $\tau(D_Q)=\infty$.
\end{proof}

\subsubsection{Examples}
\begin{enumerate}
    \item \textbf{K3 surfaces.} For $M$ a symplectic K3 with almost toric fibration $\pi:M\to S^2$, we show $\tau=\infty$ for regular fibers in a forthcoming paper. For nodal fiber neighborhoods $D_V$, the injectivity hypothesis holds \cite{GromanVarolgunes2022}, so $D_V$ is undeformed.
    
    \item \textbf{Positive singularity (dimension 3).} For $M=\bC^3\setminus\{xyz=1\}$ with Auroux fibration $\pi:M\to\bR^3$ (singularities over trivalent graph), the regular fiber embedding extends to $T^*\bT^3\hookrightarrow M$ \cite{GromanVarolgunes2022}, giving $Sh^*(L,M)=\bR^3$. By Theorem \ref{tmTauConstant} and Propositions \ref{prpRegFibReconstruction}--\ref{prpSingularFibReconstruction}, all regular fibers  are undeformed. By the Kunneth formula for symplectic cohomology 
    of Liouville domains, the generic singular fibers satisfy the injectivity 
    hypothesis. Proposition \ref{prpSingularFibReconstruction} then implies that 
    neighborhoods of generic singular fibers are also undeformed.
    
    \item \textbf{Gross/Auroux fibrations.} For $M$ the complement of a hypersurface in a toric Calabi-Yau, the regular fiber embedding extends to $T^*\bT^3\hookrightarrow M$ \cite{GromanVarolgunes2022}. Though not exact, the main result of \cite{GromanVarolgunes2022} implies $\tau=\infty$ for regular fibers. The injectivity hypothesis holds for all singularities, so all singular fiber neighborhoods are undeformed.
    
    \item \textbf{Negative singularity (dimension 3).} For $\Xneg:=\{xy=1+u+v\}\subset \bC^2\times (\bC^*)^2$ with Evans-Mauri fibration $\pi:\Xneg\to\bR^3$ \cite{evans}, one can readily show that an irrational line exists in $B_{reg}$. By Theorem \ref{tmTauConstant} and Propositions \ref{prpRegFibReconstruction}--\ref{prpSingularFibReconstruction}, all regular fibers and generic singular fiber neighborhoods are undeformed.
\end{enumerate}

\subsubsection{The hypothesis $\tau(L)=\infty$ in general}
The discussion above implies that the key to solving the reconstruction problem is showing $\tau(L)=\infty$ for at least one regular fiber $L$. So far we have only treated exact cases or those with complete embeddings $T^*\bT^n\hookrightarrow M$ (requiring non-compact $M$). For general SYZ fibrations we make the following remarks:
\begin{itemize}
    \item In dimension $2n=4$, we show in a forthcoming work that $\tau(L)=\infty$ for any regular fiber.
    \item We expect a closed string version of \cite{Solomon} shows that if $L$ is invariant under an anti-symplectic involution, then $\tau(L)=\infty$.
\item We expect the exactness assumption in Theorem \ref{tmTauConstant} can be dropped:
\begin{conj}\label{ConjTauInfinite}
    If $L\subset M$ is a Maslov $0$ Lagrangian torus with $\partial(C^*(L,M))=\{0\}$, then $\tau(L)=\infty$.
\end{conj}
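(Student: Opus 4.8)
The plan is to upgrade the proof of Theorem \ref{tmTauConstant} so that the exactness (or symplectic asphericity and atoroidality) hypothesis can be dispensed with when $L$ is a torus. In that proof, exactness is used only at the very last step: once one knows the leading differential of \eqref{eqSpectralSequence} preserves the $H_1(D;\bZ)$-grading, exactness forces the relevant Floer trajectories to carry vanishing energy and hence forces $d_{def}=0$. Without exactness a spherical relative class $\beta$ can carry strictly positive symplectic area $\langle[\omega],\beta\rangle>0$, so a Floer trajectory in such a class may contribute a nonzero term to $d_{def}$; it is precisely these ``purely holomorphic'' contributions that must be ruled out, and they are genuine holomorphic-curve phenomena rather than formal ones.

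First I would establish the cone-membership statement underlying Theorem \ref{tmTauConstant} in the form needed here: every Floer trajectory $u$ contributing a nonzero monomial to $d_{def}$, including the iterated zig-zag terms $d_1\,h\,d_1\,h\cdots d_1$ produced by the homological perturbation lemma (Lemma \ref{lmCompPert}), carries a well-defined relative class $\beta(u)\in H_2(M,L;\bZ)$ lying in $C^*(L,M)$. This follows from non-negativity of the geometric energy of $u$ computed with respect to the isotoped primitive $\theta+v$ for every flux $v\in Sh^*(L,M)$ of a straight isotopy, which is exactly the content of Proposition \ref{PrpRelHbar} together with the estimates behind Theorem \ref{mainTmB}; since the intermediate outputs of a zig-zag lie on $L$ and $C^*(L,M)$ is a convex cone closed under addition, the concatenated class stays in $C^*(L,M)$. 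The hypothesis $\partial(C^*(L,M))=\{0\}$ then gives $\partial\beta(u)=0$ for every contributing $u$, so every $\beta(u)$ is spherical and $d_{def}$ preserves the $H_1(L;\bZ)=\bZ^n$-grading on $SH^*_{\theta}(D_P)\hat{\otimes}\Lambda$.

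It remains to deduce $d_{def}=0$ from grading-preservation, and this is where the genuinely new input is required; I expect it to be the main obstacle. Grading-preservation alone does not suffice: on the ring of analytic polyvector fields of the affinoid torus a degree $+1$ derivation of the form $z^w\mapsto z^w\otimes\langle w,\Xi\rangle$ with $\Xi\in\mathrm{End}(H_1(L;\bR))\hat{\otimes}\Lambda_{>0}$ is $\bZ^n$-graded and squares to zero, so one must exploit the full (homotopy) BV structure of Theorems \ref{mainTmA}--\ref{tmProductDeformation}. Concretely, the compatibility $\{d_{def},\Delta_{def}\}\simeq 0$ with the deformed BV operator forces the symmetric part of $\Xi$ (relative to the lattice pairing) to vanish, and I would then invoke the rigidity properties of analytic polyvector fields used to prove Theorem \ref{tmRegFibReconstruction} (Section \ref{SecRigidity}) to eliminate the remaining anti-symmetric part, concluding $d_{def}=0$ up to homotopy and hence $\tau(L)=\infty$.

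The route most likely to succeed in general passes instead through the neck-stretching picture of Conjecture \ref{ConjMC}: the deformation of $SC^*_{\theta}(D_P)$ is governed by planes in $M\setminus L$ asymptotic to closed Reeb orbits on $\partial D_P$, and for $L=\bT^n$ every closed Reeb orbit of $\partial(T^*\bT^n)$ is a closed geodesic of a flat metric, hence non-contractible in $L$. Therefore the relative class of any such configuration has nonzero image under $\partial:H_2(M,L;\bZ)\to H_1(L;\bZ)$; combined with the cone-membership statement and $\partial(C^*(L,M))=\{0\}$ this forces the associated Maurer--Cartan element $x\in SC^*_{S^1}(D_P)$ to be gauge-trivial, so the twist is trivial and $\tau(L)=\infty$. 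The crux of this route is making the neck-stretching rigorous, i.e.\ establishing the relevant part of Conjecture \ref{ConjMC}. The cases $2n=4$ and $L$ invariant under an anti-symplectic involution, treated by separate methods as noted above, serve as consistency checks.
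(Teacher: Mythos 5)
This statement is Conjecture~\ref{ConjTauInfinite}: the paper does not prove it, and the paragraph that follows it in the text gives only a two-step heuristic. Your first and third paragraphs track that heuristic closely: Theorem~\ref{tmTauConstant} supplies grading preservation, and Conjecture~\ref{ConjMC} (neck-stretching producing an MC element from planes asymptotic to closed Reeb orbits on $\partial(T^*\bT^n)$, all of which project to non-trivial loops in $\bT^n$) is what would exclude any remaining contribution. You correctly identify the genuine gap as a rigorous version of Conjecture~\ref{ConjMC}, and that caveat applies equally to the paper's own sketch. The only cosmetic mismatch is that the paper closes by invoking formality of the $L_\infty$ structure on $SC^*_{S^1}(D_P)$ to conclude that MC elements cannot preserve the $H_1(L;\bZ)$-grading, whereas you argue directly from cone-membership of the relative class plus $\partial(C^*(L,M))=\{0\}$ and non-contractibility of the asymptotic Reeb orbits; these are the same geometric observation in different dress, and your version has the mild advantage of not requiring a separate formality input.

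Your second paragraph --- trying to sidestep Conjecture~\ref{ConjMC} entirely by combining grading preservation, the BV compatibility $\{d_{def},\Delta_{def}\}\simeq 0$ from Theorem~\ref{tmProductDeformation}, and the rigidity of \S\ref{SecRigidity} --- is a route the paper does not consider and is worth recording as a distinct strategy. It does not, as written, close the gap: the rigidity lemmas in \S\ref{SecRigidity} concern the commutative product on the degree-zero affinoid algebra (Lemmas~\ref{lmTateRigid}--\ref{lmLocRig}) and give nothing that would kill a $\bZ^n$-graded, BV-compatible, anti-symmetric $\Xi$-type derivation on the full polyvector field algebra. Turning this route into a proof would require a genuine new rigidity statement for Banach BV algebras of analytic polyvector fields, which neither the paper nor your sketch supplies.
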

\end{itemize}

Let us elaborate on the last item. Under the assumption, Theorem \ref{tmTauConstant} implies that any non-trivial contribution to $d_{def}$ must preserve the $H_1(D;\bZ)$-grading. By Conjecture \ref{ConjMC}, for $D=L\times P$, deformations arise from Maurer-Cartan elements arising from planes in $M\setminus D$ asymptotic to Reeb orbits which are not contractible. Moreover, the $L_{\infty}$ structure is known to be formal in this case, so Maurer-Cartan elements cannot preserve $H_1(D;\bZ)$-grading.  

Verifying $\partial(C^*(L,M))=\{0\}$ might be very difficult: one way one can try to find an irrational line in $Sh^*(L,M)$ is to find a complete geodesic in $B_{reg}$. But integral affine manifolds lack a Hopf-Rinow theorem \footnote{The author is gratefull to U. Varolgunes for pointing out this gap.}. On the other hand, working in our favor, the starshape contains straight lines which do not come from the base $B_{reg}$. Namely, we can modify $B_{reg}$ by sliding singularities. This potentially gives us a handle for controlling geodesic acceleration.

\subsection{The local to global method in Floer theory}\label{subsec:local-global}
There is an open string version of Floer theory with supports which is under development in joint work with Abouzaid and Varolgunes. The closed string story which is this paper's focus can be viewed as a special case where we consider the diagonal Lagrangian $\Delta\subset (M\times M,\omega\oplus-\omega)$.

With this in mind, let us first compare and contrast the philosophy of the present work with the philosophy proposed by Seidel  in \cite{Seidel2002ICM}. There one considers a Liouville domain $D$ obtained from a projective variety $M$ by removing an ample divisor $V$, presenting the full subcategory $\cF(M,V)\subset\cF(M)$ of objects supported in $M\setminus V$ as a deformation of the compact Fukaya category $\cF_c(D)\subset \cW(D)$. This has utilized been utilized to prove homological mirror symmetry in many settings\cite{Sheridan2015,Sheridan2021,Ganatra2024}.

By contrast, in Floer theory with supports we do not  restrict to objects contained in $D$ nor do we focus on divisor complements. The Mayer-Vietoris property \cite{Varolgunes}, whose open string version is forthcoming with Abouzaid-Varolgunes, reconstructs the global Yoneda module of $L\subset M$ from those of $L$ with supports on $D_i\subset M$ where $\{D_i\}$ forms an involutive cover. The open string analogue of the present work would further relate the Floer theory of $L$ in $M$ with supports on $D_i$ to the intrinsic Floer theory of $L$ with support on $D_i$.  We would thus cut the global Lagrangian $L$ into pieces $L\cap D_i$ and reconstruct the global Floer theory from the the wrapped Floer theory of these local pieces endowed with some deformation data. 

To summarize, the local-to-global approach for Calabi-Yau manifolds $M$ is expected  to proceed as follows:
\begin{enumerate}
    \item Construct an involutive cover $\{D_i\}$ by Liouville domains with tractable intrinsic Floer theory.
    \item Use Mayer-Vietoris to relate global Floer theory to Floer theory with supports on $D_i$.
    \item Study ambient Floer theory on $D_i$ as a deformation of intrinsic wrapped Floer theory.
    \item Apply tools of wrapped Floer theory to study the intrisic Floer theory of the $D_i$.
\end{enumerate}

\begin{rem}
The first step in the above construction is a problem of differential geometry. It has long been conjectured for example that a toric degeneration of Calabi-Yau hypersurfaces and complete intersections should give rise a Lagrangian torus fibration with singularities modeled on some small number of types of singularities. However, to the best of our knowledge this has not yet been established satisfactorily. 

On the other hand, one can take the opposite point of view where starting from an integral affine manifold with singularities one constructs an appropriate symplectic manifols. Such integral affine manifolds with singularities arise naturally from the nonarchimedean SYZ fibration associated with the degeneration of a Calabi-Yau manifold \cite{Nicaise2019}. Understanding the Fukaya category of the symplecic manifold thus constructed has implications for understanding the analytic geometry of the degeneration. An example of applications in this direction is in the work of \cite{hicks2022} concerning realizabilty of tropcial varieties. 
\end{rem}
The method outlined above can be adjusted to the Fano and positive log Calabi-Yau cases, and some other cases. But this is outside the scope of the present discussion.

\subsection{Organization of the paper}
The paper is organized as follows. In Section 2 we review the basic definitions of symplectic cohomology. Section 3 introduces S-shaped Hamiltonians construct the $\val_{M,\theta}$ filtration. Section 4 proves the formal deformation results A1-A4, intorduces the locality spectral sequence, and discusses convergence. Section 5 is concerned with boundary depth of Liouville domains. Section 6 discusses homological perturbation theory and gives the proof of Theorems B1-B3. Section 7 is devoted to the $\tau$ invariant, Theorem C and flux.  Section 8 is devoted to rigidity of algebraic structures and the proof of Theorem \ref{tmRegFibReconstruction}. Appendix \ref{SecEstimates} reviews the proof of Proposition \ref{PrpRelHbar} and is borrowed from \cite{Groman2023SYZLocal}. Appendix \ref{SecVirtualChains} concerns virtual chains. We remark that the discussion of virtual chains is included for completeness, as all our applications to the SYZ setting can be done with classical transversality methods.

\subsection{Acknowledgments}
The author benefited greatly from conversations with Kyler Siegel and with Nick Sheridan concerning Maurer-Cartan theory and deformations. The long standing collaborations of with U. Varolgunes on relative $SH$ and with U. Varolgunes and M. Abouzaid on the Fukaya Category with supports have contributed to this work in ways too numerous to spell out in detail. 

 The author was supported by  the ISF (grant no. 2445/20 and no. 3605/24).

\section{Basics}

\subsection{Notation and conventions}
Fix a ring $R$ and denote by
\[
\Lambda=\Lambda_{R}:=\left\{\sum_{i=1}^{\infty} a_iT^{\lambda_i}:a_i\in R,\lambda_i\to\infty\right\},
\]
the Novikov field over $R$ and by 
 \[
\Lambda_{\geq 0}:=\left\{\sum_{i=1}^{\infty} a_iT^{\lambda_i}:a_i\in R, \lambda_i\geq 0,\lambda_i\to\infty\right\}
\]
the Novikov ring. 

For an element $x=\sum a_iT^{\lambda_i}\in\Lambda$ we write
\[
\val(x)=\min\{\lambda_i|a_i\neq 0\},
\]
and,
\[
|x|=e^{-\val(x)}.
\]
More generally, for a module M over $\Lambda$, a \textit{non-archimedean valuation} is a valuation $\val:M\to\bR$ satisfying
\[
\val(x+y)\geq \max\{\val(x),\val(y)\},
\]
and a non-Archimedean norm is a norm of the form
\[
|x|=e^{-\val(x)}.
\]
We use the notation $\Lambda_{[a,b)}$ for elements with $\val\in[a,b)$, i.e., the $\Lambda_{\geq0}$-module of elements with valuation $\geq a$ modulo those with valuation $\geq b$. When $a=0$, $\Lambda_{[0,b)}$ is a unital ring. 

\begin{rem}
    This notation conflicts with \cite{Groman2023SYZLocal} where $\Lambda_{[a,b)}$ denotes elements of norm $<e^b$ modulo those with norm $<e^a$.
\end{rem}

Some of the main results require $R$ to be a field $\bK$ of characteristic zero, though many intermediate results hold for arbitrary ground rings $R$.

\begin{rem}\label{remGrading}
    Both ambient and intrinsic invariants are a priori $\bZ/2\bZ$-graded. For $\bZ$-grading, we require $c_1(M)=0$ (implying $c_1(D)=0$). When $D$ is not simply connected, we additionally require a trivialization of $\det(T^*D\otimes\bC)$ extending to $M$. For Weinstein neighborhoods of Lagrangians $L$, a natural trivialization exists, and the extension assumption is equivalent to $\mu_L:\pi_1(M,L)\to\bZ$ vanishing. We leave the grading ($\bZ$ with compatible trivialization, or $\bZ/2\bZ$) unspecified.
\end{rem}

\subsection{The Hamiltonian Floer package}
Our Floer cohomology conventions follow \cite{GromanVarolgunes2021}. We work with closed or geometrically bounded manifolds $M$. For geometrically bounded manifolds and dissipative Floer data (providing necessary $C^0$ estimates), see \cite[Appendix A]{GromanVarolgunes2021}. We first treat the $c_1(M)=0$ case with classical transversality. 

\subsubsection{The classical transversality package}\label{sec:classical_transversality}
Assume $c_1(M)=0$ with a fixed trivialization of the complexified canonical bundle of $M$. For a Hamiltonian $H$, denote by $X_H$ the vector field satisfying $\omega(X_H,\cdot)=dH$. The trivialization associates an orientation line $o_{\gamma}$ and Conley-Zehnder index $i_{cz}(\gamma)$ to each $1$-periodic orbit $\gamma$. Assume $H$ has finitely many $1$-periodic orbits. We also conisder the semi-positive case, in which we interpret $i_{cz}(\gamma)\in\bZ/2\bZ$.

Fix a time-dependent $\omega$-compatible almost complex structure $J$ with $(H,J)$ regular for Floer cohomology (see \cite[Definition 3.13]{AbouzaidGromanVarolgunes24}). Consider Floer trajectories $u:\bR\times S^1\to M$ solving
\begin{equation}\label{eqTransInvFloer}
\partial_su+J(\partial_tu-X_H)=0.
\end{equation}
The end at $-\infty$ is the \emph{input}, the end at $+\infty$ the \emph{output}. the orientation of the $X_H$ orbit at the input is opposite to boundary orientation; at the output they coincide.

We take the underlying $\Lambda$-module of the Floer complex $CF^*(H,J)$ to be  
\begin{equation}
CF^*(H,J)=\bigoplus o_{\gamma}\otimes\Lambda[i_{cz}(\gamma)]
\end{equation}
where the sum is over all $1$-periodic orbits and $i_{cz}(\gamma)$ denotes the Conley-Zehnder index. A Floer trajectory   $u$ with input $\gamma_1$ and output $\gamma_2$ defines a map $d_u:o_{\gamma_1}\to o_{\gamma_2}$. The Floer differential $d:CF^*(H,J)\to CF^*(H,J)$ is then defined by
\begin{equation}
d:=\sum_u T^{E_{top}(u)}d_u,
\end{equation}
where 
$$
E_{top}(u):=\int u^*(\omega+d(Hdt)).
$$

The sum is over all finite energy rigid solutions. It converges in the $T$-adic topology on $CF^*(H,J)$. To see this, write $E_{geo}(u)=\int \|du-X_H\|^2$. Then 
\begin{equation}\label{eqEtopEgeo}
E_{top}(u)\geq E_{geo}(u).
\end{equation}
By Gromov-Floer compactness, solutions with bounded geometric energy intersecting a compact set form a compact set (given $C^0$ estimates, established for dissipative data \cite{Groman}).

For regular dissipative data $(H_0,J_0)\leq (H_1,J_1)$, a dissipative monotone path $s\mapsto (H_s,J_s)$ with $(H_s,J_s)=(H_0,J_0)$ for $s\leq 0$ and $(H_s,J_s)=(H_1,J_1)$ for $s\geq 1$ induces a continuation map $\kappa:CF^*(H_0,J_0)\to CF^*(H_1,J_1)$ via
$$
\kappa=\sum T^{E_{top}(u)}\kappa_u,
$$
for $\kappa_u:o_{\gamma_1}\to o_{\gamma_2}$ the isomorphism of orientation lines induced by the solution $u$ with asymptotics on $\gamma_1,\gamma_2$. 

Monotonicity ensures $E_{top}(u)\geq E_{geo}(u)$, required for moduli space finiteness. Weakening to $\partial_sH_s$ bounded below yields a continuation map $\kappa$ over $\Lambda$ only. Strict monotonicity defines complexes and maps over $\Lambda_{\geq0}$. 

Given monotone paths $(H_s^0, J_s^0)$ and $(H_s^1, J_s^1)$ from $(H_0,J_0)$ to $(H_1,J_1)$ and a 2-parameter interpolation $(H_{s,t}, J_{s,t})$, we obtain a chain homotopy $K: CF^*(H_0,J_0) \to CF^*(H_1,J_1)$ between continuation maps $\kappa_0,\kappa_1$ via
$$
K = \sum T^{E_{top}(v)} K_v
$$
where the sum is over rigid solutions $v$ to the 2-parameter Floer equation, and $K_v: o_{\gamma_1} \to o_{\gamma_2}$ are induced orientation line maps. Then $\kappa_1 - \kappa_0 = d \circ K + K \circ d$.

\subsubsection{The general setting: virtual chains}\label{subsec:virtual_chains_general}
We now drop the assumption $c_1(M)=0$. Consider geometrically bounded $J$ (no regularity assumption) and Hamiltonian $H$ with non-degenerate $1$-periodic orbits. In the non-compact case, assume $(H,J)$ dissipative. The ground ring is a field $\bK$ of characteristic zero (refinable to arbitrary rings for special cases). The Floer complex is 
\begin{equation}
    CF^*(H,J)=\bigoplus o_{\gamma}\otimes\Lambda[i_{cz}(\gamma)]
 \end{equation}
where the sum is over $1$-periodic orbits and $i_{cz}(\gamma)\in\bZ/2\bZ$ generally.

We define the differential via virtual chains \cite{Abouzaid2022}. To Floer data $(H,J)$, we associate a \emph{Kuranishi flow category} $\mathbb{X}_H$ \cite[Section 8]{Abouzaid2022}. Briefly: start with poset $\mathcal{P}$ (the set of $1$-periodic orbits). Morphisms $\gamma\to\gamma'$ are sequences $(\gamma,\lambda_0,\gamma_1,\lambda_1,\ldots,\gamma_n,\lambda_n,\gamma')$ where $\lambda_i\geq0$ are topological energies and $\gamma_i$ are orbits. Composition is concatenation. 

A Kuranishi flow category $\mathbb{X}$ over $\mathcal{P}$ consists of orientation lines $o_\gamma$ for each $\gamma\in\mathcal{P}$ and Kuranishi presentations $\mathbb{X}^\lambda(\gamma,\gamma')$ for each $\lambda>0$ and pair $\gamma,\gamma'$, with strata corresponding to morphisms with total energy $\leq\lambda$ (see \cite[Definition 7.2]{Abouzaid2022}).

For our application, take $\mathcal{P}_H$ as $1$-periodic orbits of $H$; the construction of $\mathbb{X}_H$ over $\mathcal{P}_H$ is in \cite[Section 10]{Abouzaid2022}.

A \emph{theory of virtual counts} \cite[Section 7.2]{Abouzaid2022} produces a chain complex over $\Lambda$ from a Kuranishi flow category:
\begin{equation}
CF^*(\mathbb{X}) = \bigoplus_{\gamma \text{ objects}} o_\gamma \otimes \Lambda[|\gamma|]
\end{equation}
where $|\gamma|$ is the grading. The differential is
\begin{equation}
d = \sum_{\gamma_1, \gamma_2} \sum_{\lambda} T^\lambda m_{\lambda}
\end{equation}
with operations $m_{\lambda}: o_{\gamma_1} \to o_{\gamma_2}$ extracted from Kuranishi presentations via a  theory of virtual counts.

A monotone continuation between $(H_0,J_0)$ and $(H_1,J_1)$ yields a bimodule over $\mathbb{X}_{H_0}$ and $\mathbb{X}_{H_1}$ \cite[Section 8]{Abouzaid2022}. Its strata are sequences $(\gamma_0, \lambda_0, \ldots, \gamma_k, \lambda_k, \gamma_{k+1}, \ldots, \gamma_n, \lambda_n, \gamma')$ where $\gamma_i$ ($i \leq k$) are orbits of $H_0$ and $\gamma_j$ ($j > k$) are orbits of $H_1$, capturing contributions from differentials and continuation. A theory of virtual counts provide a chain map
\begin{equation}
\Phi = \sum_{\gamma_0, \gamma'} \sum_{\lambda} T^\lambda \phi_{\lambda}: CF^*(\mathbb{X}_{H_0}) \to CF^*(\mathbb{X}_{H_1})
\end{equation}
with $\phi_{\lambda}: o_{\gamma_0} \to o_{\gamma'}$ extracted from the bimodule via a theory of virtual counts.

Two bimodules $\mathcal{B}_0,\mathcal{B}_1$ between $\mathbb{X}_{H_0}$ and $\mathbb{X}_{H_1}$ yield a bimodule $\mathcal{H}$ over an interval, providing a chain homotopy
\begin{equation}
K: CF^*(\mathbb{X}_{H_0}) \to CF^*(\mathbb{X}_{H_1})
\end{equation}
between corresponding chain maps $\Phi_0,\Phi_1$, satisfying $\Phi_1 - \Phi_0 = d \circ K + K \circ d$. (Virtual counts work for cubes generally, but we only need this.)

The virtual chains approach parallels \S\ref{sec:classical_transversality}, but sums over generator pairs $(\gamma_1, \gamma_2)$ and energy levels $\lambda$ rather than individual solutions $u$. This means that for certain geometric arguments, the relation between the geometry of J-holomorphic curves and the virtual counts becomes somewhat more indirect.

\subsection{Indexing categories and telescopes}

\subsubsection{Indexing categories}
Symplectic cohomology in its various flavors is defined as a colimit of Hamiltonian Floer cohomologies.  For any serious applications, one needs to do this at the chain level. The starting point for this is a (truncated) \emph{indexing category}. This notion will be reused repeatedly throughout the paper for different contexts. 

\begin{df}\label{dfIndexingCategory}
A \emph{truncated indexing category} $\cH$ consists of:
\begin{itemize}
\item A set of objects $\cH_0$ called \emph{$\cH$-admissible Floer data}, consisting of pairs $(H,J)$ of Hamiltonians and almost complex structures.

\item For each pair of objects $(H_0,J_0), (H_1,J_1) \in \cH_0$, a set $\cH_1((H_0,J_0), (H_1,J_1))$ of \emph{$\cH$-admissible paths}, consisting of:
  \begin{itemize}
  \item Smooth families $s \mapsto (H_s, J_s)$ for $s \in \bR$ with compactly supported derivative, such that $(H_s, J_s) = (H_0, J_0)$ for $s \ll 0$ and $(H_s, J_s) = (H_1, J_1)$ for $s \gg 0$.
  \item Broken paths: sequences of smooth paths $(H_0,J_0) \xrightarrow{\gamma_1} (H_1,J_1) \xrightarrow{\gamma_2} (H_2,J_2) \xrightarrow{\gamma_3} \cdots \xrightarrow{\gamma_n} (H_n,J_n)$ where each $\gamma_i$ is a smooth path as above.
  \end{itemize}

\item For each pair of paths $\gamma_1, \gamma_2 \in \cH_1((H_0,J_0), (H_1,J_1))$, a set $\cH_2(\gamma_1, \gamma_2)$ of \emph{$\cH$-admissible homotopies}, defined as follows:
  \begin{itemize}
  \item When both $\gamma_1$ and $\gamma_2$ are smooth, this is a smooth 2-parameter family $(s,t) \mapsto (H_{s,t}, J_{s,t})$ interpolating between $\gamma_1$ and $\gamma_2$.
  \item When one of them is broken, we also allow for gluing homotopies. Specifically, suppose we have a broken path with two components: $\alpha$ ending on $(H_1,J_1)$ and $\beta$ starting on $(H_1,J_1)$. We define the gluing homotopy by the notation $r \mapsto \alpha \#_r \beta$, where the operation $\#_r$ is defined as follows. This is the local picture near the specific ends of $\alpha$ and $\beta$:
    \begin{itemize}
    \item For $r = 0$, it gives the completely broken path $(\alpha, \beta)$.
    \item For $r \in (0,\varepsilon)$ (for some small $\varepsilon > 0$), we remove from the lower path $\alpha$ the interval $[-\log r, \infty)$ and from the upper path $\beta$ the interval $(-\infty, \log r]$, and identify the remaining endpoints.
    \end{itemize}
  \end{itemize}

\end{itemize}
\end{df}

\begin{rem}
A truncated indexing category \emph{is not a category in the usual sense}, as it records homotopy data. The correct notion is a category enriched in cubical sets or an $\infty$-category \cite{Abouzaid2022,AbouzaidGromanVarolgunes24}. We only need part of this data and refer to truncated indexing categories simply as \emph{indexing categories}.
\end{rem}

\begin{df}\label{dfAccelerationDatum}
Given an indexing category $\cH$, an \emph{acceleration datum $A$} in $\cH$ is a family of pairs $\tau \mapsto (H_{\tau}, J_{\tau})$ of Floer data together with a smooth function $f: \bR \to \bR$ such that $f(t) \equiv 0$ for $t \ll 0$ and $f(t) \equiv 1$ for $t \gg 1$. These are required to satisfy:
\begin{itemize}
\item For each $\tau$, the pair $(H_{\tau}, J_{\tau})$ is $\cH$-admissible.
\item For each $i \in \bN$, the continuation path $g_i: s \mapsto (H_{i+f(s)}, J_{i+f(s)})$ is $\cH$-admissible
\end{itemize}

The acceleration datum is said to be \emph{regular} if:

\begin{itemize}
\item For each $i \in \bN$, the pair $(H_i, J_i)$ is regular for the definition of Floer cohomology.
\item For each $i \in \bN$, the continuation path $g_i: s \mapsto (H_{i+f(s)}, J_{i+f(s)})$ is regular for the definition of continuation maps.
\end{itemize}

In the virtual chains setting, we only impose that for each $i \in \bN$, the $1$-periodic orbits of $H_i$ are non-degenerate.
\end{df}

\subsubsection{The telescope construction}

An acceleration datum $A$ in $\cH$ produces a 1-ray of chain complexes over $\Lambda_{\geq 0}$: $$\mathcal{C}(A):= CF^*(H_1)\xrightarrow{\kappa_1} CF^*(H_2)\xrightarrow{\kappa_2}\ldots. $$
We omit $J$ when clear from context. Define the telescope $tel(\cC(A))=\bigoplus_{i=1}^\infty\ CF^*(H_i)[q]$ with $q$ a degree $1$ variable satisfying $q^2=0$.  Abbreviating $C_i=CF^*(H_i)$, the differential is
\begin{align}\label{teles}
\xymatrix{
C_1\ar@{>}@(ul,ur)^{d }  &C_2\ar@{>}@(ul,ur)^{d} &C_3\ar@{>}@(ul,ur)^{d}\\
C_1[1]\ar@{>}@(dl,dr)_{-d} \ar[u]^{\text{id}}\ar[ur]^{\kappa_1} &C_2[1]\ar@{>}@(dl,dr)_{-d} \ar[u]^{\text{id}}\ar[ur]^{\kappa_2}&\ldots\ar[u]^{\text{id}}_{\ldots} }
\end{align}
given by
\begin{equation}\label{eqTelDiff}
\delta qa:=qda+(-1)^{deg(a)}(\kappa_i(a)-a).
\end{equation}

\subsubsection{Morphisms between telescopes}

A morphism between acceleration data $A^1,A^2$ in $\cH$ requires:
\begin{itemize}
\item A 1-morphism $(H^1_i, J^1_i)\to(H^2_i, J^2_i)$ in $\cH$ for each $i \in \bN$.
\item A 2-morphism in $\cH$ providing a homotopy between the composition of  $(H^1_i, J^1_i)\to(H^1_{i+1}, J^1_{i+1})$ with  $(H^1_{i+1}, J^1_{i+1})\to(H^2_{i+1}, J^2_{i+1})$, and the composition of $(H^1_i, 
J^1_i)\to(H^2_i, J^2_i)$ with  $(H^2_i, J^2_i)\to(H^2_{i+1}, J^2_{i+1})$.
\end{itemize}

This yields a diagram between the 1-rays $\cC(A^1),\cC(A^2)$:
\begin{align}\label{eqRayMorphism}
\xymatrix{ 
\mathcal{C}_1\ar[r]^{\iota_1}\ar[d]_{f_1}\ar[dr]_{h_1}& \mathcal{C}_2\ar[d]_{f_2}\ar[r]^{\iota_2}\ar[dr]_{h_2} &\mathcal{C}_3\ar[r]^{\iota_3}\ar[d]_{f_3}\ar[dr]_{h_3}& \ldots \\ \mathcal{C}_{1}'\ar[r]_{\iota_1'} &\mathcal{C}_2'\ar[r]_{\iota_2'}&\mathcal{C}_3'\ar[r]_{\iota_3'}&\ldots}.
\end{align}
Here $h_i$ is a homotopy between $f_{i+1}\circ\iota_i$ and $\iota'_i\circ f_i$. It is shown in \cite{Varolgunes} that a morphism of $1$ rays induces a morphism on the associated telescopes.

\subsection{Symplectic cohomology of K relative to M}\label{SecSHReview}

\subsubsection{The monotone indexing category}

\begin{df}\label{dfMonotoneIndexing}
    The \emph{monotone indexing category} $\cH_{monotone}$ is defined by:
    \begin{itemize}
    \item Objects: Pairs $(H,J)$ which are dissipative and with $H$ non-degenerate.
    \item Paths: 
       Composable sequences of smooth dissipative paths monotone dissipative paths $s \mapsto (H_s, J_s)$ for $s \in \bR$ with compactly supported derivative and satisfying the monotonicity conditions
       $$\frac{\partial H_s}{\partial s}\geq 0.$$

    \item Homotopies: 
     Smooth 1-parameter families of dissipative data interpolating between paths
\end{itemize}
 
\end{df}

\begin{df}\label{dfAccDatKsubM}
 An \emph{acceleration datum $A=(H_{\tau},J_{\tau},f)$  for $K\subset M$} is an $\cH_{monotone}$-admissible acceleration datum such that for each $x\in M$ we have $$\lim_{\tau\to\infty }H_{\tau}(x)=\begin{cases}0,&\quad x\in K\\\infty,&\quad x\not\in K\end{cases}.$$
\end{df}

For the monotone indexing category, continuation maps preserve the $T$-adic filtration. Assign to $CF^*(H_i,J_i[q])$ the norm with $|orbit|=1$, $|T|=e^{-1}$, $|q|=1$. Define
$$SC_M^*(K,A;\Lambda):=\widehat{tel}(\mathcal{C}(A)),$$
the Cauchy completion of $tel(\mathcal{C}(A))$, a complete normed chain complex over $\Lambda_{\geq 0}$. The relative symplectic cohomology of $K$ is
\begin{equation}
    SH^*_M(K):=H^*(SC^*_M(K)).
\end{equation}

The $T$-adic norm defines truncations: $CF^{*,\lambda}(H,J)$ has elements with norm $<e^{\lambda}$, and $CF^*_{[\lambda_1,\lambda_2)}(H,J)=CF^{*,\lambda_2}(H,J)/ CF^{*,\lambda_1}(H,J)$. Since continuation maps preserve the filtration, we have the truncated complex $SC^*_{M,[a,b)}(K)$. Define
\begin{equation}
    SH^*_{M,[a,b)}(K):=\varinjlim_{(H,J)}HF^*_{[a,b)}(H,J)
\end{equation}
where the colimit is over dissipative $(H,J)$ with $H<0$ on $K$. 

\begin{lm}
$H^*(SC^*_{M,[a,b)}(K))=SH^*_{M,[a,b)}(K)$.
\end{lm}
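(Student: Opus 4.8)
The plan is to establish the identity $H^*(SC^*_{M,[a,b)}(K)) = SH^*_{M,[a,b)}(K)$ by showing that both sides compute the same thing: the homology of the truncation of the telescope, which by a standard interchange-of-limits argument agrees with the colimit of the homologies of the truncated Floer complexes. The key point is that truncation $(-)_{[a,b)}$, passage to homology, and the filtered colimit over acceleration data all commute with one another in an appropriate sense, once one is careful about completions.

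First I would recall that $SC^*_{M,[a,b)}(K)$ is defined as the quotient $SC^{*,b}_M(K)/SC^{*,a}_M(K)$ of the truncations of the completed telescope $\widehat{tel}(\cC(A))$, and observe that since we are looking at a bounded window of valuations $[a,b)$, the completion is harmless: the $T$-adic filtration on $\widehat{tel}(\cC(A))$ restricted to this window is eventually constant in each degree once we quotient, so $SC^*_{M,[a,b)}(K) \cong tel(\cC(A))_{[a,b)}$, the truncation of the uncompleted telescope. Next I would use the fact that the telescope $tel(\cC(A)) = \bigoplus_i C_i[q]$ is a direct sum (filtered colimit) over the index $i$, and that both truncation $(-)_{[a,b)}$ (an exact functor on filtered complexes — it is a subquotient) and homology commute with filtered colimits of chain complexes. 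Concretely, the telescope is quasi-isomorphic to the mapping telescope whose homology is $\varinjlim_i HF^*(H_i, J_i)$; applying the truncation functor termwise and passing to homology gives $H^*(tel(\cC(A))_{[a,b)}) \cong \varinjlim_i H^*(C_{i,[a,b)}) = \varinjlim_i HF^*_{[a,b)}(H_i,J_i)$.

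Then I would argue that this last colimit, taken over the acceleration datum $A$ (a cofinal sequence), agrees with the colimit $\varinjlim_{(H,J)} HF^*_{[a,b)}(H,J)$ over all dissipative $(H,J)$ with $H<0$ on $K$ that defines $SH^*_{M,[a,b)}(K)$. This is a cofinality statement: the Hamiltonians $H_i$ appearing in any acceleration datum for $K$ form a cofinal family in the poset of dissipative Hamiltonians negative on $K$, and continuation maps in the truncated theory are compatible, so the two colimits coincide. This cofinality is already implicit in the definition of $SH^*_M(K)$ itself via the telescope, so it is essentially a matter of transporting that argument through the exact truncation functor.

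The main obstacle I anticipate is the interchange of homology with the $T$-adic completion in the definition of $SC^*_M(K) = \widehat{tel}(\cC(A))$: a priori $H^*(\widehat{C}) \neq \widehat{H^*(C)}$, and one must check that truncating to the bounded window $[a,b)$ genuinely kills the discrepancy. The cleanest way to handle this is to note that for each fixed degree and each fixed generator, only finitely many powers of $T$ lie in $[a,b)$, so the relevant subquotient of $\widehat{tel}(\cC(A))$ in each degree is already a quotient of the uncompleted telescope by a subcomplex on which the differential is well-controlled; hence no completion-related $\lim^1$ term appears, and the spectral sequence of the filtration by $q$-degree (or equivalently the mapping telescope filtration) degenerates to give the stated colimit. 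One should double-check that the quotient $SC^{*,b}/SC^{*,a}$ of completions equals the completion of $tel^{*,b}/tel^{*,a}$, which holds because the subspace topology and quotient topology agree for this filtration on a bounded window.
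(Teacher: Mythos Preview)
The paper states this lemma without proof, treating it as a standard fact about telescopes and truncations. Your proposal supplies exactly the expected argument and is correct. A few minor comments on presentation:

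Your key reduction---that on a bounded valuation window $[a,b)$ the completed and uncompleted telescopes have the same subquotient---is right, and the reason is the one you eventually give: any element of $\widehat{tel}(\cC(A))$ is a sum $\sum_i x_i$ with $\val(x_i)\to\infty$, so only finitely many $x_i$ survive modulo $F^{\geq b}$, and the image lies in $\bigoplus_i CF^*_{[a,b)}(H_i)[q]$. Your earlier phrasing ``only finitely many powers of $T$ lie in $[a,b)$'' is not quite the point (the exponents $\lambda$ are real, not integral), but the conclusion is correct.

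Once you are at the uncompleted telescope, the identification $H^*\bigl(tel(\cC(A))_{[a,b)}\bigr)=\varinjlim_i HF^*_{[a,b)}(H_i)$ is the standard fact that homology of a mapping telescope equals the sequential colimit of homologies; this uses only that truncation commutes with the telescope differential, which holds since $d$, $\kappa_i$, and $\id$ all preserve the $T$-adic filtration (monotonicity of $\kappa_i$).

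The cofinality step is also correct: an acceleration datum $\{H_i\}$ is cofinal in the poset of dissipative $(H,J)$ with $H|_K<0$ because $H_i\to 0$ on $K$ and $H_i\to\infty$ off $K$ monotonically. In the noncompact case this uses the dissipativity framework (cf.\ Proposition \ref{lmWeightedWellDefined}(6) and \cite[Lemma 8.13]{Groman}), but you are right that the paper already relies on this for the well-definedness of $SH^*_M(K)$ itself, so nothing new is needed.
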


\emph{Henceforth we shall omit $\Lambda$ from the notation and write $SC^*_M(K,A)=SC^*_M(K,A;\Lambda)$ unless there is a chance of confusion.}

\subsubsection{Restriction maps}

Acceleration data $A_1, A_2$ in the monotone indexing category yield rays $\cC(A_1),\cC(A_2)$. Morphism data between $A^1,A^2$ gives a ray morphism 
\begin{align}
\xymatrix{ 
\mathcal{C}_1\ar[r]^{\iota_1}\ar[d]_{f_1}\ar[dr]_{h_1}& \mathcal{C}_2\ar[d]_{f_2}\ar[r]^{\iota_2}\ar[dr]_{h_2} &\mathcal{C}_3\ar[r]^{\iota_3}\ar[d]_{f_3}\ar[dr]_{h_3}& \ldots \\ \mathcal{C}_{1}'\ar[r]_{\iota_1'} &\mathcal{C}_2'\ar[r]_{\iota_2'}&\mathcal{C}_3'\ar[r]_{\iota_3'}&\ldots}.
\end{align}
This induces a continuous, norm-contracting map between completed telescopes.

For $K_1\subset K_2$ with acceleration data $A^1,A^2$, write $A^1\leq A^2$ if $H^1_{\tau}\leq H^2_{\tau}$ for all $\tau$. Construct a monotone family $A^s$ interpolating between $A^0,A^1$ with $H^s_{\tau}$ monotone in $s$ for each $\tau$. Imposing regularity (and dissipativity if non-compact), we obtain a ray morphism: $f_i$ from continuation maps (fix $\tau=i$, vary $s$), and $h_i$ from homotopies. The resulting map is the \emph{restriction map} from $K_2$ to $K_1$. Homotopies between monotone families yield chain homotopies between restriction maps (see \cite{Varolgunes}). Thus:

\begin{prp}\label{lmWeightedWellDefined}
    For $K_1 \subset K_2 \subset M$ compact and acceleration data $A^1, A^2$ for $K_1, K_2$ with $A^2 \leq A^1$:
    \begin{enumerate}
    \item There exists a dissipative monotone homotopy $A^1\to A^2$. 
    \item Any two such are connected by a dissipative monotone homotopy.
    \item A choice of monotone homotopy $A^1\to A^2$ induces the \emph{restriction map} $SC^*_M(A^2)\to SC^*_M(A^1)$.
    \item Any two such give chain homotopic maps, hence the same map on homology.
    \item When $K_1=K_2$, the restriction map is an isomorphism on homology.
    \item For any $A^1, A^2$ for $K \subset M$, there exists $A^3$ with $A^3 \leq A^1, A^2$.
\end{enumerate}
Thus $SC^*_M(K)$ is well defined up to contractible choice of weak homotopy equivalence, as is the restriction morphism $SC^*_M(K_2)\to SC^*_M(K_1)$ in the derived category.
\end{prp}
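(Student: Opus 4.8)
\textbf{Proof proposal for Proposition \ref{lmWeightedWellDefined}.}

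The plan is to prove the six assertions in order, building each on the telescope formalism recalled in \S\ref{SecSHReview} together with the standard monotone-homotopy machinery of \cite{Varolgunes}. First, for (1): given $A^2 \leq A^1$ (so $H^2_\tau \leq H^1_\tau$ for all $\tau$), I would construct the interpolating family $A^s$ pointwise in $\tau$ by taking any smooth monotone path $s \mapsto H^s_\tau$ from $H^1_\tau$ down to $H^2_\tau$, e.g. $H^s_\tau = (1-\chi(s))H^1_\tau + \chi(s)H^2_\tau$ for a cutoff $\chi$, which is monotone decreasing in $s$ precisely because $H^2_\tau \le H^1_\tau$; dissipativity is preserved because the space of dissipative data is convex in the relevant sense (cf. \cite[Appendix A]{GromanVarolgunes2021}), and one perturbs to achieve regularity of the finitely many Floer data $(H^s_i,J^s_i)$ and continuation paths appearing in the telescope, using that regularity is generic. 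For (6): given two acceleration data $A^1,A^2$ for the \emph{same} $K$, take $H^3_\tau := \min(H^1_\tau, H^2_\tau)$ suitably smoothed (or better, $H^3_\tau$ a Hamiltonian that is $\le$ both and still has the correct limiting behavior $0$ on $K$, $+\infty$ off $K$ — such exists since one only needs to sit below two functions with the same asymptotic profile), giving $A^3 \le A^1, A^2$.

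For (2), two monotone homotopies $A^1 \to A^2$ are themselves connected by a monotone $2$-parameter family: interpolate the two families $\{H^s_\tau\}$ linearly in a new parameter $r$, preserving $\partial_s H^{s,r}_\tau \le 0$ throughout since a convex combination of non-positive functions is non-positive; then perturb for regularity of the relevant homotopies-of-homotopies. For (3), I would invoke the discussion preceding the proposition: a monotone homotopy $A^1 \to A^2$ supplies, for each $i$, a continuation map $f_i$ (fixing $\tau = i$, varying $s$) and, for each $i$, a homotopy $h_i$ filling the square comparing $f_{i+1}\circ \iota_i$ with $\iota_i' \circ f_i$ — this is exactly the data \eqref{eqRayMorphism} of a morphism of $1$-rays, which by \cite{Varolgunes} induces a map on completed telescopes $SC^*_M(A^2)\to SC^*_M(A^1)$, continuous and norm-contracting since continuation maps in $\cH_{monotone}$ preserve the $T$-adic filtration. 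For (4), two monotone homotopies $A^1\to A^2$ are connected by a monotone $2$-parameter family by part (2); the standard argument (as in \cite{Varolgunes}) that a $2$-parameter family induces a homotopy of ray morphisms, hence a chain homotopy of the induced telescope maps, then gives that the two restriction maps agree on homology. For (5), when $K_1 = K_2 = K$: the restriction map $A^2 \to A^1$ (for $A^2 \le A^1$) has a homotopy inverse — one can compare with the identity by running the monotone homotopy $A^1\to A^2$ and back, invoking that for a fixed $K$ all acceleration data compute the same colimit $SH^*_M(K) = \varinjlim HF^*(H_i,J_i)$, so the induced map on homology is the identity on this colimit; alternatively, factor through a common lower bound $A^3$ using (6) and note both composites are restriction maps with $K$ fixed, hence isomorphisms by the colimit description.

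The final sentence — that $SC^*_M(K)$ is well defined up to a contractible choice of weak equivalence, and the restriction morphism is well defined in the derived category — then follows by assembling (1)--(6): the space of choices (acceleration data, interpolating homotopies, regular perturbations) is non-empty by (1),(6), connected by (2), and any two paths of choices are connected by higher homotopies by the same convexity-plus-genericity argument one level up, which is what "contractible" encodes here; passing to the derived category kills the remaining homotopy ambiguity, and functoriality of restriction (composition of monotone homotopies maps to composition of ray morphisms, hence of telescope maps, up to coherent homotopy) is again \cite{Varolgunes}. \textbf{The main obstacle} I anticipate is not any single step but the bookkeeping of \emph{coherence}: making precise that the perturbations needed for regularity at each stage can be chosen compatibly so that the "contractible choice" claim is literally true rather than merely true up to homotopy at each finite stage — this is exactly the kind of thing the $\infty$-categorical / cubically-enriched framework of \cite{Abouzaid2022, AbouzaidGromanVarolgunes24} is designed to handle, and I would lean on that framework (or on the explicit telescope-level arguments of \cite{Varolgunes}) rather than reprove it. A secondary technical point is ensuring dissipativity is maintained under all the interpolations in the non-compact (geometrically bounded) case, which requires knowing the relevant $C^0$-estimates are stable under the convex combinations used above.
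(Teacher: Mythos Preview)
Your proposal is correct and fills in exactly the arguments that the paper defers to external references: the paper's own proof consists entirely of citations (items (1)--(2) to \cite[Appendix A]{AbouzaidGromanVarolgunes24}, items (3)--(4) to \cite{Varolgunes}, item (5) to \cite[Proposition 6.5]{GromanVarolgunes2022}, item (6) to \cite[Lemma 8.13]{Groman}), and your sketch is a faithful outline of what those references establish. One small caution on (5): the phrase ``running the monotone homotopy $A^1\to A^2$ and back'' does not literally work, since reversing a monotone homotopy destroys monotonicity; your alternative via a common lower bound $A^3$ from (6) and the colimit description is the correct route and is what the cited reference does.
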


\begin{proof}
    Items (1)-(2): \cite[Appendix A]{AbouzaidGromanVarolgunes24}.
    Items (3)-(4): \cite{Varolgunes}.
    Item (5): \cite{GromanVarolgunes2022}, Proposition 6.5.
    Item (6): \cite[Lemma 8.13]{Groman}.
\end{proof}

\begin{rem}
    In the non-compact case, $SC^*_M(K)$ is independent of acceleration data behavior at infinity up to contractible choice of weak homotopy equivalence. 
\end{rem}

\subsection{Exact symplectic manifolds and relative SH over the ground ring}

For exact $M$, we can define Hamiltonian Floer cohomology over $\bK$ instead of $\Lambda$. Define the \emph{unweighted} complex 
\begin{equation}
CF^*_{uw}(H,J)=\bigoplus o_{\gamma}\otimes \bK[i_{cz}(\gamma)]
\end{equation}
with unweighted differential $d:=\sum_u d_u$ and unweighted continuation maps $\kappa=\sum \kappa_u$ for $(H_0,J_0)\leq (H_1,J_1)$.

Well-definedness in the exact case follows from topological energy depending only on input and output. Fix a primitive $\theta$ of $\omega$ and define the action of a $1$-periodic orbit $\gamma$ of $H$ as
\begin{equation}
\mathcal{A}_H(\gamma)=-\int_{\gamma}\theta-\int_{t\in S^1}H(t,\gamma(t))dt.
\end{equation}
Then $E_{top}(u)=\cA_H(\gamma_0)-\cA_H(\gamma_1)$ for input $\gamma_0$ and output $\gamma_1$. A similar formula holds for continuation maps.

\subsubsection{Relative SH over the ground ring}

For exact $Q$ and acceleration datum for $K\subset Q$ (Definition \ref{dfAccDatKsubM}), consider the unweighted 1-ray  $$\cC_{uw}(\{H_i\}):= CF^*_{uw}(H_1)\xrightarrow{\kappa_1} CF^*_{uw}(H_2)\xrightarrow{\kappa_2}\ldots. $$ Choosing a primitive $\theta$ of the symplectic form, each complex is normed by $|\gamma|=e^{-\cA_{H,\theta}(\gamma)}$. Since continuation data are strictly monotone, the norm is preserved, inducing a norm on the telescope.
\begin{df}\label{dfUnweightedSHofK}
The unweighted relative cochains of $K$ with respect to $\theta$ is
\begin{equation}
    SC^*_{Q,\theta}(K,A;\bK):=\widehat{tel}(\mathcal{C}_{uw}(H_\tau)),
\end{equation}
the completion of the unweighted telescope with respect to the $\theta$-defined norm. 
\end{df}

Analogously to Theorem \ref{lmWeightedWellDefined}:

\begin{prp}\label{lmUnweightedWellDefined}
    For $K_1 \subset K_2 \subset Q$ compact with acceleration data $A^1, A^2$ for $K_1, K_2$ and $A^1 \leq A^2$:
    \begin{enumerate}
    \item A monotone homotopy $A^1\to A^2$ induces the \emph{restriction map} $SC^*_{Q,\theta}(A^2)\to SC^*_{Q,\theta}(A^1)$.
    \item Any two such are chain homotopic, hence induce the same map on homology.
    \item When $K_1=K_2$, the restriction map is an isomorphism on homology.
    \end{enumerate}
Thus $SC^*_{Q,\theta}(K)$ and the restriction morphism $SC^*_{Q,\theta}(K_2)\to SC^*_{Q,\theta}(K_1)$ are well defined up to contractible choice.
\end{prp}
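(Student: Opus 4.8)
\textbf{Proof plan for Proposition \ref{lmUnweightedWellDefined}.} The plan is to mirror the proof of Proposition \ref{lmWeightedWellDefined} (i.e.\ the weighted case), tracking the extra bookkeeping needed to work over $\bK$ rather than $\Lambda$, with the $\theta$-action replacing the $T$-adic valuation as the source of the norm. The key structural point is that for exact $Q$ the topological energy $E_{top}(u)$ of a Floer trajectory or continuation solution depends only on the input and output orbits via the action functional $\cA_{H,\theta}$ — this is exactly the displayed identity $E_{top}(u)=\cA_H(\gamma_0)-\cA_H(\gamma_1)$ recorded just before Definition \ref{dfUnweightedSHofK}, together with its analogue for continuation maps. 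Consequently the unweighted differential $d=\sum_u d_u$ and unweighted continuation maps $\kappa=\sum_u\kappa_u$ converge in the $\theta$-norm (the relevant Gromov--Floer compactness gives finiteness of solutions of bounded geometric, hence bounded topological, energy, and strict monotonicity of the acceleration data gives $E_{top}\ge E_{geo}\ge 0$, so only finitely many orbit pairs contribute below any fixed action window), and they are norm non-increasing. Thus $\cC_{uw}(\{H_\tau\})$ is a genuine $1$-ray of complete normed complexes over $\bK$ once we pass to the completed telescope.

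For item (1): given acceleration data $A^1\le A^2$ for $K_1\subset K_2$, Proposition \ref{lmWeightedWellDefined}(1) already supplies a dissipative monotone homotopy $A^1\to A^2$ (this input is purely about the Floer data and does not see the coefficients). Fixing $\tau=i$ and varying the homotopy parameter $s$ produces continuation maps $f_i:CF^*_{uw}(H^2_i)\to CF^*_{uw}(H^1_i)$; these are again norm non-increasing by the continuation-map version of the action identity. The $2$-parameter homotopies in the monotone indexing category give the chain homotopies $h_i$ fitting into a ray morphism as in \eqref{eqRayMorphism}, and the telescope functoriality of \cite{Varolgunes} (which is formal once one has a ray morphism of normed complexes) produces the restriction map $SC^*_{Q,\theta}(A^2)\to SC^*_{Q,\theta}(A^1)$ on completed telescopes, continuous and norm-contracting. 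For item (2): any two choices of monotone homotopy $A^1\to A^2$ are joined by a dissipative monotone homotopy-of-homotopies by Proposition \ref{lmWeightedWellDefined}(2); running the $3$-parameter Floer equation yields a chain homotopy between the two ray morphisms, hence between the two restriction maps, so they agree on homology. For item (3), when $K_1=K_2$ one argues as in \cite{GromanVarolgunes2022}, Proposition 6.5: for two acceleration data $A^1,A^2$ for the same $K$ one finds a common lower bound $A^3$ (the existence of which is again a statement about Floer data, independent of coefficients, as in Proposition \ref{lmWeightedWellDefined}(6)), and the resulting restriction maps $SC^*_{Q,\theta}(A^i)\to SC^*_{Q,\theta}(A^3)$ are quasi-isomorphisms because on each truncated piece $CF^*_{uw}(H_i)\to CF^*_{uw}(H_i')$ the continuation maps between cofinal sequences induce the identity on the colimit, so the telescope computes that colimit. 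Assembling these gives that $SC^*_{Q,\theta}(K)$ and the restriction morphisms are well defined up to contractible choice.

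The one place deserving genuine care — and the step I expect to be the main obstacle — is the convergence and completeness argument over $\bK$. In the weighted case the $T$-adic topology is what makes the infinite sums defining $d$ and $\kappa$ converge and makes the telescope completion behave well; here the analogous role is played by the filtration by action $\cA_{H,\theta}$, but one must check that for \emph{each} truncation window $[a,b)$ only finitely many orbits of $H_i$ have action in that window and that Gromov--Floer compactness bounds the number of contributing trajectories, so that $d_u$-counts are finite and the differential descends to the truncated complexes $CF^*_{uw,[a,b)}(H_i)$. This is where dissipativity of the Floer data and the $C^0$-estimates of \cite{Groman} enter, and where the exactness of $Q$ is essential (without it $E_{top}$ is not a difference of actions and the filtration is not defined). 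Once this is in place, everything else is a transcription of the weighted arguments with $\val(T^{E_{top}})$ replaced by the action drop, and no new geometric input is required.
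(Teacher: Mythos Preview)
Your proposal is correct and matches the paper's approach: the paper states Proposition \ref{lmUnweightedWellDefined} with the preamble ``Analogously to Theorem \ref{lmWeightedWellDefined}'' and gives no separate proof, so the intended argument is precisely the transcription you outline—reuse the existence/uniqueness of monotone homotopies from the weighted case (these are statements about Floer data, not coefficients), and replace the $T$-adic filtration by the $\theta$-action filtration, using the exactness identity $E_{top}(u)=\cA_H(\gamma_0)-\cA_H(\gamma_1)$ to see that unweighted differentials and continuation maps are well defined and norm non-increasing. Your identification of the convergence/completeness step over $\bK$ as the one place requiring care is apt, and the inputs you name (dissipativity, the $C^0$ estimates of \cite{Groman}, exactness of $Q$) are exactly the right ones.
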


The weighted and unweighted versions relate as follows:

\begin{lm}\label{lmUnweightedSHcomparison}
    For $K\subset Q$ and acceleration datum $\theta$, there is an isomorphism
    \begin{equation}\label{eqUnweightedSHcomparison}
        SC^*_{Q,\theta}(K,A)\hat{\otimes}\Lambda\to SC^*_Q(K,A)
    \end{equation}
    commuting up to homotopy with maps from ordered pairs $A^1\leq A^2$. 
\end{lm}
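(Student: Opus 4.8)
\textbf{Proof plan for Lemma \ref{lmUnweightedSHcomparison}.}

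The plan is to produce the comparison map at the level of each Floer complex in the acceleration datum, check that it is compatible with continuation maps, and then pass to telescopes and completions. First I would fix the acceleration datum $A = (H_\tau, J_\tau, f)$ for $K \subset Q$ and the primitive $\theta$. For each index $i$ the underlying $\Lambda$-module $CF^*(H_i, J_i) = \bigoplus_\gamma o_\gamma \otimes \Lambda[i_{cz}(\gamma)]$ is, by definition, exactly $CF^*_{uw}(H_i, J_i) \otimes_{\bK} \Lambda$ as a graded module; the only difference between the weighted and unweighted theories is the placement of powers of $T$ in the differential. So the natural candidate for the map \eqref{eqUnweightedSHcomparison} at level $i$ is the $\Lambda$-linear rescaling
\begin{equation}
\Psi_i \colon CF^*_{uw}(H_i, J_i) \hat\otimes \Lambda \to CF^*(H_i, J_i), \qquad \gamma \mapsto T^{-\cA_{H_i,\theta}(\gamma)} \gamma,
\end{equation}
extended $\Lambda$-linearly. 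The key computation is that $\Psi_i$ intertwines the unweighted differential $d_{uw} = \sum_u d_u$ with the weighted differential $d = \sum_u T^{E_{top}(u)} d_u$: this follows immediately from the exactness identity $E_{top}(u) = \cA_{H_i,\theta}(\gamma_0) - \cA_{H_i,\theta}(\gamma_1)$ for a trajectory $u$ from input $\gamma_0$ to output $\gamma_1$, stated just before \S\ref{SecSHReview}. Indeed $\Psi_i(d_{uw}(T^\mu\gamma_0)) = \sum_u T^{\mu - \cA(\gamma_1)} d_u(\gamma_1 \text{-component})$, while $d(\Psi_i(T^\mu\gamma_0)) = \sum_u T^{\mu - \cA(\gamma_0) + E_{top}(u)} d_u(\cdots)$, and the two exponents agree. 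Note also that $\Psi_i$ need not preserve the $\Lambda_{\geq 0}$-structure on the nose, but it is an isometry for the $\theta$-norm on the source and the $T$-adic norm shifted by $\cA_{H_i,\theta}$, which is the point.

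Next I would check compatibility with the continuation maps $\kappa_i$. Since $(H_i, J_i) \leq (H_{i+1}, J_{i+1})$ and the acceleration datum is strictly monotone, the continuation trajectories satisfy $E_{top}(u) = \cA_{H_i,\theta}(\gamma_0) - \cA_{H_{i+1},\theta}(\gamma_1)$ as well (the analogous formula for continuation maps, also quoted in the exact-manifold subsection). The same bookkeeping shows $\Psi_{i+1} \circ \kappa_i^{uw} = \kappa_i \circ \Psi_i$, so that $\{\Psi_i\}$ is a morphism of $1$-rays. By the telescope functoriality recalled after \eqref{eqRayMorphism} (cf. \cite{Varolgunes}) this induces a map of telescopes $tel(\cC_{uw}(A)) \to tel(\cC(A))$, which after Cauchy completion gives the desired map \eqref{eqUnweightedSHcomparison}; it is an isomorphism because each $\Psi_i$ is, and completions of isometries along compatible systems remain isometries. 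For the homotopy-commutativity with restriction maps attached to an ordered pair $A^1 \leq A^2$: the restriction map is built from continuation maps (varying the parameter $s$ at fixed $\tau$) and homotopies $h_i$ between the two ways around the square, exactly as in the construction preceding Proposition \ref{lmWeightedWellDefined} and Proposition \ref{lmUnweightedWellDefined}. The action identity for these $s$-dependent continuation trajectories is once more $E_{top} = \cA^{in} - \cA^{out}$, so the rescaling $\Psi$ conjugates the unweighted restriction data into the weighted restriction data, term by term, including the chain homotopies $h_i$. Hence the square relating $SC^*_{Q,\theta}(A^2)\hat\otimes\Lambda \to SC^*_{Q}(A^2)$, the two restriction maps, and $SC^*_{Q,\theta}(A^1)\hat\otimes\Lambda \to SC^*_{Q}(A^1)$ commutes up to the homotopy obtained by rescaling $h_i$.

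\textbf{Main obstacle.} The genuine content is not any single identity but the careful handling of completions and convergence: the rescaling $\gamma \mapsto T^{-\cA_{H_i,\theta}(\gamma)}\gamma$ involves negative powers of $T$ when actions are positive, so one must verify that $\Psi_i$ lands in (and is bijective onto) the correct completed module, and that the telescope differential $\delta$ of \eqref{eqTelDiff} — which mixes $d$, $\kappa_i$, and the identity maps $C_i[1] \to C_i$ — is still intertwined after rescaling, the subtle point being the ``$-a$'' term in \eqref{eqTelDiff} which is not weighted. This forces one to choose the rescaling on the $q$-summand $C_i[1]$ compatibly with the one on $C_i$, i.e.\ by the \emph{same} action shift, so that $\kappa_i(a) - a$ transforms correctly; once the shifts are chosen consistently across the whole telescope the identity goes through, but this is where a reader should slow down. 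I would also remark that the statement is essentially formal given the exactness of $Q$ and is implicitly used throughout \cite{GromanVarolgunes2022}; the proof above is the expected one.
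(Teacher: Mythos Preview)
Your proposal is correct and follows exactly the paper's approach: the paper's proof defines the same rescaling $\gamma\otimes T^\lambda\mapsto T^{\lambda-\cA_{H_i,\theta}(\gamma)}\gamma$, notes that it commutes with continuation maps so as to yield a strict morphism of $1$-rays (all $h_i=0$), and observes that each $\Psi_i$ is an isomorphism of complexes. Your expanded discussion of the telescope ``$-a$'' term and the completion issue is more careful than the paper's two-sentence argument but arrives at the same place.
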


\begin{proof}
    At generator level: $\gamma\otimes T^\lambda\mapsto T^{\lambda-\cA_{H_i,\theta}(\gamma)}$ for $\gamma$ a $1$-periodic orbit of $H_i$. It is clear this map commutes with continuation maps and so give rise to morphism of $1$ rays (with all $h_i=0$). Moreover, all the $f_i$'s are isomorphisms of complexes.
\end{proof}

\subsubsection{ Liouville domains and Viterbo $SH$}

For a Liouville domain $(D,\omega=d\theta)$ (compact exact symplectic manifold with smooth boundary, primitive $\theta$), the $\omega$-dual $Z$ of $\theta$ ($\iota_Z\omega=\theta$) points outward at $\partial D$. The negative flow of $Z$ defines a function $\rho$ on a collar of $\partial D$ measuring (negative) flow time from $\partial D$. Attach the symplectization $\partial D\times [0,\infty)$ to get the completion $\widehat{D}=D\cup_{\partial D}\partial D\times [0,\infty)$, a geometrically bounded manifold. The function $\rho$ extends as the projection to $[0,\infty)$.

\emph{Viterbo symplectic cohomology} is defined using linear Hamiltonians and cylindrical contact type almost complex structures. A proper $H:\widehat{D}\to \bR$ is \emph{linear at infinity} if away from a compact set $H=ae^{\rho}+b$ for constants $a,b$ and radial coordinate $\rho$.

\begin{df}\label{dfViterboIndexing}
    The \emph{Viterbo indexing category} $\cH_{Viterbo}$ is defined by:
    \begin{itemize}
    \item Objects: Pairs $(H,J)$ where $H$ is linear at infinity with slope not in the period spectrum of $\partial D$, and $J$ is cylindrical and of contact type at infinity
    \item Paths: Smooth paths $s\mapsto (H_s,J_s)$ with compactly supported derivative and monotone increasing slope at infinity
    \item Homotopies: Smooth 2-parameter families interpolating between paths, with homotopies required to be bounded from below
    \end{itemize}
\end{df}

For $A=(H_\tau,J_\tau,f)$ in $\cH_{Viterbo}$ with $slope(H_{\tau})\to\infty$, construct $tel(\cC_{uw}(A))$ \emph{without completion}, yielding $SC^*_{Viterbo}(D)$. The latter is well defined up to contractible choice.

\begin{lm}
    For $Q=\widehat{D}$, $K=D$ a Liouville domain, and $\theta$ the Liouville primitive, there exists a weak homotopy equivalence 
    \begin{equation}
    SC^*_{Q,\theta}(D)\to SC^*_{Viterbo}(D),
    \end{equation}
    canonical up to contractible choice, commuting with restriction maps between domains with the same skeleton, and an isomorphism forgetting the norm.
\end{lm}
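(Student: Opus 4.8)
The plan is to compare the two telescope models directly, exploiting that both $SC^*_{\widehat D,\theta}(D)$ and $SC^*_{Viterbo}(D)$ are colimits (at the chain level via the telescope) of Hamiltonian Floer complexes over the \emph{same} exact manifold $\widehat D$ with the \emph{same} primitive $\theta$, and that the only difference lies in which cofinal family of Hamiltonians and which class of almost complex structures are admitted. First I would observe that an acceleration datum $A = (H_\tau, J_\tau, f)$ in the Viterbo indexing category $\cH_{Viterbo}$ — i.e.\ with $H_\tau$ linear at infinity of slope tending to $\infty$ and $J_\tau$ cylindrical of contact type — is in particular an acceleration datum for $K = D$ in the sense of Definition \ref{dfAccDatKsubM}: indeed $H_\tau \to 0$ pointwise on $D$ (after arranging $H_\tau \le 0$ on $D$, or a small negative function, which is harmless up to a final continuation) and $H_\tau \to \infty$ on $\widehat D \setminus D$ since the slope diverges and $\rho > 0$ there. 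The contact-type condition makes $(H_\tau, J_\tau)$ dissipative on the geometrically bounded manifold $\widehat D$ by the standard maximum-principle/integrated-maximum-principle estimates (as in \cite{GromanVarolgunes2021}), so such an $A$ is also $\cH_{monotone}$-admissible. Hence a single Viterbo acceleration datum simultaneously computes both invariants, the only distinction being \emph{completion}: $SC^*_{Viterbo}(D)$ is the uncompleted telescope, while $SC^*_{\widehat D,\theta}(D)$ is its completion with respect to the $\theta$-action norm.

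The second step is to show this completion is harmless. On each $CF^*_{uw}(H_i,J_i)$ the $\theta$-norm is the one with $|\gamma| = e^{-\cA_{H_i,\theta}(\gamma)}$; since $H_i$ is linear at infinity with slope below the next period, it has \emph{finitely many} $1$-periodic orbits, so each $CF^*_{uw}(H_i,J_i)$ is a finite-dimensional $\bK$-vector space and is already complete — and so is each finite telescope stage. The completion of the infinite telescope $\widehat{tel}$ therefore differs from $tel$ only by allowing infinite sums $\sum_i q a_i$ with $|a_i| \to 0$; but in the Viterbo setting the slopes go to infinity so the actions of the relevant orbits diverge, and such infinite tails are already captured by the colimit structure. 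Concretely, the inclusion $tel(\cC_{uw}(A)) \hookrightarrow \widehat{tel}(\cC_{uw}(A))$ is a quasi-isomorphism because the telescope of a $1$-ray computes the homotopy colimit whether or not one completes, when each stage is a finite complex and the structure maps are eventually ``expanding'' in action — this is the same mechanism by which $SH^*_{Viterbo}$ agrees with the completed relative theory in \cite{GromanVarolgunes2022, Groman}. I would cite the relevant lemma from those references rather than reprove it. This gives, for a fixed Viterbo acceleration datum $A$, a canonical map $SC^*_{\widehat D,\theta}(D, A) = \widehat{tel}(\cC_{uw}(A)) \to tel(\cC_{uw}(A)) = SC^*_{Viterbo}(D, A)$ which is a quasi-isomorphism (indeed an isomorphism after forgetting norms if one works with the uncompleted source, and a weak homotopy equivalence in general), and which is tautologically the identity on generators — hence an isomorphism of the underlying $\bK$-modules once norms are forgotten.

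The third step is well-definedness and naturality. By Proposition \ref{lmUnweightedWellDefined}, $SC^*_{\widehat D,\theta}(D)$ is independent of the acceleration datum up to contractible choice of weak homotopy equivalence, and by the analogous (uncompleted) statement $SC^*_{Viterbo}(D)$ is well defined up to contractible choice; since \emph{Viterbo} acceleration data are cofinal among all acceleration data for $D \subset \widehat D$ (any acceleration datum for $D$ is dominated by one with linear-at-infinity Hamiltonians and cylindrical $J$, by a standard interpolation as in Proposition \ref{lmWeightedWellDefined}(6)), the comparison map above, computed on a cofinal system, assembles to a map canonical up to contractible choice. Compatibility with restriction maps between domains with the same skeleton follows because for such domains the completions $\widehat D_1, \widehat D_2$ are isomorphic and one can choose a common cofinal family of Viterbo acceleration data making both restriction maps literally the identity-on-a-cofinal-subsystem map; the homotopy coherence is inherited from Propositions \ref{lmUnweightedWellDefined} and \ref{lmWeightedWellDefined}. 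I expect the main obstacle to be the second step — precisely controlling the difference between the completed and uncompleted telescopes and verifying that the natural map is a quasi-isomorphism rather than merely a map. The subtlety is that $\widehat{tel}$ allows infinite $q$-tails that the finite-stage complexes do not see, and one must check these tails are exact; the cleanest route is to filter by the telescope stages and run the associated Mittag-Leffler/$\varprojlim$ argument, noting that the transition maps on homology are the Viterbo continuation maps, whose colimit is by definition $SH^*_{Viterbo}(D)$, and that $\varprojlim^1$ vanishes because each stage is finite-dimensional — but stating this carefully and matching norms is where the real work lies.
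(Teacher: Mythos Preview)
Your approach is the same as the paper's: choose a single acceleration datum $A$ that is simultaneously Viterbo-admissible (linear at infinity, cylindrical contact-type $J$, slope $\to\infty$) and admissible for $D\subset\widehat D$ in the monotone indexing category, and compare the two telescopes built from $A$. The paper's proof is literally one sentence to this effect; everything you write in Steps~1 and~3 matches it.

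Where you take a detour is Step~2. You treat the passage from $\widehat{tel}$ to $tel$ as a genuine issue requiring a Mittag--Leffler/$\varprojlim^1$ argument, and you get tangled over the direction of the map. The point you are missing --- and which the paper leaves implicit --- is that for a Viterbo datum the completion is \emph{literally trivial}. With the paper's conventions $|\gamma|=e^{-\cA_{H_i,\theta}(\gamma)}$ and $\cA=-\int\theta-\int H$, one checks that every generator has norm bounded below by a fixed positive constant: Reeb-type orbits at level $r\ge 1$ with period $T$ have $\cA=-rT-h(r)\le -T<0$, hence norm $\ge e^{T_{\min}}>1$; constant orbits in $D$ have $\cA=-H_i(p)\in[0,-\inf_D H_1]$, hence norm $\ge e^{\inf_D H_1}>0$. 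Since coefficients lie in $\bK$ with the trivial valuation, no infinite sum of generators is Cauchy, so $\widehat{tel}(\cC_{uw}(A))=tel(\cC_{uw}(A))$ on the nose. This is exactly why the lemma asserts ``an isomorphism forgetting the norm'' and not merely a quasi-isomorphism, and it dissolves your worry about the map direction: the map is the identity. Your proposed $\varprojlim^1$ route would work, but it is strictly more than what is needed.
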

\begin{proof}
    Pick $A$ simultaneously admissible for $D\subset \widehat{D}$ (strictly monotone) and linear at infinity. This yields the desired map, well defined by acceleration data independence.
\end{proof}

\begin{rem}\label{remGrowthRateSensitivity}
    $SC^*_{Viterbo}(D)$ is sensitive to growth rate at infinity (requires slope$\to\infty$). $SC^*_{Q,\theta}(D)$ is insensitive to slope, only requiring Hamiltonian value$\to\infty$ outside and $\to0$ inside. This is because the left-hand side uses telescope completion, filtering out low action orbits from slowing down the Hamiltonian.
\end{rem}



\section{S-shaped Hamiltonians and positivity of relative energy}\label{SecInMaxPrinc}

\subsection{Manifolds of geometrically of finite type}
A function $f$ on a geometrically bounded symplectic manifold is called \emph{admissible} if it is proper, bounded below, and there is a constant $C$ such that with respect to a geometrically bounded almost complex structure $J$ we have $\|X_f\|_{g_J} < C$ and $\|\nabla X_f\|_{g_J} < C$. 

\begin{lm}\label{lmAdmissibleF}
    Let $f$ be an admissible function on a geometrically bounded symplectic manifold. Then there is an $\epsilon_0 > 0$ such that any Hamiltonian $H$ which outside of a compact set $K$ satisfies $H|_K = \epsilon_0 f$ has no non-constant $1$-periodic orbits outside of $K$.
\end{lm}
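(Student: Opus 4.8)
\textbf{Proof plan for Lemma \ref{lmAdmissibleF}.}

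The plan is to exploit the two uniform bounds in the definition of admissibility — $\|X_f\|_{g_J} < C$ and $\|\nabla X_f\|_{g_J} < C$ — to rule out nonconstant $1$-periodic orbits of $\epsilon_0 f$ outside $K$ for $\epsilon_0$ small. First I would observe that a $1$-periodic orbit $\gamma:S^1\to M$ of $H = \epsilon_0 f$ lying outside $K$ is an integral curve of $\epsilon_0 X_f$, so $\dot\gamma(t) = \epsilon_0 X_f(\gamma(t))$, and hence $\|\dot\gamma\|_{g_J} \leq \epsilon_0 C$ pointwise. Thus the whole orbit is contained in a $g_J$-ball of radius $\epsilon_0 C/2$ around $\gamma(0)$. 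Since $M$ is geometrically bounded, there is a uniform lower bound $r_0 > 0$ on the injectivity radius (and an upper curvature bound), so for $\epsilon_0 C/2 < r_0$ the orbit sits inside a single geodesically convex coordinate chart $U$ in which $g_J$ is uniformly comparable to the Euclidean metric.

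Next, within this chart I would run the standard ``no short periodic orbits'' argument: in the Euclidean-comparable coordinates, $\dot\gamma = \epsilon_0 X_f(\gamma)$, and the Lipschitz bound $\|\nabla X_f\| < C$ (transported to these coordinates with a universal constant $C'$ depending only on the geometric bounds of $M$) gives $|X_f(\gamma(t)) - X_f(\gamma(0))| \leq C' \,\epsilon_0 C$ along the orbit. Feeding this back into $\dot\gamma = \epsilon_0 X_f(\gamma)$ and integrating over $S^1$, one gets that $\gamma$ differs from the constant loop $\gamma(0)$ by something of size $O(\epsilon_0^2)$ in $C^1$; more precisely $\int_{S^1}\dot\gamma\, dt = 0$ forces $X_f(\gamma(0)) = -\int_{S^1}(X_f(\gamma(t)) - X_f(\gamma(0)))dt$, so $|X_f(\gamma(0))| \leq C'\epsilon_0 C$, and then a Gr\"onwall / fixed-point estimate on the integral equation $\gamma(t) = \gamma(0) + \epsilon_0\int_0^t X_f(\gamma)$ shows the only solution with period $1$ is the constant one once $\epsilon_0 C'C < 1$ (the time-$1$ flow of $\epsilon_0 X_f$ is then a contraction on the chart, so its only fixed loop is a fixed point, i.e.\ a critical point of $f$, which is a constant orbit). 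Choosing $\epsilon_0$ below the resulting threshold — which depends only on $C$ and on the geometric bounds of $(M,\omega,J)$, not on $H$ or $K$ — completes the argument.

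The main obstacle I anticipate is making the two local steps genuinely uniform over all of $M \setminus K$: the bounds $\|X_f\|, \|\nabla X_f\| < C$ are given with respect to a fixed $g_J$, but the comparison between $g_J$ and Euclidean coordinates, and the transport of the covariant bound $\|\nabla X_f\|$ into a coordinate Lipschitz bound on the components of $X_f$, both introduce constants that must be controlled independently of the basepoint $\gamma(0)$. This is exactly what geometric boundedness is designed to provide — uniform injectivity radius and two-sided control of the metric in normal coordinates of a fixed radius — so the estimate should go through, but care is needed to phrase every comparison with constants drawn only from the geometric bounds of $M$ and the constant $C$. A secondary point is that the Hamiltonian $H$ is only required to equal $\epsilon_0 f$ \emph{outside} $K$ (the statement as written has a typo, ``$H|_K = \epsilon_0 f$''; the intended hypothesis is $H|_{M\setminus K} = \epsilon_0 f$), so one only concludes the absence of orbits in the region where $H$ actually agrees with $\epsilon_0 f$, which is all that is claimed.
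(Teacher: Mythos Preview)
The paper does not give its own proof of this lemma; it simply cites \cite[Lemma 3.1]{GromanVarolgunes2021}. Your direct argument is the standard one and is essentially what that reference contains: use $\|X_f\|<C$ to trap a $1$-periodic orbit of $\epsilon_0 X_f$ in a ball of radius $O(\epsilon_0)$, use geometric boundedness to work in a single uniformly-controlled chart, and then use the Lipschitz bound coming from $\|\nabla X_f\|<C$ to rule out nonconstant short-period orbits. Your identification of the typo ($H|_{M\setminus K}=\epsilon_0 f$, not $H|_K$) is also correct.

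One small point of presentation: the phrase ``the time-$1$ flow of $\epsilon_0 X_f$ is a contraction'' is not literally true (it is close to the identity, not a contraction), and as written it could confuse a reader. The estimate you actually set up---combining $\int_{S^1}\dot\gamma=0$ with the Lipschitz bound and then feeding back via Gr\"onwall---does close up correctly: one gets $|X_f(\gamma(0))|\le (e^{\epsilon_0 C'}-1)\,|X_f(\gamma(0))|$, forcing $X_f(\gamma(0))=0$ once $\epsilon_0 C'<\ln 2$. An equivalent and slightly cleaner packaging is Yorke's inequality (via Wirtinger applied to the mean-zero periodic function $\dot\gamma$): a nonconstant $T$-periodic orbit of a vector field with Lipschitz constant $L$ satisfies $T\ge 2\pi/L$, so $L=\epsilon_0 C'$ and $T=1$ are incompatible for small $\epsilon_0$. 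Either way the conclusion stands, and the uniformity concern you flag is exactly the role of geometric boundedness.
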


\begin{proof}
    \cite[Lemma 3.1]{GromanVarolgunes2021}
\end{proof}

\begin{df}\label{dfGeometricallyFiniteType}
    We say that a symplectic manifold $Y$ is \emph{geometrically of finite type} if it admits a compatible geometrically bounded almost complex structure $J$ and an exhaustion function $f$ which is admissible with respect to $g_J$ and all its critical points are contained in a compact subset.
\end{df}

As discussed in \cite{GromanVarolgunes2021}, for $\epsilon>0$ small enough, functions which outside of a compact set $K$ are of the form $H=\epsilon f+Const$ where $f$ is admissible are dissipative. The same holds for functions which are close enough in $C^1$ to such functions. 

The most important case of a symplectic manifold of geometrically of finite type to keep in mind is the case where $M$ is the completion of a Liouville domain, $f$ is the radial function and $J$ is conical at infinity. 
\subsection{Relative energy for S-shaped Floer data}
Henceforth we will consider $M$ either compact or geometrically of finite type. In the latter case we will fix a choice of a pair $J_0,f_0$ witnessing that $M$ is of finite type. We consider a compact sub-manifold  $D\subset M$ with smooth boundary, and a smooth $1$-form $\theta$ defined on an open neighborhood  $V(D)$ of $D$. We assume  that  $\omega|_{V(D)}=d\theta$ and that  the $\omega$-dual $Z$ of $\theta$, defined by $\iota_Z\omega=\theta$, points outward at $\partial D$.

$\partial D$ possesses a collar neighborhood $C(D)$ equipped with a proper function $\rho:C(D)\to [-\delta,\delta]$ for some $\delta>0$ such that $\rho(x)$ measures the time it takes to flow from $\partial D$ to $x$ under $Z$. In other words identify $C(D)$ with a subset of  $\partial D\times\bR$ so that $\rho$ is the $\bR$ coordinate we have $Z=\frac{\partial}{\partial{\rho}}$. Note that $\rho$ has no critical points on $C(D)$. We assume $D\cup C(D)$ is the closure of $V(D)$ and in particular  that $\partial V(D)$ is a level set of $\rho$. Thus $V(D)$ is entirely determined by the domain $D$, the primitive $\theta$, and the constant $\delta$. For $t\in [0,\delta]$ we let $D_t:=D\cup\rho^{-1}([0,t])$. 

\begin{df}
A time dependent almost complex structure $J_t$ is called \emph{convex near $\partial D$} if  
\begin{equation}\label{eqConvexity}
d\rho\circ J_t=-e^{f_t}\theta,
\end{equation}
for some time dependent smooth function on $C(D)$. We will take $f_t\equiv 0$ for most of the discussion.
\end{df}

Suppose $H:M\to\bR$ is a smooth function and \emph{has sufficiently small derivatives on $M\setminus V(D)$ so as to have no non-trivial periodic orbits there}. In the non-compact case we assume further that $H$ is admissible with respect to $J_0$ and has small enough derivatives outside of a compact set to be dissipative. Denote by $\cP(H)$ the set of $1$-periodic orbits of $H$. We define an action functional $\mathcal{A}_H:\mathcal{P}(H)\to\bR$ by 
\begin{equation}
\mathcal{A}_H(\gamma)=-\int_{\gamma}\theta-\int_{t\in S^1}H(t,\gamma(t))dt.
\end{equation}
By the assumption that $H$ has no non-trivial orbits outside $V(D)$, the last expression is well defined even for the periodic orbits occurring  outside of $V(D)$. Note that unless $\omega$ admits a global primitive the action functional is \emph{not} defined for arbitrary Hamiltonians. 

We consider a pair $(H,J)$ as a function of the cylinder $\bR\times S^1$ where $H$ is as above, and either $H$ is independent of $s$, or $H$ has a partial derivative in the $s$ direction greater than or equal to zero. We will then be considering solutions $u$ to Floer's equation
\begin{equation}\label{eqFloer}
\frac{\partial u}{\partial s} + J(u)\frac{\partial u}{\partial t} = J(u)X_H(u).
\end{equation}

Any  solution to Floer's equation \eqref{eqFloer} for such $H$ gives rise to a relative cycle in $H_2(M,V(D);\bZ)$. More precisely we consider the compactifications $\overline{u}$ where punctures mapping to a critical point in $M\setminus V(D)$ are point compactified.  
On the other hand, the pair $(\omega,\theta)$ gives rise to a relative cocycle in $H^2(M,V(D);\bR)$. For a relative homology class $A\in H_2(M,D;\bZ)$ write
$$E_{M,\theta}(A):=\langle A,[\omega,\theta]\rangle.$$  We slightly abuse notation and write
$$E_{M,\theta}(u):=E_{M,\theta}([\overline{u}]).$$
Note that for a Floer solution $u$ as above 
\begin{equation}
E_{M,\theta}(u)=\int u^*\omega-\int_{\partial \overline{u}}u^*\theta.
\end{equation}

The following is a slightly generalized version of \cite[Proposition 5.3]{Groman2023SYZLocal}.
\begin{prp}\label{PrpRelHbar}
Denote by $\Sigma$ the cylinder $\bR\times S^1$. For  any bounded 
$C^0$ ball $B$ in the space $\mathcal{J}$ of $\omega$-compatible geometrically bounded almost complex structures on $M$ there are constants $\hbar=\hbar(B)>0,\epsilon=\epsilon(B)>0$  such that for any $\Sigma$ dependent almost complex structure taking values in $B$ and such that $\max\left\{|\partial_sJ|,|\partial_tJ|<1\right\}$ and any $H:\Sigma\times M\to \bR$ if $|\nabla H|_{\Sigma \times(M\setminus V(D))}<\epsilon$ and $u$ is a Floer solution with $E_{M,\theta}(u)\neq 0$ then $E^{geo}(u)>4\hbar$. 
\end{prp}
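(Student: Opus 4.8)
The plan is to reduce the statement to the case stated in \cite[Proposition 5.3]{Groman2023SYZLocal} and then to check that the only new feature here --- namely that we allow $\Sigma$-dependent almost complex structures varying within a $C^0$ ball $B$ rather than a fixed cylindrical $J$ --- does not break the estimates. First I would recall the structure of the proof of Proposition 5.3: the hypothesis $E_{M,\theta}(u)\neq 0$ forces the compactified Floer solution $\overline u$ to carry a nonzero relative class, which (after the integrated maximum principle, applied to $\rho$ and to the convexity condition $d\rho\circ J_t=-e^{f_t}\theta$) means $\overline u$ must reach into $M\setminus V(D)$ in an essential way, i.e.\ it is not contained in $V(D)$ up to the collar. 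One then runs a monotonicity argument: a $J$-holomorphic-type piece of $u$ escaping a fixed-size neighborhood of $D$ must have geometric energy bounded below by a constant depending only on the geometry of $(M,J)$ on a compact region, uniformly over $J$ ranging in the bounded ball $B$. Since $B$ is a $C^0$-bounded set in the space of geometrically bounded compatible almost complex structures, the monotonicity constant from \cite{Groman} (or \cite{GromanVarolgunes2021}, Appendix A) can be taken uniform over $B$; this is exactly where the hypotheses ``$B$ bounded'' and ``$\max\{|\partial_s J|,|\partial_t J|\}<1$'' get used, to keep the relevant comparison of $\omega$-energy and geometric energy controlled.

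The key steps, in order, would be: (1) Fix the ball $B$ and, using geometric boundedness uniformly over $B$ together with Lemma \ref{lmAdmissibleF}, choose $\epsilon=\epsilon(B)>0$ so small that any $H$ with $|\nabla H|<\epsilon$ on $M\setminus V(D)$ has no nonconstant periodic orbits there and so that Floer solutions obey a $C^0$ estimate confining their ``outside'' behavior to gradient-flow-like trajectories of small $H$ --- this is the dissipativity input. (2) Invoke the integrated maximum principle for the convex data near $\partial D$ to conclude that the portion of $\overline u$ lying in the collar $C(D)$ contributes nonnegatively, and that $E_{M,\theta}(u)\neq 0$ implies $\overline u$ genuinely exits $V(D)$, producing a $J$-holomorphic annulus or disk piece (after capping the $M\setminus V(D)$ critical-point punctures) crossing a fixed-width shell between two level sets $\rho^{-1}(t_1)$ and $\rho^{-1}(t_2)$. (3) Apply the uniform monotonicity inequality on that shell --- a compact region of $M$ --- to get a lower bound $c(B)>0$ on the $\omega$-energy of that piece, hence on $E^{geo}(u)$; set $\hbar=\hbar(B):=c(B)/4$. (4) Bookkeep the inequality $E^{geo}(u)\le E_{top}(u)$ direction versus what we actually need ($E^{geo}(u)>4\hbar$ is a lower bound on geometric energy, which is what monotonicity directly gives, so no subtlety there), and absorb the contributions of the small-$\nabla H$ term in the shell region into the choice of $\epsilon$.

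I expect the main obstacle to be step (3): making the monotonicity constant genuinely uniform over the $C^0$ ball $B$ while the almost complex structure is also allowed to depend on $(s,t)\in\Sigma$. The point is that monotonicity for $J$-holomorphic curves is classically stated for a fixed almost complex structure; for a domain-dependent $J(s,t)$ one needs the variation to be small (whence the hypothesis $|\partial_s J|,|\partial_t J|<1$) and the target geometry --- injectivity radius, curvature bounds, and the comparability constant between $g_{J(s,t)}$ and a fixed reference metric --- to be uniformly controlled over $B$ on the relevant compact shell. Geometric boundedness gives these bounds for each $J$, and compactness of $\overline B$ (or at least the bounded-ball hypothesis) should let one take them uniform, but verifying that the standard isoperimetric/monotonicity argument (e.g.\ the version in \cite{GromanVarolgunes2021}) goes through with these weakened hypotheses is the technical heart; the rest is formal manipulation of the integrated maximum principle as already carried out in \cite{Groman2023SYZLocal}. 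Since the excerpt explicitly flags this proposition as ``borrowed from'' that paper with its proof reviewed in Appendix \ref{SecEstimates}, I would structure the writeup as: restate the fixed-$J$ version, then isolate and prove the uniformity lemma over $B$, then deduce the general statement.
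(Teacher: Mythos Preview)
Your outline has the right shape --- nonvanishing relative energy forces the curve to carry weight outside $V(D)$, and some monotonicity-type estimate then produces the energy lower bound --- but step (3) contains a genuine gap, and it is not the uniformity-over-$B$ issue you flag. The piece of $u$ lying in the shell is \emph{not} $J$-holomorphic: it is a solution to Floer's equation, and even with $|\nabla H|<\epsilon$ small the $X_H$-term is present. Standard monotonicity for $J$-holomorphic curves in $M$ therefore does not apply to it directly, and ``absorbing the small-$\nabla H$ term into the choice of $\epsilon$'' is exactly the step that needs an argument rather than an assertion.

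The paper resolves this via Gromov's trick: one passes to the graph $\tilde u\subset\Sigma\times M$, which \emph{is} genuinely $J_H$-holomorphic for the almost complex structure $J_H$ of \eqref{eqGrTrick} compatible with the symplectic form $\omega_{H,\tau}=\tau^2\pi_1^*\omega_\Sigma+\pi_2^*\omega+dH\wedge\alpha$. Monotonicity is then applied to $\tilde u$ in $\Sigma\times M$. Two further points you did not anticipate: (i) the symplectic energy of $\tilde u$ picks up an unwanted $\tau^2\cdot\mathrm{Area}(\Sigma)$ term, so one must run a scaling argument in $\tau$ (Proposition~\ref{prpSmallDiamEst}) balancing this term against the monotonicity gain; (ii) the input to the monotonicity estimate is not ``the curve crosses a radial shell'' but rather the purely topological observation that $E_{M,\theta}(u)\neq0$ forces some $t$-slice $u(s_0,\cdot)$ of the component outside $V$ to have diameter at least a fixed $\epsilon$ --- otherwise that component would be contractible rel $V$. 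This slice is then covered by disjoint balls in $\Sigma\times M$ and monotonicity is applied to each. The uniformity over $B$ that you worried about enters only through Lemma~\ref{lmProdCEquivalence}, which compares $g_{J_H,\tau}$ to the product metric and is straightforward once the Gromov-trick framework is in place.
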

\begin{proof}
  This is a variant of \cite[Proposition 5.3]{Groman2023SYZLocal} where we require $C^1$ smallness on the complement of $V(D)$ rather than the complement of a neighborhood $V(\partial D)$.
  In Appendix \ref{SecEstimates} and Proposition \ref{prpRelEn} we give a self contained proof of the proposition. 
\end{proof}

Call a $C^0$ ball $B$ of almost complex structures as in the statement of Proposition \ref{PrpRelHbar}  \emph{$\hbar$-admissible} if $\hbar$ satisfies the conclusion of Proposition \ref{PrpRelHbar} for paths $J$ in $B$.

\begin{rem}\label{remMotivationSShaped}
Ideally, we would work with Hamiltonians which become exactly constant outside of a neighborhood of the form $D_{\delta}$. We call such Hamiltonians \emph{strictly S-shaped}. In this case, all the properties we want of the geometric curves would actually be very simple and would just follow from the monotonicity inequality. However, at the boundary of the region where the Hamiltonian is constant the periodic orbits are highly degenerate and Gromov-Floer compactness is not expected to hold unless one imposed further energy truncations. For this reason, we work with Hamiltonians which are only close to constant but not actually constant. This leads us to need the much more complicated Proposition \ref{PrpRelHbar}, which has a pretty involved proof. This is why we need the subtle requirements in Definition \ref{dfSShaped} as explained in Remark \ref{remExplainSShaped}.
\end{rem}

\begin{df}\label{dfSShaped}
Fix constants $\delta,\Delta>0$ and assume $\delta$ is small enough so the embedding of $D$ in $M$ extends to a symplectic embedding of $D_{\delta}$ in $M$. $\Delta$ will also have be taken to be very small for the conditions outlined below to be non-empty. We define an indexing category $
\cH(D,\theta,\delta,\Delta)$ as follows. The objects are pairs $(H,J)$ which satisfy the following conditions. 

\begin{enumerate}
\item 
On the region $C(D)=\rho^{-1}((-\delta,\delta))$ we have $H=h\circ e^\rho$ where $h :(-\delta,\delta)\to\bR$ is smooth.
\item $J$ is convex near $\rho^{-1}(\delta)$. 
\item $H$ has no non-constant periodic orbits outside of $D_{\delta}$.
\item We have 
\begin{equation}
\max H|_{\text{crit}(H)\cap(M\setminus D_{\delta/2})}-H|_{\partial D_{\delta/2}}< \Delta.
\end{equation}
\item There is a $C_0$ ball $B$ in $\mathcal{J}$ containing $J$ such that with $\hbar(B),\epsilon(B)$ as in Proposition \ref{PrpRelHbar} we have $\hbar\geq 2\Delta$ and
$|\nabla H|_{M\setminus D_{\delta/2}}<\epsilon$. 
\item There is a $\delta'\in (\delta/2,\delta)$ so that on $D_{\delta}\setminus D_{\delta'}$ we have 
\begin{equation}\label{eqhPrimeDelta}
h'e^{\delta}=\Delta.
\end{equation}
We assume that the right hand side is  less than the smallest period of the Reeb flow of $\theta$.
\end{enumerate}

In the non-compact case we further require  $J$ coincides with to $J_0$ outside of a compact set and $H$ coincides outside  of a compact set with a function of the form $H=\epsilon f_0+Const$ where $f$ is admissible and $\epsilon$ is sufficiently small for $H$ to be dissipative.

The admissible smooth paths are paths $(H_s,J_s)$ of objects in $\cH(D,\theta,\delta,\Delta)$ satisfying 
\begin{itemize}
\item $\partial_sH_s\geq 0$, $\partial_sH_s$ and $\partial_sJ_s$ are compactly supported, and,  $\max\left\{|\partial_sJ|,|\partial_tJ|<1\right\}$. 
\item There is a ball $B\subset \mathcal{J}$ such that $\hbar(B)\geq 2\Delta$,  $J_s\in\mathcal J$, and,  for $\Sigma$ the cylinder $\bR\times S^1$,
\[
    |\nabla H|_{\Sigma \times(M\setminus V(D))}<\epsilon.
\]
.
\item There is $\delta'$ such that \eqref{eqhPrimeDelta} holds on $D_{\delta}\setminus D_{\delta'}$ for all $s$.  
\item Let $c_s$ be the the value of $H_s$ on the level set $\partial D_{\delta'}$\footnote{This is a constant by Definition of $\cH(D,\theta,\delta,\Delta)$} and write 
$\tilde{H}_s=H-c_s$. Then we require that there is a continuous function $k:\bR\to \bR$ such that $\int_{\bR} k\geq -2\Delta$ and $\partial_s\tilde{H}>k$.
\end{itemize}
The admissible homotopies are all piecewise smooth homotopies of paths admissible smooth paths.
\end{df}

\begin{rem}\label{remExplainSShaped}
    The motivation for Definition \ref{dfSShaped} is to guarantee the positivity property of Lemma \ref{lmPositivity} for the relative energy which is the key to all the main results of the paper. Let us explain the role of each condition in the first half of Definition \ref{dfSShaped} pertaining to the objects of the indexing category. 
    \begin{itemize}
    \item Conditions 1-2, concerning $H$ and $J$, are required for the integrated maximum principle to be satisfied near the boundary of $D$.
    
    \item Referring to Remark \ref{remMotivationSShaped}, Conditions 3-6 serve to replace the constancy assumption. In the strictly S-shaped case, conditions 3-4 and 6 would hold automatically and for condition 5 we would rely on monotonicity instead of Proposition \ref{PrpRelHbar}. Unfortunately, as explained in Remark \ref{remMotivationSShaped}, we need to allow some non-constancy. Thus we have to impose various requirements on the oscillation and the derivatives of $H$ in the complement of $D$.
    
    \item Condition 5 is phrased in terms of a $C^0$ ball because we need to allow perturbations of $J$. If we use the virtual chains framework, we do not need to perturb $J$, which would allow a somewhat simpler exposition. We still need the phrasing of proposition \ref{PrpRelHbar} in terms of arbitrary $C^0$ balls to prove independence of the choice of almost complex structures.
    \item Equation \eqref{eqhPrimeDelta} could be replaced with an inequality $0< h'e^{\delta}\leq \Delta$ in the proof of Lemma \ref{lmPositivity}. However, the proof of Lemma \ref{lmPositivity2} requires strict equality.

    \end{itemize}

    We now comment on the second half of Definition \ref{dfSShaped} referring to continuation maps. The motivation for the first three items is hopefully evident by now. As to the last item, note that condition 4 that we imposed in for the objects of the indexing category, is not a reasonable assumption for the continuation maps since we need the sequence to converge to infinity outside of $D$ \footnote{An alternative approach would be to make the approach to infinity adiabatic. But this could lead to some complications.}. Thus we replace condition 4 with a related condition which only needs to hold after subtracting a function on the domain of the continuation map.
    \end{rem}

\subsection{Indexing categories of S-shaped Floer data}\label{subsecDataDep}

Fix parameters $\delta,\Delta$. For each compact $K\subset D$ denote by $\cH_K=\cH_K(D,\theta, \delta,\Delta)\subset\cH (D,\theta, \delta,\Delta)$ the indexing sub category whose objects are the pairs $(H,J)$ such that $H|_K<0$.  We denote by $\cH^{reg}_K\subset\cH_K$  the pairs which are regular for the definition of Floer theory. 

Before proceeding, let us spell out what a regular $\cH_K(D,\theta,\delta,\Delta)$-admissible acceleration datum $A$ is in accordance with Definition 2.2. $A$ consists of:

\begin{itemize}
\item A family of pairs $\tau\mapsto(H_{\tau},J_{\tau})$ of Floer data, where each $(H_{\tau},J_{\tau})$ is $\cH_K(D,\theta,\delta,\Delta)$-admissible
\item A smooth function $f:\bR\to\bR$ such that $f(t)=0$ whenever $t\ll 0$ and $f(t)=1$ whenever $t\gg 1$
\end{itemize}

We require that for each $i$ the $H_i$ has non-degenerate $1$-periodic orbits. If we use the classical transversality package we require in addition that
\begin{itemize}

\item For each integer $i$, the pair $(H_i,J_i)$ is regular for the definition of Floer cohomology
\item For each $i$, the continuation by the family $g_i:s\mapsto (H_{i+f(s)},J_{i+f(s)})$ is $\cH(D,\theta,\delta,\Delta)$-admissible and regular for the definition of continuation maps
\end{itemize}

\begin{lm}\label{lmSAccelExistence}
    For  $(\delta,\Delta)$ small enough positive constants there exist regular $\cH_K(D,\theta,
\delta,\Delta)$-admissible acceleration data for relative $SH$ of $K\subset M.$ Moreover, the data can be chosen so that the almost complex structures are contained in a $2\Delta$-admissible ball. Call such data \emph{strongly admissible}.
\end{lm}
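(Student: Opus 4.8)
The plan is to construct strongly admissible acceleration data by a sequence of explicit choices, verifying the conditions of Definition \ref{dfSShaped} and Definition \ref{dfAccelerationDatum} one at a time. First I would fix a collar parameter $\delta>0$ small enough that the embedding of $D$ extends to $D_\delta$, and small enough that $e^\delta$ times the smallest period of the Reeb flow of $\theta|_{\partial D}$ is still large; this controls condition (6) of Definition \ref{dfSShaped}. Next, working with a geometrically bounded cylindrical $J$ near $\rho^{-1}(\delta)$ satisfying the convexity equation \eqref{eqConvexity}, I would pick a bounded $C^0$ ball $B\ni J$ in $\mathcal J$ and let $\hbar=\hbar(B),\epsilon=\epsilon(B)$ be the constants furnished by Proposition \ref{PrpRelHbar}. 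I then \emph{define} $\Delta:=\hbar/2$, which is the key move that makes condition (5) ($\hbar\geq 2\Delta$) hold by fiat; shrinking $B$ only increases $\hbar$, so this is consistent, and we may additionally shrink $\Delta$ further if needed to keep the other smallness requirements satisfiable.

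With $(\delta,\Delta)$ fixed, I would build the family $\tau\mapsto H_\tau$ as a family of functions of $e^\rho$ on the collar, interpolating between a function which is $\approx 0$ on $K$ and a function whose slope at infinity (and whose value on $M\setminus D_\delta$) grows to $+\infty$ with $\tau$. Concretely: on $D_{\delta'}$ (some $\delta'\in(\delta/2,\delta)$) take $H_\tau$ to be a small negative constant shifted toward $0$ on $K$; on $D_\delta\setminus D_{\delta'}$ impose $h'e^\delta=\Delta$ exactly, matching \eqref{eqhPrimeDelta}; and outside $D_\delta$ let $H_\tau=\epsilon_\tau f_0+\mathrm{const}_\tau$ with $\epsilon_\tau\le\epsilon$ small, $\mathrm{const}_\tau\to\infty$, so that $|\nabla H_\tau|<\epsilon$ there and $H_\tau$ is dissipative by the remarks following Definition \ref{dfGeometricallyFiniteType}. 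By Lemma \ref{lmAdmissibleF} (applied with $f=f_0$) for $\epsilon_\tau$ small enough there are no non-constant $1$-periodic orbits outside $D_\delta$, giving condition (3); condition (4) follows because the oscillation of $H_\tau$ over $M\setminus D_{\delta/2}$ near its critical points is controlled by the $C^1$-smallness, which we may take $<\Delta$. The slope on the collar being $<$ the Reeb period means the only $1$-periodic orbits in $D_\delta$ lie in the interior, and by a standard generic perturbation of $(H_\tau,J_\tau)$ — supported away from the collar and the region $M\setminus D_{\delta/2}$ where $C^1$-smallness is imposed, and shrinking $B$ if necessary so it still contains the perturbed $J$ — we achieve non-degeneracy of orbits and regularity of the $H_i$ and the continuation paths $g_i$. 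For the continuation maps I would check the second half of Definition \ref{dfSShaped}: monotonicity $\partial_sH_s\ge 0$ is built in; the condition $\int_\bR k\ge-2\Delta$ with $\partial_s\tilde H_s>k$ is arranged because, after subtracting the constant value $c_s$ on $\partial D_{\delta'}$, the function $\tilde H_s$ only \emph{decreases} by a controlled amount on the collar during the interpolation, bounded by $\Delta$ times a fixed factor.

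The main obstacle I anticipate is the simultaneous satisfaction of regularity and the rigid geometric constraints: the conditions of Definition \ref{dfSShaped} pin down $H$ and $J$ exactly on the collar $C(D)$ (equations \eqref{eqConvexity} and \eqref{eqhPrimeDelta}) and nearly pin them down on $M\setminus D_{\delta/2}$ (the $C^1$-smallness and oscillation bounds), so the generic perturbations needed for transversality can only be performed in the interior region $D_{\delta/2}\setminus K$ and on $K$ itself. I would argue this is enough: the relevant $1$-periodic orbits all lie in the interior of $D_{\delta/2}$ (there are none outside $D_\delta$, and the collar carries none by the Reeb-period condition), and Floer trajectories between them can be made regular by perturbations supported near those orbits and in the interior, by the standard argument that the universal moduli space is cut out transversally once one is allowed to vary $(H,J)$ in a neighborhood of any point through which a non-constant trajectory passes — and every non-constant trajectory enters the interior. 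One must also ensure these perturbations keep $J$ inside a $2\Delta$-admissible ball, which is automatic since admissibility is an open condition and we only need it for \emph{some} ball $B$ containing the perturbed data; shrinking the ball raises $\hbar$, preserving $\hbar\ge 2\Delta$. Finally, passing from existence of a single admissible $(H_i,J_i)$ and admissible continuation $g_i$ to an entire acceleration datum is routine: one chooses the $H_i$ cofinal among admissible Hamiltonians (value $\to0$ on $K$, $\to\infty$ off $K$) and interpolates, invoking Definition \ref{dfAccDatKsubM}.
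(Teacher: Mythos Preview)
Your approach is essentially the paper's: fix a ball $B\subset\mathcal J$, set $\Delta:=\hbar(B)/2$, choose $\delta$ so that $D_\delta$ embeds, build the $H_\tau$ with the prescribed collar profile and small gradient outside, then perturb for regularity. Two places where the paper streamlines what you do are worth noting.

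First, the paper arranges that $(H_s-H_0)|_{M\setminus D_\delta}$ is a constant depending only on $s$, with all the acceleration happening by adjusting $h$ in the neck $D_\delta\setminus D$. This makes the last continuation condition of Definition \ref{dfSShaped} trivial: one may take $k\equiv 0$. Your model $H_\tau=\epsilon_\tau f_0+\mathrm{const}_\tau$ with $\epsilon_\tau$ varying in $\tau$ complicates the verification of the $k$-condition for no gain.

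Second, for regularity the paper perturbs only $J$, working in the space of $s$-dependent almost complex structures that remain convex near $\partial D_\delta$; convexity is an open condition, and an arbitrarily small $C^0$ perturbation stays in $B$. Your worry that perturbations must be confined to the interior of $D_{\delta/2}$ is misplaced, and your assertion that ``the relevant $1$-periodic orbits all lie in the interior of $D_{\delta/2}$'' is not correct: there are constant orbits (critical points of $H_\tau$) outside $D_\delta$, and trajectories to and between them must also be made regular. Perturbing $J$ globally within $B$, subject only to the convexity constraint at $\rho^{-1}(\delta)$, handles all of this uniformly and avoids the somewhat delicate localization argument you sketch.
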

\begin{proof}
Fix a ball $B\subset \mathcal{J}$ and let $\Delta=\frac1{2}\hbar(B)>0$ as in Proposition \ref{PrpRelHbar}. In the non-compact case we assume $\mathcal{J}$ contains $J_0$. Let $\delta>0$ so that the embedding of $D$ in $M$ extends to $D_{\delta}$. Pick a $J\in B$ and a monotone path of Hamiltonians $H_s$ converging as $i\to\infty$ to $\chi_{M\setminus K}$ such that
\begin{itemize}
\item In the non-compact case we take $H_0$ to coincide with $\epsilon f_0$ outside of a compact set, for small enough $\epsilon$. 
\item $(H_0,J)\in \cH_K(D,\theta,\delta,\Delta)$. Such an $H_0$ can be obtained by interpolating an aproppriate function $H|_V(D)$, with a nearly constant function in the compact case or with $\epsilon f_0$ in the non-compact case, and taking $\epsilon$ small enough. Together with Lemma \ref{lmAdmissibleF} it is easy to verify $H_0,J$ can be adjusted to satisfy all the conditions. 
\item For each $s$ we have that $(H_s-H_0)|_{M\setminus D_{\delta}}$ is constant.  Such a path is easily constructed by adjusting $h$ in the neck region $D_{\delta}\setminus D$.

\end{itemize}

Fixing any smooth function $f:\bR\to\bR$ such that $f(t)=0$ whenever $t\leq 0$ and $f(t)=1$ whenever $t\geq 1$, the data $s\mapsto (H_s,J)$ together with $f$ is an $\cH(D,\theta,\delta,\Delta)$-admissible acceleration datum minus the regularity condition. Note the function $k$ in the last item of Definition \ref{dfSShaped} can be taken identically $0$. By an arbitrarily small perturbation of $J$ in the space of $s$-dependent almost complex structures which are convex near $\partial D_{\delta}$ the data become regular and still satisfy the conditions for being $\cH(D,\theta,\delta,\Delta)$-admissible.

\end{proof}

The constant $\hbar$ of Proposition \ref{PrpRelHbar} depends on the choice of a bounded ball in $\mathcal{J}$. By choosing disjoint such balls, the indexing category $\cH_K(D,\theta,\delta,\Delta)$ may admit different choices of acceleration data which cannot be connected by an $\cH(D,\theta,\delta,\Delta)$-admissible homotopy. This raises a potential issue: To what extent are the constructions we carry out based in this indexing category invariants of the pair $(M,D)$? The following lemma remedies this situation. 
\begin{lm}
     Given any two strongly admissible S-shaped acceleration data $A^1\leq A^2$ in $\cH(D,\theta,\delta,\Delta)$ for sets $K_1\subset K_2$  there exists a $\Delta'$ and acceleration data $A'_1\leq A'_2$  in   $\cH(D,\theta,\delta,\Delta')\subset\cH(D,\theta,\delta,\Delta)$ such that $A_1\leq A_1'\leq A_2'\leq A_2$ and such that there is an $\cH(D,\theta,\delta,\Delta')$-admissible datum for a continuation map for any successive pair.
\end{lm}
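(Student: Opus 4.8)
The plan is to interpolate between the two given acceleration data by perturbing each Hamiltonian and almost-complex structure so that (a) the almost-complex structures all take values in a single fixed bounded ball $B \subset \mathcal{J}$, and (b) the oscillation condition (Definition \ref{dfSShaped}(4)) holds for a new, smaller constant $\Delta'$. The point is that $A^1$ and $A^2$ were built in possibly \emph{disjoint} balls, so the obstruction to connecting them is that their witnessing balls are different; we fix this by shrinking everything into a common, smaller ball and correspondingly shrinking $\Delta$. Concretely, since $A^1 \leq A^2$, for each $\tau$ and each $s$ we have a well-defined convex region $\{(1-r)H^1_\tau + r H^2_\tau : r \in [0,1]\}$ of functions; the strategy is to choose $A_1'$ and $A_2'$ to be Hamiltonians lying in this region (hence satisfying $A_1 \leq A_1' \leq A_2' \leq A_2$ automatically), and then to adjust them in the neck region $D_\delta \setminus D$ and in $M \setminus D_\delta$ to restore the $\cH(D,\theta,\delta,\Delta')$ conditions.

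The key steps, in order, are as follows. First, pick a bounded ball $B \subset \mathcal{J}$ small enough that $\hbar(B)$ is large; set $\Delta' = \tfrac12 \hbar(B)$, which we may assume is smaller than both the $\Delta$ of $A^1$ and that of $A^2$ (shrink $B$ if necessary). Second, replace the almost-complex structures $J^1_\tau, J^2_\tau$ by ones in $B$: since the convexity condition \eqref{eqConvexity} near $\rho^{-1}(\delta)$ is an open condition preserved under small perturbation, and since regularity is generic, one can perturb within $B$ while keeping admissibility and regularity; the continuation and homotopy data are handled similarly, using that $\max\{|\partial_s J|, |\partial_t J|\} < 1$ is an open condition. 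Third, rescale the Hamiltonian outside $D_{\delta/2}$: by the remark after Definition \ref{dfGeometricallyFiniteType}, replacing $H$ by a function that is $\epsilon f_0 + \text{const}$ (resp. nearly constant in the compact case) with $\epsilon$ small enough makes the gradient small enough that $|\nabla H|_{M \setminus D_{\delta/2}} < \epsilon(B)$ and the oscillation bound $\Delta'$ of condition (4) holds; by Lemma \ref{lmAdmissibleF} this also kills periodic orbits outside $D_\delta$. Fourth, adjust $h$ in the collar so that $h' e^\delta = \Delta'$ on $D_\delta \setminus D_{\delta'}$ for a suitable $\delta' \in (\delta/2, \delta)$, as in the proof of Lemma \ref{lmSAccelExistence}; since we are only modifying $H$ in the neck and in the exterior, and these modifications can be taken arbitrarily small in $C^1$ on a neighborhood of $K_i$, one retains $H|_{K_i} < 0$, so $A_i'$ is genuinely an acceleration datum for $K_i$. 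Fifth, since $A_1 \leq A_1' \leq A_2' \leq A_2$ are now all strictly monotone and all pairs lie in the common ball $B$ with common parameters $(\delta, \Delta')$, Lemma \ref{lmWeightedWellDefined}(1) (or rather its S-shaped analogue, established by the interpolation construction in Lemma \ref{lmSAccelExistence}) produces $\cH(D,\theta,\delta,\Delta')$-admissible continuation data for each successive pair; the function $k$ in the last bullet of Definition \ref{dfSShaped} can be taken to be $0$ or a small negative bump with $\int k \geq -2\Delta'$, which is possible precisely because we are interpolating monotonically within a region of small oscillation.

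The main obstacle I expect is the fourth step combined with the requirement $A_1 \leq A_1'$: one must simultaneously (i) force the exact equality $h' e^\delta = \Delta'$ with the \emph{new smaller} constant in the collar, (ii) keep $A_1'$ above $A_1$, whose collar slope was tuned to the \emph{old} $\Delta$, and (iii) keep $A_2'$ below $A_2$. Since $\Delta' < \Delta$, the new neck slopes are smaller, which is compatible with staying below $A_2$ but is in tension with staying above $A_1$ near $\partial D_\delta$; one resolves this by choosing $\delta'$ closer to $\delta$ (so the small-slope region is a thin collar) and making up the difference in the interior part of the neck $D_{\delta'} \setminus D$, where there is no slope constraint and one has room to sit above $h^1$. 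Verifying that this can be done while keeping the resulting $H$ monotone in $\tau$ and non-degenerate at each integer level is the genuinely fiddly part, but it is a routine (if delicate) interpolation argument of exactly the type carried out in the proof of Lemma \ref{lmSAccelExistence}; no new ideas are needed beyond carefully tracking the inequalities.
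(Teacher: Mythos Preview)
Your strategy has the direction of the ball--$\hbar$ relationship inverted, and this is not a cosmetic slip: it drives you to an approach that does not work. You write that you will pick $B$ \emph{small} so that $\hbar(B)$ is \emph{large}, and then set $\Delta' = \tfrac12\hbar(B)$ ``which we may assume is smaller than $\Delta$ (shrink $B$ if necessary).'' But shrinking $B$ makes $\hbar(B)$ no smaller (fewer almost complex structures means the uniform estimate in Proposition~\ref{PrpRelHbar} only gets easier), so $\Delta'$ goes \emph{up}, not down. Worse, a small ball $B$ is useless unless you can move the $J^i_\tau$ into it, and your step~2 assumes this can be done by a \emph{small} perturbation; if the witnessing balls of $A^1$ and $A^2$ are far apart, no small perturbation lands both families in a common small ball. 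The perturbation would have to be large, and then the ``open condition'' reasoning for convexity and regularity no longer applies.

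The paper's approach is the opposite and much simpler. Strong admissibility says each $A^i$ has all its almost complex structures contained in some bounded ball; take $B'$ to be any bounded ball containing \emph{both} of these balls. This $B'$ is larger, so $\hbar(B')$ is smaller, and one sets $\Delta' = \hbar(B')/2$, which is automatically $\leq \Delta$ as required for the inclusion $\cH(D,\theta,\delta,\Delta') \subset \cH(D,\theta,\delta,\Delta)$. No perturbation of the $J$'s is needed at all: they already sit in $B'$. One then constructs $A_1', A_2'$ exactly as in Lemma~\ref{lmSAccelExistence} for the parameter $\Delta'$, arranging the Hamiltonians to satisfy $A_1 \leq A_1' \leq A_2' \leq A_2$; since all four data now have almost complex structures in the single ball $B'$, the $\cH(D,\theta,\delta,\Delta')$-admissible continuation data between successive pairs exist by the first paragraph of the paper's argument (interpolate $H$ monotonically with $\partial_s H < \epsilon(B')$ and interpolate $J$ within $B'$). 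Your worry about the collar-slope ordering in step~4 largely evaporates in this framework: since $\Delta' \leq \Delta$, the new slope $\Delta' e^{-\delta}$ is no larger than the old ones, and the remaining ordering constraints are handled by the interior-of-the-neck freedom you already identified.
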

\begin{proof}

    Suppose all the almost complex structures for $A_1,A_2$ lie in a $2\Delta$-admissible ball $B$. Then for any $i$ we can find an admissible monotone path $H_s$ connecting  $H_i^1$ to $H_i^2$ and satisfying $\partial_sH<\epsilon(B)$ and a path of $t$-dependent almost complex structures $J_s$ such that $J_s\in B$ and such that $\max\left\{|\partial_sJ_s|,|\partial_tJ_s|\right\}\leq 1$ for all $s$.

    More generally, we first need to consider a bounded ball $B'$ containing all the almost complex structures. This is possible by strong admissibility. Then by  by Proposition \ref{PrpRelHbar} there is an $\hbar=\hbar(B')$ that $B$ is $\hbar$-admissible. Taking $\Delta=\hbar/2$ we proceed to construct the acceleration data $A_i'$ as in the previous lemma. We have now reduced to the previous paragraph.
\end{proof}

\subsection{S-shaped data flexibility}
Our concern now is to clarify the role of the parameters $\delta,\Delta,\theta,$ and $D$. This will be important in Section \ref{SecTau} and in particular for Theorem \ref{tmConcave}.

\begin{itemize}
\item By examining the role of $\Delta$ in Definition \ref{dfSShaped} we see that when$\Delta_1\leq \Delta_2$, fixing all the other data, we have an inclusion $\cH_K(D,\theta,\delta,\Delta_1)\subset \cH_K(D,\theta,\delta,\Delta_2)$ as a cofinal set.  In particular we can construct acceleration data which are both $\Delta$ and $\Delta'$ admissible. 

\item The case of varying $\delta$ is more involved as there is no such inclusion. This is so since in Definition \ref{dfSShaped} we impose some conditions in an arbitrarily small neighborhood of $\partial D_{\delta}$. Luckily these conditions are not mutually exclusive. So, given $\delta_1,\delta_2$ the intersection $\cH(\delta_1)\cap\cH(\delta_2)$ is cofinal in both. 
\end{itemize}

We formulate a lemma which concern varying $\theta$ and $D$.

\begin{lm}\label{lmSShapedFlexibility}
Let $D^1\subset D^2\subset M$ be nested Liouville domains with Liouville forms $\theta_1$ and $\theta_2$ respectively. Fix a $\Delta>0$ sufficiently small. Then there exist constants $\delta_1,\delta_2>0,$ and a set $\cF\subset \cH_K(D^1,\theta_1,\delta_1,\Delta)\cap \cH_K(D^2,\theta_2,\delta_2,\Delta)$ of Floer data which is cofinal in both $\cH_K(D^1,\theta_1,\delta_1,\Delta)$ and in $\cH_K(D^2,\theta_2,\delta_2,\Delta)$.

More generally, for any finite chain of nested Liouville domains $D^1\subset D^2\subset \cdots \subset D^n\subset M$ with Liouville forms $\theta_1,\theta_2,\ldots,\theta_n$ respectively, there exist constants $\delta_1,\delta_2,\ldots,\delta_n>0$ and a set $\cF\subset \bigcap_{i=1}^n \cH_K(D^i,\theta_i,\delta_i,\Delta)$ of Floer data which is cofinal in each of the $\cH_K(D^i,\theta_i,\delta_i,\Delta)$ for $i=1,2,\ldots,n$.
\end{lm}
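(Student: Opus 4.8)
\medskip

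The plan is to reduce the general chain statement to the two-domain case by induction, and to prove the two-domain case by carefully locating a common collar region on which all the defining conditions of the two indexing categories can be made to hold simultaneously. First I would observe that since $D^1 \subset D^2$ are nested Liouville domains, the Liouville vector fields $Z_1, Z_2$ dual to $\theta_1, \theta_2$ both point outward along $\partial D^1$ and $\partial D^2$ respectively, but $\theta_1$ and $\theta_2$ need not agree; however, on a small enough neighborhood $V$ of $\partial D^2$ inside $M \setminus D^1$ (and likewise on a small enough neighborhood of $\partial D^1$ inside $D^2$), the relevant collar coordinates $\rho_i$ are well-defined. The key point is that the conditions (1)--(6) of Definition \ref{dfSShaped} for $\cH_K(D^i, \theta_i, \delta_i, \Delta)$ are each imposed either (a) on an arbitrarily small collar $C(D^i) = \rho_i^{-1}((-\delta_i, \delta_i))$, (b) in the region $M \setminus D^i_{\delta_i/2}$ as smallness-of-derivative conditions, or (c) globally but only as the existence of a ball $B \subset \mathcal{J}$ witnessing Proposition \ref{PrpRelHbar}. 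As already noted in the paragraph preceding the lemma (``the intersection $\cH(\delta_1) \cap \cH(\delta_2)$ is cofinal in both''), the nested-$\delta$ phenomenon is harmless; the genuinely new issue is the interplay of the two primitives $\theta_1, \theta_2$.

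\medskip

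The core of the argument: I would first shrink $\delta_2$ so that $D^2_{\delta_2}$ still symplectically embeds in $M$ and so that $\partial D^2_{\delta_2}$ lies in the region where $\theta_1$ is defined (possible since $\theta_1$ is only required on a neighborhood $V(D^1)$, which we may take as large as we like inside $D^2$ by rescaling, or alternatively we note $\theta_1$ and $\theta_2$ need only be defined near the respective boundaries for the action functional and convexity conditions). Next, given a pair $(H, J)$, I would build it by specifying:
\begin{itemize}
\item $H = h_i \circ e^{\rho_i}$ on each collar $C(D^i)$, which is compatible provided the collars are disjoint --- arranged by taking $\delta_1, \delta_2$ small enough that $\rho_1^{-1}((-\delta_1,\delta_1))$ and $\rho_2^{-1}((-\delta_2,\delta_2))$ do not meet, which holds since $\partial D^1$ and $\partial D^2$ are disjoint compact hypersurfaces;
\item $J$ convex (in the sense of \eqref{eqConvexity}) near $\rho_1^{-1}(\delta_1)$ \emph{and} near $\rho_2^{-1}(\delta_2)$ --- again compatible because these are disjoint hypersurfaces, and convexity is a pointwise condition on $J$ along each;
\item derivative bounds $|\nabla H| < \epsilon$ on $M \setminus D^2_{\delta_2/2} \subset M \setminus D^1_{\delta_1/2}$, which is the stronger of the two regions, so controlling it there controls it on the smaller region automatically (here $\epsilon = \epsilon(B)$ for a ball $B$ chosen once and for all, using $\Delta = \tfrac12 \hbar(B)$ as in Lemma \ref{lmSAccelExistence}).
\end{itemize}
The ball $B$ witnessing condition (5) can be taken common to both: choose $B$ first, set $\Delta = \hbar(B)/2$, then both $\cH_K(D^i, \theta_i, \delta_i, \Delta)$ accept $B$-valued almost complex structures. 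This produces a nonempty set $\cF$ of Floer data admissible for both. Cofinality then follows by the construction in Lemma \ref{lmSAccelExistence}: given any $(H, J) \in \cH_K(D^i, \theta_i, \delta_i, \Delta)$, one can dominate it by an element of $\cF$ by modifying $H$ only in the collar $D^j_{\delta_j} \setminus D^j$ for $j \neq i$ and in the region $M \setminus D^i_{\delta_i}$ where $H$ is nearly constant anyway, leaving the behavior on $K$ (where $H < 0$) unchanged; monotonicity of this modification is arranged exactly as in the cited lemma.

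\medskip

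The induction on the length $n$ of the chain is then routine: having arranged a common cofinal set $\cF_{n-1}$ for $D^1 \subset \cdots \subset D^{n-1}$ with parameters $\delta_1, \ldots, \delta_{n-1}$, apply the two-domain argument to $D^{n-1} \subset D^n$ (or more directly, repeat the collar-disjointness argument for all $n$ boundaries at once, since $\partial D^1, \ldots, \partial D^n$ are pairwise disjoint compact hypersurfaces and one can choose all $\delta_i$ small enough that the collars $C(D^i)$ are mutually disjoint). The one subtlety to check is that the single ball $B$ and hence the single $\Delta$ works for all $n$ domains simultaneously --- but this is immediate since $B$ and $\Delta$ are chosen first, independently of the domains, and every condition involving them is a condition on $(H,J)$ relative to a region, not on the domain's intrinsic geometry.

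\medskip

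\textbf{Main obstacle.} The step I expect to be most delicate is verifying that condition (6) of Definition \ref{dfSShaped}, the equality $h_i' e^{\delta_i} = \Delta$ on a sub-collar $D^i_{\delta_i} \setminus D^i_{\delta_i'}$, can be imposed for \emph{both} $i=1$ and $i=2$ on a single Hamiltonian $H$. Since the two sub-collars are disjoint (by collar-disjointness), this is a matter of prescribing the radial profile $h$ of $H$ independently near each boundary; the potential conflict is that the requirement forces $H$ to have a definite positive slope in the $e^{\rho_i}$ variable on each sub-collar, and one must interpolate these profiles across the intermediate region $D^2 \setminus D^1_{\delta_1}$ (and beyond $D^2_{\delta_2}$) while keeping $H$ monotone along the Liouville flow, keeping the oscillation bound (condition (4)) and the derivative bound (condition (5)) intact, and keeping $H < 0$ on $K$. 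This is possible because $K$ is a fixed compact subset of $D^1$, hence bounded away from all the collars, so there is room to perform the interpolation; but writing down the interpolation explicitly and checking all six conditions plus dissipativity in the non-compact case is where the real work lies. I would handle it by the same device used throughout Section \ref{SecInMaxPrinc}: build $H$ as a $C^1$-small perturbation of a piecewise-linear-in-$e^{\rho}$ model, invoking Lemma \ref{lmAdmissibleF} and the robustness of conditions (3)--(5) under $C^1$-small perturbations.
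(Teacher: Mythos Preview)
Your proposal is correct and follows essentially the same approach as the paper's proof: choose the $\delta_i$ small enough that the collars $C(D^i)$ are pairwise disjoint, observe that the S-shaped conditions are imposed near these disjoint boundaries (plus global smallness conditions that nest), and then run the construction of Lemma~\ref{lmSAccelExistence} simultaneously for all domains. The paper's proof is considerably terser---it simply asserts that the conditions ``concern behavior in arbitrarily small neighborhoods of the boundaries'' which ``are disjoint'' and hence ``can be simultaneously satisfied''---whereas you have unpacked this into the itemized verification and correctly flagged the interpolation of the radial profiles across the intermediate region (your ``main obstacle'') as the place where one must actually write something down; the paper leaves that implicit. One small wording slip: in your derivative-bound bullet, the more stringent condition is smallness on the \emph{larger} region $M\setminus D^1_{\delta_1/2}$, which then implies the $D^2$ condition on the subset $M\setminus D^2_{\delta_2/2}$; your inclusion is written correctly but the surrounding prose reads ambiguously.
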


\begin{proof}
We choose $\delta_1$ small enough so that $D^1_{\delta_1}\subset D^2$ and $\delta_2$ so that $D^2$ has a symplectization cylinder of size $\delta_2$. We choose $\Delta$ small enough for the construction of Lemma \ref{lmSAccelExistence} to work for both domains.

The conditions for S-shapedness concern behavior in arbitrarily small neighborhoods of the boundaries. These neighborhoods are disjoint. So, these conditions can be simultaneously satisfied. We  can thus carry out the construction of Lemma \ref{lmSAccelExistence} for Floer data which are simultaneously admissible for both domains.

For the general case of $n$ nested domains, we choose $\delta_i$ small enough so that $D^i_{\delta_i}\subset D^{i+1}$ for $i=1,2,\ldots,n-1$, and $\delta_n$ so that $D^n$ has a symplectization cylinder of size $\delta_n$. We choose $\Delta$ small enough for the construction of Lemma \ref{lmSAccelExistence} to work for all domains. 

\end{proof}

\subsection{Positivity of relative energy}\label{secPositivity}

The key to the proof of the main theorems is the following adaptation of the integrated maximum principle \cite{AbouzaidSeidel2010}.

\begin{lm}\label{lmPositivity}
Suppose $(H,J)\in\cH(D,\theta,\delta,\Delta)$. Then $E_{M,\theta}(u)\geq 0$ 
for any solution $u$ to Floer's translation invariant equation \eqref{eqTransInvFloer}  for $(H,J)$. Moreover, we have one of the following mutually exclusive possibilities:
\begin{enumerate}
\item $E_{M,\theta}(u)>\hbar\geq 2\Delta$, or,
\item $u$ maps entirely into $V(D)$, or,
\item $E_{M,\theta}(u)=0$ and the output of $u$ is on a critical point outside $V(D)$. 
\end{enumerate}

\end{lm}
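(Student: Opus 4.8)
The plan is to establish the basic inequality $E_{M,\theta}(u)\geq 0$ via the integrated maximum principle and then refine it into the stated trichotomy by a careful analysis of where the trajectory $u$ sits relative to the collar region. First I would set up the integrated maximum principle as in \cite{AbouzaidSeidel2010}: consider the function $\rho\circ u$ on the part of $\Sigma=\bR\times S^1$ that maps into the collar $C(D)$, and show that on the level sets $\rho^{-1}(c)$ for $c\in(\delta',\delta)$ the convexity condition \eqref{eqConvexity} for $J$ together with the form $H=h\circ e^\rho$ of $H$ on $C(D)$ forces the relevant boundary integral to have a sign. Concretely, if $U\subset\Sigma$ is a connected component of $u^{-1}(D_\delta\setminus D_c)$ for generic $c$, then a computation using Stokes' theorem and the convexity of $J$ shows that the energy of $u|_U$ together with the contribution $\int_{\partial U}u^*\theta$ is controlled by the period condition \eqref{eqhPrimeDelta}, and in particular one cannot have $u$ escaping outward across $\rho^{-1}(\delta)$ without paying positive energy. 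This uses crucially that $h'e^\delta=\Delta$ is less than the smallest Reeb period, so that no orbit lies on $\rho^{-1}(\delta)$ and the maximum principle applies on a collar of that level set.

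Next, with the integrated maximum principle in hand I would split into cases according to the image of $u$. \emph{Case (i):} the image of $u$ meets $M\setminus D_{\delta/2}$ but is not contained in $V(D)$. Then $u$ must cross into the region where $|\nabla H|<\epsilon$, and the relative class $[\overline u]\in H_2(M,V(D);\bZ)$ is nontrivial because the trajectory genuinely leaves the collar; Proposition \ref{PrpRelHbar} applied with the chosen $\hbar$-admissible ball $B$ (using condition (5) of Definition \ref{dfSShaped}, $\hbar\geq 2\Delta$ and $|\nabla H|_{M\setminus D_{\delta/2}}<\epsilon$) gives $E^{geo}(u)>4\hbar$, and since $E_{M,\theta}(u)\geq E^{geo}(u)$ up to the correction coming from $H$ — here one needs the standard inequality relating topological relative energy to geometric energy in the presence of the small Hamiltonian term, which costs at most $\Delta$ by condition (4) — we conclude $E_{M,\theta}(u)>\hbar$, placing us in alternative (1). \emph{Case (ii):} $u$ maps entirely into $V(D)=D_\delta$, which is alternative (2), and here there is nothing to prove beyond $E_{M,\theta}(u)\geq 0$, which follows from the integrated maximum principle restricted to the collar (or simply from $E^{geo}(u)\geq 0$ and exactness of $\omega$ on $V(D)$). \emph{Case (iii):} the image of $u$ meets $M\setminus V(D)$ but $E_{M,\theta}(u)$ is not $>\hbar$; then by Case (i) the relative class must be trivial, $E_{M,\theta}(u)=0$, so $u^*\omega$ integrates to $\int_{\partial\overline u}u^*\theta$, and since $H$ has no nonconstant periodic orbits outside $D_\delta$ the only way the closure $\overline u$ can reach outside $V(D)$ is if the output is a point-compactified puncture at a critical point of $H$ in $M\setminus V(D)$; moreover the energy being zero forces $u$ to be the corresponding constant/gradient-type configuration, so we land in alternative (3). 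The mutual exclusivity of (1)--(3) is then immediate: (1) excludes (3) since $0\not>\hbar$, and (2) excludes (3) since a trajectory inside $V(D)$ cannot have output outside $V(D)$.

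The main obstacle I anticipate is the bookkeeping in Case (i): turning the \emph{geometric} energy bound of Proposition \ref{PrpRelHbar} into the \emph{relative topological} energy bound $E_{M,\theta}(u)>\hbar$. The subtlety is precisely the one flagged in Remark \ref{remMotivationSShaped} — $H$ is not strictly constant outside $D$, so $E_{M,\theta}(u)$ and $E^{geo}(u)$ differ by a term involving $\int u^*d(H\,dt)$ over the region where $u$ leaves $V(D)$, and one must bound this by the oscillation control built into conditions (4) and (6) of Definition \ref{dfSShaped}. The factor $4$ in the conclusion $E^{geo}(u)>4\hbar$ of Proposition \ref{PrpRelHbar} is presumably chosen with exactly this slack in mind, so the estimate should close, but writing it carefully requires tracking which portion of $\Sigma$ contributes to the Hamiltonian correction and invoking the integrated maximum principle a second time to confine that contribution to the collar. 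A secondary technical point is ensuring that the level $c\in(\delta',\delta)$ used in the maximum principle can be chosen so that $\rho^{-1}(c)$ is a regular value of $\rho\circ u$ and avoids the (finitely many) Reeb periods, which is a standard Sard-type argument.
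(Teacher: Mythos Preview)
Your outline has the right ingredients --- the integrated maximum principle, Proposition~\ref{PrpRelHbar}, and the oscillation bounds (4), (6) --- and your ``main obstacle'' paragraph correctly anticipates the bookkeeping between $E^{geo}$ and $E_{M,\theta}$. But the argument as written has two real gaps.

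\textbf{The case split is on the wrong quantity.} Proposition~\ref{PrpRelHbar} is triggered by $E_{M,\theta}(u)\neq 0$, not by ``$u$ leaves the collar''. Your Case~(i) assertion that leaving the collar forces the relative class to be nontrivial is simply false: alternative~(3) of the lemma is exactly a trajectory that exits $V(D)$ yet has $E_{M,\theta}(u)=0$. The paper splits instead on whether $E_{M,\theta}(u)=0$ or $\neq 0$. In the second case it cuts $u$ along a regular level $\rho^{-1}(c)$ with $c\in(\delta',\delta)$, obtains from convexity the inequality
\[
E_{M,\theta}(u)\;\geq\;E^{geo}(u_\epsilon)\;-\;\int_{\partial u_\epsilon}H\alpha\;+\;\int_{\partial\overline u_\epsilon}h'e^\rho\alpha,
\]
bounds each boundary integral by $\Delta$ using $\int_{\partial\overline u_\epsilon}\alpha\in\{-1,0,1\}$ together with conditions (4) and (6), and then feeds in $E^{geo}(u)>4\hbar$ from Proposition~\ref{PrpRelHbar} to get $E_{M,\theta}(u)>\hbar$.

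\textbf{Case (iii) is missing its mechanism.} Your sentence ``energy being zero forces $u$ to be the corresponding constant/gradient-type configuration'' confuses $E_{M,\theta}(u)=0$ with $E^{geo}(u)=0$; the former is a topological quantity and does not make $u$ trivial. And ``the only way $\overline u$ can reach outside is if the output is a point-compactified puncture'' does not explain why it is the \emph{output} rather than the input. The paper's argument is a sign computation on the same displayed inequality: when $E_{M,\theta}(u)=0$ but $E^{geo}(u_\epsilon)>0$, the facts that $h'e^\rho>0$ on the cut and that $H$ at outside critical points exceeds $H$ on the cut force $\int_{\partial\overline u_\epsilon}\alpha=-1$ in order for the right-hand side to be nonpositive. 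Since $\alpha=dt$ is closed, that integral is determined by which end (if any) has been point-compactified, and the value $-1$ says precisely that the output end is the one lying outside. This sign argument is the idea your plan lacks.
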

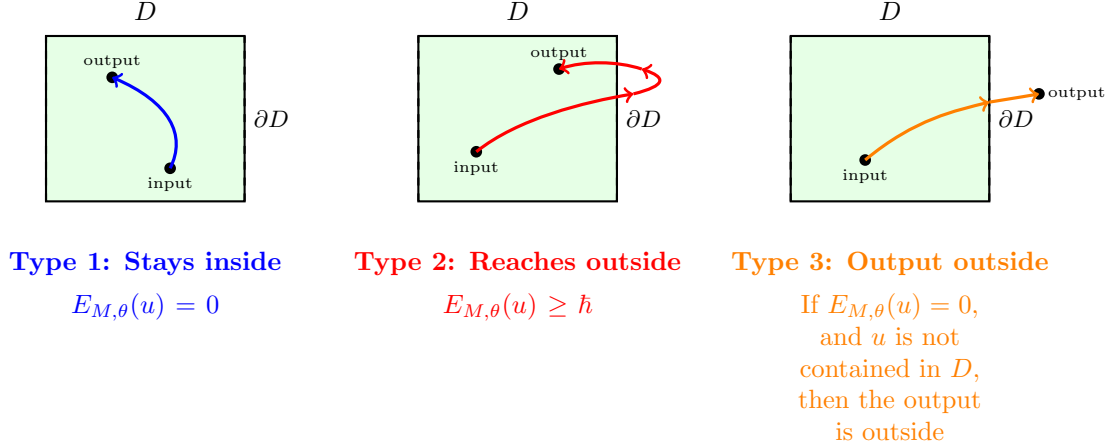
\begin{figure}[h]
\centering
\begin{tikzpicture}[scale=1.1]
    \foreach \x in {0,4.5,9} {
        \draw[thick, fill=green!10] (\x-1.2,-1) rectangle (\x+1.2,1);
        \draw[thick, dashed] (\x-1.2,-1) -- (\x-1.2,1);
        \draw[thick, dashed] (\x+1.2,-1) -- (\x+1.2,1);
        \node at (\x,1.3) {$D$};
        \node[right] at (\x+1.2,0) {\small $\partial D$};
    }
    
    \begin{scope}[shift={(0,0)}]
        \fill (0.3,-0.6) circle (2pt) node[below] {\tiny input};
        \fill (-0.4,0.5) circle (2pt) node[above] {\tiny output};
        \draw[->, very thick, blue] (0.3,-0.6) .. controls (0.5,-0.2) and (0.3,0.2) .. (-0.4,0.5);
        \node[blue, below] at (0,-1.5) {\textbf{Type 1: Stays inside}};
        \node[blue, below, text width=2.5cm, align=center] at (0,-2) {$E_{M,\theta}(u) = 0$};
    \end{scope}
    
    \begin{scope}[shift={(4.5,0)}]
        \fill (-0.5,-0.4) circle (2pt) node[below] {\tiny input};
        \fill (0.5,0.6) circle (2pt) node[above] {\tiny output};
        \draw[->, very thick, red] (-0.5,-0.4) .. controls (0,0) and (0.8,0.2) .. (1.4,0.3);
        \draw[->, very thick, red] (1.4,0.3) .. controls (1.8,0.4) and (1.8,0.5) .. (1.5,0.6);
        \draw[->, very thick, red] (1.5,0.6) .. controls (1.2,0.7) and (0.8,0.7) .. (0.5,0.6);
        \node[red, below] at (0,-1.5) {\textbf{Type 2: Reaches outside}};
        \node[red, below, text width=2.5cm, align=center] at (0,-2) {$E_{M,\theta}(u) \geq\hbar $};
    \end{scope}
    
    \begin{scope}[shift={(9,0)}]
        \fill (-0.3,-0.5) circle (2pt) node[below] {\tiny input};
        \fill (1.8,0.3) circle (2pt) node[right] {\tiny output};
        \draw[->, very thick, orange] (-0.3,-0.5) .. controls (0.3,0) and (0.8,0.1) .. (1.2,0.2);
        \draw[->, very thick, orange] (1.2,0.2) .. controls (1.5,0.25) and (1.7,0.28) .. (1.8,0.3);
        \node[orange, below] at (0,-1.5) {\textbf{Type 3: Output outside}};
        \node[orange, below, text width=2.5cm, align=center] at (0,-2) {If $E_{M,\theta}(u)=0$, and $u$ is not contained in $D$, then the output is outside};
    \end{scope}
    
    \node[above, text width=10cm, align=center] at (4.5,2) {
        Trichotomy of Floer trajectories for S-shaped Hamiltonians
    };
    
\end{tikzpicture}
\caption{The trichotomy of Floer solutions for S-shaped Hamiltonians.}
\label{fig:trichotomy}
\end{figure}
\begin{proof}
When $u$ is contained in $V(D)$  clearly $E_{M,\theta}(u)=0$ by Stokes'  Theorem. 

Suppose $u$ is not contained in $V(D)$. For any $\epsilon\in [\delta',\delta]$ let $u_{\epsilon}$ be the part of $u$ mapping into the complement of $D\cup\rho^{-1}(0,\epsilon)$. Choose $\epsilon$ so that the boundary of $u_{\epsilon}$ is cut out transversely.  Let $\overline{u}_{\epsilon}$ be the surface obtained from $u_{\epsilon}$ by one point compactification at the input or at the output in case either maps to an outside critical point. Denote by $\alpha$ the restriction of the $1$-form $dt$ to the domain of $u_{\epsilon}$. 

Then
\begin{align}\label{eqEgeoPositivity}
E_{M,\theta}(u)&=E_{M,\theta}(u_{\epsilon})\\
&=\int u_{\epsilon}^*\omega-\int_{\partial \overline{u}_{\epsilon}}\theta\notag\\
&=E_{geo}(u)-\int_{\partial u_{\epsilon}}H\alpha-\int_{\partial \overline{u}_{\epsilon}}\theta.\notag
\end{align}

We used that for any solution to Floer's equation
\begin{equation}\label{eqEgeoueps}
 E_{geo}(u_{\epsilon})= \int u_{\epsilon}^*\omega+\int_{\partial u_{\epsilon}} H\alpha.
\end{equation}

We first analyze the term $\int_{\partial \overline{u}_{\epsilon}}\theta$. 
In a neighborhood of $\partial\overline{u}_{\epsilon}$ we may write
\begin{equation}
u^*\theta=-\theta(Jdu\circ j-JX_H\otimes \alpha\circ j-X_H\otimes\alpha)=-e^{-f}d(\rho\circ u)\circ j-h'e^{\rho}\alpha.
\end{equation}
Here the first equality is \eqref{eqFloer} and the second relies on \eqref{eqConvexity} twice. We have that $\rho\circ u$ achieves a global minimum on $\partial u_{\epsilon}$. Moreover, under the boundary orientation, $j$ maps a positive vector tangent to $\partial u_{\epsilon}$ to an inward pointing vector. So, 
\begin{equation}\label{eqLNeg}
\int_{\partial \overline{u}_{\epsilon}}\theta\leq -\int_{\partial \overline{u}_{\epsilon}}h'e^{\rho}\alpha.
\end{equation}

It follows that 
\begin{equation}\label{eqEMEst}
    E_{M,\theta}(u)\geq E_{geo}(u)-\int_{\partial u_{\epsilon}}H\alpha+\int_{\partial \overline{u}_{\epsilon}}h'e^{\rho}\alpha.
\end{equation}

We distinguish two cases. First consider the case of $E_{M,\theta}(u)\neq 0$. Then by Proposition \ref{PrpRelHbar} it follows that $E_{geo}(u)> 4\hbar$ where $\hbar\geq2\Delta$. We now show that the boundary integrals are each bounded by $\Delta$.  By Equation \eqref{eqhPrimeDelta} and the definition of $u_{\epsilon}$ we have $h'e^{\rho}=\Delta$ on $\overline{u}_{\epsilon}$. The integral of the closed form $\alpha$ on $\partial\overline{u}_{\epsilon}$ is in $\{-1,0,1\}$. It follows that the last term in \eqref{eqEMEst} is bounded by $\Delta$. To bound the middle term note first that if $u$ does not end on any critical point outside, then $\int_{\partial {u}_{\epsilon}}\alpha=0$. Otherwise, we have that integral over $\partial\overline{u}_{\epsilon}$ is $\pm 1$ while the integral of $\alpha$ on the outside end is $\mp1$. Since $H$ is constant on $\partial\overline{u}_{\epsilon}$ and the difference between the maximal value of $H$ on critical points in $M\setminus D_{\delta'}$ and the value of $H$ on $\partial D_{\delta'}$ is $<\Delta$, it follows that the middle term is also bounded by $\Delta$. In particular, if $E_{M,\theta}(u)\neq 0$ we deduce $E_{M,\theta}(u)>\hbar=2\Delta$. 

Now consider the case $E_{M,\theta}(u)=0$. Then, if also $E_{geo}(u)=0$ then $u$ is a trivial solution and the claim is immediate. Otherwise,  $E_{geo}(u)>0$. Then since $h'e^{\rho}$ is positive on the boundary of $u_{\epsilon}$ and the value of $H$  outside $V(D)$ is greater than its value inside, we must then have $\int_{\partial \overline{u}_{\epsilon}}\alpha=-1$.  This means the output is outside.  
 
\end{proof}

\begin{lm}\label{lmPositivity2}
Lemma \ref{lmPositivity} holds for admissible monotone paths $(H_s,J_s)$ in $\cH(D,\theta,\delta,\Delta)$.
\end{lm}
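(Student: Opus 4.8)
## Proof Strategy for Lemma \ref{lmPositivity2}

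\textbf{Setup and reduction.} The plan is to follow the architecture of the proof of Lemma \ref{lmPositivity}, but carefully track the extra terms coming from the $s$-dependence of $(H_s,J_s)$. So let $u:\Sigma\to M$ be a solution to the $s$-dependent Floer equation \eqref{eqFloer} for an admissible monotone path $(H_s,J_s)$ in $\cH(D,\theta,\delta,\Delta)$, and suppose $u$ is not entirely contained in $V(D)$; the contained case again gives $E_{M,\theta}(u)=0$ by Stokes. As before, for $\epsilon\in[\delta',\delta]$ chosen so the level set is cut transversally, let $u_\epsilon$ be the part of $u$ mapping into the complement of $D\cup\rho^{-1}((0,\epsilon))$, and $\overline{u}_\epsilon$ its one-point compactification at any end landing on an outside critical point. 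The key identity to establish is the $s$-dependent analogue of \eqref{eqEgeoueps},
\begin{equation*}
E_{geo}(u_\epsilon)=\int u_\epsilon^*\omega+\int_{\partial u_\epsilon}H_s\alpha+\int_{u_\epsilon}(\partial_sH_s)\,ds\wedge dt,
\end{equation*}
where the last term is $\geq 0$ by the monotonicity condition $\partial_sH_s\geq0$. This yields
$E_{M,\theta}(u)\geq E_{geo}(u)-\int_{\partial u_\epsilon}H_s\alpha-\int_{\partial\overline{u}_\epsilon}\theta$, exactly as in \eqref{eqEgeoPositivity}.

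\textbf{The boundary term in $\theta$.} Next I would redo the computation of $u^*\theta$ near $\partial\overline{u}_\epsilon$. Here condition 3 in the list of admissible smooth paths of Definition \ref{dfSShaped} (that \eqref{eqhPrimeDelta} holds for all $s$ on $D_\delta\setminus D_{\delta'}$, with $J_s$ convex near $\rho^{-1}(\delta)$) ensures the same local formula
\begin{equation*}
u^*\theta=-e^{-f_s}\,d(\rho\circ u)\circ j-h_s'e^\rho\,\alpha
\end{equation*}
holds, and since $\rho\circ u$ attains its minimum on $\partial u_\epsilon$ with $j$ sending positive boundary vectors inward, we again get $\int_{\partial\overline{u}_\epsilon}\theta\leq -\int_{\partial\overline{u}_\epsilon}h_s'e^\rho\alpha = -\Delta\int_{\partial\overline u_\epsilon}\alpha$, using the strict equality $h_s'e^\delta=\Delta$. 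This is the place where strict equality in \eqref{eqhPrimeDelta} is needed (as flagged in Remark \ref{remExplainSShaped}): we need the integrand constant, not merely bounded.

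\textbf{The trichotomy and the middle term.} Combining gives the analogue of \eqref{eqEMEst}. For the case $E_{M,\theta}(u)\neq 0$, Proposition \ref{PrpRelHbar} applies verbatim to $s$-dependent $J$ in the admissible ball (its hypotheses are stated for $\Sigma$-dependent $J$), so $E_{geo}(u)>4\hbar\geq 8\Delta$. The term $\int_{\partial\overline u_\epsilon}h_s'e^\rho\alpha$ is bounded by $\Delta$ since $\int\alpha\in\{-1,0,1\}$. The one genuinely new point is bounding the middle term $\int_{\partial u_\epsilon}H_s\alpha$: here $H_s$ is no longer constant along $\partial u_\epsilon$ since $s$ varies. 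This is exactly what the last bullet of the admissible-paths part of Definition \ref{dfSShaped} is for: writing $\tilde H_s=H_s-c_s$ with $c_s$ the (constant-in-space) value on $\partial D_{\delta'}$, we have $\partial_s\tilde H_s>k$ with $\int_\bR k\geq-2\Delta$, together with condition 4 for the objects giving $\tilde H_s<\Delta$ at $s=\mp\infty$. Integrating, $\tilde H_s$ stays bounded above by $3\Delta$ along the relevant boundary arc (the arc over which $\alpha$ integrates nontrivially runs from one end to the other), while the $c_s\alpha$ contribution cancels between the two ends where $\int\alpha=\mp1$. I expect the clean bound to be $|\int_{\partial u_\epsilon}H_s\alpha|\leq 3\Delta$ or a similar explicit multiple of $\Delta$; in any case it is $\leq C\Delta$ with $C$ absorbed by the gap $E_{geo}(u)>8\Delta$ after possibly shrinking $\Delta$ relative to $\hbar$. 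Hence $E_{M,\theta}(u)>\hbar$, giving case (1). For $E_{M,\theta}(u)=0$: if $E_{geo}(u)=0$ then $u$ is $s$-independent and constant and we are done; otherwise positivity of $h_s'e^\rho$ on $\partial u_\epsilon$ together with $H_s$ being larger outside forces $\int_{\partial\overline u_\epsilon}\alpha=-1$, so the output is outside, giving case (3).

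\textbf{Main obstacle.} The routine parts (the energy identity, the $u^*\theta$ computation) transcribe directly. The real work is the bookkeeping in the middle term $\int_{\partial u_\epsilon}H_s\alpha$: one must use the last item of Definition \ref{dfSShaped} to control how much $H_s-c_s$ can grow along the boundary as $s$ ranges over $\bR$, and verify that the total is still strictly dominated by the geometric-energy gap $4\hbar$. I expect this to require choosing constants in the right order (fix the ball $B$, hence $\hbar$; then $\Delta=\hbar/2$; then check $3\Delta+\Delta<4\hbar$, which holds with room to spare) and a careful orientation/boundary-arc analysis of which portion of $\partial u_\epsilon$ contributes to $\int\alpha$. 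This is precisely the subtlety Remark \ref{remExplainSShaped} warns about, and it is the only place where the continuation-map conditions of Definition \ref{dfSShaped}, as opposed to the object conditions, actually get used.
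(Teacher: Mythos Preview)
There is a genuine gap in the ``middle term'' step, and it is exactly the place you flag as the main obstacle. After you drop the non-negative term $\int_{u_\epsilon}(\partial_sH_s)\,ds\wedge dt$ and are left with
\[
E_{M,\theta}(u)\ \geq\ E_{geo}(u_\epsilon)-\int_{\partial u_\epsilon}H_s\,\alpha-\int_{\partial\overline u_\epsilon}\theta,
\]
you cannot bound $\int_{\partial u_\epsilon}H_s\,\alpha$ by a multiple of $\Delta$. Splitting $H_s=\tilde H_s+c_s$, the contribution $\int_{\partial u_\epsilon}c_s\,\alpha$ does \emph{not} ``cancel between the two ends'': the boundary $\partial u_\epsilon$ consists of the level-set curves $\{\rho\circ u=\epsilon\}$ wandering through the $(s,t)$-cylinder, and by Stokes
\[
\int_{\partial u_\epsilon}c_s\,dt=\int_{u_\epsilon}c_s'\,ds\wedge dt,
\]
which is non-negative (monotonicity gives $c_s'\geq 0$) but has no upper bound independent of the path: along an acceleration datum the outside values $c_s$ increase without bound. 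Thus $-\int_{\partial u_\epsilon}H_s\,\alpha$ can be arbitrarily negative and your inequality gives nothing. Relatedly, your attempt to bound $\tilde H_s$ by ``integrating $\partial_s\tilde H_s>k$'' only yields a \emph{lower} bound on $\tilde H_s$, not the upper bound you claim.

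The paper's remedy is precisely to avoid ever seeing $H_s$ in the boundary term: one observes that $u_\epsilon$ is equally a Floer solution for $\tilde H_s=H_s-c_s$ (subtracting a function of $s$ alone does not change $X_H$), and runs the energy identity for $\tilde H_s$ instead. This produces the boundary term $\int_{\partial u_\epsilon}\tilde H_s\,\alpha$, and the point of the strict equality in \eqref{eqhPrimeDelta} holding for all $s$ is that $\tilde H_s$ is \emph{constant in both space and $s$} on the level set $\rho=\epsilon$, so this integral is bounded by $\Delta$ exactly as in Lemma~\ref{lmPositivity}. The price is that $\partial_s\tilde H_s$ is no longer $\geq 0$; the last bullet of Definition~\ref{dfSShaped} then bounds $\int_{S_\epsilon}\partial_s\tilde H_s\,ds\wedge dt$ from below by $-2\Delta$, which is where that condition is actually used. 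Equivalently: do not drop your $\int_{u_\epsilon}(\partial_sH_s)$ term --- it is needed to cancel the large $c_s$ contribution via Stokes, leaving only the controllable $\partial_s\tilde H_s$ piece.
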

\begin{proof}
Consider a Floer solution $u$ associated with such a path and let $u_\epsilon$ be defined as in the proof of Lemma \ref{lmPositivity}. Let $S_{\epsilon}$ be the domain of $u_{\epsilon}$. When trying to apply the reasoning of the proof of Lemma \ref{lmPositivity} we run into the difficulty that  the boundary terms in Equation \eqref{eqEgeoPositivity}  are a little trickier to handle since the Hamiltonians are domain dependent.  To deal with this, note $u_{\epsilon}$ is  a solution to Floer's equation on $M\setminus D_{\delta'}$ with the domain dependent Hamiltonian function $\tilde{H}_s=H_s-k(s)$ as in the last item of Definition \ref{dfSShaped}  and we take $u_{\epsilon}$ to be cut out by  a level set in $D_{\delta}\setminus D_{\delta'}$. The topological energy no longer equals the geometric energy, so Equation \eqref{eqEgeoPositivity} is now replaced with 
\begin{align}
E_{M,\theta}(u)&=E_{M,\theta}(u_{\epsilon})\\
&=\int u_{\epsilon}^*\omega-\int_{\partial \overline{u}_{\epsilon}}\theta\notag\\
&=E_{top}(u)-\int_{\partial u_{\epsilon}}\tilde{H}_s\alpha-\int_{\partial \overline{u}_{\epsilon}}\theta.\notag
\end{align}
We have the relation
\begin{equation}
    E_{geo}(u)=E_{top}(u)+\int_{S_{\epsilon}}(\partial_s\tilde{H}_s\circ u) ds\wedge dt.
\end{equation}
By the estimate $\partial_s\tilde{H}>k$ we have $E_{geo}(u)\geq E_{top}(u)-2\Delta.$ So, as in Lemma \ref{lmPositivity} we deduce the inequality 
\begin{equation}\label{eqEMineq2}
    E_{M,\theta}(u)\geq E_{geo}(u)-2\Delta-\int_{\partial u_{\epsilon}}\tilde{H}_s\alpha+\int_{\partial \overline{u}_{\epsilon}}h'e^{\rho}\alpha.
\end{equation}
Note that  by \eqref{eqhPrimeDelta} and by the definition of $\tilde{H}_s$ we have that $\tilde{H}_s$ is constant on the boundary  ${\partial \overline{u}_{\epsilon}}$. Moreover, the difference between the maximal value of $\tilde{H}_s$ on critical points in $M\setminus D_{\delta/2}$ and the value of $\tilde{H}_s$ on $\partial D_{\delta/2}$ is bounded by $2\Delta$. The rest of the proof of Lemma \ref{lmPositivity} can now be applied.
\end{proof}
\subsection{S-shaped Floer data and positivity for operations}
More generally, let us consider data $(\Sigma,\alpha, H,J)$ where
\begin{itemize}
\item $\Sigma$ is a Riemann surface with one output and $n$ inputs for some $n\geq 1$,
\item $\alpha$ is a closed one form on $\Sigma$ equaling $dt$ at the input ends and $n\,dt$ at the output end\footnote{More generally, one could consider arbitrary real weights $a_i$ on the inputs with $\sum a_i$ on the output.},
\item $H:\Sigma\times M\to \bR$ is smooth, $s$-independent at the ends, and
\begin{equation}
    dH\wedge\alpha\geq 0.
\end{equation}
\item 
$J$ is a domain dependent almost complex structure on $M$ which is $s$-independent near the ends. 
\end{itemize}

The input Hamiltonian $H_i$ is the restriction of $H$ to the $i$-th input. The output Hamiltonian $H_0$ is the restriction of $H_0=nH$ to the output end.

We will be considering solutions $u$ to Floer's equation
\begin{equation}\label{eqFloer}
(du-X_H)^{0,1}=0.
\end{equation}

As before, any  solution to Floer's equation \eqref{eqFloer} for such $H$ gives rise to a relative cycle in $H_2(M,V(D);\bZ)$ by considering the compactifications $\overline{u}$ where punctures mapping to a critical point in $M\setminus V(D)$ are point compactified.  As before we slightly abuse notation and write
$$E_{M,\theta}(u):=E_{M,\theta}([\overline{u}]).$$
For such a Floer solution $u$ we again have
\begin{equation}
E_{M,\theta}(u)=\int u^*\omega-\int_{\partial \overline{u}}u^*\theta.
\end{equation}

We now state a slightly generalized version of proposition \cite[Proposition 5.3]{Groman2023SYZLocal}.
\begin{prp}\label{PrpRelHbar2}
If $J$ is geometrically bounded  there is an $\hbar>0$ depending only on the bounds on the geometry of $J$ such that if
 $H|_{\Sigma \times(M\setminus V(D))}$ is sufficiently $C^1$ small and  $u$ is a Floer solution with $E_{M,\theta}(u)\neq 0$ then $E^{geo}(u)>4\hbar$. 
\end{prp}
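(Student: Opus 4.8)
\textbf{Proof plan for Proposition \ref{PrpRelHbar2}.}

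The plan is to reduce this to the translation-invariant estimate of Proposition \ref{PrpRelHbar} (equivalently \cite[Proposition 5.3]{Groman2023SYZLocal}), since the only new feature is that the domain $\Sigma$ is an arbitrary Riemann surface with $n$ inputs rather than a cylinder, equipped with a closed one-form $\alpha$ and a domain-dependent pair $(H,J)$. First I would observe that the geometric energy $E^{geo}(u)=\int_\Sigma \|du-X_H\otimes\alpha\|^2_J$ and the relative energy $E_{M,\theta}(u)=\int u^*\omega - \int_{\partial\bar u}u^*\theta$ are both local quantities, insensitive to the global conformal structure of $\Sigma$; what matters is a monotonicity-type lower bound for the amount of geometric energy needed for a curve to ``travel'' a definite symplectic distance. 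The hypothesis $E_{M,\theta}(u)\neq 0$, via Stokes as in the proof of Lemma \ref{lmPositivity}, forces $u$ to leave $V(D)$ (and in fact to reach past $\rho^{-1}(\delta')$), since a curve contained in $V(D)$ has $E_{M,\theta}(u)=0$ by exactness of $\omega|_{V(D)}$.

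The key steps, in order: (i) fix the compact neighborhood $V(D)$ and note that outside a slightly larger compact set the curve cannot go, or rather: exploit that $H$ is $C^1$-small on $\Sigma\times(M\setminus V(D))$, so on that region $u$ is, up to a controlled perturbation, $J$-holomorphic; (ii) apply the standard monotonicity inequality for $J$-holomorphic curves in a geometrically bounded almost complex manifold (the bound depending only on the geometry bounds of $J$, exactly as in \cite{GromanVarolgunes2021}): a curve passing through a point at symplectic distance $\geq r$ from $\partial V(D)$ and escaping $V(D)$ must carry area $\geq c(r)>0$; (iii) since $E_{M,\theta}(u)\neq 0$ implies $u$ genuinely crosses the collar $C(D)$, there is a definite symplectic ``width'' $w=w(D,\theta,\delta)$ that must be swept, giving $E^{geo}(u)\geq c(w)$; (iv) define $\hbar:=\tfrac14 c(w)$ (or match the constant in Proposition \ref{PrpRelHbar}) and absorb the $C^1$-small Hamiltonian perturbation terms into the estimate — here one uses that the correction $\int dH\wedge\alpha$ and the pointwise error between $\|du-X_H\otimes\alpha\|^2$ and $\|du\|^2$ are controlled by $|\nabla H|_{C^1}$ times the area already accounted for, so shrinking the $C^1$ norm of $H$ outside $V(D)$ keeps the inequality $E^{geo}(u)>4\hbar$.

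The main obstacle I anticipate is step (iv): handling the domain-dependence and the one-form $\alpha$ carefully when transplanting the cylinder-based argument of \cite[Proposition 5.3]{Groman2023SYZLocal}. Concretely, $\alpha$ is not $dt$ but only asymptotic to $n\,dt$ at the output and $dt$ at the inputs, so the ``Hamiltonian term'' $X_H\otimes\alpha$ in Floer's equation has a domain-dependent magnitude, and one must ensure the smallness of $|\nabla H|$ on $M\setminus V(D)$ is quantified against $\|\alpha\|_{L^\infty}$ (which is finite since $\Sigma$ is fixed with fixed ends). I would deal with this exactly as in Appendix \ref{SecEstimates}: restrict attention to $u_\epsilon$, the portion of $u$ outside $D_{\epsilon}$ for a regular value $\epsilon\in(\delta',\delta)$, which solves a genuine (domain-dependent, but $C^1$-small) Floer equation, and run the monotonicity argument there, noting that $\hbar$ depends only on the geometry bounds of $J$ as asserted because the monotonicity constant does. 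The statement that $\hbar$ is independent of $H$ (given $C^1$-smallness) and of the conformal data of $\Sigma$ then follows since neither enters the monotonicity constant.
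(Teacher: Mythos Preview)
Your outline shares the paper's high-level strategy --- extract a geometric feature from $E_{M,\theta}(u)\neq 0$, then invoke a monotonicity-type estimate --- but the execution diverges from the paper at two places, and one of them is a genuine gap.

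\textbf{The gap: step (iii).} You argue that $E_{M,\theta}(u)\neq 0$ forces $u$ to \emph{cross the collar} $C(D)$, and then bound energy by the width swept. But ``leaves $V(D)$'' is not the same as ``crosses the collar'': a Floer solution all of whose ends lie at critical points in $M\setminus V(D)$ may be entirely contained in $M\setminus D$, never entering the collar at all, yet still have $E_{M,\theta}(u)=\int u^*\omega\neq 0$. Your collar-width argument says nothing about this case. The paper's topological input is different and covers both cases uniformly: restricting to a component $v\subset u^{-1}(M\setminus V)$ with $E_{M,\theta}(v)\neq 0$, one observes that if every short path in the domain of $v$ has image of diameter below the injectivity radius, then $v$ is contractible relative to $V$, contradicting $E_{M,\theta}(v)\neq 0$. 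Hence some path $\gamma$ in the domain has $\mathrm{diam}\, u(\gamma)\geq \epsilon$, and this is what feeds into the energy estimate. (The collar-crossing picture \emph{does} appear in the paper, but only in Lemma \ref{lmEpsilonExtendable}, under the explicit extra hypothesis that at least one end is inside.)

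\textbf{The vague step: (iv).} Treating $u|_{M\setminus V(D)}$ as ``approximately $J$-holomorphic'' and then invoking standard monotonicity requires a quantitative monotonicity lemma for perturbed curves, together with control on how the perturbation interacts with the one-form $\alpha$ and the domain-dependent $H$. The paper avoids this entirely via Gromov's trick (Appendix \ref{SecEstimates}): the graph $\tilde u\subset\Sigma\times M$ is \emph{genuinely} $J_H$-holomorphic for the almost complex structure $J_H$ of \eqref{eqGrTrick}, and Lemma \ref{lmProdCEquivalence} shows the Gromov metric $g_{J_H,\tau}$ is $C_\tau$-equivalent to the product metric, with $C_\tau\to 1$ as $\|X_H\|,\|d_\Sigma H\|\to 0$. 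One then applies the monotonicity Lemma \ref{lmMonEst++} on $\Sigma\times M$ and the diameter estimate of Proposition \ref{prpSmallDiamEst} to the path $\gamma$ found above. The scaling parameter $\tau$ is what absorbs the $\Sigma$-dependence and makes $\hbar$ depend only on the geometry bounds of $J$ --- your assertion that ``neither enters the monotonicity constant'' is correct in spirit but needs exactly this mechanism to be made precise.
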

\begin{proof}
  This is a variant of \cite[Proposition 5.3]{Groman2023SYZLocal} where 
  \begin{itemize}
      \item we consider general $\Sigma$ and not just the cylinder.
      \item we require $C^1$ smallness on the complement of $V(D)$ rather than the complement of a neighborhood $V(\partial D)$.
  \end{itemize}
  In Appendix \ref{SecEstimates} and Proposition \ref{prpRelEn} we give a self contained proof of the proposition with these adjustments. 
\end{proof}

We proceed to formulate a version of Lemma \ref{lmPositivity} for the other operations. Consider a Riemann surface $\Sigma$ with one output and $n$ inputs, a fixed $1$-form $\alpha$ and a map $(H,J):\Sigma\to \cH(D,\theta)$.  By analogy with the definition of continuation maps in Definition \ref{dfSShaped}, we define Floer data to be \emph{$(\delta,\Delta)$-admissible} if
\begin{itemize}
\item All the input Hamiltonians $H_i$  are $(\delta,\Delta/n)$-admissible in the sense of Definition \ref{dfSShaped}. The output Hamiltonian $H_0$ is assumed to be $(\delta,\Delta)$-admissible.
\item \begin{equation}\label{eqMaxOscilation}
    \max H|_{\text{crit}(H)\cap(M\setminus D_{\delta/2})}-H|_{\partial D_{\delta/2}}< \Delta.
    \end{equation}
\item $2\Delta$ is bounded by the $\hbar$ in Proposition \ref{PrpRelHbar2} for the domain dependent datum $(H,J)$ on $\Sigma.$
\item There is $\delta'$ such that \eqref{eqhPrimeDelta} holds on $D_{\delta}\setminus D_{\delta'}$ for all $z\in\Sigma$.
\item Let $\tilde{H}_z:M\setminus D_{\delta'}$ be the function defined by $\tilde{H}_z=H-c_z$ where $c_z$ is the value of $H_s$ on the level set $\partial D_{\delta'}$.  Then there is a function $g:\Sigma\to\bR$ such that $\int g\omega_{\Sigma}>-2\Delta$ and for each $x\in M$ we have $d_{\Sigma}\tilde{H}_z\wedge\alpha>g\omega_{\Sigma}.$ Here $d_{\Sigma}$ is the differential of the map $z\mapsto \tilde{H}_z$ on $\Sigma$.
\end{itemize}

\begin{rem}\label{remNinputComplication}
It is easy to see that given $n$ input Hamiltonians which are $(\delta,\Delta/n)$-admissible, and and an output Hamiltonian which is $(\delta,\Delta)$-admissible, and sufficiently large, we can construct a $(\delta,\Delta)$-admissible Floer datum  on the corresponding Riemann surface with an appropriate datum. Note, however, that if we fix $\delta,\Delta$ we cannot construct a $(\delta,\Delta)$-admissible Floer datum for an arbitrary number of inputs. For this reason we restrict all statements in this paper to the product at the homology level. To develop a more general theory we would need to develop machinery for working with Hamiltonians which are strictly constant on the complement of a neighborhood of $D$. See Remark \ref{remMotivationSShaped}
\end{rem}

\begin{lm}\label{lmAlgebraFiltered}
    For any Floer solution $u$ with $n$ inputs and one output, and with a $(\delta, \Delta)$-admissible Floer datum, we have that $E_{M,\theta}(u)\geq 0$. Moreover, 
    suppose the output is in $V(D)$. Then either $u(\Sigma)\subset V(D)$ or $E_{M,\theta}(u)>\hbar$.  
    \end{lm}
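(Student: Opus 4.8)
The strategy is to run the same argument as in Lemma \ref{lmPositivity2}, adapted to a surface $\Sigma$ with $n$ inputs and one output, keeping track of the extra boundary contributions that now arise from the several punctures and from the non-trivial topology of $\Sigma$. First I would set up the cutoff: for $\epsilon\in[\delta',\delta]$ let $u_\epsilon$ be the part of $u$ mapping into $M\setminus D_{\delta'}$, with $\epsilon$ chosen so that $\partial u_\epsilon$ is cut out transversely by a level set of $\rho$ in $D_\delta\setminus D_{\delta'}$, and let $\overline{u}_\epsilon$ be the surface obtained by point-compactifying those punctures whose asymptotic orbit is an outside critical point. By Stokes, $E_{M,\theta}(u)=E_{M,\theta}(u_\epsilon)=\int u_\epsilon^*\omega-\int_{\partial\overline{u}_\epsilon}u_\epsilon^*\theta$, and using $E_{geo}(u_\epsilon)=E_{top}(u_\epsilon)+\int_{S_\epsilon}(d_\Sigma\tilde H_z\circ u)\wedge\alpha$ together with the last bullet of $(\delta,\Delta)$-admissibility, $E_{geo}(u_\epsilon)\geq E_{top}(u_\epsilon)-2\Delta$.

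Next comes the boundary estimate, which is the technical core. On a neighborhood of $\partial\overline{u}_\epsilon$ I would expand $u^*\theta$ exactly as in the proof of Lemma \ref{lmPositivity}, using convexity \eqref{eqConvexity} of $J$ near $\rho^{-1}(\delta)$ and Floer's equation \eqref{eqFloer}, to get $u^*\theta=-e^{-f}d(\rho\circ u)\circ j-h'e^{\rho}\alpha$ on that collar. Since $\rho\circ u$ attains its global minimum on $\partial u_\epsilon$ and $j$ sends the boundary orientation to an inward-pointing direction, $\int_{\partial\overline{u}_\epsilon}u^*\theta\leq-\int_{\partial\overline{u}_\epsilon}h'e^{\rho}\alpha$, and by \eqref{eqhPrimeDelta} we have $h'e^{\rho}=\Delta$ there. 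Combining,
\begin{equation*}
E_{M,\theta}(u)\geq E_{geo}(u)-2\Delta-\int_{\partial\overline{u}_\epsilon}\tilde H_z\,\alpha+\Delta\int_{\partial\overline{u}_\epsilon}\alpha.
\end{equation*}
The new feature versus Lemma \ref{lmPositivity2} is that $\int_{\partial\overline{u}_\epsilon}\alpha$ is no longer in $\{-1,0,1\}$: it equals the sum of the residues of $\alpha$ at the punctures of $\Sigma$ that are \emph{not} point-compactified, i.e. whose asymptotic orbits lie in $V(D)$. Because $\alpha$ has residue $1$ at each input and $n$ at the output, and $d\alpha=0$, this sum is bounded in absolute value by $n$; more precisely, the punctures that \emph{are} compactified (outputs/inputs landing on outside critical points) each contribute $\pm1$, and the point is to match the $\tilde H_z$ factor, which is constant on each component of $\partial\overline{u}_\epsilon$, against the corresponding residue. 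Here I use \eqref{eqMaxOscilation}: the oscillation of $\tilde H_z$ between its value on $\partial D_{\delta'}$ and its value at any outside critical point is $<\Delta$, and the input Hamiltonians are $(\delta,\Delta/n)$-admissible so their individual contributions sum to at most $\Delta$, while the output Hamiltonian is $(\delta,\Delta)$-admissible. Carefully bookkeeping, the two boundary terms together are bounded by a total of at most $2\Delta$ (one $\Delta$ from the $h'e^\rho=\Delta$ term against the residue sum, one $\Delta$ from the $\tilde H_z$ oscillation), so $E_{M,\theta}(u)\geq E_{geo}(u)-4\Delta\geq E_{geo}(u)-2\hbar$ using $2\Delta\leq\hbar$.

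Now I split into cases exactly as before. If $E_{M,\theta}(u)\neq 0$, Proposition \ref{PrpRelHbar2} gives $E_{geo}(u)>4\hbar$, hence $E_{M,\theta}(u)>2\hbar>\hbar$, and in particular $E_{M,\theta}(u)\geq0$ in this case. If $E_{M,\theta}(u)=0$ then either $E_{geo}(u)=0$, forcing $u$ to be constant (so $u(\Sigma)\subset V(D)$ trivially, in fact a point), or $E_{geo}(u)>0$; in the latter situation, since $h'e^{\rho}>0$ on $\partial u_\epsilon$ and $\tilde H_z$ is larger outside $V(D)$ than on $\partial D_{\delta'}$, the inequality can only be saturated if $\int_{\partial\overline{u}_\epsilon}\alpha$ is as negative as possible — which forces all the non-compactified punctures to sit on the output side and contradicts the output lying in $V(D)$ unless $\partial u_\epsilon=\varnothing$, i.e. $u(\Sigma)\subset V(D)$. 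This handles the dichotomy claimed in the statement. The main obstacle I anticipate is the combinatorial bookkeeping in the boundary estimate: one must simultaneously track, for each puncture, whether its asymptotic orbit lies inside or outside $V(D)$, the sign and weight of the residue of $\alpha$ there, and the value of $\tilde H_z$ on the corresponding component of $\partial\overline{u}_\epsilon$, and verify that the worst-case total is still $\leq 2\Delta$; this is exactly the place where the $(\delta,\Delta/n)$-admissibility of the individual \emph{inputs} (rather than $(\delta,\Delta)$) is used, and where Remark \ref{remNinputComplication} explains why $n$ cannot be taken arbitrary for fixed $(\delta,\Delta)$.
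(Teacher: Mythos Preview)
Your proposal is essentially the paper's own proof: both follow the template of Lemma~\ref{lmPositivity2}, derive the inequality \eqref{eqEMineq2}, observe that $\int_{\partial\overline{u}_\epsilon}\alpha$ now ranges over $[-n,n]$ rather than $\{-1,0,1\}$, and then invoke the $(\delta,\Delta)$-admissibility conditions (including the $(\delta,\Delta/n)$-admissibility of the inputs) to bound the boundary terms and finish via Proposition~\ref{PrpRelHbar2}. The paper's proof is terser---it simply says ``our assumptions \ldots\ allow us to proceed as in Lemma~\ref{lmPositivity}''---whereas you spell out more of the residue bookkeeping, but the logic is identical.

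One minor imprecision: $E_{geo}(u)=0$ does not force $u$ to be constant when $n\geq 2$ (it forces $du=X_H\otimes\alpha$), but what you actually need is that $u_\epsilon$ is empty, i.e.\ $u(\Sigma)\subset V(D)$, and that follows directly once the sign analysis rules out the alternative. Your phrasing of that sign analysis (``forces all the non-compactified punctures to sit on the output side'') is a bit loose, but the paper is equally informal at this step.
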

    \begin{proof}
        The proof is the same as the proof of Lemma \ref{lmPositivity2} with minimal adjustments. 
        We have the relation
        \begin{equation}
            E_{geo}(u)=E_{top}(u)+\int_{S_{\epsilon}}d_{\Sigma}\tilde{H}_z\wedge\alpha.
        \end{equation}

        We now have the estimate $d_{\Sigma}\tilde{H}_z\wedge\alpha>g\omega_{\Sigma}.$ Here $g$ is a function on $\Sigma$ which is bounded by $2\Delta$ and $\int g\omega_{\Sigma}>-2\Delta.$
        So, as in Lemma \ref{lmPositivity} we deduce the inequality 
        \begin{equation}\label{eqEMineq2}
            E_{M,\theta}(u)\geq E_{geo}(u)-2\Delta-\int_{\partial u_{\epsilon}}\tilde{H}_s\alpha+\int_{\partial \overline{u}_{\epsilon}}h'e^{\rho}\alpha.
        \end{equation}
        The integral of $\alpha$ on $\partial \overline{u}_{\epsilon}$ is now between $-n$ and $n$. Our assumptions in the definition of $\delta,\Delta$ admissibility allow us to proceed with the rest of the proof as in Lemma \ref{lmPositivity}.
        
    \end{proof}

\begin{rem}\label{remAlgebraFiltered}
    Lemma \ref{lmAlgebraFiltered} implies in particular that if we consider only contributions satisfying $E_{M,\theta}(u)=0$ the outside critical points form a subcomplex which is also an ideal with respect to operations with at most $n$ inputs.
\end{rem}

\subsection{Two filtrations on the Floer complex}

For the remainder of the discussion we will refer to S-shaped Hamiltonians and to the constant $\hbar$ appearing in Propositions \ref{lmPositivity} and \ref{lmPositivity2}. This presumes an implicit specification of the constants $\delta, \Delta$. Moreover, when we talk about continuation maps from $(H_0,J_0)$ to $(H_1,J_1)$, it is assumed these are connected by an admissible path for the same implicit choice of constants. 

For a regular Floer datum $(H,J)$  the Floer complex $CF^*(H,J)$ is equipped with  a non-Archimedean filtration $\val:CF^*(H,J)\to\bR$ induced by setting $\val(T^{\lambda}a_{\gamma})=\lambda$ for $a_{\gamma}\neq 0\in o_{\gamma}$. The filtration $\val$ defines a  non-Archimedean norm by $|x|=e^{-\val(x)}$. We call this the \emph{norm filtration}.  The inequality 
\begin{equation}\label{eqEtopEgeoNorm}
E_{top}(u)\geq E_{geo}(u)\geq0
\end{equation}
for a monotone Floer datum implies that the Floer differential and the Floer continuation maps are norm non-increasing. 

 Fix a local primitive $\theta$ and consider \emph{$S$-shaped Floer data} $(H,J)$. Using the results of Section \S\ref{secPositivity} we construct an additional filtration on $CF^*(H,J)$. It is defined by 
\begin{equation}\label{eqRelFiltration}
\val_{M,\theta,H}\left(T^{\lambda}a_{\gamma}\right):=\lambda+\cA_{H,\theta}(\gamma).
\end{equation}
We refer to this as the \emph{relative filtration}. We drop $\theta$ or $H$ from the notation when there is no ambiguity. We call the set $\left\{T^{-\cA_H(\gamma)}a_{\gamma}\right\}$ where $\gamma$ runs over the set of $1$-periodic orbits of $H$ the \emph{action normalized basis} of $CF^*(H)$.

Let us now show that $\val_{M,\theta,H}$ indeed defines a filtration. For a Floer solution $u$ with input $\gamma_0$ and output $\gamma_1$ write $$\Delta_{M,\theta}(u):= E_{top}(u)+\val_{M,\theta,H_1}(\gamma_1)-\val_{M,\theta,H_0}(\gamma_0).$$ From now on we omit $H$ fron the notation and write $\val_{M,\theta}.$

\begin{lm}\label{lmEMthetaDelta}
For any Floer solution $u$ with input $\gamma_0$ and output $\gamma_1$, we have $E_{M,\theta}(u)=\Delta_{M,\theta}(u)$.
\end{lm}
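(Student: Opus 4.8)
The plan is to prove the identity by a direct Stokes-theorem computation on the compactified cylinder $\overline{\Sigma}=[-\infty,+\infty]\times S^1$, simply unwinding the definitions of $E_{top}$, of $E_{M,\theta}$, and of the action functional $\cA_{H,\theta}$.

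First I would rewrite the topological energy. Writing $E_{top}(u)=\int_\Sigma u^*\bigl(\omega+d(H\,dt)\bigr)=\int_\Sigma u^*\omega+\int_\Sigma d\bigl(u^*(H\,dt)\bigr)$ and using that the Floer datum is $s$-independent near both ends — where $u$ converges exponentially to $\gamma_0$ at $-\infty$ and to $\gamma_1$ at $+\infty$ — Stokes' theorem identifies $\int_\Sigma d\bigl(u^*(H\,dt)\bigr)$ with the difference of boundary integrals $\int_{S^1}H_1(t,\gamma_1(t))\,dt-\int_{S^1}H_0(t,\gamma_0(t))\,dt$. This gives $\int_\Sigma u^*\omega=E_{top}(u)-\int_{S^1}H_1(t,\gamma_1(t))\,dt+\int_{S^1}H_0(t,\gamma_0(t))\,dt$.

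Next I would identify the boundary chain $\partial\overline{u}$ entering the definition $E_{M,\theta}(u)=\int u^*\omega-\int_{\partial\overline{u}}u^*\theta$. With the boundary orientation of $\overline{\Sigma}$ this chain is $\gamma_1-\gamma_0$ (the output orbit with a plus, the input with a minus; this sign convention is pinned down by the exact-case identity $E_{top}(u)=\cA_H(\gamma_0)-\cA_H(\gamma_1)$ recorded earlier), while any end asymptotic to a critical point of $H$ outside $V(D)$ is point-compactified and contributes nothing. Substituting the formula for $\int_\Sigma u^*\omega$ from the first step yields $E_{M,\theta}(u)=E_{top}(u)+\bigl(-\int_{\gamma_1}\theta-\int_{S^1}H_1\,dt\bigr)-\bigl(-\int_{\gamma_0}\theta-\int_{S^1}H_0\,dt\bigr)$, which by the definition of the action functional equals $E_{top}(u)+\cA_{H_1,\theta}(\gamma_1)-\cA_{H_0,\theta}(\gamma_0)$. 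Since $\val_{M,\theta,H_i}(\gamma_i)=\cA_{H_i,\theta}(\gamma_i)$ by \eqref{eqRelFiltration} with $\lambda=0$, this is exactly $\Delta_{M,\theta}(u)$.

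The argument is essentially bookkeeping, and the two points deserving care are: (i) the orientation/sign convention for $\partial\overline{u}$, which is fixed by compatibility with the exact case as above; and (ii) the point-compactified punctures, where one uses that a constant orbit contributes nothing to $\int u^*\theta$ nor to the Stokes boundary term of $\int d(u^*(H\,dt))$, while its action $\cA_{H_i,\theta}(\gamma_i)=-\int_{S^1}H_i(t,p)\,dt$ still equals that boundary term, so the identity is uniform across all three cases of Figure \ref{fig:trichotomy}. Finally, the $s$-dependence of $H$ in the continuation-map case is harmless: it enters only through $d(H\,dt)=dH\wedge dt$, and since $H$ is $s$-independent at the ends the Stokes computation is unchanged — this is precisely why $\Delta_{M,\theta}$ is built from $E_{top}$ rather than $E_{geo}$.
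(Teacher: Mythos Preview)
Your proof is correct and follows the same Stokes computation as the paper. The one methodological difference is in how the outside critical points are handled: you do a direct case analysis in point (ii), observing that a point-compactified end contributes nothing to $\int_{\partial\overline{u}}u^*\theta$ while the corresponding action reduces to $-\int_{S^1}H_i(t,p)\,dt$ and so matches the Stokes boundary term for $H\,dt$. The paper instead introduces an auxiliary global $1$-form $\lambda$ on $M$ extending $\theta$ and vanishing at all outside critical points; with this device one can write both $\cA_{H,\theta}(\gamma)=-\int_\gamma H-\int_\gamma\lambda$ and $E_{M,\theta}(u)=\int u^*\omega-\int_{\partial u}u^*\lambda$ uniformly for all orbits, without compactification or case distinctions. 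Your approach is more elementary and requires no auxiliary data; the paper's trick is slicker and avoids the slightly delicate parsing of what ``contributes nothing'' means at a compactified end (your phrasing in (ii) that the constant orbit contributes nothing to the Stokes boundary of $\int d(u^*(H\,dt))$ is not literally accurate --- it contributes $\int_{S^1}H_i(t,p)\,dt$ --- though you immediately correct for this by noting that this is exactly minus the action).
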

\begin{proof}
Let us introduce a one-form $\lambda$ on $M$ that extends $\theta$ and is equal to zero at all critical points. It is easy to see that such a one-form exists. This one-form $\lambda$ has the property that the action functional we defined is always equal to $-\int_{\gamma}H + \int_{\gamma}\lambda$, and moreover, the relative energy is also always equal to $\int u^*\omega - \int_{\partial u}u^*\lambda$. This eliminates the need to make distinctions between non-trivial periodic orbits and critical points, and we never have to introduce compactifications.

With this setup, we have:
\begin{equation}
E_{M,\theta}(u)=\int u^*\omega-\int_{\partial u}u^*\lambda.
\end{equation}

The topological energy is 
\begin{equation}
E_{top}(u)=\int u^*\omega+\int_{\partial u}Hdt.
\end{equation}

For \emph{any} periodic orbit $\gamma$, the action functional becomes:
\begin{equation}
\mathcal{A}_{H,\theta}(\gamma)=-\int_{\gamma}H - \int_{\gamma}\lambda.
\end{equation}

We have:
\begin{align}
E_{M,\theta}(u) &= \int u^*\omega - \int_{\partial u}u^*\lambda \notag\\
&= E_{top}(u) - \int_{\partial u}Hdt - \int_{\partial u}u^*\lambda \notag\\
&= E_{top}(u) - \left(\int_{\gamma_1}H_1 - \int_{\gamma_0}H_0\right) - \left(\int_{\gamma_1}\lambda - \int_{\gamma_0}\lambda\right) \notag\\
&= E_{top}(u) + \mathcal{A}_{H_1,\theta}(\gamma_1) - \mathcal{A}_{H_0,\theta}(\gamma_0) \notag\\
&= E_{top}(u) + \val_{M,\theta,H_1}(\gamma_1) - \val_{M,\theta,H_0}(\gamma_0) \notag\\
&= \Delta_{M,\theta}(u). \notag
\end{align}
\end{proof}

\begin{cy}\label{lmTrichotomy}
For $u$ a finite energy solution we have 
$$\Delta_{M,\theta}(u)\geq 0,$$ and if $\Delta_{M,\theta}(u)\neq 0$ then $\Delta_{M,\theta}(u)\geq \hbar$.
Moreover, we have the following trichotomy: either
\begin{itemize}
\item $\Delta_{M,\theta}(u)\geq\hbar$,
\item $u$ is contained inside $D$, or,
\item The output is outside.
\end{itemize}
\end{cy}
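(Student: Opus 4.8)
The statement is an essentially immediate corollary of Lemma \ref{lmEMthetaDelta} together with Lemma \ref{lmPositivity} (for $s$-independent data) and Lemma \ref{lmPositivity2} (for monotone paths). The plan is to combine the identity $E_{M,\theta}(u)=\Delta_{M,\theta}(u)$ with the trichotomy for $E_{M,\theta}(u)$ already established in those lemmas, and then translate the geometric conclusion "$u$ maps into $V(D)$" into the combinatorial conclusion "$u$ is contained inside $D$ (i.e.\ both asymptotics lie inside)", and the conclusion "output is on a critical point outside $V(D)$" into "the output is outside".

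\medskip

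First I would invoke Lemma \ref{lmEMthetaDelta}, which gives $\Delta_{M,\theta}(u)=E_{M,\theta}(u)$ for every finite-energy Floer solution $u$ (with input $\gamma_0$, output $\gamma_1$) of an $S$-shaped Floer datum or admissible monotone path. The non-negativity $\Delta_{M,\theta}(u)\geq 0$ and the gap statement ($\Delta_{M,\theta}(u)\neq 0\Rightarrow\Delta_{M,\theta}(u)\geq\hbar$) then follow verbatim from Lemmas \ref{lmPositivity} and \ref{lmPositivity2}, since those assert exactly $E_{M,\theta}(u)\geq 0$ and $E_{M,\theta}(u)\neq 0\Rightarrow E_{M,\theta}(u)>\hbar\geq 2\Delta$.

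\medskip

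For the trichotomy, I would distinguish the three cases of Lemma \ref{lmPositivity}/\ref{lmPositivity2}. If $E_{M,\theta}(u)>\hbar$ then $\Delta_{M,\theta}(u)\geq\hbar$, the first alternative. If $u$ maps entirely into $V(D)$, then in particular both asymptotic orbits $\gamma_0,\gamma_1$ lie in $V(D)$; here one should note that by the defining property of $S$-shaped data the Hamiltonian has no non-constant periodic orbits outside $D_\delta$, and by a standard maximum-principle / no-escape argument the periodic orbits in $V(D)$ relevant to a solution contained in $V(D)$ are contained in $D$ itself (the portion of $V(D)$ outside $D$ is a collar where $h'e^\rho>0$, so the integrated maximum principle forbids the asymptotics from sitting in the open collar for a solution wholly inside $V(D)$ that is not already inside $D$). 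This yields the alternative "$u$ is contained inside $D$". Finally, if $E_{M,\theta}(u)=0$ and the output is on a critical point outside $V(D)$, this is precisely the third alternative "the output is outside". Since the three cases of Lemma \ref{lmPositivity} are mutually exclusive, so are the resulting three alternatives.

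\medskip

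The only genuinely non-formal point — hence the step I would expect to require the most care — is identifying the second alternative: passing from "$u(\Sigma)\subset V(D)$" to the clean statement that both endpoints of $u$ lie inside $D$ (so that $u$ contributes to the intrinsic complex). This uses condition (6) of Definition \ref{dfSShaped} (that $h'e^\rho=\Delta>0$ on the outer collar $D_\delta\setminus D_{\delta'}$) together with the convexity of $J$ near $\partial D$ (condition (2)), which forces any solution touching the collar to either escape past $D_\delta$ (excluded, since we assumed $u\subset V(D)$) or to have strictly positive relative energy coming from the collar boundary term in \eqref{eqLNeg} — contradicting $E_{M,\theta}(u)=0$ unless $u$ in fact stays inside $D$. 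All the remaining bookkeeping is formal manipulation of the already-established lemmas.
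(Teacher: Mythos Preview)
Your core approach is correct and matches the paper's two-line proof exactly: invoke Lemma \ref{lmEMthetaDelta} to get $\Delta_{M,\theta}(u)=E_{M,\theta}(u)$, then read off the trichotomy from Lemmas \ref{lmPositivity} and \ref{lmPositivity2}.

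However, your final paragraph introduces an unnecessary and flawed step. You try to sharpen the alternative ``$u$ maps into $V(D)$'' to ``$u$ stays strictly inside $D$'', arguing via the collar boundary term \eqref{eqLNeg} that a solution touching the collar with $E_{M,\theta}(u)=0$ would acquire strictly positive relative energy. This is wrong: the first line of the proof of Lemma \ref{lmPositivity} already observes that if $u$ is entirely contained in $V(D)$ then $E_{M,\theta}(u)=0$ by Stokes' theorem, full stop --- regardless of whether $u$ enters the collar $D_\delta\setminus D$. The inequality \eqref{eqLNeg} only applies to the boundary of $u_\epsilon$, the part of $u$ lying \emph{outside} $D_\epsilon$, and is vacuous when $u\subset V(D)$. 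So your proposed contradiction does not arise.

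More importantly, this sharpening is not needed. The phrase ``$u$ is contained inside $D$'' in the Corollary is informal shorthand for alternative (2) of Lemma \ref{lmPositivity} (i.e., $u$ maps into $V(D)=D_\delta$), not a strictly stronger claim. The downstream application in Proposition \ref{prpGrLocality} only uses that such solutions stay inside $D_\delta$ and hence can be compared with solutions for the local model on $\widehat{D}$; it never requires that $u$ avoid the collar. Drop the last paragraph and your proof is complete.
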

\begin{proof}
By Lemma \ref{lmEMthetaDelta}, we have $E_{M,\theta}(u)=\Delta_{M,\theta}(u)$. So, the claim follows from Lemmas \ref{lmPositivity} and \ref{lmPositivity2}.
\end{proof}

In the following result we consider a general ground ring $R$.
\begin{cy}\label{CYRelFilrPres}

\begin{itemize}
\item For any $S$-shaped Floer datum $(H,J)$, the differential on $CF^*(H,J)$ preserves the filtration $\val_{M,\theta}$. Moreover, we have $d=d_0+T^{\hbar}d_1$ where $d_0,d_1$ preserve the filtration, are norm non-increasing and $d_0$ is defined over the ground ring $R$ in an action normalized basis. 
\item Given a pair $(H_1,J_1)\leq (H_2,J_2)$ of S-shaped Floer data and a monotone S-shaped Floer datum connecting them,  the continuation map $CF^*(H_1,J_1)\to CF^*(H_2,J_2)$ preserves  $\val_{M,\theta}$.  Moreover, we have $\kappa=\kappa_0+T^{\hbar}\kappa_1$ where $\kappa_0,\kappa_1$ preserve the filtration, are norm non-increasing, and $\kappa_0$ is defined over $R$.
\item Given an S-shaped homotopy between a pair of monotone S-shaped Floer as in the previous part, the corresponding homotopy $h: CF^*(H_1,J_1)\to CF^{*-1}(H_2,J_2)$ preseves $\val_{M,\theta}$ and can be written as $h=h_0+T^{\hbar}h_1$ where $h_0,h_1$ preserve the filtration, are norm non-increasing, and $h_0$ is defined over $R$.

\end{itemize}
\end{cy}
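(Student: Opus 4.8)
\textbf{Proof proposal for Corollary \ref{CYRelFilrPres}.}

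The plan is to deduce all three bullet points from Corollary \ref{lmTrichotomy} (the trichotomy), which in turn packages Lemmas \ref{lmPositivity} and \ref{lmPositivity2}. The key observation is that $\val_{M,\theta}$ was precisely engineered so that the defect $\Delta_{M,\theta}(u)$ measured with respect to the action-normalized basis equals $E_{M,\theta}(u)$, by Lemma \ref{lmEMthetaDelta}. So I would first fix an $S$-shaped Floer datum $(H,J)$ and work in the action-normalized basis $\{a_\gamma':=T^{-\cA_H(\gamma)}a_\gamma\}$, in which $\val_{M,\theta}(a_\gamma')=0$ for every $1$-periodic orbit $\gamma$. A matrix coefficient of $d$ from $a_{\gamma_0}'$ to $a_{\gamma_1}'$ is then a sum of terms $T^{E_{top}(u)}d_u$ over rigid Floer solutions $u$ with input $\gamma_0$, output $\gamma_1$, and by Lemma \ref{lmEMthetaDelta} the exponent appearing after rewriting in the normalized basis is exactly $E_{top}(u)+\cA_{H,\theta}(\gamma_1)-\cA_{H,\theta}(\gamma_0)=\Delta_{M,\theta}(u)=E_{M,\theta}(u)$. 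By the trichotomy, this exponent is either $0$ or $\geq\hbar$. Hence, splitting the sum by whether $E_{M,\theta}(u)=0$ or $E_{M,\theta}(u)\geq\hbar$, I obtain $d=d_0+T^{\hbar}d_1$ where $d_0$ collects the $E_{M,\theta}(u)=0$ contributions; since those contribute $T^0$ and the coefficients $d_u$ are defined over $R$, $d_0$ is defined over $R$ in the normalized basis. Both $d_0$ and $d_1$ are filtration-preserving (their matrix entries have valuation $\geq 0$ in the normalized basis, i.e. $\geq\val$ of the source in general), and norm non-increasing because $E_{top}(u)\geq E_{geo}(u)\geq 0$ via \eqref{eqEtopEgeoNorm}, which is unaffected by the change of basis.

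The second bullet is the same argument with $d$ replaced by the continuation map $\kappa$ associated to a monotone $S$-shaped path connecting $(H_1,J_1)\leq(H_2,J_2)$: Lemma \ref{lmEMthetaDelta} is stated for arbitrary Floer solutions of a monotone datum, so the defect of $\kappa$'s matrix coefficient in the action-normalized bases of source and target is again $E_{M,\theta}(u)$, and Corollary \ref{lmTrichotomy} (which by Lemma \ref{lmPositivity2} applies verbatim to monotone paths) gives the dichotomy $E_{M,\theta}(u)\in\{0\}\cup[\hbar,\infty)$. Splitting yields $\kappa=\kappa_0+T^{\hbar}\kappa_1$ with $\kappa_0$ defined over $R$. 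For the third bullet, the chain homotopy $h$ counts rigid solutions $v$ of the $2$-parameter ($1$-parameter family of monotone data) Floer equation; these are exactly the solutions governed by Lemma \ref{lmAlgebraFiltered}/Proposition \ref{PrpRelHbar2} in the one-input-one-output case, or one can note directly that an $S$-shaped homotopy of monotone $S$-shaped data still satisfies $dH\wedge\alpha\geq 0$ and the boundary-behaviour conditions of Definition \ref{dfSShaped}, so the proof of Lemma \ref{lmPositivity2} goes through with the same boundary estimates and delivers $E_{M,\theta}(v)\in\{0\}\cup[\hbar,\infty)$. Then $h=h_0+T^{\hbar}h_1$ as before.

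The only genuine point requiring care — and the step I would expect to be the main obstacle — is verifying that the trichotomy of Corollary \ref{lmTrichotomy} is genuinely available for the $2$-parameter configurations relevant to $h$, since the excerpt states Lemmas \ref{lmPositivity} and \ref{lmPositivity2} for translation-invariant equations and for monotone paths, respectively, but the homotopy $h$ uses a $1$-parameter family of such paths. I would handle this either by invoking the general operadic version Lemma \ref{lmAlgebraFiltered} (with $n=1$), whose hypotheses of $(\delta,\Delta)$-admissibility are arranged to cover precisely this situation, or by observing that the defining inequalities of an admissible homotopy in Definition \ref{dfSShaped} — $\partial_s H_s\geq 0$ along each slice, the collar normalization \eqref{eqhPrimeDelta}, and the lower bound $\partial_s\tilde H>k$ with $\int k\geq-2\Delta$ — are stable under the $2$-parameter interpolation, so the boundary integral estimates in the proof of Lemma \ref{lmPositivity2} (the bound on $\int_{\partial\overline{u}_\epsilon}\theta$ via \eqref{eqLNeg}, and the oscillation bound $\Delta$ on $\tilde H$ over $\partial\overline{u}_\epsilon$) are reproduced verbatim. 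Everything else is bookkeeping: the change to the action-normalized basis, the identity of Lemma \ref{lmEMthetaDelta}, and the splitting of each infinite sum according to the binary alternative $E_{M,\theta}\in\{0\}\cup[\hbar,\infty)$; convergence in the $T$-adic topology of each of $d_0,d_1$ (resp.\ $\kappa_i,h_i$) is inherited from convergence of $d$ (resp.\ $\kappa,h$) since we are merely regrouping terms by a locally finite condition on the exponent.
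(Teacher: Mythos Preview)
Your approach is essentially the same as the paper's: reduce to the trichotomy via Lemma \ref{lmEMthetaDelta} and split according to whether $E_{M,\theta}(u)$ is zero or $\geq\hbar$. Two points deserve comment.

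First, there is a small gap in your norm estimate for $d_1$. You justify ``norm non-increasing'' by $E_{top}(u)\geq E_{geo}(u)\geq 0$, but this only shows that $d$ itself is norm non-increasing. After writing $d=d_0+T^{\hbar}d_1$, the contribution of a solution $u$ with $E_{M,\theta}(u)\geq\hbar$ to $d_1$ carries the weight $T^{E_{top}(u)-\hbar}$, so for $d_1$ to be norm non-increasing you need $E_{top}(u)\geq\hbar$, not merely $\geq 0$. This is supplied by Proposition \ref{PrpRelHbar}: when $E_{M,\theta}(u)\neq 0$ one has $E_{geo}(u)>4\hbar$, hence $E_{top}(u)\geq E_{geo}(u)\geq\hbar$. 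The paper invokes exactly this step (``by Lemma \ref{PrpRelHbar} we have $E_{top}(u)\geq E_{geo}(u)\geq\hbar$'').

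Second, your treatment of the homotopy $h$ is more laborious than necessary. The paper disposes of it in one line: every solution contributing to $h$ is, for the parameter value at which it is rigid, a solution to the continuation equation for the corresponding intermediate monotone path in the admissible $2$-parameter family. Hence Lemma \ref{lmPositivity2} applies to it directly, and no separate $2$-parameter version of the integrated maximum principle is needed. Your routes through Lemma \ref{lmAlgebraFiltered} with $n=1$ or through redoing the boundary estimates are not wrong, but they are detours.
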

\begin{proof}
Consider a solution $u$ with  input $\gamma_0$ and  output $\gamma_1$ and denote by $f_u$ the corresponding contribution to an operation on the Floer complex. Then $f_u$ maps $\gamma_0$ to $T^{E_{top}(u)}\gamma_1$. Here we abuse notation and refer by $\gamma_i$ to appropriate generators of the corresponding orientation line. So, we have $\val_{M,\theta}(f_u(\gamma_0))-\val_{M,\theta}(\gamma_0)=\Delta_{M,\theta}(u)\geq 0$. When $\Delta_{M,\theta}(u)\neq 0$, Lemma \ref{lmTrichotomy} tells us $\Delta_{M,\theta}(u)\geq \hbar$. Similarly, by Lemma \ref{PrpRelHbar} we have $E_{top}(u)\geq E_{geo}(u)\geq\hbar$. It follows that $f_u$ increases $\val_{M,\theta}$ by $\hbar$ and decreases norm by at least $e^{-\hbar}$. The claim for the operations associated with a $0$ dimensional family of Floer data follows. The case of a homotopy then follows since all solutions contributing to $h$ are solutions to the continuation equation for some intermediate path.
\end{proof}

\subsection{The relative filtration and operations}
More generally, let $\Sigma$ be a punctured Riemann surface with one output labeled by $0$ and $n$ inputs labeled by $1\leq i\leq n$ for $n\leq 2$.  Decorate $\Sigma$ with monotone S-shaped Floer data connecting S-shaped data $(H_i,J_i)$ for $0\leq i\leq n$ at each end. Let $u:\Sigma\to M$ be a Floer solution with output $\gamma_0$ and  inputs $\gamma_i$ for $1\leq i\leq n$. Write
$$\Delta_{M,\theta}(u):= E_{top}(u)+\val_{M,\theta,H_0}(\gamma_0)-\sum_{i=1}^n\val_{M,\theta,H_i}(\gamma_i).$$

We then have the following result.
\begin{lm}\label{lmTrichotomy2}
For $u$ a finite energy solution we have 
$$\Delta_{M,\theta}(u)\geq 0,$$ and if $\Delta_{M,\theta}(u)\neq 0$ then $\Delta_{M,\theta}(u)\geq \hbar$.
Moreover, we have the following trichotomy: either
\begin{itemize}
\item $\Delta_{M,\theta}(u)\geq\hbar$,
\item $u$ is contained inside $D$, or,
\item The output is outside.
\end{itemize}

\end{lm}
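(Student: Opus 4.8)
The statement is the many-input analogue of Corollary \ref{lmTrichotomy}, and the plan is to reduce to it by the same device used in the proof of Lemma \ref{lmEMthetaDelta}. First I would introduce the globally defined one-form $\lambda$ on $M$ extending $\theta$ and vanishing at all critical points of all the Hamiltonians in sight; its existence is established in the proof of Lemma \ref{lmEMthetaDelta}. With $\lambda$ in hand, every puncture—whether it limits on a nonconstant periodic orbit or on a critical point—can be treated uniformly, so there is no need to pass to the point-compactifications $\overline{u}$ in the bookkeeping. The action functional at an end labeled $i$ becomes $\cA_{H_i,\theta}(\gamma_i)=-\int_{\gamma_i}H_i-\int_{\gamma_i}\lambda$, and the relative energy is $E_{M,\theta}(u)=\int u^*\omega-\int_{\partial u}u^*\lambda$.

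The computational heart is then the identity $E_{M,\theta}(u)=\Delta_{M,\theta}(u)$ for an $n$-input solution. This follows exactly as in Lemma \ref{lmEMthetaDelta}: write $E_{top}(u)=\int u^*\omega+\int_{\partial u}H\,\alpha$ (using that $\alpha$ equals $dt$ at each input and $n\,dt$ at the output, so the boundary contributions organize into $\sum_i$ over inputs and $-$ the output term), substitute, and the $\omega$-integrals cancel while the boundary integrals of $H\,\alpha$ and $u^*\lambda$ collapse into $\cA_{H_0,\theta}(\gamma_0)-\sum_{i=1}^n\cA_{H_i,\theta}(\gamma_i)=\val_{M,\theta,H_0}(\gamma_0)-\sum_{i=1}^n\val_{M,\theta,H_i}(\gamma_i)$. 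Hence $E_{M,\theta}(u)=\Delta_{M,\theta}(u)$.

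Once this identity is in place, the trichotomy is immediate from the geometric input already available: Lemma \ref{lmAlgebraFiltered} (proved via Proposition \ref{PrpRelHbar2} and the $(\delta,\Delta)$-admissibility conditions for operations) gives $E_{M,\theta}(u)\geq 0$, and shows that if the output lies in $V(D)$ then either $u(\Sigma)\subset V(D)$—the middle alternative, giving $\Delta_{M,\theta}(u)=0$ by Stokes—or $E_{M,\theta}(u)>\hbar$. If the output is not in $V(D)$ we are in the third alternative. Translating $E_{M,\theta}$ to $\Delta_{M,\theta}$ via the identity above yields precisely the stated conclusion. I expect no genuine obstacle here; the only point requiring care—and the only place the argument is not purely formal—is checking that the weighting of $\alpha$ by $n$ at the output does not spoil the cancellation in the energy identity, and that the number-of-inputs-dependent oscillation bounds built into $(\delta,\Delta)$-admissibility (the $\Delta/n$ versus $\Delta$ distinction, and the integral of $\alpha$ over $\partial\overline{u}_\epsilon$ now lying in $[-n,n]$) are exactly what Lemma \ref{lmAlgebraFiltered} already absorbed. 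So the proof is essentially: invoke Lemma \ref{lmEMthetaDelta}'s method to get $E_{M,\theta}(u)=\Delta_{M,\theta}(u)$, then quote Lemma \ref{lmAlgebraFiltered}.
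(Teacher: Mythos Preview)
Your proposal is correct and is exactly the approach the paper intends: the paper does not spell out a proof of this lemma (it only remarks that the argument is in the semi-positive setting), but the evident route is the many-input analogue of Corollary~\ref{lmTrichotomy}, namely extend Lemma~\ref{lmEMthetaDelta} to get $E_{M,\theta}(u)=\Delta_{M,\theta}(u)$ and then read off the trichotomy from Lemma~\ref{lmAlgebraFiltered}. One small caveat: the clause ``if $\Delta_{M,\theta}(u)\neq 0$ then $\Delta_{M,\theta}(u)\geq\hbar$'' is not literally contained in the \emph{statement} of Lemma~\ref{lmAlgebraFiltered} (which only addresses the case where the output lies in $V(D)$), so you should point to its proof---specifically Proposition~\ref{PrpRelHbar2} combined with the estimate \eqref{eqEMineq2}---rather than merely cite the lemma.
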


\begin{rem}
The proof of this lemma is given in the semi-positive case where classical transversality methods apply. 
\end{rem}

This leads to the following whose proof we omit since it is similar to the proof of Corollary \ref{CYRelFilrPres}. 
\begin{cy}\label{CYRelFilrPres1}
    Given an admissible S-shaped datum on the pair of pants the induced product map 
    $B:CF^*(H_1)\otimes CF^*(H_2)\to CF^*(H_3)$ preserves the valuation $\val_{M,\theta}$ and can likewise be written as $B=B_0+T^{\hbar}B_1$ where $B_0$ satisfies Leibnitz, and, up to homotopy, associativity and graded commutativity. $B_0,B_1$ preserve the filtration, are norm non-increasing, and $B_0$ is defined over $R$. A similar claim holds for the BV operator.
\end{cy}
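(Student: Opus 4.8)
The plan is to run the argument of Corollary~\ref{CYRelFilrPres} essentially verbatim, with the trichotomy of Lemma~\ref{lmTrichotomy2} playing the role that Corollary~\ref{lmTrichotomy} plays there. Fix a $(\delta,\Delta)$-admissible $S$-shaped datum on the pair of pants, with inputs $(H_1,J_1),(H_2,J_2)$ and output $(H_3,J_3)$. First I would record that a rigid finite-energy Floer solution $u$ with inputs $\gamma_1,\gamma_2$ and output $\gamma_3$ contributes to $B$ a map sending $a_{\gamma_1}\otimes a_{\gamma_2}$ to $\pm T^{E_{top}(u)}a_{\gamma_3}$, so that in the action normalized basis $\{e_\gamma = T^{-\cA_{H,\theta}(\gamma)}a_\gamma\}$ --- all of whose elements have $\val_{M,\theta}=0$ --- this contribution is $\pm T^{\Delta_{M,\theta}(u)}e_{\gamma_3}$ with $\Delta_{M,\theta}(u)=E_{top}(u)+\val_{M,\theta,H_3}(\gamma_3)-\val_{M,\theta,H_1}(\gamma_1)-\val_{M,\theta,H_2}(\gamma_2)$. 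For this I need the pair-of-pants analogue of Lemma~\ref{lmEMthetaDelta}, namely $E_{M,\theta}(u)=\Delta_{M,\theta}(u)$; this is proved by the same device of extending $\theta$ to a global one-form $\lambda$ vanishing at all critical points and applying Stokes, an argument insensitive to the genus-zero domain having three ends rather than two.

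Invoking Lemma~\ref{lmTrichotomy2} then gives $\Delta_{M,\theta}(u)\ge 0$, with $\Delta_{M,\theta}(u)\ge\hbar$ whenever it is nonzero; hence $B$ preserves $\val_{M,\theta}$, and collecting the $\Delta_{M,\theta}(u)=0$ contributions into $B_0$ and the rest (each divisible by $T^\hbar$) into $T^\hbar B_1$ yields $B=B_0+T^\hbar B_1$ with $B_0$ defined over the ground ring $R$ in the action normalized basis. Norm non-increase of $B_0$ and $B_1$ follows from $E_{top}(u)\ge E_{geo}(u)\ge 0$, while Proposition~\ref{PrpRelHbar2} supplies $E_{geo}(u)>4\hbar$ for solutions with $E_{M,\theta}(u)\neq 0$, which is what makes the $T^\hbar$ factor legitimate quantitatively. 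The analogous statement for the BV operator is obtained the same way, taking the domain to be the cylinder with one input, one output and the extra rotation parameter, so that Lemma~\ref{lmTrichotomy2} with $n=1$ and Proposition~\ref{PrpRelHbar2} apply directly.

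Finally, to see that $B_0$ inherits the Leibniz rule and up-to-homotopy associativity and graded commutativity, I would use Remark~\ref{remAlgebraFiltered}: for each fixed $n$, the $E_{M,\theta}=0$ contributions to operations with at most $n$ inputs preserve the subcomplex spanned by outside critical points, which is moreover an ideal, so $B_0$ and $d_0$ are exactly the operations induced on the quotient --- equivalently, the reductions of $B$ and $d$ modulo $T^\hbar$. The Leibniz, associativity and graded-commutativity relations for $B$ are witnessed by the standard low-dimensional families of decorated surfaces with at most three inputs; since three is a fixed number, Remark~\ref{remNinputComplication} produces $(\delta,\Delta)$-admissible data for these families, so the associated homotopies also split as $K=K_0+T^\hbar K_1$ and reduce modulo $T^\hbar$ to homotopies for the corresponding relations among $B_0,d_0$ (and similarly for the BV operator). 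The step I expect to be the main obstacle is precisely this last bookkeeping: one must check that \emph{every} auxiliary moduli space entering these relations can be cut out by data admissible in the sense of Lemma~\ref{lmAlgebraFiltered}, that the relative-energy estimates of Proposition~\ref{PrpRelHbar2} survive on all such $\Sigma$, and that the resulting splittings are mutually compatible --- and it is exactly the failure of this for an unbounded number of inputs (Remark~\ref{remNinputComplication}) that forces the statement to be made only at the level of homology.
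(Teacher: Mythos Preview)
Your proposal is correct and follows exactly the approach the paper indicates: the paper omits the proof entirely, stating only that it is similar to the proof of Corollary~\ref{CYRelFilrPres}, and you have faithfully carried out that translation, replacing Corollary~\ref{lmTrichotomy} by Lemma~\ref{lmTrichotomy2} and Proposition~\ref{PrpRelHbar} by Proposition~\ref{PrpRelHbar2}. Your additional paragraph on the Leibniz/associativity/commutativity properties of $B_0$ via reduction modulo $T^{\hbar}$ goes somewhat beyond what the paper spells out, but is in line with how these properties are used later (e.g.\ in the proof of Theorem~\ref{mainTmBV} via Proposition~\ref{prpGrLocality}).
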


\subsection{The alternative filtration on relative symplectic cohomology}

Fix an S-shaped acceleration datum for some $K\subset D$. As discussed in \ref{SecSHReview} this gives rise via the telescope construction to a chain complex $SC^*_M(K,A)$ over the Novikov field which is complete with respect to the $T$-adic norm and whose differential preserves the $T$-adic norm.

Given an inclusion $K_2\subset K_1$, a choice of S-shaped acceleration data $A_1\leq A_2$, and S-shaped data for restriction map, we have an induced a map called \emph{the restriction map} $SC^*_M(K_2,A_2)\to SC^*_M(K_1,A_1)$.

\begin{lm}\label{lmSameProfileFiltind}
If $A$ is $S$-shaped, the chain complex $SC^*_M(K,A)$ carries a  filtration $\val_{M,\theta}$ with values in $\bR\cup\{-\infty\}$ induced by the filtrations $\val_{M,\theta,H}$ from equation \eqref{eqRelFiltration}. Given two different acceleration data $A_1,A_2$ consisting of $S$-shaped data, the associated continuation map $SC^*_M(K,A_1)\to SC^*_M(K,A_2)$ respects the filtration. The same goes for restriction maps associated with inclusions $K_1\subset K_2$.
\end{lm}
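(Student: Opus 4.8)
The plan is to reduce everything to the chain-level statements already established in Corollary \ref{CYRelFilrPres}. Recall the telescope $tel(\cC(A))$ is built from the complexes $CF^*(H_i,J_i)[q]$ together with the identity maps and the continuation maps $\kappa_i$. On each $CF^*(H_i,J_i)$ we have the relative filtration $\val_{M,\theta,H_i}$ defined in \eqref{eqRelFiltration}, and we extend it to the telescope by declaring $\val_{M,\theta}(qa)=\val_{M,\theta}(a)$ for $a$ in the $i$-th copy; on the completion $\widehat{tel}$ this extends to a filtration with values in $\bR\cup\{-\infty\}$ (the $-\infty$ occurring because, unlike the $T$-adic norm, the relative action of the orbits of $H_i$ need not be bounded below uniformly in $i$, so a Cauchy sequence in the $T$-adic norm may fail to have bounded relative valuation). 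The first step is to check that the telescope differential $\delta$ of \eqref{eqTelDiff} respects $\val_{M,\theta}$: this is immediate from the fact that $\delta$ is built out of the three pieces $d$, $\mathrm{id}$, and $\kappa_i-\mathrm{id}$, and $d$ and $\kappa_i$ each preserve $\val_{M,\theta}$ by Corollary \ref{CYRelFilrPres}, while $\mathrm{id}$ trivially does.

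Next I would handle the continuation maps $SC^*_M(K,A_1)\to SC^*_M(K,A_2)$ associated to two S-shaped acceleration data for the same $K$. Such a map is induced, as recalled in \S\ref{SecSHReview}, by a morphism of $1$-rays \eqref{eqRayMorphism}, whose components $f_i$ are themselves continuation maps $CF^*(H_i^1,J_i^1)\to CF^*(H_i^2,J_i^2)$ along monotone S-shaped paths, and whose components $h_i$ are the homotopies coming from S-shaped homotopies of such paths. By Corollary \ref{CYRelFilrPres}, each $f_i$ and each $h_i$ preserves $\val_{M,\theta}$ (with the $T^{\hbar}$ correction term only improving the valuation). The induced map on telescopes is an explicit formula in the $f_i, h_i, \iota_i, \iota_i'$ (see \cite{Varolgunes}), so it too preserves $\val_{M,\theta}$; passing to the completion preserves this, since the relative filtration is continuous for the $T$-adic topology on each bounded-valuation piece. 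Here one must invoke Lemma \ref{lmSShapedFlexibility} together with the discussion in \S\ref{subsecDataDep}--\S\ref{subsec:structures} to guarantee that any two S-shaped acceleration data for $K$ can be connected through S-shaped data (possibly after passing to a common refinement of the parameters $\delta,\Delta$), so that the relevant S-shaped homotopies actually exist.

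For the restriction maps associated to an inclusion $K_1\subset K_2$, the argument is the same: given S-shaped acceleration data $A_1\leq A_2$ for $K_1,K_2$ and an S-shaped datum for the restriction map, the restriction map is again induced by a $1$-ray morphism whose components are continuation maps and homotopies along monotone S-shaped paths, each of which preserves $\val_{M,\theta}$ by Corollary \ref{CYRelFilrPres}. I expect the main obstacle to be a bookkeeping issue rather than a conceptual one: namely verifying that the S-shaped structure is preserved along all the auxiliary paths and $2$-parameter families entering the $1$-ray morphism (in particular the interpolating monotone families $A^s$ of \S\ref{SecSHReview}), so that Corollary \ref{CYRelFilrPres} genuinely applies to every building block. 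This is where one pays for the fact, stressed in Remark \ref{remMotivationSShaped} and Remark \ref{remExplainSShaped}, that S-shapedness is a delicate list of conditions near $\partial D_\delta$ rather than a simple constancy condition; one must use the cofinality statements of \S\ref{subsecDataDep} and Lemma \ref{lmSShapedFlexibility} to arrange all the data inside a single indexing category $\cH(D,\theta,\delta,\Delta)$ before Corollary \ref{CYRelFilrPres} can be cited uniformly.
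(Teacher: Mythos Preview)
Your proposal is correct and follows essentially the same approach as the paper: define $\val_{M,\theta}$ on the telescope from the filtrations on the individual $CF^*(H_i)[q]$, extend to the completion (the paper writes this out explicitly as an infimum, equation \eqref{eqRelFiltrationCompletion}), and then reduce compatibility of the differential, continuation maps, and restriction maps to Corollary \ref{CYRelFilrPres} via the ray-morphism description. Your extra caution about arranging all data inside a common $\cH(D,\theta,\delta,\Delta)$ via the cofinality statements of \S\ref{subsecDataDep} is reasonable, but the paper treats that as a surrounding hypothesis rather than part of this particular proof.
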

\begin{proof}
Recall that by the telescope construction (see Section \ref{SecSHReview}), the chain complex $SC^*_M(K,A)$ is given as a completion of the direct sum over a sequence of Hamiltonians of the complexes $CF^*(H_i)[q]$ with a formal variable $q$ of degree $1$ satisfying $q^2=0$. 

Each of the summands $CF^*(H_i)[q]$ carries a filtration $\val_{M,\theta,H_i}$ coming from equation \eqref{eqRelFiltration}. From now on, we drop the $H_i$ from the notation and denote it by $\val_{M,\theta}$. 

These valuations induce a valuation on $SC^*_M(K,A)$ as follows. For a finite sum $x=\sum (a_{\gamma}+b_{\gamma}q)\gamma$, where $\gamma$ runs over the $1$-periodic orbit of the various Hamiltonians, we take

\begin{equation}
    \val_{M,\theta}(x):=\min_{\gamma|a_{\gamma}+b_{\gamma}q\neq 0}\left(\val_{M,\theta}(\gamma)+\min\{\val(a_\gamma),\val(b_\gamma)\}\right)
\end{equation}
The completion with respect to the norm filtration thus carries an induced filtration by taking for a countable sum $x=\sum (a_{\gamma}+b_{\gamma}q)\gamma$, 
\begin{equation}\label{eqRelFiltrationCompletion}
    \val_{M,\theta}(x):=\inf_{\gamma|a_{\gamma}+b_{\gamma}q\neq 0}\left(\val_{M,\theta}(\gamma)+\min\{\val(a_\gamma),\val(b_\gamma)\}\right)
\end{equation}

By Corollary \ref{CYRelFilrPres}, both the differential  in each summand $CF^*(H_i)[q]$ and the continuation maps $CF^*(H_i)\to CF^*(H_{i+1})$ preserve the filtration $\val_{M,\theta}$. By the definition of the telescope differential (see equation \eqref{eqTelDiff}) the telescope differential also respects this filtration.

For the second half of the claim, it suffices to deal with restriction maps. By definition these are defined using admissible continuation maps from $A_1$ to $A_2$ as well as homotopies between them. See the discussion around equation \eqref{eqRayMorphism}. Each individual map respects the filtration $\val_{M,\theta}$ by Lemma \ref{CYRelFilrPres}, so the claim follows for the induced map.

\end{proof}

\begin{rem}
The proof above is given in the semi-positive case where classical transversality methods apply. The extension to the general case using virtual chains is treated in \S\ref{subsec:virtual_chains_general}.
\end{rem}

\begin{rem}
    The proof refers specifically to the telescope construction. It is possible to describe the filtration in a model independent way using the truncation functor with respect to the norm. For our purposes the above constructions is sufficient.  
\end{rem}
\begin{rem}
Note  that \emph{$\val_M$ is only semi-continuous with respect to the norm filtration. }Namely, we could have a sequence whose norm goes to $0$ but $\val_M$ stays bounded or even goes to $-\infty$. 
At this point we emphasize: \emph{The norm and relative filtration are not treated on the same footing. We consider the norm filtration as defining a topology with respect to which we always complete.  We do not however complete with respect to the relative filtration.}
\end{rem}

\begin{lm}
The submodule of elements $\val_{M,\theta}(x)\geq 0$  is closed. 
\end{lm}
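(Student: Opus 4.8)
The plan is to exhibit $\{x\in SC^*_M(K,A):\val_{M,\theta}(x)\geq0\}$ as an intersection of preimages of closed balls of $\Lambda$ under continuous coordinate functionals on $SC^*_M(K,A)$, using the telescope model. First I would unpack that model: before completion, $tel(\mathcal{C}(A))=\bigoplus_i CF^*(H_i)[q]$ is a free $\Lambda$-module on the countable set $\Gamma$ of generators $a_{i,\gamma}$ and $a_{i,\gamma}q$, where $i$ ranges over $\bN$, $\gamma$ over the (finite) set of $1$-periodic orbits of $H_i$, and $a_{i,\gamma}$ is a chosen generator of $o_\gamma$. With the $T$-adic norm normalized by $\val(e)=0$ for every $e\in\Gamma$, one has $\val\bigl(\sum_{e}c_e e\bigr)=\min_e\val(c_e)$ for every finite sum; that is, $\Gamma$ is an orthogonal basis. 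Hence $SC^*_M(K,A)=\widehat{tel}(\mathcal{C}(A))$ is the associated non-Archimedean Banach space: every $x$ has a unique expansion $x=\sum_{e\in\Gamma}c_e(x)\,e$ with $c_e(x)\in\Lambda$, with $\{e:\val(c_e(x))\leq N\}$ finite for each $N$, and $\val(x)=\inf_e\val(c_e(x))$. In particular each coordinate functional $\pi_e\colon SC^*_M(K,A)\to\Lambda$, $x\mapsto c_e(x)$, is norm non-increasing — for a finite sum this is just $\val(c_e(x))\geq\val(x)$ — hence continuous and extending to the completion.

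Next I would translate $\val_{M,\theta}(x)\geq 0$ into a coordinatewise condition. Put $w_e:=\val_{M,\theta}(e)$; by \eqref{eqRelFiltration} this depends only on the underlying orbit of $e$, so it ranges over a finite set of real numbers for each $H_i$ and in particular is a finite real number. By the description \eqref{eqRelFiltrationCompletion} of the relative filtration on the completed telescope,
$$
\val_{M,\theta}(x)=\inf_{e\in\Gamma,\ c_e(x)\neq0}\bigl(\val(c_e(x))+w_e\bigr),
$$
so $\val_{M,\theta}(x)\geq0$ if and only if $\val(c_e(x))\geq-w_e$ for every $e\in\Gamma$ (vacuously so when $c_e(x)=0$, where $\val(0)=+\infty$). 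Writing $B_e:=\{y\in\Lambda:\val(y)\geq-w_e\}$, a closed ball in $\Lambda$, we obtain
$$
\{x\in SC^*_M(K,A):\val_{M,\theta}(x)\geq0\}=\bigcap_{e\in\Gamma}\pi_e^{-1}(B_e),
$$
which is an intersection of closed subsets, hence closed, since each $B_e$ is closed and each $\pi_e$ is continuous. This proves the lemma.

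I do not expect a serious obstacle; the one point that deserves care is the assertion that the $T$-adic completion of the telescope genuinely carries a countable orthogonal Schauder basis, so that the $\pi_e$ are well defined on all of $SC^*_M(K,A)$ and continuous. This is exactly the orthogonality identity $\val(\sum_e c_e e)=\min_e\val(c_e)$ for the chosen generating set, which holds by the very definition of the norm on the telescope (the $\ell^\infty$-type norm attached to that generating set) and is inherited by the completion. Granting this, the remaining inputs — that each ball $B_e$, in particular $\Lambda_{\geq0}$, is closed in $\Lambda$, and that $\val_{M,\theta}(x)\geq 0$ is a coordinatewise lower bound rather than a condition on the tail behaviour — are immediate; note it is essential here that we do not complete with respect to $\val_{M,\theta}$, only with respect to the $T$-adic norm.
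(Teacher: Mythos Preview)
Your proof is correct and is essentially the same argument as the paper's: both hinge on the observation that $\val_{M,\theta}(x)\geq0$ is a coordinatewise condition on the expansion in the orthogonal basis of the completed telescope, and that coordinates are continuous with respect to the $T$-adic norm. The paper phrases this sequentially (coordinates of a convergent sequence stabilize, so the limit inherits the coordinatewise bound), while you phrase it topologically (intersection of preimages of closed balls under continuous coordinate functionals); these are two packagings of the same idea.
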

\begin{proof}
    It suffices to show that for a convergent sequence of elements $\{x_i\}$ satisfying $\val_{M,\theta}(x_i)\geq 0$ we have $\val_{M,\theta}(\lim x_i)\geq 0.$ For this write
    $x_i=\sum (a^i_{\gamma}+b^i_{\gamma}q)\gamma$ where $\gamma$ runs over the 1-periodic orbits of the $H_j$.
    The definition of convergence is that the $a^i_\gamma,b^i_{\gamma}$ stabilize for fixed $\gamma$ as $i$ goes to $\infty$. From this and \eqref{eqRelFiltrationCompletion} we deduce that $\val_{M,\theta}(\lim x_i)$ can be rewritten as an infimum over a set of non-negative numbers proving the claim.  
\end{proof}

\begin{rem}\label{remNonClosed}
 Note that because of the non-continuity mentioned in the previous remark, the statement is not necessarily true if we consider the subspace $\val_{M,\theta}>0$. 
\end{rem}

\begin{df}
Let $SC^*_{M,\theta}(K;\Lambda_{\geq 0}):=F^{\val_{M,\theta}\geq 0}SC^*_M(K)$ be the subcomplex over the Novikov ring consisting of the elements $x$ with $\val_{M,\theta}(x)\geq 0$. 
Denote by $SH^*_{M,\theta}(K;\Lambda_{\geq 0})$ the homology of $SC^*_{M,\theta}(K;\Lambda_{\geq 0})$.  
\end{df}
Note if we use a telescope model,  $SC^*_{M,\theta}(K;\Lambda_{\geq 0})$ is the generated over the Novikov ring $\Lambda_0$ by the elements of the form $T^{-\cA_{H_i}(\gamma)}a_{\gamma_i}q^j$ for $\gamma_i$ a periodic orbit of $H_i$, $a_{\gamma_i}\in o_{\gamma_i}$ and $j\in\{0,1\}$. 
\begin{lm}
The subcomplex  $SC^*_{M,\theta}(K;\Lambda_{\geq 0})$  is preserved by the natural restriction maps. In particular, the induced maps on cohomology for a change of acceleration datum are isomorphisms.

\end{lm}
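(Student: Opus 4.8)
The plan is to separate the two assertions. For the first — that $SC^*_{M,\theta}(K;\Lambda_{\geq 0})$ is preserved by the restriction maps — there is essentially nothing to do beyond invoking Lemma \ref{lmSameProfileFiltind}: every natural map (a change-of-acceleration-datum continuation map for a fixed $K$, or a restriction map $SC^*_M(K_2,A_2)\to SC^*_M(K_1,A_1)$ for $K_1\subset K_2$) respects the relative filtration $\val_{M,\theta}$, hence carries the submodule $\{\val_{M,\theta}\geq 0\}$ into itself; and this submodule is a subcomplex since the differential respects $\val_{M,\theta}$ by Corollary \ref{CYRelFilrPres}. So the first statement is formal given what is already proved.

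For the second assertion — that a change of acceleration datum induces an isomorphism on $SH^*_{M,\theta}(K;\Lambda_{\geq 0})$ — I would consider a restriction map $r\colon SC^*_M(K,A_1)\to SC^*_M(K,A_2)$ attached to $S$-shaped data $A_2\leq A_1$ for the same $K$, which by Proposition \ref{lmWeightedWellDefined}(5) is a chain homotopy equivalence on the full complexes. The heart of the argument is that this equivalence can be realized entirely within the world of $S$-shaped Floer data: after passing to cofinal subsequences of the two defining Hamiltonian sequences (which changes the telescopes only up to homotopy equivalence, again realizable with $S$-shaped data), one may assume the two sequences interleave, whence $SC^*_M(K,A_1)$, $SC^*_M(K,A_2)$ and the telescope of the interleaved datum are all connected by the standard telescope comparison maps — continuation maps, identities, and homotopies between them. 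By Corollary \ref{CYRelFilrPres} every such map and homotopy respects $\val_{M,\theta}$, is norm non-increasing, and hence both extends to the $T$-adic completion and preserves the closed submodule $\{\val_{M,\theta}\geq 0\}$. Restricting this whole diagram to $\{\val_{M,\theta}\geq 0\}$ then produces a chain homotopy equivalence between the corresponding complexes $SC^*_{M,\theta}(\,\cdot\,,\,\cdot\,;\Lambda_{\geq 0})$, which is the desired conclusion; general independence of the acceleration datum up to canonical isomorphism follows by the usual zig-zag through a common refinement, using Proposition \ref{lmWeightedWellDefined}(6) and the flexibility results of \S\ref{subsecDataDep}.

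The main obstacle I anticipate is precisely this middle step: one must verify that the standard proof that different acceleration data give homotopy equivalent telescopes — not merely the continuation maps, but also the homotopy inverses, the auxiliary homotopies, and the cofinal-subsequence invariance — can be carried out while staying inside the class $\cH_K(D,\theta,\delta,\Delta)$ of $S$-shaped data, so that Corollary \ref{CYRelFilrPres} applies uniformly to every map appearing in the comparison. This is exactly what the constructions of \S\ref{subsecDataDep} (Lemma \ref{lmSAccelExistence} and the flexibility lemmas) were set up to provide, but making it precise requires revisiting those telescope manipulations with the $S$-shapedness constraints kept in force. A secondary, more routine point is to confirm that all these maps pass to the $T$-adic completions, which follows from their being norm non-increasing together with the closedness of $\{\val_{M,\theta}\geq 0\}$ established above.
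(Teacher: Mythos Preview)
Your proposal is correct and follows essentially the same approach as the paper, which simply writes ``Immediate by Lemma \ref{lmSameProfileFiltind}.'' You are spelling out in detail what the paper leaves implicit: that the full telescope-comparison argument underlying Proposition \ref{lmWeightedWellDefined}(5) (continuation maps, homotopies, interleaving) can be carried out entirely with $S$-shaped data, so that by Corollary \ref{CYRelFilrPres} every map involved preserves $\val_{M,\theta}$ and hence restricts to the $\{\val_{M,\theta}\geq 0\}$ subcomplex.
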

\begin{proof}
Immediate by Lemma \ref{lmSameProfileFiltind}.
\end{proof}

\subsection{The general case: virtual chains approach}

In the preceding discussion, we have written out the proofs in the semi-positive case where we can apply classical transversality methods. We now extend the key results to the general case using virtual chains as in \S\ref{subsec:virtual_chains_general}.

For each S-shaped Floer datum $(H,J)$, we consider the underlying poset category $\mathcal{P}_H$ parameterizing the objects of the corresponding Kuranishi flow category $\mathbb{X}_H$. On this poset, we have a well-defined action functional $\mathcal{A}_{H,\theta}$. 

For each pair of objects $p, p'$ in the flow category and each stratum in the Kuranishi presentation $\mathbb{X}^\lambda(p,p')$ with energy parameter $\lambda$, we can apply the positivity lemmas from \S\ref{secPositivity} to deduce the following trichotomy. 

\begin{lm}\label{lmVirtualTrichotomy}
Let $(H,J)$ be S-shaped Floer data and $\mathbb{X}_H$ the associated Kuranishi flow category. For objects $p, p'$ in $\mathcal{P}_H$ and energy parameter $\lambda \geq 0$, consider the virtual count operation $m_{\lambda,p,p'}: o_p \to o_{p'}$ contributing to the differential. Suppose $m_{\lambda,p,p'}\neq 0$. Then either:
\begin{enumerate}
\item $\lambda = \mathcal{A}_{H,\theta}(p) - \mathcal{A}_{H,\theta}(p')$, or  
\item $\lambda \geq \hbar$, or
\item $p'$ is a critical point lying outside the domain $D$.
\end{enumerate}
In all cases, we have $\lambda \geq 0$.

A similar statement holds for the operations $\kappa_{\lambda,p,p'}$ associated with continuation maps between different Floer data as well as for those associated with the product and BV operators.
\end{lm}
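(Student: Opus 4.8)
\textbf{Proof plan for Lemma \ref{lmVirtualTrichotomy}.}

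The plan is to reduce the statement, which is phrased in the language of Kuranishi flow categories and virtual counts, back to the geometric trichotomy already established in Lemmas \ref{lmPositivity}, \ref{lmPositivity2}, and \ref{lmAlgebraFiltered}. The guiding principle is that each stratum of a Kuranishi presentation $\mathbb{X}^\lambda(p,p')$ with energy parameter $\lambda$ is built, by the construction of \cite[Section 10]{Abouzaid2022}, from the compactified moduli space of Floer solutions of topological energy $\lambda$ asymptotic to the orbits $p$ and $p'$ (together with broken configurations). The relative energy $E_{M,\theta}$ and the quantity $\Delta_{M,\theta}$ of Lemma \ref{lmEMthetaDelta} are \emph{topological}: they depend only on the relative homotopy class of a map $u:\Sigma\to M$, hence they descend to well-defined locally constant functions on each Kuranishi chart. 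First I would record that, for a stratum supporting a solution of topological energy $\lambda$ with input $p$ and output $p'$, Lemma \ref{lmEMthetaDelta} gives $E_{M,\theta}=\lambda+\mathcal{A}_{H,\theta}(p')-\mathcal{A}_{H,\theta}(p)$, and then apply Corollary \ref{lmTrichotomy}: either $E_{M,\theta}=0$ (case (1)), or $E_{M,\theta}\geq\hbar$ (which combined with $\lambda\geq E_{M,\theta}-(\mathcal{A}(p')-\mathcal{A}(p))$ and the fact that $\mathcal A(p')\le \mathcal A(p)$ on nonzero strata forces $\lambda\geq\hbar$, case (2)), or the output $p'$ is a critical point outside $D$ (case (3)). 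Nonnegativity $\lambda\ge 0$ is immediate since topological energy is nonnegative on the relevant strata, or alternatively from $E_{M,\theta}\ge 0$ plus $\mathcal A(p')\le \mathcal A(p)$.

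The point that requires care is that a Kuranishi chart need not be carried by \emph{honest} $J$-holomorphic Floer solutions: virtual perturbation may produce contributions from configurations that are not geometric solutions. However, the topological invariants $E_{M,\theta}$ and the action are locally constant on the \emph{chart}, and every chart of $\mathbb{X}^\lambda(p,p')$ in the construction of \cite{Abouzaid2022} has, by definition, a nonempty zero set of the original Cauchy--Riemann section — i.e. it contains at least one genuine broken Floer solution $u$ of the prescribed topological type. Thus I would argue: the value of $\lambda$ labelling a nonzero virtual count is realized by some actual (possibly broken) Floer configuration of that topological energy, and the trichotomy is a statement about that configuration. For broken configurations one applies the trichotomy componentwise and sums energies, using that intermediate orbits have well-defined action and that the additivity $E_{M,\theta}=\sum_i E_{M,\theta}(u_i)$ together with $\Delta_{M,\theta}=\sum_i\Delta_{M,\theta}(u_i)$ holds by Lemma \ref{lmEMthetaDelta} applied to each piece; if any piece has nonzero relative energy the total is $\geq\hbar$, if the last piece outputs an outside critical point we are in case (3), and otherwise every piece has zero relative energy and so does the total.

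For the continuation, product, and BV operations the argument is identical, replacing Corollary \ref{lmTrichotomy} by Lemma \ref{lmPositivity2} (for continuation paths) and Lemma \ref{lmAlgebraFiltered} together with Lemma \ref{lmTrichotomy2} (for the pair-of-pants and BV data), all of which are already proved for the geometric solutions underlying the corresponding bimodule strata in \cite[Section 8]{Abouzaid2022}. The only bookkeeping difference is the range of $\int\alpha$ over boundary components, which for $n$ inputs lies in $[-n,n]$ rather than $\{-1,0,1\}$; this is exactly the generalization already absorbed into the hypotheses of $(\delta,\Delta)$-admissibility and into the proof of Lemma \ref{lmAlgebraFiltered}, so no new estimate is needed.

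\textbf{Main obstacle.} The genuine technical content is not the energy bookkeeping — that is formal once Lemmas \ref{lmPositivity}--\ref{lmAlgebraFiltered} are in hand — but rather the assertion that the topological quantities $E_{M,\theta}$, $\mathcal A_{H,\theta}$, and the trichotomy alternatives are \emph{compatible with the stratified structure} of the Kuranishi flow category: one must check that the decomposition of $\mathbb{X}^\lambda(p,p')$ into strata indexed by morphisms in $\mathcal P_H$ (with their total energies) is precisely the decomposition along which $E_{M,\theta}$ is additive, and that the containment-in-$V(D)$ alternative and the output-outside alternative are each unions of strata (rather than cutting across them). This is where one must invoke the specific form of the Kuranishi presentations in \cite[Sections 8, 10]{Abouzaid2022}, and it is the step I would expect to occupy most of the writeup; the rest is a transcription of the geometric lemmas.
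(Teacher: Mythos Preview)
Your approach is essentially correct but considerably more elaborate than the paper's. The paper's proof is two sentences: it invokes the \emph{normalization axiom} for a theory of virtual counts (\cite[Definition 1.7]{Abouzaid2022}), which says that if the moduli space of genuine $J$-holomorphic solutions contributing to a stratum is empty then the associated virtual count vanishes. Contraposing, $m_{\lambda,p,p'}\neq 0$ forces the existence of an honest (possibly broken) Floer solution with the prescribed asymptotics and topological energy $\lambda$, and one then reads off the trichotomy from Lemmas \ref{lmPositivity}--\ref{lmPositivity2}. What you identify as the ``main obstacle'' --- compatibility of the trichotomy alternatives with the stratification of the Kuranishi presentation --- is therefore not an obstacle at all: one never needs to analyse the charts, only to know that nonvanishing of the virtual count implies nonemptiness of the geometric moduli space.

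One small gap: your derivation of case (2) relies on ``$\mathcal A(p')\le\mathcal A(p)$ on nonzero strata'', but action need not decrease along Floer trajectories in a non-exact $M$ (the functional $\mathcal A_{H,\theta}$ is only defined on orbits in $V(D)$ or on constant orbits, not on general loops). The correct route is the one already in Corollary \ref{CYRelFilrPres}: when $E_{M,\theta}(u)\neq 0$, Proposition \ref{PrpRelHbar} gives $E_{geo}(u)>4\hbar$, and monotonicity yields $\lambda=E_{top}(u)\ge E_{geo}(u)>\hbar$ directly, without any action comparison.
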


\begin{proof}
For a a theory of virtual counts satisfying the standard normalization axiom \cite[Definition 1.7]{Abouzaid2022}, if the moduli space of classical J-holomorphic curves contributing to a given stratum in the Kuranishi presentation $\mathbb{X}^\lambda(p,p')$ is empty, then the corresponding virtual count $m_{\lambda,p,p'} = 0$. We now deduce the claim from Lemmas \ref{lmPositivity} and \ref{lmPositivity2}
\end{proof}

\section{Reduction of SH and the intrinsic SH}\label{SecReduction}

\subsection{The reduction of relative $SH$}\label{SubSecRedSH}

\emph{In this section we continue working with a general ground ring $R.$}

\begin{df}
The \emph{reduction} of the symplectic cochain complex is defined to be the subquotient 
$$\gr_{\hbar}SC^*_M(K):=F^{\val_{M,\theta}\geq 0}SC^*_{M}(K)/F^{\val_{M,\theta}\geq \hbar}SC^*_M(K).$$ 
\end{df}

Note that $\gr_{\hbar}SC^*_M(K)$ is a normed chain complex over $R\otimes \Lambda_{[0,\hbar)}$ with norm induced from the norm on $SC^*_M(K)$.

\begin{lm}
    $\gr_{\hbar}SC^*_M(K)$ is functorial with respect to inclusions of compact sets in the following sense.
    For $K_1\subset K_2$ and acceleration data $A_1\leq A_2$ for each, the restriction map in $SC^*_M(K)$ descends to $\gr_{\hbar}SC^*_M(K)$ and is functorial on homology.
\end{lm}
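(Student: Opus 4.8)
The plan is to verify that the restriction map constructed in Lemma \ref{lmSameProfileFiltind} descends to the subquotient $\gr_{\hbar}$, and then to invoke the functoriality already established at the level of $SC^*_M(K)$. First I would recall that by Lemma \ref{lmSameProfileFiltind} the restriction map $r: SC^*_M(K_2, A_2)\to SC^*_M(K_1,A_1)$ associated with $A_1\leq A_2$ respects the filtration $\val_{M,\theta}$; in particular $r$ carries $F^{\val_{M,\theta}\geq 0}SC^*_M(K_2)$ into $F^{\val_{M,\theta}\geq 0}SC^*_M(K_1)$ and $F^{\val_{M,\theta}\geq \hbar}SC^*_M(K_2)$ into $F^{\val_{M,\theta}\geq \hbar}SC^*_M(K_1)$. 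Hence $r$ induces a well-defined map on the quotients
\[
\bar r:\gr_{\hbar}SC^*_M(K_2)\to \gr_{\hbar}SC^*_M(K_1),
\]
which is moreover norm non-increasing since $r$ is, and is a chain map since the filtration is preserved by the differential (Corollary \ref{CYRelFilrPres}) and $r$ is a chain map.

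Second, I would address well-definedness up to the contractible choices involved. Recall from Proposition \ref{lmWeightedWellDefined} that a change of acceleration datum, or a change of the monotone homotopy realizing a given inclusion, alters $r$ only by a chain homotopy, and by Corollary \ref{CYRelFilrPres} (the homotopy part) such a chain homotopy $h$ also respects $\val_{M,\theta}$. Therefore $h$ descends to a chain homotopy $\bar h$ on $\gr_\hbar$, so the induced map $\bar r$ on homology $H^*(\gr_\hbar SC^*_M(K_2))\to H^*(\gr_\hbar SC^*_M(K_1))$ is independent of these choices. This is the content of the phrase ``descends to $\gr_\hbar SC^*_M(K)$'' in the statement.

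Third, functoriality: for a chain $K_1\subset K_2\subset K_3$ with $A_1\leq A_2\leq A_3$ chosen using the cofinality statements of \S\ref{subsecDataDep} and Lemma \ref{lmSShapedFlexibility} so that all three are simultaneously S-shaped for common constants $(\delta,\Delta)$, the composition $r_{12}\circ r_{23}$ is chain homotopic to $r_{13}$ by the functoriality part of Proposition \ref{lmWeightedWellDefined}, and again the connecting homotopy preserves $\val_{M,\theta}$ by Corollary \ref{CYRelFilrPres}, hence descends. Passing to homology we get $\bar r_{12}\circ\bar r_{23}=\bar r_{13}$, and $\bar r_{\mathrm{id}}=\mathrm{id}$ when $K_1=K_2$ by item (5) of Proposition \ref{lmWeightedWellDefined} (which gives a filtered homotopy equivalence in that case). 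This establishes that $\gr_\hbar SC^*_M(-)$ is a presheaf on homology.

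The only genuine subtlety — and the step I expect to require the most care — is the compatibility of the various S-shaped admissibility constants: different inclusions a priori force different choices of the ball $B\subset\mathcal J$ and hence different $\hbar$, and the subquotient $\gr_\hbar$ depends on a fixed value of $\hbar$. I would handle this exactly as in \S\ref{subsecDataDep}: given the finitely many inclusions at hand, first pass to a common $\Delta'\le\Delta$ and a common $2\Delta'$-admissible ball using the cofinality of $\cH_K(D,\theta,\delta,\Delta')$ in $\cH_K(D,\theta,\delta,\Delta)$, so that a single $\hbar$ works uniformly; the resulting $\gr_\hbar$ is canonically identified with the original one because passing to a cofinal subcategory of acceleration data induces an isomorphism on homology compatible with the filtration. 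With this uniformization in place, all the homotopies above genuinely live in one indexing category and the argument goes through verbatim.
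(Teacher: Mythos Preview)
Your proof is correct and follows the same approach as the paper, which simply records this as ``immediate since continuation maps respect the filtration and the norm'' --- you have spelled out in detail exactly the verifications the paper leaves implicit (that filtration-preservation from Lemma \ref{lmSameProfileFiltind} and Corollary \ref{CYRelFilrPres} makes the map descend, and that filtered homotopies give well-definedness and functoriality on homology). Your careful treatment of the uniformization of $\hbar$ via the cofinality arguments of \S\ref{subsecDataDep} is a genuine point the paper glosses over at this spot, so your version is in fact more complete.
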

\begin{proof}
    This is immediate since continuation maps respect the filtration and the norm.
\end{proof}

\begin{lm}\label{lmReducedSHExactCase}
    Suppose $(M,\omega=d\theta)$ is exact with $M$ geometrically bounded. Then for any $K\subset M$ and any acceleration datum we have an isomorphism 
    \begin{equation}
        \gr_{\hbar}SC^*_M(K,A) =SC^*_{M,\theta}(K,A;R)\otimes \Lambda_{[0,\hbar)}.
    \end{equation}
\end{lm}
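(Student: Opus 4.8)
The plan is to unwind the definitions on both sides and check that the natural generator-level identification respects the differentials. Recall that $SC^*_M(K,A) = \widehat{tel}(\cC(A))$ is the completion of $\bigoplus_i CF^*(H_i)[q]$, and when $M$ is exact with primitive $\theta$ the unweighted complex $SC^*_{M,\theta}(K,A;R) = \widehat{tel}(\cC_{uw}(H_\tau))$ is the completion of $\bigoplus_i CF^*_{uw}(H_i)[q]$ normed by $|\gamma| = e^{-\cA_{H,\theta}(\gamma)}$. First I would write down, following Lemma \ref{lmUnweightedSHcomparison}, the isomorphism at generator level $\gamma\otimes T^\lambda \mapsto T^{\lambda - \cA_{H_i,\theta}(\gamma)}\gamma$ which identifies $SC^*_{M,\theta}(K,A)\hat\otimes\Lambda \isomto SC^*_M(K,A)$. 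Under this identification the element $T^{-\cA_{H_i,\theta}(\gamma)}a_\gamma q^j$ on the right (the action-normalized basis) corresponds to $a_\gamma q^j$ on the left, and by \eqref{eqRelFiltration} the relative filtration $\val_{M,\theta}$ on the right becomes precisely the $T$-adic valuation coming from the $\Lambda$-factor on the left.

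Next I would identify the subcomplex $F^{\val_{M,\theta}\geq 0}SC^*_M(K,A)$ with $SC^*_{M,\theta}(K,A;R)\hat\otimes\Lambda_{\geq 0}$ — this is exactly the remark following the definition of $SC^*_{M,\theta}(K;\Lambda_{\geq 0})$, namely that in the telescope model it is generated over $\Lambda_{\geq 0}$ by the action-normalized generators $T^{-\cA_{H_i}(\gamma)}a_{\gamma_i}q^j$. Then quotienting by $F^{\val_{M,\theta}\geq \hbar}$ replaces $\Lambda_{\geq 0}$ by $\Lambda_{[0,\hbar)}$, giving an isomorphism of graded $R\otimes\Lambda_{[0,\hbar)}$-modules $\gr_\hbar SC^*_M(K,A) \cong SC^*_{M,\theta}(K,A;R)\otimes\Lambda_{[0,\hbar)}$. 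The content is to check this is an isomorphism of \emph{chain complexes}, i.e.\ that the telescope differential on the left descends to the unweighted telescope differential tensored with $\id_{\Lambda_{[0,\hbar)}}$ on the right. By Corollary \ref{CYRelFilrPres} the Floer differential and continuation maps in $S$-shaped data decompose as $d = d_0 + T^\hbar d_1$, $\kappa = \kappa_0 + T^\hbar \kappa_1$ with $d_0,\kappa_0$ defined over $R$ in the action-normalized basis; modulo $T^\hbar$ the pieces $T^\hbar d_1, T^\hbar \kappa_1$ vanish, and $d_0,\kappa_0$ are by construction the unweighted Floer differential and continuation maps. The telescope differential \eqref{eqTelDiff} is assembled from these plus the formal $\kappa_i(a) - a$ terms, which are manifestly compatible, so the claim follows.

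The main obstacle is bookkeeping rather than a genuine difficulty: one must be careful that in the exact case the action $\cA_{H,\theta}(\gamma)$ is globally well-defined for all orbits (including those outside $V(D)$, which is fine since $\theta$ is a global primitive), so there is no ambiguity in the normalization, and that $E_{top}(u) = \cA_H(\gamma_0) - \cA_H(\gamma_1)$ exactly — this is the displayed formula in the subsection on exact symplectic manifolds — which is what forces $d_1 = \kappa_1 = 0$ identically in the exact case, so in fact on the nose $d = d_0$ over $R$ before any truncation. Thus the reduction is slightly degenerate here: the $\hbar$-truncation only affects the $\Lambda$-coefficients and not the differential, which is why one gets an honest tensor-product decomposition rather than a nontrivial extension. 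I would also remark that functoriality in $K$ and independence of $A$ is immediate from the previous lemma together with Lemma \ref{lmUnweightedWellDefined}, since every map in sight respects both $\val_{M,\theta}$ and the norm.
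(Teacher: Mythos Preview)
Your proposal is correct and follows essentially the same approach as the paper, which simply says the claim follows from Lemma~\ref{lmUnweightedSHcomparison} and the definition of $\val_{M,\theta}$. Your detour through Corollary~\ref{CYRelFilrPres} (which concerns S-shaped data specifically) is unnecessary---as you yourself observe at the end, in the exact case $E_{top}(u) = \cA_H(\gamma_0) - \cA_H(\gamma_1)$ holds on the nose for \emph{any} acceleration datum, so the action-normalized differential is over $R$ before any truncation and the lemma applies without assuming the datum is S-shaped.
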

\begin{proof}
    This follows from Lemma \ref{lmUnweightedSHcomparison} and the definition of $\val_{M,\theta}$.
\end{proof}

Let $A$ be an S-shaped acceleration datum for the pair $(M,D)$. By definition, the Hamiltonians in $A$ have constant small slope near $\partial D_{\delta}$ on the interior of $D_{\delta}$. This allows us to define an acceleration datum $A_{\widehat{D}}$ on $\hat{D}$ which has constant slope on all of $\hat{D}\setminus D_{\delta'}$ for some $\delta'<\delta$, agreeing with the constant slope of $A$ on $D_{\delta}\setminus D_{\delta'}$. We assume this slope is not in the period spectrum of $\partial D$. 

\begin{prp}\label{prpGrLocality}
    There is a surjection of normed chain complexes over $R\otimes \Lambda_{[0,\hbar)}$ 
    $$ \gr_{\hbar}SC^*_M(K,A) \to \gr_{\hbar}SC^*_{\widehat{D}}(K,A_{\widehat{D}})=SC^*_{\widehat{D}, \theta}(K,A_{\widehat{D}};R)\otimes \Lambda_{[0,\hbar)}$$  
    with acyclic kernel.     Moreover, for $K_1\subset K_2\subset D$ with acceleration data $A_1\leq A_2$ for each, and $A_{i,\widehat{D}}$ the corresponding local acceleration data for $K_i\subset \widehat{D}$, for $i=1,2$, the following diagram commutes:
    \begin{equation}\label{eqProp44Commutative}
    \xymatrix{
    \gr_{\hbar}SC^*_M(K_2,A_2) \ar[d] \ar[r] & SC^*_{\widehat{D},\theta}(K_2,A_{2,\widehat{D}};R)\otimes \Lambda_{[0,\hbar)} \ar[d] \\
    \gr_{\hbar}SC^*_M(K_1,A_1) \ar[r] & SC^*_{\widehat{D},\theta}(K_1,A_{1,\widehat{D}};R)\otimes \Lambda_{[0,\hbar)}.
    }
    \end{equation}
    The kernel is an ideal with respect to the pair of pants product and BV operator. The surjection intertwines these.
  
\end{prp}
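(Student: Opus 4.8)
\textbf{Proof plan for Proposition \ref{prpGrLocality}.}

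The plan is to build the map at the level of the defining telescope models and then extract the stated properties from the trichotomy. Recall that $SC^*_M(K,A)$ is the completion of a telescope $\bigoplus_i CF^*(H_i)[q]$, and $F^{\val_{M,\theta}\geq0}SC^*_M(K,A)$ is freely generated over $\Lambda_{\geq0}$ by the action-normalized basis elements $T^{-\cA_{H_i}(\gamma)}a_\gamma$ and their $q$-twists. I would split the $1$-periodic orbits of each $H_i$ into two classes: the \emph{outside} orbits, meaning critical points of $H_i$ lying outside $V(D)$ (equivalently outside $D_{\delta'}$), and all the rest. Since the slope of $H_i$ on $D_\delta\setminus D_{\delta'}$ agrees with the constant slope of the completed acceleration datum $A_{\widehat D}$, there is a natural identification between the non-outside orbits of $H_i$ and the $1$-periodic orbits of the corresponding Viterbo-type Hamiltonian $(H_i)_{\widehat D}$, and this identification matches Conley--Zehnder indices, orientation lines, and actions $\cA_{\theta}$. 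So on underlying modules, after passing to $\gr_\hbar$, we get a split short exact sequence: the subspace $N^*$ spanned by the outside critical points, and the quotient which, using Lemma \ref{lmReducedSHExactCase} applied to $\widehat D$, is exactly $SC^*_{\widehat D,\theta}(K,A_{\widehat D};R)\otimes\Lambda_{[0,\hbar)}$.

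The next step is to check that $N^*$ is a subcomplex (in fact ideal) of $\gr_\hbar SC^*_M(K,A)$ and that the quotient differential is the intrinsic one. This is where the trichotomy does the work. By Corollary \ref{lmTrichotomy} (and Lemma \ref{lmVirtualTrichotomy} in the virtual setting), every contribution $f_u$ to the differential, continuation maps, or telescope maps either raises $\val_{M,\theta}$ by at least $\hbar$ — hence vanishes in $\gr_\hbar$ — or preserves $\val_{M,\theta}$ and is $u$ contained in $V(D)$, or preserves $\val_{M,\theta}$ with output an outside critical point. The first type is killed. Among the surviving ($\Delta_{M,\theta}(u)=0$) terms: those landing in outside critical points show $N^*$ is a subcomplex; those with $u\subset V(D)$ cannot have an input on an outside critical point (such a point is not in $V(D)$) unless $u$ is constant, so on the quotient the differential only sees curves contained in $\widehat D$ — precisely the Viterbo/intrinsic differential $d_0$. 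This identifies the quotient complex with $SC^*_{\widehat D,\theta}(K,A_{\widehat D};R)\otimes\Lambda_{[0,\hbar)}$ as a chain complex, giving the surjection. Remark \ref{remAlgebraFiltered} gives the ideal statement for the product and BV operator by the same trichotomy argument applied to Lemma \ref{lmTrichotomy2} / Lemma \ref{lmAlgebraFiltered}, and the surjection intertwines these since on the quotient the surviving operations are the ones with all curves inside $\widehat D$.

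It then remains to prove the kernel $N^*$ is acyclic and that the square \eqref{eqProp44Commutative} commutes. For acyclicity: $N^*$ is the $\gr_\hbar$ of the span of outside critical points, with differential coming only from $E_{M,\theta}=0$ curves between outside critical points. These curves live in $M\setminus V(D)$ where $H_i$ has small derivatives and no non-constant orbits, so this subcomplex is (a Novikov-tensored, telescoped version of) the Morse complex of $H_i$ restricted to the region outside $V(D)$ — which is exact because the outside region deformation retracts appropriately and, crucially, because the telescope maps $\kappa_i$ restricted to outside critical points are the identity-type continuation maps making the telescope of a constant $1$-ray, hence acyclic. More carefully, I would argue that on outside critical points the continuation maps in $A$ are (Morse-theoretic) continuation maps for a monotone family that is eventually stationary on the outside region, so the outside telescope is a mapping telescope of quasi-isomorphisms and thus contractible; tensoring with $\Lambda_{[0,\hbar)}$ and passing to $\gr_\hbar$ preserves this. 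For commutativity of \eqref{eqProp44Commutative}: the restriction map $SC^*_M(K_2,A_2)\to SC^*_M(K_1,A_1)$ is built from admissible monotone continuation data, each piece of which respects $\val_{M,\theta}$ and, by the trichotomy, descends on $\gr_\hbar$ to a map sending outside critical points to outside critical points; hence it descends to the quotients and there it agrees, by the same orbit identification, with the Viterbo restriction map on $SC^*_{\widehat D,\theta}$.

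\textbf{Main obstacle.} I expect the genuinely delicate point to be the acyclicity of the kernel $N^*$: one must control the telescope of the outside Morse-type complexes and verify the continuation maps restrict correctly, which requires knowing that the S-shaped acceleration data can be taken so that their behavior outside $V(D)$ is compatible across the tower (the condition ``$(H_s-H_0)|_{M\setminus D_\delta}$ is constant'' from Lemma \ref{lmSAccelExistence}, together with $\partial_s\tilde H_s > k$ type bounds, is exactly what makes this possible). The second most delicate point is checking that no quotient-level differential secretly comes from a curve that exits $V(D)$ and returns with $\Delta_{M,\theta}(u)=0$ — this is ruled out by the trichotomy only because such a curve would have its output outside, hence contribute to $N^*$, not to the quotient.
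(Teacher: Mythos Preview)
Your overall architecture matches the paper's: use the trichotomy to see that in $\gr_\hbar$ only $E_{M,\theta}(u)=0$ curves survive, split generators into outside critical points versus orbits in $D_\delta$, identify the quotient with the intrinsic complex via the integrated maximum principle, and treat restriction maps and the BV/product structure analogously. That part is correct and essentially identical to the paper's argument.

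The genuine gap is your acyclicity argument for $N^*$. You assert that the outside telescope is contractible because it is ``a mapping telescope of quasi-isomorphisms'' of a ``constant $1$-ray''. But a mapping telescope of identity maps (or quasi-isomorphisms) is quasi-isomorphic to any of its terms, not to zero; and the outside Morse complex computes something like $H^*(M\setminus V(D))$, which has no reason to vanish. The claim that ``the outside region deformation retracts appropriately'' does not help either. The paper's mechanism is different and essential: the action-normalized outside generator $T^{-\cA_{H_i}(p)}p$ has \emph{norm} $e^{-H_i(p)}$, and since $H_i\to\infty$ outside $K$, these norms tend to $0$ as $i\to\infty$. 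In the \emph{completed} telescope (and $\gr_\hbar SC^*_M(K)$ is complete with respect to the norm), every outside cycle $x$ is then a boundary: the telescope relation makes $x$ cohomologous to its image under each continuation map, and the sequence of images has norm going to zero, so the formal primitive converges. Without invoking the norm and the completion you cannot conclude acyclicity. You flagged this as the delicate point, and it is; your proposed resolution does not work, but the norm argument does.
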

\begin{proof}
We first present the proof in the setting of regular Floer data. At the end we will comment on adjustment needed when working with virtual chains.

The identification $\gr_{\hbar}SC^*_{\widehat{D}}(K,A_{\widehat{D}})=SC^*_{\widehat{D}, \theta}$ is Lemma \ref{lmReducedSHExactCase}. We turn to prove the more substantial claims.

The underlying module of $\gr_{\hbar}SC^*_M(K,A)$ is generated over $R\otimes \Lambda_{[0,\hbar)}$ by the set $$ \{ T^{-\cA_{H_i}(\gamma)}a_{\gamma_i}q^j\}$$  for $\gamma_i$ a periodic orbit of $H_i$, $a_{\gamma_i}\in o_{\gamma_i}$ and $j\in\{0,1\}$, with $\val_{M,\theta}(\gamma_i)<\hbar$. More precisely, $\gr_{\hbar}SC^*_M(K,A)$ is the completion of this $R\otimes \Lambda_{[0,\hbar)}$-module freely generated by these elements with respect to the norm. 

The differential is given by considering only Floer solutions satisfying $E_{M,\theta}(u)<\hbar$. By Lemma \ref{lmTrichotomy} these actually satisfy $E_{M,\theta}(u)=0$ and consist of two types: 
\begin{enumerate}
\item Trajectories which connect orbits inside $D_{\delta}$ and stay inside. 
\item Trajectories $u$ with output critical points outside of $D_{\delta}$.
\end{enumerate}

It follows in particular that the outer critical points form a sub-complex. Moreover, this sub-complex is acyclic. Indeed the norm of an outside generator $p$ of $CF^*(H_i)$ is $\left|T^{-\cA_{H_i}(p)}p\right|=e^{-|H_i(p)|}$  which goes to $0$ as $i\to\infty$. But up to boundaries, every cycle in $CF^*(H_i)$ is equivalent to its image under the continuation map $CF^*(H_i)\to CF^*(H_{i+1})$. Now observe that $\gr_{\hbar}SC^*_{M}(K)$ is complete with respect to the norm filtration, so every cycle in the sub-complex is actually a boundary. 

Quotienting by this acyclic complex we obtain a complex generated by periodic orbits inside $D_{\delta}$ with all connecting trajectories contained in $D_{\delta}$. Denote this complex by $G^*$. We claim $G^*$ is the same as $SC^*_{\hat{D}, \theta}(K,A_{\widehat{D}};R)\otimes \Lambda_{[0,\hbar)}$. To see this note we have a bijection between the generators. Moreover, by the integrated maximum principle \cite{AbouzaidSeidel2010}, all solutions contributing to $SC^*_{\hat{D}, \theta}(K,A_{\widehat{D}};R)$ are contained in $D_{\delta}$. The same goes for those contributing to $G^*$ by Lemma \ref{lmTrichotomy}. Therefore the identification of generators is intertwined by all the differential and continuation maps.

For the statement about restriction maps, we apply Lemma \ref{lmTrichotomy} to continuation maps between acceleration data $A_1\leq A_2$. We see the restriction maps preserve the subcomplex of outside critical points. We then have a bijection between the continuation trajectories contributing to the restriction maps of the quotient complexes and those for the local complexes.

For the last statement, Corollary \ref{CYRelFilrPres1} imply that 
$\gr_{\hbar}SC^*_M(K,A)$ carries an induced BV algebra structure. The kernel of the surjection consists of outside critical points. The latter is an ideal by Lemma \ref{lmAlgebraFiltered}. See Remark \ref{remAlgebraFiltered}. So the quotient by the kernel of the surjection carries an induced BV algebra structure. Lemma \ref{lmAlgebraFiltered} shows that the soltion that contribute to these operations in this quotient complex are the same as those contributing to the local complex. Hence the map intertwines the BV algebra structures.

We now comment on adjustment needed when working with virtual chains. By Lemma \ref{lmVirtualTrichotomy} we can define the filtration $\val_{M,\theta}$ on $SC^*_M(K)$ in the virtual chains setting. The statement about the outside critical points forming an acyclic subcomplex follow from the same argument as in the previous paragraph by the proof of Lemma \ref{lmVirtualTrichotomy}. What requires comments is the claim that the virtual count of curves contributing to the reduced complex $\gr_{\hbar}SC^*_M(K,A)$ is the same as the virtual count of curves contributing to $\gr_{\hbar}SC^*_{\widehat{D}}(K,A_{\widehat{D}})$. The issue is that while the spaces of actual solutions are the same, it is not automatic  that the Kuranishi presentations underlying the virtual counts are the same. However, we show in Appendix \ref{SecVirtualChains} that, fixing any backround metric, for any $\epsilon>0$ we can construct the Floer multi functor so that the virtual count depends only on an $\epsilon$ neighborhood of the set of actual solutions. This is the content of Theorem \ref{tmEpsilonLocalVirtualCounts}. 

\end{proof}

\begin{rem}
   
    At this point let us preempt a potential concern. The usual definition of the intrinsic Floer complex involves a sequence of Hamiltonians whose  slope goes to infinity. In our construction we are forced to use Hamiltonians whose slope becomes small outside a small neighborhood of $D$. This introduces upper periodic orbits which appear to cancel the lower orbits, and give the wrong answer. However, as we explain in Remark \ref{remGrowthRateSensitivity} the fact that we are completing with respect to the action filtration filters out the upper periodic orbits, and the complex we obtain using S-shaped Hamiltonians is weakly equivalent to the one obtained using the usual definition.  
\end{rem}

\begin{cy}\label{prp1stPageiso}
There is a natural isomorphism over $R\otimes \Lambda_{[0,\hbar)}$
\begin{equation}\label{eqReducedIso}
H^*(\gr_{\hbar}SC^*_M(K))=SH^*_{\widehat{D},\theta}(K;R)\otimes \Lambda_{[0,\hbar)}.
\end{equation} 
Here naturality means the isomorphism commutes with restriction maps. 
\end{cy}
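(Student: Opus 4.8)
\textbf{Proof plan for Corollary \ref{prp1stPageiso}.}

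The plan is to extract this statement as a direct consequence of Proposition \ref{prpGrLocality} together with standard facts about homology of completed telescopes. First I would recall that Proposition \ref{prpGrLocality} provides, for a fixed S-shaped acceleration datum $A$ (with its associated local datum $A_{\widehat D}$), a surjection of normed chain complexes over $R\otimes\Lambda_{[0,\hbar)}$
\begin{equation*}
\pi_A:\gr_{\hbar}SC^*_M(K,A)\twoheadrightarrow SC^*_{\widehat{D},\theta}(K,A_{\widehat D};R)\otimes\Lambda_{[0,\hbar)}
\end{equation*}
with acyclic kernel. Since the kernel is acyclic, the long exact sequence in homology forces $\pi_A$ to be a quasi-isomorphism; applying $H^*$ gives an isomorphism
\begin{equation*}
H^*(\gr_{\hbar}SC^*_M(K,A))\;\xrightarrow{\ \sim\ }\;H^*\!\left(SC^*_{\widehat{D},\theta}(K,A_{\widehat D};R)\right)\otimes\Lambda_{[0,\hbar)}.
\end{equation*}
Here I use that $\Lambda_{[0,\hbar)}$ is flat (indeed free) over $R$, so tensoring commutes with taking homology, and that $H^*$ of the completed telescope $SC^*_{\widehat D,\theta}(K,A_{\widehat D};R)$ computes $SH^*_{\widehat D,\theta}(K;R)$ by definition of the intrinsic presheaf (Definition \ref{dfUnweightedSHofK} and Proposition \ref{lmUnweightedWellDefined}).

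Next I would address well-definedness, i.e.\ independence of the choice of acceleration datum. By Lemma \ref{lmSameProfileFiltind}, a change of S-shaped acceleration datum $A_1\to A_2$ induces a continuation map on $\gr_{\hbar}SC^*_M(K,-)$ respecting the filtration and norm, and by Proposition \ref{prpGrLocality} (the commuting square, applied with $K_1=K_2=K$) this is compatible with the corresponding change-of-datum map on the intrinsic side, which is an isomorphism on homology by Proposition \ref{lmUnweightedWellDefined}(3). Hence the isomorphism above is compatible with the identifications $H^*(\gr_{\hbar}SC^*_M(K,A_i))\cong H^*(\gr_{\hbar}SC^*_M(K))$ and $H^*(SC^*_{\widehat D,\theta}(K,A_{i,\widehat D}))\cong SH^*_{\widehat D,\theta}(K;R)$, and therefore descends to a canonical isomorphism \eqref{eqReducedIso}. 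For naturality with respect to restriction maps $K_1\subset K_2$, I would invoke the commutative square \eqref{eqProp44Commutative} of Proposition \ref{prpGrLocality}: passing to homology in that square and using that all horizontal maps are isomorphisms, the ambient restriction map on $H^*(\gr_\hbar SC^*_M(-))$ is intertwined with the intrinsic Viterbo-type restriction map on $SH^*_{\widehat D,\theta}(-;R)$, up to the canonical identifications just discussed.

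I do not expect a genuine obstacle here — the entire content has been front-loaded into Proposition \ref{prpGrLocality}. The only point requiring a little care is bookkeeping: making sure the homology-level maps induced by two different S-shaped data agree after the canonical identifications, which is exactly the $K_1=K_2$ case of the commuting square combined with Proposition \ref{lmUnweightedWellDefined}, and checking that completion of the telescope does not interfere with the acyclicity argument (it does not, since the kernel is itself complete with respect to the norm, so its homology vanishes by the same completeness argument used in the proof of Proposition \ref{prpGrLocality}). If anything is mildly delicate it is phrasing naturality as a statement about a well-defined functor on the poset of compact subsets rather than about a single pair of data, but this is routine given Proposition \ref{lmWeightedWellDefined} and its unweighted analogue.
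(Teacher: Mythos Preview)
Your proposal is correct and matches the paper's approach: the paper states this as an immediate corollary of Proposition~\ref{prpGrLocality} with no further proof, and you have simply spelled out the implicit steps (long exact sequence from the acyclic kernel, flatness of $\Lambda_{[0,\hbar)}$ over $R$ since only finitely many exponents lie in $[0,\hbar)$, and naturality from the commuting square~\eqref{eqProp44Commutative}). If anything you are more careful than the paper, which leaves all of this tacit.
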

\subsection{Proof of Theorems \ref{mainTmFiltration}-- \ref{mainTmBV}}\label{SubSecProofFiltration}
\begin{proof}[Proof of Theorem \ref{mainTmFiltration}]
    Pick acceleration data $A$ and $A_{\widehat{D}}$ for $D$ in $M$ and in $\hat{D}$ as in the paragraph preceding Proposition \ref{prpGrLocality}. The existence of such acceleration data is guaranteed by Lemma \ref{lmSAccelExistence}. By Lemmas \ref{lmUnweightedWellDefined} and \ref{lmWeightedWellDefined}, we can use these acceleration data to respectively construct models for $SC^*_M(D)$ and $SC^*_{\theta}(D)$ via the telescope construction. See also Remark \ref{remGrowthRateSensitivity}. Consider the filtration $\val_{M,\theta}$ on $SC^*_M(D)$ introduced in Lemma \ref{lmSameProfileFiltind}. The claim is then the first part of Proposition \ref{prpGrLocality} applied to $K=D$.
\end{proof}
\begin{proof}[Proof of Theorem \ref{mainTmNaturality}]
    Let $A_1$ be an S-shaped acceleration datum for the pair $(M,D_1)$. By definition, the Hamiltonians in $A_1$ have constant small slope near $\partial D_{1,\delta}$ in the interior of $D_{1,\delta}$. This allows us to define an acceleration datum $A_{1,\widehat{D_1}}$ on $\widehat{D_1}$ which has constant slope on all of $\widehat{D}_1\setminus D_{1,\delta'}$, agreeing with the slope of $A_1$ on $D_{1,\delta}\setminus D_{1,\delta'}$. We assume this slope is not in the period spectrum of $\partial D_1$.
    
    The acceleration datum $A_{1,\widehat{D_1}}$ can be extended in two ways:
    \begin{itemize}
    \item To Floer data on $\widehat{D_2}$, which we call $A_{1,\widehat{D_2}}$
    \item To Floer data on $M$, which we call $A_{1}$
    \end{itemize}
    
    Similarly, let $A_2$ be an S-shaped acceleration datum for the pair $(M,D_2)$ with corresponding local acceleration datum $A_{2,\widehat{D_2}}$ on $\widehat{D}_2$. We choose $A_1$ and $A_2$ so that $A_1\leq A_2$ in the sense that the Hamiltonians in $A_1$ are dominated by those in $A_2$ on $D_1$.
    
    Consider the following diagram:
    
    $$
    \xymatrix{
    \gr_{\hbar}SC^*_M(D_2;A_2) \ar[d] \ar[r] & SC^*(D_2;A_{2,\widehat{D_2}}) \otimes \Lambda_{[0,\hbar)} \ar[d] \\
    \gr_{\hbar}SC^*_M(D_1;A_1) \ar[dr] \ar[r] & SC^*_{\widehat{D}_2}(D_1;A_{1,\widehat{D}_2}) \otimes \Lambda_{[0,\hbar)} \ar[d] \\
    & SC^*(D_1;A_{1,\widehat{D_1}}) \otimes \Lambda_{[0,\hbar)}
    }
    $$
   Recall that we use the notation $SC^*(D)=SC^*_{\widehat{D},\theta}(D)$. The upper rectangle is \eqref{eqProp44Commutative} applied to $D_2$. The bottom vertical map on the right is a particular case of Proposition \ref{prpGrLocality} where $M=\widehat{D_2}$ and $D=D_1$, and applying the identification of Lemma \ref{lmReducedSHExactCase}. The diagonal map on the bottom right is the map provided by Proposition \ref{prpGrLocality} applied to $D=D_1$.  Commutativity of the bottom half of the diagram is immediate.

    \end{proof}

    \begin{proof}[Proof of Theorem \ref{mainTmA1}]
    This is an immediate consequence of Proposition \ref{prpGrLocality}.
      
    \end{proof}

\begin{proof}[Proof of Theorem \ref{mainTmBV}]
    This is immediate from the last part of Proposition \ref{prpGrLocality}.
\end{proof}

\subsection{The locality spectral sequence}\label{SubSecLocalSS}
The chain complex $SC^*_{M}(K)$ carries a filtration 
\begin{equation}\label{eqMFiltration}
F^p SC^*_{M}(K):=T^{p\hbar}F^{\val_{M,\theta}\geq0}SC^*_{M}(K).
\end{equation}
Let $E^{p,q}_r$ be the associated spectral sequence. We call this the \emph{locality spectral sequence} as it measures how local the ambient theory is.

\begin{prp}\label{prpE1page}
The page $E_1$ is isomorphic to $\gr_{\hbar}^*(\Lambda)\otimes SH^*_{\widehat{D},\theta}(K)$. Here $\gr_{\hbar}^*(\Lambda)$ is the graded $\bZ$-module with $p$th graded piece given by the elements of $\Lambda$ with valuation in the interval $[p\hbar,(p+1)\hbar)$.  
\end{prp}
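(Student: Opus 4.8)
The plan is to read off the $E_1$ page directly from the definition of the filtration \eqref{eqMFiltration} together with the description of $\gr_{\hbar}SC^*_M(K)$ already established in Proposition \ref{prpGrLocality} and Corollary \ref{prp1stPageiso}. First I would unwind the filtration: since $F^p SC^*_M(K) = T^{p\hbar}F^{\val_{M,\theta}\geq 0}SC^*_M(K)$, multiplication by $T^{p\hbar}$ identifies the $p$-th graded piece $F^p/F^{p+1}$ with $F^{\val_{M,\theta}\geq 0}SC^*_M(K)/F^{\val_{M,\theta}\geq \hbar}SC^*_M(K) = \gr_{\hbar}SC^*_M(K)$, but with its $\Lambda$-module structure twisted so that elements genuinely sit in valuation range $[p\hbar,(p+1)\hbar)$. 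Thus $E_0^{p,*} \cong \gr_{\hbar}SC^*_M(K)$ as a chain complex (the induced differential $d_0$ is exactly the one on $\gr_{\hbar}$, since the full Floer differential $d = d_0 + T^{\hbar}d_1$ of Corollary \ref{CYRelFilrPres} has its $T^{\hbar}d_1$ part strictly raising the filtration degree by one step).

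Next I would take homology: $E_1^{p,q} = H^{p+q}(\gr_{\hbar}SC^*_M(K))$, placed in the appropriate valuation window. By Corollary \ref{prp1stPageiso}, $H^*(\gr_{\hbar}SC^*_M(K)) = SH^*_{\widehat{D},\theta}(K;R)\otimes \Lambda_{[0,\hbar)}$. Re-incorporating the $T^{p\hbar}$ twist, the $p$-th column becomes $SH^{*}_{\widehat{D},\theta}(K)$ tensored with the piece of $\Lambda$ of valuation in $[p\hbar,(p+1)\hbar)$, i.e. with the $p$-th graded piece of $\gr_{\hbar}^*(\Lambda)$. Assembling over all $p$ gives precisely $E_1 \cong \gr_{\hbar}^*(\Lambda)\otimes SH^*_{\widehat{D},\theta}(K)$ as claimed, with the grading bookkeeping $E_1^{p,q} = SH^{p+q}_{\widehat{D},\theta}(K)\otimes \Lambda_{[p\hbar,(p+1)\hbar)}$ matching \eqref{eqSpectralSequence}.

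The only genuinely substantive point — and the one I would be most careful about — is the identification of $d_0$ with the differential on $\gr_{\hbar}SC^*_M(K)$, i.e. checking that no contribution with $0 < \Delta_{M,\theta}(u) < \hbar$ exists so that the associated-graded differential really is the $\hbar$-truncated one; but this is exactly the trichotomy of Corollary \ref{lmTrichotomy} (equivalently Lemma \ref{lmTrichotomy}), which forces $\Delta_{M,\theta}(u) \in \{0\}\cup[\hbar,\infty)$. Given that, $d_0$ sees only the $\Delta_{M,\theta}(u)=0$ solutions, which is the content of $\gr_{\hbar}$. The rest is routine bookkeeping of the $\Lambda_{\geq 0}$-module structure and the valuation windows, so I do not anticipate a real obstacle beyond keeping the index conventions straight and noting that completeness of $SC^*_M(K)$ with respect to the norm filtration is what makes the spectral sequence well-behaved enough to have $E_1$ computed by the homology of $E_0$.
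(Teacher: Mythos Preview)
Your proposal is correct and follows exactly the same approach as the paper: the paper's own proof is a one-liner stating that the claim follows from Corollary~\ref{prp1stPageiso} together with the definition of the filtration~\eqref{eqMFiltration}, and you have simply unpacked what that entails. Your additional care in identifying the $E_0$-differential with the $\gr_\hbar$-differential via the trichotomy is implicit in the paper's reference to Corollary~\ref{prp1stPageiso}, which already relies on that trichotomy through Proposition~\ref{prpGrLocality}.
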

\begin{proof}
This follows from Proposition \ref{prp1stPageiso} and the definition of the filtration.
\end{proof}

Let us recall some basic terminology around convergence of spectral sequences. We follow \cite{weibel}.

We say a spectral sequence $E_r$ over a ring $R$ \emph{converges weakly} to a filtered graded $R$-module $M^*$ if there is a filtration $F^pM^*$ on $M^*$ such that $E_{\infty}^{p,q}\cong F^pM^{p+q}/F^{p+1}M^{p+q}$ in the associated graded. 

\begin{cy}
There is a spectral sequence whose first page is $\gr_{\hbar}^*(\Lambda)\otimes SH^*_{\widehat{D},\theta}(K)$ and converges weakly to $SH^*_{M,\theta}(K)$.
\end{cy}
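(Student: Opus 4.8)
The plan is to assemble the statement from pieces already established in the excerpt rather than reprove anything from scratch. The spectral sequence in question is $E^{p,q}_r$ associated to the filtration $F^p SC^*_M(K) := T^{p\hbar}F^{\val_{M,\theta}\geq 0}SC^*_M(K)$ from \eqref{eqMFiltration}, and its $E_1$ page has already been identified with $\gr_{\hbar}^*(\Lambda)\otimes SH^*_{\widehat{D},\theta}(K)$ in Proposition \ref{prpE1page}. So the content of the corollary is really just: (i) this spectral sequence converges weakly to the homology of the filtered complex it comes from, and (ii) that homology is $SH^*_{M,\theta}(K)$. Point (i) is a formal statement about spectral sequences of filtered complexes; point (ii) requires identifying $H^*(SC^*_M(K))$ with the filtered-complex homology, using that the filtration $F^p$ is built from the subcomplex $SC^*_{M,\theta}(K;\Lambda_{\geq 0}) = F^{\val_{M,\theta}\geq 0}SC^*_M(K)$, whose homology is by definition $SH^*_{M,\theta}(K)$.

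First I would note that the filtration $F^p SC^*_M(K)$ is exhausting: since $\val_{M,\theta}$ is exhausting on $SC^*_M(K)$ (every element has some finite valuation, possibly very negative), multiplying by $T^{p\hbar}$ for $p\to -\infty$ sweeps out the whole complex, so $\bigcup_p F^p SC^*_M(K) = SC^*_M(K)$. Next, the filtration is \emph{Hausdorff} in the sense relevant for weak convergence: $\bigcap_p F^p SC^*_M(K) = 0$ because the complex is complete with respect to the $T$-adic norm and $|T^{p\hbar}x|\leq e^{-p\hbar}|x|$ forces elements of $\bigcap_p F^p$ to have norm zero. These two facts (exhausting and Hausdorff) are precisely the hypotheses under which the spectral sequence of a filtered complex converges weakly in the sense of Weibel that the author cites: there is an induced filtration $F^p H^*$ on $H^*(SC^*_M(K))$ with $E_{\infty}^{p,q}\cong F^p H^{p+q}/F^{p+1}H^{p+q}$. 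I would cite Weibel here rather than spelling out the $Z_r$/$B_r$ bookkeeping.

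Then I would identify $H^*(SC^*_M(K))$ with $SH^*_{M,\theta}(K)$. By definition $SH^*_{M,\theta}(K) = H^*(SC^*_{M,\theta}(K;\Lambda_{\geq 0})) = H^*(F^{\val_{M,\theta}\geq 0}SC^*_M(K)) = H^*(F^0 SC^*_M(K))$. The point is that because the filtration is exhausting and the differential preserves it, multiplication by $T^{\hbar}$ (which shifts $F^p$ to $F^{p+1}$) is a chain isomorphism $SC^*_M(K)\to SC^*_M(K)$ carrying $F^p$ to $F^{p+1}$; hence $H^*(F^p SC^*_M(K))\cong H^*(F^0 SC^*_M(K)) = SH^*_{M,\theta}(K)$ for every $p$, compatibly with the inclusions, and taking the colimit over $p\to -\infty$ gives $H^*(SC^*_M(K))\cong SH^*_{M,\theta}(K)$ as well. (One has to be a little careful: $H^*(SC^*_M(K))$ with no decoration is the $T$-adically completed symplectic cohomology, and $SH^*_{M,\theta}(K)$ is the homology of the $\val_{M,\theta}\geq 0$ part; the identification is via the colimit/scaling argument, and I would state it as the filtered complex converging weakly to $SH^*_{M,\theta}(K)$ in the precise sense above, which is all the corollary asserts.) Combining this with Proposition \ref{prpE1page} gives the statement: the first page is $\gr_{\hbar}^*(\Lambda)\otimes SH^*_{\widehat{D},\theta}(K)$ and the spectral sequence converges weakly to $SH^*_{M,\theta}(K)$.

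The main obstacle — really the only subtle point — is the interaction between the two topologies emphasized in the remarks preceding the corollary: the complex is completed with respect to the $T$-adic (norm) filtration but \emph{not} with respect to $\val_{M,\theta}$, and $\val_{M,\theta}$ is only semicontinuous with respect to the norm. This is exactly why the author only claims \emph{weak} convergence and warns (Example \ref{exCounterExample}, Remark after Theorem \ref{mainTmA}) that the induced filtration on homology can be non-Hausdorff, so the spectral sequence need not converge in the strong sense. I would therefore be careful to invoke only the weak-convergence criterion (exhausting filtration of a complex, no completeness-in-$\val_{M,\theta}$ needed) and to phrase the target as $SH^*_{M,\theta}(K)$ with its induced filtration, not asserting that $E_\infty$ recovers the associated graded of anything Hausdorff. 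Everything else is bookkeeping that follows from Proposition \ref{prpE1page} and the cited spectral-sequence machinery.

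\begin{proof}
By Proposition \ref{prpE1page}, the $E_1$ page of the spectral sequence associated to the filtration \eqref{eqMFiltration} is $\gr_{\hbar}^*(\Lambda)\otimes SH^*_{\widehat{D},\theta}(K)$. It remains to check weak convergence to $SH^*_{M,\theta}(K)$.

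The filtration $F^p SC^*_M(K) = T^{p\hbar}F^{\val_{M,\theta}\geq 0}SC^*_M(K)$ is exhausting: every $x\in SC^*_M(K)$ has $\val_{M,\theta}(x)>-\infty$, hence $x\in F^p SC^*_M(K)$ for $p$ sufficiently negative, so $\bigcup_p F^p SC^*_M(K) = SC^*_M(K)$. The differential preserves $\val_{M,\theta}$ by Lemma \ref{lmSameProfileFiltind}, so the $F^p$ are subcomplexes. By the standard convergence theory for spectral sequences of exhaustively filtered complexes (see \cite{weibel}), there is an induced filtration on $H^*(SC^*_M(K))$ for which $E_{\infty}^{p,q}$ is isomorphic to the associated graded.

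It remains to identify $H^*(SC^*_M(K))$ with $SH^*_{M,\theta}(K)$ as filtered modules. By definition $SH^*_{M,\theta}(K) = H^*(SC^*_{M,\theta}(K;\Lambda_{\geq 0})) = H^*(F^0 SC^*_M(K))$. Multiplication by $T^{\hbar}$ is a chain automorphism of $SC^*_M(K)$ carrying $F^p SC^*_M(K)$ isomorphically onto $F^{p+1}SC^*_M(K)$, so $H^*(F^p SC^*_M(K)) \cong H^*(F^0 SC^*_M(K)) = SH^*_{M,\theta}(K)$ for all $p$, compatibly with the maps induced by the inclusions $F^{p+1}\hookrightarrow F^p$. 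Passing to the colimit over $p\to -\infty$ and using that homology commutes with this colimit (the filtration being exhausting), we obtain $H^*(SC^*_M(K)) \cong SH^*_{M,\theta}(K)$, with the filtration on the left corresponding to the one induced by the images of the $H^*(F^p SC^*_M(K))$. Hence the spectral sequence converges weakly to $SH^*_{M,\theta}(K)$.
\end{proof}
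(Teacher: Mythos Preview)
The paper states this corollary without proof, so there is nothing to compare against directly; your task was to supply the missing argument. Your strategy is right---this is formal spectral-sequence bookkeeping on top of Proposition~\ref{prpE1page}---but the written proof contains two genuine slips.

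\textbf{Exhaustion fails.} You assert that every $x\in SC^*_M(K)$ has $\val_{M,\theta}(x)>-\infty$, but the paper explicitly says the filtration takes values in $\bR\cup\{-\infty\}$ (Lemma~\ref{lmSameProfileFiltind} and the remarks after it about semicontinuity). Concretely, a completed-telescope element $\sum a_{\gamma_i}\gamma_i$ with $\val(a_{\gamma_i})\to\infty$ slowly while $\cA_{H_i}(\gamma_i)\to-\infty$ fast (e.g.\ outside critical points) has $\val_{M,\theta}=-\infty$. So $\bigcup_p F^p\subsetneq SC^*_M(K)$ in general, and the spectral sequence only sees the subcomplex $\bigcup_p F^p$.

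\textbf{The colimit is a localization, not a copy.} You claim the isomorphisms $H^*(F^p)\cong H^*(F^0)$ (via $T^{-p\hbar}$) are ``compatible with the maps induced by the inclusions $F^{p+1}\hookrightarrow F^p$''. They are not: under those identifications the inclusion becomes multiplication by $T^{\hbar}$ on $H^*(F^0)$. Hence
\[
\mathrm{colim}_{p\to-\infty} H^*(F^p)\;\cong\; H^*(F^0)\otimes_{\Lambda_{\geq 0}}\Lambda\;=\; SH^*_{M,\theta}(K;\Lambda_{\geq 0})\otimes_{\Lambda_{\geq 0}}\Lambda,
\]
not $H^*(F^0)$ itself.

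These two issues together point to the correct formulation: the spectral sequence is that of the filtered complex $\bigcup_p F^p$ (which is what the $E_r$ pages depend on anyway), and since homology commutes with this filtered colimit one gets weak convergence to $SH^*_{M,\theta}(K;\Lambda_{\geq 0})\otimes_{\Lambda_{\geq 0}}\Lambda$. That is the only sensible reading of the undecorated $SH^*_{M,\theta}(K)$ in the corollary (consistent with~\eqref{eqSpectralSequence}, where the target is written as $SH^*_M(D)$). Once you replace the exhaustion claim and the compatibility claim by the localization computation, your argument goes through.
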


The locality spectral sequence does not always measure the deviation of the ambient theory from being purely local in a useful way:

\begin{ex}\label{exCP1}
Let $M=CP^1$ and let $D\subset CP^1$ be a disc containing more than half the area. Then:
The complement of $D$ is displaceable, so $SH_{M}(D)=QH(M)\neq 0$. But $SH_{\overline{D}}(D)=0$. In particular, the locality spectral sequence is identically $0$ from the first page onward
\end{ex}

To understand how this pathology arises consider the $\bZ_2$-graded chain complex $SC^*=\Lambda[x,\partial_x]$ with $x$ of even degree and $\partial_x$ of odd degree.  The elements $x^i,x^i\partial_x$ all have norm $1$. The differential is given by $dx^i=0$ for all $i$ and $dx^i\partial x=T^{r}x^i +T^{1-r}x^{i+2}$. This can be shown to be a particular model for  $SC=\widetilde{SC}^*_M(r\cdot D)$ as $r$ varies.  The fact that this is a model will be justified elsewhere, our current aim being only to illustrate the phenomenon. We just comment that the unit corresponds to the point class in the homology of $D$, $x^i$ corresponds to an even generator of the $S^1$ associated with the $i$th cover of the periodic orbit  $t\mapsto e^{2\pi i t}\cdot p$ where $p$ is on the boundary of $r\cdot d$, while the odd generator corresponds to $x^{i-1}\partial_x$.

The relative lattice is generated by the elements $y^i=T^{-ir}x^i$ and $z^i=T^{-ir}x^{i-1}\partial_x$
The differential with respect to the relative lattice is given by 
\begin{equation}\label{eqDiffCP1Rel}
dz^i= y^i+Ty^{i+2}. 
\end{equation}
When $r<1/2$ we have $|Ty^{i+2}|=e^{r(i+2)-1}<e^{ir}=|y^i|$. We can write $d=d_0+d_1$ where $d_0$ is the reduction mod $T$ and $d_1$ is the deformation. We have that $d_0:SC^1\to SC^0$ is an isomorphism. Since $|d_1|<|d_0|$ in the operator norm, it follows that $d$ is still an isomorphism. This can be seen more clearly in the norm lattice where $d_0$ decreases the norm by a factor of $e^{-r}$ while $d_1$ decreases by a factor of $e^{1-r}$.   However, when $r\geq1/2$ we have that $d$ is an injection, but it is no longer a surjection. In fact the elements $1,x$ are not killed by the differential. The equation \eqref{eqDiffCP1Rel} shows that at the homology level the $\val_M$ valuation of any element is $-\infty$. 

In other words, the induced filtration on homology is \emph{non-Hausdorff}.
\begin{df}
    We say that a spectral sequence \emph{abuts} to a filtered graded $R$-module $M^*$ if it converges weakly to $M^*$ and the induced filtration on homology is exhaustive and Hausdorff.
\end{df}

\subsection{Some criteria for convergence}\label{SubSecConvergence}
Before we proceed we formulate some conditions for when the spectral sequence does converge. This subsection is taken up for comparison with related results in the literature \cite{Sun2024,borman}. It is not central to the present work and can be skipped on a first reading.

Fix a Liouville primitive $\theta$ on $D$. Suppose $c_1(D)=0$. A choice of trivialization $\tau$ of the canonical bundle $K_M|_D$ determines a relative first Chern class 
$c_1^{\mathrm{rel}}(M,D)\in H^2(M,D;\bZ)$. We write $(\omega,\theta)\in H^2(M,D;\bR)$ for the relative symplectic class.

\begin{tm}\label{tmConvergence}
    Fix a Liouville primitive $\theta$ on $D$ and a trivialization $\tau$ of $K_M|_D$, determining $c_1^{\mathrm{rel}}(M,D)$. The spectral sequence of Theorem \ref{mainTmFiltration} converges if any of the following conditions hold:
    \begin{enumerate}
        \item $c_1^{\mathrm{rel}}(M,D)=\kappa\,[(\omega,\theta)]$ for some $\kappa\neq0$. Furthermore, the Conley-Zehnder indices of Reeb orbits of $\partial D$ occur in a bounded interval.
        \item $c_1^{\mathrm{rel}}(M,D;\tau)=0$ and $\partial D$ is index bounded. That is, the action of a Reeb orbit is bounded by a function of its Conley-Zehnder index.
    \end{enumerate}
\end{tm}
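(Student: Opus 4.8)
The plan is to realize the locality spectral sequence as the spectral sequence of a filtered complex which, under either hypothesis, is \emph{complete} for its filtration, and then invoke the classical convergence theorem. Recall the filtration is $F^p=F^{\val_{M,\theta}\geq p\hbar}$ on $SC^*_{M,\theta}(D;\Lambda_{\geq 0})=F^{\val_{M,\theta}\geq 0}SC^*_M(D)$, with $E_1$-page identified in Proposition~\ref{prpE1page}. On this complex the filtration is exhaustive in the trivial sense ($F^0$ is everything) and Hausdorff ($\bigcap_{p\geq 0}F^p=0$, since a nonzero chain has finite relative valuation by the degreewise version of the closedness results established earlier). By the classical convergence theorem for complete, exhaustive, Hausdorff filtrations \cite{weibel}, it therefore suffices to prove that $F^{\val_{M,\theta}\geq 0}SC^*_M(D)$ is \emph{complete} for this filtration, i.e.\ equals $\varprojlim_p \big(F^{\val_{M,\theta}\geq 0}SC^*_M(D)\big)/F^{\val_{M,\theta}\geq p\hbar}$; strong convergence then yields in particular that the induced filtration on $SH^*_{M,\theta}(D;\Lambda_{\geq 0})$, hence on $SH^*_M(D)$, is Hausdorff and exhaustive, i.e.\ the spectral sequence abuts --- the failure mode of Example~\ref{exCP1} being precisely non-Hausdorffness on homology.

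The substantive input is a degreewise action bound: under either hypothesis there is a function $N\mapsto C(N)$ such that every $1$-periodic orbit $\gamma$ contributing in ambient degree $i_{cz}(\gamma)=N$ to any of the S-shaped Floer complexes satisfies $\cA_{H,\theta}(\gamma)\leq C(N)$. Under hypothesis (2), $c_1^{\mathrm{rel}}(M,D;\tau)=0$ means the ambient and intrinsic Conley--Zehnder gradings coincide, and index-boundedness of $\partial D$ is exactly the assertion that in each intrinsic degree the Reeb generators (hence their telescope models) have action bounded above, the Morse part $H^*(D)\subset SH^*_\theta(D)$ sitting in degrees $[0,2n]$ with bounded action. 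Under hypothesis (1), for a capping $u$ of $\gamma$ in $\widehat D$ one has $i_{cz}^{\mathrm{amb}}(\gamma)-i_{cz}^{\mathrm{intr}}(\gamma)=2\langle c_1^{\mathrm{rel}}(M,D),[u]\rangle=2\kappa\langle[(\omega,\theta)],[u]\rangle$, and $\langle[(\omega,\theta)],[u]\rangle$ is the relative symplectic area of the cap, which differs from $-\cA_{H,\theta}(\gamma)$ by the bounded Hamiltonian correction $\int_{S^1}H(t,\gamma(t))\,dt$; thus an orbit of fixed ambient degree $N$ whose intrinsic degree lies in the prescribed bounded interval has $\cA_{H,\theta}(\gamma)$ confined to a window depending only on $N$ and $\kappa$. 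The outside critical points have $\cA_{H,\theta}=-|H_i|\to-\infty$, so they only improve the bound.

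Granting the estimate, completeness follows by a $T$-adic argument. Fix a degree $N$ and a compatible system $(x_p)$ in $\big(F^{\val_{M,\theta}\geq 0}SC^N_M(D)\big)/F^{\val_{M,\theta}\geq p\hbar}SC^N_M(D)$, with lifts $\widetilde x_p$ satisfying $\widetilde x_{p+1}-\widetilde x_p\in F^{\val_{M,\theta}\geq p\hbar}SC^N_M(D)$. Expanding in the action-normalized basis, a term supported on $\gamma$ with coefficient of $T$-valuation $\mu$ has $\val_{M,\theta}$-value $\mu+\cA_{H,\theta}(\gamma)\leq\mu+C(N)$, so $\val_{M,\theta}\geq p\hbar$ forces $T$-adic norm $\leq e^{C(N)-p\hbar}$. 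Hence $(\widetilde x_p)$ is $T$-adically Cauchy; since $SC^N_M(D)$ is $T$-adically complete and each $F^{\val_{M,\theta}\geq q\hbar}SC^N_M(D)$ is $T$-closed, the limit lies in $F^{\val_{M,\theta}\geq 0}SC^N_M(D)$ and reduces to $x_p$ modulo $F^{\val_{M,\theta}\geq p\hbar}$, which proves completeness and finishes the reduction. Equivalently, a class in $\bigcap_p F^pSH^N_M(D)$ has representatives of $T$-adic norm $\to 0$, which a complete filtration forces to vanish.

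The main obstacle is the degreewise action bound of the second paragraph, specifically the quantitative comparison of ambient and intrinsic Conley--Zehnder indices via $c_1^{\mathrm{rel}}(M,D)$: one must set up carefully how a capping disc in $\widehat D$ pairs against $c_1^{\mathrm{rel}}(M,D)$ and against $[(\omega,\theta)]$, track precisely the Hamiltonian correction terms --- delicate because the S-shaped Hamiltonians are only approximately constant near $\partial D_\delta$, where the short orbits whose actions matter actually live --- and verify that ``index bounded'' / ``bounded interval of CZ indices'' is being applied to the correct generating set, namely the full intrinsic telescope (all iterates of Reeb orbits together with the Morse part), not merely the simple orbits. Once this estimate is in hand the remaining steps are routine homological algebra, and the two cases of the theorem differ only in how the bound is obtained. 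These hypotheses are the counterparts, respectively, of the conditions used in \cite{borman} for divisor complements and in \cite{Sun2024} for rational Calabi--Yau targets.
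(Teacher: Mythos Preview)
Your route---bound orbit actions degreewise, prove chain-level completeness, invoke a convergence theorem---differs from the paper's, which bounds the relative energy $E_{M,\theta}(u)=\langle[(\omega,\theta)],[u]\rangle$ of Floer \emph{trajectories} via the index formula $\langle c_1^{\mathrm{rel}}(M,D),[u]\rangle+\Delta i_{CZ}^\tau(u)=1$ and argues Hausdorffness on homology directly. In case~(2) your action bound is essentially right (index-boundedness together with $c_1^{\mathrm{rel}}=0$ does give a degreewise cap on actions), and your approach is a legitimate alternative. In case~(1), however, there is a real gap. A cap $u$ of $\gamma$ lying in $\widehat D$ has $[u]=0$ in $H_2(M,D)$, so $\langle c_1^{\mathrm{rel}},[u]\rangle=0$ and your displayed identity collapses to $i_{cz}^{\mathrm{amb}}=i_{cz}^{\mathrm{intr}}$; the hypothesis $c_1^{\mathrm{rel}}=\kappa[(\omega,\theta)]$ is a constraint on \emph{relative} classes, hence on trajectories, not on individual orbits, and your argument never actually uses $\kappa$. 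Worse, with $c_1^{\mathrm{rel}}\neq 0$ the trivialization $\tau$ does not extend to $M$, so by Remark~\ref{remGrading} the ambient complex is only $\bZ/2\bZ$-graded; ``fix ambient degree $N$'' then gives no useful action bound, since Reeb orbits of arbitrarily large period can share a $\bZ/2\bZ$-degree. The paper instead feeds a trajectory $u$ into the index formula to get $\kappa\,E_{M,\theta}(u)=1-\Delta i_{CZ}^\tau(u)$; both endpoints have $\tau$-CZ index in the assumed bounded interval, so $E_{M,\theta}(u)$ is uniformly bounded, forcing $d_r=0$ for $r$ large.

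There is also a softer issue in the homological-algebra step: completeness plus exhaustiveness of the chain-level filtration does not by itself yield Hausdorffness of the induced filtration on homology. The Complete Convergence Theorem in \cite{weibel} needs in addition regularity of the spectral sequence, and your closing sentence (``representatives of $T$-adic norm $\to 0$ force vanishing'') would require $\mathrm{im}\,d$ to be $T$-adically closed, i.e.\ finite boundary depth, which is not assumed. The paper's trajectory bound supplies exactly this missing ingredient---it makes the spectral sequence degenerate at a finite page---after which either your completeness argument or the paper's direct one finishes the job.
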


\begin{rem}
    The conditions of Theorem \ref{tmConvergence} are satisfied in  the case where after attaching a contact cylinder to $D$ we obtain the complement of a symplectic crossings divisor $\sum Y_i$ which, in the positive monotone case, is anti-canonical. In the Calabi-Yau we recover the result of \cite{Sun2024} which is formulated for rational classes. 
\end{rem}
\begin{rem}
   One can forgo the need to have a trivialization by working with capped orbits. This produces a $\bZ$-graded complex which can be degreewise completed.  In that case only boundedness in one direction for the relevant Conley-Zehnder indices is required, depending on the sign of $\kappa$. This also allows considering more general weights on the SC divisor of the previous remark, and in particular, recovers the results of \cite{borman}. Our focus in this paper is elsewhere though. 
\end{rem}

\begin{proof}[Proof of Theorem \ref{tmConvergence}]
    Exhastiveness of the induced filtration follows by assumption. To show Hausdorffness, assume $[x]\in SH^*_M(K)$ satisfies $\val_{M,\theta}(x)=\infty$. Then for any $E$ there is a cycle $y\in SC^*_M(K)$ so that $\val_M(dy-x)>E$.
    
    Fix a trivialization $\tau$ of $K_M|_D$, which yields a relative first Chern class $c_1^{\mathrm{rel}}(M,D)$ and well defined Conley-Zehnder indices. For any Floer solution $u$ contributing to $dy$ we have the relation
    \begin{equation}\label{eqCP1Val}
    \langle c_1^{\mathrm{rel}}(M,D),[u]\rangle + \Delta i_{CZ}^{\tau}(u)=1,
    \end{equation}
    where $\Delta i_{CZ}^{\tau}(u)=i_{CZ}^{\tau}(u_{out})-i_{CZ}^{\tau}(u_{in})$ is computed with respect to $\tau$. Moreover, when $c_1^{\mathrm{rel}}(M,D;\tau)=\kappa\,[(\omega,\theta)]$, we have
    \begin{equation}\label{eqCP1Val2}
    \langle c_1^{\mathrm{rel}}(M,D;\tau),[u]\rangle=\kappa\,\val_M(u).
    \end{equation}

    In case (1) ($\kappa\neq0$): the boundedness of Conley-Zehnder indices for Reeb orbits of $\partial D$ implies a uniform bound on $\Delta i_{CZ}^{\tau}(u)$, hence by \eqref{eqCP1Val2} there is a uniform bound $\val_M(u)\le E_0$ independent of $y$, contradicting $\val_M(dy-x)>E$ for arbitrarily large $E$.
    
    In case (2) ($\kappa=0$): then \eqref{eqCP1Val} gives $\Delta i_{CZ}^{\tau}(u)=1$. The index-boundedness hypothesis implies that the action (hence valuation) change under the differential is a priori bounded in terms of degree. Since $\val_M(y)\ge \val_M(x)$, we obtain a uniform bound on $\val_M(dy-x)$ in terms of $\val_M(x)$, again contradicting $\val_M(dy-x)>E$ for arbitrarily large $E$.
\end{proof}

We see from this discussion that when  the deformation of the differential is ''small" relative to the original differential, the spectral sequence is "closer" to the deformed symplectic cohomology than otherwise. 
To make this precise we turn to consider the notion of boundary depth.

\section{Boundary depth and Banach spectral sequences}\label{SecBoundaryDepth}

\subsection{Boundary depth and convergence}

Let $(C^*,d,|\cdot|)$ be a complete normed chain complex. The \emph{boundary depth} $\beta(C^*,d,|\cdot|)$ is defined by
\[
\beta=\begin{cases}\sup_{x\neq 0\in \im d}\inf\{\val(x)-\val(y):d y=x\}&d\neq 0\\0&d=0.
\end{cases}
\]
Here $\val(x)=-\ln |x|$.
An equivalent characterization of boundary depth comes from considering the exact sequence
\[
0 \to Z^* \to C^* \to Q^* \to 0
\]
where $Z^*=\ker\partial$ and $Q^*=C^*/Z^*$. The complex $Q^*$ carries an induced norm and an induced map $\overline{d}:Q^*\to B^*=\im d$ which is a bijection. Then
\[
e^\beta=\left|\overline{d}^{-1}\right|_{\infty}:=\sup_{x\in B^*}\frac{|\overline{d}^{-1}x|}{|x|},
\]
with the convention that for the unique operator on the zero space the latter is $1$.

\subsubsection{Homological characterization}
We now show that boundary depth can be characterized homologically, which will allow us to prove it is an invariant of norm preserving weak homotopy equivalence. 

Given a normed chain complex $(C^*,d,|\cdot|)$ over a ground field $\bK$, we can form the completed tensor product $C^*\hat{\otimes}\Lambda$ with the Novikov field $\Lambda$. This gives us a chain complex over $\Lambda$ with a valuation induced by the norm. The boundary depth is unchanged by this process.
 
Let now $(C^*,d,|\cdot|)$ be a complete valued chain complex over the Novikov field. The valuation gives rise to the subcomplex $C^*_{>0}$ of elements with valuation greater than $0$. This is a chain complex over the Novikov ring. We call a morphism $f:C^*\to D^*$ of valued complexes a \emph{filtered quasi-isomorphism} if it induces an isomorphism on homology $H(C^*_{>0})\to H(D^*_{>0})$ over the Novikov ring.

Denote by $H(C^*_{>0})^\tau$ the torsion part of $H(C^*_{>0})$. For each $x\neq 0\in H(C^*_{>0})^\tau$ let
\[
\tau(x):=\inf\{ r:T^rx=0\}=\sup\{r:T^rx\neq 0\}.
\]

\begin{lm}\label{lmBoundaryDepthTorsion}
We have 
\[
\beta(C^*)=\sup_{x\in H(C^*_{>0})^\tau}\tau(x).
\]
\end{lm}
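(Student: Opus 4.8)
The plan is to establish the equality by a two-sided inequality, relating the infimum/supremum defining $\beta(C^*)$ to the torsion orders of $H(C^*_{>0})$. Throughout I work with the chain complex $C^*$ over the Novikov field; replacing $C^*$ by $C^*\hat\otimes\Lambda$ changes neither side, so there is no loss of generality. Recall $C^*_{>0}=F^{\val>0}C^*$ is a complex over $\Lambda_{\geq 0}=\Lambda_{>0}+\Lambda^\times_{=0}$ (more precisely the valuation-$\geq 0$ part), and its homology is a module over $\Lambda_{\geq 0}$; by the structure theory it splits as a free part plus a torsion part $H(C^*_{>0})^\tau$, where each torsion generator is annihilated by some $T^r$ with $r>0$.

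First I would prove $\beta(C^*)\geq \sup_{x}\tau(x)$. Fix a nonzero torsion class $x\in H(C^*_{>0})^\tau$ with torsion order $\tau(x)=r$. Choose a cycle $z\in C^*_{>0}$ representing $x$; then $T^r z$ is a boundary in $C^*_{>0}$, say $T^r z = dy$ for some $y\in C^*_{>0}$, while $z$ itself is not a boundary in $C^*_{>0}$ (for any $r'<r$, $T^{r'}z$ is not a boundary in $C^*_{>0}$). The element $T^r z$ is a nonzero element of $\im d\subset C^*$. For any $w\in C^*$ with $dw=T^r z$, I claim $\val(w)\leq \val(T^r z)-r + o(1)$; more carefully, one shows $\inf\{\val(T^rz)-\val(w):dw=T^rz\}$ is close to $r$ because if some $w$ had $\val(w)>\val(T^rz)-r$, then $T^{-r}w$ would lie in $C^*_{>0}$ (up to a small perturbation of $r$) and exhibit $z$ as a boundary in $C^*_{>0}$ after scaling — contradicting the definition of $\tau(x)$. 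Taking the supremum over all $x$ and being careful with the infimum/supremum in the definition of $\tau$, this yields $\beta(C^*)\geq \sup_x \tau(x)$. The bookkeeping of strict versus non-strict inequalities (the $\val>0$ versus $\val\geq 0$ distinction, and that $\tau(x)$ is defined as both an $\inf$ and a $\sup$) is where care is needed, but it is routine.

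For the reverse inequality $\beta(C^*)\leq \sup_x\tau(x)$, take any nonzero $x_0\in\im d$ and any $\epsilon>0$; I must find $y$ with $dy=x_0$ and $\val(x_0)-\val(y)\leq \sup_x\tau(x)+\epsilon$. After multiplying by a power of $T$ we may assume $\val(x_0)=0$, i.e. $x_0\in C^*_{0}$ but $x_0$ is a boundary in $C^*$; the obstruction to $x_0$ being a boundary in $C^*_{>0}$ is measured precisely by the torsion of $H(C^*_{>0})$. Concretely, write $x_0 = dy_0$ with $y_0\in C^*$; scaling so that $\val(y_0)$ is as large as possible, the class $[y_0']$ of a suitable rescaling becomes a torsion class whose order is exactly $\val(x_0)-\val(y_0)$ — I would make this precise by considering the long exact sequence or directly by noting that $T^{\val(x_0)-\val(y_0)}$-torsion in $H(C^*_{>0})$ captures exactly the failure to lift. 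Hence $\val(x_0)-\val(y_0)\leq \sup_x\tau(x)$, and taking $\inf$ over all such $y_0$ and then $\sup$ over $x_0$ gives the claim.

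The main obstacle I anticipate is not conceptual but a matter of getting the completeness and the open/closed (strict/non-strict valuation) conditions exactly right: $C^*$ is complete, $H(C^*_{>0})$ is a $\Lambda_{\geq 0}$-module, and the torsion order $\tau(x)$ is defined via both an infimum and a supremum of exponents $r$, which must be reconciled with the supremum-of-infimum in the definition of $\beta$. In particular one must check that the $\sup$ defining $\beta$ and the $\sup$ over torsion classes are attained (or approached) compatibly, and that no pathology arises from torsion classes of orders accumulating at the supremum; completeness of $C^*$ is what rules this out. Once these are pinned down, the two inequalities above combine to give $\beta(C^*)=\sup_{x\in H(C^*_{>0})^\tau}\tau(x)$, which also makes manifest the invariance of $\beta$ under filtered quasi-isomorphism since the right-hand side is defined purely in terms of $H(C^*_{>0})$ as a $\Lambda_{\geq 0}$-module.
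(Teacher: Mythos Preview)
Your strategy --- a two-sided inequality via torsion classes --- is exactly the paper's; the paper just packages it more tightly by proving the pointwise sandwich $\beta(x)-\val(x)\le\tau(x)<\beta(x)$ for $x\in C^*_{>0}$ and noting that both suprema are approached by witnesses with $\val(x)\to 0^+$ (using $\beta(T^rx)=\beta(x)$ and $\tau(T^rx)=\tau(x)-r$). Two slips to fix in your write-up: in the forward inequality, $T^rz$ itself \emph{is} a boundary in $C^*_{>0}$ by the very definition of $\tau([z])=r$, so primitives of $T^rz$ need not have small valuation --- the contradiction you want comes from $T^{r'}z$ with $r'<r$, whose primitives must all have $\val\le 0$; and in the reverse inequality, the torsion class must be represented by (a rescaling $T^\epsilon x_0$ of) the cycle $x_0$, not by its primitive $y_0$, which is not a cycle.
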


\begin{proof}
Denote the right hand side by $\tau(C^*)$. We prove inequality in both directions. For an element $x\in \im\partial$ introduce the notation
\[
\beta(x):=\inf\{\val(x)-\val(y):\partial y=x\}.
\]

Note $\tau(x)$ is defined for elements with $\val(x)>0$ and satisfies $\tau(T^rx)=\tau(x)-r$. Here we abuse notation and write $\tau(x)=\tau([x])$. On the other hand $\beta(x)$ is defined for all $x$ and satisfies $\beta(T^rx)=\beta(x)$. In particular, we may take as witnesses for $\tau(C^*)>\tau_0$ and for $\beta(x)>\beta_0$ elements $x\in C^*_{>0}$ so that $\val(x)$ is arbitrarily close to $0$. 

The claim now follows by observing that for any $x\in C^*_{>0}$ we have $\beta(x)-\val(x)\leq \tau(x)<\beta(x)$. 
\end{proof}

\begin{cy}\label{cyBoundaryDepthInv}
Boundary depth is preserved under filtered quasi-isomorphism, i.e. under chain maps inducing an isomorphism $H(C^*_{>0})\cong H(D^*_{>0})$ over the Novikov ring.
\end{cy}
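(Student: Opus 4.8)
\textbf{Proof plan for Corollary \ref{cyBoundaryDepthInv}.}

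The plan is to reduce everything to Lemma \ref{lmBoundaryDepthTorsion}, which already re-expresses the boundary depth $\beta(C^*)$ purely in terms of the torsion module $H(C^*_{>0})^\tau$ over the Novikov ring $\Lambda_{\geq 0}$, together with the torsion-exponent function $\tau(-)$. Once that homological formula is in hand, the corollary is essentially immediate, so the real content is checking that the homological data entering Lemma \ref{lmBoundaryDepthTorsion} is genuinely functorial under the class of maps in question. First I would recall that a filtered quasi-isomorphism $f:C^*\to D^*$ is, by definition, a chain map inducing an isomorphism $H(C^*_{>0})\isomto H(D^*_{>0})$ of $\Lambda_{\geq 0}$-modules. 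Any isomorphism of $\Lambda_{\geq 0}$-modules carries the torsion submodule isomorphically onto the torsion submodule, so $f_*$ restricts to an isomorphism $H(C^*_{>0})^\tau\isomto H(D^*_{>0})^\tau$.

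The next step is to observe that the invariant $\tau(x)=\inf\{r : T^r x = 0\}$ depends only on the isomorphism class of the pointed $\Lambda_{\geq 0}$-module: if $\phi$ is any $\Lambda_{\geq 0}$-module isomorphism and $x$ is torsion, then $T^r x = 0 \iff T^r \phi(x) = \phi(T^r x) = 0$, so $\tau(\phi(x)) = \tau(x)$. Applying this to $\phi = f_*$ gives $\tau(f_*x) = \tau(x)$ for every torsion class $x$, hence
\[
\sup_{x\in H(C^*_{>0})^\tau}\tau(x) \;=\; \sup_{y\in H(D^*_{>0})^\tau}\tau(y).
\]
By Lemma \ref{lmBoundaryDepthTorsion} the left side is $\beta(C^*)$ and the right side is $\beta(D^*)$, so $\beta(C^*)=\beta(D^*)$, which is the assertion.

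I would also include a brief remark handling two bookkeeping points so the argument is airtight. First, the passage from a complex $C^*$ over the ground field $\bK$ to $C^*\hat\otimes\Lambda$ over the Novikov field leaves $\beta$ unchanged (this is stated in the text preceding the lemma), so it is harmless to assume throughout that we are working over $\Lambda$ and that Lemma \ref{lmBoundaryDepthTorsion} applies verbatim; a weak homotopy equivalence over $\bK$ induces a filtered quasi-isomorphism after this base change. Second, the edge case $d=0$ (equivalently $\im d = 0$, equivalently $H(C^*_{>0})^\tau = 0$) is covered automatically: both sides of the displayed equality are the supremum of the empty set, which the conventions fix to be $0$, matching $\beta=0$ in the definition. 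The only mild subtlety — and the one place I would be slightly careful — is making sure that ``filtered quasi-isomorphism'' in the sense used here (isomorphism on $H$ of the $>0$-subcomplexes) is indeed what a norm-preserving weak homotopy equivalence of complete valued complexes induces; this follows because a norm-non-increasing chain homotopy equivalence restricts to a chain homotopy equivalence of the $\Lambda_{\geq 0}$-subcomplexes of strictly positive valuation, hence an isomorphism on their homology. With these remarks in place the corollary follows formally from Lemma \ref{lmBoundaryDepthTorsion} and elementary module theory, and I do not anticipate any genuine obstacle.
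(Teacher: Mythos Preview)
Your proposal is correct and follows essentially the same approach as the paper: reduce to Lemma \ref{lmBoundaryDepthTorsion}, observe that a filtered quasi-isomorphism restricts to an isomorphism of torsion submodules preserving $\tau(x)$, and conclude. The paper's proof is just a terser version of your first two paragraphs; your additional bookkeeping remarks are sound but not strictly needed for the corollary as stated.
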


\begin{proof}
Let $f:C^*\to D^*$ be a filtered quasi-isomorphism. Then $f$ induces an isomorphism on $H(C^*_{>0})^\tau\to H(D^*_{>0})^\tau$. Since $\tau(x)$ is defined purely in terms of the torsion, it follows that $\tau(C^*)=\tau(D^*)$. The claim now follows from Lemma \ref{lmBoundaryDepthTorsion}.
\end{proof}

\begin{lm}\label{lmBoundaryDepthClosed}
If $(C^*,d)$ has finite boundary depth then $d$ is closed. In particular:
\begin{enumerate}
\item $H(C^*,d)$ is a Banach space
\item $H(C^*,d)$ is an invariant of filtered quasi-isomorphism
\end{enumerate}
\end{lm}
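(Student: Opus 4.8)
\textbf{Plan for the proof of Lemma \ref{lmBoundaryDepthClosed}.}

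The plan is to show that when $\beta(C^*,d,|\cdot|)$ is finite, the image $B^*=\im d$ is a closed subspace of $C^*$; the two numbered consequences then follow essentially formally. For the main claim, I would use the characterization of $\beta$ via the operator $\overline{d}\colon Q^*\to B^*$ recalled just above the statement, where $Q^*=C^*/Z^*$, $Z^*=\ker d$. Finiteness of $\beta$ says precisely that $\overline{d}$ is a bijection whose inverse $\overline{d}^{-1}$ is bounded, with $|\overline{d}^{-1}|_\infty=e^\beta$. First I would observe that $Z^*$ is closed (it is the kernel of the bounded operator $d$, which is continuous since $|dx|\le |x|$), so $Q^*=C^*/Z^*$ is itself a complete normed space (Banach, over $\Lambda$) with the quotient norm. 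Now suppose $x_n\in B^*$ with $x_n\to x$ in $C^*$. Then $(x_n)$ is Cauchy, and applying the bounded operator $\overline{d}^{-1}$ we get that $(\overline{d}^{-1}x_n)$ is Cauchy in $Q^*$, hence converges to some $\bar y\in Q^*$. By continuity of $\overline{d}=d$ (descended to $Q^*$) we get $d\bar y = \lim d(\overline{d}^{-1}x_n)=\lim x_n = x$, so $x\in\im d = B^*$. Hence $B^*$ is closed.

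Granting this, the numbered items are quick. For (1): since $B^*$ is closed in the complete space $C^*$, the quotient $Z^*/B^*=H(C^*,d)$ inherits a complete quotient norm, i.e.\ is a Banach space over $\Lambda$ (equivalently a complete normed space over $\bK$ after the observation that completed tensoring with $\Lambda$ does not change $\beta$, already noted in the text). For (2): given a filtered quasi-isomorphism $f\colon C^*\to D^*$, Corollary \ref{cyBoundaryDepthInv} gives $\beta(C^*)=\beta(D^*)$, so $D^*$ also has closed image and $H(D^*)$ is Banach; and $f$ induces an isomorphism $H(C^*_{>0})\cong H(D^*_{>0})$ over $\Lambda_{\geq0}$, which after inverting $T$ (i.e.\ tensoring with $\Lambda$) yields an isomorphism $H(C^*)\cong H(D^*)$ of the Banach cohomologies. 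One should check this last isomorphism is isometric, or at least bounded with bounded inverse, which follows from $f$ and a homotopy inverse being norm-nonincreasing together with the closedness of the images controlling the quotient norms.

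The step I expect to be the main obstacle is not the closedness argument itself — that is a standard open-mapping-style manipulation — but rather pinning down the precise category in which ``filtered quasi-isomorphism'' is being used and making sure the induced map on the Banach cohomologies in item (2) is genuinely an isomorphism of normed (not merely ungraded $\Lambda$-) modules. In particular one must be careful that $H(C^*_{>0})$ may have torsion, so the passage from the $\Lambda_{\geq0}$-module isomorphism to a statement about $H(C^*)=H(C^*_{>0})\otimes_{\Lambda_{\geq0}}\Lambda$ and about norms needs the finiteness of $\beta$ (bounding the torsion exponents, via Lemma \ref{lmBoundaryDepthTorsion}) to guarantee the quotient norm on homology is well-behaved and that no completion subtleties intervene. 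Once that bookkeeping is done, everything else is formal.
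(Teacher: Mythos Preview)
Your approach to the main claim is essentially identical to the paper's: both use boundedness of $\overline{d}^{-1}$ to pull a Cauchy sequence in $B^*$ back to a Cauchy sequence in $Q^*$, then lift to $C^*$ and apply continuity of $d$. The paper is in fact terser than your proposal---it does not spell out items (1) and (2) at all, leaving them as immediate consequences---so your additional discussion of the bookkeeping for (2) goes beyond what the paper provides.
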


\begin{proof}
We need to prove that for any sequence $x_i\in C^*$ so that $d x_i$ is a convergent sequence, there exists a convergent sequence $y_i$ satisfying $d y_i=d x_i$. For this consider the sequence $z_i:=\overline{d}^{-1}(d x_i)\in Q^*$. Then $z_i$ is convergent in $Q^*$, since $\overline{d}^{-1}$ is bounded, hence continuous. The sequence $z_i$ lifts to a convergent sequence $y_i\in C^*$ by definition of the quotient topology.
\end{proof}
\subsection{Properties of boundary depth of a Liouville domain}\label{subsecBoundaryDepthLiouville}
For a Liouville domain $D\subset M$ we have defined the boundary depth $\beta(D)$ as 
\begin{equation}
    \beta(D):=\beta\left(SC^*_{\widehat{D},\theta}(D;\bQ)\right).
\end{equation}
where $SC^*_{\widehat{D},\theta}(D;\bQ)$ is the completed telescope model associated with some acceleration datum $A$. We list the following properties of $\beta(D)$:
\begin{lm}\label{lmBoundaryDepthLiouville}

    \begin{enumerate}
     \item $\beta(D)$ is independent of the choice of acceleration datum $A$.
     \item $\beta$ is independent of the choice of primitive.
     \item $\beta(t\cdot D)=t\beta(D)$.
     \item If the cohomology $SH^*(D;\Lambda_{>0})$ over the Novikov ring is torsion free, then $\beta(D)=0$. 
    \end{enumerate}
\end{lm}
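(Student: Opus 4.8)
\textbf{Proof plan for Lemma \ref{lmBoundaryDepthLiouville}.}

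The plan is to reduce items (1)--(4) to the homological characterization of boundary depth established in Lemma \ref{lmBoundaryDepthTorsion} and Corollary \ref{cyBoundaryDepthInv}. For item (1), I would invoke Proposition \ref{lmUnweightedWellDefined}: two acceleration data $A^1, A^2$ for the Liouville domain $D$ admit a common refinement, and the associated restriction maps are chain homotopy equivalences that are norm non-increasing in the $\theta$-action norm. One has to upgrade ``chain homotopy equivalence'' to ``filtered quasi-isomorphism'' in the sense preceding Lemma \ref{lmBoundaryDepthClosed}, i.e. that the maps induce isomorphisms on $H(C^*_{>0})$ over $\Lambda_{\geq 0}$; this follows because the homotopies can be taken norm non-increasing as well (this is exactly the content of the telescope functoriality in \cite{Varolgunes} combined with the fact that all the relevant continuation maps are filtered). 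Then Corollary \ref{cyBoundaryDepthInv} gives independence of $A$. For item (2), given two primitives $\theta_1, \theta_2$ with $\theta_1 - \theta_2$ closed on $D$, I would note that $SC^*_{\widehat{D},\theta_1}(D;\bQ)$ and $SC^*_{\widehat{D},\theta_2}(D;\bQ)$ differ only in the choice of action normalization of the generators; when $D$ is a Liouville domain the primitive is unique up to an exact form plus the choice of the global radial coordinate, so in fact the two normed complexes are \emph{isometric} via the obvious identification of generators (the actions of periodic orbits change by $\int_\gamma(\theta_1-\theta_2)$, which is a cohomological pairing; since orbits in the telescope model are contractible or lie in prescribed homotopy classes one checks the shift is by a $T$-power and hence norm-preserving up to global rescaling, which does not affect $\beta$). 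A cleaner route: boundary depth only depends on the torsion of $H(C^*_{>0})$ over $\Lambda_{\geq 0}$, and changing $\theta$ by an exact form does not change the filtered homotopy type, so Corollary \ref{cyBoundaryDepthInv} applies again.

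For item (3), scaling $D$ by $t$ multiplies the symplectic form, hence all actions $\cA_{H,\theta}(\gamma)$ and all topological energies $E_{top}(u)$, by $t$. Concretely, I would exhibit an explicit identification of telescope models for $t\cdot D$ with those for $D$ under which the valuation is multiplied by $t$: replacing $T$ by $T^{1/t}$ formally, or equivalently pulling back Floer data under the scaling diffeomorphism and tracking that periodic orbits and rigid Floer trajectories are in bijection with topological energies scaled by $t$. On homology this rescales $\tau(x)$ by $t$ for every torsion class $x$, and Lemma \ref{lmBoundaryDepthTorsion} then gives $\beta(t\cdot D)=t\,\beta(D)$. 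For item (4), if $SH^*(D;\Lambda_{\geq 0})=H(C^*_{>0})$ is torsion-free, then $H(C^*_{>0})^\tau = 0$, so the supremum in Lemma \ref{lmBoundaryDepthTorsion} is over the empty set; with the convention that this supremum is $0$ (equivalently $d=0$ on the level of the relevant quotient, or $\overline{d}^{-1}$ has operator norm $1$), we get $\beta(D)=0$. I should double-check the edge case where $\im d = 0$ entirely, which is covered by the $d=0$ branch of the definition.

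The main obstacle I anticipate is item (1): making precise that the restriction maps between telescope models for \emph{different acceleration data of the same domain} are filtered quasi-isomorphisms in the $\theta$-normed sense, not merely $T$-adic weak equivalences. The subtlety is that the telescope is completed with respect to the $T$-adic (norm) topology but \emph{not} with respect to the $\val_{\theta}$-filtration, and a priori a $T$-adic homotopy equivalence need not behave well with respect to $\val_\theta$. However, for the \emph{intrinsic} complex $SC^*_{\widehat{D},\theta}(D;\bQ)$ the two filtrations are linked: by Lemma \ref{lmUnweightedSHcomparison} the $\theta$-normed complex is literally $SC^*_{\widehat{D},\theta}(D;\bQ) \hat\otimes \Lambda$ with the tensor-product valuation, and the unweighted complex over $\bQ$ has a genuinely exhausting filtration whose homotopy type is acceleration-data-independent by Proposition \ref{lmUnweightedWellDefined}. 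So the argument should go through by working with the unweighted $\bQ$-model first, where ``filtered quasi-isomorphism'' is unproblematic, and then tensoring up. I would structure the proof of (1) around this observation, and the remaining items become short corollaries.
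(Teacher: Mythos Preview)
Your approach for items (1), (3), and (4) is essentially the same as the paper's: reduce to the homological characterization (Lemma \ref{lmBoundaryDepthTorsion}) and invoke Corollary \ref{cyBoundaryDepthInv} for (1), track how actions scale for (3), and observe the torsion supremum is vacuous for (4). The paper's proof of these items is terser but not substantively different, and your extra care about filtered quasi-isomorphisms in (1) is reasonable.

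For item (2), however, your argument has a gap and the paper takes a different route. You correctly note that two primitives $\theta_1,\theta_2$ differ by a \emph{closed} form, but then your first argument claims the resulting action shift is ``norm-preserving up to global rescaling''; this is false unless $\theta_1-\theta_2$ is exact, since orbits in different free homotopy classes pick up different shifts $\int_\gamma(\theta_1-\theta_2)$. Your ``cleaner route'' then only treats the exact case, leaving the non-exact case unaddressed. The paper sidesteps this entirely: by Lemma \ref{lmUnweightedSHcomparison} there is an isometry $SC^*_{\widehat{D},\theta}(D;\bQ)\hat\otimes\Lambda \cong SC^*_{\widehat{D}}(D;\Lambda)$, and since completed tensoring with $\Lambda$ does not change boundary depth, one may compute $\beta(D)$ directly from the weighted complex $SC^*_{\widehat{D}}(D;\Lambda)$, whose definition makes no reference to $\theta$ at all. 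This is both shorter and avoids any case analysis on the cohomology class of $\theta_1-\theta_2$.
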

\begin{proof}
    We prove each property in turn:

    \begin{enumerate}
        \item Independence of acceleration datum: This follows from Corollary \ref{cyBoundaryDepthInv} since different acceleration data give rise to filtered quasi-isomorphic complexes.

        \item Independence of primitive:  $\beta$ can equivalently be defined without reference to a primitive in terms of the chain complex $SC^*_{\widehat{D}}(D;\Lambda)$.

        \item Scaling property: Under Liouville rescaling by $t>0$, actions scale by $t$ and hence valuations scale by $t$. Torsion exponents, measured in valuation, therefore scale linearly: $\beta(t\cdot D)=t\,\beta(D)$.
        
        \item Torsion-free case: If $SH^*(D;\Lambda_{>0})$ is torsion free, then by Lemma \ref{lmBoundaryDepthTorsion}, we have $\beta(D)=\sup_{x\in H(C^*_{>0})^\tau}\tau(x)=0$ since there are no torsion elements.
    \end{enumerate}
\end{proof}

\section{Homological perturbation}\label{subsecFloerType}
\subsection{Floer type complexes}\label{subsecFloerType}
The results of this section require working with a field $\bK$. Let $V^*$ be a graded non-Archimedean Banach space over the Novikov field $\Lambda=\Lambda_{\bK}$. We assume $V$ admits a countable orthogonal basis $E$ over the Novikov field. That is, $E$ consists of a sequence of elements $e_1, e_2, \dots,$ such that any element $x\in V$ has an expansion as a convergent sum $x=\sum c_ie_i$ with $c_i\in\Lambda$ and such that 
\begin{equation}
|x|=\sup_i|c_ie_i|.
\end{equation}

Crucially, \emph{we do not assume $|e_i|=1$}. In particular, while $V$ is a Banach space with respect to the norm on $V$, it is \emph{not assumed to be complete with respect to the $T$-adic filtration $\val_E$ induced by the basis $\{e_i\}$}.

Let $G^*$ be the degreewise complete graded $\bK$ module generated over $\bK$ by the basis elements of $E$. In each degree $G^*$ is a Banach space over $\bK$ with the trivial valuation. Note we have an isomorphism 
$$
V^*=G^*\hat{\otimes}\Lambda
$$
which is natural with respect to filtration preserving morphisms.

Let $d:V^*\to V^{*+1}$ be a differential which preserves the valuation $\val_E$. We assume that:
\begin{enumerate}
\item $|dx|\leq |x|$ for all $x\in V$, and,
\item there is a real number $\hbar>0$ such that $d=d_0\otimes \id+ T^{\hbar}d_1$ where $d_0:G^*\to G^{*+1}$ is a norm decreasing differential, and $d_1$ decreases norms and preserves the valuation $\val_E$. 
\end{enumerate}
 
We refer to $(V,d)$ as above as a \emph{Floer type complex}. We refer to $(G^*,d_0)$ as the \emph{reduced complex}. 

\begin{lm}\label{lmTelFloerType}
Let $D\subset M$ be a symplectically embedded Liouville domain with a primitive $\theta$. Let $K\subset D $ be a compact set and fix an $S$-shaped acceleration datum $A=(H_{\tau},J_{\tau},f)$ for $K\subset D\subset M$. Let $SC^*_M(K;A)$ be the associated telescope chain complex constructed from the sequence of Floer complexes $CF^*(H_i,J_i)$ with continuation maps. 

Then $SC^*_M(K;A)$, together with the lattice generated by basis elements 
$$T^{-\val_{M,\theta}(\gamma)}\gamma, \quad qT^{-\val_{M,\theta}(\gamma)}\gamma$$
for the periodic orbits $\gamma$ of the S-shaped Hamiltonians $H_i$, conforms to the definition of a Floer type complex provided we pick a primitive $\theta$ of Liouville type. Here $\val_{M,\theta}$ is the valuation defined in \eqref{eqRelFiltration}.
\end{lm}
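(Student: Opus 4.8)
The plan is to verify Lemma~\ref{lmTelFloerType} by checking each of the defining conditions of a Floer type complex against the telescope model, using the structural results already established for the relative filtration $\val_{M,\theta}$. First I would fix notation: let $A = (H_\tau, J_\tau, f)$ be the given $S$-shaped acceleration datum, and recall from Section~\ref{SecSHReview} that $SC^*_M(K;A)$ is the completion of $tel(\cC(A)) = \bigoplus_i CF^*(H_i,J_i)[q]$ with respect to the $T$-adic (norm) filtration. The proposed orthogonal basis $E$ over $\Lambda$ consists of the action-normalized generators $T^{-\val_{M,\theta}(\gamma)}a_\gamma$ and $qT^{-\val_{M,\theta}(\gamma)}a_\gamma$, ranging over all $1$-periodic orbits $\gamma$ of all the $H_i$ and all choices of $a_\gamma$ generating $o_\gamma$. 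Since for each $i$ there are finitely many orbits, $E$ is countable. That $E$ is orthogonal (i.e. $|x| = \sup_i |c_i e_i|$ for a convergent expansion $x = \sum c_i e_i$) is immediate from the definition of the $T$-adic norm on the telescope, under which orbits have norm $1$, $|T| = e^{-1}$, $|q| = 1$, and the sum is an orthogonal direct sum completed in this norm. The key point distinguishing this from a trivially-normed complex is that $|e_i| = |T^{-\val_{M,\theta}(\gamma)} a_\gamma| = e^{\val_{M,\theta}(\gamma)}$, which is not $1$ in general since $\val_{M,\theta}(\gamma) = \cA_{H,\theta}(\gamma)$ can be any real number; this is exactly why the hypothesis ``we do not assume $|e_i|=1$'' is needed.

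Next I would identify the filtration $\val_E$ induced by the basis with the relative filtration $\val_{M,\theta}$ of Lemma~\ref{lmSameProfileFiltind}. By construction $\val_E(e_i) = 0$ for each basis element, and $\val_E(T^\lambda e_i) = \lambda$; comparing with \eqref{eqRelFiltrationCompletion}, for $x = \sum(a_\gamma + b_\gamma q)\gamma$ written in the action-normalized basis we get $\val_E(x) = \inf\{\val(\text{coefficient of }e_i)\} = \val_{M,\theta}(x)$. So $\val_E = \val_{M,\theta}$, and the reduced $\bK$-module $G^*$ is precisely the degreewise-complete span over $\bK$ of the action-normalized generators, with trivial valuation. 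The natural isomorphism $V^* = G^*\hat\otimes\Lambda$ is then the tautological one, natural with respect to filtration-preserving maps. It remains to check that the telescope differential $\delta$ (from \eqref{eqTelDiff}, built from the Floer differentials $d$ and continuation maps $\kappa_i$) is a Floer type differential. Norm non-increasingness, $|\delta x| \le |x|$, follows from \eqref{eqEtopEgeoNorm} for the individual $d$ and $\kappa_i$ together with the fact that the $q$-part of $\delta$ (namely $\kappa_i - \id$) is also norm non-increasing. The crucial decomposition $\delta = \delta_0\otimes\id + T^\hbar \delta_1$ with $\delta_0$ defined over $\bK$: by Corollary~\ref{CYRelFilrPres}, each $d$ on $CF^*(H_i)$ and each $\kappa_i$ splits as $d_0 + T^\hbar d_1$ and $\kappa_0 + T^\hbar \kappa_1$ with $d_0, \kappa_0$ defined over $R$ (here $R = \bK$) in the action-normalized basis, and $d_1, \kappa_1, d_0, \kappa_0$ all norm non-increasing and $\val_{M,\theta}$-preserving. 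Assembling these across the telescope — including the $\id$ and $-d$ pieces in \eqref{teles}, which manifestly lie in $\delta_0$ — gives $\delta = \delta_0 \otimes \id + T^\hbar\delta_1$ with $\delta_0$ a norm-decreasing $\bK$-linear differential on $G^*$ and $\delta_1$ norm-decreasing and $\val_{M,\theta}$-preserving. Finally, $\delta$ preserves $\val_E = \val_{M,\theta}$ by the last part of Lemma~\ref{lmSameProfileFiltind}.

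I would then remark that the requirement $\delta_0$ be an honest differential (i.e. $\delta_0^2 = 0$) follows by reducing the identity $\delta^2 = 0$ modulo $T^\hbar$: since $\delta^2 = \delta_0^2\otimes\id + T^\hbar(\cdots)$ and $\delta_0$ involves no positive powers of $T$, we get $\delta_0^2 = 0$. (One should be slightly careful that the cross term $\delta_0\otimes\id \cdot T^\hbar\delta_1 + T^\hbar\delta_1\cdot\delta_0\otimes\id$ contributes only to the $T^\hbar$-part, which is fine.) The degreewise completeness of $G^*$ over $\bK$ follows because $SC^*_M(K;A)$ is complete in the $T$-adic norm and $G^*$ sits inside it as the $\val_{M,\theta} = 0$ slice in each internal degree; alternatively one observes directly that $G^*$ in each degree is the space of $\bK$-linear combinations of finitely-many-per-level generators completed with respect to supremum norm, which is a Banach space.

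The main obstacle I anticipate is bookkeeping rather than conceptual: one must be careful to distinguish the two filtrations in play — the $T$-adic norm filtration (with respect to which everything is completed, and which makes the action-normalized generators an orthogonal basis) and the relative filtration $\val_{M,\theta}$ (which is only semi-continuous with respect to the norm, per the remark following Lemma~\ref{lmSameProfileFiltind}, and with respect to which one does \emph{not} complete). The definition of Floer type complex demands exactly this asymmetry: $V$ is Banach in the norm but not $T$-adically complete in $\val_E$. Verifying that the telescope model sits on the correct side of this distinction — in particular that the basis $E$ is orthogonal for the \emph{norm} while $\val_E$ recovers $\val_{M,\theta}$ and the differential splits as required with respect to $\val_E$ — is the content of the lemma, and the only subtlety is making sure no step accidentally invokes completeness in $\val_{M,\theta}$. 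Everything needed is packaged in Corollary~\ref{CYRelFilrPres} and Lemma~\ref{lmSameProfileFiltind}; the proof is essentially an unwinding of definitions once those are invoked.
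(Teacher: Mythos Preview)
Your proposal is correct and follows essentially the same approach as the paper's own proof, which is a two-sentence sketch invoking exactly the same two ingredients: the energy inequality $E_{top}(u)\geq E_{geo}(u)>0$ for norm non-increasingness, and Corollary~\ref{CYRelFilrPres} together with the telescope formula for the decomposition $d=d_0\otimes\id+T^\hbar d_1$. Your version simply unpacks these references in detail, making explicit the identification $\val_E=\val_{M,\theta}$, the orthogonality of the action-normalized basis, and the verification that $\delta_0^2=0$.
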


\begin{proof}
    The property $|dx|<|x|$ follows from $E^{top}(u)\geq E^{geo}(u)>0$ for any Floer trajectory $u$ contributing to $d$ or the monotone continuation maps. The second property follows from Corollary \ref{CYRelFilrPres} and the definition of the telescope. 
\end{proof}
\subsection{homological perturbation}\label{subsecHomologicalPerturbation}
Our aim is to relate the chain complex $(V^*,d)$ and the chain complex $(G^*,d_0)$. We recall the notion of a special deformation retract. 

\begin{df}
    A map $p:L^*\to M^*$ of chain complexes is called a \emph{special deformation retraction} if there exists a chain map $i:M^*\to L^*$ and a homotopy $h$ between $i\circ p$ and the identity of $L^*$, and the following hold:
    \begin{enumerate}
        \item $p\circ i=\id$,
        \item $h^2=0$, $h\circ i=0$, and $p\circ h=0.$
    \end{enumerate}
    The data $i,p,h$ is called \emph{special D.R. data}.  
\end{df}

We first formulate a general claim concerning the countably generated chain complex $(G^*,d_0)$.
\begin{lm}\label{lmSpecialDR}
     Suppose $(G^*,d_0)$ has finite boundary depth $\beta$. There exists a special deformation retraction $p:(G^*,d_0)\to (H^*(G^*,d_0),0)$ which induces the identity map on homology. Moreover for any $\epsilon>0$ we can choose the data $i,p,h$ so that
     \begin{enumerate}
        \item $i,p$ increase norms by at most $e^{\epsilon}$, and
        \item $h$ increases norms by at most $e^{\beta+\epsilon}$.
     \end{enumerate}  
\end{lm}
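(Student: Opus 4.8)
\textbf{Proof plan for Lemma \ref{lmSpecialDR}.}

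The plan is to build the special deformation retraction from a choice of splittings adapted to the boundary depth, and then to run the standard homological perturbation argument. First I would work over the ground field $\bK$, where $(G^*,d_0)$ is a degreewise complete Banach complex with trivial valuation; since boundary depth is unchanged by $\hat{\otimes}\Lambda$ it suffices to treat this case. Write $Z^* = \ker d_0$, $B^* = \im d_0 \subset Z^{*+1}$, and recall from \S\ref{SecBoundaryDepth} that finite boundary depth $\beta$ means the induced map $\overline{d_0}:G^*/Z^* \to B^{*+1}$ is a bijection with $|\overline{d_0}^{-1}|_\infty = e^\beta$, and moreover (Lemma \ref{lmBoundaryDepthClosed}) that $B^*$ is closed, so $H^*(G^*,d_0) = Z^*/B^*$ is a Banach space. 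The first technical step is to choose, for each $\epsilon>0$, a closed complement $W^* \subset G^*$ to $Z^*$ and a closed complement $U^* \subset Z^*$ to $B^*$, each realized by a projection of norm $\leq e^{\epsilon}$. Over a non-Archimedean field one gets such almost-orthogonal complements for closed subspaces of Banach spaces with orthogonalizable norm by a standard approximation argument (choosing an orthogonal-up-to-$\epsilon$ basis); this is the one place where the hypothesis that the norm is orthogonalizable — inherited from $V$ having a countable orthogonal basis — is used, and it is also where the $e^{\epsilon}$ losses enter.

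With these splittings in hand I would set $i:H^*(G^*,d_0)=Z^*/B^* \xrightarrow{\sim} U^* \hookrightarrow G^*$ to be the inclusion of the chosen representative subspace, and $p:G^* \to H^*$ the composition $G^* \xrightarrow{\text{proj}} Z^* \xrightarrow{\text{proj}} U^* \cong H^*$; both have norm $\leq e^{\epsilon}$ by construction, and $p\circ i = \id$. The homotopy $h$ is defined to be zero on $Z^*$ and on $W^*$ to be the composition $d_0|_{W^*}:W^* \xrightarrow{\sim} B^{*+1}$ followed by $\overline{d_0}^{-1}$ landing back in $W^*$ — concretely $h = i_{W}\circ \overline{d_0}^{-1} \circ q_{B}$, where $q_B$ is the projection $G^* \to B^*$ (defined using both splittings) and $i_W$ the inclusion of $W^*$. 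Then $h$ vanishes on $Z^*$, so $h^2 = 0$, $h\circ i = 0$, $p\circ h = 0$ are immediate from the definitions; and $d_0 h + h d_0 = \id - i p$ is a direct verification splitting an element as its $U^*$, $B^*$, and $W^*$ components. The norm bound $|h| \leq e^{\beta+\epsilon}$ follows since $\overline{d_0}^{-1}$ has norm $e^\beta$, $q_B$ and $i_W$ cost at most $e^\epsilon$ each, and one absorbs the various $\epsilon$'s by rescaling $\epsilon \mapsto \epsilon/3$ at the outset.

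The main obstacle I anticipate is the construction of the norm-$\leq e^\epsilon$ complementary projections in the non-Archimedean Banach setting: unlike Hilbert spaces there are no orthogonal projections, and closed subspaces need not be complemented at all in a general non-Archimedean Banach space. The hypothesis that $V$ (hence $G^*$ degreewise) carries a countable orthogonal basis is exactly what rescues this — for spaces of countable type with orthogonalizable norm, every closed subspace admits an $\epsilon$-orthogonal complement, which is the standard input one cites (e.g. from non-Archimedean functional analysis). I would state this as a lemma and give the approximation argument: enlarge an orthogonal basis of the subspace to an $\epsilon$-orthogonal basis of the whole space, which is possible degreewise since everything is of countable type. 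Everything else — the identities, the Leibniz-type verification $d_0 h + h d_0 = \id - ip$, and tracking the norm bounds — is routine bookkeeping.
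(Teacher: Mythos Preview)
Your approach is essentially identical to the paper's: choose an $r$-orthogonal decomposition $G^* = W^* \oplus B^* \oplus U^*$ (the paper writes $D \oplus B \oplus C$), set $i,p$ via the $U^*$-summand, and take $h = i_W \circ \overline{d_0}^{-1} \circ q_B$. The paper packages the existence of almost-orthogonal complements as a separate lemma (Lemma~\ref{lmDecomposition}) and likewise appeals to standard non-Archimedean functional analysis for it.

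One slip to fix: your verbal description of $h$ contradicts your own formula. You write that $h$ vanishes on $Z^*$ and on $W^*$ is ``$d_0|_{W^*}$ followed by $\overline{d_0}^{-1}$'' --- but that composite is the identity on $W^*$, and in any case your formula $h = i_W \circ \overline{d_0}^{-1} \circ q_B$ is nonzero precisely on $B^* \subset Z^*$ and zero on $W^* \oplus U^*$. The identities $h^2=0$, $h\circ i=0$, $p\circ h=0$ then follow from $\im h \subset W^*$ together with $h|_{W^*}=0$, $i(H^*)\subset U^*$, and $p|_{W^*}=0$, not from ``$h$ vanishes on $Z^*$''. With that corrected, your argument goes through exactly as in the paper.
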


Before proving this lemma, we need a few notions and lemmas from non-Archimedean analysis.

\begin{df}
Let $V$ be a nonarchimedean Banach space decomposing as a direct sum $V=B_1\oplus B_2$ with projection maps $\pi_i:V\to B_i$. For an $r\in(0,1)$, the decomposition is said to be \emph{$r$-orthogonal} if for any $v\in V$ we have 
\begin{equation}
|v|>r\max\{|\pi_1(v)|,|\pi_2(v)|\}.
\end{equation} 
In this situation we say that $B_2$ is an \emph{$r$-orthogonal complement of $B_1$}.
\end{df}

\begin{lm}\label{lmDecomposition}
Let $(G^*,d)$ be a countably generated Banach chain complex of finite boundary depth. For any $r\in (0,1)$ there is an $r$-orthogonal decomposition
 \begin{equation}
G=D\oplus B\oplus C
\end{equation} 
where $B\oplus C$ is the kernel of the differential and $B$ is the image of $D$ under the differential. If $d$ has boundary depth $\beta$ the induced inverse $d^{-1}:B\to D$ has norm $\leq\frac1{r}e^{\beta}$. 
\end{lm}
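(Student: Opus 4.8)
The plan is to build the decomposition orthogonal-complement by orthogonal-complement, using the countable orthogonal basis and the finite boundary depth in an essential way. First I would recall the relevant non-Archimedean facts: every closed subspace of a countably-generated non-Archimedean Banach space admits, for any $r\in(0,1)$, an $r$-orthogonal complement (this is the standard non-Archimedean analogue of the existence of orthogonal complements; see e.g. the treatment in van Rooij or the appendices of works on rigid analytic spaces), and an $r$-orthogonal direct sum of closed subspaces is again closed. The key point that makes the argument work is that by Lemma \ref{lmBoundaryDepthClosed}, finite boundary depth forces $d$ to be a closed operator, so $B^*=\im d$ is a closed subspace; likewise $Z^*=\ker d$ is closed since $d$ is bounded.

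The construction then goes as follows. Since $B^*=\im d$ is closed inside $Z^*$ (itself closed in $G^*$), choose an $\sqrt{r}$-orthogonal complement $C^*$ of $B^*$ inside $Z^*$, so $Z^*=B^*\oplus C^*$ is an $\sqrt{r}$-orthogonal decomposition. Next, since $Z^*$ is closed in $G^*$, choose an $\sqrt{r}$-orthogonal complement $D^*$ of $Z^*$ in $G^*$, giving $G^*=D^*\oplus Z^*$. Composing the two splittings and using that the composite of $\sqrt{r}$-orthogonal decompositions is $r$-orthogonal (a short estimate: if $v=d+b+c$ then $|v|\ge \sqrt r\,\max\{|d|,|b+c|\}\ge \sqrt r\,\max\{|d|,\sqrt r\max\{|b|,|c|\}\}\ge r\max\{|d|,|b|,|c|\}$), we obtain the desired $r$-orthogonal decomposition $G^*=D^*\oplus B^*\oplus C^*$ with $B^*\oplus C^*=\ker d$ and $d(D^*)=B^*$. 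Here one checks $d|_{D^*}\colon D^*\to B^*$ is a bijection: it is surjective by construction and injective because $D^*\cap\ker d=0$.

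Finally, for the norm bound on the inverse: given $x\in B^*$, write $x=dy$ with $y\in G^*$; decomposing $y=y_D+y_Z$ with $y_Z\in\ker d$, we have $x=dy_D$ with $y_D\in D^*$, so $d^{-1}x=y_D$. By $r$-orthogonality of the decomposition, $|y|\ge r|y_D|$, hence $|y_D|\le r^{-1}|y|$. Taking the infimum over all $y$ with $dy=x$ and using the definition of boundary depth, $\inf_{dy=x}|y|\le e^{\beta}|x|$ (this is exactly $e^{\beta}=|\overline d^{-1}|_\infty$ from the equivalent characterization of $\beta$ recalled at the start of the section, giving $|y_D|\le r^{-1}e^{\beta}|x|$, i.e. $|d^{-1}|\le r^{-1}e^{\beta}$). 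The main obstacle I anticipate is purely bookkeeping on the non-Archimedean side: one must be careful that the chosen complements are \emph{closed} (so that iterating the splitting stays within the category of Banach spaces) — this is where finite boundary depth is used, via Lemma \ref{lmBoundaryDepthClosed}, to guarantee $B^*$ is closed — and that the constants compound correctly, which is handled by starting from $\sqrt r$-orthogonality in each step. If one did not have finite boundary depth, $\im d$ need not be closed and no such bounded $d^{-1}$ could exist, so the hypothesis is genuinely needed.
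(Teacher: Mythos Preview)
Your proposal is correct and follows essentially the same approach as the paper: invoke the standard non-Archimedean fact that closed subspaces of countably generated Banach spaces admit $r$-orthogonal complements, use finite boundary depth (via Lemma~\ref{lmBoundaryDepthClosed}) to ensure $\im d$ is closed, build the decomposition in two steps, and bound $|d^{-1}|$ by projecting a near-optimal preimage onto $D$. The one place you are actually more careful than the paper is in the bookkeeping of constants: the paper takes an $r$-orthogonal complement at each of the two steps and implicitly asserts the triple is $r$-orthogonal, whereas you correctly observe that composing two $r$-orthogonal splittings only yields $r^2$-orthogonality, and fix this by starting from $\sqrt{r}$ at each stage. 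This is a cosmetic improvement rather than a different argument; for the norm bound on $d^{-1}$ only the orthogonality of $D$ against $\ker d$ is used, so the paper's estimate is unaffected, but your version cleanly delivers the triple $r$-orthogonality as stated.
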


\begin{proof}
By standard nonarchimedean analysis any closed subspace of a countably generated Banach space has an $r$-orthogonal complement for any $r$. \marginpar{Find a reference} Thus we choose such a complement $D$ for the closed subspace $\ker d$, and then a complement $C\subset \ker d$ to the closure of $\im d$. By finiteness of boundary depth, the closure of $\im d$ coincides with $\im d$. 

To prove the estimate $|d^{-1}|_{\infty}\leq\frac1{r}e^{\beta}$ note that for $x\in B$ and any $\epsilon>0$ we can find an element $y\in G^*$ such that $dy=x$ and such that $\frac{|y|}{|x|}<e^{\beta+\epsilon}$. The projection $z=p_D(y)$ agrees with $y$ up to cycles, so $dz=x$. Since $z\in D$, we must have $d^{-1}(x)=z$. Moreover $|z|<\frac1{r}|y|$ by $r$-orthogonality. Since $\epsilon$ was arbitrary, the claim follows. 
\end{proof}

\begin{rem}
    It is helpful to consider this Lemma in the case of the completed telescope model $tel\{H_i\}$  for a sequence of convex Hamiltonians $H_i$ under the assumption that the differential on $CF^*(H_i)$ vanishes for each $i$. That is, the differential in the telescope encodes only the continuation maps. In this case, a good choice for $D$ is the sub-complex $qtel\{H_i\}$.  $B$ is of course the image $d(D)$. $C$ can be obtained by picking for each Reeb orbit $\gamma$ its incarnation in one of the $H_i$. For $r$ to be $\epsilon$ close to $1$, we need to pick the incarnation of each orbit so that the action is $\log (1+\epsilon)$ close to the period of the corresponding Reeb orbit.\end{rem}

\begin{proof}[Proof of Lemma \ref{lmSpecialDR}]
    Fix any $\epsilon>0$ and let $r>e^{-\epsilon}$. We consider an $r$-orthogonal decomposition $G^*=D\oplus B\oplus C$ as in Lemma \ref{lmDecomposition}. We have an isomorphism $j: C^*\to H^*(G^*,d_0)$ taking a cycle to its cohomology class. We then define $p=j\circ p_C$ and $i=j^{-1}$. By definition $p$ induces the identity map on homology. To define $h$, let $d^{-1}:B\to D$ be the inverse of $d|_D$. We define $h=d^{-1}\circ p_B$. 
    Then 
    \begin{itemize}
    \item $i\circ p =p_C$, so $\id-i\circ p=p_D+p_B = h\circ d+d\circ h$.
    \item $h\circ i = d^{-1}\circ p_B\circ j^{-1} = 0$ since $p_B\circ j^{-1} = 0$ (as $j^{-1}$ maps to $C$ and $p_B$ projects to $B$); 
    \item $p\circ h = j\circ p_C\circ d^{-1}\circ p_B = 0$ since $p_C\circ d^{-1} = 0$ (as $d^{-1}$ maps to $D$ and $p_C$ projects to $C$); and,
    \item  $h^2 = d^{-1}\circ p_B\circ d^{-1}\circ p_B = 0$ since $p_B\circ d^{-1} = 0$ (as $d^{-1}$ maps to $D$ and $p_B$ projects to $B$).
    \end{itemize}
    To verify the claims about norm increase, note that $j$ does not increase norm, so $|p|_{\infty}\leq |p_C|_{\infty}<\frac1{r}<e^{\epsilon}$. Note that $j$ decreases norms by at most $\frac1{r}$. Indeed, for any cycle $c\in C$, we have $|j(c)|=\inf_{b\in B}|c+b|$. Since $p_C(c+b)=c$, the $r$ orthogonality implies $|c+b|>r|c|$. So $i=j^{-1}$ increases norms by at most $e^{\epsilon}$. Finally, we have seen $|d^{-1}|_{\infty}= e^{\beta}$. So $|h|_{\infty}<|d^{-1}||p_B|\leq e^{\beta+\epsilon}$. 
\end{proof}

We now study the perturbed complex $(V^*,d)=(G^*\hat{\otimes}\Lambda,d_0+T^{\hbar}d_1)$. Write $\delta=T^{\hbar}d_1$. Suppose 
    \begin{equation}\label{eqPerturbationBound}
    |\delta|_\infty<e^{-\beta}.
    \end{equation}
    Then in particular, Lemma \ref{lmSpecialDR} implies that we can fix special D.R. data $i,p,h$ so that $\id -\delta\circ h$ is invertible. Fix such data and let $S=(\id -\delta\circ h)^{-1}\circ\delta$. 

\begin{lm}\label{lmCompPert}
    Suppose 
    \begin{equation}\label{eqPerturbationBoundLemma}
    |\delta|_\infty<e^{-\beta}.
    \end{equation}
    Given special D.R. data $(i,p,h)$ for $G^*$ with $\epsilon$ of Lemma \ref{lmSpecialDR} chosen so that $e^{6\epsilon+\beta}<|\delta|_{\infty}^{-1}$ there is a differential $\tilde{\delta}$ on the $d_0$ cohomology $H^*(G,d_0)\hat{\otimes}\Lambda$ and a special deformation retraction 
    $$(V^*,d)\to (H^*(G,d_0)\hat{\otimes}\Lambda,\tilde{\delta})$$ 
    with data $i_1,p_1,h_1$ as follows:
    $$\tilde{\delta}=p\circ S\circ i,\quad i_1=i+h\circ S\circ i,\quad p_1=p+p\circ S\circ h,\quad h_1=h+h\circ S\circ h.$$
    The chain map $p_1$ is filtered with respect to $\val_E$ on the left and the $T$-adic filtration on the right. The induced map on homology is an isometry. 
\end{lm}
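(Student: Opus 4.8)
The plan is to invoke the classical \emph{homological perturbation lemma} (HPL), but with careful bookkeeping of the two distinct filtrations at play: the $T$-adic filtration $\val_E$ (with respect to which we never complete) and the Banach norm $|\cdot|$ (with respect to which everything is complete). First I would recall that the HPL in its standard form takes special D.R.\ data $(i,p,h)$ for $(G^*, d_0)$ and a ``small'' perturbation $\delta$ of the differential, and produces new special D.R.\ data onto the same underlying homology. The only thing that needs checking is that all the relevant infinite series converge. Here $S = (\id - \delta\circ h)^{-1}\circ \delta = \sum_{k\ge 0} (\delta\circ h)^k \circ \delta$, and the key estimate is $|\delta\circ h|_\infty \le |\delta|_\infty \cdot |h|_\infty < e^{-\beta}\cdot e^{\beta+\epsilon} = e^{\epsilon}$. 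This is \emph{not} $<1$, so convergence in the Banach norm is not automatic — this is the main subtlety.

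The resolution, and the step I expect to be the main obstacle, is that convergence of $S$ must be extracted from the $T$-adic filtration rather than the Banach norm. Since $\delta = T^{\hbar}d_1$ raises $\val_E$ by $\hbar$ and $h$ preserves $\val_E$, the composite $(\delta\circ h)^k\circ\delta$ raises $\val_E$ by $(k+1)\hbar$. Thus the series for $S$ converges $T$-adically on each fixed $\val_E$-filtration level, and since $V^* = G^*\hat\otimes\Lambda$ with $G^*$ degreewise complete, the partial sums converge to a well-defined operator $S$ on $V^*$. The hypothesis $e^{6\epsilon + \beta} < |\delta|_\infty^{-1}$ is what guarantees that when we then compose with $i, p, h$ (each raising the Banach norm by a controlled factor $e^\epsilon$ or $e^{\beta+\epsilon}$) and pass through the finitely many compositions in the formulas for $\tilde\delta, i_1, p_1, h_1$, the resulting operators are bounded in Banach norm; a crude count shows at most six factors of the ``bad'' exponents appear, whence the $6\epsilon$. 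I would spell this out by estimating $|\tilde\delta|_\infty = |p\circ S\circ i|_\infty \le e^{2\epsilon}|S|_\infty$ and $|S|_\infty \le |\delta|_\infty$ on each graded piece once the $(\delta h)^k$ terms are summed, using that on the associated graded of $\val_E$ the operator $\delta h$ vanishes, so the Banach-norm bound on $S$ is governed by the leading $\delta$ term only. Then $|\tilde\delta|_\infty < e^{2\epsilon}|\delta|_\infty < e^{-\hbar}$ as claimed in Theorem~B1 (using $\epsilon$ small, $\beta < \hbar$).

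Next I would verify the algebraic identities. That $(i_1, p_1, h_1, \tilde\delta)$ form special D.R.\ data for $(V^*, d_0 + \delta)$ onto $(H^*(G, d_0)\hat\otimes\Lambda, \tilde\delta)$ is the purely formal content of the HPL and follows by the standard manipulation: one checks $p_1 i_1 = \id$, the side conditions $h_1^2 = 0$, $h_1 i_1 = 0$, $p_1 h_1 = 0$, and that $i_1, p_1$ are chain maps for the new differentials, all of which reduce to the side conditions $h^2 = 0$, $hi = 0$, $ph = 0$ for the original data together with the geometric-series identity $S(\id + hS) = \delta(\id - h\delta)^{-1}$ (equivalently $S = \delta + \delta h S$). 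I would cite the standard reference (e.g.\ Crainic's notes, or Markl) for these identities rather than reproduce them, noting only that every infinite sum involved converges by the same $\val_E$-filtration argument. That $\tilde\delta$ is an honest differential, $\tilde\delta^2 = 0$, also falls out of the HPL identities.

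Finally, for the two assertions about $p_1$: that $p_1$ is filtered with respect to $\val_E$ on the left and the $T$-adic filtration on the right is immediate from the formula $p_1 = p + p\circ S\circ h$ together with the facts that $p$ is $\val_E$-filtered (it factors through the reduced complex, i.e.\ $\val_E \ge 0$ elements go to $T$-adic $\ge 0$ elements), $h$ preserves $\val_E$, and $S$ raises $\val_E$ by at least $\hbar > 0$; so the correction term $p S h$ lands in strictly positive $T$-adic filtration and does not spoil the estimate. For the isometry statement on homology: $p_1$ induces an isomorphism on homology by the HPL, and since $p_1 = p + (\text{term raising } \val_E)$ while $p$ already induces the identification with the $E_1$-page which on associated gradeds is the weak equivalence \eqref{eqLocalityDef}, the induced map on homology agrees modulo strictly-positive-valuation terms with an isometry; one then argues that a homology isomorphism which is norm-nonincreasing (as $p_1$ is, being built from norm-nonincreasing pieces and the controlled $S$) and agrees to leading order with an isometry is itself an isometry — concretely, $\val$ of $[p_1(x)]$ equals $\val$ of the leading term $[p(x)]$ since the correction has strictly larger valuation and cannot cancel the leading term. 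I expect the Banach-norm convergence bookkeeping in the first two paragraphs to be where the real work lies; the rest is formal.
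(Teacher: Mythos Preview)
Your convergence analysis misidentifies the difficulty. The hypothesis $e^{6\epsilon+\beta}<|\delta|_\infty^{-1}$ already gives
\[
|\delta\circ h|_\infty \le |\delta|_\infty\,|h|_\infty < e^{-6\epsilon-\beta}\cdot e^{\beta+\epsilon}=e^{-5\epsilon}<1,
\]
so the geometric series for $S$ converges in the Banach norm directly, and $|S|_\infty\le|\delta|_\infty$. This is precisely how the paper sets things up (the sentence ``we can fix special D.R.\ data $i,p,h$ so that $\id-\delta\circ h$ is invertible'' just before the lemma). Your fallback via $\val_E$ is not only unnecessary, it does not actually deliver convergence: $V^*$ is \emph{not} complete for $\val_E$ (the paper stresses this, and in the Floer model the basis norms $|e_i|$ are unbounded), so a $\val_E$-Cauchy series need not converge in $V^*$. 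Your subsequent claim that ``on the associated graded of $\val_E$ the operator $\delta h$ vanishes, so $|S|_\infty\le|\delta|_\infty$'' is a non sequitur for the same reason.

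The isometry argument is where the real gap lies. Your claim that $p_1$ is norm-nonincreasing is false: already $p$ can raise norms by $e^{\epsilon}$, so you only get that $\overline{p}_1$ is an $e^{\epsilon}$-quasi-isometry, and your ``leading-order'' sentence conflates $\val_E$ with the Banach norm (the correction $p\circ S\circ h$ has strictly positive $\val_E$, but the isometry is with respect to $|\cdot|$). The paper closes this gap by a direct chain of equalities $|[p_1(c)]|=\cdots=|[c]|$: one takes a near-minimizing representative $c$, passes through $i$ and $i_1$ using $|i_1-i|_\infty<e^{-5\epsilon}$ and $|d-d_0|_\infty<e^{-5\epsilon}$ (this is where the $6\epsilon$ in the hypothesis is actually spent), and uses that $i_1\circ\tilde\delta$ applied to anything is $d$-exact in $V^*$ to absorb the $\tilde\delta$-boundary ambiguity on the target side into a $d$-boundary on the source side. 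Your outline does not contain this mechanism, and without it the isometry does not follow.
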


\begin{proof}
    The fact that $i_1,p_1,h_1$ are special D.R. data is the Homological Perturbation Lemma \cite{PertubationLemma}. The map $p_1=p+p\circ S\circ h$ preserves valuation since all the involved maps preserve valuation. 
    
    We now prove the isometry claim. By what we have established so far, the maps $p_1$ induces an isomorphism between the homologies of $(V^*,d)$ and $(H^*(G,d_0)\hat{\otimes}\Lambda,\tilde{\delta})$. Let us denote the homology level map by $\overline{p}_1$. First observe that $\overline{p}_1$ is an $e^{\epsilon}$-quasi-isometry. This follows from the fact that $i_1,p_1$ raise norms by at most $e^{\epsilon}$ at the chain level and are mutually inverse at the homology level.
    
    We now show that $\overline{p}_1:H^*(V^*,d)\to H^*(H^*(G,d_0)\hat{\otimes}\Lambda,\tilde{\delta})$ is an isometry. Let $[c]$ be a class in $H^*(V^*,d)$ where $c$ is chosen to be a cycle which is $\delta$ close to minimizing norm for $\delta\ll e^{\epsilon}-1$.  We have $\overline{p}_1([c])=[p_1(c)]$. By definition of the norm on the homology of $(H^*(G,d_0)\hat{\otimes}\Lambda,\tilde{\delta})$ we have 
    \begin{align*}
        |[p_1(c)]|&=\inf_{x\in H^*(G,d_0)\hat{\otimes}\Lambda}|p_1(c)-\tilde{\delta}(x)|\\
        &=\inf_{x\in H^*(G,d_0)\hat{\otimes}\Lambda}\inf_{y\in V^*}|i\circ p_1(c)-i\circ\tilde{\delta}(x)-d_0(y)|\\
        &=\inf_{y'\in V^*}\inf_{y\in V^*}|i\circ p_1(c)-i\circ\tilde{\delta}(p_1(y'))-d_0(y)|\\
        &=\inf_{y'\in V^*}\inf_{y\in V^*}|i_1\circ p_1(c)-i_1\circ\tilde{\delta}(p_1(y'))-d(y)|\\
        &=\inf_{y\in V^*}|i_1\circ p_1(c)-dy|\\
        &=\inf_{y\in V^*}|c-dy|\\
        &=|[c]|.
    \end{align*}
   On the third line we use the fact that $p_1$ is surjective. On the fourth line we use the following facts
   \begin{itemize}
   \item the composition $i\circ p_1$ raises norms by at most $e^{2\epsilon}$. 
   \item $c$ is close to mimising norm in its $d$-homology class, so 
   $$|i\circ p_1(c)-i\circ\tilde{\delta}(x)-d_0(y)|>e^{-2\epsilon}|c|.$$
   \item We have that $|i_1-i|_{\infty}<e^{-5\epsilon}$ and $|d-d_0|_{\infty}<e^{-5\epsilon}$, so the changes don't affect the norm.
   \end{itemize}
On the fifth line we use the fact that $i_1\circ\tilde{\delta}$ applied to any element of $H^*(G,d_0)\hat{\otimes}\Lambda$ is a boundary in $V^*$. 
\end{proof}

\begin{rem}\label{remGeneralCoefficientRings}
Lemma \ref{lmCompPert} holds if we replace $\Lambda$ by any non-Archimedean algebra over the field $\bK$. The only thing we're using is Equation \eqref{eqPerturbationBoundLemma}. Lemma \ref{lmSpecialDR} only refers to $G^*$, so it's not affected by $\Lambda$. We simply replace the $T$-adic filtration by the non-Archimedean filtration induced on the tensor product with the given non-Archimedean algebra. This generality is used in Section \ref{SecTau} where we work with the ring $R(P)$ constructed from relative homology classes.
\end{rem}

\subsection{Proof of Theorems \ref{mainTmA}--\ref{tmProductDeformation}}\label{SubSecProofThmA}
\begin{proof}[Proof of Theorem \ref{mainTmA}]
    Let us prove the Theorem for the general case of a compact subset $K\subset D$ with $\beta(K)<\hbar$.  The case $K=D$ follows in particular.
Fix an $S$-shaped acceleration datum $A$ for $K\subset D\subset M$ to obtain the complex $SC^*_M(K,A)$. By Lemma \ref{lmTelFloerType} this is a Floer type complex which can be written as 
$$SC^*_M(K)=(G^*\hat{\otimes}\Lambda,d_0+T^{\hbar}d_1)$$ 
with $G^*$ the complex over $\bK$ with the generators over $\bK$ given in Lemma \ref{lmTelFloerType}. Pick a special D.R. datum $i,p,h$ for $G^*$ as in Lemma \ref{lmSpecialDR}. The assumption $\beta(D)<\hbar(M,D)$ ensures that $|\delta|_\infty<e^{-\beta}$ for $\delta=T^{\hbar}d_1$. Lemma \ref{lmCompPert} then gives a special D.R. 
\[
\rho_0: SC^*_M(K)\to (H^*(G,d_0)\hat{\otimes}\Lambda,\tilde{\delta}).
\]
Finally, Proposition \ref{prpGrLocality} gives a valuation preserving isometry $j: H^*(G,d_0)\hat{\otimes}\Lambda\to SH^*_{\widehat{D},\theta}(K;\bQ)\hat{\otimes}\Lambda.$ We take $\rho= j\circ \rho_0$ and $d_{def}=j\circ\tilde{\delta}\circ j^{-1}$ to obtain the desired  special D.R. data. The claimed properties of $\rho$ follow from the properties of $\rho_0$ guaranteed by Lemma \ref{lmCompPert} and the fact that $j$ is an isometry.

\end{proof}

\begin{proof}[Proof of Theorem \ref{mainTmD}]

Fix S-shaped acceleration data $A_1\leq A_2$ for $K_1,K_2$ respectively. Let $G_j$ be the reduced complex of the Floer type complex $SC^*_M(K_j,A_j)$.  Then, denoting by $d_0$ the reduced differential, $(G_j,d_0)$ is a model for  for $SC^*_{\widehat{D},\theta}(K_j;\bK)$, for $j=1,2$. On the other hand, $G^*_j\hat{\otimes}\Lambda$ is the underlying vector space of $SC^*_M(K)$ by Proposition \ref{prpGrLocality}.

Fix special D.R. data $h^j,p^j,i^j$ for $G_j$, for $j=1,2$. Then we have homotopy commutative diagrams
$$
\xymatrix{
G_2^* \ar[d]_{\rho_0} \ar[r]^{p^2} & SH^*_{\widehat{D},\theta}(K_2;\bK) \ar[d]_{\rho_{loc}} \\
G_1^* \ar[r]^{p^1} & SH^*_{\widehat{D},\theta}(K_1;\bK)
}
$$
for $\rho_{loc}$ the cohomology level restriction map, and 
$$
\xymatrix{
SC^*_{M}(K_2;\Lambda) \ar[d]_{\rho} \ar[r]^{p^2_{1}} & SH^*_{\widehat{D},\theta}(K_2;\bK)\hat{\otimes}\Lambda \ar[d]_{\rho_{def}} \\
SC^*_{M}(K_1;\Lambda) \ar[r]^{p^1_{1}} & SH^*_{\widehat{D},\theta}(K_1;\bK)\hat{\otimes}\Lambda
}
$$
where $\rho_{def}=p^1_{1}\circ \rho\circ i^2_{1}.$ Note the cohomology level map $\rho_{loc}$ is also given by the formula $\rho_{loc}= p^1\circ \rho_0\circ i^2$.  To estimate $|\rho_{loc}-\rho_{def}|_{\infty}$ let $\tilde{\rho}=p^1\circ \rho\circ i^2.$ Then, by the formulas in \ref{lmCompPert} we have $|\tilde{\rho}-\rho_{def}|_{\infty}<e^{-\hbar}$. It remains to estimate the difference $|\tilde{\rho}-\rho_{loc}|=|p^1\circ(\rho-\rho_0)\circ i^2|_{\infty}$. The latter is $<e^{-\hbar}$ by Corollary \ref{CYRelFilrPres}.  

The claim concerning  the $\val_{M,\theta}$ filtration is word for word the same.
\end{proof}

\begin{proof}[Proof of Theorem \ref{tmProductDeformation}]
    Fix an S-shaped acceleration datum $A$ and let $G^*$ as in the proof of Theorem \ref{mainTmD}. Denote by $B_0$ the product on $G^*$. For appropriate special D.R. data, the product in homology is given by $x*y=p(B_0(i(x),i(y)))$. The deformed product is similarly given by 
    $x*_{def}y=p_1(B(i_1(x),i_1(y)))$. 
    
    To estimate $|*-*_{def}|_{\infty}$, let $\tilde{*}$ be defined by $x\tilde{*}y = p(B(i(x),i(y)))$. Then by the formulas in Lemma \ref{lmCompPert}, we have $|\tilde{*} - *_{def}|_{\infty} < e^{-\hbar}$. It remains to estimate $|\tilde{*} - *|_{\infty} = |p \circ (B - B_0) \circ (i \otimes i)|_{\infty}$. By Corollary \ref{CYRelFilrPres1}, this is $< e^{-\hbar}$.
    
    The BV operator $\Delta_{def}$ is treated identically, replacing the product $B$ with the BV operation. The valuations estimates follow from the same reasoning using the $\val_{M,\theta}$ filtration in place of the norm estimates.
\end{proof}

\subsection{Dependence of $\hbar$ on the geometry}
\begin{lm}\label{lmOutsideContributions}
    Fix some $\delta>0$. Let $D$ be a Liouville domain which is embedded in $M$ and is $\delta'$-extendable for some $\delta'>\delta$. Then $\hbar$ can be taken to be the constant $\hbar_{in}(D,J,\delta)$ of Lemma \ref{lmEpsilonExtendable} which depends only on $D$ and on the almost complex structure $J$ restricted to $D_{\delta}\setminus D$.
\end{lm}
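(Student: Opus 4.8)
The plan is to trace through the proofs of Theorems \ref{mainTmFiltration} and \ref{mainTmA} and identify precisely where the constant $\hbar$ enters, then argue that under the $\delta'$-extendability hypothesis every such occurrence can be bounded below by a quantity depending only on $D$ and $J|_{D_\delta\setminus D}$. Recall from the discussion in \S\ref{remProofStrategy} and Corollary \ref{CYRelFilrPres} that $\hbar$ enters in exactly two ways: first, as the constant in Proposition \ref{PrpRelHbar} controlling $E^{geo}(u)$ for Floer trajectories with $E_{M,\theta}(u)\neq 0$; and second, through the parameter $\Delta$ in Definition \ref{dfSShaped}, which bounds the oscillation of $H$ on $\mathrm{crit}(H)\cap(M\setminus D_{\delta/2})$ and the slope $h'e^\delta$ in the neck. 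The key observation is that for the main results only the \emph{relative energy positivity} of Lemma \ref{lmPositivity}/\ref{lmTrichotomy} and the \emph{geometric energy bound} $E^{geo}(u)>\hbar$ are used, and once $E_{M,\theta}(u)\neq 0$ forces the trajectory to leave $V(D)$, the relevant geometric estimate is a monotonicity-type lower bound on the energy of a curve that passes through $D_{\delta}\setminus D$.

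First I would invoke Lemma \ref{lmEpsilonExtendable} (the referenced monotonicity lemma for extendable embeddings) to extract the constant $\hbar_{in}(D,J,\delta)$: since $D$ extends to $(1+\delta')\cdot D$ with $\delta'>\delta$, any $J$-holomorphic curve component that enters $D_{\delta/2}$ and exits $D_{\delta}$ must cross an annular region of definite ``width'' in the completed Liouville sense, and the monotonicity inequality for the geometrically bounded $J$ restricted to this region gives an energy lower bound depending only on $D$ and $J|_{D_\delta\setminus D}$, not on $M$ or on $J$ outside. Second, I would revisit the construction in Lemma \ref{lmSAccelExistence}: the parameter $\Delta$ was chosen as $\tfrac12\hbar(B)$ for a ball $B\subset\mathcal J$, but under extendability we may instead fix $\Delta:=\tfrac12\hbar_{in}(D,J,\delta)$ and check that Conditions 3--6 of Definition \ref{dfSShaped} can still be met — Conditions 3, 4, 6 are arranged by choosing the Hamiltonian profile $h$ in the neck $D_\delta\setminus D$ appropriately (independent of the ambient geometry), while Condition 5 is now supplied by the monotonicity bound rather than by Proposition \ref{PrpRelHbar}. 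Third, I would observe that with this choice, the trichotomy of Lemma \ref{lmTrichotomy} and Corollary \ref{CYRelFilrPres} go through with $\hbar=\hbar_{in}(D,J,\delta)$, and hence all the quantitative statements of Theorems \ref{mainTmFiltration} and \ref{mainTmA} hold with this value.

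The main obstacle I anticipate is making rigorous the claim that the monotonicity estimate only sees $J|_{D_\delta\setminus D}$: a Floer trajectory $u$ with $E_{M,\theta}(u)\neq 0$ need not be $J$-holomorphic (it solves Floer's equation with the S-shaped Hamiltonian term), so one must either absorb the Hamiltonian perturbation — which is $C^1$-small outside $D_{\delta/2}$ by Condition 5 — into the monotonicity constant, or pass to the part $u_\epsilon$ of the trajectory lying in $D_\delta\setminus D_{\delta'}$ where the convexity relation \eqref{eqConvexity} holds and apply the integrated maximum principle as in the proof of Lemma \ref{lmPositivity}. Here the delicate point, flagged in Remark \ref{remMotivationSShaped}, is precisely that $H$ is not strictly constant outside $D$; but since the relevant region $D_\delta\setminus D$ is fixed and $J$ there is fixed, the Hamiltonian-perturbed monotonicity constant depends only on $(D,J|_{D_\delta\setminus D},\delta)$ together with a uniform $C^1$-bound on the S-shaped profile in the neck, which is itself determined by $\delta$ and $\Delta\leq\tfrac12\hbar_{in}$. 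I would therefore expect the proof to reduce to quoting Lemma \ref{lmEpsilonExtendable} and Proposition \ref{PrpRelHbar2} with the spatial support of the estimate restricted to the fixed collar, and checking that no step in Section \ref{SubSecProofFiltration} or \ref{SubSecProofThmA} reintroduces dependence on the ambient data.
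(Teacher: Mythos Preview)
Your proposal misses a genuine case that the paper's proof has to work for. Lemma \ref{lmEpsilonExtendable} only controls trajectories with \emph{at least one end inside} $D$; it says nothing about Floer trajectories connecting two outside critical points. Such trajectories can have $E_{M,\theta}(u)\neq 0$ (for a curve entirely in $M\setminus V(D)$ with both ends point-compactified, $E_{M,\theta}(u)=\int u^*\omega$, which depends on the ambient topology) while having geometric energy well below $\hbar_{in}$, since they never cross the collar $D_\delta\setminus D$ and therefore evade your monotonicity argument entirely. Thus the trichotomy of Lemma \ref{lmTrichotomy} does \emph{not} go through with $\hbar=\hbar_{in}$ at the level of individual trajectories, and your third step fails.

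The paper resolves this not geometrically but algebraically: it goes inside the homological perturbation construction of $d_{def}$ (Lemmas \ref{lmDecomposition}, \ref{lmSpecialDR}, \ref{lmCompPert}) and argues that the special deformation retraction can be chosen so that these outside-to-outside trajectories do not contribute to $d_{def}$. Concretely, the outside critical points form an acyclic subcomplex of the reduced complex $G^*$ with a telescope-explicit contracting homotopy $k$; one then chooses the summand $C$ in the decomposition $G=D\oplus B\oplus C$ to be the graph of $-k\circ d_0$ over a splitting of the intrinsic complex, so that every element of $C$ has its outside-critical-point component strictly subleading in norm. Feeding this through the formula $\tilde\delta=p\circ S\circ i$ shows that the leading contribution to $d_{def}$ always involves a curve with an inside end, and only then does $\hbar_{in}$ apply. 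Your proposal contains no analogue of this step, and it cannot be avoided by tightening the S-shaped data.
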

\begin{proof}
By Lemma \ref{lmEpsilonExtendable},  all curves $u$ which have at least one end inside are satisfy $E_M(u)\neq 0$ have geometric energy at least $\hbar_{in}$. It remains to show that curves that are entirely outside and have energy less than $\hbar_{in}$ do not contribute to $d_{def}$.

This follows from an algebraic argument using the construction in the proof of Theorem \ref{mainTmA}. In that construction, we have a special deformation retraction from $SC^*_M(K)$ to the homology of the reduced complex $G^*$ whose differential is $d_0$. This special deformation retraction is obtained by choosing a splitting in Lemma \ref{lmDecomposition} and then applying Lemma \ref{lmCompPert}. We claim that we can choose $C$ in the decomposition $G = D \oplus B \oplus C$ so that for all elements of $C$, any summand which comes from outside critical points is subleading. That is, for any $c\in C$ we have that $c=c_1+c_2$ where $c_1$ is generated by orbits contained in $D_{\delta/3}$ and $|c_2|<|c_1|$. 

To see this, note that by Proposition \ref{prpGrLocality}, the complex $G$ is an extension of the complex $SC^*(D)$ by  the acyclic complex of outside critical points. In fact, using the telescope model we see that with respect to $d_0$ each cycle $x$ in this subcomplex is killed by a chain $y$ of the same norm as $x$. Namely $y=\sum_{i=0}^\infty q(d_0\circ q)^i(x))=: k(x)$. Here $q$ is multiplication by $q$ and $d_0$ is the $\val_M=0$ part of the differential in the telescope. 

So we can construct $C$ by the following procedure. Split the intrinsic complex $SC^*(D)$ into $D_0\oplus B_0\oplus C_0$. Denote by $O$ the complex of outside critical points. Then define $C$ as the graph of the map $-k\circ d_0: C_0\to O$. 

With this in place, the formula $\tilde{\delta} = p \circ S \circ i$ for the deformed differential in Lemma \ref{lmCompPert} shows that it will be dominated by curves which decrease the norm by at least $e^{-\epsilon}$. Indeed, pick an element $x\in SH^*(D)$ of norm $1$. Then by constrction $i(x)$ maps into $C$. Decompose $i(x)=y_1+y_2$ as above. Then $S(y_1)$ and any contribution to $S(y_2)$ ending on an inside orbit have norm at most $e^{-\hbar-\epsilon}$. Now $p$ factors through the projection to $C$, so $p_C\circ S\circ i(x)$ has norm at most $e^{-\hbar-\epsilon}$. Further mapping to $SH^*(D)$ increases the norm by at most $e^{\epsilon}$. This proves the claim.
\end{proof}

\section{Embedding torsion}\label{SecTau}
\subsection{The $\tau$ invariant}
Let $K\subset D\subset M$ be a compact subset of a Liouville domain $D$ embedded in a symplectic manifold $M$ satisfying $\beta(K)<\hbar(M,D)$. Fix a special deformation retraction $SC^*_M(K)\to (G^*\hat\otimes \Lambda,d_{def})$ as in Theorem \ref{mainTmA}, where $G^*:=SH^*_{\widehat{D},\theta}(D;\bK)$.

For a fixed acceleration datum in $\cH_K(D,\theta,\delta,\Delta)$, we define:
$$\tau(K,D,M,\theta):=\val_{M,\theta}(d_{def}):=\inf_{x\in G^*\hat\otimes \Lambda}\{\val_{M,\theta}(d_{def}x)-\val_{M,\theta}(x)\}.$$
This quantity measures the minimal relative energy needed for any non-trivial deformation of the intrinsic symplectic cohomology to occur in the ambient theory. A priori, it depends on the choice of acceleration datum. To show that it is independent of choices, we turn to an alternative way of describing it in terms of the locality spectral sequence. 

The locality spectral sequence introduced in equation \eqref{eqMFiltration} also measures how the ambient symplectic cohomology $SH^*_M(K)$ deforms the intrinsic symplectic cohomology $SH^*_{\widehat{D}}(D;\mathbb{Q})$. Let $C^*=SC^*_M(K)$ and let $\gr_{\hbar}\Lambda$ be the associated graded ring of $\Lambda$ with respect to the $\bZ$-filtration $\lfloor\val/\hbar\rfloor$. Then if $\hbar$ is taken sufficiently small, Theorem \ref{mainTmFiltration} identifies the first page $E_1$ of our spectral sequence  with $SH^*_{\widehat{D}}(D;\mathbb{Q})\otimes \gr_{\hbar}\Lambda$. That is, $E_1^{p,q}=SH^{p+q}(D;\mathbb{Q})\otimes \gr^p_{\hbar}\Lambda$.

The graded ring $\gr_{\hbar}\Lambda$ inherits a valuation from $\Lambda$ by lifting elements. Thus, the pages of the spectral sequence carry an induced valuation. Let $i$ be the first page for which there is a non-trivial differential $d_i$. Define $\val(d_i):=\inf_{x\in E_i}\{\val(d_ix)-\val(x)\}$. Then $\val(d_i)\in [(i-1)\hbar, i\hbar)$.

\begin{lm}\label{lmAltTauChar}
The quantity $\tau(K,D,M,\theta)$ can be alternatively characterized as:
$$\tau(K,D,M,\theta)=\val(d_i).$$ In particular,  $\tau(K,D,M,\theta)$ is independent of the choice of acceleration datum in $\cH_K(D,\theta,\delta,\Delta)$ or on the choice of the parameters $\delta,\Delta$.
\end{lm}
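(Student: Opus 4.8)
\textbf{Proof plan for Lemma \ref{lmAltTauChar}.}

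The plan is to identify both quantities with a single invariant of the $\bR$-filtered complex $(SC^*_M(K),\val_{M,\theta})$ together with its norm, namely the valuation of the leading differential in the spectral sequence of the $\hbar$-rescaled filtration \eqref{eqMFiltration}. First I would unwind the construction of $d_{def}$ from the proof of Theorem \ref{mainTmA}. Recall $SC^*_M(K)=(G^*\hat\otimes\Lambda,d_0+T^{\hbar}d_1)$ and $d_{def}=j\circ\tilde\delta\circ j^{-1}$ where $\tilde\delta = p\circ S\circ i$, $S=(\id-\delta h)^{-1}\delta$, $\delta=T^{\hbar}d_1$. Since $j$ is a $\val_{M,\theta}$-preserving isometry, $\val_{M,\theta}(d_{def})=\val_{M,\theta}(\tilde\delta)$. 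Now by Corollary \ref{CYRelFilrPres}, $d_1$ raises $\val_{M,\theta}$ by at least $\hbar$ and $h$ (built from $d_0^{-1}$) preserves it, so the geometric series expansion $S=\sum_k (\delta h)^k\delta$ shows $\val_{M,\theta}(S)\geq \hbar$, with the leading ($k=0$) term being $\delta$ itself. Thus $\val_{M,\theta}(\tilde\delta)=\val_{M,\theta}(p\circ\delta\circ i)$ modulo strictly higher-order corrections from the $k\geq 1$ terms; one must check these corrections do not accidentally lower the valuation, which they cannot since each is $\geq\hbar$ and in fact $\geq$ the valuation of the $k=0$ term plus $\hbar$.

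Next I would match this with $\val(d_i)$. The key point is that the homological perturbation data $(i_1,p_1,h_1)$ realize the $E_1$-page $H^*(G^*,d_0)\hat\otimes\Lambda = SH^*_{\widehat D,\theta}(K)\hat\otimes\Lambda$ as a \emph{strong deformation retract} of $SC^*_M(K)$, carrying the $\hbar$-filtration \eqref{eqMFiltration} on the source to the obvious $\hbar$-filtration on $H^*(G^*,d_0)\hat\otimes\Lambda$ (this uses that $p_1$ is $\val_E$-to-$T$-adic filtered, established in Lemma \ref{lmCompPert}, together with the fact that $\val_{M,\theta}$ and $\val_E$ agree up to the constant shifts built into the action-normalized basis). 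A general fact about strong deformation retracts of filtered complexes is that they induce isomorphisms of the associated spectral sequences from $E_1$ onward. Hence the spectral sequence of \eqref{eqMFiltration} agrees from $E_1$ with that of the two-term complex $(SH^*_{\widehat D,\theta}(K)\hat\otimes\Lambda, d_{def})$ filtered by $\lfloor\val_{M,\theta}/\hbar\rfloor$. For such a two-term complex the first nonzero differential occurs precisely on page $i$ with $(i-1)\hbar\leq \val_{M,\theta}(d_{def})<i\hbar$, and $\val(d_i)=\val_{M,\theta}(d_{def})$ by inspection: $d_i$ is induced by the part of $d_{def}$ raising the $\hbar$-filtration degree by exactly $i-1$, which is the leading part, and the valuation of that leading part on the associated graded equals $\val_{M,\theta}(d_{def})$. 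This gives $\tau(K,D,M,\theta)=\val(d_i)=\val_{M,\theta}(d_{def})$.

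Finally, independence of choices follows because the spectral sequence of the filtered complex $(SC^*_M(K),\eqref{eqMFiltration})$ and hence $\val(d_i)$ depend only on $SC^*_M(K)$ together with its $\val_{M,\theta}$-filtration and norm, all of which are well-defined up to filtered, norm-nonincreasing homotopy equivalence by Lemma \ref{lmSameProfileFiltind} and Lemma \ref{CYRelFilrPres} (which handle change of acceleration datum), and by the discussion in \S\ref{subsecDataDep} for change of the parameters $\delta,\Delta$ (different S-shaped categories admit a common cofinal refinement, with the continuation maps between them filtered and norm-nonincreasing). Since $\val(d_i)$ is manifestly an invariant of the filtered homotopy type of the spectral sequence, it is independent of all these choices, and then so is $\tau$ via the identification just proved.

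\textbf{Main obstacle.} The delicate step is the bookkeeping in the second paragraph: one must be careful that the isomorphism of spectral sequences induced by the perturbation-lemma retract is genuinely \emph{filtered} for the $\hbar$-filtration (not just for the finer $\val_{M,\theta}$-filtration), and that passing to $E_1$ does not lose the information needed to read off $\val_{M,\theta}(d_{def})$ exactly rather than merely up to a multiple of $\hbar$. Resolving this requires tracking that the correction terms $h\circ S\circ h$ etc.\ in $p_1$ strictly raise $\val_{M,\theta}$, so that the identification of $E_1$-pages is compatible with the \emph{ungraded} valuations and not only the $\hbar$-graded ones; this is where Corollary \ref{CYRelFilrPres} (the $\hbar$-gap in the deformation part of every structure map) does the real work.
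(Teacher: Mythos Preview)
Your approach is essentially the paper's: the filtered special deformation retraction from Theorem \ref{mainTmA} induces an isomorphism of spectral sequences, on the target model $(SH^*_{\widehat D,\theta}(K)\hat\otimes\Lambda,d_{def})$ the equality $\val_{M,\theta}(d_{def})=\val(d_i)$ is immediate, and independence then follows from Lemma \ref{lmSameProfileFiltind}. Your first paragraph (unwinding $d_{def}$ via the geometric series for $S$) is an unnecessary detour --- the paper skips it entirely and simply reads off $\val(d_i)$ on the retracted model, where it is ``obvious from the definition''; the obstacle you flag is handled just by the filteredness of $p_1$ established in Lemma \ref{lmCompPert}.
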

\begin{proof}
We have a filtration preserving quasi isomorphism $f:SC^*_M(K)\to G^*\hat\otimes \Lambda$. In particular, it induces an isomorphism of spectral sequences. For the model $G^*\hat\otimes \Lambda$ the equality $\val_{M,\theta}(d_{def})=\val(d_i)$ is obvious from the definition of $\val(d_i)$. The last part follows since by Lemma \ref{lmSameProfileFiltind} different choices of acceleration data are related by a filtration preserving quasi-isomorphism. These induce isomorphisms on the spectral sequences.
\end{proof}

It also follows from the above characterization that $\tau$ is independent of the choice of $\hbar$.

Since $\tau(K,D,M,\theta)$ is independent of all choices, we call it the \emph{embedding torsion} invariant. We drop $K,D,M$ from the notation when they are clear from the context.

We now study how $\tau$ depends on $D$ and on $\theta$. 

\begin{lm}\label{lmTauFormIndep0}
    If $K\subset D^1\subset D^2$ for domains $D^1,D^2$ which are Liouville with respect to $\theta$ then $\tau(K,D_1,M,\theta)=\tau(K,D_2,M,\theta).$ 
\end{lm}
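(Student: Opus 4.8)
\textbf{Proof plan for Lemma \ref{lmTauFormIndep0}.}

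The plan is to exploit the alternative characterization of $\tau$ from Lemma \ref{lmAltTauChar} together with the cofinality statement of Lemma \ref{lmSShapedFlexibility}. The key point is that $\tau(K,D^j,M,\theta)$ is computed from the locality spectral sequence of $SC^*_M(K)$ equipped with the filtration $\val_{M,\theta}$, and the only role played by $D^j$ is in fixing which indexing category of S-shaped Floer data we use to build a telescope model. Since $\theta$ is a Liouville primitive for both $D^1$ and $D^2$, Lemma \ref{lmSShapedFlexibility} (applied to the nested pair $D^1\subset D^2$ with the same form $\theta$ on both, so $\theta_1=\theta_2=\theta$) provides constants $\delta_1,\delta_2>0$ and a set $\cF\subset \cH_K(D^1,\theta,\delta_1,\Delta)\cap\cH_K(D^2,\theta,\delta_2,\Delta)$ which is cofinal in both indexing categories.

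First I would pick a regular acceleration datum $A$ whose underlying Floer data all lie in $\cF$; such data exist by the construction of Lemma \ref{lmSAccelExistence} carried out simultaneously for both domains, exactly as in the proof of Lemma \ref{lmSShapedFlexibility}. The associated telescope complex $SC^*_M(K,A)$ is then a legitimate model for computing relative symplectic cohomology of $K$ regardless of which of $D^1, D^2$ we regard it as "S-shaped with respect to". Second, I would observe that the filtration $\val_{M,\theta}$ defined in \eqref{eqRelFiltration} depends only on the primitive $\theta$ and the Hamiltonians $H_i$ — it makes no reference to the domain $D^j$ at all. Hence the filtered complex $(SC^*_M(K,A),\val_{M,\theta})$, and therefore its locality spectral sequence, is literally the same object whether computed via $D^1$ or via $D^2$. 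Third, Lemma \ref{lmAltTauChar} identifies $\tau(K,D^j,M,\theta)$ with $\val(d_i)$, the valuation of the first non-trivial differential in this spectral sequence; since the spectral sequence is the same, $\val(d_i)$ is the same, so $\tau(K,D^1,M,\theta)=\tau(K,D^2,M,\theta)$.

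The one subtlety to address carefully is that Lemma \ref{lmAltTauChar} asserts independence of $\tau$ on the choice of acceleration datum \emph{within a fixed} $\cH_K(D^j,\theta,\delta,\Delta)$, so I must check that a single datum drawn from $\cF$ genuinely computes $\tau(K,D^j,M,\theta)$ for each $j$ — but this is immediate because $\cF$ is cofinal in $\cH_K(D^j,\theta,\delta_j,\Delta)$, and Lemma \ref{lmAltTauChar} already grants independence of $\delta,\Delta$ as well. I expect the main (and only real) obstacle to be bookkeeping: making sure the various smallness constraints on $\delta_1,\delta_2,\Delta$ coming from Lemmas \ref{lmSAccelExistence} and \ref{lmSShapedFlexibility} can be met simultaneously with the constraint $\hbar\geq 2\Delta$ for a common $\hbar$-admissible ball of almost complex structures, so that the resulting datum is strongly admissible for both domains at once. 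Once that is arranged, the identification of the two spectral sequences is tautological and the conclusion follows.
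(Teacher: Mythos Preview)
Your proposal is correct and follows essentially the same argument as the paper: invoke Lemma \ref{lmSShapedFlexibility} to get a common cofinal family of S-shaped data for both domains, observe that the filtration $\val_{M,\theta}$ and hence the locality spectral sequence built from such data is independent of which domain is used, and conclude via Lemma \ref{lmAltTauChar}. The paper's proof is terser but structurally identical; your additional remarks about the bookkeeping on $\delta_1,\delta_2,\Delta$ and strong admissibility are accurate elaborations rather than a different route.
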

\begin{proof}
  By Lemma \ref{lmSShapedFlexibility} in Section \ref{SecInMaxPrinc} we can construct an acceleration datum for $K$ using the cofinal set $\cF$ which is S-shaped for both domains. The spectral sequences for both domains can be produced using the acceleration datum $\cF$. That is, the spectral sequences are isomorphic. So the value of $\tau$, which by Lemma \ref{lmAltTauChar} depends only on the spectral sequence, is the same. 
\end{proof}

\begin{lm}\label{lmTauFormIndep}
    Let $V(D)$ be an open neighborhood of a domain $D\subset M$. Suppose both $\theta$ and $\theta+df$ are Liouville forms for $D$ on $V(D)$. Then $\tau(K,D,M,\theta)=\tau(K,D,M,\theta+df).$
\end{lm}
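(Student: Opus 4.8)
The statement is that the embedding torsion $\tau(K,D,M,\theta)$ is unchanged if one replaces the local Liouville primitive $\theta$ by $\theta + df$ for a function $f$ on a neighborhood $V(D)$ (with both $\theta$ and $\theta+df$ Liouville for $D$). By Lemma \ref{lmAltTauChar}, $\tau$ depends only on the locality spectral sequence associated with the $\val_{M,\theta}$-filtration on $SC^*_M(K)$, so it suffices to exhibit a filtration-preserving quasi-isomorphism (indeed, an isomorphism of complexes) between the two models $(SC^*_M(K),\val_{M,\theta})$ and $(SC^*_M(K),\val_{M,\theta+df})$, or more precisely between the two spectral sequences they give rise to. The key observation is that the two valuations differ in a completely explicit way on generators.

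The plan is as follows. First I would recall that the only place $\theta$ enters is through the action functional $\cA_{H,\theta}(\gamma) = -\int_\gamma \theta - \int_{S^1} H(t,\gamma(t))\,dt$ used to define $\val_{M,\theta}$ in \eqref{eqRelFiltration}, while the underlying complex, its differential, the continuation and telescope maps, and the $T$-adic norm are all independent of $\theta$. For a $1$-periodic orbit $\gamma$ of an $S$-shaped Hamiltonian $H$ one computes $\cA_{H,\theta+df}(\gamma) = \cA_{H,\theta}(\gamma) - \int_\gamma df = \cA_{H,\theta}(\gamma)$, since $\gamma$ is a loop and $\int_\gamma df = 0$ (here one uses that $f$ is globally defined on the neighborhood $V(D)$ containing all the relevant orbits — the orbits occurring outside $V(D)$ are critical points, for which the action is likewise shifted by $\int_{\{pt\}} df = 0$). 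Hence $\val_{M,\theta,H}(\gamma) = \val_{M,\theta+df,H}(\gamma)$ on each generator of each Floer complex $CF^*(H_i,J_i)$, and the two filtrations on $SC^*_M(K)$ coincide \emph{on the action-normalized bases}. Consequently the identity map on $SC^*_M(K)$ is a filtration-preserving (in fact filtration-strictly-preserving) chain isomorphism between the two filtered complexes, and it induces an isomorphism of the two locality spectral sequences. By Lemma \ref{lmAltTauChar}, $\tau(K,D,M,\theta) = \val(d_i)$ is read off from the spectral sequence, so the two values agree.

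The one point that needs a little care — and which I expect to be the main (minor) obstacle — is handling orbits or critical points of $H$ that lie \emph{outside} $V(D)$, where $\theta$ is not defined. There one must recall, as in the discussion preceding Definition \ref{dfSShaped} and in the proof of Lemma \ref{lmEMthetaDelta}, that one extends $\theta$ to a global one-form $\lambda$ on $M$ vanishing at all critical points, so that the action functional is $-\int_\gamma H + \int_\gamma \lambda$ for all orbits uniformly. Changing $\theta$ to $\theta + df$ on $V(D)$ corresponds to changing $\lambda$ to some $\lambda'$ which agrees with $\lambda$ outside a neighborhood of $D$ (one may extend $df$ by zero, after cutting off, which does not affect exactness near $D$ for the integral computation over loops); since $\int_\gamma df = 0$ for any loop $\gamma$ and the critical-point contributions are unaffected, the action functional — and hence $\val_{M,\theta}$ — is literally unchanged generator by generator. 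Thus there is in fact nothing to deform: the identity works, and the proof is essentially a bookkeeping verification. I would therefore write the proof as: extend $\theta,\theta+df$ to global forms as in Lemma \ref{lmEMthetaDelta}; observe $\cA_{H,\theta+df}(\gamma) - \cA_{H,\theta}(\gamma) = -\int_\gamma df = 0$ for every generator; conclude $\val_{M,\theta} = \val_{M,\theta+df}$ as filtrations on $SC^*_M(K)$; invoke Lemma \ref{lmAltTauChar}.
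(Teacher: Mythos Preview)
Your core observation is correct and is exactly the one the paper uses: since $\int_\gamma df=0$ for every loop $\gamma$ (including constant orbits), the action functionals $\cA_{H,\theta}$ and $\cA_{H,\theta+df}$ agree on every generator, so the two valuations coincide on any common model and Lemma~\ref{lmAltTauChar} finishes the argument. The paper also notes that the relative class $[\omega,\theta]\in H^2(M,D;\bR)$ is unchanged by adding $df$, which is the dual statement on the energy side.

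However, you have skipped one genuine step. You assert that ``the underlying complex, its differential, the continuation and telescope maps, and the $T$-adic norm are all independent of $\theta$'', and then proceed to compare the two filtrations on \emph{the same} model $SC^*_M(K)$. But the S-shaped condition of Definition~\ref{dfSShaped} depends on $\theta$: the collar coordinate $\rho$ is the flow time of the Liouville field $Z$ determined by $\iota_Z\omega=\theta$, the requirement $H=h\circ e^\rho$ on $C(D)$ and the convexity condition $d\rho\circ J=-e^{f_t}\theta$ are all tied to this particular $\theta$. An acceleration datum admissible for $\cH_K(D,\theta,\delta,\Delta)$ need not be admissible for $\cH_K(D,\theta+df,\delta,\Delta)$, so a priori the two filtrations live on different models and cannot be compared by the identity map.

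The paper closes this gap by invoking Lemma~\ref{lmSShapedFlexibility}: one chooses nested domains $K\subset D^1\subset D^2\subset V(D)$ with disjoint boundaries, with $\theta$ Liouville for $D^1$ and $\theta+df$ Liouville for $D^2$, and then finds Floer data cofinal in both $\cH_K(D^1,\theta,\delta_1,\Delta)$ and $\cH_K(D^2,\theta+df,\delta_2,\Delta)$. On this common model your computation goes through verbatim. So your proof becomes complete once you insert this appeal to S-shaped flexibility (together with Lemma~\ref{lmTauFormIndep0} to pass from $D^1,D^2$ back to $D$).
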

\begin{proof}
     We can find $K\subset D^1\subset D^2\subset V(D)$ such that $\theta$ is a Liouville form on $D^1$ and $\theta+df$ is a Liouville form on $D^2$ and such that $\partial D^1\cap\partial D^2=\emptyset$. Then, by Lemma \ref{lmSShapedFlexibility}, there exist constants $\delta_1,\delta_2,\Delta>0$ and a set $\cF\subset \cH_K(D^1,\theta_1,\delta_1,\Delta)\cap \cH_K(D^2,\theta_2,\delta_2,\Delta)$ of Floer data which is cofinal in both $\cH_K(D^1,\theta_1,\delta_1,\Delta)$ and in $\cH_K(D^2,\theta_2,\delta_2,\Delta)$. We can construct an acceleration datum for $K$ using $\cF$. Then $\cF$ is S-shaped for both choices of primitive. Moreover, the values of the action on generators is unaffected since the integral of $df$ on a loop vanishes. That is, $\val_{M,\theta_1}=\val_{M,\theta_2}$. Similarly, the value of the relative cohomology class $(\omega,\theta)\in H^*(M,D;\bR)$ on relative homology classes is unaffected by the addition of $df$. Thus the spectral sequences for each of the primitives are isomorphic to each other. The claim again follows from Lemma \ref{lmAltTauChar}.
\end{proof}

Let $(D,\theta)$ be a Liouville domain and let $K_1\subset K_2\subset D$ be compact subsets such that $\beta(K_1)<\hbar(M,D)$ and $\beta(K_2)<\hbar(M,D)$. 

\begin{lm}\label{lmTauMonotone}
Assume the restriction map $SH^*_{\widehat{D}}(K_2)\to SH^*_{\widehat{D}}(K_1)$ is injective. Then $\tau(K_1,D,M,\theta)\leq \tau(K_2,D,M,\theta). $
\end{lm}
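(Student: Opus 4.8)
The plan is to prove the inequality through the locality spectral sequence, using Lemma \ref{lmAltTauChar}: under the hypotheses $\beta(K_i)<\hbar(M,D)$ the quantity $\tau(K_i,D,M,\theta)$ is defined and equals $\val(d^{K_i}_{i})$, where $d^{K_i}_{i}$ is the first nonvanishing differential of the locality spectral sequence of $K_i$, and moreover $\val(d^{K_i}_{i})\in[(i-1)\hbar,i\hbar)$. If $\tau(K_2)=\infty$ (all differentials vanish) the claim is vacuous, so assume there is a first page $i$ with $d^{K_2}_i\neq 0$.

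First I would produce a comparison morphism of spectral sequences. Choosing S-shaped acceleration data $A_1\le A_2$ for $K_1\subset K_2\subset D$ in a common $\cH(D,\theta,\delta,\Delta)$ and forming the telescopes $SC^*_M(K_j,A_j)$, the restriction map preserves $\val_{M,\theta}$ by Lemma \ref{lmSameProfileFiltind}, hence it induces a morphism $\phi_\bullet:E^{K_2}_\bullet\to E^{K_1}_\bullet$ of the associated locality spectral sequences. By the naturality in Corollary \ref{prp1stPageiso}, the first-page map is $\phi_1=\rho\otimes\mathrm{id}_{\gr_\hbar\Lambda}$, where $\rho:SH^*_{\widehat D,\theta}(K_2)\to SH^*_{\widehat D,\theta}(K_1)$ is the intrinsic restriction map, assumed injective over the ground field $\bK$. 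The key elementary point I would record is that for an injective $\bK$-linear map $U\hookrightarrow U'$ of $\bK$-vector spaces and any $\bK$-module $W$ carrying a non-Archimedean valuation, the map $\rho\otimes\mathrm{id}_W$ is injective and valuation-preserving: the valuation of $\sum_i u_i\otimes w_i$ expanded in a $\bK$-basis of $U$ equals $\min_i\val(w_i)$, this is independent of the chosen basis, and $\rho$ identifies $U\otimes_\bK W$ isometrically with the direct summand $\rho(U)\otimes_\bK W\subset U'\otimes_\bK W$. Applying this with $U=SH^*(K_2)$, $W=\gr_\hbar\Lambda$ shows $\phi_1$ is injective and valuation-preserving.

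Finally I would run the page comparison. Since $d^{K_2}_r=0$ for $r<i$, if $d^{K_1}_r=0$ for all $r\le i$ as well, then $E^{K_j}_r=E^{K_j}_1$ and $\phi_r=\phi_1$ for $r\le i$, so $d^{K_1}_i\phi_i=\phi_i d^{K_2}_i$ with $\phi_i$ injective forces $d^{K_2}_i=0$, a contradiction; hence the first nonvanishing differential for $K_1$ occurs at some page $j\le i$. For $r<j\le i$ both $d^{K_1}_r$ and $d^{K_2}_r$ vanish, so $\phi_j=\phi_1$ remains injective and valuation-preserving. If $j<i$, then $\tau(K_1)=\val(d^{K_1}_j)<j\hbar\le(i-1)\hbar\le\val(d^{K_2}_i)=\tau(K_2)$ by the interval constraint. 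If $j=i$, then given $\varepsilon>0$ pick $y\in E^{K_2}_i$ with $d^{K_2}_i y\neq 0$ and $\val(d^{K_2}_i y)-\val(y)<\tau(K_2)+\varepsilon$; injectivity of $\phi_i$ gives $d^{K_1}_i(\phi_i y)=\phi_i(d^{K_2}_i y)\neq 0$, and since $\phi_i$ preserves valuation, $\tau(K_1)=\val(d^{K_1}_i)\le\val(d^{K_1}_i\phi_i y)-\val(\phi_i y)=\val(d^{K_2}_i y)-\val(y)<\tau(K_2)+\varepsilon$; letting $\varepsilon\to0$ yields $\tau(K_1)\le\tau(K_2)$.

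The main obstacle is the first step: checking that the restriction maps genuinely induce a morphism of locality spectral sequences whose first-page component is $\rho\otimes\mathrm{id}$, and — crucially — that this component is not merely filtration-non-decreasing but valuation-preserving. This is exactly where the injectivity hypothesis on $\rho$ enters: without it $\phi_1$ could strictly raise valuation, destroying the comparison of leading differentials, and one could indeed have $\tau(K_1)>\tau(K_2)$. Everything downstream is bookkeeping with the interval constraint $\val(d_i)\in[(i-1)\hbar,i\hbar)$ supplied by Lemma \ref{lmAltTauChar}.
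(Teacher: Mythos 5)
Your proof is correct and follows the same route as the paper: a morphism of locality spectral sequences induced by the restriction map, with injectivity of $\rho\otimes\mathrm{id}$ on the first page propagated until the first non-vanishing differential of $K_2$. The paper's proof is terser; in particular it dispenses with your explicit $j=i$ case by invoking the independence of $\tau$ from $\hbar$ and adjusting $\hbar$ so that $\tau(K_1)$ becomes an integer multiple of $\hbar$, after which the interval constraint alone suffices. Your explicit verification that $\phi_i$ is valuation-preserving (using that the ground field is trivially valued, so $\rho\otimes\mathrm{id}$ is an isometry onto a direct summand independent of the chosen $\bK$-basis) is a clean alternative to that rescaling trick and arguably more self-contained, but it adds no new geometric input. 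Both arguments hinge on the same two ingredients — naturality of the first-page identification from Corollary \ref{prp1stPageiso} and the interval constraint $\val(d_i)\in[(i-1)\hbar,i\hbar)$ from Lemma \ref{lmAltTauChar} — so this is a filling-in of detail rather than a different proof.
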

\begin{proof}
The restriction maps induce a map of spectral sequences. If $p$ is the first page for which $K_2$ has a non-trivial differential, then either $K_1$ has already had a non trivial differential on some page preceding $p$ or the induced restriction map is an injection up to and including page $p$. Maps of spectral sequences are in particular chain maps, so the injectivity implies $K_1$ also has a non-trivial differential on page $p$. By adjusting $\hbar$ we can further assume $\tau(K_1,D,M,\theta)$ is an integer multiple of $\hbar$. 
\end{proof}
We apply this to the case $K_2=D$ and $K_1$ is a Liouville subdomain. 

\begin{lm}\label{lmSkelDep}
Suppose $K\subset D$ is a Liouville subdomain 
such that $\partial K$ does not meet the skeleton of $D$. Then
\begin{equation}
\tau(K,D,M,\theta)=\tau(D,D,M,\theta). 
\end{equation}
\end{lm}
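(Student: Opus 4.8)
The statement asserts that for a Liouville subdomain $K\subset D$ whose boundary $\partial K$ is disjoint from the skeleton of $D$, we have $\tau(K,D,M,\theta)=\tau(D,D,M,\theta)$. The idea is to combine the monotonicity Lemma \ref{lmTauMonotone} (applied with $K_1=K$, $K_2=D$) with a complementary inequality in the other direction, which should come from the fact that $D$ deformation retracts onto its skeleton, and the skeleton is contained in $K$, so that $D$ and $K$ ``see the same thing'' as far as the ambient theory is concerned.

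First I would observe that $\partial K$ disjoint from $\mathrm{Skel}(D)$ means $\mathrm{Skel}(D)$ lies in the interior of $K$; hence the Liouville flow of $D$ retracts $D$ into $K$. The standard consequence in Viterbo/intrinsic symplectic cohomology is that the restriction map $SH^*_{\widehat D,\theta}(D)\to SH^*_{\widehat D,\theta}(K)$ is an isomorphism (Viterbo functoriality applied to the retract, together with the identification $SC^*_{\widehat D,\theta}(K)\simeq SC^*_{\widehat K,\theta}(K)$ from the remark following Theorem \ref{mainTmA1}). In particular this restriction map is injective, so Lemma \ref{lmTauMonotone} immediately gives $\tau(K,D,M,\theta)\le \tau(D,D,M,\theta)$. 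The content is the reverse inequality.

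For the reverse inequality, I would use the characterization of $\tau$ via the locality spectral sequence, Lemma \ref{lmAltTauChar}: $\tau(K)=\val(d_i)$ where $d_i$ is the first nonzero differential on the spectral sequence for $K$, and similarly for $D$. Since the restriction map $SC^*_M(D)\to SC^*_M(K)$ is $\val_{M,\theta}$-filtered (Lemma \ref{lmSameProfileFiltind}) and induces an isomorphism on the associated graded $E_1$-pages — because by Proposition \ref{prpGrLocality} the $E_1$-page is $SH^*_{\widehat D,\theta}(\cdot)\otimes\gr_\hbar\Lambda$ and the restriction map induces the intrinsic restriction map there, which we just argued is an isomorphism — the map of spectral sequences is in fact an isomorphism from $E_1$ onward. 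Therefore the first nonzero differentials occur on the same page and have the same $\val$, i.e. $\tau(D,D,M,\theta)=\tau(K,D,M,\theta)$. To make this rigorous I would, as in the proof of Lemma \ref{lmTauFormIndep0}, want to present both spectral sequences using a common S-shaped acceleration datum: choose a Liouville subdomain $K'$ with $\mathrm{Skel}(D)\subset\mathrm{int}(K')\subset K'\subset\mathrm{int}(K)$ and apply Lemma \ref{lmSShapedFlexibility} to the nested chain $K'\subset K\subset D$ to get a cofinal family $\cF$ of Floer data simultaneously S-shaped for all three; the restriction maps built from $\cF$ are then manifestly filtered, and the claim reduces to the statement that the induced $E_1$-maps are isomorphisms, which is Corollary \ref{prp1stPageiso} together with the isomorphism of intrinsic restriction maps.

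\textbf{Main obstacle.} The delicate point is justifying that the intrinsic restriction map $SH^*_{\widehat D,\theta}(D)\to SH^*_{\widehat D,\theta}(K)$ is an isomorphism (not merely injective) when $\partial K$ avoids the skeleton — i.e. that ``same skeleton'' forces the two presheaf values to agree. This is where the hypothesis is really used, and it requires the deformation-invariance of intrinsic (Viterbo-type) symplectic cohomology under Liouville isotopy rel skeleton. Once that isomorphism is in hand, everything else is bookkeeping with filtered quasi-isomorphisms and the spectral sequence characterization of $\tau$; if one only wanted the inequality $\tau(K)\le\tau(D)$ the injectivity statement alone suffices via Lemma \ref{lmTauMonotone}, so the real work is entirely in upgrading injectivity to an isomorphism.
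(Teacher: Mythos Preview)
Your approach is correct and essentially the same as the paper's: both argue that the restriction map induces an isomorphism on the $E_1$-page of the locality spectral sequence (via Proposition \ref{prp1stPageiso} and naturality), hence an isomorphism of spectral sequences, hence equal $\tau$. The ``main obstacle'' you flag is dispatched more simply than you suggest --- the hypothesis forces $\skel(K)=\skel(D)$ and hence an exact symplectomorphism $\widehat{K}\cong\widehat{D}$, so the intrinsic restriction map is an isomorphism of $R$-modules directly (forgetting the norm), with no deformation-invariance argument needed; your detour through Lemma \ref{lmTauMonotone} is then redundant.
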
 

\begin{proof}
We have an exact symplectomorphism $\widehat{K}=\widehat{D}$. Thus, forgetting the norm, we have that $SH_{\theta,\widehat{K}}(K;R)=SH_{\theta,\widehat{D}}(D;R)$ for any ground ring $R$. More precisely, the canonical restriction map is an isomorphism of $R$-modules \footnote{Note that the inverse is not bounded. Nevertheless, over the trivially valued $R$, this does not prevent invertibility.}. By naturality of the identification in Proposition \ref{prp1stPageiso} with respect to restriction maps it follows that the canonical map of spectral sequences induced by the the restriction maps for $SH^*_M$ is an isomorphism on the first page. It is thus an isomorphism of spectral sequences. In particular, all numerical invariants extracted from the locality spectral sequence are the same. 
\end{proof}

Henceforth, we drop $K$ from the notation for $\tau$ if $K$ satisfies the assumptions of Lemma \ref{lmSkelDep}. We write $\tau(D,\theta, M)$ for $\tau(D,D,M,\theta)$.
\begin{cy}\label{cySkelDep}
$\tau(D,\theta, M)$ depends only on the skeleton of $D$ with respect to $\theta$.
\end{cy}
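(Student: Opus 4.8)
The statement to prove is Corollary \ref{cySkelDep}: that $\tau(D,\theta,M)$ depends only on the skeleton of $D$ with respect to $\theta$.

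\textbf{The plan.} The key observation is that the skeleton $\skel(D,\theta)$, together with the Liouville form, determines the completion $\widehat{D}$ as an exact symplectic manifold, and more: it determines a \emph{germ} of Liouville domains all of which share the same skeleton. Concretely, if $D_1$ and $D_2$ are two Liouville domains with $\skel(D_1,\theta_1)=\skel(D_2,\theta_2)$ as subsets of $M$ with matching Liouville data, then a neighborhood of this common skeleton is exact-symplectomorphic for both, and one can find a small Liouville subdomain $K$ with $\skel(D_i,\theta_i)\subset \mathrm{int}(K)\subset K \subset D_1\cap D_2$ whose boundary $\partial K$ meets neither skeleton (they coincide). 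Then I would apply Lemma \ref{lmSkelDep} \emph{twice}: once to the pair $K\subset D_1$, giving $\tau(K,D_1,M,\theta_1)=\tau(D_1,D_1,M,\theta_1)=\tau(D_1,\theta_1,M)$, and once to the pair $K\subset D_2$, giving $\tau(K,D_2,M,\theta_2)=\tau(D_2,\theta_2,M)$. Since $\theta_1$ and $\theta_2$ restrict to the same Liouville form near the common skeleton (up to exact $1$-form, by the flexibility Lemma \ref{lmTauFormIndep}), we have $SH^*_{\widehat{D_1},\theta_1}(K)=SH^*_{\widehat{D_2},\theta_2}(K)$ and moreover $\tau(K,D_1,M,\theta_1)=\tau(K,D_2,M,\theta_2)$.

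\textbf{Key steps in order.} First, set up the hypothesis precisely: fix $D$ with Liouville form $\theta$, and suppose $D'$ is another Liouville domain whose skeleton (with its induced Liouville structure) agrees with that of $D$ inside $M$. Second, invoke the standard fact that the Liouville flow retracts a neighborhood of the skeleton onto it, so a sufficiently small sublevel set $K$ of the Liouville-flow radial function is a Liouville subdomain of both $D$ and $D'$ with $\partial K$ disjoint from the common skeleton; this is where one uses that $\skel$ is compact and that the Liouville vector field is outward-pointing along level sets near the skeleton. Third, apply Lemma \ref{lmSkelDep} to $K\subset D$ and to $K\subset D'$. Fourth, reconcile the two resulting quantities $\tau(K,D,M,\theta)$ and $\tau(K,D',M,\theta')$: since $\widehat{D}=\widehat{K}=\widehat{D'}$ as exact symplectic manifolds (via the canonical identifications in Lemma \ref{lmSkelDep}) and the action filtrations agree because $\theta$ and $\theta'$ differ by an exact form on the relevant neighborhood (Lemma \ref{lmTauFormIndep}), the locality spectral sequences for $K$ computed inside $D$ versus inside $D'$ are isomorphic — by Lemma \ref{lmAltTauChar}, $\tau$ is extracted from this spectral sequence, so the two values coincide. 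Chaining the equalities gives $\tau(D,\theta,M)=\tau(D',\theta',M)$.

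\textbf{Main obstacle.} The technical heart is the second and fourth steps: constructing the common Liouville subdomain $K$ and, more subtly, verifying that the two a priori distinct S-shaped acceleration data setups (one adapted to the collar of $D$, one to the collar of $D'$) can be taken within a common cofinal family. This is precisely the content of Lemma \ref{lmSShapedFlexibility}, whose hypothesis is that $\partial D^1$ and $\partial D^2$ are disjoint — which holds here by construction of $K$. So the obstacle is really bookkeeping: one must check that the identification $\widehat{D}=\widehat{K}$ intertwines the $\val_{M,\theta}$ filtrations and the relative cohomology classes $[\omega,\theta]\in H^2(M,D;\bR)$, so that the spectral sequences are genuinely isomorphic as filtered objects and not merely abstractly isomorphic. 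Once that compatibility is in hand, Lemma \ref{lmAltTauChar} does the rest and the corollary follows formally.
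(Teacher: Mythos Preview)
Your proposal is correct and follows essentially the same route as the paper: shrink to a common Liouville subdomain containing the skeleton, apply Lemma \ref{lmSkelDep} on each side, and bridge the two values via independence of the ambient domain used for the S-shaped data. The paper's version is slightly more streamlined in two ways: it takes the common subdomain to be $t\cdot D$ for small $t$ (rather than a generic $K$), and it invokes Lemma \ref{lmTauFormIndep0} directly for the bridging step rather than re-deriving that independence from Lemma \ref{lmSShapedFlexibility} and Lemma \ref{lmAltTauChar}; also, since the statement fixes a single $\theta$, the paper does not need to pass through Lemma \ref{lmTauFormIndep} at all.
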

\begin{proof}
Suppose $D'$ is any other domain for which $\theta$ is a Liouville form and has the same skeleton. Then for $t>0$ small enough $t\cdot D\subset D'$ and contains the skeleton. So by Lemma \ref{lmSkelDep} we have $\tau(D',D',M,\theta)=\tau(t\cdot D,D',M,\theta)$. By Lemma \ref{lmTauFormIndep0} we have $\tau(t\cdot D',D',M,\theta)=\tau(t\cdot D,D,M,\theta)$ the latter equals $\tau(D,D,M,\theta)$ by Lemma \ref{lmSkelDep} again.
\end{proof}

We now go back to considering the case $K= D$. We ask how $\tau$ changes when $\theta$ is varied by a \emph{non-exact} 1-form. Fix a Liouville form $\theta$. Let $\cL$ be the set of all Liouville forms for $D$ and consider the map $f:\cL\to H^1(D;\bR)$ taking the Liouville form $\theta'$ to the cohomology class $[\theta'-\theta]$. The image of $f$ contains an open neighborhood $U\subset H^1(D;\bR)$ of the origin which we fix. 

\begin{rem}
Both $U$ and the fibers of $f$ over $U$ are convex. 
\end{rem}

We obtain a function $\tau_{\theta}:U\to\bR\cup\{\infty\}$ by taking $\tau_{\theta}(v):=\tau(D,M,\theta+\alpha)$ for any $\alpha$ representing the cohomology class $v\in U\subset H^1(D;\bR)$ and so that $\theta+\alpha$ is a Liouville form for $D$. Note that $\tau_{\theta}$ is well defined by Lemma \ref{lmTauFormIndep}.
\begin{tm}\label{tmConcave}
 The function $\tau_{\theta}:U\subset H^1(D;\bR)\to\bR\cup\{\infty\}$ is either identically infinite or finite everywhere and concave. 
\end{tm}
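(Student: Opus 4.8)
The plan is to exhibit $\tau_\theta$, outside the degenerate case, as an infimum of affine functions of $v$; concavity is then automatic, since for affine $f_i$ one has $f_i(\lambda v_0+(1-\lambda)v_1)=\lambda f_i(v_0)+(1-\lambda)f_i(v_1)\ge\lambda\inf_j f_j(v_0)+(1-\lambda)\inf_j f_j(v_1)$, and taking $\inf_i$ on the left gives concavity of $\inf_i f_i$ wherever it is finite. The starting observation is that $SC^*_M(D)$ and its Floer differential are completely insensitive to the choice of local primitive --- only the relative filtration $\val_{M,\theta}$ of \eqref{eqRelFiltration} depends on it. Replacing $\theta$ by $\theta+\alpha$ with $[\alpha]=v\in U$ changes the action by $\mathcal{A}_{H,\theta+\alpha}(\gamma)=\mathcal{A}_{H,\theta}(\gamma)-\langle v,[\gamma]\rangle$, where $[\gamma]\in H_1(D;\bZ)$ is the class of the $1$-periodic orbit $\gamma$ (every orbit surviving into the reduced complex lies in $V(D)$), so $\val_{M,\theta+\alpha}$ is $\val_{M,\theta}$ twisted by the $H_1(D;\bZ)$-grading.

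First I would upgrade the homological perturbation behind Theorem \ref{mainTmA} to a universal construction over the ring $R(P)$ of relative homology classes anticipated in Remark \ref{remGeneralCoefficientRings}. Fix a bounded convex neighborhood $P\subset U$ of the origin and let $R(P)$ be the completion of $\bQ[H_2(M,D;\bZ)]$ along the valuation $\val_P([\beta])=\inf_{v\in P}\langle[\omega,\theta]+\delta(v),\beta\rangle$, so that each evaluation $\sigma_v\colon R(P)\to\Lambda$, $[\beta]\mapsto T^{\langle[\omega,\theta]+\delta(v),\beta\rangle}$, is norm non-increasing for $v\in P$. Running the S-shaped telescope of \S\ref{SecInMaxPrinc} while recording for each contributing trajectory $u$ its class $[\overline u]\in H_2(M,D;\bZ)$ --- rather than only $E_{M,\theta}(u)=\langle[\omega,\theta],[\overline u]\rangle$ --- and then applying the homological perturbation Lemma \ref{lmCompPert} over $R(P)$ (legitimate by Remark \ref{remGeneralCoefficientRings}) produces a universal deformed differential $d^{\mathrm{univ}}$ on $SH^*_{\widehat{D},\theta}(D;\bK)\hat\otimes R(P)$ whose $\val_P=0$ reduction is the intrinsic Floer differential $d_0$ of Proposition \ref{prpGrLocality} --- in particular independent of the primitive --- and whose specialization $\sigma_v(d^{\mathrm{univ}})$ is the differential $d_{def}^{\theta+\alpha}$ attached to the primitive $\theta+\alpha$ for any $\alpha$ representing $v$.

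The crucial structural input is that $SH^*_{\widehat{D},\theta}(D;\bK)$ is graded by $H_1(D;\bZ)$: an intrinsic Floer cylinder, being contained in $D$, is a path of loops joining its two asymptotic orbits, so $d_0$ preserves free homotopy class, and the relative class $[\overline u]$ of such a cylinder asymptotic to generators $p,q$ of gradings $\mu_p,\mu_q$ satisfies $\partial[\overline u]=\mu_p-\mu_q$; this constraint is additive under composition and survives the homotopies entering homological perturbation. Hence the $(p,q)$ matrix entry of $d^{\mathrm{univ}}$ lies in the $R(P)$-submodule spanned by $\{[\beta]:\partial\beta=\mu_p-\mu_q\}$, on which $\sigma_v$ differs from $\sigma_0$ by multiplication by the single monomial $T^{\langle\delta(v),\beta\rangle}$, equal to $T^{\langle v,\mu_p-\mu_q\rangle}$ since $\langle\delta(v),\beta\rangle=\langle v,\partial\beta\rangle$. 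Therefore $(d_{def}^{\theta+\alpha})_{pq}=T^{\langle v,\mu_p-\mu_q\rangle}(d_{def}^{\theta})_{pq}$, and so
$$\tau_\theta(v)=\val_{M,\theta+\alpha}(d_{def}^{\theta+\alpha})=\inf_{p,q}\bigl(c_{pq}-\langle v,\mu_p-\mu_q\rangle\bigr),$$
with $c_{pq}\in\bR\cup\{\infty\}$ independent of $v$ and equal to $+\infty$ exactly when $(d_{def}^{\theta})_{pq}=0$. This is an infimum of affine functions over a $v$-independent index set. If all $c_{pq}=\infty$, i.e.\ $d_{def}^{\theta}=0$, then $(d_{def}^{\theta+\alpha})_{pq}=0$ for every $p,q$ and every $v$, so $\tau_\theta\equiv\infty$. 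Otherwise the index set is nonempty, so $\tau_\theta(v)<\infty$ on $U$; and, after shrinking $U$ so that the hypothesis $\beta(D)<\hbar(M,D,\theta+\alpha)$ of Theorem \ref{mainTmA} persists for all $\alpha$ with $[\alpha]\in U$ (possible since $\beta(D)$ is independent of the primitive and $\hbar$ is bounded below on the compact set $\overline U$), the estimate $|d_{def}^{\theta+\alpha}|_\infty<e^{-\hbar}$ of Theorem \ref{mainTmA} forces $\tau_\theta\ge\hbar>0$. Thus in this case $\tau_\theta$ is finite everywhere and, being an infimum of affine functions, concave.

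The bulk of the work sits in the universal construction: running the S-shaped telescope and the homological perturbation lemma over $R(P)$ while tracking relative homology classes, and in particular handling the acyclic subcomplex of critical points outside $V(D)$ --- which carries no intrinsic $H_1(D;\bZ)$-grading but is quotiented out in forming the reduced complex, so that the $H_1(D;\bZ)$-grading of $d^{\mathrm{univ}}$ is not disturbed. Once the universal differential is in place with its grading, the passage to an infimum of affine functions, and hence concavity, is purely formal.
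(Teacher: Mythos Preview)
Your approach is essentially the same as the paper's: both build a universal deformed differential $d_{def}^P$ over a ring $R(P)$ recording relative homology classes in $H_2(M,D;\bZ)$, run the homological perturbation lemma once over $R(P)$ (justified by Remark~\ref{remGeneralCoefficientRings}), and then specialize at each $v\in P$ to recover $d_{def}^{\theta+\alpha}$; concavity follows from exhibiting $\tau_\theta$ as an infimum of affine functions of $v$.

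The genuine addition in your argument is the $H_1(D;\bZ)$-grading. The paper writes $\tau_\theta(v)$ as the infimum of the linear functions $\alpha\mapsto\langle[\omega,\sigma(v)],\alpha\rangle$ over relative classes $\alpha$ appearing in the expansion of $d_{def}^P$, but does not explain why cancellations among different $\alpha$'s cannot make the valuation jump in a $v$-dependent way. Your observation that all classes $\beta$ contributing to a fixed matrix entry $(p,q)$ share the same boundary $\partial\beta$ (hence the same $v$-dependence $\langle v,\partial\beta\rangle$) shows that any cancellation within an entry is \emph{$v$-independent}: the entire entry scales by a single monomial $T^{\langle v,\partial\beta\rangle}$, so its valuation is exactly affine in $v$. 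This cleanly justifies the infimum-of-affines formula and the dichotomy $\tau_\theta\equiv\infty$ versus finite everywhere. In effect your argument patches a point of imprecision in the paper's proof.

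One technical point you flag but do not fully resolve: to run homological perturbation respecting the total-class grading, you need the special D.R.\ data $(i,p,h)$ for $(G^*,d_0)$ over $\bK$ to respect the $H_1$-grading, but $d_0$ includes inside-to-outside trajectories (to critical points outside $V(D)$) which a priori need not preserve the naive $H_1$-class. The resolution is that for the simultaneous acceleration datum of Lemma~\ref{lmPosDatumV}, positivity $E_{M,\sigma(v)}(u)\ge 0$ for \emph{all} $v\in P$ forces $\partial[\overline u]=0$ whenever $E_{M,\theta}(u)=0$ (otherwise $E_{M,\sigma(v)}(u)=\langle v,\partial[\overline u]\rangle$ would change sign), so in the reduced complex $d_0$ does preserve the grading. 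Alternatively one can choose $C$ inside the intrinsic subcomplex as in the proof of Lemma~\ref{lmOutsideContributions}; either way this is routine once noticed.
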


For the proof of Theorem \ref{tmConcave}, we wish to construct a fixed model for $SC^*_M(D)$ carrying a family of filtrations $\val_{\sigma}$ where $\sigma$ is an appropriate section of $f:\cL\to H^1(D;\bR)$ over an open neighborhood $U'\subset U$ of the origin. 

\begin{lm}\label{lmPosDatumV}
Given a choice of section $\sigma$ of $f:\cL\to H^1(D;\bR)$ over $U$ such that $\sigma(0)=\theta$ and a convex polytope $P\subset U$, there is an S-shaped acceleration datum for which the positivity property of Lemma \ref{lmTrichotomy} (with $\hbar$ depending on $P$ and the section $\sigma$) holds for $\val_{\sigma}$ for all points in $P$.
\end{lm}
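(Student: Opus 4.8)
\textbf{Proof plan for Lemma \ref{lmPosDatumV}.}

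The plan is to reduce the statement, by a compactness argument in the parameter $P$, to the single-primitive case treated in Lemma \ref{lmSAccelExistence} and Lemma \ref{lmSShapedFlexibility}. First I would fix the section $\sigma$ and the convex polytope $P\subset U$ and choose, for each vertex $p$ of $P$, a $1$-form $\theta_p:=\sigma(p)$. Since $P$ is convex and $\sigma$ may be taken affine after shrinking $U$ (the fibers of $f$ over $U$ are convex, so an affine section exists), every $\theta_p$ for $p\in P$ is a convex combination of the forms at the vertices, and in particular each $\theta_p$ is a Liouville form for $D$. I would then pick nested Liouville domains $D=D^0\supset t_1D\supset\cdots$ — or rather a finite chain of rescalings $t_iD$ with $t_1>t_2>\cdots$ arranged so that the boundaries are pairwise disjoint — and invoke Lemma \ref{lmSShapedFlexibility}: for a sufficiently small common $\Delta$ there exist constants $\delta_i>0$ and a set $\cF$ of Floer data cofinal in each $\cH_K\bigl(t_iD,\theta_{p_i},\delta_i,\Delta\bigr)$ simultaneously, where $p_1,\dots,p_n$ enumerate the vertices of $P$. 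Because the S-shapedness conditions live in arbitrarily small, pairwise disjoint collar neighborhoods of the various $\partial(t_iD)$, the same datum is S-shaped for all the vertex primitives at once.

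The key point is then to promote "positivity for each vertex primitive" to "positivity for every $\theta_p$, $p\in P$" simultaneously. For this I would run the proof of Lemma \ref{lmTrichotomy} (i.e. of Lemmas \ref{lmPositivity} and \ref{lmPositivity2}) with $\theta$ replaced by $\theta_p$ and observe that the only place $\theta_p$ enters the estimate \eqref{eqEMEst}–\eqref{eqEMineq2} is through the term $E^{geo}(u)$, which is independent of $\theta_p$, and through the topological quantity $E_{M,\theta_p}(u)=\langle[\omega,\theta_p],[\overline u]\rangle$, which depends affinely on $p$: writing $p=\sum_j c_j p_j$ with $c_j\ge 0$, $\sum_j c_j=1$, we get $E_{M,\theta_p}(u)=\sum_j c_j E_{M,\theta_{p_j}}(u)$. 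Since each S-shaped curve $u$ with $E_{M,\theta_{p_j}}(u)\neq 0$ already satisfies $E^{geo}(u)>4\hbar$ by Proposition \ref{PrpRelHbar} — and this geometric bound does not see $p$ — the trichotomy of Lemma \ref{lmTrichotomy} holds verbatim for $\val_\sigma$ at $p$ as soon as one knows $E_{M,\theta_p}(u)\ge 0$. The latter is the integrated-maximum-principle half of Lemma \ref{lmPositivity}, whose proof only uses conditions (1)–(6) of Definition \ref{dfSShaped} near $\partial(t_iD)$, hence goes through for the datum $\cF$ and the primitive $\theta_p$. I would also record the translation-invariant statement for the valuation: by Lemma \ref{lmEMthetaDelta}, $E_{M,\theta_p}(u)=\Delta_{M,\theta_p}(u)$, so positivity of relative energy is exactly positivity of $\Delta$ for $\val_{\sigma(p)}$.

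Finally, to get a \emph{single} $\hbar=\hbar(P,\sigma)$ that works for all of $P$ I would take $\hbar$ to be the minimum over the finitely many vertices of the constants produced above — equivalently, fix one bounded $C^0$-ball $B\subset\mathcal J$ containing all the (finitely many, after the construction of Lemma \ref{lmSAccelExistence}) almost complex structures in $\cF$, set $\hbar:=\hbar(B)$ as in Proposition \ref{PrpRelHbar}, and choose $\Delta=\hbar/2$; the affine interpolation over $P$ then inherits this same $\hbar$. I expect the main obstacle to be purely bookkeeping: checking that one can simultaneously satisfy the collar conditions of Definition \ref{dfSShaped} for all vertex primitives and all the rescaled domains with a \emph{single} acceleration datum while keeping the associated almost complex structures inside one $\hbar$-admissible ball — but this is exactly what Lemma \ref{lmSShapedFlexibility} together with the disjointness of the relevant collars delivers, so no genuinely new estimate is needed beyond Proposition \ref{PrpRelHbar}.
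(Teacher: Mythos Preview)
Your approach is essentially the paper's: pick finitely many primitives (you take the vertices of $P$, the paper takes any finite set whose convex hull contains $P$), invoke Lemma~\ref{lmSShapedFlexibility} on a chain of nested domains (you use shrinkings $t_iD$, the paper uses enlargements $D_{\delta_i}$) to produce a single simultaneously S-shaped acceleration datum, and then pass to every $p\in P$ via the affine identity $E_{M,\theta_p}(u)=\sum_j c_j\,E_{M,\theta_{p_j}}(u)$, hence $\val_{\sigma(p)}=\sum_j c_j\,\val_{\theta_{p_j}}$. That is the whole argument.

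One caution: you overclaim when you say ``the trichotomy of Lemma~\ref{lmTrichotomy} holds verbatim for $\val_\sigma$ at $p$'' and propose to ``run the proof of Lemma~\ref{lmPositivity} with $\theta$ replaced by $\theta_p$''. For a general $p$ there is no domain for which the datum is S-shaped with respect to $\theta_p$, so the estimate \eqref{eqEMEst} (which uses the collar coordinate $\rho$ and the convexity condition $d\rho\circ J=-e^f\theta$) does not apply directly; and the gap ``$=0$ or $\geq\hbar$'' need not survive convex combination, since some vertex values $E_{M,\theta_{p_j}}(u)$ could be $0$ while others are $\geq\hbar$. The paper's proof only asserts and proves the \emph{positivity} $\Delta_{M,\theta_p}(u)\geq 0$ --- i.e.\ that the complex is filtered by $\val_{\sigma(p)}$ --- which is exactly what the affine identity delivers and is all that is used downstream. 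Your detour through an affine section is likewise unnecessary: $\val_{\sigma(p)}$ depends only on the class $p\in H^1(D;\bR)$, since $\int_\gamma(\sigma(p)-\theta)$ sees only $[\sigma(p)-\theta]=p$ on closed loops.
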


\begin{proof}
    We pick a finite collection of $N$ points $v_i\in H^1(D;\bR)$ with lifts $\sigma(v_i)=\theta_i$ to Liouville forms and such that the convex hull of the $v_i$ contains $P$. Pick a monotone sequence $0<\delta_1<\dots<\delta_N$ and consider the sets $D_{\delta_i}\subset D_{\delta_{i+1}}$ defined using the Liouville flow with respect to $\theta$. By taking $\delta_N$ sufficiently small we can assume that $\theta_i$ is a Liouville form for $D_{\delta_i}$. As in Lemma \ref{lmSShapedFlexibility}, if we pick $\delta\ll\min\{\delta_{i+1}-\delta_i\}$ and $\Delta$ small enough, we can find a set of Floer data $\cF$ which is cofinal in each of the $\cH_{D}(D_{\delta_i},\theta_i,\delta,\Delta)$.  
    
    Picking a model for $SC^*_M(D)$ based on $\cF$ the complex $SC^*_M(D)$ is filtered with respect to each of the $\val_{\theta_i}$ for each $0\leq i\leq N.$ Therefore it is filtered with respect to all convex combinations of the $\val_{\theta_i}$, that is, with respect to $\val_{\sigma}$ for all $\sigma$ over the convex hull of the $v_i$, and thus over $P$.
    \end{proof}

We proceed to construct the coefficient ring for our family. Let $P\subset U$ be a convex polytope containing the origin. For any $v\in P$ we have a map $\ell_v:H_2(M,D;\bZ)\to\bR$ given by $\ell_v(u)=\langle u, (\omega,\sigma(v))\rangle$. Let $C(P)$ be the cone in $H_2(M,D;\bZ)$ defined by:
\[
C(P)=\{\alpha\in H_2(M,D;\bZ)|\ell_v(\alpha)\geq 0,\quad\forall v\in P\}.
\]

Let $R(P)$ be the group ring of $C(P)$:
\[
R(P)=\Lambda[C(P)].
\]

The ring $R(P)$ is generated by elements of the form $e^{[u]}$ where $[u]\in C(P)$ represents a relative class. We do not complete the ring $R(P)$ or introduce a norm on it.

For any $v\in P$, we have a specialization map $sp_v:R(P)\to \Lambda$ given by:
\begin{equation}
sp_v(e^{[u]}):=T^{\langle(\omega,\sigma(v)),u\rangle}.
\end{equation}

We construct a version of $SC^*_M(D)$ over $R(P)$ following the telescope model from Section \ref{SecSHReview}. Fix an acceleration datum $\cF$ for $D$ as in Lemma \ref{lmPosDatumV}. For each $(H_i,J_i)$ in this datum, let $CF^*(H_i,J_i;P)$ be freely generated over $R(P)$ by elements $\{S^{[\gamma]}\gamma\}$ where $\gamma$ runs over the $1$-periodic orbits of $H_i$ and $S^{[\gamma]}$ is a formal variable. We weight each Floer trajectory $u$ contributing to the differential by $e^{[u]}$ where $[u]$ is its class in $H_2(M,D;\bZ)$.

For any $v\in P$, we have a specialization map $sp_v:CF^*(H,J;P)\to CF^*(H,J)$ given by:
\begin{equation}
sp_v(S^{[\gamma]}\gamma \cdot e^{[u]}) := T^{-\cA_{\sigma(v)}(\gamma)} \cdot T^{\langle(\omega,\sigma(v)),u\rangle} \cdot \gamma.
\end{equation}
This map sends the formal variable $S^{[\gamma]}$ to $T^{-\cA_{\sigma(v)}(\gamma)}$ and the coefficient ring element $e^{[u]}$ to $T^{\langle(\omega,\sigma(v)),u\rangle}$.

Following the telescope construction, we form the 1-ray:
$$\mathcal{C}(P):= CF^*(H_1,J_1;P)\xrightarrow{\kappa_1} CF^*(H_2,J_2;P)\xrightarrow{\kappa_2}\ldots$$
where the continuation maps $\kappa_i$ are defined using the same S-shaped acceleration datum but with coefficients in $R(P)$. Again we weight the Floer trajectories $u$ contributing to the maps $\kappa_i$ by $e^{[u]}$.

The telescope chain complex $tel(\mathcal{C}(P))$ is defined as:
$$tel(\mathcal{C}(P))=\bigoplus_{i=1}^\infty\ CF^*(H_i,J_i;P)[q]$$
with $q$ a degree $1$ variable satisfying $q^2=0$. The differential is given by the formula:
\begin{equation}
\delta qa:=qda+(-1)^{deg(a)}(\kappa_i(a)-a).
\end{equation}

We define a norm on the tensor product $tel(\mathcal{C}(P))$ as the supremum over $v\in P$ of the norms of the specializations:
\begin{equation}
|x|_P:=\sup_{v\in P}|sp_v(x)|.
\end{equation}

We complete with respect to this norm and denote the completed complex by $SC^*_M(D,P):=\widehat{tel}(\mathcal{C}(P))$.

For any $v\in P$, we have a specialization map $sp_v:SC^*_M(D,P)\to SC^*_M(D)$ given by extending the specialization maps on the individual Floer complexes:
\begin{equation}
sp_v\left(\sum_i S^{[\gamma_i]}\gamma_i \cdot e^{[u_i]} \cdot q^{k_i}\right) := \sum_i T^{-\cA_{\sigma(v)}(\gamma_i)} \cdot T^{\langle(\omega,\sigma(v)),u_i\rangle} \cdot \gamma_i \cdot q^{k_i}.
\end{equation}
This map recovers the original complex $SC^*_M(D)$ with the filtration $\val_{\sigma(v)}$.

\begin{lm}\label{lmDifferentialDecreases}
The differential on $SC^*_M(D,P)$ decreases the norm $|\cdot|_P$.
\end{lm}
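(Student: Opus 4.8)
The plan is to reduce the claim for the norm $|\cdot|_P$ to the already-established norm-decreasing property of the differential on the ordinary complex $SC^*_M(D)$, via the family of specialization maps $sp_v$. The point is that $|\cdot|_P$ is by construction the supremum over $v\in P$ of the norms $|sp_v(\cdot)|$, so it suffices to show that for every $v\in P$ the map $sp_v$ intertwines the differential $\delta$ on $SC^*_M(D,P)$ with the Floer differential on $SC^*_M(D)$ carrying the filtration $\val_{\sigma(v)}$, and then to invoke the norm-decreasing statement for the latter.

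\textbf{Step 1: $sp_v$ is a chain map.} First I would check that for each $v\in P$ the specialization $sp_v:SC^*_M(D,P)\to SC^*_M(D)$ commutes with the differentials. This is essentially bookkeeping with the weights: a Floer trajectory $u$ from $\gamma_0$ to $\gamma_1$ contributes $e^{[u]}$ to the $R(P)$-linear differential and $T^{E_{top}(u)}$ to the ordinary one, and by Lemma \ref{lmEMthetaDelta} we have $E_{top}(u)=E_{M,\theta}(u)-\cA_{H_1,\sigma(v)}(\gamma_1)+\cA_{H_0,\sigma(v)}(\gamma_0)$, while $E_{M,\theta}(u)=\langle(\omega,\sigma(v)),[u]\rangle$ since the relative class $[\omega,\sigma(v)]$ only depends on the restriction of $\sigma(v)$ near $D$ and evaluates on $[u]$ as $E_{M,\sigma(v)}(u)=E_{M,\theta}(u)$ by Lemma \ref{lmTauFormIndep} type reasoning (the difference is exact). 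Tracking the definition of $sp_v$ on the generators $S^{[\gamma]}\gamma\cdot e^{[u]}$ then shows $sp_v\circ\delta=d_{\sigma(v)}\circ sp_v$, where $d_{\sigma(v)}$ is the Floer differential on the telescope model of $SC^*_M(D)$ and similarly for the continuation-map part $\kappa_i$ and the telescope variable $q$ (which is unaffected).

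\textbf{Step 2: norm-decreasing for each specialization.} By Corollary \ref{CYRelFilrPres} and Lemma \ref{lmTelFloerType}, the differential of the telescope model $SC^*_M(D)$, equipped with the filtration $\val_{\sigma(v)}$ and its associated norm, is norm non-increasing; in fact one can arrange it to strictly decrease norms by the factor $e^{-E^{geo}(u)}<1$ using \eqref{eqEtopEgeoNorm}. Hence for $x\in SC^*_M(D,P)$ and each $v\in P$, $|sp_v(\delta x)|=|d_{\sigma(v)}(sp_v(x))|\le|sp_v(x)|$.

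\textbf{Step 3: take the supremum.} Taking the supremum over $v\in P$ and using the definition $|y|_P=\sup_{v\in P}|sp_v(y)|$ gives $|\delta x|_P=\sup_{v}|sp_v(\delta x)|\le\sup_{v}|sp_v(x)|=|x|_P$, which is the assertion. The continuity of $\delta$ needed to extend from $tel(\mathcal C(P))$ to its completion $\widehat{tel}(\mathcal C(P))$ is automatic from this estimate. \textbf{The main obstacle} I anticipate is Step 1, specifically verifying carefully that the exponents of $T$ produced by $sp_v$ after applying $\delta$ really do match, i.e.\ that the combinatorics of the formal variables $S^{[\gamma]}$ and $e^{[u]}$ are consistent with the action identity of Lemma \ref{lmEMthetaDelta} uniformly in $v$, and that $[u]\in C(P)$ so that $sp_v(e^{[u]})=T^{\langle(\omega,\sigma(v)),[u]\rangle}$ has non-negative exponent whenever $E_{M,\theta}(u)\ge 0$ — but this last point is exactly the positivity content of Lemma \ref{lmPosDatumV}, so it is already in hand.
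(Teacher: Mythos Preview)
Your proposal is correct and follows essentially the same approach as the paper's proof: both reduce to the norm-decreasing property of the ordinary Floer differential via the specializations $sp_v$ and then take the supremum over $v\in P$. The paper's proof is a one-sentence version that implicitly uses the chain-map property of $sp_v$ which you spell out in Step~1; your more careful bookkeeping there (verifying the exponents match via Lemma~\ref{lmEMthetaDelta}) is a useful elaboration but not a different idea.
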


\begin{proof}
Since $|\cdot|_P$ is the supremum over $v\in P$ of the norms of the specializations, and the differential decreases the norm of each specialization by \eqref{eqEtopEgeoNorm}, the claim follows.
\end{proof}

\begin{lm}\label{lmFamilySDR}
Let $P\subset U$ be a convex polytope containing the origin. There exists a special deformation retraction 
$$\rho_P: SC^*_M(D,P) \to (G^*\hat\otimes R(P), d_{def}^P)$$
where $G^*:=SH^*_{\widehat{D},\theta}(D;\bK)$ and $d_{def}^P$ is a differential on $G^*\hat\otimes R(P)$.

Moreover, for any $v\in P$, the specialization map $sp_v:R(P)\to \Lambda$ induces a map 
$$sp_v: (G^*\hat\otimes R(P), d_{def}^P) \to (G^*\hat\otimes \Lambda, d_{def}^v)$$
where $d_{def}^v$ is the differential obtained from the special deformation retraction for the primitive $\sigma(v)$.
\end{lm}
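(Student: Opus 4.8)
\textbf{Proof plan for Lemma \ref{lmFamilySDR}.}

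The plan is to apply the homological perturbation machinery of Section \ref{subsecHomologicalPerturbation} over the coefficient ring $R(P)$ rather than over $\Lambda$, using the version of Lemma \ref{lmCompPert} noted in Remark \ref{remGeneralCoefficientRings}. First I would show that $SC^*_M(D,P)$, together with the lattice generated by the elements $S^{[\gamma]}\gamma$ and $qS^{[\gamma]}\gamma$, is a Floer type complex in the sense of \S\ref{subsecFloerType} with coefficient algebra $R(P)$ in place of $\Lambda$. The reduced complex is $(G^*,d_0)$ with $G^*=SH^*_{\widehat{D},\theta}(D;\bK)$: indeed, by construction the portion of the differential with $E_{M,\theta}(u)=0$ for \emph{every} $v\in P$ is exactly the intrinsic differential $d_0$ (this is where Lemma \ref{lmPosDatumV} enters, guaranteeing that the chosen acceleration datum $\cF$ is simultaneously S-shaped for the whole polytope $P$, so that the trichotomy of Lemma \ref{lmTrichotomy} holds uniformly for $\val_\sigma$, $\sigma\in P$). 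By Corollary \ref{CYRelFilrPres} and the definition of the telescope differential, the remaining part of the differential is divisible by a positive power of the defining energy, so we can write $d=d_0+\delta$ where $\delta$ decreases the norm $|\cdot|_P$ by at least $e^{-\hbar}$, with $\hbar=\hbar(P,\sigma)$ the uniform constant from Lemma \ref{lmPosDatumV}; here one uses that for any contributing trajectory $u$ either $E_{M,\theta}(u)=0$ (absorbed in $d_0$) or $E_{M,\theta}(u)\ge\hbar$ and $E^{geo}(u)\ge\hbar$ by Proposition \ref{PrpRelHbar}, so $e^{[u]}$ multiplied into a basis element strictly lowers $|\cdot|_P$.

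Next I would feed this into the perturbation lemma. Since $\beta(D)<\hbar(M,D)$ by hypothesis (and by Lemma \ref{lmBoundaryDepthLiouville} this boundary depth is computed from $(G^*,d_0)$, which is independent of the coefficient extension), we have $|\delta|_\infty<e^{-\beta}$, so Lemma \ref{lmSpecialDR} provides special D.R. data $(i,p,h)$ for $(G^*,d_0)$ with $\id-\delta\circ h$ invertible over $R(P)$ — invertibility is a Neumann series argument that only uses the norm estimate, hence goes through verbatim over the non-Archimedean $\bK$-algebra $R(P)$ as in Remark \ref{remGeneralCoefficientRings}. Lemma \ref{lmCompPert} then yields a special deformation retraction $\rho_P:SC^*_M(D,P)\to(G^*\hat\otimes R(P),d_{def}^P)$ with $d_{def}^P=p\circ S\circ i$, $S=(\id-\delta\circ h)^{-1}\circ\delta$.

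For the compatibility with specialization, I would check that for each $v\in P$ the specialization map $sp_v:SC^*_M(D,P)\to SC^*_M(D)$ is a chain map (clear from the formulas, since it intertwines $e^{[u]}\mapsto T^{\langle(\omega,\sigma(v)),u\rangle}$ with the energy weights) and carries the lattice to the lattice, hence sends the reduced complex to the reduced complex identically and sends $\delta$ to the perturbation $\delta^v$ of the $\sigma(v)$-model. Since the special D.R. data $(i,p,h)$ live on $G^*$ and are independent of the coefficient ring, $sp_v$ commutes with the formulas of Lemma \ref{lmCompPert}, and therefore $sp_v\circ\rho_P=\rho_{\sigma(v)}\circ sp_v$ and $sp_v(d_{def}^P)=d_{def}^v$. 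The main obstacle I anticipate is purely bookkeeping: verifying that the single acceleration datum $\cF$ from Lemma \ref{lmPosDatumV} genuinely produces a \emph{uniform} $\hbar$ over the polytope $P$ so that the one perturbation $\delta$ is simultaneously small for all the valuations $\val_{\sigma(v)}$ — this is what makes $|\delta|_P<e^{-\hbar}$ legitimate and is the content one must extract carefully from Lemmas \ref{lmPosDatumV} and \ref{lmDifferentialDecreases}; everything after that is a mechanical transcription of \S\ref{SubSecProofThmA} with $\Lambda$ replaced by $R(P)$.
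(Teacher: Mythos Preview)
Your proposal is correct and follows the same approach as the paper: apply Lemma \ref{lmCompPert} over $R(P)$ via Remark \ref{remGeneralCoefficientRings}, and observe that the specialization maps commute with the explicit homological perturbation formulas. The paper's own proof is much terser and essentially just cites these two ingredients.

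The one place where the paper handles things slightly differently from you is the inequality $|\delta|_\infty<e^{-\beta}$. You invoke the standing hypothesis $\beta(D)<\hbar(M,D)$ and then worry (correctly) about whether the $\hbar$ from Lemma \ref{lmPosDatumV} is uniform over $P$ and still dominates $\beta$. The paper sidesteps this entirely by writing ``After shrinking the domain $P$ if necessary''---which is legitimate because the application to Theorem \ref{tmConcave} is local in $P$. So your ``main obstacle'' is real, and the paper's resolution is simply to shrink $P$ rather than to extract a sharp uniform bound.

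A minor notational wobble: you write ``The reduced complex is $(G^*,d_0)$ with $G^*=SH^*_{\widehat{D},\theta}(D;\bK)$'', but in the language of \S\ref{subsecFloerType} the reduced complex is the chain-level telescope over $\bK$, and $SH^*_{\widehat{D},\theta}(D;\bK)$ is its cohomology (after quotienting by the acyclic outside subcomplex as in Proposition \ref{prpGrLocality}). The lemma's $G^*$ is the latter, and Lemma \ref{lmCompPert} lands there; your argument is fine once this is untangled.
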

    
\begin{proof}
After shrinking the domain $P$ if necessary, the inequality $|\delta|_\infty < e^{-\beta}$ from Equation \eqref{eqPerturbationBoundLemma} is satisfied for the family version of the chain complex. See Remark \ref{remGeneralCoefficientRings}. The hypotheses of Lemma \ref{lmCompPert} are thus satisfied, giving a special deformation retraction using the formulas of Lemma \ref{lmCompPert}. Examining the formulas we see that the specialization maps commute with the homological perturbation construction. This gives the last claim.

\end{proof}

\begin{proof}[Proof of Theorem \ref{tmConcave}]
We use the family special deformation retraction from Lemma \ref{lmFamilySDR}. For any convex polytope $P\subset U$ containing the origin, we have a special deformation retraction:
$$\rho_P: SC^*_M(D,P) \to (G^*\hat\otimes R(P), d_{def}^P)$$

For any $v\in P$, the specialization map $sp_v:R(P)\to \Lambda$ induces a map:
$$sp_v: (G^*\hat\otimes R(P), d_{def}^P) \to (G^*\hat\otimes \Lambda, d_{def}^v)$$
where $d_{def}^v$ is the differential obtained from the special deformation retraction for the primitive $\sigma(v)$.  Moreover,the right hand side is a model as in Theorem \ref{mainTmD}. In particular, $\tau_{\theta}(v)=\val(d_{def}^v)$.

The valuation at a point $v\in P$ is the infimum over all relative homology classes appearing in the expansion of $d_{def}^P$ of $\val_{M,\sigma(v)}$ applied to this relative class. Each relative class thus defines a linear function on $U$. The $0$ class corresponds to the constant $\infty$. When when the only relative class contributing is the $0$ class we have that $\tau_{theta}$ is identically $\infty$. Otherwise it is locally given as the infimum of a set of linear functions defined over $U$. Thus it is finite everywhere and concave.
\end{proof}

\subsection{Proof of Theorem \ref{mainTmB}}
\begin{proof}
    \begin{enumerate}
        \item This is Lemma \ref{lmAltTauChar}.
        \item This is Lemma \ref{lmTauFormIndep}.
        \item This is Theorem \ref{tmConcave}.
        \item This is Lemma \ref{lmTauMonotone}.
        \item This is Corollary \ref{cySkelDep}.
        
    \end{enumerate}
\end{proof}

\begin{rem} 
    There is a version of monotonicity is true even for non-exact inclusions. The point of the exactness assumption is that that Theorem \ref{mainTmB} talks about the invariant $\tau(D,\theta,M)$ for $\theta$ a Liouville form. 
\end{rem}

\subsection{Embedding torsion and flux}\label{SecFlux}
\begin{df}\label{dfSymplecticEmbedding}
Let $(D,\theta)$ be a Liouville domain. A symplectic embedding  $\iota:(D,\theta)\to (M,\omega)$ is an open embedding such that $\iota^*\omega=d\theta$. An \emph{isotopy} of symplectic embeddings is a smooth family $s\mapsto \iota_s$ of symplectic embeddings. 
\end{df}

An isotopy $\iota_s$ gives rise to an element 
\begin{equation}\label{eqFluxDefinition}
Flux_{\{\iota_s\}}\in Hom(H_1(D;\bZ),\bR)=H^1(D;\bR)
\end{equation}
via
\begin{equation}\label{eqFluxFormula}
Flux_{\{\iota_s\}}(\gamma)=\int_{tr_{\{\iota_s\}}(\gamma)}\omega
\end{equation}
where $tr_{\iota_s}(\gamma)$ is the cylinder $(s,t)\mapsto \iota_s(\gamma(t))$. 

\begin{lm}\label{lmFluxWellDefined}
$Flux_{\{\iota_s\}}$ depends only on the homology class of $\gamma$ in $H_1(D;\bZ)$ and on the endpoint preserving homotopy class of $\{\iota_s\}$. 
\end{lm}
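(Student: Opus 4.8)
The plan is to prove the two independence claims separately, since they involve different mechanisms. For homology-class independence, fix the isotopy $\{\iota_s\}$ and suppose $\gamma_0$ and $\gamma_1$ are loops in $D$ representing the same class in $H_1(D;\bZ)$. Then $\gamma_0-\gamma_1=\partial\Sigma$ for some $2$-chain $\Sigma$ in $D$. Consider the $3$-chain $W$ swept out by $\Sigma$ under the isotopy, i.e.\ the image of $[0,1]\times\Sigma$ under $(s,x)\mapsto\iota_s(x)$. The boundary of $W$ consists of the two copies $\iota_0(\Sigma),\iota_1(\Sigma)$ together with the trace cylinders $tr_{\{\iota_s\}}(\gamma_0)$ and (with opposite orientation) $tr_{\{\iota_s\}}(\gamma_1)$. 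Applying Stokes to $\int_W d\omega=0$ gives
\begin{equation}
\int_{tr(\gamma_0)}\omega-\int_{tr(\gamma_1)}\omega=\int_{\iota_1(\Sigma)}\omega-\int_{\iota_0(\Sigma)}\omega.
\end{equation}
Since $\iota_s$ is a symplectic embedding, $\iota_s^*\omega=d\theta$ on $D$, so $\int_{\iota_s(\Sigma)}\omega=\int_\Sigma d\theta=\int_{\partial\Sigma}\theta=\int_{\gamma_0}\theta-\int_{\gamma_1}\theta$, which is independent of $s$; hence the right-hand side vanishes and $Flux_{\{\iota_s\}}(\gamma_0)=Flux_{\{\iota_s\}}(\gamma_1)$.

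For homotopy-class independence, let $\{\iota_s^0\}$ and $\{\iota_s^1\}$ be two isotopies with the same endpoints $\iota_0,\iota_1$, related by an endpoint-preserving homotopy $\iota_s^r$, $r\in[0,1]$. Fix a loop $\gamma$ in $D$. Now look at the map $\Phi:[0,1]_r\times[0,1]_s\times S^1_t\to M$, $(r,s,t)\mapsto\iota_s^r(\gamma(t))$, and restrict $\Phi^*\omega$ (a closed $2$-form on the cube $\times S^1$, since $d\omega=0$) to the relevant faces. The two faces $r=0$ and $r=1$ are exactly the trace cylinders for the two isotopies; the faces $s=0$ and $s=1$ are degenerate in the $s$-direction for each fixed $r$ but more to the point are $(r,t)$-families of the \emph{constant} embeddings $\iota_0$ resp.\ $\iota_1$ (endpoints are preserved), so $\Phi|_{s=0}(r,t)=\iota_0(\gamma(t))$ is $r$-independent and its contribution to $\int\Phi^*\omega$ over that face vanishes because $\partial_r$ is in the kernel of the pullback; likewise for $s=1$. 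Applying Stokes to $\int\Phi^*(d\omega)=0=\int_{\partial(\text{cube}\times S^1)}\Phi^*\omega$ then yields $\int_{tr_{\{\iota_s^0\}}(\gamma)}\omega=\int_{tr_{\{\iota_s^1\}}(\gamma)}\omega$, which is the claim. Combined with the first part this also shows $Flux$ descends to a well-defined element of $Hom(H_1(D;\bZ),\bR)=H^1(D;\bR)$.

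I do not expect a serious obstacle here; the only point requiring a little care is the bookkeeping of orientations and boundary faces in the Stokes arguments, and making precise that ``endpoint-preserving homotopy of isotopies'' kills the $s=0,s=1$ boundary contributions. One clean way to organize this, avoiding chain-level fuss, is to phrase both statements as: the trace map sends a loop $\gamma$ to a relative $2$-cycle in $(M,\iota_0(D)\cup\iota_1(D))$ whose class depends only on $[\gamma]\in H_1(D)$ and the homotopy class of the path of embeddings, and $\omega$ pairs with it; then both independence statements follow from $[\omega]$ being closed together with $\iota_s^*\omega$ being exact on $D$. I would present the direct Stokes computations above, as they are elementary and self-contained, and relegate the reformulation to a remark.
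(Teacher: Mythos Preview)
Your proof is correct and follows essentially the same Stokes/homotopy approach as the paper. The paper's version is terser: for the first part it takes a bounding surface $I$ with $\partial I=\gamma_1-\gamma_2$ and observes that $tr(\gamma_2)$ is homotopic rel boundary to $\iota_0(I)+tr(\gamma_1)-\iota_1(I)$, then uses $\iota_0^*\omega=\iota_1^*\omega$; for the second part it simply notes that varying the isotopy with fixed endpoints varies $tr(\gamma)$ homotopically rel boundary. Your explicit $3$-chain and $3$-parameter family make the same argument concrete, and your use of a general $2$-chain $\Sigma$ (rather than a cylinder) is a harmless improvement in generality.
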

\begin{proof}
Let $\gamma_1$ and $\gamma_2$ be homologous smooth loops in $D$ and let $I$ be a cylinder with $\partial I=\gamma_1-\gamma_2$. Then the integral of $\omega$ over $tr(\gamma_2)$ is the same as the integral of $\omega$ over $\iota_0(I)+tr(\gamma_1)-\iota_1(I)$ by a homotopy argument. The first part of the claim follows since $\iota_0^*\omega=\iota_1^*\omega$. The second part follows by a homotopy argument. Namely, for a fixed $\gamma$, varying the isotopy with fixed endpoints amounts to varying $tr(\gamma)$ homotopically with fixed endpoints.
\end{proof}

\begin{df}\label{dfStraightIsotopy}
An isotopy $\iota_s$ is called \emph{straight} if $s\mapsto Flux_{\{\iota_s\}}$ is an affine function.
\end{df}

\begin{lm}\label{lmStraightIsotopy}
For any straight isotopy $\iota_t$, $t\mapsto\tau(\iota_t(D))$ is concave as a function of $t$. 
\end{lm}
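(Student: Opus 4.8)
\textbf{Proof plan for Lemma \ref{lmStraightIsotopy}.}

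The plan is to reduce concavity of $t\mapsto\tau(\iota_t(D))$ to the concavity statement already proved in Theorem \ref{tmConcave}, by transporting the family of embeddings to a family of Liouville forms on a fixed domain. First I would observe that pulling back by the isotopy $\iota_t$, the embedding $\iota_t:(D,\theta)\to(M,\omega)$ has the same image-side data as a fixed embedding $\iota_0$ equipped with a varying primitive: more precisely, fix the embedding $\iota_0$ and set $\theta_t:=\iota_0^*(\iota_t)_*\theta$ on $\iota_0(D)$, or equivalently work on the abstract $D$ with $\iota_0$ fixed and replace $\theta$ by a one-form $\theta_t$ whose cohomology class moves affinely. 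Indeed, by \eqref{eqFluxFormula} and Lemma \ref{lmFluxWellDefined}, $[\theta_t-\theta]=Flux_{\{\iota_s\}_{s\le t}}\in H^1(D;\bR)$, and straightness of the isotopy (Definition \ref{dfStraightIsotopy}) says precisely that this is an affine function of $t$. The content to check here is that $\tau(\iota_t(D))$, which by Theorem \ref{mainTmB}(2) depends only on the relative class $[\omega,\theta]\in H^2(M,D;\bR)$ under the embedding $\iota_t$, equals $\tau(D,\theta_t,\iota_0,M)$ — i.e. that an ambient isotopy does not change embedding torsion beyond its effect on flux. This follows from the isotopy invariance of relative symplectic cohomology with supports together with the naturality of the filtration $\val_{M,\theta}$, but it should be spelled out using Proposition \ref{lmWeightedWellDefined} and Lemma \ref{lmAltTauChar}.

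Second, I would invoke Theorem \ref{tmConcave}: with $\theta$ the initial Liouville form, there is an open neighborhood $U\subset H^1(D;\bR)$ of the origin and a function $\tau_\theta:U\to\bR\cup\{\infty\}$, well-defined by Lemma \ref{lmTauFormIndep}, which is either identically infinite or finite and concave. By the first step, $\tau(\iota_t(D))=\tau_\theta(v(t))$ where $v(t)=Flux_{\{\iota_s\}_{s\le t}}$ is an affine function of $t$ valued in $H^1(D;\bR)$. The composition of a concave function with an affine function is concave (and the identically-infinite case is trivially concave), so $t\mapsto\tau_\theta(v(t))$ is concave on the interval where $v(t)\in U$. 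For the global statement one either restricts to that subinterval, or notes that $\tau_\theta$ extends to all of the image of the flux map by the same cofinality construction (Lemma \ref{lmSShapedFlexibility} and Lemma \ref{lmPosDatumV}) since the only role of $U$ in Theorem \ref{tmConcave} is to guarantee the perturbation bound \eqref{eqPerturbationBoundLemma}, which is automatic once $\tau$ is finite; alternatively one appeals to the fact that $\tau$ is preserved under Liouville rescaling (Theorem \ref{mainTmB}(5)) to rescale $D$ so that the whole flux path lands in $U$.

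The main obstacle is the identification in the first step: showing that deforming the embedding along the isotopy, as opposed to deforming the primitive on a fixed embedding, has exactly the effect of translating by the flux class on the embedding torsion — and in particular that the S-shaped acceleration data and the family coefficient ring $R(P)$ used in the proof of Theorem \ref{tmConcave} can be set up uniformly along the isotopy. Concretely, one must check that $SC^*_M(\iota_t(D))$, as a filtered complex with the $\val_{M,\theta_t}$ filtration, is isomorphic (compatibly in $t$) to the model $SC^*_M(D,P)$ with the filtration $\val_{\sigma(v(t))}$ constructed before Lemma \ref{lmDifferentialDecreases}. This is essentially bookkeeping — the action functionals $\cA_{H,\theta_t}$ and the relative energies $E_{M,\theta_t}$ transform the right way under $\iota_t$ — but it requires care because $\iota_t(D)$ genuinely moves in $M$, so one is comparing Floer data on different (though Hamiltonian-isotopic) regions; the cleanest route is probably to pull everything back by $\iota_t$ to the abstract $D$ and check that the pulled-back Floer data are S-shaped for $(D,\theta_t)$ in the sense of Definition \ref{dfSShaped}, then apply Lemma \ref{lmAltTauChar} to conclude that $\tau$ is computed by the same spectral sequence.
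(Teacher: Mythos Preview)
Your overall strategy---reduce to Theorem \ref{tmConcave} by converting the moving embedding into a family of Liouville forms on a fixed domain, with cohomology class moving affinely by the straightness assumption---is exactly the paper's. The difference is in how the identification $\tau(\iota_t(D))=\tau(\text{fixed domain},\theta_t)$ is established, and here the paper takes a shorter route that avoids the ``main obstacle'' you describe.

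The paper works \emph{locally} around each $t_0$: since concavity is a local property, it suffices to prove it on a small interval about every $t_0$. For $r<1$ close to $1$ and $t$ near $t_0$ one has $\iota_t(r\cdot D)\subset \iota_{t_0}(D)$, so the pushed-forward primitive $(\iota_t)_*\theta$ (restricted and then extended) gives a family $\theta_t$ of primitives on the single fixed domain $\iota_{t_0}(D)$. The identification then becomes the chain
\[
\tau(\iota_t(D))=\tau(\iota_t(r\cdot D))=\tau(\iota_{t_0}(D),\theta_t),
\]
where each step uses only invariance properties of $\tau$ already established in \S\ref{SecTau} (Corollary \ref{cySkelDep} for skeleton-independence, Lemmas \ref{lmTauFormIndep0}--\ref{lmTauFormIndep} for the primitive). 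No transport of S-shaped data along the isotopy, no appeal to isotopy invariance of $SC^*_M$, no uniform family of acceleration data is required. The Stokes computation you sketch then gives $[\theta_t-\theta_{t_0}]=\mathrm{Flux}$, affine in $t$, and Theorem \ref{tmConcave} finishes.

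Your proposed global workarounds are problematic. Rescaling $D$ does not shrink the flux path (flux is computed on cycles in $D$ and is unchanged under $D\mapsto t\cdot D$), and $\tau_\theta$ is only defined where $\theta+\alpha$ is genuinely a Liouville form for $D$, which fails for large $\alpha$. The correct resolution is simply the local argument above; once you adopt it, the bookkeeping you worry about in the last paragraph becomes unnecessary.
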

\begin{proof}
For $t_0$ and any $r>1$ there is an $\epsilon>0$ such that $\iota_t(r\cdot D)\subset \iota_{t_0}(D)$ for each $t\in (t_0-\epsilon,t_0+\epsilon)$. The family $\theta_t:=(\iota_{t_0}\circ \iota_t^{-1})^*\theta$ extends to $D_{t_0}$ as a family of primitives of $\omega$ which we still denote by $\theta_t$. Shrinking $\epsilon$ if necessary, $\partial D_{t_0}$ is convex with respect to $\theta_t$ for  all $t\in (t_0-\epsilon,t_0+\epsilon)$. 

By Stokes' Theorem, the flux of the isotopy $\iota_t$ measures the difference between the cohomology classes of $\theta_t$ and $\theta_{t_0}$. Specifically, for any loop $\gamma$ in $D$, we have:
$$\int_{tr_{\{\iota_t\}}(\gamma)}\omega = \int_{tr_{\{\iota_{t}\}}(\gamma)}\theta-\int_{tr_{\{\iota_{t_0}\}}(\gamma)}\theta = \int_{\gamma}(\theta_t - \theta_{t_0})$$
 Since the isotopy is straight, it follows that the cohomology class $[\theta_t - \theta_{t_0}]$ depends affinely on $t$.

We have $\tau(\iota_t(D))=\tau(\iota_t(r\cdot D))=\tau(\iota_{t_0}(r\cdot D),\theta_t)=\tau(\iota_{t_0}(D),\theta_t)$. The claim now follows from Theorem \ref{tmConcave}.
\end{proof}

\subsection{Proof of Theorem \ref{tmTauConstant}}
\begin{df}\label{dfStarShape}
    The \emph{star shape} $Sh^*(D,M)$ is the subset of $H^1(D;\bR)$ of elements which can be realized as the flux of some straight isotopy.
\end{df}

Let $\delta:H^1(D;\bR)\to H^2(M,D;\bR)$ be the coboundary map in the long exact sequence of the pair $(M,D)$.

\begin{df}\label{dfDualCone}
    The \emph{dual cone} $C^*(D,M)$ to the star shape consists of classes $\beta\in H_2(M,D;\bZ)$ such that $\langle[\omega,\theta]+\delta(v),\beta\rangle\geq 0$ for all $v\in Sh^*(D,M)$.
\end{df}

\begin{rem}
    Given a local primitive $\theta$ of $\omega$, we have a projection map from the affine space $\cA$ of primitives of $\omega$ to $H^1(D;\bR)$ given by taking the cohomology class of the difference with $\theta$. A choice of section $\sigma$ of this map allows us to write the pairing in the definition of $C^*(D,M)$ explicitly: for each $v\in H^1(D;\bR)$ and $\beta\in H_2(M,D;\bZ)$, we have
    $$\langle[\omega,\theta]+\delta(v),\beta\rangle = \langle[\omega,\sigma(v)],\beta\rangle.$$
\end{rem}

Since a small neighborhood can be represented by Liouville forms, and we have already established that $\tau$ is unchanged by adding an exact form (Lemma \ref{lmTauFormIndep}), we can think of $\tau$ as a function on a neighborhood of $0$ in $H^1(D;\bR)$.

\begin{proof}[Proof of Theorem \ref{tmTauConstant}]
    We first show that if $\partial(C^*(D,M))=0$, then $\tau$ is constant on a neighborhood of $0$ in $H^1(D;\bR)$.
    
    Pick a small polygon $P$ in $H^1(D;\bR)$ containing $0$ and a section $\sigma$ in the space of Liouville forms over that polygon, as constructed in Lemma \ref{lmPosDatumV}. Then any class $\alpha\in H_2(M,D;\bZ)$ which appears in the leading term of the differential for a model as in Lemma \ref{lmFamilySDR} gives rise when considered as a function on $P$ to an upper bound on $\tau$. This upper bound is independent of the choice of the section, since $\tau$ is independent of the choice of the section.
    
    Now consider a segment $I$ contained in the star shape and realize it as a straight isotopy $\iota_t$. Then $\tau$, initially defined on $P$, extends to a function on $I$ and is concave on $I$ by Lemma \ref{lmStraightIsotopy}. Here we rely again on the fact that in $P$ the function $\tau$ is unchanged by adding an exact form by Lemma \ref{lmTauFormIndep}. Thus $\alpha$ is an upper bound for $\tau$ considered as a function on $I$. Since $\tau$ is bounded away from $0$, we have that $\alpha$ must lie in $C^*(D,M)$. By the assumption $\partial(C^*(D,M))=0$, we have $\partial\alpha=0$. It follows that the valuation of the leading term of $d_{def}$ is constant on $P$. 
    
    The second part of the claim follows immediately: if $\tau$ is constant, then the leading differential preserves the $H_1(D;\bZ)$ grading, since any class $\alpha$ appearing in the leading differential must satisfy $\partial\alpha=0$.

    Finally, suppose the ambient manifold $M$ is exact. Constancy of $\tau$ implies that any relative class $\alpha\in H_2(M,D;\bZ)$ contributing to $d_{def}$ satisfies $\partial\alpha=0$, hence lifts to a homology class of $M$. By exactness, $\omega$ evaluates to $0$ on such a class, so $\alpha$ cannot contribute to the differential.
\end{proof}

\section{Rigidity of algebraic structures}\label{SecRigidity}

\subsection{Rigidity of algebras}

\begin{df}\label{dfRigidity}
Let $\cA$ be a commutative nonarchemedean Banach algebra whose product we denote for elements $x,y\in \cA$ by $xy$. For $c<1$ a commutative product $*$ on $\cA$ is called $c$-close to the  original product if  $|x*y-x y|<c|xy|$ for all $x,y\in \cA$. $\cA$  is said to be $c$-rigid if any $c$-close product structure on $\cA$ is isomorphic as a Banach algebra to $\cA$ by an isomorphism which is identity up to terms of norm $<c$.
\end{df}

Let's consider some examples.

\begin{lm}[Rigidity of Tate Algebra]\label{lmTateRigid}
   The Tate algebra $T_n = \Lambda\langle x_1, \ldots, x_n \rangle$ (the $\Lambda$-algebra of restricted formal power series in $n$ variables, where coefficients converge to $0$ with respect to the norm on $\Lambda$) is $c$-rigid for any $c<1$
\end{lm}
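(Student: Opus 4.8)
The plan is to exhibit an explicit isomorphism from the deformed algebra $(T_n,*)$ back to $(T_n,\cdot)$ by an iterative scheme, using the contraction hypothesis to guarantee convergence. First I would reduce to the case where the deformed product agrees with the original on the generators: given any $c$-close product $*$, the elements $x_i' := x_i$ still generate topologically, and the key observation is that $T_n$ is the free adic-complete commutative Banach $\Lambda$-algebra on the $x_i$ subject only to $|x_i|\le 1$. Concretely, a continuous $\Lambda$-algebra homomorphism out of $(T_n,*)$ is determined freely by the images of the $x_i$, provided those images have norm $\le 1$; this is the universal property of the Tate algebra, and it holds for any commutative Banach $\Lambda$-algebra structure, deformed or not, because the multivariate power series $\sum_\alpha c_\alpha x^{\ast\alpha}$ still converges (the $*$-monomials $x^{\ast\alpha}$ have norm $\le 1$, since $*$ is $c$-close to $\cdot$ and the original monomials have norm $\le 1$, so $c<1$ forces $|x^{\ast\alpha}|\le 1$).

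The main step is then to build the isomorphism $\Phi:(T_n,\cdot)\to(T_n,*)$ with $\Phi(x_i)=x_i$, and to show it is invertible with $\Phi=\id+(\text{terms of norm}<c)$. Define $\Phi$ on monomials by $\Phi(x^\alpha)=x^{\ast\alpha}$ (the $*$-power), extended $\Lambda$-linearly and continuously; this is well-defined by the convergence remark above, and it is a $\Lambda$-algebra map by construction (it sends a product of generators to the corresponding $*$-product). Writing $\Psi:=\Phi-\id$, one checks inductively on $|\alpha|$ that $|\Psi(x^\alpha)| = |x^{\ast\alpha}-x^\alpha|<c$: for $|\alpha|=1$ it is $0$, and the inductive step expands $x^{\ast\alpha}=x^{\ast\beta}\ast x_i = (x^\beta + \Psi(x^\beta))\ast x_i$ and uses $|a\ast b - ab|<c|ab|\le c$ together with the non-Archimedean triangle inequality. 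Hence $|\Psi|_{\mathrm{op}}<c$ on the ideal generated by the $x_i$, and on all of $T_n$ after accounting for the constant term, on which $\Phi$ is the identity. Since $\Psi$ is a strict contraction (in operator norm $<c<1$), $\Phi=\id+\Psi$ is invertible by the geometric series $\Phi^{-1}=\sum_{k\ge0}(-\Psi)^k$, which converges in operator norm; moreover $\Phi^{-1}=\id+(\text{terms of norm}<c)$. This gives the required Banach-algebra isomorphism that is the identity up to norm $<c$ terms.

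The step I expect to be the main obstacle is justifying rigorously that $\Phi$, defined on the monomial basis, is genuinely a \emph{continuous} $\Lambda$-algebra homomorphism and not merely a $\Lambda$-linear map — i.e.\ that $\Phi(fg)=\Phi(f)*\Phi(g)$ for \emph{all} $f,g$, not just monomials. This requires interchanging infinite sums with the (deformed) multiplication, which is legitimate because $*$ is continuous and bilinear and $T_n$ is complete, but one must be careful that the $*$-monomials $x^{\ast\alpha}$ still form a topologically orthogonal Schauder-type family (equivalently, that $\{x^{\ast\alpha}\}$ and $\{x^\alpha\}$ have the same closed span with comparable norms), which again follows from $|x^{\ast\alpha}-x^\alpha|<c$ and a standard non-Archimedean perturbation-of-basis argument. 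Once this bookkeeping is in place the rest is routine. I would also remark that the same argument applies verbatim with $\Lambda$ replaced by any complete non-Archimedean field or Banach ring, and that the constant $c<1$ is sharp only in that one needs the $*$-monomials to stay in the unit ball; no smallness beyond $c<1$ is used.
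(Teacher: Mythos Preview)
Your proof is correct and follows essentially the same approach as the paper: define $\Phi$ on the monomial basis by $x^I\mapsto x^{*I}$, show by induction on $|I|$ that $|x^{*I}-x^I|<c$, and conclude that $\Phi$ is an isomorphism of Banach algebras equal to the identity up to terms of norm $<c$. You are more explicit than the paper about continuity and the invertibility via the geometric series for $(\id+\Psi)^{-1}$, but the core argument is identical.
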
 
\begin{proof}
    Consider the generators $x_1,\dots x_n$ which are of norm $1$. The monomials $x^I$ form an orthonormal basis for $T_n$ as a vector space over $\Lambda$. For any element $x\in T_n$ denote by $x_j^{*i}$ the $i$-fold  $*$ product of $x$ with itself. For any multi-index $I=(i_1,\dots,i_n)$, denote by $x^{*I}$ the element $x_1^{*i_1}*\dots *x_n^{*i_n}$. Then $|x^{*I}-x^I|<|x^I|=1$ by induction on the length of $I$. The map $x^I\mapsto x^{*I}$ for each $I\in\bZ^n$ is a norm preserving isomorphism of $\Lambda$-modules which is also a homomorphism of algebras. 
\end{proof} 

\begin{lm}[Rigidity of annulus]\label{lmHyperbolicRigid}
        Consider inside the affine variety $\Lambda[x_1,x_2]/(x_1x_2=1)$ the rigid analytic domain $A_{r_1,r_2}$ defined by the inequalities $|x_1|\leq e^{r_1}, |x_2|\leq e^{r_2}$. Let $\cA$ be its algebra of functions. Then $\cA$ is $e^{-r_1-r_2}$-rigid.
\end{lm}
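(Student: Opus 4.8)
The strategy mirrors the proof of Lemma \ref{lmTateRigid}, adapted to the Laurent-type basis appropriate to the annulus. First I would fix the natural orthogonal basis: since $x_1 x_2 = 1$, every element of $\cA$ has a unique convergent expansion $\sum_{k\in\bZ} a_k \xi^k$ where $\xi := x_1$ and $\xi^{-1} = x_2$, and $a_k \in \Lambda$. With the given domain $|x_1|\leq e^{r_1}$, $|x_2|\leq e^{r_2}$, the supremum norm is $|\sum a_k \xi^k| = \sup_k |a_k| e^{\phi(k)}$ where $\phi(k) = r_1 k$ for $k\geq 0$ and $\phi(k) = -r_2 k$ for $k\leq 0$; so the monomials $\{\xi^k\}_{k\in\bZ}$ form an orthogonal (not orthonormal) basis, with $|\xi^k| = e^{\phi(k)}$. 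The key numerical input is the strict superadditivity-type estimate $\phi(k+l) \leq \phi(k) + \phi(l)$ with a definite gap when $k,l$ have opposite signs: concretely $|\xi^k \cdot \xi^l| = e^{\phi(k+l)} \leq e^{-(r_1+r_2)\min(|k^+ \wedge l^-|,\ldots)} |\xi^k||\xi^l|$, and in particular $|\xi\cdot\xi^{-1}| = 1 = e^{-r_1-r_2}|\xi||\xi^{-1}|$, which is exactly where the constant $e^{-r_1-r_2}$ enters.

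Next, given a product $*$ that is $e^{-r_1-r_2}$-close to the original product, I would build the comparison isomorphism by the same bootstrapping as in Lemma \ref{lmTateRigid}: define $\Phi(\xi^k) := \xi^{*k}$ for $k \geq 0$ (the $k$-fold $*$-power), $\Phi(\xi^{-k}) := (\xi^{-1})^{*k}$, and extend $\Lambda$-linearly and continuously. One shows by induction on $|k|$ that $|\xi^{*k} - \xi^k| < |\xi^k| = e^{\phi(k)}$, using $c$-closeness of $*$ together with the fact that $|\xi\cdot\xi^{k-1}| = |\xi^k|$ for $k$ of constant sign (no cancellation), so the perturbation term $|\xi * \xi^{*(k-1)} - \xi\cdot\xi^{k-1}|$ stays strictly below $|\xi^k|$. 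This gives that $\Phi$ is a norm-preserving $\Lambda$-module isomorphism that is the identity modulo terms of norm $< 1$. The remaining point — and the step requiring the most care — is the compatibility relation that makes $\Phi$ an algebra map, namely that $\xi^{*k} * \xi^{*l} = \xi^{*(k+l)}$ including the mixed-sign cases, i.e. one must check $\xi^{*k} * (\xi^{-1})^{*l}$ reduces correctly. Here I would argue that $\xi * \xi^{-1}$ differs from $1$ by an element of norm $< |\xi\cdot\xi^{-1}| = 1$, and since the leading term is the unit, $\xi$ is $*$-invertible with $*$-inverse close to $\xi^{-1}$; then $\Phi$ is determined by sending the $*$-invertible generator $\xi$ to... well, one instead defines $\Phi$ using the \emph{original} generators and checks it intertwines, or one invokes the universal property of the Laurent algebra as a localization.

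The cleanest route is probably the latter: $\cA$ with the $*$-product is still topologically generated by $\xi$ and its $*$-inverse, $\xi$ and the $*$-unit $1$ are unchanged to leading order, so $(\cA,*)$ is the function algebra of an affinoid annulus $|x_1| \leq e^{r_1'}, |x_2|\leq e^{r_2'}$ with $r_i'$ close to $r_i$; orthogonality of the monomial basis is preserved up to the perturbation bound, forcing $r_i' = r_i$, and then the map $\xi \mapsto \xi$ (original to $*$) extends to the desired isomorphism identity-modulo-norm-$<c$. I would expect the main obstacle to be precisely bookkeeping this mixed-sign cancellation: unlike the Tate algebra, where all monomials multiply without norm drop, here the products $\xi^k * \xi^l$ with $kl < 0$ have strictly smaller norm than $|\xi^k||\xi^l|$, so one must verify that the $c$-closeness hypothesis with the sharp constant $c = e^{-r_1-r_2}$ is still strong enough to control those terms inductively — this is why the constant in the statement is $e^{-r_1-r_2}$ and not something larger, and getting the induction to close at exactly this threshold is the delicate part.
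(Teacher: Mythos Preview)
You correctly identify the obstacle: with $\Phi(\xi^k)=\xi^{*k}$ for $k\ge 0$ and $\Phi(\xi^{-k})=(\xi^{-1})^{*k}$, the mixed-sign relation fails because $\xi*\xi^{-1}\neq 1$ exactly. You even note that $\xi$ is $*$-invertible with $*$-inverse close to $\xi^{-1}$ --- but then you drop this and retreat to a vague ``cleanest route'' via universal properties of the Laurent algebra, which is circular: identifying $(\cA,*)$ with the function algebra of some annulus already requires producing generators satisfying the defining relation under $*$, which is the very thing you need to construct.

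The missing move is simply to \emph{use} the $*$-inverse. The paper normalizes to $z_1=e^{-r_1}x_1$, $z_2=e^{-r_2}x_2$ (so $|z_i|=1$ and $z_1z_2=e^{-r_1-r_2}$), observes that $c$-closeness gives $z_1*z_2=e^{-r_1-r_2}(1+\epsilon)$ with $|\epsilon|<1$, and then chooses $w_2$ close to $z_2$ with $z_1*w_2=e^{-r_1-r_2}$ \emph{exactly} (i.e.\ $w_2$ is the appropriately scaled $*$-inverse of $z_1$). Now define the map on the orthonormal basis by $z_1^i\mapsto z_1^{*i}$ for $i\ge 0$ and $z_2^j\mapsto w_2^{*j}$ for $j>0$. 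Because the relation $z_1*w_2=z_1z_2$ holds on the nose, this map is automatically an algebra homomorphism $(\cA,\cdot)\to(\cA,*)$ --- the mixed-sign products reduce correctly by induction --- and the same norm estimates you already sketched show it is a norm-preserving bijection that is the identity modulo lower-order terms. Your induction for same-sign powers and your observation that $\xi$ is $*$-invertible are both correct; you just need to feed the latter back into the definition of $\Phi$ on negative powers rather than treating it as an afterthought.
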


More generally, we have the following lemma for the polyannulus.
\begin{proof}
    Consider a $c$-close product structure $*$ for any $c<e^{-r_1-r_2}$. Let $z_1=e^{-r_1}x_1$ and $z_2=e^{-r_2}x_2$. We have $z_1*z_2=e^{-r_1-r_2}(1+o(1))$. In particular there is a $w_2$ such that $|w_2-z_2|<|z_2|=1$ and such that $z_1*w_2=1$. Consider the bases $\{z_1^i,z_2^j\}$ and $\{z_1^{*i},w_2^{*j}\}$  for $i\geq0,j>0$. Then the map 
$$z_1^0\mapsto z_1^{*i},\quad z_2^i\mapsto w_2^{*j}$$
is a norm preserving isomorphism of $\Lambda$-modules which is also a homomorphism of algebras. 
\end{proof}

\begin{lm}\label{corProductRigid}
    Let $m\ge 1$. Consider the $m$-fold polyannulus
    $$
       \prod_{j=1}^m A_{r_{j,1},\,r_{j,2}}.
    $$
     Let
    $$
       c \,=\, e^{-\max_{1\le j\le m}\,(r_{j,1}+r_{j,2})}.
    $$
    Then the algebra $\cA$ of analytic functions on this polyannulus is $c$-rigid.
\end{lm}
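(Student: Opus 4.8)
The plan is to reduce the $m$-fold case to the single-annulus case (Lemma \ref{lmHyperbolicRigid}) combined with the Tate-algebra case (Lemma \ref{lmTateRigid}) by a ``coordinate-by-coordinate'' argument, building an explicit isomorphism from the multiplicative structure of monomials. Write $\cA$ for the algebra of analytic functions on $\prod_{j=1}^m A_{r_{j,1},r_{j,2}}$. Each factor $A_{r_{j,1},r_{j,2}}$ has coordinate functions $x_{j,1},x_{j,2}$ with $x_{j,1}x_{j,2}=1$, and after rescaling $z_{j,1}=e^{-r_{j,1}}x_{j,1}$, $z_{j,2}=e^{-r_{j,2}}x_{j,2}$ we have $|z_{j,1}|=|z_{j,2}|=1$ and $z_{j,1}z_{j,2}=e^{-r_{j,1}-r_{j,2}}$. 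The monomials $z^{I}:=\prod_{j=1}^m z_{j,1}^{i_{j,1}}z_{j,2}^{i_{j,2}}$, where for each $j$ at most one of $i_{j,1},i_{j,2}$ is nonzero, form an orthonormal $\Lambda$-basis of $\cA$, and $|z^Iz^{I'}|\le \max_j e^{-(r_{j,1}+r_{j,2})}=c$ whenever the product $z^Iz^{I'}$ involves cancelling a $z_{j,1}z_{j,2}$ pair, and equals $1$ otherwise (just as in the one-variable proofs).

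The first step is: given a $c$-close commutative product $*$ on $\cA$, for each $j$ correct the pair $(z_{j,1},z_{j,2})$ to a pair $(z_{j,1},w_{j,2})$ with $z_{j,1}*w_{j,2}=e^{-r_{j,1}-r_{j,2}}$ (equivalently, after rescaling, a genuine inverse relation), exactly as in the proof of Lemma \ref{lmHyperbolicRigid}. Since the defect $|z_{j,1}*z_{j,2}-z_{j,1}z_{j,2}|<c|z_{j,1}z_{j,2}|=c\cdot e^{-r_{j,1}-r_{j,2}}$, one obtains $w_{j,2}$ with $|w_{j,2}-z_{j,2}|<|z_{j,2}|=1$ by a geometric-series correction using that $*$ is $T$-adically contracting on the relevant terms; here one uses $c<1$ so the correction converges. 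The second step is to define the candidate isomorphism $\Phi:\cA\to(\cA,*)$ on basis elements by
\begin{equation}
\Phi(z^I):=\prod_{j=1}^m{}^{*}\; z_{j,1}^{*i_{j,1}}*w_{j,2}^{*i_{j,2}},
\end{equation}
that is, replace each ordinary power by the corresponding $*$-power of the corrected generator and each ordinary product by $*$, and extend $\Lambda$-linearly and continuously. One must check $\Phi$ is well defined on the completion (the coefficients still tend to $0$ in norm since $\Phi$ is norm-preserving on basis elements) and is an algebra homomorphism for $*$ on the target: this is immediate on monomials since the defining relations $z_{j,1}z_{j,2}=e^{-r_{j,1}-r_{j,2}}$ are matched by $z_{j,1}*w_{j,2}=e^{-r_{j,1}-r_{j,2}}$ and distinct $j$'s involve independent generators. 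The third step is to verify $|\Phi(x)-x|<c|x|$: by induction on the total length of the multi-index $I$, using $|z^{*K}-z^{K}|<c$ (or more precisely $\le$ the norm of the highest cancellation incurred) for each single-factor power and the fact that $*$ agrees with the ordinary product up to error $<c$ on products of norm-$1$ elements, one gets $|\Phi(z^I)-z^I|<c$; since the $z^I$ are orthonormal this propagates to all of $\cA$. Finally, $\Phi$ is norm-preserving hence bijective (its inverse is constructed symmetrically, or one notes an endomorphism of a Banach space that is identity up to a contraction is invertible), giving the required Banach-algebra isomorphism that is the identity up to terms of norm $<c$.

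The main obstacle I expect is bookkeeping the induction in the third step cleanly: when multiplying out $z_{j,1}^{*i_{j,1}}*w_{j,2}^{*i_{j,2}}$ one accumulates many lower-order terms, and one must argue that \emph{every} such term has norm $<c$ relative to the leading monomial, i.e. that no ``resonance'' can push an error term back up to norm $1$. This is true because every discrepancy between $*$ and the ordinary product is, by hypothesis, strictly smaller than the ordinary product of the arguments, and the ordinary monomial products in $\cA$ have norm at most $1$; so the errors are genuinely bounded by $c$ and, being $T$-adically small, sum to something of norm $<c$. The subtlety relative to Lemmas \ref{lmTateRigid} and \ref{lmHyperbolicRigid} is only that one has two sources of defect interacting (the inversion correction $w_{j,2}-z_{j,2}$ and the $*$-versus-ordinary defect), but both are controlled by the single constant $c$, so the estimates compose without loss. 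I would also remark that the choice $c=e^{-\max_j(r_{j,1}+r_{j,2})}$ is sharp: the factor with largest $r_{j,1}+r_{j,2}$ is the ``shallowest'' annulus and governs how close a product must be before the monomial combinatorics can be rigidified, so no smaller constant than the worst factor's is available, consistent with Lemma \ref{lmHyperbolicRigid} applied to a single factor.
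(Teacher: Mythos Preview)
Your proof is correct and follows essentially the same approach as the paper's: normalize generators, correct the inverse relation for each factor via a convergent geometric-series argument, and define the isomorphism by sending ordinary monomials to $*$-monomials in the corrected generators, checking $|\Phi(z^I)-z^I|<c$ by induction on the length of the multi-index. The only cosmetic difference is that the paper corrects both generators $(w_{j,1},w_{j,2})$ per factor while you keep $z_{j,1}$ and correct only $w_{j,2}$ (as in the single-annulus lemma); this changes nothing.
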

\begin{proof}
    Consider a $c$-close product structure $*$ for any $c<e^{-\max_{1\le j\le m}(r_{j,1}+r_{j,2})}$. For each factor $A_{r_{j,1},r_{j,2}}$ with coordinates $x_{j,1}, x_{j,2}$ satisfying $x_{j,1}x_{j,2}=1$, introduce normalized variables $z_{j,1}=e^{-r_{j,1}}x_{j,1}$ and $z_{j,2}=e^{-r_{j,2}}x_{j,2}$. Then $|z_{j,1}|=|z_{j,2}|=1$ and $z_{j,1}z_{j,2}=e^{-r_{j,1}-r_{j,2}}$.
    
    Under the $*$-product, we have $z_{j,1}*z_{j,2}=e^{-r_{j,1}-r_{j,2}}(1+o(1))$ where the error term has norm $<c<e^{-r_{j,1}-r_{j,2}}$. For each $j$, we can find elements $w_{j,1}$ and $w_{j,2}$ with $|w_{j,1}-z_{j,1}|<1$ and $|w_{j,2}-z_{j,2}|<1$ such that $w_{j,1}*w_{j,2}=e^{-r_{j,1}-r_{j,2}}$.
    
    The algebra $\cA$ has an orthonormal basis consisting of monomials of the form
    $$\prod_{j=1}^m z_{j,1}^{a_{j,1}} z_{j,2}^{a_{j,2}}$$
    where $a_{j,1}, a_{j,2} \geq 0$ are non-negative integers. Define the map
    $$\prod_{j=1}^m z_{j,1}^{a_{j,1}} z_{j,2}^{a_{j,2}} \mapsto \prod_{j=1}^m w_{j,1}^{*a_{j,1}} * w_{j,2}^{*a_{j,2}}$$
    where the $*$ operations are performed according to the $*$-product structure.
    
    Since each $|w_{j,i}^{*k} - z_{j,i}^k| < |z_{j,i}^k| = 1$ by induction on $k$, and the errors compound with norm $<c$ because of the nonarchimedean property, this map is a norm-preserving isomorphism of $\Lambda$-modules which is also a homomorphism of algebras.
\end{proof}

\begin{lm}\label{lmLocRig}
Let $f\neq 0\in T_n$ be an element of norm $1$. For an $r>0$ consider the Laurent subdomain of the nonarchemedean unit ball defined as the set of points for which $|f(x)|\geq e^{-r}$. Then the algebra $\cA$ of functions is $e^{-r}$-rigid. 
\end{lm}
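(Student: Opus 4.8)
The plan is to reduce this to the rigidity of the Tate algebra (Lemma \ref{lmTateRigid}) by adjoining a single inverting variable, exactly as in the proofs of Lemma \ref{lmHyperbolicRigid} and Lemma \ref{corProductRigid}. Write $\cA$ as the quotient of $T_{n+1}=\Lambda\langle x_1,\dots,x_n,y\rangle$ by the relation $y\cdot f(x)=1$ — this is the standard presentation of the Laurent subdomain $\{|f|\ge e^{-r}\}$ as an affinoid algebra, where $y$ corresponds to the bounded function $f^{-1}$, which has norm $e^{r}$. Equivalently, after rescaling set $z=e^{-r}y$, so $|z|=1$ and the relation becomes $z\cdot f(x)=e^{-r}$. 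A convenient orthonormal $\Lambda$-basis of $\cA$ is then given by the monomials $x^I z^k$ with $I\in\bZ_{\ge 0}^n$, $k\in\bZ_{\ge 0}$ (one uses $z f=e^{-r}$ to eliminate powers of $f$ against powers of $z$; the point is that such a basis exists, with all basis elements of norm $1$, because $|f|=1$ on the unit ball).

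First I would fix a $c$-close commutative product $*$ on $\cA$ with $c<e^{-r}$. The key computation is to locate a replacement for $z$ that is a genuine $*$-inverse of $f$, up to the rescaling constant. Since $*$ is $c$-close, $z*f(x)=zf(x)+\text{(error of norm }<c|zf|=c\cdot e^{-r})$, and $zf=e^{-r}$, so $z*f = e^{-r}(1+\eta)$ with $|\eta|<c\,e^{r}<1$. Here $f(x)$ should be interpreted as the element of $\cA$ obtained by forming $f$ using the $*$-product on the $x_i$'s; one checks by induction on monomial length (using the nonarchimedean triangle inequality) that $f^{*}(x):=f(x\text{ built with }*)$ differs from $f(x)$ by an element of norm $<1$, hence still has norm $1$ and is $*$-multiplicatively invertible on the nose after correcting by $(1+\eta)^{-1}$, which exists in $\cA$ since $|\eta|<1$. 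Set $w:=z*(1+\eta)^{-1}$; then $|w-z|<1$ and $w*f^{*}(x)=e^{-r}$. Now define the $\Lambda$-module map $\Psi$ on basis elements by sending $x^I z^k$ to the $*$-monomial $(x^{*I})*w^{*k}$ (products formed via $*$). Because $|x^{*I}-x^I|<1$, $|w^{*k}-z^k|<1$ and errors only compound in norm below $1$ by ultrametricity, $\Psi$ sends each basis element to an element agreeing with it up to norm $<1$; since $\{x^I z^k\}$ is orthonormal this forces $\Psi$ to be a norm-preserving $\Lambda$-linear isomorphism. By construction $\Psi$ intertwines the original product with $*$ on generators and respects the defining relation $w*f^{*}(x)=e^{-r}$, so it is an algebra isomorphism $(\cA,\cdot)\to(\cA,*)$; and $\Psi-\id$ has norm $<c$ by tracking the size of the errors.

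The main obstacle — and the only place requiring genuine care — is verifying that the replacement $f^{*}(x)$ together with $w$ actually generates $(\cA,*)$ with the expected relation, i.e. that no new relations or obstructions appear and that the monomials $(x^{*I})*w^{*k}$ really do span. This is the analogue of the step in Lemma \ref{lmHyperbolicRigid} where one passes from $z_1*w_2=1$ to the full change of basis, but here it is slightly more delicate because $f$ is an arbitrary norm-$1$ element of $T_n$ rather than a coordinate, so one cannot simply relabel. The resolution is that the statement is purely about the filtered/graded structure: modulo the maximal ideal of elements of norm $<1$, the $*$-product agrees with the original product (this is exactly the hypothesis $|x*y-xy|<c|xy|$), so $\gr(\cA,*)\cong\gr(\cA,\cdot)$, which is the honest coordinate ring of the special fiber; the elements $\bar f^{*}$, $\bar w$ reduce to $\bar f$, $\bar z$, which do generate the reduced ring with the reduced relation. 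A standard successive-approximation / completeness argument (the same one underlying the homological perturbation lemma, Lemma \ref{lmCompPert}, applied here to the multiplicative structure) then lifts this isomorphism of associated gradeds to an isomorphism of the complete algebras, identity modulo norm $<c$. I would write this out via the explicit basis map $\Psi$ above rather than abstract deformation theory, since the explicit map is what yields the sharp constant $e^{-r}$.
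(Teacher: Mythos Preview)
Your approach is essentially the same as the paper's: describe $\cA$ via an orthonormal basis $\{x^I z^k\}$ (the paper writes these as $\frac{x^I}{e^{jr}f^j}$), find a $*$-inverse for $f$ up to the constant $e^{-r}$, and send $x^I z^k \mapsto x^{*I}*u^{*k}$. The one difference is that you rebuild $f$ using the $*$-product to obtain $f^*(x)$ and then invert that, whereas the paper simply finds $u$ with $u*f=e^{-r}$ using the original $f$; your version is arguably more careful (since the map $x^I\mapsto x^{*I}$ sends $f$ to $f^*$, so the relation one needs is $u*f^*=e^{-r}$), but the paper does not belabor this point and neither needs to, since $|f^*-f|<c<e^{-r}$ makes the two inverses differ by a unit close to $1$.
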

\begin{proof}
The algebra $\cA$ is a completion of the localization of the polynomial algebra in $n$ variables inverting $f$. The completion is with respect to the norm assigning $e^{-r}$ to $f^{-1}$. Moreover, there is an orthonormal basis consisting of all elements of the form $\frac{x^I}{e^{jr}f^j}$ for $j$ a non-negative integer and $I$ an $n$-tuple of non-negative integers. Given a product $*$ which is $c$-close for $c<e^{-r}$ we find an element $u$ of norm $1$ such that $u*f=e^{-r}$. We then define the norm preserving isomorphism of rings $\frac{x^{I}}{e^{jr}f^j}\mapsto x^{*I}*u^{*j} $.
\end{proof}

\subsection{Rigidity of pre-sheaves}
\begin{df}
    An injective map $f: V \to W$ of Banach spaces is said to be \emph{dense} if the image $f(V)$ is dense in $W$ with respect to the norm topology.
\end{df}

\begin{df}
An almost commutative diagram of Banach spaces is a diagram
$$
\xymatrix{V_0\ar[d]_{h_0}\ar[r]^f&W_0\ar[d]_{h_1}\\V_1\ar[r]^g&W_1}
$$
such that $|g\circ h_0-h_1\circ f|< |g\circ h_0|.$
\end{df}

\begin{prp}\label{prpDiagramRididity}
    Given an almost commutative diagram
    $$
    \xymatrix{V_0\ar[d]_{h_0}\ar[r]^f&W_0\ar[d]_{h_1}\\V_1\ar[r]^g&W_1}
$$
such that $f,g$ are isometries and $h_0$ is injective and dense, there is a unique map $\tilde{g}$  such that upon replacing $g$ with $\tilde{g}$ the diagram strictly commutes. Moreover, $|\tilde{g}-g|<|g|$.
\end{prp}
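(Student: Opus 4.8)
The plan is to define $\tilde g$ by the obvious formula forced by commutativity on the dense subspace $h_0(V_0)$, and then extend by continuity, using the almost-commutativity hypothesis to control the norm of the correction. First I would observe that since $f$ is an isometry, it is injective with closed image, so $f^{-1}:f(V_0)\to V_0$ is an isometry defined on the image. For $w\in h_0(V_0)$, write $w=h_0(v)$ and set
\begin{equation}
\tilde g(w):=h_1(f(v)).
\end{equation}
This is well-defined because $h_0$ is injective: $v$ is uniquely determined by $w$. By construction the square with $g$ replaced by $\tilde g$ commutes on $h_0(V_0)$, i.e. $\tilde g\circ h_0=h_1\circ f$ exactly.

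Next I would estimate $|\tilde g(w)-g(w)|$ for $w=h_0(v)\in h_0(V_0)$. By the almost-commutativity hypothesis $|g\circ h_0-h_1\circ f|<|g\circ h_0|$, so
\begin{equation}
|\tilde g(w)-g(w)|=|h_1(f(v))-g(h_0(v))|<|g(h_0(v))|=|g(w)|,
\end{equation}
where the last equality uses that $g$ is an isometry applied to $w$. In particular $|\tilde g-g|<|g|$ as operators on the dense subspace, and since the norm on $W_1$ is non-Archimedean this also gives $|\tilde g(w)|=|g(w)|=|w|$ on $h_0(V_0)$ (the strict inequality forces the two terms not to cancel), so $\tilde g$ is bounded (indeed norm-preserving) on $h_0(V_0)$. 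Because $h_0(V_0)$ is dense in $W_0$ and $\tilde g$ is a bounded linear map on it into the Banach space $W_1$, it extends uniquely to a bounded linear map $\tilde g:W_0\to W_1$, and the inequality $|\tilde g-g|<|g|$ persists on the closure by continuity of both sides. The extended $\tilde g$ still satisfies $\tilde g\circ h_0=h_1\circ f$ since both sides are continuous and agree on $V_0$, so the diagram strictly commutes.

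For uniqueness: if $\tilde g'$ also makes the square strictly commute, then $\tilde g'\circ h_0=h_1\circ f=\tilde g\circ h_0$, so $\tilde g'$ and $\tilde g$ agree on $h_0(V_0)$; being continuous and agreeing on a dense subspace, they are equal. The main obstacle I anticipate is purely a matter of bookkeeping: one must make sure the strict inequality in the non-Archimedean estimate is genuinely preserved under the continuous extension (passing to a sup/limit could a priori only give $\leq$), which is why it is cleanest to record $|\tilde g(w)-g(w)|<|g(w)|$ pointwise together with the operator-norm bound $|\tilde g-g|\le c\,|g|$ for the specific constant $c<1$ coming from the hypothesis $|g\circ h_0-h_1\circ f|\le c\,|g\circ h_0|$, and to extend that fixed-constant inequality rather than the strict one. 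No further ideas beyond density, continuity, and the non-Archimedean triangle inequality are needed.
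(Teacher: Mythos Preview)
Your proof is correct and follows essentially the same approach as the paper's: define $\tilde g$ on the dense subspace $h_0(V_0)$ by forcing commutativity, use almost-commutativity and the non-Archimedean inequality to see $\tilde g$ is norm-preserving there, extend by continuity, and deduce uniqueness from density. One notational slip: you write ``$h_0(V_0)$ is dense in $W_0$'' and ``$\tilde g:W_0\to W_1$'', but since $h_0:V_0\to V_1$, the dense subspace lies in $V_1$ and the extended map is $\tilde g:V_1\to W_1$; the argument is otherwise unaffected.
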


\begin{proof}
    We first define $\tilde{g}$ on $h_0(V_0)$ in such a way that the diagram commutes. By almost commutativity of the diagram  $g$ agrees with $\tilde{g}$ on  $h_0(V_0)$ up to lower order error. In particular, $\tilde{g}$ is norm preserving on  $h_0(V_0)$. Since $h_0$ has dense image, we can extend $\tilde{g}$ by continuity to all of $V_1$. This extension is still norm preserving. Uniqueness is clear since any map for which the diagram commutes must agree with $\tilde{g}$ on $h_0(V_0)$ and hence on all of $V_1$ by continuity.
\end{proof}

We wish to strengthen this result to a natural transformation of presheaves. We first introduce the following definition.
\begin{df}
    Let $I$ be an indexing category and let $\mathcal{D} = \{V_i, h_{ij}\}_{i,j \in I}$ and $\mathcal{D}' = \{W_i, k_{ij}\}_{i,j \in I}$ be two diagrams of Banach spaces indexed by $I$. An \emph{almost natural transformation} from $\mathcal{D}$ to $\mathcal{D}'$ is a collection of maps $\{f_i: V_i \to W_i\}_{i\in I}$ such that for all $i,j\in I$ the square
    $$\xymatrix{V_i \ar[d]_{h_{ij}} \ar[r]^{f_i} & W_i \ar[d]_{k_{ij}} \\ V_j \ar[r]^{f_j} & W_j}$$
    is almost commutative.
\end{df}

\begin{prp}\label{prpAlmostNaturalTransformation}
Let $I$ be an indexing category with an initial element $0$ and let $\mathcal{D} = \{V_i, h_{ij}\}_{i,j \in I}$ and $\mathcal{D}' = \{W_i, k_{ij}\}_{i,j \in I}$ be two diagrams of Banach spaces indexed by $I$. Suppose we have an almost natural transformation $\{f_i: V_i \to W_i\}_{i \in I}$ such that:
\begin{enumerate}
\item All maps $f_i$ in the natural transformation are isometries.
\item All maps $h_{ij}$ in the diagram $\mathcal{D}$ are injective and dense.
\end{enumerate}
Then there exists a unique natural transformation $\{\tilde{f}_i: V_i \to W_i\}_{i \in I}$ such that $\tilde{f}_0 = f_0$ and $|\tilde{f}_i - f_i| < |f_i|$ for all $i \neq 0$.
\end{prp}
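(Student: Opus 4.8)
\textbf{Proof proposal for Proposition \ref{prpAlmostNaturalTransformation}.}

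The plan is to bootstrap from Proposition \ref{prpDiagramRididity}, which handles a single almost-commutative square with the analogous hypotheses. First I would define $\tilde f_i$ for each $i\in I$ by applying Proposition \ref{prpDiagramRididity} to the square
$$\xymatrix{V_0\ar[d]_{h_{0i}}\ar[r]^{f_0}&W_0\ar[d]_{k_{0i}}\\ V_i\ar[r]^{f_i}&W_i},$$
which is almost commutative since $\{f_i\}$ is an almost natural transformation, and whose top map $f_0$ and bottom map $f_i$ are isometries while $h_{0i}$ is injective and dense. Proposition \ref{prpDiagramRididity} then produces a unique $\tilde f_i:V_i\to W_i$ making this square strictly commute, with $|\tilde f_i-f_i|<|f_i|$; for $i=0$ this gives $\tilde f_0=f_0$ (uniqueness applied to the identity square). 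Concretely, $\tilde f_i$ is characterized on the dense subspace $h_{0i}(V_0)$ by $\tilde f_i\circ h_{0i}=k_{0i}\circ f_0$ and extended by continuity; this is the source of both the norm estimate and later the naturality.

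Next I would verify that $\{\tilde f_i\}$ is an honest natural transformation, i.e.\ that $\tilde f_j\circ h_{ij}=k_{ij}\circ\tilde f_i$ for all $i\to j$ in $I$. Since $h_{0i}(V_0)$ is dense in $V_i$ and all maps in sight are continuous (isometries, or close to isometries, hence bounded), it suffices to check equality after precomposing with $h_{0i}$. Using functoriality $h_{ij}\circ h_{0i}=h_{0j}$ and $k_{ij}\circ k_{0i}=k_{0j}$, one computes
$$\tilde f_j\circ h_{ij}\circ h_{0i}=\tilde f_j\circ h_{0j}=k_{0j}\circ f_0=k_{ij}\circ k_{0i}\circ f_0=k_{ij}\circ\tilde f_i\circ h_{0i},$$
where the second and last equalities are the defining property of $\tilde f_j$ and $\tilde f_i$ respectively. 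By density the two bounded maps $\tilde f_j\circ h_{ij}$ and $k_{ij}\circ\tilde f_i$ agree on all of $V_i$. Finally, uniqueness of the whole collection follows because any natural transformation $\{g_i\}$ with $g_0=f_0$ must satisfy $g_i\circ h_{0i}=k_{0i}\circ g_0=k_{0i}\circ f_0$, so $g_i$ agrees with $\tilde f_i$ on the dense subspace $h_{0i}(V_0)$ and hence everywhere.

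The only genuine subtlety — and the step I would be most careful about — is that the naturality square for a general pair $i\to j$ is verified only \emph{after} precomposing with $h_{0i}$, which requires that $h_{0i}$ have dense image and that all four maps of the square be continuous; the latter needs the a priori bound $|\tilde f_i|\le|f_i|$ coming from the norm estimate in Proposition \ref{prpDiagramRididity}, so the order (construct $\tilde f_i$ with its estimate first, then check naturality) matters. One should also note that the argument uses that $I$ has an initial object $0$ in an essential way: it is what lets us pin down each $\tilde f_i$ by a single square through $0$ rather than having to solve a compatible system all at once. Everything else is a routine density-and-continuity argument with no analytic difficulty beyond what Proposition \ref{prpDiagramRididity} already supplies.
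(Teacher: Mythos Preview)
Your proposal is correct and follows essentially the same approach as the paper: construct each $\tilde f_i$ by applying Proposition \ref{prpDiagramRididity} to the square through the initial object $0$, then verify naturality for a general arrow $i\to j$ by precomposing with $h_{0i}$ and using functoriality $h_{0j}=h_{ij}\circ h_{0i}$, $k_{0j}=k_{ij}\circ k_{0i}$, with uniqueness following from density. Your added remarks on continuity and the role of the initial object are reasonable elaborations but do not change the argument.
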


\begin{proof}
For each $i \in I$, we construct $\tilde{f}_i$ as follows. Since $I$ has an initial element $0$, there exists a unique map $h_{0i}: V_0 \to V_i$ in the diagram $\mathcal{D}$. Consider the almost commutative diagram
$$\xymatrix{V_0 \ar[d]_{h_{0i}} \ar[r]^{f_0} & W_0 \ar[d]_{k_{0i}} \\ V_i \ar[r]^{f_i} & W_i}$$
where $k_{0i}: W_0 \to W_i$ is the corresponding map in $\mathcal{D}'$. 

By Proposition \ref{prpDiagramRididity}, there exists a unique map $\tilde{f}_i: V_i \to W_i$ such that $|\tilde{f}_i - f_i| < |f_i|$ and the diagram
$$\xymatrix{V_0 \ar[d]_{h_{0i}} \ar[r]^{f_0} & W_0 \ar[d]_{k_{0i}} \\ V_i \ar[r]^{\tilde{f}_i} & W_i}$$
strictly commutes. Set $\tilde{f}_0 = f_0$.

Now we need to verify that this construction gives a natural transformation. Let $h_{ij}: V_i \to V_j$ be any map in $\mathcal{D}$ with corresponding map $k_{ij}: W_i \to W_j$ in $\mathcal{D}'$. We need to show that the square
$$\xymatrix{V_i \ar[d]_{h_{ij}} \ar[r]^{\tilde{f}_i} & W_i \ar[d]_{k_{ij}} \\ V_j \ar[r]^{\tilde{f}_j} & W_j}$$
commutes.

By the construction, we have $k_{0i} \circ f_0 = \tilde{f}_i \circ h_{0i}$ and $k_{0j} \circ f_0 = \tilde{f}_j \circ h_{0j}$. Since $h_{0j} = h_{ij} \circ h_{0i}$ and $k_{0j} = k_{ij} \circ k_{0i}$ (by functoriality of the diagrams), we have:
\begin{align*}
k_{ij} \circ \tilde{f}_i \circ h_{0i} &= k_{ij} \circ k_{0i} \circ f_0 \\
&= k_{0j} \circ f_0 \\
&= \tilde{f}_j \circ h_{0j} \\
&= \tilde{f}_j \circ h_{ij} \circ h_{0i}
\end{align*}

Since $h_{0i}$ is injective and dense (by assumption), it has dense image, and therefore $k_{ij} \circ \tilde{f}_i = \tilde{f}_j \circ h_{ij}$ as required.

The uniqueness follows from the uniqueness in Proposition \ref{prpDiagramRididity} and the requirement that $\tilde{f}_0 = f_0$.
\end{proof}

\subsection{Proof of Theorem \ref{tmRegFibReconstruction}}\label{SecRegFibReconstruction}
Let $M$ be a closed symplectic manifold satisfying $c_1(M)=0$ and let $L$ be a Maslov $0$ Lagrangian torus. Let $P\subset H^1(L;\bR)$ be a Delzant parallelpiped centered at the origin. Assume $L$ has a Weinstein neighborhood $D_P$ symplectomorphic to the Lagrangian product $P\times L$ with $L$ mapping to $\{0\}\times L$. Define a Grothendieck topology on $P$ whose admissible opens are the closed rational convex polytopes $Q\subset P$. We obtain a presheaf $\cF^*$ of Banach BV algebras which assigns to $Q$ the algebra $\cF^*(Q):=SH^*_M(D_Q)$. 

On the other hand, $P$ carries a presheaf $\cA^*$ of Banach BV algebras defined as follows. Denote by $Aff(Q)$ the group of integral affine functions on $Q$. For any $Q\subset P$, the algebra $\cA^0_{\mathbb{Z}}(Q)$ is the completion of the group ring of $Aff(Q)$ with respect to the norm $|z^f|:=e^{-\min_Q(f)} $. Then $\cA^0_{\mathbb{Z}}(Q)$ is a module over the universal Novikov ring  by taking $T^{\lambda}\cdot z^f:=z^{f+\lambda}$. For  a ground ring $R$, we define $\cA^0(Q) := \cA^0_{\mathbb{Z}}(Q) \hat{\otimes}_{\mathbb{Z}} R$. $\cA^0(Q)$ is in fact an affinoid algebra over $\Lambda$. Under an isomorphism of $Q$ with a concrete polytope $Q_0\subset \bR^n$  the algebra $\cA^0(Q)$ becomes naturally isomorphic with the algebra of analytic functions on the domain $Log^{-1}(Q_0)\subset (\Lambda^*)^n$ where $Log:(\Lambda^*)^n\to\bR^n $ is the map $(z_1,\dots, z_n)\mapsto (-\log|z_1|,\dots,-\log|z_n|)$. 

For any $Q\subset P$, we define $\cA^*(Q):=Der^*(\cA^0(Q))$ the algebra of bounded polyderivations. To define the BV structure, observe $\cA^0(Q)$ carries a canonical volume form $\Omega_0$ up to scaling. It is defined as follows. Fix a collection $f_1,\dots, f_n\in Aff(Q)$ so that the $df_i$ form a $\bZ$ basis for the integral lattice in the global sections of $T^*Q$.  Let $z_i:=z^{f_i}\in\cA^0(Q)$ then $\Omega_0:=\frac{dz_1\wedge\dots\wedge dz_n}{z_1\dots z_n}$ is a volume form. Any other choice affects $\Omega_0$ by scaling by an element of the form $T^{\lambda}$. The volume form $\Omega_0$ defines an isomorphism $\Omega_0$ from polyderivations to differential forms by contraction. We now define the BV structure on $\cA^*(Q)$ by taking $div_{\Omega_0}$ to be the degree $1$ operator  given by 
\begin{equation}
div_{\Omega_0}:=\Omega_0^{-1}\circ d\circ\Omega_0
\end{equation}
 where $d$ is the exterior derivative. Observe that $div_{\Omega_0}$ is insensitive to the scaling of $\Omega_0$. Note also that fixing a base point in $Q$ fixes the scaling of $\Omega_0$.

 We can now turn to prove Theorem \ref{tmRegFibReconstruction}. We first introduce a constant $t_0$ by the following procedure. Consider the cube $C=[-1/2,1/2]^n$ in $\bR^n=H^1(\bT^n;\bR)$. Let $J$ be the standard flat almost complex structure on $T^*\bT^n$ mapping $\frac{\partial}{\partial p_i}$ to $\frac{\partial}{\partial \theta_i}$ for $i=1,\dots, n$. Let $D_C=C\times \bT^n\subset T^*\bT^n$. Consider the constant $\hbar_{in}(D_C,J,2)$ of Lemma \ref{lmEpsilonExtendable}. Namely, $\hbar$ is the minimal geometric energy of a Floer trajectory with small $H$ which meets both $D_C$ and the boundary of $D_{2C}$ where $2C$ is the cube $[-1,1]^n$. Let $t_0=\frac12\min\{1,\hbar_{in}(D_C,J,2)\}$. 

Let $P\subset H^1(L;\bR)$ be a Delzant parallelpiped of sidelength $2\ell$ centered at the origin. Suppose $\tau(L)=\infty$ and that the spectral sequence of Theorem \ref{mainTmFiltration} collapses on the first page for the domain $D_{t_0\cdot P}$. 

Our aim is to show that there exists an isomorphism of presheaves $\cA^*|_{t_0\cdot P}\to\cF^*|_{t_0\cdot P}$ which is compatible with the restriction maps and is an isomorphism of presheaves of Banach BV algebras. We break the proof into a number of steps.

Denote by $\cF^*_{loc}$ the presheaf of local symplectic cohomologies on $P$ defined by $Q\mapsto SH^*_{T^*L,\theta}(D_Q)$ where $\theta$ is the Liouville form on $T^*L$. 

\medskip
\noindent\textbf{Step 1: Isomorphism over $t_0\cdot P$ in degree 0.} 

We first establish an isomorphism $\cF^0(t_0\cdot P)\simeq \cA^0(t_0\cdot P)$ by a direct argument.
 
 \begin{lm}\label{lmCAsympHcSt1}
    Under the hypotheses above, there exists an isomorphism of affinoid algebras $\cA^0(t_0\cdot P)\to\cF^0(t_0\cdot P)$.
 \end{lm}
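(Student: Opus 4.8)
The goal is to promote the abstract perturbation statement of Theorems B1--B3 to a concrete isomorphism of affinoid algebras in degree $0$. I would argue in four stages. First, identify the intrinsic side: by \cite[\S5]{GromanVarolgunes2022} (cited as the computation of $\cF^*_{loc}$), the presheaf $\cF^0_{loc}$ on $P$ is canonically isomorphic to $\cA^0$, the pushforward of the structure sheaf of $\log^{-1}(P)\subset T$. This identifies $SH^0_{T^*L,\theta}(D_{t_0\cdot P})$ with $\cA^0(t_0\cdot P)$, the algebra of analytic functions on $\log^{-1}(t_0\cdot P)$, an affinoid algebra which (after choosing an identification of $t_0\cdot P$ with a concrete parallelepiped $Q_0\subset\bR^n$) is a polyannulus algebra of the kind appearing in Lemma \ref{corProductRigid}. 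Second, invoke the collapse hypothesis: since the spectral sequence \eqref{eqSpectralSequence} collapses on the first page for $D_{t_0\cdot P}$, Theorem \ref{mainTmA} (together with Theorem \ref{tmProductDeformation} for the product) gives that $d_{def}=0$ and hence $\cF^0(t_0\cdot P)=SH^0_M(D_{t_0\cdot P})$ is, \emph{as a Banach module over $\Lambda$}, isometrically isomorphic to $SH^0_{T^*L,\theta}(D_{t_0\cdot P})\hat\otimes\Lambda=\cA^0(t_0\cdot P)$, but carrying a possibly deformed product $*_{def}$.

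Third, estimate the size of the deformation. By Theorem \ref{tmProductDeformation} we have $|*-*_{def}|<e^{-\hbar}$, where $\hbar=\hbar(M,D_{t_0\cdot P})$. Here the choice of $t_0$ enters decisively: by Lemma \ref{lmOutsideContributions}, for the $2$-extendable embedding $D_C\hookrightarrow T^*\bT^n$ with the flat $J$, we may take $\hbar=\hbar_{in}(D_C,J,2)$, which depends only on the local geometry, and $t_0=\tfrac12\min\{1,\hbar_{in}(D_C,J,2)\}$ was defined precisely so that the sidelengths of $t_0\cdot P$ (which are at most $t_0\cdot(\text{diam of }P)$, and after rescaling bounded by the constants $r_{j,i}$ governing the polyannulus) satisfy $\max_j(r_{j,1}+r_{j,2})<\hbar$. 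Thus $e^{-\hbar}<c:=e^{-\max_j(r_{j,1}+r_{j,2})}$, which is exactly the rigidity constant for the polyannulus algebra $\cA^0(t_0\cdot P)$ from Lemma \ref{corProductRigid}. Fourth, apply rigidity: the deformed product $*_{def}$ is $c$-close to the standard product, so Lemma \ref{corProductRigid} produces an algebra isomorphism $\cA^0(t_0\cdot P)\to(\cA^0(t_0\cdot P),*_{def})=\cF^0(t_0\cdot P)$ which is the identity up to terms of norm $<c$; composing with the module identification from step two and the identification of step one yields the desired isomorphism of affinoid algebras.

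**Main obstacle.** The genuinely delicate point is the bookkeeping in step three: matching the constant $\hbar$ coming from the geometric estimate of Proposition \ref{PrpRelHbar}/Lemma \ref{lmOutsideContributions} against the rigidity constant $c=e^{-\max_j(r_{j,1}+r_{j,2})}$ attached to the \emph{specific} polyannulus presentation of $\cA^0(t_0\cdot P)$. One must be careful that the normalization of actions (the Liouville rescaling by $t_0$, which by Lemma \ref{lmBoundaryDepthLiouville} scales $\beta$ and by Remark after Theorem \ref{mainTmB} leaves $\hbar$ fixed) is tracked consistently, and that the ``sidelengths'' $r$ of $t_0\cdot P$ are read off in the same units as the energy $\hbar$. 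Since $D_{t_0\cdot P}$ does not have smooth boundary, one also uses (as flagged in the footnote to Theorem \ref{tmRegFibReconstruction}, via Theorem \ref{mainTmB}, parts (4)--(5)) that collapse and the value of $\tau$, hence $\hbar$-type estimates, are insensitive to the shape of the domain, so one may replace $t_0\cdot P$ by a smooth domain with the same skeleton when applying Theorem \ref{mainTmA}. Everything else — the module identification, the invocation of $d_{def}=0$, and the final application of Lemma \ref{corProductRigid} — is formal once the constant inequality $e^{-\hbar}<c$ is in hand.
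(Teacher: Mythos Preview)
Your proposal is correct and follows essentially the same approach as the paper: identify $\cF^0_{loc}(t_0\cdot P)\cong\cA^0(t_0\cdot P)$ via \cite{GromanVarolgunes2022}, use $\beta=0$ and the collapse hypothesis to get $d_{def}=0$ and hence a module isometry $\cF^0(t_0\cdot P)\cong\cA^0(t_0\cdot P)$ from Theorem~\ref{mainTmA}, then combine Theorem~\ref{tmProductDeformation} with Lemma~\ref{lmOutsideContributions} (applied to a $2$-extendable cube containing $t_0\cdot P$) to bound the product deformation below the rigidity threshold of Lemma~\ref{corProductRigid}. You have also correctly identified the one genuinely delicate point, the matching of the geometric constant $\hbar$ against the rigidity constant for the polyannulus, which is exactly where the specific definition of $t_0$ and the scaling behaviour $\hbar_{in}(r\cdot D)=r\,\hbar_{in}(D)$ from Lemma~\ref{lmEpsilonExtendable} enter.
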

\begin{proof}
    According to \cite[\S 5]{GromanVarolgunes2022} we have that $\cF^0_{loc}$ is isomorphic to $\cA^0$ as presheaves of Banach algebras. The parallelpiped $t_0\cdot P$ has sidelength $r=t_0\ell$. 
    After an $SL(n,\bZ)$ change of coordinates, $t_0\cdot P$ is a cube of the same sidelength. So according to Lemma \ref{corProductRigid}, $\cA^0(t_0\cdot P)$ is $e^{-r}$-rigid and therefore so is $\cF^0_{loc}(t_0\cdot P)$.
    
    We have that $SC^*_{\widehat{D_{t_0\cdot P}}}(D_{t_0\cdot P};\Lambda)$ is torsion free, so $\beta(D_{t_0\cdot P})=0$. The assumption $\tau(L)=\infty$ implies that $d_{def}=0$. By Theorem \ref{mainTmA} we have that $\cF^*(t_0\cdot P)$ is isomorphic to $\cF^*_{loc}(t_0\cdot P)$ as a $\Lambda$-module. 
    
    Moreover, $t_0\cdot P$ is contained in a $2$-extendable cube of sidelength $>\ell$. So by Theorem \ref{tmProductDeformation} and Lemma \ref{lmOutsideContributions}, $\cF^0(t_0\cdot P)$ is a $c$-perturbation of $\cF^0_{loc}(t_0\cdot P)\simeq \cA^0(t_0\cdot P)$ for some $c<e^{-r}$.  It follows by $e^{-r}$-rigidity that $\cF^0(t_0\cdot P)$ is isomorphic to $\cA^0(t_0\cdot P)$. 
\end{proof}

\medskip
\noindent\textbf{Step 2: Isomorphism over $t_0\cdot P$ in higher degrees.} 

In higher degrees, we show that both $\cF^*(t_0\cdot P)$ and $\cA^*(t_0\cdot P)$ are isomorphic to the polyderivations of the degree 0 algebra. Since we have already established an isomorphism in degree 0, this will complete the proof.

To deal with $SH$ in higher degrees we consider a map 
 $$\Upsilon:SH^*_M(D_{t_0\cdot P})\to Der^*(SH^0_M(D_{t_0\cdot P})).$$ 
 It is defined as identity on $SH^0$. On $SH^1$ it is defined as $v\mapsto \{v,\cdot\}$ for $v\in SH^1$ and $\{\cdot,\cdot\}$ denotes the bracket. Recall the bracket in symplectic cohomology is defined in terms of the BV operator and the standard product, and measures the failure of $\Delta$ to satisfy the degree $-1$ Leibniz rule. Namely,
 \begin{equation}\label{eqBracket}
 \{x,y\}:= \Delta(xy)-\Delta(x)y-(-1)^{|x|}\Delta(y)x.
 \end{equation}
 For basic properties and the Jacobi identity see \cite{Abouzaid2014b}.
Inductively, suppose we have defined $\Upsilon$ for degrees $<k$. For $v\in SH^k$ we define the action of $\Upsilon(v)$ on an ordered $k$-tuple of elements $x_1,\dots x_k$ by $$\langle \Upsilon(v),x_1,\dots x_k\rangle:=\langle \Upsilon\langle\{v,x_1\},x_2,\dots x_k\rangle.$$ 

 \begin{lm}
 $\Upsilon$ is well defined. Namely, for $v\in SH^k_M(D_{t_0\cdot P})$, we have that $\Upsilon(v)$ is indeed a $k$-vector field.
 \end{lm}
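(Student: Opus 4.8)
The statement to prove is that the map $\Upsilon$ is well-defined: for each $v \in SH^k_M(D_{t_0\cdot P})$, the object $\Upsilon(v)$ defined by the inductive formula $\langle \Upsilon(v), x_1,\dots,x_k\rangle := \langle \Upsilon(\{v,x_1\}), x_2,\dots,x_k\rangle$ is a genuine $k$-derivation, i.e.\ a section of $\Lambda^k \mathrm{Der}$, not merely a multilinear gadget. Concretely this amounts to two things: (i) $\Upsilon(v)$ is a derivation in each slot separately, and (ii) $\Upsilon(v)$ is \emph{graded-antisymmetric} (here $k$-vector fields on a degree-$0$ commutative algebra, so totally antisymmetric) in its $k$ arguments. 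The derivation property in the first slot and the antisymmetry are the content; everything else is bookkeeping via the induction hypothesis.

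\textbf{First steps.} I would start from the algebraic input that $SH^*_M(D_{t_0\cdot P})$ is a BV algebra (Theorem \ref{mainTmBV}), hence the Gerstenhaber bracket $\{\cdot,\cdot\}$ defined by \eqref{eqBracket} is a degree $-1$ Lie bracket satisfying the Jacobi identity and the Leibniz/Poisson rule with respect to the product: $\{v, xy\} = \{v,x\}y + (-1)^{(|v|-1)|x|}x\{v,y\}$. For the base cases $k=0,1$ there is nothing to check beyond $\{v,\cdot\}$ being a derivation, which is exactly the Poisson rule. For the inductive step, given $v \in SH^k$, I would verify: (a) \emph{derivation in the first argument}: $\langle\Upsilon(v), xy, x_2,\dots,x_k\rangle = \langle\Upsilon(\{v,xy\}),x_2,\dots,x_k\rangle$, and expanding $\{v,xy\}$ by the Poisson rule plus $\Lambda$-linearity of $\Upsilon$ on $SH^{k-1}$ (induction hypothesis: $\Upsilon(\{v,x\}y + \text{sign}\cdot x\{v,y\}) = y\cdot\Upsilon(\{v,x\}) + \text{sign}\cdot x\cdot\Upsilon(\{v,y\})$, using that $\Upsilon$ is $SH^0$-linear in lower degree) gives the Leibniz rule in the first slot; (b) \emph{derivation in the other slots} follows from the induction hypothesis applied to $\Upsilon(\{v,x_1\}) \in \mathrm{Der}^{k-1}$; (c) \emph{antisymmetry}: this is where I need the Jacobi identity. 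Swapping $x_1 \leftrightarrow x_2$ I must show $\langle\Upsilon(\{v,x_1\}),x_2,\dots\rangle = -\langle\Upsilon(\{v,x_2\}),x_1,x_3,\dots\rangle$. Expanding both sides one more step via the inductive definition, the leading term involves $\Upsilon(\{\{v,x_1\},x_2\})$ versus $\Upsilon(\{\{v,x_2\},x_1\})$; the Jacobi identity $\{\{v,x_1\},x_2\} \pm \{\{v,x_2\},x_1\} = \{v,\{x_1,x_2\}\}$ together with the fact that $\{x_1,x_2\}=0$ when $x_1,x_2 \in SH^0$ (the bracket has degree $-1$, so on degree-$0$ classes $\{x_1,x_2\} \in SH^{-1}$; more to the point, for the polyvector field interpretation one checks $\{SH^0, SH^0\}$ contributes trivially to the relevant terms, or one works with general arguments and tracks that the correction term is itself killed inductively) yields the antisymmetry.

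\textbf{The main obstacle.} The delicate point is the antisymmetry in (c), and in particular handling the Jacobi correction term $\Upsilon(\{v,\{x_1,x_2\}\})$. On a smooth (degree-$0$, purely commutative) affinoid algebra one wants $k$-vector fields to be antisymmetric, and the reason this works is that the relevant degree-$0$ elements $x_i$ have $\{x_i,x_j\}$ lying in a degree where it either vanishes or where the inductive hypothesis forces the contribution to cancel. I would make this precise by first reducing to evaluating $\Upsilon(v)$ on generators of $SH^0(D_{t_0\cdot P})$, which by Step 1 of the proof of Theorem \ref{tmRegFibReconstruction} is (isomorphic to) an affinoid algebra of functions on $\mathrm{Log}^{-1}(t_0 P)$, hence generated by invertible monomials $z_i^{\pm 1}$; and on such generators the Poisson bracket structure is the expected Poisson bracket on an algebraic torus, for which the relevant vanishings are transparent. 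Then one extends by multilinearity and continuity (all maps here are bounded with respect to the Banach norms, so density arguments apply). A secondary subtlety: I must confirm $\Upsilon$ lands in \emph{bounded} polyderivations — this follows because each step is composition with the bounded operators $\{v,\cdot\}$ and the bounded product/BV operator, and $SH^k$ is finitely generated in the relevant range (the polyvector fields on $\mathrm{Log}^{-1}(t_0P)$ are a finite free module over $SH^0$), so no boundedness is lost.

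\textbf{Wrap-up.} Having established that $\Upsilon(v)$ is a bounded totally antisymmetric $k$-derivation, I would remark that by the same generator-level computation $\Upsilon$ is in fact an isomorphism of graded modules $SH^*_M(D_{t_0\cdot P}) \xrightarrow{\sim} \mathrm{Der}^*(SH^0_M(D_{t_0\cdot P}))$: on the torus side the analogous map from the intrinsic theory $\cF^*_{loc}$ to polyvector fields is the identification of \cite[\S5]{GromanVarolgunes2022}, and since $d_{def}=0$ under the hypothesis $\tau(L)=\infty$ the ambient theory agrees with the intrinsic one as a BV algebra up to the controlled perturbation of Theorem \ref{tmProductDeformation}, which the rigidity of Lemma \ref{corProductRigid} upgrades to an honest isomorphism — this is exactly what feeds into Step 2. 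I expect the write-up of the well-definedness claim itself to be short once the Jacobi-identity bookkeeping in (c) is organized around evaluation on torus generators.
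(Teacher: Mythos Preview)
Your approach is correct in spirit but takes a longer route than the paper. The paper's proof is two lines: (i) $\Upsilon(v)$ is a derivation in the \emph{last} variable, because the iterated formula $\langle\Upsilon(v),x_1,\dots,x_k\rangle=\{\dots\{v,x_1\},\dots,x_k\}$ exhibits the last step as bracketing with the degree-$1$ element $\{\dots\{v,x_1\},\dots,x_{k-1}\}$, and $\{w,\cdot\}$ for $|w|=1$ is a derivation by the Poisson rule; (ii) alternating follows by checking the transposition of the last two slots, which is Jacobi together with the blunt fact that $\{x_{k-1},x_k\}=0$ because $SH^{-1}_M(D_{t_0\cdot P})=0$ (the intrinsic model is polyvector fields on a polyannulus, concentrated in degrees $0,\dots,n$, and the spectral sequence has collapsed).

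By contrast you check the derivation property in the \emph{first} slot, which forces you through the auxiliary lemma that $\Upsilon$ is $SH^0$-linear on lower degrees --- true, but itself a consequence of $\{SH^0,SH^0\}=0$, so you are using the key vanishing twice. Your ``main obstacle'' paragraph then hedges on whether $\{x_1,x_2\}$ vanishes and proposes a reduction to torus generators; this detour is unnecessary once you observe $SH^{-1}=0$ outright, which the paper does. The boundedness remark and the material in your wrap-up belong to the \emph{next} lemma (that $\Upsilon$ is an isomorphism of Gerstenhaber algebras), not to well-definedness. So: your argument goes through, but the paper's choice of checking the last slot rather than the first, and invoking $SH^{-1}=0$ directly, makes the proof essentially a one-liner.
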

 \begin{proof}
 Note $\Upsilon$ is a derivation in the last variable. Indeed, $\langle \Upsilon(v),x_1,\dots x_k\rangle=\{\dots\{v,x_1\},x_2\},\dots\},x_k\}$ and the bracket with a degree $1$ element acts as a derivation. It suffices to verify that $\Upsilon$ is alternating. The latter is also verified by considering the transposition of the last two elements and is an immediate consequence of the Jacobi identity since the bracket of a pair of degree $0$ elements vanishes.
\end{proof}

\begin{lm}
$\Upsilon$ is an isomorphism of Gerstenhaber algebras.
\end{lm}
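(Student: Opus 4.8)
The plan is to show that $\Upsilon$ is a bijection of graded modules and that it intertwines both the product and the Gerstenhaber bracket; the Lie part is essentially built into the definition, so the burden is the multiplicativity and the bijectivity. First I would observe that $\Upsilon$ is $SH^0$-linear in the last slot by construction (iterated bracket with a degree-one class is a derivation, and the bracket of two degree-zero classes vanishes, which is what made $\Upsilon$ well defined and alternating in the preceding lemma). So $\Upsilon(v)$ is genuinely an element of $Der^*(SH^0_M(D_{t_0 P}))=Der^*(\cA^0)$, where I use the degree-$0$ isomorphism $\cF^0(t_0P)\simeq\cA^0(t_0P)$ from Step 1. The strategy for bijectivity is to compare with the local model: by Theorem \ref{tmProductDeformation} the BV algebra $\cF^*(t_0P)$ is a $c$-perturbation (with $c<e^{-\hbar}$) of $\cF^*_{loc}(t_0P)$, and by \cite[\S5]{GromanVarolgunes2022} combined with the hypothesis $\tau(L)=\infty$ (so $d_{def}=0$ and $\beta(D_{t_0P})=0$) the latter is, as a BV algebra, the module $\cA^0(t_0P)$-polyvector fields with divergence BV operator for $\Omega_0$. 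For that honest geometric model the analogous map $\Upsilon_{loc}$ sends a polyvector field to the iterated Schouten bracket with coordinate functions, and this is classically the identity under the identification $Der^*=\Lambda^*Der^1$; hence $\Upsilon_{loc}$ is an isomorphism.

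Next I would transport this across the perturbation. Since $\Upsilon$ is defined purely in terms of the BV operator and product of $\cF^*(t_0P)$, and these differ from those of $\cF^*_{loc}(t_0P)$ by operators of norm $<e^{-\hbar}$ (Theorem \ref{tmProductDeformation}), the operator $\Upsilon$ differs from $\Upsilon_{loc}$ by terms of norm $<e^{-\hbar}<1$ after the degree-$0$ identification. A bounded operator between the relevant Banach modules which is within $1$ of an isometric isomorphism is itself an isomorphism by the usual geometric-series argument (its inverse is $\Upsilon_{loc}^{-1}\sum_{k\ge0}(\mathrm{id}-\Upsilon\circ\Upsilon_{loc}^{-1})^k$, convergent in operator norm), so $\Upsilon$ is a bijection. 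For the algebra structure: the product on $Der^*$ is the wedge product, and I would check $\Upsilon(v\wedge w)=\Upsilon(v)\wedge\Upsilon(w)$ by induction on total degree, reducing via the last-slot derivation property and the Leibniz rule of the bracket over the product (cf.\ \eqref{eqBracket} and the Gerstenhaber identities of \cite{Abouzaid2014b}) to the degree-$(1,0)$ and $(0,0)$ base cases, where it is immediate. The bracket compatibility $\Upsilon(\{v,w\})=[\Upsilon(v),\Upsilon(w)]$ (Schouten bracket on the right) likewise follows from the Jacobi identity for the Gerstenhaber bracket by the same inductive bookkeeping, since contracting both sides against a tuple of degree-zero classes turns Jacobi into the defining recursion for $\Upsilon$.

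The main obstacle I anticipate is the multiplicativity step: while $\Upsilon$ is tautologically a chain of brackets and hence automatically respects the Lie bracket up to the combinatorics of reordering slots, showing it respects the \emph{commutative} product requires genuinely using that the seven-term Gerstenhaber relations hold on the nose in $SH^*$ (not just up to homotopy) — this is fine since we are working at the level of cohomology $SH^*_M(D_{t_0P})$, where the BV algebra structure is strict — but the induction must be organized so that the antisymmetrization in the higher slots is compatible with the wedge on the target. A clean way to sidestep delicate sign-chasing is to note that once $\Upsilon$ is known to be a bijection and an $SH^0$-module map in each slot which agrees with $\Upsilon_{loc}$ modulo norm $<1$, and $\Upsilon_{loc}$ is a BV (in particular Gerstenhaber) isomorphism, one can instead verify that $\Upsilon$ is multiplicative by checking it on generators of $Der^*(\cA^0)$ over $\cA^0$, namely the $n$ commuting vector fields $z_i\partial_{z_i}$, which lift to classes in $SH^1$; there the identity $\Upsilon(v_I\wedge v_J)=\Upsilon(v_I)\wedge\Upsilon(v_J)$ for disjoint multi-indices is a finite computation. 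Assembling these, $\Upsilon$ is an isomorphism of Gerstenhaber algebras, and combined with the degree-$0$ statement of Step 1 this finishes the identification $\cF^*(t_0P)\cong\cA^*(t_0P)$.
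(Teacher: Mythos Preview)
Your approach is correct and essentially the same as the paper's. Both argue bijectivity by comparing $\Upsilon$ to the local analogue (the paper writes $\Upsilon_0$ where you write $\Upsilon_{loc}$), bounding the difference by $e^{-\hbar}$ via Theorem \ref{tmProductDeformation}, and applying the perturbation-of-isomorphism argument. The one notable difference is emphasis: the paper dispatches Gerstenhaber compatibility in a single sentence (``$\Upsilon$ is a map of Gerstenhaber algebras by construction''), treating it as a standard consequence of the Leibniz and Jacobi identities for iterated brackets, whereas you devote most of your proposal to this point and flag it as the main obstacle. Your concerns are not misplaced---the multiplicativity does require the shuffle-type expansion you describe---but this is classical Gerstenhaber-algebra bookkeeping and the paper is content to leave it implicit. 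The paper also routes the norm comparison through an intermediate isomorphism $\iota: Der^*(SH^0_M) \to Der^*(SH^0_{loc})$ induced from Step~1, giving the estimate $|\Upsilon - \iota \circ \Upsilon_0| < \max\{|\Upsilon - \Upsilon_0|, |\iota - \id|\}$; your direct comparison after identifying the degree-zero parts amounts to the same thing.
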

\begin{proof}
    $\Upsilon$ is a map of Gerstenhaber algebras by construction. It remains to show that it is an isomorphism. It suffices to show this as a map of Banach spaces.

    Let $\Upsilon_0:SH^*_{T^*L,\theta}(D_{t_0\cdot P})\to Der^*(SH^0_{T^*L,\theta}(D_{t_0\cdot P}))$ be the map for intrinsic symplectic cohomology that sends elements to the corresponding polyderivations. It is shown in  \cite[\S 5]{GromanVarolgunes2022} that $\Upsilon_0$ is an isomorphism.
    
    Let $\iota:Der^*(SH^0_M(D_{t_0\cdot P}))\to Der^*(SH^0_{T^*L,\theta}(D_{t_0\cdot P}))$ be induced by the isomorphism $SH^0_M(D_{t_0\cdot P})\to SH^0_{T^*L,\theta}(D_{t_0\cdot P})$ constructed in Lemma \ref{lmCAsympHcSt1}. Since both $\Upsilon_0$ and $\iota$ are isomorphisms, it remains to show that $\Upsilon$ is a small perturbation of $\iota\circ\Upsilon_0.$ Note that $\iota$ is an $e^{-\hbar}$-perturbation of the identity map, where we identify $SH^0_M(D_{t_0\cdot P})$ with $SH^0_{T^*L,\theta}(D_{t_0\cdot P})$ via Theorem \ref{mainTmA}. Using this identification we can estimate $|\Upsilon-\iota\circ\Upsilon_0|<\max\{|\Upsilon-\Upsilon_0|,|\iota-\id|\}$. The second term is less than $e^{-\hbar}$ as already noted. 
    To estimate the first term, observe that the norm of a polyderivation $\xi$ is given by the evaluation of $\xi$ on a tuple of functions of norm one. Therefore, looking at the defining equation of $\xi$ in terms of the product and the BV operator, we see that the difference between $\Upsilon$ and $\Upsilon_0$ involves terms which come from configurations that have a component contributing to the deformation of the product or the deformation of the BV operator. Therefore they all have norm less than $e^{-\hbar}$. Thus $|\Upsilon-\iota\circ\Upsilon_0|<e^{-\hbar}<1$. Since the composition $\iota\circ\Upsilon_0$ is an isomorphism, it follows that $\Upsilon$ is also an isomorphism.
\end{proof}
\begin{lm}\label{lmVolumeFormBV}
There exists a volume form $\Omega$ on $SH^0_M(D_{t_0\cdot P})$ so that the BV operator on $SH^*_M(D_{t_0\cdot P})$ is intertwined with $div_{\Omega}$ under the isomorphism $\Upsilon$.
\end{lm}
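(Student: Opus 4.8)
The plan is to transport the canonical volume form $\Omega_0$ on $\cA^0(t_0\cdot P)$ through the isomorphism of affinoid algebras $\phi:\cA^0(t_0\cdot P)\to SH^0_M(D_{t_0\cdot P})$ of Lemma \ref{lmCAsympHcSt1} and to show that the resulting candidate volume form $\Omega:=\phi_*\Omega_0$ intertwines the ambient BV operator with $div_{\Omega}$ under $\Upsilon$. Concretely, having already identified $SH^*_M(D_{t_0\cdot P})$ with $Der^*(SH^0_M(D_{t_0\cdot P}))$ via $\Upsilon$, I would first recall that on the local (intrinsic) side, the main result of \cite[\S5]{GromanVarolgunes2022} already establishes that the BV operator on $SH^*_{T^*L,\theta}(D_{t_0\cdot P})$ is intertwined with $div_{\Omega_0}$ under $\Upsilon_0$. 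So the content of the lemma is that this identity survives the deformation, up to adjusting $\Omega_0$ by the affinoid isomorphism.

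The key steps, in order: (1) Define $\Omega:=\phi_*\Omega_0$, using that $\phi$ is an isomorphism of $\Lambda$-algebras and hence induces an isomorphism on modules of K\"ahler differentials and on polyderivations, so $div_\Omega$ is well-defined on $Der^*(SH^0_M(D_{t_0\cdot P}))$. (2) Both $\Delta$ (the ambient BV operator, transported via $\Upsilon$) and $div_\Omega$ are degree $+1$ operators on $Der^*(SH^0_M(D_{t_0\cdot P}))$ that square to zero and satisfy the same second-order (BV/Gerstenhaber) compatibility with the wedge product and the Schouten bracket; the space of such operators is an affine space over $Der^1$ — i.e., two such operators differ by contraction with a $1$-form, equivalently by $div$ of a rescaling of $\Omega$ by a unit. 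This is the standard fact that a BV operator generating a fixed Gerstenhaber bracket on the exterior algebra of a module is determined by a choice of "volume form" up to a closed logarithmic factor. (3) It therefore suffices to pin down the discrepancy on $SH^1$: show that $\Delta|_{SH^1} - div_\Omega|_{Der^1}$, which is a map $Der^1\to \cA^0$ that is a derivation-difference, actually vanishes after the correct normalization of $\Omega$. Here I would use Theorem \ref{tmProductDeformation} and Lemma \ref{lmOutsideContributions}: $\Delta = \Delta_{loc} + (\text{correction of norm} < e^{-\hbar})$, and on the local side $\Delta_{loc}$ is exactly $div_{\Omega_0}$; combined with the fact that $\phi$ is itself an $e^{-\hbar}$-perturbation of the identity (under the identification of Theorem \ref{mainTmA}), the discrepancy $\Delta - div_\Omega$ has norm $< e^{-\hbar} < 1$. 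But the set of volume forms is discrete in the relevant sense — $\Omega$ is determined up to scaling by $T^\lambda\cdot(\text{exponential of an affine function})$, and the divergence operator is insensitive to the $T^\lambda$ scaling — so a divergence operator that is within distance $e^{-\hbar}<1$ of another genuine divergence operator $div_{\Omega}$ and agrees with it on the subalgebra $SH^0$ must coincide with it. (4) Conclude that $\Delta = div_\Omega$ on all of $Der^*$ by the affine-space rigidity in step (2), since they agree on $Der^0$ and $Der^1$.

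The main obstacle I anticipate is step (3): making precise the claim that "a second-order operator generating the given bracket, close to a genuine divergence, is a divergence." The subtlety is that the naive argument would want to solve $div_{\Omega'} = \Delta$ for $\Omega' = e^g\Omega$, which amounts to solving $dg/g = (\text{some explicit closed }1\text{-form})$ — and over an affinoid annulus/torus this need not have an honest solution (logarithmic periods obstruct it, which is exactly the wall-crossing / non-canonicity phenomenon flagged in Remark \ref{remNeckStretching}). The resolution is that the smallness estimate $|\Delta - div_\Omega| < e^{-\hbar}$ forces the obstruction $1$-form to be \emph{small}, while the lattice of possible logarithmic periods of a genuine volume form is \emph{discrete} (periods lie in $\frac{1}{2\pi i}\bZ^n$-type lattices coming from $H^1$ of the torus with integral structure), so a small obstruction must be zero; hence the correcting factor $e^g$ exists and $\Omega$ can be chosen so that equality holds exactly. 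I would make this rigorous by expanding $\Delta$ and $div_\Omega$ in the orthonormal monomial basis furnished in the proof of Lemma \ref{corProductRigid}, comparing coefficients, and invoking that the difference of two divergence operators is contraction with an \emph{exact} logarithmic $1$-form (an element of the image of $d\log$ on units), whose integrality combined with the norm bound yields vanishing. Everything else — well-definedness of $\phi_*\Omega_0$, insensitivity of $div$ to $T^\lambda$-scaling, the affine structure on BV operators — is routine given the results already established.
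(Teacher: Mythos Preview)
Your route differs from the paper's, and the central step has a gap. The paper's argument is purely algebraic and makes no use of the smallness estimates from Theorem~\ref{tmProductDeformation}. It runs as follows: since both $\Delta$ and $\Delta_0:=div_{\Omega_0}$ satisfy \eqref{eqBracket}, their difference $\delta:=\Delta-\Delta_0$ is a degree $-1$ \emph{derivation} of the graded-commutative algebra of polyvector fields, and hence $\delta=\iota_\alpha$ for some $1$-form $\alpha$; expanding $0=\Delta^2=\Delta_0^2+\Delta_0\iota_\alpha+\iota_\alpha\Delta_0+\iota_\alpha^2$ and transporting to differential forms via the isomorphism induced by $\Omega_0$ (under which $\iota_\alpha$ becomes $\alpha\wedge\cdot$ and $\Delta_0$ becomes $d$) yields $\iota_{d\alpha}=0$, so $\alpha$ is closed; the paper then asserts that $\alpha$ is exact on the affinoid domain, writes $\alpha=dh$, and sets $\Omega=e^h\Omega_0$, giving $div_\Omega=div_{\Omega_0}+\iota_{dh}=\Delta$.

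Your step~(2) already contains the essential slip: you write that two BV operators generating the given bracket ``differ by contraction with a $1$-form, equivalently by $div$ of a rescaling of $\Omega$ by a unit.'' These are \emph{not} equivalent. BV operators squaring to zero and generating the fixed Schouten bracket form an affine space over \emph{closed} $1$-forms, while divergence operators $div_{g\Omega_0}$ for units $g$ form the strictly smaller affine subspace over $d\log(\text{units})$. On a polyannulus the gap between these is precisely the nontrivial $H^1_{dR}$ you yourself flag in the obstacle paragraph. Your proposed resolution --- that the periods of $\alpha$ lie in a discrete lattice, so smallness forces them to vanish --- is circular: integrality of the periods is a \emph{consequence} of $\Delta$ already being a divergence operator (since then $\alpha=d\log(\Omega/\Omega_0)$ and units on a polyannulus have the shape $z^I\cdot(1+\text{small})$ with $I\in\bZ^n$), not an independent constraint you can feed into the argument. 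An arbitrary closed $1$-form has no a priori integrality on its $dz_i/z_i$-coefficients, so ``small plus discrete'' does not apply. You also never explicitly exploit $\Delta^2=0$ to deduce that $\alpha$ is closed, which is the step that reduces the question to a cohomological one in the first place; without it you are left trying to control an arbitrary $1$-form by norm alone, which cannot succeed.
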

\begin{proof}
We have established that $\Upsilon$ is an isomorphism of Gerstenhaber algebras between $SH^*_M(D_{t_0\cdot P})$ and the polyvector fields on $SH^0_M(D_{t_0\cdot P})$. Let $\Delta_0:=div_{\Omega_0}$ be the BV operator on the polyvector fields associated to the canonical volume form $\Omega_0$ on $SH^0_M(D_{t_0\cdot P})$ from the underlying isomorphism of affinoid algebras $\cA^0(t_0\cdot P)\to\cF^0(t_0\cdot P)$ constructed in Lemma \ref{lmCAsympHcSt1}. Let $\Delta$ be the Floer theoretic BV operator. 

Both $\Delta_0$ and $\Delta$ satisfy \eqref{eqBracket}. Therefore, the difference $\delta:=\Delta-\Delta_0$ is a degree $-1$ derivation of the graded-commutative algebra of polyvector fields. We claim that $\delta$ is the contraction with some $1$-form $\alpha$. Indeed, on functions $f \in C^\infty(X) = \wedge^0 TX$ we have $\delta(f) = 0$ for degree reasons. For a vector field $v$, the degree -1 Leibnitz rule gives:

$$\delta(fv) = \delta(f) \wedge v + f \delta(v) = f \delta(v)$$
This defines the $1$-form $\alpha$ which acts on vector fields by $\alpha(v) = \delta(v)$. Since contraction with a $1$-form is a degree $-1$ derivation, and the polyvector fields form the free graded-commutative algebra on $\Gamma(TX)$, it follows that $\delta$ coincides with contraction with $\alpha$. Thus $\Delta = \Delta_0 + \iota_\alpha$.

To proceed, write $0=\Delta^2 = \Delta_0^2 + \Delta_0 \iota_\alpha +\iota_\alpha \Delta_0 + \iota_\alpha^2$. Since both $\Delta_0^2 = 0$ and $\iota_\alpha^2 = 0$, we deduce that $\Delta^2 = \Delta_0\circ \iota_\alpha+\iota_\alpha\circ \Delta_0$. Observe now that under the isomorphism between differential forms and polyvector fields defined by $\Omega_0$, contraction with $\alpha$ maps to exterior product with $\alpha$. Thus $\Delta_0\circ \iota_\alpha=\iota_{d\alpha}-\iota_\alpha\circ \Delta_0$. Thus the equation $\Delta^2=0$ implies $\iota_{d\alpha}=0$. Thus $d\alpha=0$.

Since we're working over an affinoid domain, the closed form $\alpha$ must be exact. Therefore, $\alpha = df$ for some function $h$. Consider the volume form
$$\Omega = e^h \cdot \Omega_0.$$ Then
$$div_{\Omega} = div_{\Omega_0} + \iota_{dh} = div_{\Omega_0} + (\Delta - \Delta_0)$$ as required.
\end{proof}

\begin{lm}
There exists an isomorphism $f:SH^0_M(\pi^{-1}(t_0\cdot P))\to\cA^0(t_0\cdot P)$ so that $\Omega=f^*\Omega_0$. It lifts to an isomorphism of BV algebras $SH^*_M(D_{t_0\cdot P})\to\cA^*(t_0\cdot P)$.
\end{lm}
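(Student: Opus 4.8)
The plan is to leverage the rigidity result of Lemma~\ref{corProductRigid} (via its consequence established in Step~1) together with the volume form $\Omega$ produced in Lemma~\ref{lmVolumeFormBV}. First I would observe that we already possess \emph{some} isomorphism of affinoid algebras $g_0:SH^0_M(D_{t_0\cdot P})\to \cA^0(t_0\cdot P)$ from Lemma~\ref{lmCAsympHcSt1}. Pulling back $\Omega_0$ along $g_0$ gives a volume form $g_0^*\Omega_0$ on $SH^0_M(D_{t_0\cdot P})$, and the volume form $\Omega$ from Lemma~\ref{lmVolumeFormBV} differs from $g_0^*\Omega_0=\Omega_0$ (under the identification) by multiplication by a unit $e^h$. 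The key point is that $e^h$ is a unit in the affinoid algebra $\cA^0(t_0\cdot P)$ close to a constant: since the deformation is controlled by $e^{-\hbar}$, we can arrange $h$ to have small norm, so $e^h=T^\lambda u$ for a unit $u$ with $|u-1|<1$. Then I would seek an automorphism $\phi$ of $\cA^0(t_0\cdot P)$ with $\phi^*\Omega_0 = e^{h}\Omega_0$, i.e.\ $\det(D\phi)/(\text{monomial})=e^h$; equivalently I want to absorb the density $e^h$ by a coordinate change. Concretely, writing $\cA^0(t_0\cdot P)$ as functions on $\mathrm{Log}^{-1}(Q_0)$ with coordinates $z_1,\dots,z_n$ and $\Omega_0=\frac{dz_1\wedge\cdots\wedge dz_n}{z_1\cdots z_n}$, one solves for $\phi(z_i)=z_i\cdot \exp(\psi_i)$ so that the logarithmic Jacobian picks up exactly $h$; since $h$ is small this can be done by an iteration/implicit function argument in the affinoid setting, or more simply by setting $\phi(z_1)=z_1 e^{h}$, $\phi(z_j)=z_j$ for $j\geq 2$, which gives $\phi^*\Omega_0 = e^h\Omega_0$ directly. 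Taking $f:=\phi\circ g_0$ then yields an isomorphism of affinoid algebras with $f^*\Omega_0=\Omega$.

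Once $f$ is constructed, the lift to an isomorphism of BV algebras is essentially formal. By the previous two lemmas, $\Upsilon:SH^*_M(D_{t_0\cdot P})\to Der^*(SH^0_M(D_{t_0\cdot P}))$ is an isomorphism of Gerstenhaber algebras, and $f$ induces an isomorphism $f_*:Der^*(SH^0_M(D_{t_0\cdot P}))\to Der^*(\cA^0(t_0\cdot P))=\cA^*(t_0\cdot P)$ of Gerstenhaber algebras (pushforward of polyderivations along the ring isomorphism $f$). The composite $F:=f_*\circ\Upsilon: SH^*_M(D_{t_0\cdot P})\to\cA^*(t_0\cdot P)$ is then an isomorphism of Gerstenhaber algebras extending $f$ in degree $0$. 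It remains to check that $F$ intertwines the BV operators. On the source the BV operator corresponds under $\Upsilon$ to $div_\Omega$ by Lemma~\ref{lmVolumeFormBV}; under $f_*$, and using $f^*\Omega_0=\Omega$, the operator $div_\Omega$ is carried to $div_{\Omega_0}$, which is precisely the BV operator on $\cA^*(t_0\cdot P)$ by definition. Hence $F$ is an isomorphism of BV algebras, and in degree $0$ it is $f$, as desired.

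The main obstacle I anticipate is the construction of the coordinate change $\phi$ absorbing the density $e^h$ while remaining a genuine \emph{isometric} automorphism of the affinoid algebra (so that the resulting $F$ is an isomorphism of \emph{Banach} BV algebras, not merely of abstract ones). The naive choice $\phi(z_1)=z_1e^h$ changes the norm unless $\|e^h\|=1$, i.e.\ unless $h$ has non-negative valuation with valuation-zero part a unit of norm $1$; one must check, using the quantitative estimates $|\Delta-\Delta_0|<e^{-\hbar}$ from Theorem~\ref{tmProductDeformation} and Lemma~\ref{lmVolumeFormBV}'s construction of $h$, that indeed $h$ can be taken with $|h|<1$ and hence $e^h$ a unit of norm $1$, so that $\phi$ is an isometry. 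A secondary subtlety is the identification $f^*\Omega_0=\Omega$ as opposed to $f^*\Omega_0 = T^\lambda\Omega$ for some shift $\lambda$; but since $div$ is insensitive to scaling $\Omega$ by $T^\lambda$, this causes no problem for the BV-intertwining statement, and one normalizes by fixing a base point as remarked after the definition of $\Omega_0$.

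\begin{proof}[Proof sketch of the final Lemma]
By Lemma~\ref{lmCAsympHcSt1} fix an isomorphism of affinoid algebras $g_0:SH^0_M(D_{t_0\cdot P})\to\cA^0(t_0\cdot P)$, and identify the two algebras via $g_0$. By Lemma~\ref{lmVolumeFormBV} the Floer BV operator corresponds under $\Upsilon$ to $div_{\Omega}$ with $\Omega=e^{h}\Omega_0$, where, by the quantitative bounds of Theorem~\ref{tmProductDeformation} (see also Lemma~\ref{lmOutsideContributions}), $h$ may be taken with $|h|<e^{-\hbar}<1$, so that $e^{h}$ is a unit of norm $1$. Writing $\cA^0(t_0\cdot P)$ in coordinates $z_1,\dots,z_n$ with $\Omega_0=\frac{dz_1\wedge\cdots\wedge dz_n}{z_1\cdots z_n}$, let $\phi$ be the automorphism with $\phi(z_1)=z_1 e^{h}$, $\phi(z_j)=z_j$ for $j\geq 2$; then $\phi$ is an isometry and $\phi^*\Omega_0=e^{h}\Omega_0=\Omega$. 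Set $f:=\phi\circ g_0$, an isometric isomorphism of affinoid algebras with $f^*\Omega_0=\Omega$. Define $F:=f_*\circ\Upsilon:SH^*_M(D_{t_0\cdot P})\to\cA^*(t_0\cdot P)$, where $f_*$ denotes pushforward of polyderivations along $f$. Then $F$ is an isomorphism of Gerstenhaber algebras restricting to $f$ in degree $0$, and since $f^*\Omega_0=\Omega$ it carries $div_{\Omega}$ to $div_{\Omega_0}$; by Lemma~\ref{lmVolumeFormBV} this means $F$ intertwines the Floer BV operator with the BV operator of $\cA^*(t_0\cdot P)$. Hence $F$ is the desired isomorphism of BV algebras.
\end{proof}
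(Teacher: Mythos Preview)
Your approach coincides with the paper's: the paper also identifies $SH^0$ with the polyannulus via Step~1, writes $\Omega=e^h\Omega_0$, and takes $f:(z_1,\dots,z_n)\mapsto(e^{-h}z_1,z_2,\dots,z_n)$, asserting $f^*\Omega_0=\Omega$ with no further comment. Your write-up is more complete than the paper's three-line proof, spelling out the lift $F=f_*\circ\Upsilon$ to higher degrees and the verification that $f^*\Omega_0=\Omega$ forces $F$ to intertwine $div_\Omega$ with $div_{\Omega_0}$; the paper leaves all of this implicit.

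One caveat worth flagging (shared with the paper): the one-line formula $\phi(z_1)=z_1e^{h}$ does not literally give $\phi^*\Omega_0=e^h\Omega_0$. In logarithmic coordinates $u_j=\log z_j$ the map is $u_1\mapsto u_1+h$, whose Jacobian along $\Omega_0=du_1\wedge\cdots\wedge du_n$ is $1+\partial_{u_1}h=1+z_1\partial_{z_1}h$, so $\phi^*\Omega_0=(1+z_1\partial_{z_1}h)\Omega_0$, not $e^h\Omega_0$. Your own stated fallback --- solve for $\phi(z_1)=g\,z_1$ with logarithmic Jacobian $e^h$, i.e.\ integrate $\partial_{u_1}\log g=e^h-1$, or run the iteration/implicit-function argument you mention --- is the honest route; since $|h|<1$ this converges and yields an isometric automorphism, after which the rest of your argument goes through verbatim.
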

\begin{proof}
    We identify $SH^0_M(\pi^{-1}(t_0\cdot P))$ with a  polyannulus. 
    Let $\Omega=e^h\Omega_0=e^h\frac{dz_1\wedge\dots dz_n}{z_1 \dots z_n}$. Let $f$ be the map  on $(\Lambda^*)^n$ defined by $(z_1,\dots, z_n)\mapsto (e^{-h}z_1, z_2\dots, z_n)$. Then $f^*\Omega_0=\Omega$.
\end{proof}

\medskip
\noindent\textbf{Step 3: Comparison of isomorphisms.}

We now have two different isomorphisms from $\cF^*(t_0\cdot P)$ to $\cF^*_{loc}(t_0\cdot P)$. On the one hand, we have the composition of isomorphisms of BV algebras 
$$\cF^*(t_0\cdot P)\xrightarrow{\text{Step 2}} \cA^*(t_0\cdot P)\xrightarrow{\text{\cite[\S 5]{GromanVarolgunes2022}}} \cF^*_{loc}(t_0\cdot P).$$
Here the first map is the inverse of the isomorphism constructed via polyderivations, and the second map is the canonical isomorphism from \cite[\S 5]{GromanVarolgunes2022}.

On the other hand, the assumption $\tau(L)=\infty$, implies that $\tau(D_{t_0\cdot P})=\infty$. Therefore, Theorem \ref{mainTmA} provides an isomorphism of $\Lambda$-modules
$$\cF^*(t_0\cdot P)\to \cF^*_{loc}(t_0\cdot P).$$

\begin{lm}\label{lmIsomorphismsAgree}
The two isomorphisms $\cF^*(t_0\cdot P)\to \cF^*_{loc}(t_0\cdot P)$ described above agree up to lower-order terms. More precisely, their difference has norm $<e^{-\hbar}$.
\end{lm}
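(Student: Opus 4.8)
\textbf{Proof plan for Lemma \ref{lmIsomorphismsAgree}.}

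The plan is to compare the two maps $\cF^*(t_0\cdot P)\to\cF^*_{loc}(t_0\cdot P)$ at the level of the telescope models, tracking everything through the special deformation retraction of Theorem \ref{mainTmA}. Recall that both maps factor through the identification of the underlying $\Lambda$-module of $\cF^*(t_0\cdot P)$ with $G^*\hat\otimes\Lambda$, where $G^*=SH^*_{T^*L,\theta}(D_{t_0\cdot P};\bK)$, supplied by Proposition \ref{prpGrLocality}. Since $\beta(D_{t_0\cdot P})=0$ and $\tau(L)=\infty$, Theorem \ref{mainTmA} gives $d_{def}=0$, so the retraction $\rho$ of Theorem \ref{mainTmA} is a genuine isometric isomorphism of chain complexes $\cF^*(t_0\cdot P)\xrightarrow{\sim}(G^*\hat\otimes\Lambda,0)=\cF^*_{loc}(t_0\cdot P)$; this is the ``second'' isomorphism in the statement. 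The ``first'' isomorphism is the composite $\cF^*(t_0\cdot P)\to\cA^*(t_0\cdot P)\to\cF^*_{loc}(t_0\cdot P)$ built in Step 2, where the first arrow is $\Upsilon^{-1}$ postcomposed with the degree-$0$ identification of Lemma \ref{lmCAsympHcSt1} (extended to polyderivations), and the second is the canonical isomorphism of \cite[\S 5]{GromanVarolgunes2022}. First I would observe that, by construction, the degree-$0$ part of the first composite is precisely the isomorphism $\cF^0(t_0\cdot P)\simeq\cF^0_{loc}(t_0\cdot P)$ obtained in Lemma \ref{lmCAsympHcSt1}, which is by construction an $e^{-\hbar}$-perturbation of the $\Lambda$-module identification coming from Theorem \ref{mainTmA} (this is exactly what ``$c$-rigid with $c<e^{-\hbar}$'' extracted from Theorem \ref{tmProductDeformation} and Lemma \ref{lmOutsideContributions} gives us). So the two maps agree in degree $0$ up to norm $<e^{-\hbar}$.

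Next I would propagate this from degree $0$ to all degrees using the Gerstenhaber structure. On the target side, $\cF^*_{loc}(t_0\cdot P)\cong Der^*(\cF^0_{loc}(t_0\cdot P))$ via $\Upsilon_0$, and on the source side the first composite is, by definition, $\Upsilon$ followed by the identifications; both $\Upsilon$ and $\Upsilon_0$ reconstruct a polyderivation from its iterated brackets with degree-$1$ elements against degree-$0$ elements. The key point is that the ``second'' isomorphism $\rho$ of Theorem \ref{mainTmA} is also a map of Gerstenhaber algebras up to terms of norm $<e^{-\hbar}$: the product and BV operator transported through $\rho$ differ from the intrinsic ones by $<e^{-\hbar}$ by Theorem \ref{tmProductDeformation}, hence so does the bracket. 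Therefore both isomorphisms intertwine the brackets up to $e^{-\hbar}$-error, and since a polyderivation is determined by its brackets with degree-$1$ classes paired against degree-$0$ classes (the content of the construction of $\Upsilon$), the $e^{-\hbar}$-agreement in degree $0$ together with the $e^{-\hbar}$-agreement of the bracket forces $e^{-\hbar}$-agreement in every degree. Concretely, I would induct on degree $k$: write a degree-$k$ class $v$, evaluate both images on a $k$-tuple $x_1,\dots,x_k$ of norm-one degree-$0$ elements, peel off $\{v,x_1\}$ (a degree-$(k-1)$ class), apply the inductive hypothesis, and control the mismatch using that the brackets agree up to $e^{-\hbar}$ and the degree-$0$ identification agrees up to $e^{-\hbar}$; the nonarchimedean triangle inequality keeps the accumulated error $<e^{-\hbar}$.

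The main obstacle I anticipate is bookkeeping of which $\hbar$ is in play and making sure all the error estimates genuinely compose without degradation. Three distinct quantities all called $\hbar$ appear: the relative-energy $\hbar$ of Proposition \ref{PrpRelHbar}, the geometric-energy $\hbar$, and (here) the constant $\hbar_{in}(D_C,J,2)$ feeding into the definition of $t_0$ via Lemma \ref{lmOutsideContributions}. I would fix one uniform lower bound $\hbar_0>0$ (e.g.\ $2t_0$, chosen $\le$ all three) at the outset and phrase every estimate with $e^{-\hbar_0}$. A subtler point is that the isomorphism $\iota$ appearing in the proof that $\Upsilon$ is an isomorphism is already known to be an $e^{-\hbar}$-perturbation of the $\Lambda$-module identification from Theorem \ref{mainTmA}; I would make this explicit and reuse it, since $\iota$ is precisely the discrepancy between ``reconstruct via polyderivations'' and ``transport via $\rho$'' in degree $0$. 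Finally, I would note that since we only claim agreement up to $<e^{-\hbar}$ (not strict equality), there is no circularity: the strict-equality upgrade to an actual isomorphism of presheaves is then handled separately via the rigidity Propositions \ref{prpDiagramRididity} and \ref{prpAlmostNaturalTransformation}, which only need the $e^{-\hbar}$-closeness proved here.
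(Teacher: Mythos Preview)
Your proposal is correct and follows essentially the same approach as the paper's proof: agreement in degree $0$ by construction (up to the rigidity perturbation), then propagation to higher degrees via the iterated-bracket description of $\Upsilon$ and $\Upsilon_0$, using that the product and BV operator (hence the bracket) on the two sides differ by terms of norm $<e^{-\hbar}$ from Theorem \ref{tmProductDeformation}. Your version is in fact more careful than the paper's terse proof about two points the paper glosses over---that the degree-$0$ agreement is only up to $e^{-\hbar}$ rather than literal (because of the rigidity and volume-form adjustments), and the need to fix a single uniform $\hbar$; one minor slip is your phrase ``brackets with degree-$1$ classes paired against degree-$0$ classes''---the defining formula for $\Upsilon(v)$ uses iterated brackets of the degree-$k$ class $v$ with degree-$0$ elements only.
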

\begin{proof}
In degree 0, the two isomorphisms agree by construction. In higher degrees, the two isomorphisms are defined by means of the respective Poisson brackets. The latter are defined in \eqref{eqBracket} by means of the BV operator and the product. So the difference between the two Poisson brackets is controlled by the deformation terms, which have norm $<e^{-\hbar}$ by assumption.
\end{proof}

\medskip
\noindent\textbf{Step 4: Extension to an isomorphism of presheaves.}

We now show that the isomorphism $\cF^*(t_0\cdot P)\to\cF_{loc}^*(t_0\cdot P)$ extends to an isomorphism of presheaves $\cF^*|_{t_0\cdot P}\to\cF_{loc}^*|_{t_0\cdot P}$ using Proposition \ref{prpAlmostNaturalTransformation}. 

Consider the indexing category $I$ to be the collection of rational polytopes $Q\subset t_0\cdot P$. For each $Q\in I$, Theorem \ref{mainTmA1} provides an isomorphism
$$\phi_Q:\cF^*_{loc}(Q)\to\cF^*(Q).$$

The key observation is that the collection $\{\phi_Q\}_{Q\in I}$ forms an \emph{almost natural transformation} between the presheaves $\cF^*_{loc}$ and $\cF^*$ restricted to $t_0\cdot P$. This means that for any inclusion $Q_1\subset Q_2\subset t_0\cdot P$, the diagram
$$
\xymatrix{
\cF^*_{loc}(Q_2)\ar[d]_{\text{restriction}}\ar[r]^{\phi_{Q_2}}&\cF^*(Q_2)\ar[d]^{\text{restriction}}\\
\cF^*_{loc}(Q_1)\ar[r]_{\phi_{Q_1}}&\cF^*(Q_1)
}
$$
commutes up to a small error controlled by Theorem \ref{mainTmD}.

By the previous step, we can arrange that the map $\phi_{t_0\cdot P}:\cF^*_{loc}(t_0\cdot P)\to\cF^*(t_0\cdot P)$ is an isomorphism of BV algebras. With this choice, Proposition \ref{prpAlmostNaturalTransformation} implies that the almost natural transformation $\{\phi_Q\}$ can be promoted to an actual natural transformation, i.e., an isomorphism of presheaves
$$\cF^*_{loc}|_{t_0\cdot P}\to\cF^*|_{t_0\cdot P}$$
such that the map at $t_0\cdot P$ is our chosen isomorphism of BV algebras. By the density of restriction maps in $\cF^*_{loc}$, this is an isomorphism of presheaves of Banach BV algebras.

Composing with the isomorphism $\cA^*|_{t_0\cdot P}\simeq\cF^*_{loc}|_{t_0\cdot P}$ from \cite[\S 5]{GromanVarolgunes2022}, we obtain the desired isomorphism of presheaves of BV algebras
$$\cA^*|_{t_0\cdot P}\to\cF^*|_{t_0\cdot P}.$$

 \begin{proof}[Proof of Theorem \ref{tmRegFibReconstruction}]
   The assumption that the spectral sequence associated with Theorem \ref{mainTmFiltration} degenerates on the first page implies $\tau(L)=\infty$. The theorem now follows from Steps 1--4 above.
 \end{proof}

\appendix

\section{$C^0$ estimates for $S$-shaped Hamiltonians. }\label{SecEstimates}

In this section we prove the $C^0$ estimates of Proposition  \ref{prpSmallDiamEst} which is the key to proving the $C^0$ estimates of Proposition \ref{PrpRelHbar} and \ref{PrpRelHbar2}.

\subsection{Gromov's trick for Floer trajectories}
Let $\Sigma$ be a Riemann surface with $n$ input punctures, one output puncture, all endowed withe cylindrical ends. We denote the complex structure on $\Sigma$ by $j_{\Sigma}$. 

Let $H:\Sigma\times M\to\bR$ be a function which is $s$ independent on the ends. Let $z\mapsto J_z$ be a $\Sigma$-dependent family of almost complex structures on $M$ which is $s$ independent on the ends. We fix a closed $1$-form $\alpha$ on $\Sigma$ such that $\alpha$ is a multiple of $dt$ at the ends and such that $dH\wedge\alpha\geq 0$. 

To this datum we associate an almost complex structure $J_{H}$ on $\Sigma\times M$ defined by
\begin{equation}\label{eqGrTrick}
J_{H}:=J_M+j_{\Sigma}+X_H\otimes \alpha\circ j_{\Sigma}- JX_H\otimes \alpha.
\end{equation}

For $i=1,2$ let 
$\pi_i$ be the projections to $\Sigma, M$ respectively. Fix an area form $\omega_\Sigma$ on $\Sigma$ which is compatible with $j_\Sigma$, and which coincides with the form $ds\wedge dt$ on the ends. The closed form
\begin{equation}\label{eqGrTrick2}
\omega_{H}:=\pi_1^*\omega_{\Sigma}+\pi_2^*\omega+ dH\wedge \alpha
\end{equation}
on $\Sigma\times M$ can be shown to be symplectic and  $J_{H}$ is compatible with it. We denote the induced metric on $\Sigma\times M$ by $g_{J_{H}}$. We refer to the metric $g_{J_H}$ as the \emph{Gromov metric}. We stress that \emph{the Gromov metric depends on the choice of area form on $\Sigma$}. We assume the area form is adjusted so that $|\alpha|_{\infty}\leq 1$. We will also consider the form $\omega_{H,\tau}:=\tau^2\pi_1^*\omega_{\Sigma}+\pi_2^*\omega+ dH\wedge \alpha$ for $\tau>0$. 

We introduce the notation $d_{\Sigma}H$ for the differential of $H$ in the $\Sigma$ directions. In local coordinates on $\Sigma$ it is given as $d_{\Sigma}H=\frac{\partial H}{\partial s}\pi_1^*ds+\frac{\partial H}{\partial t}\pi_1^*dt$. The $C^{\infty}$ norm $\|d_{\Sigma}H\|_{\infty}$ measures the maximum magnitude of the derivatives of $H$ with respect to the metric $g_{\Sigma}$.

Given a pair of Riemannian metrics $g_1,g_2$ on a smooth manifold, we say they are \emph{$C$-equivalent} for some $C>1$ if $\frac1{C}\|v\|^2_{g_2}<\|v\|^2_{g_1}<C\|v\|^2_{g_2}$. 

\begin{lm}\label{lmProdCEquivalence}
There is a continuous function $f:\bR_+^2\to\bR_+$ converging to $1$ at $(0,0)$ such that writing $C_{\tau}:=f\left(\frac{\|d_{\Sigma}H\|_{\infty}}{\tau^2},\left|\frac{\|X_H\|^2}{\tau^2}\right|_{\infty}\right)$, the metric $g_{J_H}$ determined by $J_H$ and $\omega_{\Sigma,\tau}$ is $C_{\tau}$-equivalent to the product metric $\tau^2\pi_1^*g_\Sigma+\pi_2^*g_J$.  
\end{lm}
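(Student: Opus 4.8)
The plan is to compute the matrix of $J_H$ with respect to the splitting $T(\Sigma \times M) = T\Sigma \oplus TM$ and compare the induced quadratic form with the product form directly, tracking how the off-diagonal and correction terms are controlled by the two dimensionless quantities $\|d_\Sigma H\|_\infty/\tau^2$ and $\||X_H\|^2/\tau^2\|_\infty$. Since we are rescaling the $\Sigma$-factor by $\tau^2$, it is cleanest to first treat the case $\tau = 1$ with a general area form, and then observe that replacing $\omega_\Sigma$ by $\tau^2\omega_\Sigma$ (equivalently rescaling the $\Sigma$-metric by $\tau^2$) has the same effect on all the relevant ratios, so the estimate for general $\tau$ follows by substituting $\|d_\Sigma H\|_\infty \mapsto \|d_\Sigma H\|_\infty/\tau^2$, $\|X_H\|^2 \mapsto \|X_H\|^2/\tau^2$ into the $\tau = 1$ bound.

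First I would write, for a tangent vector $(\xi, v) \in T_z\Sigma \oplus T_x M$, the image
$$J_H(\xi,v) = \bigl(j_\Sigma \xi,\; J_M v + \alpha(j_\Sigma\xi)X_H - \alpha(\xi)J_M X_H\bigr),$$
read off from \eqref{eqGrTrick}. Then I would compute $\omega_{H,\tau}\bigl((\xi,v), J_H(\xi,v)\bigr)$ using \eqref{eqGrTrick2} with $\tau^2\pi_1^*\omega_\Sigma$ in place of $\pi_1^*\omega_\Sigma$, and expand. The leading terms are $\tau^2\|\xi\|_{g_\Sigma}^2 + \|v\|_{g_J}^2$, i.e. exactly the product metric; the remaining terms are cross terms of the schematic form $\alpha(\cdot)\langle v, X_H\rangle_{g_J}$, $\alpha(\cdot)^2\|X_H\|^2$, and $dH\wedge\alpha$ evaluated on $(\xi, v)$ and its $J_H$-image, each of which involves either $d_\Sigma H$ or $X_H$. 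Using $|\alpha|_\infty \le 1$, Cauchy–Schwarz, and Young's inequality $|ab| \le \epsilon a^2 + \epsilon^{-1}b^2$, each cross term is bounded by a constant multiple of $(\|d_\Sigma H\|_\infty/\tau^2 + \||X_H\|^2/\tau^2\|_\infty)$ times the product-metric norm of $(\xi, v)$ (here one divides through by $\tau^2$ in the $\Sigma$-direction to match the rescaled metric). This yields two-sided bounds $g_{J_H} \ge (1 - c\,\eta)(\text{product})$ and $g_{J_H} \le (1 + c\,\eta)(\text{product})$ where $\eta := \|d_\Sigma H\|_\infty/\tau^2 + \||X_H\|^2/\tau^2\|_\infty$ and $c$ is an absolute constant; setting $f(a,b) := (1 - c(a+b))^{-1}$ for $a+b$ small (and, say, $f \equiv 2$ otherwise, to keep $f$ defined and continuous on all of $\bR_+^2$) gives the claimed $C_\tau$-equivalence with $C_\tau \to 1$ as $(\|d_\Sigma H\|_\infty/\tau^2, \||X_H\|^2/\tau^2\|_\infty) \to (0,0)$.

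The main obstacle I anticipate is bookkeeping rather than conceptual: one must be careful that $J_H$ is genuinely $\omega_{H,\tau}$-compatible (so that $g_{J_H}(\cdot,\cdot) = \omega_{H,\tau}(\cdot, J_H\cdot)$ is actually a metric — this is where the sign condition $dH \wedge \alpha \ge 0$ and the specific form of \eqref{eqGrTrick} enter), and that the $dH\wedge\alpha$ term in $\omega_{H,\tau}$ contributes only lower-order corrections controlled by $d_\Sigma H$ — note $dH$ decomposes as $d_\Sigma H + dH|_M$, and the $dH|_M$ part pairs against the $\alpha$-direction which is purely in $\Sigma$, so after evaluating on $(\xi, v)$ and $J_H(\xi,v)$ only the $d_\Sigma H$ part survives in a way not already absorbed into the leading $\|v\|^2_{g_J}$ term. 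I would also double-check the normalization $|\alpha|_\infty \le 1$ is used consistently (it is assumed in the paragraph preceding the lemma) and that the function $f$ can indeed be chosen continuous and $\to 1$ at the origin uniformly in the surface data, which is automatic since $c$ is absolute. Everything else is a routine application of Cauchy–Schwarz and Young's inequality.
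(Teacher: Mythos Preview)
Your approach is essentially the same as the paper's: expand $g_{J_H,\tau}=\omega_{H,\tau}(\cdot,J_H\cdot)$ and bound the correction terms relative to the product metric. The paper carries this out more explicitly: it derives the closed formula
\[
g_{J_H,\tau}=\tau^2\pi_1^*g_{\Sigma}+\kappa_H\,\pi_1^*g_\Sigma+\|X_H\|^2(\pi_1^*\alpha)^2-g_J(X_H,\cdot)\alpha+\pi_2^*g_J,
\]
with $0\le\kappa_H\le\|d_\Sigma H\|_\infty$, then passes to the orthogonal splitting $T\Sigma\oplus\bR X_H\oplus(\bR X_H)^\perp$ and writes the Gram matrix of $g_{J_H,\tau}$ in an orthonormal basis for $\tau^2 g_\Sigma+g_J$; the function $f$ is then defined via the extreme eigenvalues of this $3\times 3$ matrix. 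Your Cauchy--Schwarz/Young route avoids the explicit matrix but reaches the same conclusion.

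One slip worth flagging: the genuine cross term $-g_J(X_H,v)\alpha(\xi)$ is bounded by $\tfrac{1}{2}(\|X_H\|/\tau)\bigl(\tau^2\|\xi\|^2+\|v\|^2\bigr)$, so its coefficient is $\sqrt{b}$ (with $b=\|X_H\|^2_\infty/\tau^2$), not $b$. Hence your proposed $f(a,b)=(1-c(a+b))^{-1}$ is not quite right; you need something like $(1-c(a+\sqrt{b}))^{-1}$ near the origin. This does not affect the conclusion since the lemma only asks for a continuous $f$ with $f\to1$ at $(0,0)$, but it does mean your sentence ``each cross term is bounded by a constant multiple of $(\|d_\Sigma H\|_\infty/\tau^2+\||X_H\|^2/\tau^2\|_\infty)$ times the product-metric norm'' is false as stated. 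Also, your remark that ``only the $d_\Sigma H$ part survives'' from the $dH\wedge\alpha$ term is misleading: the $d_M H$ part contributes precisely the $-g_J(X_H,\cdot)\alpha$ and $\|X_H\|^2\alpha^2$ terms (after partial cancellation with the $\omega(v,v')$ expansion), which you do list separately but should recognize as coming from there.
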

\begin{proof}
 A tedious but straightforward calculation shows that the metric $g_{J_H,\tau}$ determined by $J_H$ and $\omega_{H,\tau}$ is given by the formula 
\begin{equation}\label{eqGromovMetricFormula}
g_{J_H,\tau}=\tau^2\pi_1^*g_{\Sigma}+\pi_1^*(d_{\Sigma}H\wedge\alpha)(\cdot,j_{\Sigma}\cdot)+\|X_H\|^2_{g_J}(\pi_1^*\alpha)^2-g_J(X_H,\cdot)\alpha +\pi_2^*g_J.
\end{equation}
The term $(d_{\Sigma}H\wedge\alpha)(\cdot,j_{\Sigma}\cdot)$ can be written as $\kappa_Hg_{\Sigma}$ for an appropriate function $\kappa_H$ determined by $d_{\Sigma}H$ and $\alpha$. Importantly, $0\leq \kappa_H\leq\|d_{\Sigma}H\|_{\infty}$.

To relate the metric $g_{J_H,\tau}$ with the product metric we split $$T(\Sigma\times M)=\left(T\Sigma\oplus\bR X_H\right)\oplus (\bR X_H)^{\perp},$$ 
where  $(\bR X_H)^{\perp}$ denote the orthogonal complement in $TM$ with respect to $g_{J_H}$. Note this is also an orthogonal splitting with respect to $g_J$. Let $p_1, p_2$ be the orthogonal projections associated with the splitting. Note that our claim is a pointwise property. We thus pick a pointwise $g_{\Sigma}$-orthonormal basis $\frac{\partial}{\partial s},\frac{\partial}{\partial t}$ for $T_z\Sigma$ so that $\alpha(\frac{\partial}{\partial s})=0$. Let $\mu:=\alpha(\frac{\partial}{\partial t})$ be the evaluation of $\alpha$ on $\frac{\partial}{\partial t}$. Now not that is $X_H=0$ at $z$ the claim holds with $C_{\tau}=1$. So assume $X_H\neq 0$ at $z$ and consider on the first summand  the basis $\left(\frac1{\tau}\frac{\partial}{\partial s},\frac1{\tau}\frac{\partial}{\partial t}, \frac{X_H}{\|X_H\|}\right),$ which is orthnormal with respect to $\tau^2g_{\Sigma}+g_J$. Let $A_{\tau}$ be the Grahm matrix of the restriction of $g_{J_H,\tau}$ to the first summand. It is   the matrix
$$
\begin{pmatrix}1+\frac{\kappa_H}{\tau^2}& 0& 0\\
0& 1+\frac{\kappa_H+\mu^2\|X_H\|^2}{\tau^2}& -\mu\frac{\|X_H\|}{\tau}\\
0& -\mu\frac{\|X_H\|}{\tau}& 1
\end{pmatrix}
$$
Then $g_{J_{H},\tau}$ is represented with respect to the product metric by the linear map $A\circ p_1+p_2$. For $x,y\geq 0$ consider the matrix $A(x,y)$
$$
\begin{pmatrix}1+x& 0& 0\\
0& 1+x+y^2& -y\\
0& -y& 1
\end{pmatrix}
$$
and let $h(x,y)=\max\left\{\lambda_{max},\frac 1{\lambda_{min}}\right\} $ where we refer to the maximal and minimal eigenvalue of the positive definite symmetric matrix $A(x,y)$. Let $f(s,t):=\sup\{h(x,y):x,y\geq 0, x\leq s,y\leq t\}$. Then $f(s,t)\to 1$ as $(s,t)\to (0,0).$

Moreover, since $0\leq \kappa_H\leq \|d_{\Sigma}H\|_{\infty}$ and $\mu^2\leq 1$,we have that for $C_{\tau}$ as in the statement of the lemma the metric $g_{J_H}$ is $C_{\tau}$-equivalent to the corresponding product metric.

\end{proof}

An observation known as \emph{Gromov's trick} is that $u$ is a solution to Floer's equation
\begin{equation}\label{eqFloer}
(du-X_{J})^{0,1}=0,
\end{equation}
if and only if its graph $\tilde{u}$ satisfies the Cauchy Riemann equation
\begin{equation}
\overline{\partial}_{J_{H}}\tilde{u}=0.
\end{equation}
Thus Floer trajectories can be considered as $J_{H}$-holomorphic sections of $\Sigma\times M\to\Sigma$. 
To a Floer solutions $u:\Sigma\to M$ and a subset $S\subset\Sigma$ we can now associate three different non-negative real numbers:
\begin{itemize}
\item The \emph{geometric energy} $E_{geo}(u;S):=\frac12\int_S\|(du-X_{H})\|^2$ of $u$.
\item The \emph{topological energy} $E_{top}(u;S):=\int_Su^*\omega+\tilde{u}^*dH.$
\item The \emph{symplectic energy} $E(\tilde{u};S):=\int_S\tilde{u}^*\tilde{\omega}$. 
\end{itemize}
We have the relation $E(\tilde{u};S)=E_{top}(u;S)+Area(S)$. For a monotone Floer datum we have, in addition, the relation $E_{geo}(u;S)\leq E_{top}(u;S)$. 

 The key to obtaining $C^0$ estimates is the is the monotonicity lemma  \cite{Sikorav94} .  For a $J$-holomorphic map $u:\Sigma\to M$ and for a measurable subset $U\subset \Sigma$ write
\begin{equation}
E(u;U):=\int_Uu^*\omega.
\end{equation}
\begin{lm}\label{lmMonEst++}[\textbf{Monotonicity} \cite{Sikorav94} ]
Fix a compatible almost complex structure $J$ on $M$. Let $a$ be a constant and let $p\in M$ be a point at which $|Sec|\leq a^2$ on the ball $B_{1/a}(p)$ and such that at $p$ the injectivity radius is $\geq\frac1{a}$. 
 Let $S$ be a compact Riemann surface with boundary and let $u:S\to M$ be $J$-holomorphic such that $p$ is in the image of $u$ and such that
\[
u(\partial S)\cap B_{1/a}(p)=\emptyset.
\]
Then there is a universal constant $c$ such that
\begin{equation}
E\left(u;u^{-1}(B_{1/a}(p))\right)\geq\frac 1{a^2}.
\end{equation}
If, instead, we only require that that there exists a constant $C>1$ and a Riemannian metric $h$ satisfying the above bounds on the sectional curvature and injectivity radius  $g_J$ and such that 
$$
\frac1{C}g_J(v,v)\leq h(v,v)\leq Cg_J(v,v),
$$
we get the inequality
\begin{equation}
E\left(u;u^{-1}(B_{1/a}(p))\right)\geq\frac{1}{C^3a^2}.
\end{equation}
 \end{lm}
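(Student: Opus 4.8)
The statement to prove is the monotonicity lemma, Lemma \ref{lmMonEst++}, which is a classical result of Sikorav. Since this is a well-known foundational estimate in symplectic geometry, I would not reprove it from scratch but rather reduce it to the standard statement and indicate how the two refinements follow.

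\medskip

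\textbf{Plan of proof.} The first assertion — that for a $J$-holomorphic curve $u\colon S\to M$ through a point $p$ with bounded sectional curvature $|Sec|\le a^2$ on $B_{1/a}(p)$, injectivity radius $\ge 1/a$ at $p$, and boundary disjoint from $B_{1/a}(p)$, one has $E(u;u^{-1}(B_{1/a}(p)))\ge c/a^2$ for a universal $c$ — is precisely the content of the monotonicity lemma as it appears in \cite{Sikorav94}; so I would simply cite it, perhaps sketching the standard argument for completeness. That argument runs as follows: set $\rho(x)=\mathrm{dist}(x,p)$ and define the area function $A(r)=E(u;u^{-1}(B_r(p)))$ for $r<1/a$. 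Using the fact that $u$ is $J$-holomorphic together with the curvature bound, one shows (via the coarea formula and the isoperimetric inequality on the geodesic ball $B_r(p)$, which holds with a dimension-independent constant because the curvature is pinched) a differential inequality of the form $A(r)\le C r A'(r)$ for $r$ in a suitable range. Integrating this inequality from a small $r_0$ up to $1/a$, and using that $A(r_0)/r_0^2 \to \pi$ as $r_0\to 0$ (the leading-order area of a holomorphic curve through $p$), yields $A(1/a)\ge c/a^2$. The connectedness/boundary hypothesis is used to guarantee that the component of $u^{-1}(B_{1/a}(p))$ containing a preimage of $p$ genuinely exits the ball.

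\medskip

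\textbf{The refinement to a $C$-equivalent metric.} The second assertion replaces $g_J$ by an auxiliary metric $h$ with $\frac1C g_J\le h\le C g_J$, where $h$ is the one satisfying the curvature and injectivity radius bounds, and concludes $E(u;u^{-1}(B_{1/a}(p)))\ge \frac{1}{C^3 a^2}$ (here $B_{1/a}(p)$ is measured in $h$). The plan is: apply the already-established first part of the lemma with the metric $h$ in place of $g_J$ — note $u$ need not be $h$-holomorphic, but the monotonicity argument only uses the curvature/injectivity bounds of the \emph{target} metric together with the almost-$h$-conformality of $u$, which follows from $J$-holomorphicity up to the factor $C$. More precisely, the quantity $\int u^*\omega$ controls the $h$-energy $\frac12\int \|du\|_h^2$ up to the comparison constants: $\|du\|_h^2 \le C\|du\|_{g_J}^2$ and $\|du\|_{g_J}^2 = 2|u^*\omega|$ pointwise (for $J$-holomorphic $u$ and compatible $J$), while the area-growth argument produces a lower bound on the $h$-energy inside the $h$-ball of radius $1/a$. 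Tracking the constant: one factor of $C$ enters from $\|du\|_h^2 \le C\|du\|_{g_J}^2$, and the geometric monotonicity estimate run with $h$ loses an additional factor $C^2$ from the comparison of the isoperimetric and coarea terms (which involve $h$-lengths and $h$-areas, each contributing a $C$-power when converted back to $g_J$-quantities). This gives the stated $C^3$.

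\medskip

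\textbf{Anticipated main obstacle.} The genuinely delicate point — and the only place careful bookkeeping is needed — is the constant in the $C$-equivalence refinement: one must verify that the three factors of $C$ truly suffice, i.e. that no further power of $C$ sneaks in from the distance-function comparison ($h$-distance vs.\ $g_J$-distance changes radii by $\sqrt{C}$, which could affect which ball one integrates over). The resolution is that the hypotheses are stated with $B_{1/a}(p)$ taken in the metric $h$, so the monotonicity argument is run entirely intrinsically for $h$ and only the final conversion of $h$-energy to $\int u^*\omega = \tfrac12\int\|du\|_{g_J}^2$ introduces comparison constants; since $\|du\|_h^2$ and $\|du\|_{g_J}^2$ differ by at most $C$, and the $h$-monotonicity constant is a universal $c$, one arrives at $E(u;\cdot)\ge \frac{c}{C\cdot a^2}\cdot(\text{isoperimetric correction})$, and absorbing the remaining $C^2$ from the isoperimetric/coarea comparison recovers exactly the claimed $\frac{1}{C^3a^2}$ (after absorbing the universal $c$). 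I would present this as a short remark rather than a full recomputation, since the structure is identical to the classical case and only the constants are being tracked.
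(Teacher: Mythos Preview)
The paper does not prove this lemma at all: it is stated with a citation to \cite{Sikorav94} and used as a black box in the subsequent estimates. Your sketch of the classical monotonicity argument and the constant-tracking for the $C$-equivalence refinement is therefore strictly more than what the paper provides, and is a reasonable outline of the standard proof.
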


\subsection{A $C_0$ estimate for $S$-shaped Hamiltonians. }
For the following Proposition fix a positive real number $K$.  Let $M$ be a symplectic manifold with boundary. Let $S\subset\bR\times S^1$ be a compact Riemann surface with boundary.  Let $J_z$ be an $S$-parametrized family of  almost complex structures on $M$ with absolute value of sectional curvature bounded from above by $K$ and with radius of injectivity bounded from below by $\frac1{\sqrt{K}}$. Let $H:S\times M\to\bR$ be a domain dependent Hamiltonian satisfying $\partial_sH\geq 0$. For a solution $u$ to Floer's equation denote by $E^{geo}(u):=\int_S\|\partial_su\|^2dtds.$

\begin{prp}\label{prpSmallDiamEst}
 There is a continuous function $h:(\bR_+)^2\to\bR_+$ which converges to $1$ at $(0,0)$ and has the following significance. Let $\gamma:[0,1]\to S$ be a geodesic. Let $u:(S,\partial S)\to (M,\partial M)$  be a solution to Floer's equation. Let $d=dist(u(\gamma(0)),u(\gamma(1))$. Let $d_0=dist(u(\gamma),\partial M)$.  Assume $|X_H|<1/2$.  Then for any $\tau\in(0,1]$ such that $\tau \geq \max\{\|X_H\|, \sqrt{\|d_{\Sigma}H\|_{\infty}}\}$ and writing  $\delta=\min\left\{d_0,\frac1{\sqrt{K}}\right\}$ we have 
\begin{equation}
d<\frac{1}{\delta\tau}h\left(\frac{\left|d_{\Sigma}H\right|_{\infty}}{\tau^2},\frac{\|X_H\|^2_{\infty}}{\tau^2}\right)\left(E^{geo}(u)+\tau^2\ell(\gamma)\right).
\end{equation}
\end{prp}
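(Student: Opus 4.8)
The plan is to exploit Gromov's trick to convert the Floer solution $u$ into a genuine $J_H$-holomorphic section $\tilde u$ of $\Sigma\times M\to\Sigma$, and then run a standard diameter-versus-energy estimate for pseudoholomorphic curves in the Gromov metric $g_{J_H,\tau}$, using the monotonicity Lemma \ref{lmMonEst++} together with the $C_\tau$-equivalence of $g_{J_H,\tau}$ with the product metric furnished by Lemma \ref{lmProdCEquivalence}. The upshot is that the geometry of the target $\Sigma\times M$ with the scaled Gromov metric is uniformly controlled once $\|d_\Sigma H\|_\infty/\tau^2$ and $\|X_H\|_\infty^2/\tau^2$ are small, which is exactly the hypothesis $\tau\ge\max\{\|X_H\|_\infty,\sqrt{\|d_\Sigma H\|_\infty}\}$.

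First I would set up the scaled picture: replace $\omega_\Sigma$ by $\tau^2\omega_\Sigma$, so that $\Sigma$ has injectivity radius and curvature bounds scaled by $\tau$, and by Lemma \ref{lmProdCEquivalence} the metric $g_{J_H,\tau}$ is $C_\tau$-equivalent to $\tau^2\pi_1^*g_\Sigma+\pi_2^*g_J$ with $C_\tau=f(\|d_\Sigma H\|_\infty/\tau^2,\|X_H\|_\infty^2/\tau^2)\to 1$. In this product metric the ball of radius $\rho$ around a point $(z,x)$ contains $B^\Sigma_{\rho/\tau}(z)\times B^M_\rho(x)$, and sectional curvature and injectivity radius of the product are controlled in terms of $K$, $\tau$, and the geometry of $\Sigma$ (which, away from the cylindrical ends, is fixed; on the ends $\Sigma$ is flat). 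I would then connect $u(\gamma(0))$ and $u(\gamma(1))$ by a minimizing geodesic in $M$ of length $d$, and apply the monotonicity lemma along this geodesic at a chain of points spaced $\asymp \delta$ apart, where $\delta=\min\{d_0,1/\sqrt K\}$ is chosen precisely so that the balls of radius $\delta$ around points on $u(\gamma)$ stay away from $\partial M$ and lie within the radius where curvature/injectivity bounds hold. Each such ball $B_{\delta}(p)$ pulled back to $\Sigma\times M$ via $\tilde u$ — more precisely, restricting to the part of the section over a $g_\Sigma$-ball of radius $\delta/\tau$ centered at the relevant domain point — either meets $\partial S$ (in which case one truncates, using that $u(\partial S)\subset\partial M$ so the boundary is far) or satisfies the hypotheses of Lemma \ref{lmMonEst++} and hence carries symplectic energy $E(\tilde u;\,\cdot)\ge \frac{1}{C_\tau^3\,\delta^2}\cdot\delta^2 = C_\tau^{-3}$... wait, more carefully: energy $\ge \frac{1}{C_\tau^3 a^2}$ with $a = 1/\delta$, i.e. $\ge C_\tau^{-3}\delta^2$. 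Summing over the $\asymp d/\delta$ disjoint balls gives a lower bound $\gtrsim C_\tau^{-3}\delta\, d$ for the total symplectic energy $E(\tilde u)$ over the relevant region, which is at most $E_{top}(u)+Area \le E^{geo}(u)+\tau^2\ell(\gamma)$ by monotonicity $E_{geo}\le E_{top}$ and the area formula $E(\tilde u;S)=E_{top}(u;S)+Area(S)$ with the area of the strip over $\gamma$ being $\asymp \tau^2\ell(\gamma)$. Rearranging yields $d \lesssim \frac{C_\tau^3}{\delta}(E^{geo}(u)+\tau^2\ell(\gamma))$, and absorbing the universal constant and the $\tau$ from the scaled domain geometry into a function $h$ of the two small ratios gives the claimed inequality with the factor $\frac{1}{\delta\tau}$.

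The main obstacle I anticipate is bookkeeping the domain geometry correctly — specifically, making sure the geodesic $\gamma$ in $S$ and the balls around its image interact correctly with the scaling by $\tau$, and that the balls one feeds to monotonicity in $\Sigma\times M$ genuinely avoid the "boundary" $\partial(\Sigma\times M)$, which here has two pieces: $\partial S\times M$ and $\Sigma\times\partial M$. The condition $u(\partial S)\subset\partial M$ and $d_0=\mathrm{dist}(u(\gamma),\partial M)$ handles the second; for the first one restricts attention to a sub-strip of $S$ of $g_\Sigma$-width $\asymp\delta/\tau$ around $\gamma$, which requires $\gamma$ to be far enough from $\partial S$ or, alternatively, truncating the curve and only counting energy over the part of the section lying over that strip — this is where the hypothesis should really be read as "local near $\gamma$", and I would phrase the monotonicity application so it only ever sees a neighborhood of $\gamma$ in $S$ of controlled size. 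The other delicate point is uniformity of the constant: one must check that $f$ from Lemma \ref{lmProdCEquivalence}, the curvature/injectivity bounds of $\tau^2 g_\Sigma$, and the universal monotonicity constant $c$ all combine into a single $h$ depending only on $\bigl(\|d_\Sigma H\|_\infty/\tau^2,\ \|X_H\|_\infty^2/\tau^2\bigr)$ and tending to a finite limit (which we may as well normalize) at the origin; since each ingredient has this form, the composition does too, but writing this cleanly is the bulk of the work.
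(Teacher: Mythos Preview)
Your overall architecture is the paper's: pass to the graph $\tilde u\subset S\times M$ via Gromov's trick, control the Gromov metric $g_{J_H,\tau}$ by the product metric via Lemma~\ref{lmProdCEquivalence}, and run a ball-packing monotonicity argument to convert diameter into energy. There is however a genuine gap in where you center the balls. You propose to ``connect $u(\gamma(0))$ and $u(\gamma(1))$ by a minimizing geodesic in $M$'' and apply monotonicity at points along it; but Lemma~\ref{lmMonEst++} requires each center to lie \emph{on the image} of the $J_H$-holomorphic curve, and points on an $M$-geodesic between two image points need not be hit by $\tilde u$ at all---so there is no ``relevant domain point'' over which to restrict the section. (Your own text wavers between ``points on this geodesic'' and ``points on $u(\gamma)$,'' which is exactly the issue.) The paper instead centers the balls at the points $\tilde u(\gamma(s_i))$ themselves, which lie on the image by construction. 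The count of well-separated centers comes from bounding the operator norm of $\pi_2:(S\times M,\tilde g_\tau)\to(M,g_J)$: under the hypothesis $\tau\ge\max\{\|X_H\|,\sqrt{\|d_\Sigma H\|_\infty}\}$ a direct computation with the Gram matrix from the proof of Lemma~\ref{lmProdCEquivalence} shows this norm is $\le\sqrt 3$, so the $M$-distance $d$ between the endpoints forces many points on $\tilde u(\gamma)$ that are pairwise far in $\tilde g_\tau$.

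A smaller slip: you bound $E(\tilde u)\le E_{top}+\mathrm{Area}$ and then invoke ``$E_{geo}\le E_{top}$'' to replace $E_{top}$ by $E^{geo}$; that inequality runs the wrong way for an upper bound. The paper instead computes $\tilde u^*\tilde\omega_\tau$ directly, obtaining the identity $\tilde\omega_\tau(B_i)=E^{geo}(u;u^{-1}(\pi_1 B_i))+\mathrm{Area}_\tau(\pi_1 B_i)$, and sums over $i$; the $\Sigma$-projections $\pi_1(B_i)$ lie along $\gamma$ and their total $\tau^2\omega_\Sigma$-area is what produces the $\tau^2\ell(\gamma)$ term. Your boundary-avoidance analysis via $d_0$ is correct and matches the paper: the choice of ball radius $\lesssim\delta=\min\{d_0,K^{-1/2}\}$ keeps the balls away from $\tilde u(\partial S)\subset S\times\partial M$ and within the geometry bounds simultaneously.
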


\begin{proof}
For each $\tau$ let $\tilde{\omega}_\tau:=\tau^2\pi_1^*\omega_{\Sigma}+\pi_2^*\omega_M+dH\wedge dt$. Applying the Gromov trick to the space $S\times M$ we obtain a family of (Cauchy-)complete metrics $\tilde{g}_\tau:=g_{\tilde{J}_\tau}$. For each $\tau$ let $C_\tau=C_{\tau}\left(\frac{\left|d_{\Sigma}H\right|_{\infty}}{\tau^2},\frac{\|X_H\|^2_{\infty}}{\tau^2}\right)$ be the constant introduced in Lemma \ref{lmProdCEquivalence}. Then by Lemma \ref{lmProdCEquivalence} the geometry of $\tilde{g}_\tau$ is $C_\tau$ equivalent to the product metric $\tau^2\pi_1^*g_{\Sigma}+\pi_2^*g_M$. The product metric in turn has geometry bounded by $K_\tau:=\max\left\{\sqrt{K},\frac{1}{\tau}\right\}$. Denote by $\tilde{u}\subset S\times M$ the graph of $u$. Consider the lift $\tilde{\gamma}$ of $\gamma$. 
For any integer $N$ let $r_N=\frac{d}{2N}$.  Then we claim that for each $\tau\geq \max\{\|X_H\|, \sqrt{\|d_{\Sigma}H\|_{\infty}}\}$  there are at least $\lfloor N/(\sqrt{3}\tau)\rfloor$ points $s_i\in[0,1]$ such that $d_\tau(\tilde{u}(\gamma(s_i)), \tilde{u}(\gamma(s_{j})))\geq2\tau r_N$ whenever $i\neq j$. For this, we need to bound how much the projection map $\pi_2: S\times M\to M$ can increase norms. Referring to the formula \eqref{eqGromovMetricFormula} and the notation introduced in the proof of Lemma \ref{lmProdCEquivalence}, it suffices to analyze the restriction of $\pi_2$ to $TS\oplus \bR X_H$. Looking at the matrix given in the proof of Lemma \ref{lmProdCEquivalence} with variables $X = \frac{\kappa_H}{\tau^2} \leq \frac{\|d_{\Sigma}H\|_{\infty}}{\tau^2}$ and $Y = \mu\frac{\|X_H\|}{\tau}$, we see that norms can increase by at most $\sqrt{1 + X + Y^2}$. Under our assumption that $\tau \geq \sqrt{\|d_{\Sigma}H\|_{\infty}}$, we have $X \leq 1$, and since $\mu \leq 1$ and $\tau \geq \|X_H\|$, we have $Y \leq 1$, so the norm increase is bounded by $\sqrt{3}$.   
Let $B_i=B_{r_{N}}(\tilde{u}(\gamma(s_i)))$ where we consider balls with respect to $\tilde{g}_{
\tau}$. Suppose now that $N$ is so that $r_N\ll \delta$ and $N>1$. The monotonicity inequality of Lemma \ref{lmMonEst++} then gives 
\begin{equation}
\sum_i \tilde{\omega}_\tau(B_i) \geq \frac1{C^3_\tau} \lfloor N/(\sqrt{3}\tau)\rfloor \tau^2 r_N^2>\frac1{4\sqrt{3}C^3_\tau }\tau r_N{d},
\end{equation}
where $C$ is a constant that dominates $C_{\tau}$ on $(0,1]$.
On the other hand
\begin{equation}
\tilde{\omega}_\tau(B_i)= E^{geo}(u;u^{-1}(\pi_1(B_i)))+Area_\tau(\pi_1(B_i)).
\end{equation}
So,
\begin{equation}
\sum_i \tilde{\omega}_\tau(B_i) \leq E^{geo}(u)+2\tau\ell_\tau=E^{geo}(u)+2\tau^2\ell.
\end{equation}

Combining these, we obtain
\begin{equation}
d\leq  \frac{\sqrt{3}C_\tau^3}{r_N}\left(\frac1{\tau}E^{geo}(u)+2\tau\ell\right).
\end{equation}
Taking $r_N\sim\frac15\delta$ gives the claim. 
\end{proof}

For the following proposition fix a Hamiltonian $H$ and a geometrically bounded almost complex structure $J$. Let $V:=D_{\delta}$ for a Liouville domain $D$. 
\begin{prp}\label{prpRelEn}
If $\nabla H|_{\Sigma\times M\setminus V}$ is sufficiently  small there is an $\hbar>0$ depending only on the geometry of $J$ such that if $E_{M,\theta}(u)\neq 0$ then $E_{M,\theta}(u)>\hbar$. The same holds if the hypothesis is true once we replace $H$ by $H-f$ where $f:\Sigma\to\bR$ is  any smooth function.
\end{prp}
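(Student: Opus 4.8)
\textbf{Proof plan for Proposition \ref{prpRelEn}.} The plan is to reduce the statement to the diameter estimate of Proposition \ref{prpSmallDiamEst} by the following chain of reasoning, which follows \cite[Proposition 5.3]{Groman2023SYZLocal} with the modifications recorded in Propositions \ref{PrpRelHbar} and \ref{PrpRelHbar2}. Suppose for contradiction that there is a sequence of Floer solutions $u_k$ with $E_{M,\theta}(u_k)\neq 0$ and $E_{M,\theta}(u_k)\to 0$. Since $E_{M,\theta}(u)\geq 0$ is already known (integrated maximum principle, Lemma \ref{lmPositivity}), and since for a monotone datum $E^{geo}(u)\leq E_{top}(u)$ while $E_{top}(u)$ differs from $E_{M,\theta}(u)$ only by boundary terms controlled on $M\setminus V$ by the $C^1$-smallness of $H$, we get that both $E^{geo}(u_k)\to 0$ and $E_{top}(u_k)\to 0$. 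The heart of the argument is then to show that a solution with $E_{M,\theta}(u)\neq 0$ and very small geometric energy is impossible: either the solution must be entirely contained in $V(D)$ — in which case $E_{M,\theta}(u)=0$ by Stokes, a contradiction — or it genuinely leaves $V(D)$, and then it must ``travel'' a definite distance, which the diameter estimate forbids for small energy.

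First I would set up the dichotomy precisely. If $u$ is not contained in $V=D_\delta$, then its image meets both the region $M\setminus D_\delta$ and (by the asymptotic conditions, since the orbits lie inside $D_\delta$ or are outside critical points with $E_{M,\theta}=0$) a neighborhood of $\partial D_{\delta/2}$, so there is a pair of points on a short geodesic segment $\gamma$ in the domain $\Sigma$ whose images under $u$ are separated by at least some fixed distance $d_0>0$ depending only on $\delta$ and the geometry of $J$ near $\partial D_\delta$. Here I would use that $H$ has small gradient on $M\setminus D_{\delta/2}$, so the Hamiltonian vector field $X_H$ is small there and the solution behaves, up to small error, like a genuine $J$-holomorphic curve; in particular $\|X_H\|_\infty$ and $\|d_\Sigma H\|_\infty$ can be taken as small as we like. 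Applying Proposition \ref{prpSmallDiamEst} with a suitable choice of $\tau\in(0,1]$ (small but $\geq\max\{\|X_H\|,\sqrt{\|d_\Sigma H\|_\infty}\}$, which is permissible precisely because these quantities are small) together with a bound on $\ell(\gamma)$, one obtains
\[
d_0\;\leq\;d\;\leq\;\frac{1}{\delta\tau}\,h\!\left(\tfrac{|d_\Sigma H|_\infty}{\tau^2},\tfrac{\|X_H\|^2_\infty}{\tau^2}\right)\bigl(E^{geo}(u)+\tau^2\ell(\gamma)\bigr).
\]
Since $h\to 1$ and $E^{geo}(u_k)\to0$, for $k$ large the right-hand side is smaller than $d_0$ once $\tau$ is chosen appropriately (e.g. $\tau^2\sim E^{geo}(u_k)^{1/2}$, balancing the two terms), giving the contradiction. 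Reading off the threshold from this inequality produces the constant $\hbar$, which depends only on $\delta$ and on the geometry bounds for $J$ (sectional curvature and injectivity radius), as claimed.

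Two technical points require care. The first, and the main obstacle, is the bookkeeping of the boundary terms relating $E_{M,\theta}(u)$, $E_{top}(u)$, and $E^{geo}(u)$ when $H$ is not compactly supported and is only $C^1$-small rather than constant on $M\setminus V$: one must verify that the failure of $E^{geo}(u)\leq E_{top}(u)$ and of $E_{top}(u)=E_{M,\theta}(u)$ is itself bounded by a quantity that $\to0$ with the $C^1$ norm of $H$ on the complement of $V$, uniformly in the solution. This is exactly where the precise form of the S-shaped conditions enters, and it is the place where the ``$C^1$-small on $M\setminus V(D)$'' hypothesis (rather than on the complement of a neighborhood of $\partial D$ only) is used. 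The second point is the last sentence: replacing $H$ by $H-f$ with $f$ a function on $\Sigma$ alone changes neither $X_H$ nor $d_\Sigma(H-f)$ in a way that affects the relevant norms (indeed $X_{H-f}=X_H$ and $d_\Sigma f$ contributes a term independent of $M$), and it changes $E_{top}$ and $E_{M,\theta}$ by the same controlled boundary quantity; so the entire argument goes through verbatim with $H$ replaced by $H-f$. I would dispatch this by noting that all estimates above were stated in terms of $X_H$, $d_\Sigma H$, and the boundary integrals, each of which transforms harmlessly under $H\mapsto H-f$.
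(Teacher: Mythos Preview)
Your plan has two genuine gaps. The main one is the step ``there is a pair of points on a short geodesic segment $\gamma$ in the domain $\Sigma$ whose images under $u$ are separated by at least some fixed distance $d_0$.'' Merely knowing that the image of $u$ meets both $M\setminus D_\delta$ and a neighborhood of $\partial D_{\delta/2}$ does not produce a \emph{short} $\gamma$: with small geometric energy the curve can (and will) traverse the collar arbitrarily slowly in the domain, so any two preimages of far-apart points may be joined only by a long arc, and then the $\tau^2\ell(\gamma)$ term in Proposition~\ref{prpSmallDiamEst} swamps the estimate. The paper supplies the missing idea: from $E_{M,\theta}(u)\neq 0$ one extracts a component $v$ of $u\setminus u^{-1}(V)$ with $E_{M,\theta}(v)\neq 0$, hence $v$ is \emph{not} contractible relative to $V$; this topological fact forces some $t$-slice $u(s_0,\cdot)\cap v$ to have diameter $\geq\epsilon$ (otherwise every slice is small and $v$ retracts into $V$). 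That slice is a piece of a $t$-circle, so $\ell(\gamma)\leq 2\pi$ is bounded a priori, and now the diameter estimate yields a lower bound on $E^{geo}(u)$ directly. No contradiction argument is needed.

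The second gap is the reduction ``$E_{M,\theta}(u_k)\to 0\Rightarrow E^{geo}(u_k)\to 0$.'' This fails: by Lemma~\ref{lmEMthetaDelta} we have $E_{top}(u)=E_{M,\theta}(u)-\cA(\gamma_1)+\cA(\gamma_0)$, and the actions of orbits inside $D$ are not controlled by the $C^1$-smallness of $H$ on $M\setminus V$ (for S-shaped Hamiltonians they are typically large). So $E_{top}(u)$, and hence $E^{geo}(u)$, need not be small. Relatedly, invoking Lemma~\ref{lmPositivity} here is circular: its proof already uses Proposition~\ref{PrpRelHbar}, which is precisely what Proposition~\ref{prpRelEn} is meant to establish. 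For the final clause about $H-f$, the correct observation is simpler than what you wrote: Floer's equation depends only on $X_H=X_{H-f}$, so the set of solutions and $E^{geo}(u)$ are literally unchanged, and one applies the first part directly with $H$ replaced by $H-f$.
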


\begin{rem}
The point of the last sentence is a s follows. The typical situation is for a continuation map from $H_0$ to $H_1$ the difference $H_1-H_0$ has to be allowed to be arbitrarily large, but the the oscillation of each of the $H_i$ on $M\setminus V$ can be kept arbitrarily small. 
\end{rem}
\begin{proof}
 The fact that $E_{M,\theta}(u)\neq 0$ implies that there is a component $v$ of $u\setminus u^{-1}(V)$ so that $E_{M,\theta}(v)\neq 0$.  Let $\epsilon>0$ be dominated by the injectivity radius of $M$ as well as by the distance from $\partial D$ to its cut locus. Consider the coordinate $s$ restricted to $v$. Then there is a value $s_0$ such that a connected component of $u(s_0,\cdot)\cap v$ has diameter greater than $\epsilon$. Indeed, otherwise $v$ is contractible relative to $V$.  

Applying Proposition \ref {prpSmallDiamEst}, taking $d_0=\epsilon/2$ we and assuming without loss of generality that $\epsilon/2\leq \frac1{\sqrt{K}}$ we obtain
\begin{equation}
\epsilon<\frac2{\epsilon\tau}h\left(\frac{\left|d_{\Sigma}H\right|_{\infty}}{\tau^2},\frac{\|X_H\|^2_{\infty}}{\tau^2}\right)\left(E^{geo}(u)+\tau^2\ell(\gamma)\right),
\end{equation}
for any $\tau$ such that $\|X_H\|<\tau.$
Taking  $H$ even smaller in $C^1$ so that $h\left(\frac{\left|d_\Sigma H\right|_{\infty}}{\tau^2},\frac{\|X_H\|^2_{\infty}}{\tau^2}\right)\leq 2$ we obtain the estimate
\begin{equation}
    E^{geo}(u)>\epsilon^2\delta\tau/4-2\tau^2\pi.
\end{equation}
Fixing $\tau=\frac{\epsilon^2\delta}{16\pi}$
we obtain
\begin{equation}
E^{geo}(u)>\frac{\epsilon^2\delta}{16\pi}. 
\end{equation} 
For the final clause, note that $H$ determines the same equation as $H-f$ since Floer's equation only involves derivatives of $H$ in directions tangent to $M$.
\end{proof}

\begin{lm}\label{lmEpsilonExtendable}
Fix some $\delta>0$. Let $D$ be a Liouville domain which is embedded in $M$ and is $\delta'$-extendable for some $\delta'>\delta$. Let $J$ be an almost complex structure on $M$ which is of contact type near the boundary of $D$. Then for curves which have at least one end inside, $\hbar$ can be made to depend only on $D$ and on $J$ restricted to a $\delta$-neighborhood of $D$. Denote this constant by $\hbar_{in}(D,J,\delta)$. For any $r>0$ we have that $\hbar_{in}(r\cdot D,\psi_r^*J,\delta)=r\hbar_{in}(D,J,\delta)$. Here $\psi_r$ is the dilation map $x\mapsto r\cdot x$ where $r$ acts by Liouville scaling.
\end{lm}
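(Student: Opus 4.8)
\textbf{Plan of proof for Lemma \ref{lmEpsilonExtendable}.}

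The plan is to extract the constant $\hbar_{in}(D,J,\delta)$ directly from the proof of Proposition \ref{prpRelEn}, tracking which geometric quantities actually enter, and then to establish the scaling behavior by a change-of-variables argument. First I would set up the situation: since $D$ is $\delta'$-extendable with $\delta'>\delta$, the symplectization collar $D_{\delta'}\setminus D$ is genuinely embedded in $M$, so on $D_\delta\setminus D$ the Liouville form $\theta$, the radial coordinate $\rho$, and (by hypothesis) a contact-type $J$ are all available. For a Floer trajectory $u$ with at least one end inside $D$ and with $E_{M,\theta}(u)\neq 0$, the integrated maximum principle (Lemma \ref{lmPositivity}) forces $u$ to leave $D_{\delta'}$; but the part of $u$ lying in the collar $D_{\delta}\setminus D$ is where the monotonicity estimate gets applied, and the only geometry used in Proposition \ref{prpSmallDiamEst} is a sectional-curvature bound $K$ and an injectivity-radius lower bound — for the relevant piece of curve these are controlled purely by $g_J$ restricted to a $\delta$-neighborhood of $D$, since that is where the curve component $v$ with $E_{M,\theta}(v)\neq 0$ must first cross a transverse slice of diameter $\geq\epsilon$. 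The threshold $\epsilon$ itself is chosen as a fraction of $\min\{\mathrm{inj}(g_J|_{D_\delta}),\,\mathrm{dist}(\partial D, \mathrm{cut\,locus})\}$, again intrinsic to $D_\delta$. Plugging these into the final inequality $E^{geo}(u)>\tfrac{\epsilon^2\delta}{16\pi}$ of Proposition \ref{prpRelEn} gives a constant depending only on $D$ and on $J|_{D_\delta}$; I would \emph{define} $\hbar_{in}(D,J,\delta)$ to be (a quarter of) this quantity, matching the bookkeeping of Lemma \ref{lmOutsideContributions}.

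The second step is the scaling statement $\hbar_{in}(r\cdot D,\psi_r^*J,\delta)=r\,\hbar_{in}(D,J,\delta)$. Here I would use that Liouville dilation $\psi_r\colon x\mapsto r\cdot x$ satisfies $\psi_r^*\theta = r\theta$ (on the collar, in the coordinates where $\rho$ is flow time, dilation by $r$ corresponds to a shift in $\rho$ composed with multiplication, and the precise normalization is that actions scale by $r$). Consequently, if $u$ is a Floer trajectory for data on $r\cdot D$ built by pulling back via $\psi_r$ the data on $D$, then $\psi_r^{-1}\circ u$ is a Floer trajectory for the original data (Floer's equation is conformally covariant under this rescaling of Hamiltonian and $1$-form by $r$), and the relative energies transform by $E_{(r\cdot D),\,r\theta}(u) = r\,E_{D,\theta}(\psi_r^{-1}\circ u)$. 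This is the same mechanism already used to prove $\beta(t\cdot D)=t\beta(D)$ in Lemma \ref{lmBoundaryDepthLiouville}(3) and the scaling of $\hbar$ in the remarks after Theorem \ref{mainTmA}. Since both the curvature/injectivity data entering $\epsilon$ and the geometric energies entering the final bound scale homogeneously of weight $1$ under $\psi_r$ when $J$ is replaced by $\psi_r^*J$, the infimum of relative energies over non-trivial trajectories scales by exactly $r$, which is the claim. One small point to check is that the $\delta$-neighborhood used on the $r\cdot D$ side is still $\delta$ (not $r\delta$): this is consistent because $\delta$ measures flow time, which is \emph{not} rescaled by $\psi_r$ — only the form and the energies are — so the statement is genuinely $\hbar_{in}(r\cdot D,\psi_r^*J,\delta) = r\,\hbar_{in}(D,J,\delta)$ with $\delta$ fixed.

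The main obstacle I anticipate is making rigorous the claim that the constant depends \emph{only} on $J$ restricted to a $\delta$-neighborhood and not on the global geometry of $(M,J)$. In Proposition \ref{prpRelEn} as stated, $\hbar$ is allowed to depend on the geometry of $J$ on all of $M$, because the curve component $v$ of $u\setminus u^{-1}(V)$ could in principle wander far from $D$. The fix is precisely the hypothesis that $u$ has \emph{at least one end inside}: combined with $E_{M,\theta}(u)\neq 0$ and the trichotomy of Lemma \ref{lmPositivity}, one shows that either $E_{M,\theta}(u)\geq\hbar$ with $\hbar$ the global constant (which is fine, it only makes $\hbar_{in}$ smaller), or $u$ contributes with output an outside critical point — and in the latter case the relevant diameter-$\geq\epsilon$ slice is realized while the curve is still in the collar $D_\delta\setminus D$, because the curve must transit this collar to connect the inside end to the outside. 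Thus the monotonicity inequality need only be applied on balls contained in a $\delta$-neighborhood of $D$, where the geometry bounds are local. Writing this transit argument carefully — in particular ensuring the slice of large diameter can be found within the collar rather than beyond it — is the delicate part; everything else is a matter of rescaling bookkeeping that parallels arguments already in the paper.
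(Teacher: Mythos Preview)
Your approach is essentially the same as the paper's, and your first paragraph already contains the complete argument. The ``obstacle'' you identify in the third paragraph is a red herring, and the detour through the trichotomy of Lemma~\ref{lmPositivity} is both unnecessary and counterproductive.

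The point is much simpler than you make it. You already observed (correctly) that a trajectory $u$ with $E_{M,\theta}(u)\neq 0$ cannot be contained in the region where $\theta$ is defined --- this is Stokes, not the integrated maximum principle, and it needs no quantitative $\hbar$. Combined with the hypothesis that one end lies inside $D$, this forces $u$ to traverse the annulus $D_\delta\setminus D$. That traversal alone furnishes the path of definite length required by Proposition~\ref{prpSmallDiamEst}, and the monotonicity estimate is applied to balls centered on points of $u$ lying \emph{in this annulus}. What the curve does once it exits $D_\delta$ is irrelevant: the energy lower bound comes entirely from the portion of $u$ inside the collar, so only the geometry of $(D_\delta,J|_{D_\delta})$ enters. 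Your attempt to case-split via the trichotomy reintroduces the global $\hbar$, which is exactly what you are trying to avoid; drop that paragraph entirely.

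For the scaling clause, your reasoning is correct but over-engineered. The paper simply observes that $\psi_r$ sets up a bijection between Floer solutions for the data on $D$ and on $r\cdot D$, under which the symplectic form --- and hence the geometric energy --- scales by $r$. Since $\hbar_{in}$ is an infimum of such energies, it scales by $r$. There is no need to track how curvature or injectivity radius transform; the bijection of solution spaces does all the work.
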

\begin{proof}
This follows by examining the proof of Proposition \ref{prpRelEn}. Examining the proof, we find that that with fixed estimates on the curvature, it is enough that we find a path in the domain which gets mapped into a region where the Hamiltonian is arbitrarily small and where the length of the image of that path is bounded from below by some uniform constant. A solution whose relative energy is non-zero and has one end inside would have to go across between the upper and lower boundary of an annulus of size $\delta$ around $D$. The last clause of the proposition follows by observing that the scaling map $\psi_r$ maps Floer solutions to appropriate Floer solutions while scaling the symplctic form and with it the geometric energy by $r$.  
\end{proof}

\section{Virtual Chains and $\varepsilon$-Locality}
\label{SecVirtualChains}
The framework of virtual chains developed by Abouzaid in \cite{Abouzaid2022} is sufficient for most the construction of the complex $SC^*_M(K)$, its operations, presheaf structure as well as the $\val_M$ filtration when using S-shaped data. However the proof of Proposition \ref{prpGrLocality} which underlies Theorems \ref{mainTmFiltration} - \ref{mainTmBV} requires a slight adjustment. The argument of Proposition \ref{prpGrLocality} is that if we consider contributions from curves $u$ satisfying $\val_M(u)=0$ and whose output is not on an outside critical point, then relying on the integrated maximum principle we have a bijection between these solutions and those contributing to the intrinsic complex. This argument becomes problematic in the setting of virtual chains where the operations are not defined by a count of actual Floer solutions but by virtual counts. 
This is not a serious issue since it can be shown that the virtual count depends only on an arbitrarily small neighborhood of the set of actual solutions. Rather than show this we choose to build in this type of locality in the construction of the Kuranishi charts. For this we recall the notion of Kuranishi charts and how they are constructed. We introduce a slight adjustment to the construction of the Kuranishi charts in \cite{Abouzaid2022} where fix an $\epsilon>0$ and consider thickening data which are $\epsilon$ local (Definition \ref{dfEpsilonLocalSpace}). This has the unpleasant feature that the intrinsic Floer theory would appear to depend on a parameter $\epsilon$. It should not be hard to construct Kuranishi bi-modules corresponding to families in which $\epsilon$ varies, and use this to show that the virtual count is independent of the choice of $\epsilon$.

\subsection{Kuranishi Charts and Presentations}

We begin by recalling the some pertinent definitions from \cite{Abouzaid2022}. We recall them in abridged form. For full details, we refer the reader to \cite{Abouzaid2022}.

\begin{df}[Kuranishi Chart]\label{dfKuranishiChart}
Following \cite[Definition 2.7]{Abouzaid2022}, a \emph{Kuranishi chart} is a quintuple $X = (G, Q, X, V, s)$, where:
\begin{enumerate}
\item $G$ is a compact Lie group,
\item $Q$ is a model for manifolds with generalized corners (see \cite[Definition 2.1]{Abouzaid2022}),
\item $X$ is a $Q$-manifold with an action of $G$ that has finite stabilizers (see \cite[Definition 2.4]{Abouzaid2022} for the notion of $Q$-manifold),
\item $V$ is a real $G$-vector bundle over $X$, and
\item $s$ is a $G$-equivariant section of $V$.
\end{enumerate}
\end{df}

\begin{df}[Morphisms of Kuranishi Charts]\label{dfKuranishiMorphism}
Following \cite[Definition 2.8]{Abouzaid2022}, a \emph{morphism} $f$ of Kuranishi charts from $X_1 = (G_1, Q_1, X_1, V_1, s_1)$ to $X_2 = (G_2, Q_2, X_2, V_2, s_2)$ consists of the following data:
\begin{enumerate}
\item A homomorphism $G_1 \to G_2$ whose kernel acts freely on $X_1$,
\item A map $f: Q_1 \to Q_2$ of models. We denote the image of the minimal element of $Q_1$ by $P_f$,
\item A $G_1$-equivariant map $f: X_1 \to \partial_{P_f} X_2$, where $\partial_{P_f} X_2$ denotes the $P_f$-boundary of the $Q_2$-manifold $X_2$,
\item A $G_1$-equivariant embedding $V_1 \to f^*V_2$ of vector bundles over $X_1$.
\end{enumerate}
We require appropriate transversality conditions (see \cite[Section 2.2]{Abouzaid2022} for details).
\end{df}

\begin{df}[Kuranishi Presentation]\label{dfKuranishiPresentation}
Following \cite[Definition 2.10]{Abouzaid2022}, a \emph{Kuranishi presentation} consists of a category $Q$ (which is a model for manifolds with generalized corners), a category $A$, and a functor
\begin{equation}
X: A \to \text{Chart}_{\downarrow Q}
\end{equation}
such that the (ordinary) colimit $M$ of the stratified spaces $M(X_\alpha)$ over $\alpha \in A$ is a compact Hausdorff space with finitely many non-empty strata, and such that for each point $u \in M$, the nerve of the category $A_{[u]}$ of charts which cover $u$ is contractible. Here $M(X_\alpha) = s_\alpha^{-1}(0)/G_\alpha$ denotes the \emph{footprint} of the Kuranishi chart $X_\alpha$ (see \cite[Equation 2.18]{Abouzaid2022}).
\end{df}

\begin{df}[Map of Kuranishi Presentations]\label{dfMapKuranishiPresentation}
Following \cite[Definition 2.12]{Abouzaid2022}, a \emph{map of Kuranishi presentations} from $(Q_1, A_1, X_1)$ to $(Q_2, A_2, X_2)$ consists of a map of models $Q_1 \to Q_2$, a functor $A_1 \to A_2$, and a natural transformation in the following diagram
\begin{equation}
\xymatrix{
A_1 \ar[r] \ar[d] & \text{Chart}_{\downarrow Q_1} \ar[d] \ar@{=>}[dl] \\
A_2 \ar[r] & \text{Chart}_{\downarrow Q_2}
}
\end{equation}
If $P \in Q_2$ is the image of the minimal element of $Q_1$, we require the above map to induce a homeomorphism from $M_1$ to $\partial_P M_2$.
\end{df}

\begin{rem}
There is a version with oriented charts which is necessary for the virtual counts. For details on oriented Kuranishi charts and their role in defining virtual fundamental classes, we refer to \cite[Section 2.4]{Abouzaid2022}.
\end{rem}

\begin{df}[Kuranishi Flow Category]\label{dfKuranishiFlowCategory}
Following \cite[Definition 7.2]{Abouzaid2022}, we construct a \emph{Kuranishi flow category} as follows. We start with a poset $P$ which in our applications is taken to be the set of 1-periodic orbits of a Hamiltonian $H$. We turn $P$ into a category by defining a morphism from $\gamma$ to $\gamma'$ to be a sequence $(\gamma, \lambda_0, \gamma_1, \lambda_1, \ldots, \gamma_n, \lambda_n, \gamma')$. The $\lambda_i$ are non-negative real numbers representing topological energies, and the $\gamma_i$ are 1-periodic orbits. Composition of morphisms is defined by concatenation.

A \emph{Kuranishi flow category} $\mathbb{X}$ over $P$ with object set $P_{\mathbb{X}}$ consists of:
\begin{enumerate}
\item A choice of graded orientation line $o_p$ for each element $p \in P_{\mathbb{X}}$,
\item Morphism spaces given by Kuranishi presentations: for each $\lambda > 0$ and pair of objects $\gamma, \gamma'$, a Kuranishi presentation $\mathbb{X}^\lambda(\gamma, \gamma')$ whose strata correspond to morphisms from $\gamma$ to $\gamma'$ in $P$ with total topological energy $\leq \lambda$.
\end{enumerate}
The morphisms are required to be oriented appropriately and satisfy composition and finiteness conditions. This structure provides the categorical framework for virtual chain complexes as described in Section \ref{subsec:virtual_chains_general}. For complete details, see \cite[Section 7]{Abouzaid2022}.
\end{df}

\subsection{The Category of Thickening Data}

Following \cite[Section 11.1]{Abouzaid2022}, we recall the definition of the category of thickening data, which parametrizes Kuranishi charts.

\begin{df}[Category of Thickening Data]\label{dfThickeningData}
For each pair of periodic orbits $p, p'$ and energy level $\lambda \geq 0$, there is a category $A_\lambda(p,p')$ of \emph{thickening data}. A thickening datum $\alpha \in A_\lambda(p,p')$ consists of:

\begin{enumerate}
\item \textbf{Choice of stratum}: A morphism $P_\alpha \to P_\alpha'$ in $P_\lambda(p,p')$, where $P_\lambda(p,p')$ is the partially ordered set from \cite[Definition 7.1]{Abouzaid2022}. We write $\partial_\alpha P_\lambda(p,p')$ for the partially ordered set of objects between $P_\alpha$ and $P_\alpha'$, and $T_\alpha$ for the labelled arc $T_{P_\alpha'}$.

\item \textbf{Additional marked points}: For each sub-arc $T$ of $T_\alpha$, a natural number $S_T^\alpha$ and a sequence $\vec{r}_T^\alpha$ of length $S_T^\alpha$ consisting of strictly positive integers. We write $\vec{r}_\alpha$ for the union of the sequences $\vec{r}_T^\alpha$ over all arcs $T$, $G_\alpha$ for the product of the symmetric groups on $r_i^\alpha$ elements, $M_\alpha$ for the moduli space associated to this data, and $C_\alpha$ for the universal curve over $M_\alpha$.

\item \textbf{Stabilizing divisors}: A sequence $\vec{D}_\alpha = (D_{T,1}^\alpha, \ldots, D_{T,S_T^\alpha}^\alpha)$ of smooth codimension 2 submanifolds of $M$ for each arc $T \subset T_\alpha$. The precise role of these divisors in ensuring stability is detailed in \cite[Section 11.1]{Abouzaid2022}.

\item \textbf{Inhomogeneous term}: A $G_\alpha$-invariant finite dimensional subspace $V_\alpha \subset \mathbb{R}^\infty \otimes \mathbb{R}[G_\alpha]$ and an equivariant map
\begin{equation}
Y_\alpha: V_\alpha \to \Omega^{0,1}_{\text{vert}}(C_\alpha, C^\infty(M,TM))
\end{equation}
into the space of $(0,1)$-forms on the fibers of the universal curve, with values in the space of vector fields on $M$, with support away from all nodes.
\end{enumerate}
\end{df}

\begin{df}[Morphisms of Thickening Data]\label{dfThickeningMorphism}
A \emph{morphism} $\alpha \to \beta$ of thickening data consists of:

\begin{enumerate}
\item A factorization $P_\beta \to P_\alpha \to P_\alpha' \to P_\beta'$ in $P_\lambda(p,p')$.

\item Embeddings $\{1, \ldots, S_{T'}^\beta\} \subset \{1, \ldots, S_T^\alpha\}$ for each sub-arc $T'$ of $T_\beta$ projecting to sub-arc $T$ of $T_\alpha$, which induce embeddings of sequences $\vec{r}_\beta \subset \vec{r}_\alpha$ and divisors $\vec{D}_\beta \subset \vec{D}_\alpha$.

\item A $G_\alpha$-equivariant inclusion $V_\beta \hookrightarrow V_\alpha$ fitting in a commutative diagram
\begin{equation}
\xymatrix{
V_\beta \ar[r] \ar[d] & \Omega^{0,1}_{\text{vert}}(C_\beta, C^\infty(M,TM)) \ar[d] \\
V_\alpha \ar[r] & \Omega^{0,1}_{\text{vert}}(C_\alpha, C^\infty(M,TM))
}
\end{equation}
where the right vertical map is given by pullback.
\end{enumerate}

Composition of morphisms is defined in the natural way, making $A_\lambda(p,p')$ into a category.
\end{df}

For precise definitions of the spaces involved and detailed constructions, we refer to \cite[Definition 11.1]{Abouzaid2022}.

\subsection{From Thickening Data to Kuranishi Presentations}

Following \cite[Section 11]{Abouzaid2022}, given a thickening datum $\alpha \in A_\lambda(p,p')$, one constructs a space $X_\alpha$ as follows. Let $\mathcal{C}_\alpha$ denote the universal curve over the moduli space $M_\alpha$ of marked Riemann surfaces associated to the thickening datum $\alpha$. The space $X_\alpha$ consists of pairs $(u,v)$ where:
\begin{itemize}
\item $v \in V_\alpha$ is an element of the finite-dimensional parameter space
\item $u: \Sigma \to M$ is a map from a curve $\Sigma$ in the fiber $\mathcal{C}_\alpha$ satisfying the prescribed asymptotic behavior to periodic orbits $p$ and $p'$
\item The pair $(u,v)$ satisfies the inhomogeneous Floer equation
\begin{equation}\label{eqInhomogeneousFloerAbouzaid}
\frac{\partial u}{\partial s} + J\left(\frac{\partial u}{\partial t} - X_{H_s}(u)\right) = Y_\alpha(v)
\end{equation}
\end{itemize}

 Fix $\varepsilon > 0$. Let $\mathcal{M}(p,p')$ denote the moduli space of all solutions to the \emph{homogeneous} Floer equation (i.e., solutions with $Y_\alpha(v) = 0$) connecting $p$ to $p'$.

\begin{df}[$\varepsilon$-Local Space]\label{dfEpsilonLocalSpace}
For a thickening datum $\alpha \in A_\lambda(p,p')$ and $\varepsilon > 0$, we define the \emph{$\varepsilon$-local space} $X_\alpha^\varepsilon$ to be the subset of $X_\alpha$ consisting of pairs $(u,v)$ where $u$ maps into an $\varepsilon$-neighborhood of the image of an element of $\mathcal{M}(p,p')$. 
\end{df}

\begin{df}[$\varepsilon$-Local Kuranishi Chart]\label{dfEpsilonKuranishiChart}
Following \cite[Definition 11.3]{Abouzaid2022} with our $\varepsilon$-locality modification, the \emph{$\varepsilon$-local Kuranishi chart} $X_\alpha^\varepsilon$ associated to a thickening datum $\alpha \in A_\lambda(p,p')$ consists of:
\begin{enumerate}
\item The finite group $G_\alpha$,
\item The partially ordered set $\partial_\alpha P_\lambda(p,p')$ equipped with its natural map to $P_\lambda(p,p')$,
\item The manifold
\begin{equation}
X_{\alpha,\text{reg}}^{\varepsilon}(p,p') \subset X_\alpha^\varepsilon(p,p'),
\end{equation}
consisting of curves in the $\varepsilon$-neighborhood of homogeneous solutions whose evaluation maps at the points labeled by $r_i^\alpha$ are transverse to $D_i^\alpha$, and which are regular,
\item The trivial $G_\alpha$ vector bundle over $X_{\alpha,\text{reg}}^{\varepsilon}(p,p')$ with fiber $V_\alpha$, and
\item The section of this vector bundle given by the projection map
\begin{equation}
s_\alpha^\varepsilon: X_{\alpha,\text{reg}}^{\varepsilon}(p,p') \to V_\alpha.
\end{equation}
\end{enumerate}
The only modification relative to the definition in \cite[Definition 11.3]{Abouzaid2022} is that condition (3) restricts to curves within an $\varepsilon$-neighborhood of solutions to the homogeneous Floer equation, ensuring locality properties essential for maximum principle arguments.
\end{df}

\begin{prp}[$\varepsilon$-Local Kuranishi Presentation]\label{prpEpsilonLocalProperties}
The collection of $\varepsilon$-local charts forms a Kuranishi presentation of the moduli space $\mathcal{M}_\lambda(p,p')$.
\end{prp}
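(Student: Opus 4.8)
The plan is to verify that the $\varepsilon$-local charts of Definition \ref{dfEpsilonKuranishiChart}, as $\alpha$ ranges over the category $A_\lambda(p,p')$ of thickening data, satisfy the conditions of Definition \ref{dfKuranishiPresentation} — namely that the colimit over $\alpha$ of the footprints $M(X_\alpha^\varepsilon) = (s_\alpha^\varepsilon)^{-1}(0)/G_\alpha$ is a compact Hausdorff space with finitely many nonempty strata (here $\mathcal{M}_\lambda(p,p')$ with its stratification by $\partial_\alpha P_\lambda(p,p')$), and that for each point $u$ in this colimit the nerve of the category of charts covering $u$ is contractible. The strategy is to reduce each of these to the corresponding statement in \cite[Section 11]{Abouzaid2022}, where the non-local version is established, observing that restriction to an $\varepsilon$-neighborhood of $\mathcal{M}(p,p')$ changes nothing essential.

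First I would record that the footprint is unchanged: since a solution of the homogeneous equation ($v$ with $Y_\alpha(v)=0$) automatically lies in the $\varepsilon$-neighborhood of its own image, we have $(s_\alpha^\varepsilon)^{-1}(0) = (s_\alpha)^{-1}(0)$, so $M(X_\alpha^\varepsilon) = M(X_\alpha)$ and the colimit is exactly $\mathcal{M}_\lambda(p,p')$ with the Gromov--Floer topology. Compactness, Hausdorffness and the finiteness of the stratification are then quoted verbatim from \cite{Abouzaid2022}, as these are properties of the footprint and its stratification by broken configurations, not of the ambient thickened spaces. Next I would check that $X_{\alpha,\mathrm{reg}}^\varepsilon(p,p')$ is an open subset of $X_{\alpha,\mathrm{reg}}(p,p')$ and hence inherits the structure of a $Q$-manifold with $G_\alpha$-action with finite stabilizers, and that the section $s_\alpha^\varepsilon$ is the restriction of $s_\alpha$: openness follows because "mapping into an $\varepsilon$-neighborhood of some element of $\mathcal{M}(p,p')$'' is a $C^0$-open condition on $u$, and near the zero locus the regularity and transversality conditions cut out the same smooth chart. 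The morphisms $\alpha\to\beta$ induce morphisms of the $\varepsilon$-local charts exactly as in Definition \ref{dfThickeningMorphism}, since the pullback maps on curves preserve the property of landing in an $\varepsilon$-neighborhood of $\mathcal{M}(p,p')$.

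The one point requiring genuine care — and the step I expect to be the main obstacle — is the contractibility of the nerve of $A_{[u]}^\varepsilon$, the category of $\varepsilon$-local charts covering a given point $u\in\mathcal{M}_\lambda(p,p')$. In \cite{Abouzaid2022} this is proved by exhibiting, for any finite collection of thickening data covering $u$, a common refinement obtained by taking unions of the marked points, divisors, and obstruction spaces $V_\alpha$; I would need to confirm that this refinement operation stays within the $\varepsilon$-local world, i.e. that enlarging $V_\alpha$ and adding marked points does not force the regular locus to leave the $\varepsilon$-neighborhood. This holds because the $\varepsilon$-neighborhood condition is imposed only on the map $u$ and is monotone: it is a condition cut out independently of $V_\alpha$, so it is automatically compatible with all the refinement maps, and the cofinality/contractibility argument of \cite[Section 11]{Abouzaid2022} applies mutatis mutandis. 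I would also remark that the orientation data carry over unchanged, since the $\varepsilon$-local charts are open subcharts of the oriented charts of \cite{Abouzaid2022}, so one obtains an oriented Kuranishi presentation suitable for the virtual counts, completing the proof. Finally I would note for the record that the resulting virtual counts a priori depend on $\varepsilon$, and refer forward to the expected construction of Kuranishi bimodules interpolating between different values of $\varepsilon$ (as indicated in the introduction to this appendix) to see that they do not.
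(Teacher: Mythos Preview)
Your proposal is correct and takes essentially the same approach as the paper: both reduce to \cite{Abouzaid2022} by observing that the $\varepsilon$-local charts are open subcharts of Abouzaid's with identical footprints, and that the assignment $\alpha\mapsto X_\alpha^\varepsilon$ is functorial (the paper phrases this as a natural transformation $X_\alpha^\varepsilon\hookrightarrow X_\alpha$ to Abouzaid's functor and then quotes \cite[Section 11]{Abouzaid2022} wholesale). One minor simplification: since the footprints coincide, the category $A_{[u]}^\varepsilon$ of charts covering $u$ is literally equal to Abouzaid's $A_{[u]}$, so nerve contractibility is inherited immediately and your revisiting of the refinement argument, while valid, is unnecessary.
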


\begin{proof}
We first observe that our construction of Kuranishis charts from thickening data is functorial.  This follows from the fact that morphisms of thickening data preserve the $\varepsilon$-neighborhood constraint. 

It remains to verify the condition on the footprint in Definition \ref{dfKuranishiPresentation}. For this observe first that there is a natural transformation from the $\varepsilon$-local functor to Abouzaid's original functor, given by the inclusion maps $X_\alpha^\varepsilon \hookrightarrow X_\alpha$. This is immediate from the construction. Under this natural transformation, the footprint $(\mathfrak{s}_\alpha^\varepsilon)^{-1}(0)/G_\alpha$ of each $\varepsilon$-local chart maps homeomorphically to its image in the footprint of the corresponding Abouzaid chart. The claim follows from the fact that the construction in \cite[Section 11]{Abouzaid2022} gives a Kuranishi presentation.
\end{proof}

\subsection{Composition in the $\varepsilon$-Local Flow Category}

We now explain why our $\varepsilon$-local construction gives rise to a Kuranishi flow category, following \cite[Section 11.3]{Abouzaid2022}.

Given a triple $(p,p',p'')$ of time-1 orbits of $H$, we consider the map
\begin{equation}
A_{\lambda_1}(p, p') \times A_{\lambda_2}(p', p'') \to A_{\lambda_1+\lambda_2}(p, p'')
\end{equation}
which at the level of stratifying sets is induced by the map
\begin{equation}
P_{\lambda_1}(p, p') \times P_{\lambda_2}(p', p'') \to P_{\lambda_1+\lambda_2}(p, p'')
\end{equation}
from \cite[Section 7.1]{Abouzaid2022}, and at the level of marked points and stabilising divisors by concatenating sequences, with the one associated to $(p,p')$ appearing first. These choices specify a moduli space $M_{\alpha_1 \times \alpha_2}$, and determine a homeomorphism
\begin{equation}
M_{\alpha_1} \times M_{\alpha_2} \to M_{\alpha_1 \times \alpha_2}.
\end{equation}

At the level of inhomogeneous terms, we complete the definition by taking direct sums of vector spaces, and extending each map $Y_{\alpha_i}$ by zero to the components of the curve coming from the other factor.

At the level of moduli spaces of curves, this induces a $G_{\alpha_1} \times G_{\alpha_2}$-equivariant homeomorphism
\begin{equation}
X_{\alpha_1}^\varepsilon(p, p') \times X_{\alpha_2}^\varepsilon(p', p'') \to X_{\alpha_1 \times \alpha_2}^\varepsilon(p, p''),
\end{equation}
which is compatible with the projection to $V_{\alpha_1} \times V_{\alpha_2}$, and preserves regular loci as well as orientations. The key point is that the $\varepsilon$-locality constraint is preserved under this composition: if curves are within $\varepsilon$-neighborhoods of homogeneous solutions individually, their concatenation remains within an $\varepsilon$-neighborhood of the concatenated homogeneous solution.

This map is natural with respect to morphisms in $A_{\lambda_1}(p, p') \times A_{\lambda_2}(p', p'')$, so we obtain a natural isomorphism and hence a map
\begin{equation}
\mathbb{X}^\varepsilon(p, p') \times \mathbb{X}^\varepsilon(p', p'') \to \mathbb{X}^\varepsilon(p, p'')
\end{equation}
in the category of oriented Kuranishi diagrams, with isomorphisms of the underlying orientation lines satisfying the properties listed in \cite[Definition 7.2]{Abouzaid2022}.

Since the vector spaces of inhomogeneous terms are subspaces of $\mathbb{R}^\infty \otimes \mathbb{R}[G_\alpha]$, this operation is strictly associative.

\begin{prp}[{$\varepsilon$-Local Flow Category}]\label{prpEpsilonFlowCategory}
The $\varepsilon$-local Kuranishi presentations $\mathbb{X}^\varepsilon(p,p')$, indexed by $\mathbb{R}_+$, are the morphisms of a Kuranishi flow category $\mathbb{X}^\varepsilon(H)$ with objects the time-1 Hamiltonian orbits of $H$, with compositions given by the maps above.
\end{prp}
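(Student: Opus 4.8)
The plan is to reduce Proposition~\ref{prpEpsilonFlowCategory} to the combination of two facts that have essentially already been assembled in the preceding subsections: first, that each $\mathbb{X}^\varepsilon(p,p')$ is a genuine Kuranishi presentation of the $\varepsilon$-local moduli space (Proposition~\ref{prpEpsilonLocalProperties}), and second, that the composition maps constructed just above respect all the structure demanded by \cite[Definition 7.2]{Abouzaid2022}. So the proof is really a matter of checking the axioms of a Kuranishi flow category one at a time against what we have built, and pointing out at each step that the only new ingredient relative to \cite[Section~11.3]{Abouzaid2022} is the $\varepsilon$-locality constraint, which is preserved throughout.

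First I would record that the objects are the time-$1$ orbits of $H$ together with the orientation lines $o_p$ carried over verbatim from \cite[Section~11]{Abouzaid2022}; nothing about the $\varepsilon$-truncation touches the orientation data, since orientations are computed on the linearized operator and the homogeneous solutions near which we truncate are the same ones that appeared before. Next I would verify that the morphism spaces $\mathbb{X}^\varepsilon(p,p')$, for $\lambda\in\bR_+$, are oriented Kuranishi presentations whose stratification by $\partial_\alpha P_\lambda(p,p')$ matches the boundary decomposition into broken trajectories of total energy $\le\lambda$ --- this is Proposition~\ref{prpEpsilonLocalProperties} together with the observation that the footprint of $\mathbb{X}^\varepsilon(p,p')$ is exactly $\mathcal{M}_\lambda(p,p')$ (the $\varepsilon$-neighborhood of a homogeneous solution contains that solution, so no actual trajectories are lost). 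Then I would invoke the composition maps
\[
\mathbb{X}^\varepsilon(p,p')\times\mathbb{X}^\varepsilon(p',p'')\to\mathbb{X}^\varepsilon(p,p'')
\]
constructed above, noting the three properties already checked there: $G_{\alpha_1}\times G_{\alpha_2}$-equivariance and compatibility with the projections to $V_{\alpha_1}\times V_{\alpha_2}$; compatibility with orientation lines in the sense of \cite[Definition 7.2]{Abouzaid2022}; and strict associativity, which follows because the inhomogeneous term spaces are concrete subspaces of $\bR^\infty\otimes\bR[G_\alpha]$ and composition is direct sum. The one point I would spell out with a little care is the stratum-compatibility of composition: the image of a product stratum under the composition map is precisely the corresponding stratum in $\mathbb{X}^\varepsilon(p,p'')$, because the map on stratifying posets is the one from \cite[Section~7.1]{Abouzaid2022}, and the $\varepsilon$-constraint on a concatenated curve is implied by the $\varepsilon$-constraints on its two halves together with the fact that the gluing of two homogeneous solutions of energies $\lambda_1,\lambda_2$ is a homogeneous solution of energy $\lambda_1+\lambda_2$.

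The main obstacle --- and the only place where $\varepsilon$-locality could conceivably fail to interact well with the flow-category structure --- is precisely this last compatibility: a priori the composition of two $\varepsilon$-local charts lands in curves near a \emph{broken} homogeneous configuration, and one must know that such curves still lie in an $\varepsilon$-neighborhood of an honest (unbroken) element of $\mathcal{M}(p,p'')$ in the relevant compactified sense, or equivalently that the notion of ``$\varepsilon$-neighborhood of $\mathcal{M}(p,p'')$'' is taken in a way that includes limits of broken configurations. I would handle this by fixing the background metric once and for all (as already alluded to in the discussion preceding Theorem~\ref{tmEpsilonLocalVirtualCounts}) and defining the $\varepsilon$-neighborhood in Definition~\ref{dfEpsilonLocalSpace} using the Gromov--Hausdorff distance on images, so that the closure of $\mathcal{M}(p,p')$ in the space of stable broken trajectories is automatically within distance $0$ of itself; then the concatenation of two $\varepsilon$-close curves is $\varepsilon$-close to the concatenated broken solution, which is a point of the compactified moduli space, and the footprint bookkeeping goes through. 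With that convention in place, all the remaining axioms are inherited termwise from \cite{Abouzaid2022}, and the proposition follows. I would close by remarking, as the excerpt already does, that the apparent dependence of the resulting intrinsic theory on $\varepsilon$ is illusory and can be removed by a bimodule over an interval interpolating between two values of $\varepsilon$, but that this is not needed for any of the applications in the present paper.
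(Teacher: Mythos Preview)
Your proposal is correct and follows essentially the same approach as the paper; in fact the paper does not write out any proof at all for this proposition, treating it as an immediate consequence of the preceding discussion of composition maps (equivariance, naturality, orientation compatibility, strict associativity, and the key line that the $\varepsilon$-locality constraint is preserved under concatenation). Your elaboration on the potential obstacle concerning broken configurations is more careful than the paper's treatment, which simply asserts preservation of the $\varepsilon$-constraint without specifying the metric convention you spell out; this is a useful clarification but not a departure in approach.
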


We can now state the main result of this section.

\begin{tm}\label{tmEpsilonLocalVirtualCounts}
    For any $\epsilon>0$ the construction of the Floer complex can be carried out so that for any pair of periodic orbits $p,p'$ and any $\lambda>0$ the virtual count giving rise to the operation $m_{\lambda,p,p'}$ contributing to the differential depends only on an $\epsilon$ neighborhood of the set of actual solutions in $\mathcal{M}(p,p';\lambda)$. In the following sense: Suppose we are given a pair of symplectic manifolds $M_1, M_2$ and a pair of Floer data $(H_1,J_1)$ and $(H_2,J_2)$ on them. For some fixed background metrics on $M_1,M_2$ let $U_i\subset M_i$ be open sets containing an $\epsilon$ neighborhood of the moduli spaces $\mathcal{M}(p_i,p_i';\lambda)$ for $i=1,2$. Suppose there is a symplectomorphism $\phi:U_1\to U_2$ intertwining all the Floer data and the background metrics. Then under the induced isomorphism of orientation lines, the operations $m_{\lambda,p_1,p'_1}$ and $m_{\lambda,p_2,p'_2}$ are intertwined.
\end{tm}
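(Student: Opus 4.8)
The plan is to prove Theorem \ref{tmEpsilonLocalVirtualCounts} by tracking, step by step, what data enters the construction of the operations $m_{\lambda,p,p'}$ in the $\varepsilon$-local framework and observing that each ingredient can be chosen to depend only on an $\varepsilon$-neighborhood of the homogeneous moduli space. First I would set up the bookkeeping: a thickening datum $\alpha \in A_\lambda(p,p')$ consists of combinatorial data (a stratum of $P_\lambda(p,p')$, marked points, symmetric groups) together with \emph{geometric} data — the stabilizing divisors $\vec D_\alpha$ and the inhomogeneous term $Y_\alpha : V_\alpha \to \Omega^{0,1}_{\mathrm{vert}}(C_\alpha, C^\infty(M,TM))$. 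The only places where the ambient manifold $M$ is used are (i) the divisors $D_{T,i}^\alpha \subset M$, (ii) the target of $Y_\alpha$, and (iii) the defining equation \eqref{eqInhomogeneousFloerAbouzaid} for the $\varepsilon$-local space $X_\alpha^\varepsilon$. The key point of Definition \ref{dfEpsilonLocalSpace} is that $X_\alpha^\varepsilon$ consists only of pairs $(u,v)$ with $u$ mapping into an $\varepsilon$-neighborhood $N_\varepsilon(\mathcal{M}(p,p'))$ of the homogeneous moduli space. So I would argue that one can make all choices supported there: choose the stabilizing divisors $D_{T,i}^\alpha$ to meet $N_\varepsilon(\mathcal{M}(p,p'))$ and (away from it, if needed for genericity) to be irrelevant, and choose $Y_\alpha(v)$ to be a vector field supported in $N_\varepsilon(\mathcal{M}(p,p'))$. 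With these choices the chart $X_\alpha^\varepsilon$, the obstruction bundle $V_\alpha$, and the Kuranishi section $s_\alpha^\varepsilon$ are all determined by the restriction of the Floer data (and the background metric, which enters through the definition of $\varepsilon$-neighborhood and through the Sobolev completions) to $N_\varepsilon(\mathcal{M}(p,p'))$.

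Second I would verify that this restricted choice is still admissible — i.e. that one can carry out Abouzaid's inductive construction of the Floer multifunctor $A_\lambda(p,p') \to \mathrm{Chart}_{\downarrow Q}$ using only $\varepsilon$-supported thickening data, compatibly with the composition maps described before Proposition \ref{prpEpsilonFlowCategory}. The main observation is that the composition map on thickening data concatenates curves and extends $Y$ by zero, so $\varepsilon$-support is preserved under composition (a broken configuration with each piece in an $\varepsilon$-neighborhood of a homogeneous solution lies in an $\varepsilon$-neighborhood of the broken homogeneous solution, in the Gromov topology); and the contractibility-of-nerve condition in Definition \ref{dfKuranishiPresentation} is checked over points of the footprint, which are genuine homogeneous solutions and hence lie well inside $N_\varepsilon$. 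This is exactly what Proposition \ref{prpEpsilonLocalProperties} and Proposition \ref{prpEpsilonFlowCategory} already assert, so the only addition here is to record that the \emph{construction itself} — not merely the resulting presentation — can be performed with $\varepsilon$-supported auxiliary data, by running Abouzaid's induction and at each stage cutting off the divisors and inhomogeneous terms to $N_\varepsilon$ using a partition of unity.

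Third, given a symplectomorphism $\phi : U_1 \to U_2$ with $N_\varepsilon(\mathcal{M}(p_i,p_i';\lambda)) \subset U_i$, intertwining the Floer data and background metrics, I would push forward the $\varepsilon$-supported thickening data: $\phi$ carries $\varepsilon$-supported divisors and inhomogeneous terms for $(M_1,H_1,J_1)$ to $\varepsilon$-supported ones for $(M_2,H_2,J_2)$, and — because $\phi$ is a diffeomorphism intertwining the metrics and the almost complex structures — it induces an isomorphism of the spaces $X_\alpha^\varepsilon$, the obstruction bundles $V_\alpha$, the sections $s_\alpha^\varepsilon$, and the orientation data, compatibly with morphisms in $A_\lambda$. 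Hence it induces an isomorphism of the two $\varepsilon$-local Kuranishi presentations $\mathbb{X}^\varepsilon(p_1,p_1') \cong \mathbb{X}^\varepsilon(p_2,p_2')$. Since a theory of virtual counts assigns the same number to isomorphic oriented Kuranishi presentations (this is part of the axioms in \cite[Section 7.2]{Abouzaid2022}), the extracted operations $m_{\lambda,p_1,p_1'}$ and $m_{\lambda,p_2,p_2'}$ are intertwined by the induced isomorphism of orientation lines. Finally, I would note that the virtual count so obtained agrees with the original (non-$\varepsilon$-local) one by the natural transformation $X_\alpha^\varepsilon \hookrightarrow X_\alpha$ used in the proof of Proposition \ref{prpEpsilonLocalProperties} together with the cobordism-invariance of virtual counts, so nothing is lost by passing to the $\varepsilon$-local model.

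The hard part will be the second step: one must check that Abouzaid's construction of the Floer multifunctor — in particular the inductive choice of obstruction spaces $V_\alpha$ achieving transversality of the thickened equation, and the compatibility with the boundary strata and with the stabilizing-divisor formalism — genuinely survives the restriction to $\varepsilon$-supported auxiliary data. The potential worry is that transversality at a boundary/broken configuration might appear to require obstruction sections or divisors "spread out" over a region larger than $N_\varepsilon$; but since every boundary stratum of $\mathcal{M}_\lambda(p,p')$ is itself a product of lower-energy homogeneous moduli spaces, each sitting inside its own $\varepsilon$-neighborhood, the inductive hypothesis supplies $\varepsilon$-supported data there, and one only needs a cutoff argument near the gluing region — which, being a neighborhood of a genuine broken solution, is again inside $N_\varepsilon$. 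I expect this to go through but it is where the real content lies, and a careful write-up would need to quote the relevant transversality and gluing statements from \cite[Sections 10--11]{Abouzaid2022} and check each is local in the required sense.
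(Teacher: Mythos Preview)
Your proposal is largely correct in spirit but is far more elaborate than the paper's argument, and in one place it overshoots what is actually being claimed. The paper's proof is a single sentence: ``Immediate from the construction of the virtual counts and Proposition \ref{prpEpsilonFlowCategory}.'' The point is that the entire appendix has already done the work: once the $\varepsilon$-local charts $X_\alpha^\varepsilon$ are defined (Definition \ref{dfEpsilonLocalSpace}) and shown to assemble into a Kuranishi flow category (Propositions \ref{prpEpsilonLocalProperties} and \ref{prpEpsilonFlowCategory}), the virtual count is extracted from that flow category, and since every curve in $X_\alpha^\varepsilon$ lands in the $\varepsilon$-neighborhood, only the restriction of the Floer data and auxiliary choices to that neighborhood enters. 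A symplectomorphism $\phi$ of $\varepsilon$-neighborhoods then transports the restricted data and induces an isomorphism of the Kuranishi presentations, hence of the virtual counts. Your step 3 is exactly this, and your step 2 is essentially a re-proof of Propositions \ref{prpEpsilonLocalProperties} and \ref{prpEpsilonFlowCategory}; the paper simply cites them.

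Two specific comments. First, your insistence on choosing the divisors and inhomogeneous terms to be \emph{supported} in $N_\varepsilon$ is not strictly necessary: since every $u$ in $X_\alpha^\varepsilon$ maps into $N_\varepsilon$, only the restrictions of $\vec D_\alpha$ and $Y_\alpha$ to $N_\varepsilon$ affect $X_\alpha^\varepsilon$, so one can transport those restrictions under $\phi$ and extend arbitrarily outside. Second, your final paragraph asserts that the $\varepsilon$-local virtual count agrees with the original one via cobordism invariance. The paper explicitly does \emph{not} claim this; see the paragraph preceding the subsection on Kuranishi charts, where the author writes that independence of $\varepsilon$ ``should not be hard'' but is left to a bimodule argument not carried out here. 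So that last step, while plausible, is neither part of Theorem \ref{tmEpsilonLocalVirtualCounts} nor proved in the paper, and you should drop it.
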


\begin{proof}
Immediate from the construction of the virtual counts and Proposition \ref{prpEpsilonFlowCategory}.
\end{proof}

\bibliographystyle{plain}
\bibliography{sections/ref}
\end{document}